\newtheorem{thm}{Theorem}[section]
\newtheorem{thm2}{Theorem}
\newtheorem{conv}{Convention}
\newtheorem{pro}[thm]{Proposition}
\newtheorem{prop}[thm]{Proposition}
\newtheorem{ques}[thm]{Question}
\newtheorem{cor}[thm]{Corollary}
\newtheorem{lem}[thm]{Lemma}
\theoremstyle{definition}
\newtheorem{defi2}[thm2]{Definition}
\newtheorem{defi}[thm]{Definition}
\newtheorem{rem}[thm]{Remark}
\newtheorem{exa}[thm]{Example}
\newcommand{\bb}[1]{\mathbb{#1}}
\newcommand{\cali}[1]{{\mathcal{#1}}}
\newcommand{\mrm}[1]{{\mathrm{#1}}}
\newcommand{\Ki}{{\mathcal{K}}}
\newcommand{\C}{{\mrm{Spa}({C},{O_C})}}
\newcommand{\Cp}{{\mrm{Spa}({C'},{O_{C'}})}}
\newcommand{\Spac}[1]{\mrm{Spa}({#1},{#1}^\circ)}
\newcommand{\Spa}[1]{\mrm{Spa}({#1},#1^+)}
\newcommand{\Spf}[1]{\mrm{Spa}({#1})}
\newcommand{\Spd}[1]{{\mrm{Spd}({#1},#1^+)}}
\newcommand{\Spo}[1]{{\mrm{Spo}({#1},#1^+)}}
\newcommand{\Spor}[1]{{\mrm{Spo}({#1},{#1})}}
\newcommand{\Spof}[1]{{\mrm{Spo}({#1})}}
\newcommand{\Spdf}[1]{{\mrm{Spd}({#1})}}
\newcommand{\Huf}[1]{({#1},{#1})}
\newcommand{\Hub}[1]{({#1},#1^+)}
\newcommand{\Heuer}[1]{#1^{\diamond/\circ}}
\newcommand{\topPerf}{{\widetilde{\mrm{Perf}}}}
\newcommand{\topSch}{{\widetilde{\mrm{SchPerf}}}}
\newcommand{\red}{^{\mrm{red}}}
\newcommand{\Zp}{{\mathbb{Z}_p}}
\newcommand{\Qp}{{\mathbb{Q}_p}}
\newcommand{\Fp}{{\mathbb{F}_p}}
\newcommand{\Fpd}{{\mrm{Spd}(\mathbb{F}_p)}}
\newcommand{\Zpd}{{\mrm{Spd}(\mathbb{Z}_p)}}
\newcommand{\Wkd}{{\Spdf{W(k)}}}
\newcommand{\Qpd}{{\mrm{Spd}(\mathbb{Q}_p)}}
\newcommand{\Oe}{O_{{E}}}
\newcommand{\ar}{\arrow}
\newcommand{\ovr}[1]{{#1_{\mrm{red}}}}
\newcommand{\dia}{\diamondsuit}
\newcommand{\ov}[1]{{\overline{#1}}}
\newcommand{\und}[1]{{\underline{#1}}}
\newcommand{\pthr}{^{\frac{1}{p^\infty}}}
\newcommand{\F}{\cali{F}}
\newcommand{\G}{\cali{G}}
\newcommand{\g}{{G}}
\newcommand{\Di}{\mathscr{D}}
\newcommand{\Hi}{\cali{H}}
\newcommand{\Si}{\mathfrak{S}}
\newcommand{\Tubf}[2]{{\widehat{#1}_{/#2}}}
\newcommand{\Tf}[2]{{\widehat{#1}_{/#2}}}
\newcommand{\Tup}[2]{{{#1}^{\circledcirc}_{/#2}}}
\newcommand{\Grm}[1]{{\mrm{Gr}^{\g,\leq \mu}_{#1}}}
\newcommand{\Gru}[1]{{\mrm{Gr}^{G}_{#1}}}
\newcommand{\GruH}[1]{{\mrm{Gr}^{H}_{#1}}}
\newcommand{\GrumH}[1]{{\mrm{Gr}^{H,\leq\mu}_{#1}}}
\newcommand{\Grum}[1]{{\mrm{Gr}^{G,\leq\mu}_{#1}}}
\newcommand{\Gruw}[1]{{\mrm{Gr}^{G,\leq w}_{#1}}}
\newcommand{\GruwH}[1]{{\mrm{Gr}^{H,\leq w}_{#1}}}
\newcommand{\GrueH}[2]{{\mrm{Gr}^{H,\leq #2}_{#1}}}
\newcommand{\GrWme}[1]{{\mrm{Gr}^{\g,\leq \mu}_{\cali{W},#1}}}
\newcommand{\Wred}{{\cali{W}_{\mrm{red}}^+}}
\newcommand{\U}[2]{{U_{#1\leq #2\neq 0}}}
\newcommand{\N}[2]{{N_{#1\ll #2}}}
\newcommand{\I}[1]{{\cali{I}^{\preceq}(#1)}}
\newcommand{\Iv}[1]{{\cali{I}^{\preceq}_{\mrm{ver}}(#1)}}
\newcommand{\nc}{\newcommand}
\nc{\on}{\operatorname}
\nc{\pot}[1]{ [\hspace{-0,5mm}[ {#1} ]\hspace{-0,5mm}] }
\nc{\rpot}[1]{ (\hspace{-0,7mm}( {#1} )\hspace{-0,7mm}) }
\title{Specialization maps for Scholze's category of diamonds}
\author{Ian Gleason}
\email{igleason@uni-bonn.de}
\subjclass[2020]{14G45}
\address{Mathematisches Institut der Universit\"at Bonn, Endenicher Allee 60, 53115, Bonn, Germany}
\begin{document}
\begin{abstract}
We introduce the specialization map in Scholze's theory of diamonds. We consider v-sheaves that ``behave like formal schemes" and call them kimberlites. We attach to them: a reduced special fiber, an analytic locus, a specialization map, a Zariski site, and an \'etale site. When the kimberlite comes from a formal scheme, our sites recover the classical ones. We prove that unramified $p$-adic Beilinson--Drinfeld Grassmannians are kimberlites with finiteness and normality properties. 
\end{abstract}

\maketitle
\tableofcontents

\section*{Acknowledgements}
We thank the author’s PhD advisor, Sug Woo Shin, for his interest, his insightful questions and suggestions at every stage of the project, and for his generous constant encouragement and support during the PhD program. To David Hansen for many very helpful conversation that cleared misunderstandings of the author, for providing a reference and a proof to \Cref{pro:specialisniceforTate}, for kindly reading an early draft of this article. To Jo\~ao Louren\c{c}o for some correspondences that encouraged the author to pursue \Cref{thm2:non-analyticthm} in this generality, for his interest in our work and many conversations that have shaped our perspective on the subject. To Ben Heuer for explaining us his perspective on the specialization map for formal schemes.
To the anonymous referee for very helpful suggestions.
To Laurent Fargues, Peter Scholze and Jared Weinstein for answering questions the author had on $p$-adic Hodge theory and the theory of diamonds. To Johannes Ansch\"utz, Alexander Bertoloni, Patrick Daniels, Dong Gyu Lim, Zixin Jiang and Alex Youcis for helpful conversations and correspondences. \\ 

This work was supported by the Doctoral Fellowship from the “University of California Institute for Mexico and the United States” (UC MEXUS) and the “Consejo Nacional de Ciencia y Tecnolog\'ia” (CONACyT), and by ``Deutsche Forschungsgemeinschaft" (DFG, German Research Foundation) via Scholze's Leibniz-Preis. This research was conducted in during the authors PhD program at UCB (University of California Berkeley) and during the authors postdoctoral stay at Universit\"at Bonn, we are grateful to both institutions.

\section*{Statements and Declarations}
On behalf of all authors, the corresponding author states that there is no conflict of interest. My manuscript has no associated data.

\section*{Introduction}
As Fargues--Scholze \cite{FS21} and Scholze--Weinstein \cite{Ber} show, Scholze's theory of diamonds and v-sheaves \cite{Et} is a powerful geometric framework to study (among other things) the local Langlands correspondence and the theory of local Shimura varieties. One of the early milestones of Scholze's theory roughly says that diamonds capture correctly the \'etale site of an analytic adic space (\cite[\S 15]{Et}). Moreover, the category of diamonds contains many geometric objects of arithmetic interest that do not come from an analytic adic space, and for these spaces one still gets a well-behaved \'etale site.

Although Scholze's theory of v-sheaves is also useful to study adic spaces that are not analytic (like the ones coming from a formal scheme), some complications arise. For example, the v-sheaf associated to a non-analytic adic space has more open subsets than one would expect \cite[\S 18]{Ber}. In particular, a comparison of \'etale sites can't hold since even the site of open subsets do not coincide. Despite this complications, it is still profitable to understand the behavior of the v-sheaves associated to non-analytic adic spaces. The main motivation to work this out is because there are v-sheaves of arithmetic interest that do not come from an adic space, but ``resemble" the behavior of a formal scheme. The main examples to keep in mind are the integral models of moduli spaces of $p$-adic shtukas proposed in \cite[\S 25]{Ber} or the $p$-adic Beilinson--Drinfeld Grassmannians of \cite[\S 21]{Ber}. 

In rough terms this article does the following:  

\begin{enumerate}
	\item Study the topological space $|\Spd{A}|$ for a general Huber pair $\Hub{A}$ over $\Zp$. We overcome many of the technical difficulties of working with these spaces. 	
	\item Propose a rigorous definition of what it means for a v-sheaf to ``resemble" a formal scheme. We call these v-sheaves \textit{kimberlites}.
	\item Construct specialization maps attached to kimberlites. This recovers the classical specialization maps of formal schemes.  
	\item Attach ``Zariski" and ``\'etale" sites to a kimberlite. This allow us to recover the Zariski and \'etale sites of a formal scheme intrinsically from the v-sheaf attached to it.  
	\item Verify that the $p$-adic Beilinson--Drinfeld Grassmannians attached to reductive groups over $\Zp$ are interesting examples of kimberlites that do not come from formal schemes.  
\end{enumerate}

Although this work (admittedly of technical nature) is far from being a robust theory, we think it makes appreciable progress in our understanding of this kind of v-sheaves and sets a stepping stone for future investigations. For instance, the constructions and techniques discussed here have already found applications in the following works: 
\begin{enumerate}
	\item In our work on geometric connected components of local Shimura varieties \cite{gleason2021geometric}.
	\item In our collaborative work with Ansch\"utz, Louren\c{c}o and Richarz on the Scholze--Weinstein conjecture \cite{AGLR22}.
	\item In the representability results of Pappas and Rapoport \cite{pappas2021padic}.
\end{enumerate}
We find it reasonable to expect that our considerations will play a role in more general representability results of integral local Shimura varieties, and a role in the study of ``the nearby cycles functor" (\Cref{rem:naivenearbycycles}). \\    

Let us give a more detailed account of our results. In \cite{Et}, Scholze sets foundations for the theory of diamonds which can be defined as certain sheaves on the category of characteristic $p$ perfectoid spaces endowed with a Grothendieck topology called the v-topology. He associates to any pre-adic space $X$ over $\Zp$ (not necessarily analytic) a small v-sheaf $X^\diamondsuit$, and whenever $X$ is analytic he proves that $X^\diamondsuit$ is a locally spatial diamond. If $X=\Spa{B}$ for a Huber pair $\Hub{B}$, then $X^\dia$ is denoted $\Spd{B}$. Moreover, Scholze assigns to any small v-sheaf $\F$ an underlying topological space $|\F|$ and whenever $\F=X^\diamondsuit$ he constructs a functorial surjective and continuous map $|\F|\to|X|$. When $X$ is analytic it is proven in \cite{Ber} that this map is a homeomorphism, but this map fails to be injective almost always for pre-adic spaces that have non-analytic points. To tackle this difficulty, we associate to a Huber pair $\Hub{B}$ what we call below its \textit{olivine spectrum}, which we denote $\Spo{B}$. The definition of this topological space is concrete enough to allow computations to take place. Moreover, in most cases of interest $\Spo{B}$ recovers $|\Spd{B}|$. 

\begin{thm2}
	\label{thm2:mostHuberpairsareolivine}
	Let $\Hub{B}$ be a complete Huber pair over $\Zp$, there is a functorial bijective and continuous map $|\Spd{B}|\to \Spo{B}$. Moreover, it is a homeomorphism if $\Hub{B}$ is topologically of finite type over $(B_0,B_0)$ for $B_0\subseteq B^+$ a ring of definition.	
\end{thm2}
In \cite[\S 18]{Ber} Scholze and Weinstein attach to a perfect scheme $X$ in characteristic $p$ a v-sheaf denoted $X^\diamond$. Moreover, they prove that the functor $X\mapsto X^\diamond$ is fully-faithful. It will be clear to the reader that many of our techniques used to prove \Cref{thm2:mostHuberpairsareolivine} are borrowed from Scholze and Weinstein's approach to their full-faithfulness result, but our perspective allow us to go farther. 
\begin{thm2}
	\label{thm2:non-analyticthm}
	Let $Y$ be a perfect non-analytic adic space over $\Fp$ and let $X$ be a pre-adic space over $\Zp$. The natural map $\mrm{Hom}_{_\mrm{PreAd}}(Y,X)\to\mrm{Hom}(Y^\dia,X^\dia)$ is bijective. In particular, $(-)^\dia$ is fully faithful when restricted to the category of perfect non-analytic adic spaces over $\Fp$.
\end{thm2}
\Cref{thm2:non-analyticthm} allow us to recover Scholze and Weinstein's result as a particular case. Also, the olivine spectrum allows us to show that Scholze and Weinstein's functor $X\mapsto X^\diamond$ is continuous for the v-topology and admits a right adjoint $\F\mapsto \F\red$ at the level of topoi. We call this the \textit{reduction functor} and it plays a key role in the rest of our theory. In general, the objects obtained from the reduction functor might not be perfect schemes, but they are ``scheme theoretic v-sheaves" and they come equipped with an underlying topological space that agrees with the Zariski topology whenever they are representable.\\


Let us describe our approach to study v-sheaves that ``behave like'' formal schemes. We consider three layers, in each layer we get closer to capture the behavior of formal schemes. 
We first recall a more classical case. Let $\cali{X}$ be a $p$-adic separated formal scheme topologically of finite type over $\Zp$. One can associate to $\cali{X}$ a rigid analytic space over $\Qp$, that we will denote by $X_\eta$. We can also associate to $\cali{X}$ a finite type reduced scheme over $\Fp$, that we denote by $\ov{X}$. Huber's theory of adic spaces allows us to consider $X_\eta$ as an adic space and assign to it a topological space $|X_\eta|$. Moreover, one can construct a continuous map $\mrm{sp}_{\cali{X}}:|X_\eta|\to |\ov{X}|$, where $|\ov{X}|$ is the usual Zariski space underlying $\ov{X}$ \cite[Remark 7.4.12]{Bhatt} or \cite[Definition 6.4]{Lou}). A theorem of Louren\c{c}o (\cite[Theorem 18.4.2]{Ber}) says that ``nice enough" formal schemes can be recovered from the triple $(X_\eta,\ov{X},\mrm{sp}_{\cali{X}})$. We propose that v-sheaves that ``resemble" formal schemes should be those for which a specialization map can be constructed.  

Consider the following. Given a Tate Huber pair $\Hub{A}$ with pseudo-uniformizer $\varpi\in A^+$ the specialization map $\mrm{sp}_{A}:\Spa{A}\to \mrm{Spec}(A^+/\varpi)$ assigns to $x\in \Spa{A}$ the prime ideal $\mathfrak{p}_x$ of those elements $a\in A^+$ for which $|a|_x<1$. 
Observe that this construction is functorial in the category of Tate Huber pairs. We wish to exploit functoriality to descend this specialization map to more general v-sheaves. The first question is: What should the target of the specialization map be? 

One can compute directly that if $\Hub{A}$ is a uniform Tate Huber pair, then $\mrm{Spd}(A^+) \red$ is the perfection of $\mrm{Spec}(A^+/\varpi)$. This suggests that if we want to attach a specialization map to a v-sheaf $\F$ the target of this map should be $|\F\red|$. 

A key aspect that makes the specialization map for Tate Huber pairs functorial is that every map of Tate Huber pairs $\Spa{A}\to \Spa{B}$ automatically upgrades ``integrally" to a map $\Spf{A^+}\to \Spf{B^+}$. This motivates the following definition:

\begin{defi2}
	Let $\F$ be a small v-sheaf, $\Hub{A}$ be a Tate Huber pair and $f:\Spd{A}\to \F$ a map.
\begin{enumerate}
	\item We say that $\F$ \textit{formalizes} $f$ (or that $f$ is \textit{formalizable}) if there is $t:\Spdf{A^+}\to \F$ factoring $f$. 
%
	\item We say that $\F$ \textit{v-formalizes} $f$ if there is a v-cover $g:\Spa{B}\to \Spa{A}$ such that $\F$ formalizes $f\circ g$.
	\item We say that $\F$ is \textit{v-formalizing} if it v-formalizes any $f$ as above.
\end{enumerate}
\end{defi2}

Given a v-formalizing v-sheaf $\F$ one could define the specialization map $\mrm{sp}_{\F}:|\F|\to |\F\red|$ so that for any “formalized” map $f:\Spdf{A^+}\to \F$ the following diagram is commutative:
\begin{center}
	\begin{tikzcd}
		\mid \Spa{A}\mid \ar{r}\ar{d}{\mrm{sp}_{A}} & \mid \Spdf{A^+}\ar{r}{\mid f \mid} \mid & \mid \F\mid \ar{d}{\mrm{sp}_{\F}}\\
		\mid \mrm{Spec}(A^+/\varpi)^\mrm{perf}\mid \ar{rr}{\mid f\red \mid}& &\mid \F\red\mid 
	\end{tikzcd}
\end{center}
The recipe to compute the specialization map would then be as follows: given $x\in |\F|$ represent it by a map $\iota_x:\Spa{C}\to \F$, find a formalization $f_x:\Spdf{C^+}\to \F$ of $\iota_x$. Apply the reduction functor to $f_x$ to obtain a map $f_x\red:\mrm{Spec}(C^+/\varpi)^\mrm{perf}\to \F\red$. Look at the image of the closed point in $\mrm{Spec}(C^+/\varpi)$ under $|f_x\red|$. This is $\mrm{sp}_{\F}(x)$.

The natural question is whether or not this is well defined. The problem being that the map $\iota_x:\Spa{C}\to \F$ might have more than one formalization. The naive guess is that this doesn't happen if $\F$ is separated as a v-sheaf. Unfortunately, this is false. At the heart of the problem is the following pathology: although $|\Spa{C}|$ is dense within $|\Spf{C^+}|$ it is not true that $|\Spd{C}|$ is dense within $|\Spdf{C^+}|$. It is this key subtlety that requires sufficient understanding of the olivine spectrum of Huber pairs. 

\begin{defi2}
	Let $f:\F\to \G$ be a map of v-sheaves.
\begin{enumerate}
	\item We say $f$ is \textit{formally adic} if the following diagram induced by adjunction is Cartesian: 
\begin{center}
\begin{tikzcd}
	(\F\red) ^\diamond \ar[r]\ar[d] &(\G\red)^\diamond\ar[d]\\
	\F\ar[r] &\G
\end{tikzcd}
\end{center}
\item If $\F$ comes with a formally adic map to $\Zpd$ we say that $\F$ is \textit{$p$-adic}.
\item We say $f$ is \textit{formally closed} if it is a formally adic closed immersion.
\item We say $\F$ is \textit{formally separated} if the diagonal $\F\to \F\times \F$ is formally closed.
\end{enumerate}	
\end{defi2}
Using the olivine spectrum we prove that $|\Spd{C}|$ is ``formally dense" in $|\Spdf{C^+}|$. The main feature of a formally separated v-sheaf $\F$ is that a map $\iota:\Spa{A} \to \F$ has at most one formalization (if any).

Combining the two inputs we say that a v-sheaf $\F$ is \textit{specializing} if it is v-formalizing and formally separated, this is the first layer of approximation to the definition. We attach functorially to such $\F$ a continuous specialization map $|\F|\to |\F\red|$.

Now, specializing v-sheaves produce all the specialization maps we are interested in, but they are still too general to capture the behavior of formal schemes. 
\begin{defi2}
	Let $\F$ be a specializing v-sheaf. We say $\F$ is a \textit{prekimberlite} if:
	\begin{enumerate}[a)]
	\item $\F\red$ is represented by a scheme. 
	\item The map $(\F\red)^\diamond \to \F$ coming from adjunction is a closed immersion.
\end{enumerate}
If $\F$ is a prekimberlite, we let the \textit{analytic locus} be $\F^{\mrm{an}}=\F\setminus (\F\red)^\diamond$. 
\end{defi2}
We can attach \'etale and Zariski sites to a prekimberlite as follows:
\begin{defi2}
	Suppose $\F$ is a prekimberlite, we let $(\F)_{\mrm{qc}, \mrm{for\text{-}\acute{e}t}}$ be the category that has as objects maps $f:\G\to \F$ where $\G$ is a prekimberlite and $f$ is formally adic, \'etale and quasicompact. Morphisms are maps of v-sheaves commuting with the structure map. We call objects in this category the \textit{\'etale formal neighborhoods} of $\F$. If $f$ is also injective we call them open formal neighborhoods of $\F$. 
\end{defi2}

\begin{thm2}
	\label{thm2:invarianceofetalesite}
	For $\F$ a prekimberlite, the reduction functor $(-)\red:(\F)_{\mrm{qc},\mrm{for\text{-}\acute{e}t}}\to (\F\red)_{\mrm{qc},\mrm{\acute{e}t},\mrm{sep}}$ is an equivalence. Here, the target category are the maps of perfect schemes $f:Y\to \F\red$ that are quasicompact, \'etale and separated. Moreover, this functor restricts to an equivalence between open formal neighborhoods and quasicompact open immersions. 
\end{thm2}

Now, if $\frak{X}$ is a separated formal scheme locally admitting a finitely generated ideal of definition (see \Cref{conv:formalschemes} below for details), then $\frak{X}^\dia$ is a prekimberlite and $(\frak{X}^\dia)\red$ is the perfection of the reduced subscheme of $\frak{X}$. In particular, one can recover the \'etale site of $\frak{X}$ from $(\frak{X}^\dia)_{\mrm{qc},\mrm{for\text{-}\acute{e}t}}$ through \Cref{thm2:invarianceofetalesite}.

Let us describe the inverse functor, for this we consider the following construction due to Heuer \cite{heuer21}. For a perfect scheme $X$ in characteristic $p$ we let $\Heuer{X}$ denote the v-sheaf given by the analytic sheafification of the rule $(R,R^+)\mapsto X(\mrm{Spec}(R^+/\varpi)^\mrm{perf})$ where $\Hub{R}$ is affinoid perfectoid and $\varpi\in R^+$ is a pseudo-uniformizer. For a prekimberlite $\F$ we get a map of v-sheaves $\mrm{SP}_\F:\F\to \Heuer{(\F\red)}$. If $f:V\to \F\red$ is \'etale, quasicompact and separated then $\Tf{\F}{V}:=\F\times_{\Heuer{(\F\red)}}\Heuer{V}$ is the \'etale formal neighborhood of $\F$ with $(\Tf{\F}{V})\red=V$. The two key ingredients are that for perfect schemes $X$ we have an identification $(\Heuer{X})\red=X$, and if $V\to X$ is \'etale then $\Heuer{V}\to \Heuer{X}$ is formally adic and \'etale. This later statement in turn reduces to the invariance of \'etale sites under nilpotent thickenings and perfection. Heuer's construction also allows us to consider what we call \textit{formal neighborhoods}. If $\F$ is a prekimberlite and $S\subseteq \F\red$ is a locally closed subscheme we can consider $\Tf{\F}{S}:=\F\times_{\Heuer{(\F\red)}}\Heuer{S}$. We always have $\Tf{\F}{S}\subseteq \F$ and when $S$ is constructible this is even an open immersion.\footnote{These open immersions are only formally adic when $S$ is an open immersion.} This construction generalizes ``completion" of a formal scheme along a locally closed immersion.

We are ready for the third approximation.
\begin{defi2}
	Let $\F$ be a prekimberlite.
	\begin{enumerate}
		\item We say $\F$ is valuative if $\mrm{SP}_\F:\F\to \Heuer{(\F\red)}$ is partially proper.
	\item A \textit{smelted kimberlite} is a pair $\Ki=(\F,\Di)$ where $\F$ is a valuative prekimberlite, $\Di$ is a quasiseparated locally spatial diamond and $\Di \subseteq \F^{\mrm{an}}$ is open. The main cases of interest are when $\Di=\F^\mrm{an}$ or when $\Di=\F\times_\Zpd \Qpd$. 
\item We define the specialization map $\mrm{sp}_{\mathcal{K}}:|\Di|\to |\F\red|$ as the composition $|\Di|\to |\F|\xrightarrow{\mrm{sp}_{\F}} |\F\red|$.
	If the context is clear we write $\mrm{sp}_\Di$ instead of $\mrm{sp}_\Ki$.
\item We say $\F$ is a \textit{kimberlite} if $(\F,\F^\mrm{an})$ is a smelted kimberlite and $\mrm{sp}_{\F^\mrm{an}}$ is quasicompact.
	\end{enumerate}
\end{defi2}

The specialization map for kimberlites and smelted kimberlites is better behaved since it is even continuous for the constructible topology.

\begin{thm2}
	\label{pro2:spectralmapSch-Spatial}
	Let $\Ki=(\F,\Di)$ be a smelted kimberlite and $\G$ be a kimberlite, the following hold: 
	\begin{enumerate}
		\item $\mrm{sp}_\Di:|\Di|\to |\F\red|$ is a specializing, spectral map of locally spectral spaces. 
		\item $\mrm{sp}_{\G^\mrm{an}}:|\G^\mrm{an}|\to |\G\red|$ is also a closed map.
	\end{enumerate}
\end{thm2}
If $(\F,\Di)$ is a smelted kimberlite and $S\subseteq |\F\red|$ we can define analogs of Berthelot tubes, by letting $\Tup{\Di}{S}=\Tf{\F}{S}\times_\F \Di$. We call these spaces the \textit{tubular neighborhood} of $\Di$ around $S$.

Finally, to study the $p$-adic Beilinson--Drinfeld Grassmannians we introduce some ``finiteness" and ``normality" conditions.
\begin{defi2}
	Let $\Ki=(\F,\Di)$ a smelted kimberlite and $\G$ a kimberlite.
\begin{enumerate}
	\item We say $\Di$ is a \textit{cJ-diamond} (constructibly Jacobson) if rank $1$ points are dense in the constructible topology of $\Di$.
	\item We say that $\Ki$ is \textit{rich} if: $\Di$ is a cJ-diamond, $|\F\red|$ is locally Noetherian and $\mrm{sp}_{\Di}:|\Di|\to |\F\red|$ is surjective. 
	\item We say that $\G$ is \textit{rich} if: $(\G, \G^\mrm{an})$ is rich. 
	\item If $\Ki$ is rich we say it is \textit{topologically normal} if for every closed point $x\in |\F\red|$ the tubular neighborhood $\Tup{\Di}{x}$ is connected.\footnote{In a previous version of this article we had already introduced this notions, but we hadn't realized the connection to normality. Our motivation to call this ``topologically normal" comes from \cite[Proposition 2.38]{AGLR22}.}
\end{enumerate}
\end{defi2}

Let $\g$ denote a reductive over $\Zp$ and let $T\subseteq B\subseteq \g$ denote integrally defined maximal torus and Borel subgroups respectively. 
Let $\mu\in X_*^+(T_{\overline{\mathbb{Q}}_p})$ be a dominant cocharacter with reflex field $E\subseteq \overline{\mathbb{Q}}_p$. Let $O_E$ denote the ring of integers of $E$ and let $k_E$ denote the residue field. Let $\Grm{\Oe}$ denote the v-sheaf parametrizing $B^+_\mrm{dR}$-lattices with $\g$-structure whose relative position is bounded by $\mu$ as in \cite[Defintion 20.5.3]{Ber} and let $\GrWme{k_E}$ denote the Witt vector affine Grassmannian \cite{Zhu}, \cite{Witt}. 
%
%
Here is our result:

\begin{thm2}
	\label{thm2:GrassmannianisaKimberlite}
	$\Grm{O_E}$ is a topologically normal rich $p$-adic kimberlite with $(\Grm{O_E})\red=\GrWme{k_E}$. In particular, the specialization map is a closed, surjective and spectral map of spectral topological spaces. 
\end{thm2}
This result has partially been generalized in our collaboration \cite{AGLR22}. There, we prove that the local models for parahoric groups are rich $p$-adic kimberlites. Nevertheless, we only improve the ``normality" part of the result if we assume that $\mu$ is minuscule and outside certain cases in small characteristic. 

In \cite{gleason2021geometric}, we use normality of $\Grm{O_E}$ to prove normality of moduli spaces of $p$-adic shtukas which is a key step to prove the main theorem of \cite{gleason2021geometric} for the following reason. Classically, normality of formal scheme ensures that the generic fiber and special fibers have the same connected components. This also happens for rich smelted kimberlites.

	We prove \Cref{thm2:GrassmannianisaKimberlite} by using a Demazure resolution. Our key observation is that one can do the Demazure resolution using either $B^+_\mrm{dR}$-coefficients or $A_\mrm{inf}$-coefficients. The use of $A_\mrm{inf}$-coefficients makes it clear that $\Grm{O_E}$ is v-formalizing. Normality of $\Grm{O_E}$ can be deduced from the normality of the source in the Demazure resolution, which in turn can be deduced inductively from it's expression as iterated $(\bb{P}^1)^\dia$-bundles.

Let us comment on the organization of the paper.
\begin{enumerate}[I)]

	\item In the first section, we give a short review of the theory of diamonds, the v-topology and some facts about spectral topological spaces. We also review Scholze's $\diamondsuit$ functor that takes as input a pre-adic space over $\Zp$ and returns as output a v-sheaf.
	\item In the second section, we introduce and study the olivine spectrum of a Huber pair. We prove \Cref{thm2:mostHuberpairsareolivine} and \Cref{thm2:non-analyticthm}.  
	\item In the third section, we review the small diamond functor $\diamond$. We prove the continuity of $\diamond$. We introduce the reduction functor as the right adjoint to $\diamond$. We introduce and study ``formally adic" maps.	
	\item In the fourth section, we develop our theory of specialization maps. We introduce specializing v-sheaves and prekimberlites. We introduce formal neighborhoods, \'etale formal neighborhoods and we prove \Cref{thm2:invarianceofetalesite}. We introduce kimberlites, and smelted kimberlites and prove \Cref{pro2:spectralmapSch-Spatial}. We prove that formal schemes give rise to kimberlites. Finally, we introduce the finiteness and normality conditions. 
	\item In the fifth section, we study the specialization map for $p$-adic Beilinson--Drinfeld Grassmannians. We review the contruction of twisted loop groups with $B^+_\mrm{dR}$ and $A_\mrm{inf}$ coefficients. We construct the two versions of the ``integral" Demazure resolution. We prove \Cref{thm2:GrassmannianisaKimberlite}.
\end{enumerate}

\section{The v-topology}

We assume familiarity with the theory of perfectoid spaces and diamonds as discussed in \cite[\S 7]{Ber} or \cite[\S 3]{Et}. For the most part the reader can ignore the set-theoretic subtleties that arise from the theory. Nevertheless, for some of our constructions set-theoretic carefulness is necessary. 

\subsection{Recollections on diamonds and small v-sheaves}
We let $\mrm{Perfd}$ denote the category of perfectoid spaces and $\mrm{Perf}$ the subcategory of perfectoid spaces in characteristic $p$. 
Recall that we can endow $\mrm{Perfd}$ with two Grothendieck topologies, called the pro-\'etale topology and v-topology respectively \cite[Definition 7.8, Definition 8.1]{Et}.
%
The following example of a cover for the v-topology will be used repeatedly.
\begin{exa}
	\label{exa:prodpointsbasis}
	Let $\Spa{A}$ be an affinoid perfectoid space, with pseudo-uniformizer $\varpi\in A^+$. Given $x\in |\Spa{A}|$ let $\iota_x:\mrm{Spa}(k(x),k(x)^+)\to \Spa{A}$ be the residue field. By \cite[Corollary 6.7]{Perf}, each $\mrm{Spa}(k(x),k(x)^+)$ is perfectoid. Let $R^+:=\prod_{x\in|\Spa{A}|}k(x)^+$ endowed with the $\varpi$-adic topology and let $R=R^+[\frac{1}{\varpi}]$. Then $\Spa{R}$ is perfectoid and $\Spa{R}\to \Spa{A}$ is a v-cover. 

	If one replaces the role of $k(x)$ by a completed algebraic closure $C(x)$ of $k(x)$, and on considers $S^+:=\prod_{x\in|\Spa{A}|}C(x)^+$  where $C(x)^+$ denotes the integral closure of $k(x)^+$ in $C(x)$, then by letting $S=S^+[\frac{1}{\varpi}]$ we also have that $\Spa{S}$ is perfectoid and that $\Spa{S}\to \Spa{A}$ is a v-cover.  

\end{exa}
\begin{defi}
	\label{defi:prodpoints}
	Let $I$ be a set and $\{\Hub{C_i},\varpi_i\}_{i\in I}$ a collection of tuples where each $C_i$ is an algebraically closed nonarchimedean field, the $C_i^+$ are open and bounded valuation subrings of $C_i$, and $\varpi_i$ is a of pseudo-uniformizer. Let $R^+:=\prod_{i\in I} C^+_i$, let $\varpi=(\varpi_i)_{i\in I}$, endow $R^+$ with the $\varpi$-adic topology and let $R:=R^+[\frac{1}{\varpi}]$. Any space of the form $\Spa{R}$ constructed in this way will be called a product of points.
\end{defi}
\begin{rem}
	\label{rem:differentunifgivesdifferent}
	Different choices of pseudo-uniformizers $(\varpi_i)_{i\in I}$ give rise to different adic spaces. 
Also, \Cref{exa:prodpointsbasis} proves that products of points form a basis for the v-topology in the category of perfectoid spaces. 
\end{rem}
Recall the notion of totally disconnected spaces.
\begin{defi}(\cite[Definition 7.1, Definition 7.15, Lemma 7.5]{Et})
	\label{defi:totallydisconnect}
	An affinoid perfectoid space $\Spa{R}$ is totally disconnected if it splits every open cover. Moreover, it is strictly totally disconnected if it splits every \'etale cover.
\end{defi}
\begin{pro}\textup{(\cite[Lemma 7.3, Proposition 7.16, Lemma 11.27]{Et})}
	\label{pro:connectdcompcriterion}
	Let $Y$ be an affinoid perfectoid space. $Y$ is represented by a strictly totally disconnected space if and only if every connected component of $Y$ is represented by $\Spa{C}$ for $C$ an algebraically closed field and $C^+$ an open and bounded valuation subring.
\end{pro}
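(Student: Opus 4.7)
The plan is to prove both implications by reducing statements about $Y$ to statements about its connected components. For the forward direction, let $Y$ be strictly totally disconnected and fix a connected component $Z \subseteq Y$. First I would realize $Z$ as the cofiltered intersection of its clopen neighborhoods in $Y$; since each such neighborhood is a rational subset of $Y$ (being quasi-compact open in the spectral space $|Y|$), the corresponding filtered colimit of Huber rings completes to a perfectoid Huber pair, exhibiting $Z$ itself as an affinoid perfectoid space $\Spa{S}$. Next I would argue that $Z$ inherits strict total disconnectedness: any étale cover of $Z$ spreads, by finite presentation and the approximation properties of cofiltered limits of affinoid perfectoids, to an étale cover of some clopen neighborhood $V \ni Z$; extending by the trivial cover on $Y \setminus V$ yields an étale cover of $Y$, whose assumed section restricts back to a section on $Z$.

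The heart of the forward direction is then the claim that a connected strictly totally disconnected affinoid perfectoid $\Spa{S}$ has the form $\Spa{C}$ with $C$ an algebraically closed nonarchimedean field and $C^+$ an open bounded valuation subring. I would combine the splitting of open covers with connectedness to deduce that any finite rational cover $\Spa{S} = U_1 \cup \dots \cup U_n$ must already contain a member equal to $\Spa{S}$; this forces $|\Spa{S}|$ to be totally ordered under specialization, and I would unpack this using the structure of the valuation spectrum to conclude that $S$ is a nonarchimedean field. Algebraic closedness then follows by applying the étale splitting hypothesis to the finite étale cover associated to any nontrivial finite separable extension of $S$, which would have to split as a product of copies of $S$.

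For the reverse direction, assume every connected component of $Y$ has the stated form, and let $f : Y' \to Y$ be a quasi-compact étale cover. I would build a global section of $f$ component by component. Restricted to a connected component $\Spa{C} \subseteq Y$, $f$ pulls back to a finite étale $C$-algebra; algebraic closedness of $C$ forces this to split as a finite product of copies of $C$, producing a section over $\Spa{C}$. By the standard spreading argument for finitely presented morphisms over cofiltered systems, this section extends to a section of $f$ over a clopen neighborhood of $\Spa{C}$ in $Y$. Ranging over all components yields a cover of $Y$ by clopens on each of which $f$ has a section; quasi-compactness of $Y$ plus refinement to a disjoint clopen partition then patches these into a global section. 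The same strategy (restricted to open covers) handles total disconnectedness.

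The main obstacle will be extracting the valuation-field structure of $S$ from the hypothesis that open covers of $\Spa{S}$ split — translating "no nontrivial rational cover" into "$S$ is a field with open bounded valuation subring" requires careful interplay between the perfectoid structure, the spectral topology of $|\Spa{S}|$, and connectedness. The spreading step in the reverse direction is also delicate, but the cofiltered-limit formalism for affinoid perfectoid spaces supplies the needed approximation of étale sections.
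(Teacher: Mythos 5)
First, a point of comparison: the paper itself offers no proof of this proposition --- it is quoted verbatim from \cite[Lemma 7.3, Proposition 7.16, Lemma 11.27]{Et} --- so the only meaningful benchmark is Scholze's argument, and your skeleton does follow that standard route. The reductions you set up are essentially sound: a connected component $Z$ is the cofiltered intersection of its clopen neighbourhoods, each of which is rational (though not for the reason you give --- a quasi-compact open is in general only a finite union of rational subsets; clopens are rational because they correspond to idempotents of the perfectoid ring), so $Z$ is affinoid perfectoid; \'etale covers and their sections spread out from $Z$ to a clopen neighbourhood by the usual finite-presentation argument over cofiltered limits; and in the converse direction one produces a section over each component (algebraic closedness splits the finite \'etale part; note an \'etale cover of $\Spa{C}$ is not literally a finite \'etale $C$-algebra but an open subspace of one, and the section exists because the closed point must be hit) and glues over a finite disjoint clopen refinement of $\pi_0(Y)$. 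The algebraic-closedness step in the forward direction, via the finite \'etale cover attached to a nontrivial finite separable extension, is also fine.

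The genuine gap is exactly at what you call the heart, and your proposed route through it does not work as stated. From ``every finite rational cover of the connected space $\Spa{S}$ has a member equal to the whole space'' you correctly deduce a unique closed point, hence that $|\Spa{S}|$ is a chain; but this is a purely topological condition and cannot by itself yield that $S$ is a nonarchimedean field --- for instance $\mrm{Spa}(K[x]/(x^2),\,\cdot\,)$, or more generally non-uniform Tate rings, have exactly the same chain-shaped spectrum without being fields. The decisive input is uniformity (perfectoidness) of $S$, and it enters through the ring, not the spectrum: for $f\in S$ and each $n$, splitting of the cover by $\{|f|\le|\varpi^n|\}$ and $\{|\varpi^n|\le|f|\}$ forces one member to be everything, so either $f$ is invertible or $f\in\varpi^n S^\circ$ for every $n$, and boundedness of $S^\circ$ together with completeness then gives $f=0$; hence every nonzero element is invertible and $S$ is a perfectoid field. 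Moreover the statement also asserts that $C^+$ is a \emph{valuation} ring, which your outline never addresses; this again comes from splitting covers, namely $\{|f|\le 1\}\cup\{1\le|f|\}$, which shows that for every $f\in C^\times$ either $f\in C^+$ or $f^{-1}\in C^+$ (it is not automatic from $C^+$ being open and integrally closed). So the architecture is right, but the central lemma --- where the perfectoid hypothesis is actually used --- is flagged as an ``obstacle'' rather than proved, and the sketch offered in its place (total ordering of the spectrum plus ``unpacking the valuation spectrum'') would not close it.
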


\begin{pro}
	Product of points are strictly totally disconnected perfectoid space. \label{pro:productofpointsarestrictlytotallydisconn} \label{pro:prodtotallydiscon}
\end{pro}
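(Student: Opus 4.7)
The plan is to apply the criterion of \Cref{pro:connectdcompcriterion}: it suffices to verify that $\Spa{R}$ is an affinoid perfectoid space and that every one of its connected components is of the form $\Spa{C}$ with $C$ algebraically closed and $C^+$ an open bounded valuation subring of $C$.

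First I would check that the Huber pair $\Hub{R}$ is perfectoid in characteristic $p$. Perfectness of $R^+ = \prod_i C_i^+$ is immediate since each $C_i^+$ is perfect (being a subring of a characteristic $p$ perfectoid field). For $\varpi$-adic completeness, inverse limits commute with products, so $\varprojlim_n R^+/\varpi^n \cong \prod_i \varprojlim_n C_i^+/\varpi_i^n \cong R^+$. Finally $R^+$ is open, bounded, and integrally closed in $R = R^+[\tfrac{1}{\varpi}]$ because each $C_i^+$ has the corresponding properties inside $C_i$.

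Second I would compute the connected components via the Boolean algebra of idempotents. Since each $C_i^+$ is a domain, the idempotents of $R^+$ (equivalently of $R^+/\varpi$) are precisely the characteristic functions of subsets of $I$, so the Boolean algebra of idempotents is $2^I$. The space of connected components of $\Spa{R}$ (which agrees with $\pi_0(\mathrm{Spec}(R^+/\varpi))$ for such a totally disconnected-looking ring) is therefore the Stone space of $2^I$, namely the Stone--\v{C}ech compactification $\beta I$. For each ultrafilter $\mathcal{U} \in \beta I$, the corresponding connected component is $\Spa{C_\mathcal{U}}$ where $C_\mathcal{U}^+$ is the $\varpi$-adic completion of $\varinjlim_{S \in \mathcal{U}} \prod_{i \in S} C_i^+$ and $C_\mathcal{U} = C_\mathcal{U}^+[\tfrac{1}{\varpi}]$.

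Third, I would verify that each $(C_\mathcal{U}, C_\mathcal{U}^+)$ meets the hypotheses of \Cref{pro:connectdcompcriterion}. Before completion, the localization is the ultraproduct $\prod_{\mathcal{U}} C_i$, which is a field (ultraproduct of fields), is valued (ultraproduct of valuation rings is a valuation ring when mapping to a field), and is algebraically closed by \L{}o\'s's theorem since each $C_i$ is. Passing to the $\varpi$-adic completion preserves algebraic closedness by Krasner's lemma, and preserves the valuation-ring structure of $C_\mathcal{U}^+$ inside $C_\mathcal{U}$. The main obstacle is the careful identification of the connected components of $\Spa{R}$ with ultrafilters on $I$ and the verification that the corresponding residue ring is the (completed) ultraproduct; but these are by now standard manipulations with Boolean algebras of idempotents and with Tate rings, after which the conclusion is immediate from \Cref{pro:connectdcompcriterion}.
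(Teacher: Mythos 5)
Your proposal is correct and takes essentially the same route as the paper's proof: identify $\pi_0(\Spa{R})$ with ultrafilters on $I$ via the Boolean algebra of idempotents, recognize each component (before completion) as an ultraproduct of the $(C_i,C_i^+)$, transfer the properties ``valuation ring'' and ``algebraically closed field'' across the ultraproduct by \L{}o\'s/first-order logic, handle the $\varpi$-adic completion by standard arguments, and conclude with \Cref{pro:connectdcompcriterion}. The only differences are cosmetic: you additionally verify that $\Hub{R}$ is perfectoid (the paper takes this from \Cref{exa:prodpointsbasis}), and your phrase ``the localization is the ultraproduct $\prod_{\mathcal U}C_i$'' should more precisely say that the uncompleted component ring is $V=R^+/\cali{I}_x$ whose \emph{fraction field} is that ultraproduct (the ring $V[\tfrac{1}{\varpi}]$ itself need not be a field before completing), exactly as in the paper's reduction to $V$.
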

\begin{proof}
	Fix notation as in \Cref{defi:prodpoints}. The closed-opens subsets of $\Spa{R}$ are given by subsets of $I$. Every $x\in \pi_0(\Spa{R})$ is computed as $\bigcap_{U\in\cali{U}_x} U$ for some ultrafilter. This is a Zariski closed subsets cut out by an ideal of idempotents $\cali{I}_x=\langle1_V \rangle$, with $V\subseteq I$ and ${V\notin \cali{U}}$. The $\cali{O}^+$-structure sheaf of $x$ is the $\varpi$-completion of $R^+/\cali{I}_x$. Let $V=R^+/\cali{I}_x$ and $V'$ the $\varpi$-adic completion of $V$.
	By \Cref{pro:connectdcompcriterion}, it suffices to prove that $V'$ is a valuation ring with algebraically closed fraction field. 
	This easily reduces to the same claim on $V$. It is not hard to see that $\mrm{Frac}(V)=(\prod_{i \in I} C_i)/\cali{I}_x$. Moreover, the properties of being a valuation ring or being an algebraically closed field can be expressed in first order logic so these properties pass to ultraproducts, alternatively we can cite \cite[Lemma 3.27]{bhatt2021arc}. 
\end{proof}
The v-topology on $\mrm{Perfd}$ is subcanonical \cite[Corollary 8.6]{Et}. We denote a perfectoid space and the sheaf it represents with the same letter. When a distinction is needed, if $X$ denotes a perfectoid space we denote by $h_X$ the sheaf it represents. Let $Y$ be a diamond \cite[Definition 11.1]{Et}, we recall the definition of its associated underlying topological space $|Y|$. 
\begin{defi}
	\label{defi:diamondtop}
	A map $p:\Spa{K}\to Y$ is a point if $K$ is a perfectoid field in characteristic $p$ and $K^+$ is an open and bounded valuation subring of $K$. Two points $p_i:\Spa{K_i}\to Y$, $i\in\{1,2\}$, are equivalent if there is a third point $p_3:\Spa{K_3}\to Y$, and surjective maps $q_i:\Spa{K_3}\to \Spa{K_i}$ making the following commutative diagram: 
$$
\begin{tikzcd}
	&\Spa{K_1} \arrow{rd}{p_1} & \\
	\Spa{K_3}\arrow{ru}{q_1} \arrow{rd}{q_2} \arrow{rr}{p_3}	& & Y  \\
	&\Spa{K_2} \arrow{ru}{p_2}& \\
\end{tikzcd}
$$
We let $|Y|$ denote the set of equivalence classes of points of $Y$. 
\end{defi}
Scholze proves that if $Y$ has a presentation $X/R$ with $X$ and $R$ perfectoid, then there is canonical bijection between $|Y|$ and $|X|/|R|$. Moreover, the quotient topology on $|Y|$ coming from the surjection $|X|\to |Y|$ doesn't depend on the presentation \cite[Proposition 11.13]{Et}.
Also, if $X$ is a perfectoid space, then $h_X$ is a diamond and $|h_X|$ is canonically homeomorphic to $|X|$.  

We refer to sheaves on $\mrm{Perf}$ for the v-topology as v-sheaves. 
Recall that a v-sheaf is said to be small if it admits a surjection from a representable sheaf. We denote by $\topPerf$ the category of small v-sheaves. 
There's a more explicit way of defining this. 
Given a cut-off cardinal $\kappa$ (\cite[\S 4, \S 8 ]{Et} for details) denote by $\mrm{Perf}_\kappa$ the category of $\kappa$-small perfectoid spaces in characteristic $p$ and by $\topPerf_\kappa$ the topos of sheaves for the v-topology on this category. Objects in this topos are called $\kappa$-small v-sheaves. We have natural fully-faithful embeddings $\topPerf_\kappa\subseteq \topPerf_\lambda$ for $\kappa<\lambda$ and $\topPerf=\bigcup_{\kappa} \topPerf_\kappa$ as a big filtered colimit over cut-off cardinals $\kappa$. 
%
%

Scholze associates to any small v-sheaf a topological space. The definition is similar to \Cref{defi:diamondtop}, with the role of perfectoid spaces exchanged by diamonds. The key point being that if $X\to Y$ is a map of small v-sheaves with $X$ a diamond then $R=X\times_Y X$ is also a diamond and $Y=X/R$ \cite[Proposition 12.3]{Et}. Scholze then defines $|Y|$ as $|X|/|R|$ with the quotient topology and by \cite[Proposition 12.7]{Et} this is well defined.
Given a topological space $T$ we can consider a presheaf on $\mrm{Perf}$, denoted $\underline{T}$, defined as $$\underline{T}\Hub{R}=\{f:|\Spa{R}|\to T\mid f\,\text{is\,\,continuous}\}$$
This is a v-sheaf but it might not be small. There is a natural transformation, $X\to \underline{|X|}$ of v-sheaves.
A morphism of small v-sheaves $j:U\to X$ is open if it is relatively representable in perfectoid spaces and after basechange it becomes an open embedding of perfectoid spaces. Open subsheaves of $X$ are uniquely determined by open subsets of $|X|$ (\cite[Proposition 11.15, Proposition 12.9]{Et}).
%
%
The concept of closed immersion is a little more subtle. It is not a purely topological condition in the sense that closed subsheaves of $\F$ are not in bijection with closed subsets of $|\F|$. Indeed, there are more closed subsets than closed immersions.
\begin{defi}\textup{(\cite[Definition 10.7, Proposition 10.11, Definition 5.6]{Et} )}
	A map of sheaves $\F\to \G$ is a closed immersion if for every $X=\Spa{R}$ a strictly totally disconnected space and a map $X\to \G$ the pullback $X\times_\F \G\subseteq X$ is representable by a closed immersion of perfectoid spaces. 
\end{defi}
The following result characterizes closed immersions.
\begin{prop}\textup{(\cite{AGLR22})}
	\label{pro:Joaoetal}
	For a v-sheaf $\F$ we say a subset $X\subseteq |\F|$ is weakly generalizing if for any geometric point $f:\Spa{C}\to \F$ we have that $f^{-1}(X)\subseteq |\Spa{C}|$ is stable under generization. For any v-sheaf $\F$ the rule $$X\mapsto \F\times_{\underline{|\F|}} \underline{X}\subseteq \F$$ gives a bijection between weakly generalizing closed subsets of $|\F|$ and closed subsheaves of $\F$.
\end{prop}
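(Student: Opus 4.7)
\textbf{The plan} is to reduce to the affinoid perfectoid case by v-descent. I would pick a v-cover $\pi \colon Y \to \F$ with $Y = \Spa{R}$ a product of points as in \Cref{defi:prodpoints}; such $Y$ is strictly totally disconnected by \Cref{pro:prodtotallydiscon}. Setting $Z = Y \times_\F Y$, closed subsheaves of $\F$ are controlled by closed subsheaves of $Y$ with matching pullbacks along $Z \rightrightarrows Y$, and weakly generalizing closed subsets of $|\F|$ are controlled by such subsets of $|Y|$ with matching pullbacks along $|Z| \rightrightarrows |Y|$. Hence the problem splits into: (a) establishing the bijection when $\F = Y$ is (strictly totally disconnected) affinoid perfectoid; (b) checking the two descent pictures are compatible.

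\textbf{The key perfectoid case} is the classical correspondence $I \leftrightarrow V(I)$. A closed immersion $W \hookrightarrow Y$ comes from a closed ideal $I \subseteq R$ and its image $V(I) = \{x \in |Y| : |f|_x = 0 \ \forall f \in I\}$ is closed and stable under generalization. Conversely, given $V \subseteq |Y|$ closed and stable under generalization, I would set $I_V := \{f \in R : f \text{ vanishes in } k(x) \text{ for all } x \in V\}$; uniformity of $R$ (i.e.\ the injectivity of $R \hookrightarrow \prod_x k(x)$) gives $V(I_V) = V$, and for strictly totally disconnected $Y$ the quotient $R/I_V$ is again perfectoid, producing a closed immersion. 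This establishes the bijection in the base case.

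\textbf{The general case} then follows by descent. For the forward implication, given a closed subsheaf $\G \hookrightarrow \F$, I would pull back along any geometric point $\iota \colon \Spa{C} \to \F$: by the very definition of closed immersion, $\Spa{C} \times_\F \G$ is a closed immersion of affinoid perfectoid spaces, hence its image in $|\Spa{C}|$ is generalizing, which is exactly the weak generalization of $|\G|$. Closedness of $|\G|$ in $|\F|$ follows from v-descent of closed immersions. The identity $\G = \F \times_{\underline{|\F|}} \underline{|\G|}$ can then be checked after base change to $Y$, where it reduces to the base case. For the reverse implication, given $X \subseteq |\F|$ closed and weakly generalizing, I would check that $\G := \F \times_{\underline{|\F|}} \underline{X}$ pulls back, along any strictly totally disconnected $T \to \F$, to $T \times_{\underline{|T|}} \underline{X_T}$, with $X_T$ the preimage of $X$; this $X_T$ is closed by continuity and stable under generalization because each connected component of $T$ is a geometric point $\Spa{C}$ by \Cref{pro:connectdcompcriterion}, so the weakly generalizing hypothesis applies fiberwise. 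The base case then identifies this pullback with a closed immersion, and $\G \hookrightarrow \F$ is closed by definition.

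\textbf{The main obstacle} will be the affinoid perfectoid case together with the precise bookkeeping needed to transport the "generalization-stability" property through v-descent. The delicate point is verifying that weak generalization is exactly the condition that descends a generalizing subset on a strictly totally disconnected cover to a subset of $|\F|$, for which one must combine uniformity of perfectoid rings with the description of $|\F|$ from \cite[Proposition 12.7]{Et}. Once this is in hand the routine but lengthy descent verifications fall into place.
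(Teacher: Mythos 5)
This proposition is quoted in the paper from \cite{AGLR22} without an internal proof, so there is no in-paper argument to compare against; your outline does follow the route such a proof has to take (test everything on strictly totally disconnected objects, where ``weakly generalizing'' becomes an honest generalization condition component by component). However, the justification you give for the crucial base case contains a genuine gap. Uniformity of $R$ does not give $V(I_V)=V$: injectivity of $R\hookrightarrow \prod_x k(x)$ only says that a function vanishing at \emph{every} point of $|Y|$ is zero, and for a general affinoid perfectoid $Y$ the equality $V(I_V)=V$ fails even for closed generalizing $V$ (e.g.\ $V=\{|T|\geq 1/2\}$ in the perfectoid closed disc is closed and stable under generization, yet $I_V=0$). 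What actually saves the base case is the structure of strictly totally disconnected spaces, which you never invoke at this point: by \Cref{pro:connectdcompcriterion} each connected component of $Y$ is a geometric point $\Spa{C}$, whose underlying space is a totally ordered chain of vertical generizations, so a closed generalizing subset meets each component either trivially or fully; being closed (hence quasicompact) its image in the profinite set $\pi_0(Y)$ is closed, and therefore $V$ is a cofiltered intersection of clopen subsets. This is the statement on which everything hinges. It also repairs your second leap, ``$R/I_V$ is again perfectoid'': that is not a consequence of strict total disconnectedness but is exactly the nontrivial representability of Zariski closed subsets of affinoid perfectoid spaces (the input behind \cite[Definition 5.6]{Et}); alternatively, with the clopen description one realizes the subsheaf directly as a cofiltered limit of clopen affinoid perfectoid subspaces, cut out by idempotents, which is manifestly a closed immersion with underlying space $V$.

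Two smaller points you leave implicit and should make explicit. First, you repeatedly identify the preimage of $|\G|$ (resp.\ of $X$) in $|T|$ or $|\Spa{C}|$ with the underlying space of the fibre product $\G\times_\F T$; this requires showing that every point of $|T|$ mapping into $|\G|$ lifts to $|\G\times_\F T|$, which follows from the description of points as equivalence classes of maps from $\Spa{K}$'s admitting common refinements, but it is an argument, not a formality, and it is also what makes ``closedness of $|\G|$ follows from descent'' precise (one needs $|Y|\to|\F|$ to be a quotient map together with this identification). Second, for your rule to be a bijection you must also recover $X$ as $|\F\times_{\underline{|\F|}}\underline{X}|$; surjectivity onto $X$ uses weak generalization once more: a representative $\Spa{K}\to\F$ of $x\in X$ pulls $X$ back to a generalizing subset containing the closed point of $|\Spa{K}|$, and since every point of $|\Spa{K}|$ generizes the closed point, the whole map factors through $\underline{X}$. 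With the component/$\pi_0$ argument supplied and these verifications written out, your strategy does go through.
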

\subsection{Spectral spaces and locally spatial diamonds}
We recall the basic theory of spectral topological spaces. This material is taken from section \cite[\S2 ]{Et} where most of the proofs can be found.
\begin{defi}
	\label{defi:Spectral-space}
	Let $S$, $T$ be topological spaces, and $f:S\to T$ a continuous map.
\begin{enumerate}
	\item $T$ is spectral if it is quasicompact, quasiseparated, and it has a basis of open neighborhoods stable under intersection that consists of quasicompact and quasiseparated subsets.
	\item $T$ is locally spectral if it admits an open cover by spectral spaces.
	\item $f$ is a spectral map of spectral spaces if $S$ and $T$ is are spectral and $f$ is quasicompact.
	\item $f$ is a spectral map of locally spectral spaces if for every quasicompact open $U\subseteq S$ and quasicompact open $V\subseteq T$ with $f(U)\subseteq V$ $f|_U:U\to V$ is spectral.
\end{enumerate}
\end{defi}
\begin{thm}\textup{(Hochster)}
	\label{thm:Hochsteter}
	For a topological space $T$ the following are equivalent:
\begin{enumerate}
	\item $T$ is spectral.
	\item $T$ is homeomorphic to the spectrum of a ring.
	\item $T$ is a projective limit of finite $T_0$ topological spaces.
\end{enumerate}
Moreover, the category of spectral topological spaces with spectral maps is equivalent to the pro-category of finite $T_0$ topological spaces.
\end{thm}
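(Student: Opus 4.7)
The plan is to prove the three equivalences via the cycle (2)$\Rightarrow$(1), (3)$\Rightarrow$(1), (1)$\Rightarrow$(3), (1)$\Rightarrow$(2); of these the substantive steps are the last two, while the others are formal. For (2)$\Rightarrow$(1), standard algebraic geometry gives that $\mrm{Spec}(R)$ is quasicompact and quasiseparated and that the principal opens $D(f)$ form a basis of quasicompact opens stable under finite intersection. For (3)$\Rightarrow$(1), any finite $T_0$ space is trivially spectral, and a cofiltered inverse limit of spectral spaces along spectral maps is again spectral; the essential content is that a cofiltered limit of nonempty compact Hausdorff spaces is nonempty, applied to the associated constructible (patch) topologies.

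For (1)$\Rightarrow$(3), I would pass to the constructible topology $T^{\mrm{cons}}$ generated by the quasicompact opens and their complements, and verify that it is compact Hausdorff and totally disconnected. For every finite family $\cali{F}=\{U_1,\dots,U_n\}$ of quasicompact opens of $T$, declare $x\sim_{\cali{F}} y$ if $x$ and $y$ lie in exactly the same $U_i$; the quotient $T_{\cali{F}}$ is then a finite $T_0$ space whose topology is generated by the images of the $U_i$, and the transition maps $T_{\cali{F}}\to T_{\cali{F}'}$ for $\cali{F}'\subseteq \cali{F}$ are surjective maps of finite $T_0$ spaces. The canonical map $T\to \varprojlim_{\cali{F}} T_{\cali{F}}$ is continuous, surjective by quasicompactness of $T^{\mrm{cons}}$, and injective because quasicompact opens separate points in a spectral $T_0$ space; a standard argument then upgrades this to a homeomorphism.

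For (1)$\Rightarrow$(2), which is Hochster's theorem and the main obstacle, the target is to realize $T$ as $\mrm{Spec}(R)$ for some commutative ring $R$. The organizing principle is that $T$ is already recovered from its distributive lattice $L=L(T)$ of quasicompact opens via Stone-type duality, so the task is to build a ring whose lattice of quasicompact opens is $L$. Hochster's construction presents such an $R$ by taking a polynomial ring over $\mathbb{Z}$ with one generator for each element of $L$, modded out by relations encoding the meet, join, $0$ and $1$ of $L$; the delicate step is to verify that passing to the radical-ideal lattice reproduces $L$ and hence that $\mrm{Spec}(R)\cong T$. This is the hardest part of the proof; I would invoke the construction of Hochster's 1969 paper rather than redo it here.

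For the ``moreover'' statement, the equivalence (1)$\Leftrightarrow$(3) extends to morphisms: a spectral map $f\colon S\to T$ of spectral spaces induces, for every finite family $\cali{F}$ of quasicompact opens of $T$, a map of finite $T_0$ quotients $S_{f^{-1}\cali{F}}\to T_{\cali{F}}$, and these assemble into a morphism of pro-systems. Conversely, any morphism of the pro-systems passes to the limit to yield a spectral map of spectral spaces, and these two assignments are mutually inverse. This gives the desired equivalence of categories.
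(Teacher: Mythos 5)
The paper does not prove this statement at all: it is recorded as a classical background result (attributed to Hochster, with the surrounding material taken from \cite[\S 2]{Et}), so there is no internal proof to compare yours against. Judged on its own, your outline follows the standard route and is essentially sound: (2)$\Rightarrow$(1) and (3)$\Rightarrow$(1) are routine, and your (1)$\Rightarrow$(3) argument via the finite $T_0$ quotients $T_{\cali{F}}$ indexed by finite families of quasicompact opens is the usual one. Two places deserve a little more care than your sketch gives them. First, in (1)$\Rightarrow$(3) the ``standard argument'' upgrading the continuous bijection $T\to\varprojlim_{\cali{F}}T_{\cali{F}}$ to a homeomorphism has two layers: compactness of the patch topology gives that the map is a homeomorphism for the constructible topologies, and one must then separately check that the limit of the finite $T_0$ (non-discrete) topologies recovers the spectral topology of $T$, using that the quasicompact opens of $T$ are exactly the preimages of opens of the $T_{\cali{F}}$; as written you blur these two steps. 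Second, for the ``moreover'' clause, surjectivity of the transition maps is not needed and not automatic, and full faithfulness requires checking that a compatible system of maps to the $T_{\cali{F}}$ (a morphism in the pro-category, i.e.\ taken up to refinement of the index system) is the same datum as a patch-continuous, hence spectral, map $S\to T$ — this is true but is where the actual content of the categorical statement sits. Finally, delegating (1)$\Rightarrow$(2) to Hochster's ring construction is entirely reasonable; that step is the substance of Hochster's theorem, and the paper itself treats the whole statement as a citation.
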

Given a spectral space $T$, we say that a subset $S$ is constructible if it lies in the Boolean algebra generated by quasicompact open subsets of $T$. For a locally spectral space $T$, a subset $S$ is constructible if for every quasicompact open subset $U\subseteq T$ the subset $S\cap U$ is constructible in $U$. The patch (or constructible) topology on $T$ is the one in which constructible subsets form a basis for the topology. A spectral space is Hausdorff and profinite for its patch topology and a locally spectral space is locally profinite for the patch topology.
\begin{pro}
	\label{pro:spectralvspatch}
	A continuous map of locally spectral spaces $f:S\to T$ is spectral if and only if it is continuous for the patch topology.
\end{pro}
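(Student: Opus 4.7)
The plan is to prove the two directions separately, leveraging the characterization of constructible subsets as patch-clopen sets (for a spectral space) and the fact that the patch topology of a spectral space is compact (Hausdorff, profinite). I will use the description of spectrality via quasicompact opens from \Cref{defi:Spectral-space}, and for one direction I will invoke Hochster's description of spectral maps as pro-objects in finite $T_0$ spaces (\Cref{thm:Hochsteter}).

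For the direction ``spectral $\Rightarrow$ patch-continuous'', first I would reduce to a statement about a spectral map between spectral spaces. Given a quasicompact open $U\subseteq S$, its continuous image $f(U)$ is quasicompact, hence contained in a finite union $V\subseteq T$ of quasicompact opens, which is itself a quasicompact open of $T$. By hypothesis, the restriction $f|_U\colon U\to V$ is a spectral map between spectral spaces, so by Hochster's theorem it corresponds to a map of pro-objects in finite $T_0$ spaces. Passing to the associated profinite sets, which is exactly the patch topology, shows that $f|_U$ is patch-continuous. Since patch-continuity is local on the target (and the source), and quasicompact opens cover $S$, $f$ itself is patch-continuous.

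For the converse ``patch-continuous $\Rightarrow$ spectral'', take quasicompact opens $U\subseteq S$ and $V\subseteq T$ with $f(U)\subseteq V$, and any quasicompact open $W\subseteq V$; I must show that $f^{-1}(W)\cap U$ is quasicompact. Since $W$ is a quasicompact open in the spectral space $V$, it is constructible, equivalently patch-clopen in $V$, and also patch-clopen when viewed inside $T$. By patch-continuity, $f^{-1}(W)$ is patch-clopen in $S$; intersecting with the patch-compact space $U$ gives that $f^{-1}(W)\cap U$ is patch-closed in $U$, hence patch-compact. Since the patch topology refines the original topology, patch-compact sets are quasicompact in the original topology, giving the required quasicompactness.

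The main delicate point is the bookkeeping between the original topology and the patch topology on a \emph{locally} spectral space (as opposed to a spectral one), since constructibility is only defined after intersecting with quasicompact opens. Reducing each check to a quasicompact open $U\subseteq S$ together with a quasicompact open $V\subseteq T$ containing $f(U)$ isolates exactly the spectral-to-spectral situation where the crisp equivalence ``constructible = patch-clopen'' applies, and this is what makes both implications work cleanly.
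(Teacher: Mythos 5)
Your two implications are, in substance, the standard argument, and the core is sound; note that the paper itself gives no proof of this proposition (it is recalled from \cite[\S 2]{Et}), so I can only judge the argument on its own terms. The spectral-to-spectral kernel of both directions is correct: a spectral map of spectral spaces is patch-continuous (your route through Hochster's pro-(finite $T_0$) description works, though it is equivalent and quicker to say that preimages of quasicompact opens are quasicompact opens, hence preimages of constructible sets are constructible), and conversely a patch-continuous map has patch-closed, hence patch-compact, hence quasicompact preimages of quasicompact opens, since the patch topology of a spectral space is compact and refines the given one.

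The soft spot is the bookkeeping you yourself identify as delicate, and it is not fully discharged. In the forward direction you take $V$ to be a finite union of quasicompact opens covering $f(U)$ and treat $f|_U\colon U\to V$ as a spectral map of \emph{spectral} spaces; a finite union of quasicompact opens in a locally spectral space is quasicompact but need not be quasiseparated, hence need not be spectral, so this step silently uses a quasiseparatedness-type hypothesis (or the reading of \Cref{defi:Spectral-space} in which ``quasicompact open'' means spectral open subspace). In the converse direction the claim that $W$ is ``patch-clopen when viewed inside $T$'' needs that $W$ is constructible in $T$, i.e.\ that $W\cap U'$ is constructible in every quasicompact open $U'\subseteq T$; again this uses that intersections of quasicompact opens are quasicompact, equivalently that the patch topology of $T$ restricts on a spectral open subspace $V$ to the patch topology of $V$ (the same compatibility is used when you glue patch-continuity from the pieces $U$ in the forward direction, which is locality on the \emph{source}, not the target). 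In the quasiseparated setting, or with the definition of the patch topology of a locally spectral space as generated by the patch topologies of its spectral open subspaces (which is the setting of \cite{Et} and covers all spaces occurring in this paper), these points are harmless and your proof is complete; if you want the statement in the stated generality, you should either add this restriction/compatibility lemma explicitly or rephrase both directions so that all patch-topological manipulations happen inside a single spectral open subspace of the target.
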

\begin{defi}
	\label{defi:specialzing-generalizing}
	Let $f:S\to T$ be a continuous map of topological spaces.
\begin{enumerate}
	\item We say $f$ is generalizing if given $t_1,t_2\in T$ and $s_1\in S$ with $f(s_1)=t_1$ and such that $t_2$ generalizes $t_1$, then there exists an element $s_2$ generalizing $s_1$ with $f(s_2)=t_2$.
	\item We say $f$ is specializing if given $t_1,t_2\in T$ and $s_1\in S$ with $f(s_1)=t_1$ and such that $t_2$ specializes from $t_1$, then there exists an element $s_2$ specializing from $s_1$ with $f(s_2)=t_2$.
\end{enumerate}
\end{defi}

For a locally spectral space $T$ we say that a subset is pro-constructible if it is closed for the patch topology, or equivalently if it is an arbitrary intersection of constructible subsets. 
\begin{pro}\textup{(\cite[Lemma 2.4]{Et})}
	\label{pro:pro-cons-spec}
	Let $T$ be a spectral space and $S\subseteq T$ a pro-constructible subset. The closure $\overline{S}$ of $S$ in $T$ consists of the points that specialize from a point in $S$.
\end{pro}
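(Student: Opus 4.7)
The plan is to prove the two inclusions separately, with the hard direction relying on the compactness of the patch topology.

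For the easy inclusion, I would show that any specialization of a point in $S$ lies in $\overline{S}$. Given $s\in S$ and $t$ with $t\in \overline{\{s\}}$, by definition every open neighborhood of $t$ contains $s$, and in particular meets $S$; hence $t\in \overline{S}$. This direction does not require $S$ to be pro-constructible, only that $S$ is non-empty at $s$.

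For the reverse inclusion, fix $t\in \overline{S}$. The goal is to produce an element $s\in S$ lying in every open neighborhood of $t$, equivalently in every quasicompact open neighborhood (since these form a basis in the spectral space $T$). Consider the family
\[\cali{F}=\{U\cap S\mid U\subseteq T\text{ quasicompact open}, t\in U\}.\]
Each member is pro-constructible: $U$ is quasicompact open and thus constructible, while $S$ is pro-constructible by hypothesis, so the intersection is pro-constructible. Because a finite intersection of quasicompact opens in a spectral space is again a quasicompact open (using the basis stable under finite intersections from \Cref{defi:Spectral-space}), any finite intersection of members of $\cali{F}$ has the form $(U_1\cap\cdots\cap U_n)\cap S$, where $U_1\cap\cdots\cap U_n$ is still a quasicompact open neighborhood of $t$. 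Since $t\in\overline{S}$, this intersection is non-empty, so $\cali{F}$ has the finite intersection property.

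The main step will be invoking the compactness of the patch topology on $T$: pro-constructible subsets are precisely the closed subsets of the patch topology, which is compact (indeed Hausdorff) on the spectral space $T$. Therefore $\bigcap_{U\in\cali{F}_0} (U\cap S)$ is non-empty for the full family, yielding an element $s\in S$ that lies in every quasicompact open neighborhood of $t$. Since such neighborhoods form a basis, $s$ lies in every open neighborhood of $t$, so $t\in\overline{\{s\}}$, i.e., $t$ is a specialization of $s$. The delicate point is recognizing that the pro-constructibility of $S$ is exactly what lets the patch-compactness argument go through; without it, the finite-intersection-property family would live in the wrong topology and one could not conclude non-emptiness of the total intersection.
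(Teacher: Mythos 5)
Your proof is correct and is exactly the standard argument behind the cited result (the paper itself only quotes \cite[Lemma 2.4]{Et} without reproving it): the easy inclusion is immediate, and the reverse inclusion follows from the finite intersection property of the patch-closed sets $U\cap S$, for $U$ ranging over quasicompact open neighborhoods of $t$, together with compactness of the constructible topology. The only blemish is the stray subscript in $\bigcap_{U\in\cali{F}_0}(U\cap S)$, which should be the intersection over the full family $\cali{F}$, as your surrounding text already indicates.
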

\begin{cor}
	\label{cor:closedmap}
	Let $f:S\to T$ be a spectral map of spectral spaces. If $f$ is specializing then it is also a closed map.
\end{cor}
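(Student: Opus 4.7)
The plan is to reduce the closedness of $f$ to two facts: first, that images of pro-constructible sets under spectral maps remain pro-constructible, and second, that a pro-constructible subset of a spectral space is closed precisely when it is stable under specialization (by \Cref{pro:pro-cons-spec}). Thus I would take an arbitrary closed subset $C\subseteq S$ and aim to show $f(C)$ is pro-constructible and stable under specialization.

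First I would observe that any closed subset $C$ of the spectral space $S$ is pro-constructible: $C$ is the complement of an open set, which in turn is a union of quasicompact opens, so $C$ is the intersection of the closed constructible sets that are complements of those quasicompact opens. Equivalently, $C$ is closed in the patch topology on $S$. Since $f$ is a spectral map, \Cref{pro:spectralvspatch} tells us $f$ is continuous for the patch topologies on $S$ and $T$. The patch topology on a spectral space is compact Hausdorff, so $C$ is patch-compact, hence $f(C)$ is patch-compact and therefore patch-closed in $T$, i.e.\ pro-constructible.

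Next I would verify that $f(C)$ is stable under specialization using the specializing hypothesis. Given $t_1\in f(C)$ and a specialization $t_2$ of $t_1$ in $T$, pick $s_1\in C$ with $f(s_1)=t_1$. By the assumption that $f$ is specializing, there exists $s_2\in S$ specializing $s_1$ with $f(s_2)=t_2$; since $C$ is closed in $S$ and $s_1\in C$, the specialization $s_2$ lies in $C$, so $t_2=f(s_2)\in f(C)$. Combining pro-constructibility with stability under specialization, \Cref{pro:pro-cons-spec} gives that $f(C)$ equals its own closure in $T$, hence is closed.

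I do not expect any serious obstacle: the argument is essentially a clean concatenation of \Cref{pro:spectralvspatch} (to pass to patch topology and conclude pro-constructibility of $f(C)$) with \Cref{pro:pro-cons-spec} (to translate stability under specialization into closedness). The only small point to be careful about is confirming that closed subsets of a spectral space are automatically pro-constructible, which follows immediately from writing an arbitrary open as a union of quasicompact opens.
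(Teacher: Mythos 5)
Your argument is correct and is precisely the intended one: the paper states this as an immediate corollary of \Cref{pro:pro-cons-spec} (together with patch-continuity from \Cref{pro:spectralvspatch}), which is exactly how you proceed — closed sets are pro-constructible, spectral maps preserve pro-constructibility via patch-compactness, and the specializing hypothesis plus closedness of $C$ under specialization shows $f(C)$ contains all its specializations, hence is closed. No gaps.
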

We warn the reader that the analogue of \Cref{cor:closedmap} for locally spectral spaces does not hold. 
\begin{pro}\textup{(\cite[Lemma 2.5]{Et})}
	\label{pro:generalizing-quotient}
Let $f:S\to T$ be a spectral map of spectral topological spaces. Assume $f$ is surjective and generalizing, then it is a quotient map.
\end{pro}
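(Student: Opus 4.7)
The plan is to show that if $V = T\setminus U$ has closed preimage $f^{-1}(V)$ in $S$, then $V$ itself is closed in $T$; this, combined with continuity, gives the quotient property. I would split this into two observations: first, that $V$ is pro-constructible, and second, that $V$ is stable under specialization. Together, by \Cref{pro:pro-cons-spec}, these force $V$ to be closed.

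For the pro-constructibility, the plan is to leverage \Cref{pro:spectralvspatch}: since $f$ is spectral, it is continuous for the patch topology. Spectral spaces are compact Hausdorff under the patch topology, so $f$ becomes a continuous map between compact Hausdorff spaces, hence closed. The subset $f^{-1}(V)$ is closed in the standard topology and therefore patch-closed (the standard topology is coarser than the patch topology). By surjectivity, $V = f(f^{-1}(V))$, and as the image of a patch-closed set under the patch-closed map $f$, it is patch-closed, i.e.\ pro-constructible.

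For the specialization-stability, the plan is to use the generalizing hypothesis together with surjectivity. Take $v \in V$ and $t \in T$ with $v \rightsquigarrow t$. Use surjectivity to pick any lift $s_1 \in S$ of $t$. Applying the definition of ``generalizing'' to the pair $t_1 = t$, $t_2 = v$ with $f(s_1) = t_1$, one obtains $s_2 \in S$ with $f(s_2) = v$ and $s_2 \rightsquigarrow s_1$ reversed, i.e.\ $s_1$ is a specialization of $s_2$. Since $s_2 \in f^{-1}(V)$ and $f^{-1}(V)$ is closed in the standard topology of $S$, it contains all specializations of its points, so $s_1 \in f^{-1}(V)$ and hence $t = f(s_1) \in V$.

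The main (and really the only) subtlety is getting the direction of the lifting correct: the generalizing property lifts a \emph{more general} point $t_2$ above a \emph{less general} point $t_1$ whose lift is already known, which is exactly the reverse of what a naive reading of ``specialization-stable'' might suggest. Once this is untangled, the two observations combine with \Cref{pro:pro-cons-spec} to conclude that $V$ is closed in $T$, finishing the proof.
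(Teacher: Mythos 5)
Your argument is correct: patch-continuity of a spectral map between spectral spaces plus compactness of the patch topology gives that $V=f(f^{-1}(V))$ is pro-constructible, the generalizing property (applied with the roles of $t$ and $v$ exactly as you untangle them) gives stability of $V$ under specialization, and \Cref{pro:pro-cons-spec} then closes $V$. The paper itself gives no proof here but simply cites \cite[Lemma 2.5]{Et}, and your proof is essentially the standard argument behind that citation, so there is nothing to fix.
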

\begin{defi}\textup{(\cite[Definition 11.17]{Et})}
	\label{defi:spatial}
	Let $X$ be a diamond. We say that $X$ is a spatial diamond if it is quasicompact, quasiseparated and $|X|$ has a basis of open neighborhoods of the form $|U|$ where $U\subseteq X$ is a quasicompact open embedding. We say that $X$ is locally spatial if it has an open cover by spatial diamonds.
\end{defi}
The topology of spatial diamonds is spectral. Nevertheless, a diamond that has a spectral underlying topological space might not necessarily be spatial since the quasicompactness and quasiseparatedness conditions of \Cref{defi:spatial} are imposed in the topos-theoretic sense. 
\begin{pro}\textup{(\cite[Proposition 11.18, Proposition 11.19]{Et})}
	\label{pro:spatial-spectral}
	Let $X$ and $Y$ be locally spatial diamonds and $f:X\to Y$ a morphism of v-sheaves. The following assertions hold:
\begin{enumerate}
	\item $|X|$ is a locally spectral topological space.
	\item Any open subfunctor $U\subseteq X$ is a locally spatial diamond.
	\item $|X|$ is quasicompact (respectively quasiseparated) as a topological space if and only if $X$ is quasicompact (respectively quasiseparated) as a v-sheaf.
	\item The topological map $|f|$ is spectral and generalizing. In particular, if $|X|$ is quasicompact and $|f|$ is surjective then by \Cref{pro:generalizing-quotient} it is also a quotient map.  
\end{enumerate}
\end{pro}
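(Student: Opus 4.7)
The plan is to reduce all four assertions to the case where $X$ (and $Y$) are spatial by passing to spatial open covers, and then to exploit the fact that any spatial diamond $V$ admits a surjection $Z \to V$ from a strictly totally disconnected perfectoid space (e.g.\ a product of points, cf.\ \Cref{pro:prodtotallydiscon}) on which $|Z|$ is concretely spectral and quotient-topology arguments become tractable.

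For (1), if $V$ is spatial, pick such a surjection $Z \to V$; by \cite[Proposition 12.7]{Et}, $|V|$ inherits the quotient topology from the spectral space $|Z|$, and the definition of spatial provides a basis of quasicompact opens $|U|$ with $U \subseteq V$ a quasicompact open subsheaf. This basis is stable under finite intersection by quasiseparation of $V$, so $|V|$ is spectral, and $|X|$ is locally spectral by gluing. For (2), an open subfunctor $U \subseteq X$ corresponds via \cite[Proposition 11.15]{Et} to an open of $|X|$; intersecting with a spatial open cover and covering each piece by the basic quasicompact open subsheaves, which are themselves spatial, gives a locally spatial structure on $U$. For (3), the forward direction uses that $|X|$ is the image of the spectral (hence quasicompact) space $|Z|$ along any affinoid perfectoid surjection $Z \to X$; conversely, if $|X|$ is quasicompact, finitely many spatial opens $V_i$ cover it, and $\sqcup V_i \to X$ is a quasicompact surjection. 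Quasiseparation on either side is handled by applying quasicompactness to the pairwise fiber products $V_i \times_X V_j$, whose topological realization agrees with $|V_i|\times_{|X|}|V_j|$ after a cover argument.

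For (4), spectrality of $|f|$ reduces to showing that $f$ pulls back quasicompact open subsheaves to quasicompact open subsheaves: openness is stable under pullback in v-sheaves, and quasicompactness can be checked topologically by (3). This translates into continuity of $|f|$ for the patch topology and therefore gives spectrality by \Cref{pro:spectralvspatch}.

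The main obstacle is the generalizing property. The strategy is as follows: represent a point $x_1 \in |X|$ by a morphism $\Spa{K} \to X$ with $K^+$ an open and bounded valuation subring of an algebraically closed perfectoid field $K$; a generalization $y_2$ of $y_1 := |f|(x_1)$ in $|Y|$ corresponds to enlarging $K^+$ to a larger valuation subring $K^{\prime\prime} \subseteq K$, producing a morphism $\mrm{Spa}(K, K^{\prime\prime}) \to Y$ compatible with $y_1$. One must lift this enlargement through $f$. To do so, pass to a strictly totally disconnected pro-\'etale cover $Y' \to Y$ so that $X \times_Y Y'$ is covered by affinoid perfectoid spaces, invoke the fact that maps of affinoid perfectoid spaces are generalizing (a direct computation at the level of the associated Huber pairs, since every generalization is realized by enlarging the valuation ring $K^+$ and one can always pull back such enlargements along a map of affinoid perfectoids), lift $\mrm{Spa}(K, K^{\prime\prime})$ to $X \times_Y Y'$, and descend along the quotient presentation of $X$ to produce $x_2 \in |X|$ generalizing $x_1$ and mapping to $y_2$. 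The last assertion then follows from \Cref{pro:generalizing-quotient}.
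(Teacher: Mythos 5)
This proposition is not proved in the paper at all: it is recalled verbatim from \cite[Propositions 11.18, 11.19]{Et}, so the only proof to compare against is Scholze's. Measured against that, your sketch follows the right general strategy (reduce to the spatial case, present a spatial diamond as a quotient of a strictly totally disconnected perfectoid space, and use Huber-theoretic facts about affinoid analytic adic spaces), but it skips over the two places where essentially all of the work in \cite{Et} is concentrated.

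First, in (1) you conclude that $|V|$ is spectral from quasicompactness together with a basis of quasicompact opens stable under finite intersection. That is not sufficient: a spectral space must also be sober, and soberness is not inherited by topological quotients of spectral spaces (an infinite set with the cofinite topology is quasicompact, every open is quasicompact, and it is a quotient of a spectral space, yet its unique irreducible closed subset has no generic point). Proving soberness of $|X|$ --- equivalently, controlling the quotient $|Z|/|R|$ for the qcqs equivalence relation $R=Z\times_X Z$ --- is the heart of \cite[Proposition 11.18]{Et} and rests on the general results of \cite[\S 2]{Et} about quotients of spectral spaces along pro-constructible, generalizing equivalence relations; your sketch never touches this. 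Second, in (4) the argument for the generalizing property is circular. You take a generalization $y_2$ of $y_1=|f|(x_1)$ and assert it ``corresponds to enlarging $K^+$ to $K''\subseteq K$'', i.e.\ that it lies in the image of the chosen point $\mathrm{Spa}(K,K^+)\to Y$; equivalently, after passing to a strictly totally disconnected cover $Y'\to Y$ you must lift $y_2$ to a generalization of a point of $|Y'|$ over $y_1$, i.e.\ you need $|Y'|\to |Y|$ to be generalizing. That lifting statement is precisely (an instance of) the assertion being proved; it does not follow from the true fact that maps of affinoid perfectoid spaces are generalizing, because $Y$ is only a v-quotient of $Y'$, and again it is exactly what the quotient lemmas of \cite[\S 2]{Et} are designed to provide. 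A smaller point: in (3) you invoke $|V_i\times_X V_j|=|V_i|\times_{|X|}|V_j|$, but in general the natural map is only surjective, which suffices for one implication but should not be stated as an identification.
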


\subsection{Pre-adic spaces as v-sheaves}
The theory of diamonds is mainly of “analytic” nature. On the other hand, we wish to consider spaces that are closer to schemes or formal schemes. The category of v-sheaves allows us to consider these three types of spaces at the same time. 
Recall that to any Huber pair $\Hub{A}$ we can associate a pre-adic space, $\mrm{Spa}^{\mrm{ind}}(A,A^+)$, as in \cite[Appendix to Lecture 3]{Ber}. One then constructs pre-adic spaces by appropriately glueing along rational covers.\footnote{One has to do this carefully since sheafiness doesn't hold in this generality.}
Every pre-adic space $X$ has an underlying topological space, and we can define the open analytic locus $|X|^{\mrm{an}}$ and the non-analytic locus $|X|^{\mrm{na}}$ in the naive way. 
That is, a point $x\in |X|$ is analytic if for every open affinoid $x\in \mrm{Spa}^{\mrm{ind}}\Hub{A}\subseteq |X|$ (equivalently one affinoid) $x$ is analytic in $\Spa{A}$.  
\begin{prop}
	\label{pro:preadicreduced}
	Given a pre-adic space $X$ there is a reduced non-analytic adic space $X^{\mrm{na}}$ and a map $X^{\mrm{na}}\to X$ which is final in the category of maps $Y\to X$ with $Y$ a reduced non-analytic adic space. Moreover, the map $|X^{\mrm{na}}|\to |X|^{\mrm{na}}$ is a homeomorphism.
\end{prop}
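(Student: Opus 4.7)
The plan is to build $X^{\mrm{na}}$ locally from a ``reduced discrete quotient'' of each affinoid chart and then glue. Concretely, for an affinoid pre-adic space $X = \mrm{Spa}^{\mrm{ind}}(A, A^+)$, let $I\subseteq A$ be the closed ideal generated by the topologically nilpotent elements $A^{\circ\circ}$. Since any ideal of definition of a ring of definition $A_0 \subseteq A$ lies in $A^{\circ\circ}$ and is open in $A$, the ideal $I$ is open, and $A/I$ is therefore discrete. I would then set $B := (A/I)_{\mrm{red}}$ with the discrete topology, let $B^+$ be the integral closure of the image of $A^+$ in $B$, and define $X^{\mrm{na}} := \Spa{B}$. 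Discrete Huber pairs are sheafy, so this is a genuine reduced non-analytic adic space, and the canonical surjection yields the structural map $X^{\mrm{na}}\to X$.

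The universal property in the affine case is then routine: given $Y=\Spa{D}$ with $D$ reduced and discrete, a morphism $Y\to X$ corresponds to a continuous map $(A,A^+)\to (D,D^+)$. Discreteness and reducedness of $D$ give $D^{\circ\circ}=0$, which forces $A^{\circ\circ}$ (and hence $I$) to map to zero; reducedness of $D$ then kills the nilradical of $A/I$, so the morphism factors uniquely through $(B,B^+)$. The topological statement reduces to the same analysis: points of $\Spa{B}$ are equivalence classes of valuations on $B$, which lift bijectively to valuations on $A$ whose support contains $A^{\circ\circ}$ — i.e., the non-analytic points of $\Spa{A}$ — and the rational topologies agree under this identification.

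For a general pre-adic space $X$ I would glue the above construction over an affinoid atlas. The key compatibility is that $(-)^{\mrm{na}}$ commutes with rational opens: for a rational subset $U = X(T/s) \subseteq X$ meeting $X^{\mrm{na}}$, the denominator $s$ must have unit image in $B$, so the rational localization of $(A,A^+)$ descends to an ordinary localization at the level of $B$, which matches the corresponding open of $X^{\mrm{na}}$. I expect this compatibility to be the hard part, since pre-adic Huber pairs need not be sheafy and rational localizations can be pathological. The saving grace is that after passing to the discrete reduced quotient one lands in sheafy (essentially schematic) adic spaces, so the glueing is governed by ordinary commutative algebra. Granting this, the local constructions and their universal properties glue to give $X^{\mrm{na}}\to X$, and the homeomorphism $|X^{\mrm{na}}|\cong |X|^{\mrm{na}}$ follows from the affinoid case by passing to the cover.
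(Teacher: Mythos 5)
Your construction and route are essentially the paper's: in the affinoid case quotient by the (open) ideal generated by $A^{\circ\circ}$ to land in a discrete, hence sheafy, Huber pair, check the universal property by noting that topological nilpotents (and, in your refinement, the nilradical) must die in any reduced discrete target, identify the non-analytic points of $\mrm{Spa}(A,A^+)$ with the valuations killing $A^{\circ\circ}\cdot A$, and then globalize via compatibility of this quotient with rational localization. The paper does exactly this, defining the structure sheaf of $X^{\mrm{na}}$ on $|X|^{\mrm{na}}$ by the same quotient and invoking the same compatibility; your extra passage to $(A/I)_{\mrm{red}}$ and to the integral closure for $B^+$ are harmless normalizations.

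One assertion in your glueing step is wrong as stated: for a rational subset $U=X(T/s)$ meeting $X^{\mrm{na}}$ it is \emph{not} true that $s$ has unit image in $B$. For example, with $X=\mrm{Spa}(\Zp\langle x\rangle,\Zp\langle x\rangle)$, so $B=\Fp[x]$, the subset $U=X(\tfrac{p}{x})$ meets $X^{\mrm{na}}$ (at the points where $x\neq 0$), yet $x$ is not a unit in $\Fp[x]$. What is true, and what you need, is the following: the trace of $U$ on $X^{\mrm{na}}$ is the rational subset $\{\,|\bar t_i|\le|\bar s|\neq 0\,\}$ of $\mrm{Spa}(B,B^+)$ (every ideal of the discrete ring $B$ is open, so this is legitimately rational), whose coordinate ring is the discrete localization of $(B,B^+)$ at $\bar s$, with no completion needed; and this agrees with the non-analytic quotient of $A\langle T/s\rangle$ because the ideal generated by topological nilpotents in $A\langle T/s\rangle$ is open, so the completion collapses ($\widehat{R}/J\cong R/(R\cap J)$ for $J$ open), leaving a quotient of $A[1/s]$, i.e. of $B[1/\bar s]$, and one checks the kernels match. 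With that corrected compatibility the glueing goes through exactly as you and the paper intend.
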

\begin{proof}
	In the affinoid case $\mrm{Spa}^{\mrm{ind}}(A,A^+)^{\mrm{na}}=\mrm{Spa}(A/A^{\circ \circ}\cdot A,A^+/A^{\circ \circ}\cdot A^+)$. Since $A/(A^{\circ \circ}\cdot A)$ is discrete it is sheafy. Moreover, if $\Hub{B}$ is discrete then $\mrm{Hom}(\mrm{Spa}^{\mrm{ind}}(B,B^+),\mrm{Spa}^{\mrm{ind}}\Hub{A})$ is in bijection with maps $\Hub{A}\to \Hub{B}$. Topological nilpotents map to $0$ in $B$ which proves the universal property. The claim of topological spaces is clear.
	For general pre-adic space $X$ we define $X^{\mrm{na}}$ to have underlying topological space $|X|^{\mrm{na}}$ and if $V\subseteq |X^{\mrm{na}}|$ is of the form $U\cap |X|^{\mrm{na}}$ for $U\subseteq |X|$ open and of the form $U=\mrm{Spa}^{\mrm{ind}}(A,A^+)$ we let $\cali{O}^{\mrm{ind}}_{X^{\mrm{na}}}(V):=\cali{O}^{\mrm{ind}}_X(U)/A^{\circ \circ}\cdot \cali{O}^{\mrm{ind}}_X(U)$. Since the construction $A\mapsto A/A^{\circ \circ}$ is compatible with rational localization $\cali{O}^{\mrm{ind}}_{X^{\mrm{na}}}(V)$ is well-defined and glues to a sheaf of ind-topological rings on $X^{\mrm{na}}$. Moreover, locally the ind-topological rings come from a topological ring because $A/A^{\circ \circ}$ is sheafy. This implies $X^{\mrm{na}}$ is an adic space.  
\end{proof}

Recall that to any pre-adic space $X$ over $\Zp$ one can associate a small v-sheaf $X^\dia$ over $\mrm{Spd}(\mathbb{Z}_p)$. 
This is done by letting $\mrm{Spd}(\mathbb{Z}_p)(Y)=\{(Y^\sharp,\iota)\}/_{\cong}$ 
and letting $X^\dia (Y)=\{(Y^\sharp,\iota,f)\}/_{\cong}$, where $Y^\sharp\in \mrm{Perfd}$, $\iota: (Y^\sharp)^\flat\to Y$ is an isomorphism, and $f:Y^\sharp\to X$ is a morphism of pre-adic spaces.

\begin{pro}\textup{(\cite[Lemma 18.1.1]{Ber})}
	\label{pro:Zpdvsheaf}
	For any pre-adic space $X$ over $\Zp$ (not necessarily analytic), the presheaf $X^\diamondsuit$ is a small v-sheaf. 
\end{pro}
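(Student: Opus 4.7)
The plan is to reduce to the affinoid case $X=\mrm{Spa}^{\mrm{ind}}(A,A^+)$ and then verify v-sheafiness by appealing to v-descent of untilts and of the structure presheaves $\cali{O}$, $\cali{O}^+$ on $\mrm{Perf}$. Smallness will then follow from a routine cardinality bound.

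For the reduction, I cover $X$ by open affinoid pre-adic subspaces $X=\bigcup_i U_i$. For a perfectoid $Y$ in characteristic $p$, a section of $X^\diamondsuit(Y)$ is an untilt $(Y^\sharp,\iota)$ over $\Zp$ together with a map $f:Y^\sharp\to X$, which is equivalently the datum of an open cover $\{f^{-1}(U_i)\}_i$ of $Y^\sharp$ together with maps $f^{-1}(U_i)\to U_i$ agreeing on overlaps. Given a v-cover $Y'\to Y$ with a descent datum in $X^\diamondsuit(Y')$, v-descent for untilts first supplies an untilt $(Y^\sharp,\iota)$ of $Y$. Because open subfunctors of a small v-sheaf are determined by open subsets of the underlying topological space (\cite[Proposition 11.15]{Et}), the open cover on $Y'^\sharp$ descends to one on $Y^\sharp$. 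Thus v-sheafiness of $X^\diamondsuit$ reduces to the affinoid case, plus standard gluing of morphisms along open covers.

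In the affinoid case, by the definition of $\mrm{Spa}^{\mrm{ind}}$ a section $X^\diamondsuit(Y)$ is a triple $(Y^\sharp,\iota,\varphi)$ where $(Y^\sharp,\iota)$ is an untilt of $Y$ over $\Zp$ and $\varphi:(A,A^+)\to (\cali{O}(Y^\sharp),\cali{O}^+(Y^\sharp))$ is a continuous morphism of Huber pairs. The untilt datum alone defines the v-sheaf $\mrm{Spd}(\Zp)$, the v-topology being subcanonical by \cite[Corollary 8.6]{Et}. Over $\mrm{Spd}(\Zp)$, specifying $\varphi$ amounts to giving compatible ring homomorphisms from $A$ and $A^+$ into $\cali{O}$ and $\cali{O}^+$ respectively, subject to a continuity condition on the preimages of a neighborhood basis of $0$. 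Since $\cali{O}$ and $\cali{O}^+$ are v-sheaves on $\mrm{Perf}$, such ring-theoretic data descend elementwise; the continuity condition is local on $Y$ and also descends. Smallness is then obtained by choosing a cut-off cardinal $\kappa$ large enough that $|A|,|A^+|<\kappa$ over a chosen affinoid covering family of $X$, so that a $\kappa$-small disjoint union of affinoid perfectoid representables surjects onto $X^\diamondsuit$.

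The main obstacle is conceptual rather than technical: since $X$ may be non-analytic (and even non-sheafy), it is not a priori clear that morphisms into $X$ behave well in the v-topology. The resolution is that the presheaf $\mrm{Spa}^{\mrm{ind}}(A,A^+)$ is defined so that maps into it are literally continuous Huber-pair homomorphisms out of $(A,A^+)$, and such homomorphisms assemble from v-sheafy pieces regardless of analyticity. Granting Scholze's v-descent for untilts and for the structure sheaves, the rest is bookkeeping along open covers.
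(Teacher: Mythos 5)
Your argument is correct and is essentially the proof the paper relies on: the paper gives no proof of its own but cites \cite[Lemma 18.1.1]{Ber}, whose argument likewise reduces to the affinoid case and combines v-descent of untilts (i.e.\ that $\Zpd$ is a v-sheaf) with v-descent for $\cali{O}$ and $\cali{O}^+$ on perfectoid spaces, together with a routine cardinality bound for smallness. One cosmetic remark: descending the open cover of $Y'^\sharp$ to $Y^\sharp$ really rests on $|Y'^\sharp|\to|Y^\sharp|$ being a quotient map (\cite[Proposition 11.13, Proposition 12.9]{Et}) rather than on Proposition 11.15 alone, but this is standard and does not affect the argument.
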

From now on, given a Huber pair $\Hub{A}$ we denote $(\Spa{A}^{\mrm{ind}})^\diamondsuit$ by $\Spd{A}$. 
If $\Hub{R}$ is a Huber pair for which $R^+=R^\circ$ we will abbreviate $\Spa{R}$ and $\Spd{R}$ by $\Spf{R}$ and $\Spdf{R}$. For example, $\Fpd$, $\Zpd$, $\Qpd$. Given an $I$-adic ring $R$ with $I\subseteq R$ finitely generated, we will say that $\Huf{R}$ is a ``\textit{formal}" Huber pair. 

\begin{prop} 
	\label{rem:diamondconstr}
We collect some facts about $\diamondsuit$, that are either in the literature or not hard to prove. Let $\mrm{PreAd}_\Zp$ denote the category of pre-adic spaces over $\Zp$ and let $X\in \mrm{PreAd}_\Zp$.
\begin{enumerate}
	\item If $X$ is perfectoid, then $X^\diamondsuit\cong h_{X^\flat}$ \cite[Lemma 15.2]{Et}.
	\item There is a surjective map of topological spaces $|X^\diamondsuit|\to |X|$ \cite[Proposition 18.2.2]{Ber}. \label{mapofadictovsheaf}
	\item If $X$ is analytic, then $X^\diamondsuit$ is a locally spatial diamond and $|X^\diamondsuit|\cong|X|$, \cite[Lemma 15.6]{Et}.
	\item The functor $\dia:\mrm{PreAd}_\Zp\to \topPerf$ commutes with limits and colimits. More precisely, if $X_i$ is a family of pre-adic spaces and $\varinjlim_{i\in I} X_i$ (respectively $\varprojlim_{i\in I} X_i$) is represented by a pre-adic space $X$ then $X^\dia=\varinjlim_{i\in I} X_i^\dia$ (respectively $X^\dia=\varprojlim_{i\in I} X_i^\dia$). 
	 \item The structure map $\Spd{B}\to \Zpd$ is separated. 
	 \item The map $(X^{\mrm{na}})^\dia\to X^\dia$ is a closed immersion and $X^\dia\setminus (X^{\mrm{na}})^\dia=(X^{\mrm{an}})^\dia$. 
\end{enumerate}
\end{prop}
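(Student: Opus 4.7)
The first three items are in the cited references, so I would focus my argument on (4), (5), and (6).

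For (4), the plan is to use the identification $X^\dia(Y) = \bigsqcup_{(Y^\sharp, \iota)} \mrm{Hom}_{\mrm{PreAd}_{\Zp}}(Y^\sharp, X)$ for each characteristic $p$ perfectoid space $Y$. Limits are handled by the observation that $\mrm{Hom}(Y^\sharp, -)$ commutes with limits in its second argument, and limits of v-sheaves are computed sectionwise. Colimits are more delicate: pre-adic colimits are built from disjoint unions and glueings along open immersions, and a map out of $Y^\sharp$ factors locally through some piece of the colimit. Since $\dia$ sends open covers of pre-adic spaces to v-covers, one recovers the colimit comparison after v-sheafification.

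For (5), I would observe that the diagonal $\Spa{B} \to \Spa{B} \times_{\Spa{\Zp}} \Spa{B}$ corresponds to the surjection of Huber pairs $B \hat{\otimes}_{\Zp} B \twoheadrightarrow B$, hence is a closed immersion of pre-adic spaces. By (4) it identifies with the diagonal of $\Spd{B} \to \Zpd$, so separation reduces to showing that $\dia$ preserves closed immersions. This is checkable on strictly totally disconnected test objects: given a closed immersion $V(I) \hookrightarrow X$ locally cut out by a finitely generated ideal $I$, and $\Spa{R} \to X^\dia$ corresponding to an untilt map $A \to R^\sharp$, the pullback is represented by the closed subspace of the perfectoid untilt cut out by $I \cdot R^\sharp$.

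For (6), the closed-immersion half follows from (5) applied to the surjection $A \twoheadrightarrow A/(A^{\circ\circ}\cdot A)$ produced by \Cref{pro:preadicreduced}. For the complement statement, my approach is to compare underlying topological spaces: $(X^\mrm{an})^\dia$ embeds as an open subsheaf of $X^\dia$ and $(X^\mrm{na})^\dia$ as a closed subsheaf, and their images in $|X^\dia|$ are disjoint because their further images in $|X|$ under the surjection of (2) lie in the disjoint subsets $|X^\mrm{an}|$ and $|X^\mrm{na}|$. To show they cover $|X^\dia|$, it suffices to check on geometric points: given $\Spa{C} \to X^\dia$ with $C$ algebraically closed, the corresponding untilt is a perfectoid field mapping to a single point of $|X|$, which is either analytic or non-analytic, forcing the desired factorization. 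The main obstacle is (6), where one must carefully track the interaction between the surjection $|X^\dia| \to |X|$ and the analytic/non-analytic decomposition, and then upgrade the resulting set-theoretic decomposition of $|X^\dia|$ to a v-sheaf identification using that open subsheaves of $X^\dia$ are uniquely determined by their underlying open subsets.
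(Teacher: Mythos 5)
The paper offers no argument for items (4)--(6) --- (1)--(3) carry citations and the remaining items are left as ``not hard to prove'' --- so there is no written proof to compare against. Judged on its own terms, your strategy is the natural one and is consistent with how the paper argues nearby: the ``Zariski closed inside an untilt'' mechanism you need for (5) is exactly the one used in \Cref{lem:Zpdseparated}, and the support analysis needed for (6) parallels \Cref{pro:tworeds}. Two corrections are needed, one of them substantive.

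The substantive one is in (6). A point of $|X^\dia|$ is an equivalence class of maps $\Spa{C}\to X^\dia$ with $C^+$ possibly of higher rank, and then the untilt $\mrm{Spa}(C^\sharp,C^{\sharp,+})$ has several points whose images in $|X|$ can meet both $|X|^{\mrm{an}}$ and $|X|^{\mrm{na}}$; your assertion that the untilt ``maps to a single point of $|X|$'' fails in exactly these cases, so the dichotomy you invoke does not by itself produce a factorization. The repair: the class is detected at the closed point $s$ of the untilt. If $f(s)$ is analytic, the class lies in the open subset $|(X^{\mrm{an}})^\dia|$, because the pullback of that open subsheaf along the representative is $f^{-1}(X^{\mrm{an}})$. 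If $f(s)$ is non-analytic, choose an affinoid $\Spa{A}\subseteq X$ containing $f(s)$; being open it is stable under generization and hence contains the image of the whole untilt, and the kernel of $f^{*}:A\to C^\sharp$ equals the support of the valuation at $s$, hence is an open prime ideal; it therefore contains every topologically nilpotent element of $A$, so $f$ factors through $X^{\mrm{na}}$ and the representative itself factors through $(X^{\mrm{na}})^\dia$ (in particular the untilt is forced to be of characteristic $p$), with no juggling of equivalence classes needed. With that, your upgrade from the topological decomposition to the sheaf identity via the correspondence between open subsheaves and open subsets of $|X^\dia|$ is fine. Two smaller points: in (5) the ideal of the diagonal in $B\hat{\otimes}_{\Zp}B$ is in general not finitely generated, so you should drop that hypothesis --- representability of a Zariski closed subfunctor of an affinoid perfectoid by a closed immersion requires no finiteness --- and in (4) your colimit argument only treats colimits built from disjoint unions and gluing along open immersions; this covers every use made in the paper, but it is narrower than the literal statement, so either justify the general case or state the restriction.
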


\section{The olivine spectrum}
As we will see below, the map $|X^\diamondsuit|\to |X|$ of \cref{mapofadictovsheaf} in \Cref{rem:diamondconstr} is usually not injective when $X$ has non-analytic points. Although the map is always surjective, it might not be a quotient map in pathological and drastic non-Noetherian situations. To develop a theory of specialization maps for v-sheaves, we need better understanding of $|\Spdf{A}|$ when $A$ is an $I$-adic ring over $\Zp$. To tackle this difficulty, we introduce what we call the \textit{olivine spectrum} of a Huber pair. It is a small variation of Huber's adic spectrum with a diamond-like twist. Under some mild “finiteness” conditions we prove that the olivine spectrum recovers $|\Spd{B}|$. 
For the rest of the section we fix $\Hub{B}$ a Huber pair (not necessarily over $\Zp$ and not necessarily complete).
\subsection{Review, terminology and conventions}
We assume familiarity with the construction of Huber's adic spectrum, $\Spa{B}$, but we review some definitions, some key facts, and we fix some terminology. Let $x\in \Spa{B}$ and fix a representative $|\cdot|_x:B\to \Gamma_x\cup \{0\}$.
	\begin{enumerate}
		\item The support $\mathbf{supp}(x)\subseteq B$ is the prime ideal of $b\in B$ with $|b|_x=0$. 
		\item We say $x$ is non-analytic if $\mathbf{supp}(x)$ is open in $B$, we say it is analytic otherwise.
		\item Let $H\subseteq \Gamma_x$ be a convex subgroup. We let $|\cdot|_y:B\to (\Gamma_x/H)\cup \{0\}$ with $|b|_y=|b|_x+H\in \Gamma_x/H$ when $|b|_x\neq 0$ and $|b|_y=0$ when $|b|_x=0$. Equivalence classes of valuations constructed this way are called a \textit{vertical generizations} of $x$.
		\item There is a residue field map of complete Huber pairs $\iota^*_x:\Hub{B}\to \Hub{K_x}$, where $K_x$ is either a discrete field or a complete nonarchimedean field. In both cases, $K_x^+$ is an open and bounded valuation subring of $K_x$. The induced map $\iota_x:\Spa{K_x}\to \Spa{B}$ is a homeomorphism onto the subspace of $\Spa{B}$ of continuous vertical generizations of $x$ and satisfies the universal property of maps that factor through this locus. 
		\item Residue fields relate to vertical generizations as follows. Let $K_x^\circ$ be the subring of power-bounded elements in $K_x$. If $y$ is a continuous vertical generizations of $x$ we let $K^+_y=\{b\in K_x\mid |b|_y\leq 1\}$. This gives a bijection between the set of continuous vertical generizations of $x$ and valuation rings of $K_x$ with $K_x^\circ\supseteq K_y^+\supseteq K^+_x$. Moreover, the residue field at $y$ is $(K_x,K_y^+)$.
		\item We say $x$ is \textit{trivial} if $\Gamma_x=\{1\}$. In this case, $K_x$ is discrete.
		\item We say that a valuation is \textit{microbial} if it has a non-trivial rank $1$ vertical generization. 
		\item For technical reasons we take the convention that trivial valuations have rank $0$. 
		\item The \textit{characteristic subgroup} of $|\cdot|_x$, denoted by $c\Gamma_x$, is the smallest convex subgroup of $\Gamma_x$ containing $\gamma=|b|_x$ for all $b\in B$ with $1\leq\gamma$.
		\item Given a convex subgroup $H\subseteq \Gamma_x$ containing $c\Gamma_x$, we define $|\cdot|_y:B\to H\cup \{0\}$ with $|b|_y=|b|_x$ if $|b|_x\in H$ and $|b|_y=0$ otherwise. Equivalence classes of valuations constructed in this way are called \textit{horizontal specializations} of $x$.
		\item Residue fields relate to horizontal specializations as follows. Let $K_B$ be the subring of $K_x$ generated by $K_x^+$ and the image of $B$ in $K_x$. Consider the induced map $f:\mrm{Spec}(K_B)\to \mrm{Spec}(B)$. Horizontal specializations of $x$ are in bijection with prime ideals of $B$ that are in the image of $f$. For a convex subgroup $H$ containing $c\Gamma_x$ and inducing $y$, the associated prime ideal $\mathfrak{p}_y=\{b\in B\mid |b|_x< \gamma\,\text{ for }\gamma\in H\}$. We sometimes denote $|\cdot|_y$ by $|\cdot|_x/\mathfrak{p}_y$. \label{horizontalspecializandvals}
			
		\item Given a topological space $T$ we construct a partial order on $T$ by letting $t_1\preceq_T t_2$ if $t_1\in \overline{\{t_2\}}$. We call this partial order the \textit{generization pattern} of $T$. We use $\preceq_B$ instead when $T=\Spa{B}$.
		\item The generization pattern of $\Spa{B}$ is determined by vertical generizations and horizontal specializations. More precisely, letting $(y,z)\in R$ if $z$ is a vertical generization of $y$ or if $y$ is horizontal specialization of $z$. Then $\preceq_{{B}}$ is the transitive closure of $R$.
	\end{enumerate}

\subsection{Definitions and basic properties}
\begin{defi}
	We define a topological space $\Spo{B}$ which we call the \textit{olivine spectrum} of $B$.
	\begin{enumerate}
		\item Let $\Spo{B}\subseteq \Spa{B}^2$ consist of pairs, $x:=(|\cdot|_x^h,|\cdot|_x^a)$, such that $|\cdot|_x^a$ has rank $1$ or $0$ and is a vertical generization of $|\cdot|_x^h$. 
		\item Pick $b_1,b_2\in B$ and let $\U{b_1}{b_2}=\{x\in \Spo{B}\mid \, |b_1|_x^h\leq |b_2|_x^h\neq 0\}$, we call such subsets classical localizations. 
		\item Pick $b_1,b_2\in B$ and let $\N{b_1}{b_2}=\{x\in \Spo{B}\mid \, |b_1|_x^a< |b_2|_x^a\neq 0\}$, we call such subsets analytic localizations. 
		\item We endow $\Spo{B}$ with the topology generated by classical and analytic localizations. 
	\end{enumerate}
\end{defi}
We denote by $h:\Spo{B}\to \Spa{B}$ the first coordinate projection, this map is continuous and surjective. Moreover, both $\mrm{Spo}$ and $h$ are functorial. 
\begin{defi}
	Let $x\in \Spo{B}$. 
	\begin{enumerate}
		\item We say that $x$ is \textit{discrete} if $|\cdot|_x^a$ is trivial. We say that a discrete point is \textit{microbial} if $h(x)$ is microbial. We say that a discrete point is \textit{algebraic} if $|\cdot|_x^h$ is trivial.
		\item We say that $x$ is \textit{d-analytic} if $|\cdot|_x^a$ is non-trivial. Suppose that $x$ is d-analytic, we say that it is \textit{analytic} if $h(x)$ is analytic and we say it is \textit{meromorphic} otherwise.
		\item We say that $x$ is \textit{bounded} if $|B|_x^a\leq 1$.
		\item We say that $x$ is \textit{formal} if it is bounded and d-analytic.
	\end{enumerate}
\end{defi}
For $x\in \Spo{B}$ the set $h^{-1}(h(x))$ has at most one d-analytic point and at most one discrete point. Consequently, $\mathbf{Card}(h^{-1}(h(x)))\in \{1,2\}$. Moreover, $\mathbf{Card}(h^{-1}(h(x)))=2$ if and only if $h(x)$ is discrete and $h(x)$ is microbial. The definitions are made so that $x$ is analytic if and only if $h(x)$ is, and in this way we can talk about the analytic locus. Nevertheless, with our terminology, there is no longer a dichotomy since meromorphic points are not analytic but they are also not discrete.  

We define the \textbf{bounded locus}, and denote it $\Spo{B}^\dagger\subseteq \Spo{B}$, as the subset of bounded points. This is a closed subset with complement of $\cup_{b\in B} \N{1}{b}$.
\begin{rem}
	Let us comment on the terminology chosen. By construction, the olivine spectrum has more points than Huber's adic spectrum. Algebraic points of $\Spo{B}$ are in bijection with the usual Zariski spectrum of $B/B^{\circ \circ}$. Discrete points are in bijection with the non-analytic points of Huber. Later on we will realize that when $(B,B^+)$ is defined over $\mathbb{Z}_p$ the d-analytic points of $\Spo{B}$ correspond to those points whose residue field is a diamond. Among d-analytic points only those that are analytic are in bijection with the analytic points of Huber. The terms formal and meromorphic stem from \Cref{generization-names}. The term bounded stems from the fact that the bounded locus on $\mathrm{Spd}(\mathbb{F}_p[t],\mathbb{F}_p)$ agrees with the functor sending a pair $(R,R^+)$ to the set of power-bounded elements $R^\circ$.
\end{rem}


\begin{defi}
	Let $x\in \Spo{B}$, we let $\mathbf{supp}(x):=\mathbf{supp}(h(x))$, this is the \textit{support ideal}. We let $\mathbf{sp}(x)=\{b\in B^+\mid |b|_x^h<1\}$, this is the \textit{specialization ideal}. If $x$ is bounded, we let $\mathbf{def}(x)=\{b\in B\mid |b|_x^a<1\}$, this is the \textit{deformation ideal}. 
\end{defi}

\begin{rem}
The specialization ideal will be key for us later when we discuss the specialization map for specializing v-sheaves. In contrast to Huber's theory, the discrete point that one can construct by killing the elements of the specialization ideal does not always lie in the topological closure of our original point. For this reason we also have to consider what we call the deformation ideal. 	
\end{rem}

Notice that $x$ is bounded if and only if $c\Gamma_{x^a}=\{1\}$, this only happens if $x$ is either discrete or formal. If $x$ is bounded, it is discrete whenever $\mathbf{supp}(x)=\mathbf{def}(x)$ and it is formal otherwise.

\begin{defi}
	\label{generization-names}
	Let $x$ and $y$ be two points in $\Spo{B}$.
	\begin{enumerate}
		\item  $y$ is a \textit{vertical generization} of $x$ ($x$ a \textit{vertical specialization} of $y$ respectively) if $|\cdot|_x^a=|\cdot|_y^a$ and $|\cdot|_y^h$ is a vertical generization of $|\cdot|_x^h$ in $\Spa{B}$. We abbreviate this as $y$ is v.g. of $x$ ($x$ is v.s. of $y$ respectively).
		\item $y$ is a \textit{meromorphic generization} of $x$ ($x$ a \textit{meromorphic specialization} of $y$ respectively) if $y$ is meromorphic, $x$ is discrete and $h(x)=h(y)$. We abbreviate this as $y$ is m.g. of $x$ ($x$ is m.s. of $y$ respectively).
		\item $y$ is a \textit{formal generization} of $x$ ($x$ a \textit{formal specialization} of $y$ respectively) if $y$ is formal, $x$ is discrete $\mathbf{def}(y)=\mathbf{supp}(x)$ and $|\cdot|^h_x=|\cdot|^h_y/\mathbf{def}(y)$. We abbreviate this as $y$ is f.g. of $x$ ($x$ is f.s. of $y$ respectively).
	\end{enumerate}
\end{defi}

\begin{rem}
In Huber's theory there are two distinguished types of specialization, namely vertical specializations and horizontal specializations. We consider three distinguished types of specialization. 
The vertical specializations we consider arise in the same way as Huber's vertical generizations and have the same behavior. 
In contrast, Huber's horizontal specializations are replaced by meromorphic and formal specializations. 
In very rough terms, a formal generization is what you obtain when you replace the equation $b=0$ by asking instead the condition that $b$ is a topologically nilpotent unit. 
Analogously, a meromorphic specialization is what you obtain when you replace the condition that $b$ is a topologically nilpotent unit by the condition that $|b|<1$ but for all $\epsilon\in (0,1)$ $\epsilon<|b|$. 
One can think of the locus $\{b=0\}$ as one discrete end, the locus $\{1>b>\epsilon\}$ as the opposite discrete end, and the locus where $b$ is a topologically nilpotent unit as the analytic in-between that specializes to both ends through the formal specialization and meromorphic specialization respectively. 
\end{rem}
	
Given $x\in \Spo{B}$ let $\I{x}$ denote the set of generizations of $x$ in $\Spo{B}$ and let $\Iv{x}$ denote the set of vertical generizations of $x$. If the context is clear, for a point $y\in \Spa{B}$ we will also use $\Iv{y}$ to denote the vertical generizations of $y$ in $\Spa{B}$. Let us make some easy observations and set some convenient notation: 
	\begin{enumerate}
		\item If $x$ is discrete it has a meromorphic generization (necessarily unique) if and only if $x$ is microbial. We denote this generization by $x^{\mrm{mer}}$.
		\item If $x$ is meromorphic it has a unique meromorphic specialization, we denote it by $x_{\mrm{mer}}$.
		\item If $x$ is formal it has a unique formal specialization, we denote it by $x_{\mrm{for}}$. If $x$ is discrete, we let $x^{\mrm{For}}$ denote the set of formal generizations of $x$.
	\end{enumerate}

We recommend the reader to work through the following example confirming all of the claims.

\begin{exa}
	\label{exa:thebasiccase}
	Let $B=\Fp\pot{u}$ endowed with the discrete topology, then $\Spf{B}$ consists of $3$ points: $$\Big\{\eta=|\cdot|_\eta,\,s=|\cdot|_s,\,t=|\cdot|_{t}\Big\}$$
	
	Here $|\cdot|_\eta$ is the trivial valuation with residue field $\Fp\left( \left( u \right) \right)$, $|\cdot|_s$ is the trivial valuation with residue field $\Fp$ and $|\cdot|_t$ is the $(u)$-adic valuation on $\Fp\pot{u}$ with residue affinoid field $(\Fp\left( \left( u \right) \right),\Fp\left[ \left[ u \right] \right])$. All valuations have rank $1$ or $0$. The only non-trivial vertical generization in $\Spf{B}$ goes from $|\cdot|_t$ to $|\cdot|_\eta$. 
	
	On the other hand, $Spo(B)$ has $4$ points: 
	$$\Big\{\eta:=(|\cdot|_\eta,|\cdot|_\eta),\,s:=(|\cdot|_s,|\cdot|_s),\,t^{\mrm{f}}:=(|\cdot|_t,|\cdot|_t),\,t^{\mrm{d}}:=(|\cdot|_t,|\cdot|_\eta)\Big\}$$
	Now, $\{\eta\}=\U{1}{u}$, $\{\eta,t^{\mrm{d}},t^{\mrm{f}}\}=\U{0}{u}$, $\{t^{\mrm{f}}\}=\N{u^2}{u}$ and $\{t^{\mrm{f}},s\}=\N{u}{1}$, and these are the only proper open subsets.
	Here $s$, $\eta$ and $t^{\mrm{d}}$ are discrete. Moreover, $t^{\mrm{d}}$ is microbial, and $t^{\mrm{f}}$ is both a meromorphic and formal d-analytic point. 

\begin{figure}[h!]
	\label{thedrawing}
    \centering
    \includegraphics[width=0.6\textwidth]{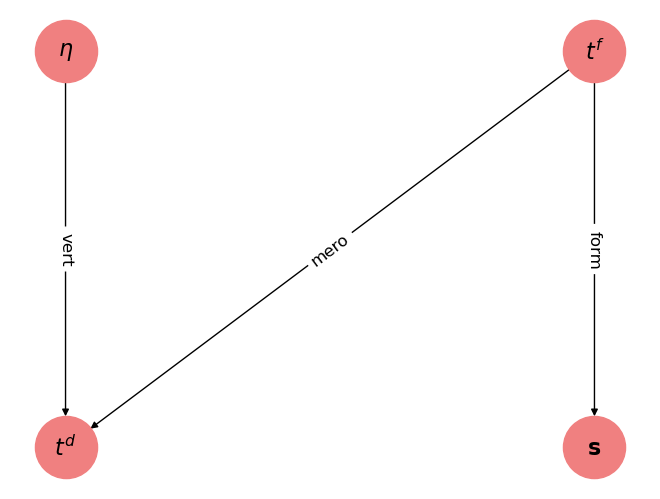}
    \caption{Generization pattern of $\mathrm{Spo}(\mathbb{F}_p\pot{u},\mathbb{F}_p\pot{u})$}
    \label{fig:graph}
\end{figure}

	The generization pattern is: $\eta$ is a vertical generization of $t^{\mrm{d}}$, $t^{\mrm{d}}$ is the meromorphic specialization of $t^{\mrm{f}}$, and $s$ is the formal specialization of $t^{\mrm{f}}$. We have $\mrm{Spo}(\Fp\pot{u})^\dagger=\mrm{Spo}(\Fp\pot{u})$.
\end{exa}

The following shows that the v.g., f.s. and m.s. determine the generization pattern in $\Spo{B}$.
\begin{pro}
	\label{pro:generizationpattern}
	Let $x\in \Spo{B}$.
	\begin{enumerate}
		\item If $x$ is d-analytic then $\I{x}=\Iv{x}$.
		\item If $x$ is discrete then $\I{x}=\Iv{x}\cup \Iv{x^{\mrm{mer}}} \cup (\bigcup_{z\in \Iv{x}} {z^{\mrm{For}}})$.
	\end{enumerate}
\end{pro}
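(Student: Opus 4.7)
For the inclusion $\supseteq$, I would verify directly, using the basis $\{\U{b_1}{b_2}, \N{b_1}{b_2}\}$, that vertical generizations, the meromorphic generization (when $x$ is discrete and microbial), and formal generizations of vertical generizations (when $x$ is discrete) all lie in every basic open containing $x$. A vertical generization $y$ of $x$ satisfies $|\cdot|_y^a = |\cdot|_x^a$, handling $\N{b_1}{b_2}$ trivially, while $|\cdot|_y^h$ is a Huber-theoretic v.g.\ of $|\cdot|_x^h$, preserving the inequalities $|b_1|_x^h \leq |b_2|_x^h \neq 0$ through the convex quotient. For $y = x^{\mrm{mer}}$, the equality $h(y) = h(x)$ handles $\U{b_1}{b_2}$; and since $x$ is discrete the condition $x \in \N{b_1}{b_2}$ reduces to $b_1 \in \mathbf{supp}(x)$, $b_2 \notin \mathbf{supp}(x)$, which $y$ inherits with strict inequality because $|\cdot|_y^a$ is rank $1$. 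For $y \in z^{\mrm{For}}$ with $z \in \Iv{x}$, transitivity of generization reduces the claim to showing $y$ generizes $z$, which is a direct case analysis using $|\cdot|_z^h = |\cdot|_y^h / \mathbf{def}(y)$ and $\mathbf{def}(y) = \mathbf{supp}(z)$.

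For $\subseteq$, let $y \in \I{x}$. Continuity of $h \colon \Spo{B} \to \Spa{B}$ gives $h(y) \in \I{h(x)}$ in $\Spa{B}$, and applying $\U{0}{b}$ for $b \notin \mathbf{supp}(x)$ yields $\mathbf{supp}(y) \subseteq \mathbf{supp}(x)$. For (1), with $|\cdot|_x^a$ rank $1$ non-trivial, I pick $d \in B$ with $0 < |d|_x^a < 1$; if some $c \in \mathbf{supp}(x) \setminus \mathbf{supp}(y)$ existed, the opens $\N{c}{d^n}$ (valid since $|c|_x^a = 0 < |d|_x^{an}$) together with $\N{d}{1}$ would force $0 < |c|_y^a < |d|_y^{an}$ with $|d|_y^a < 1$, driving $|c|_y^a = 0$, a contradiction. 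Hence $\mathbf{supp}(y) = \mathbf{supp}(x)$; the discrete-$y$ possibility is excluded using $\N{b_1}{b_2}$ with $b_1, b_2 \notin \mathbf{supp}(x)$, and the rank-$1$ valuations $|\cdot|_y^a$ and $|\cdot|_x^a$ are then forced to be equivalent by a standard archimedean argument (equality $v(a) = v(b)$ propagates to $w(a) = w(b)$ via comparison against $v(a^n)$ and $v(b^{n+1})$). Finally, same-support generization in $\Spa{B}$ forces $|\cdot|_y^h$ to be a v.g.\ of $|\cdot|_x^h$, so $y \in \Iv{x}$.

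For (2), I split into three cases on $y$. If $y$ is discrete, then $\N{b}{1}$ for $b \in \mathbf{supp}(x)$ forces $b \in \mathbf{supp}(y)$ (a trivial $|\cdot|_y^a$ only admits strict inequality through the support), giving $\mathbf{supp}(y) = \mathbf{supp}(x)$ and hence $y \in \Iv{x}$. If $y$ is d-analytic with $\mathbf{supp}(y) = \mathbf{supp}(x)$, then $|\cdot|_y^a$ is a rank-$1$ v.g.\ of $|\cdot|_y^h$ and, composed with the v.g.\ $|\cdot|_y^h \to |\cdot|_x^h$, becomes a rank-$1$ v.g.\ of $|\cdot|_x^h$; uniqueness of the rank-$1$ v.g.\ forces $x$ microbial with $|\cdot|_y^a = |\cdot|_{x^{\mrm{mer}}}^a$, so $y \in \Iv{x^{\mrm{mer}}}$. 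The remaining case, $y$ d-analytic with $\mathbf{supp}(y) \subsetneq \mathbf{supp}(x)$, is the main obstacle: I would show $y$ is formal with $\mathbf{def}(y) = \mathbf{supp}(x)$, so $z := y_{\mrm{for}}$ is discrete with $|\cdot|_z^h$ a v.g.\ of $|\cdot|_x^h$, making $z \in \Iv{x}$ and $y \in z^{\mrm{For}}$. The inequality $|b|_y^a < 1$ for $b \in \mathbf{supp}(x)$ is immediate from $\N{b}{1}$, but the equality $|b|_y^a = 1$ for $b \notin \mathbf{supp}(x)$ is the delicate step: it combines the $\U{b_1}{b_2}$-constraints on $|\cdot|_y^h$ with the fact that $|\cdot|_y^a$ arises from $|\cdot|_y^h$ by quotienting by a convex subgroup $H_a$ to force $|b|_y^h \in H_a$ whenever $b \notin \mathbf{supp}(x)$.
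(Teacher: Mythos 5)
Your overall architecture is the same as the paper's: you prove $\supseteq$ by checking basic opens (handling formal generizations of vertical generizations through transitivity rather than directly, which is an inessential difference), and $\subseteq$ by first extracting $\mathbf{supp}(y)\subseteq\mathbf{supp}(x)$ from classical localizations and then running the same case division, ending with $z=y_{\mrm{for}}\in\Iv{x}$ and $y\in z^{\mrm{For}}$. However, two steps have concrete problems. First, in (1) you ``pick $d\in B$ with $0<|d|^a_x<1$''; non-triviality of the rank-$1$ valuation $|\cdot|^a_x$ only provides some $b$ with $|b|^a_x\notin\{0,1\}$, and it can happen that every such $b$ satisfies $|b|^a_x>1$. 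For instance, take $B=\Fp[S,T]$ with the discrete topology, $B^+=\Fp$, and let $|\cdot|^h_x=|\cdot|^a_x$ be the rank-$1$ valuation with support $(S)$ given by $T$-degree on $\Fp[T]$: this is an unbounded meromorphic point with $|f|^a_x\geq 1$ for every $f\notin\mathbf{supp}(x)$, so your argument for $\mathbf{supp}(x)\subseteq\mathbf{supp}(y)$ cannot even start. The paper covers exactly this case by the mirror argument: from $\N{1}{b}$ one gets $1<|b|^a_y$, and from $x\in\N{b_1\cdot b^n}{b}$ one gets $|b_1\cdot b^n|^a_y<|b|^a_y$ for all $n$, which with $1<|b|^a_y$ and archimedeanness forces $|b_1|^a_y=0$. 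The same one-sidedness affects your ``archimedean propagation'' of equalities, which in any case you do not need: once $|\cdot|^h_y$ is known to be a vertical generization of $|\cdot|^h_x$, the equality $|\cdot|^a_y=|\cdot|^a_x$ follows from uniqueness of the non-trivial rank-$1$ vertical generization, exactly as you invoke it in case B of (2).

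Second, in case C of (2) the step you yourself flag as delicate, namely $|b|^a_y=1$ for $b\notin\mathbf{supp}(x)$, is where the real work lies, and the mechanism you indicate (only the $\U{b_1}{b_2}$-constraints on $|\cdot|^h_y$ together with the quotient-by-$H_a$ picture) does not suffice as stated. Those constraints do give the lower bound: choosing $c\in\mathbf{supp}(x)\setminus\mathbf{supp}(y)$, one has $x\in\U{c}{b^n}$ for all $n$, hence $0<|c|^a_y\leq|b^n|^a_y$ in the rank-$1$ quotient, forcing $|b|^a_y\geq1$. But the upper bound $|b|^a_y\leq1$, i.e.\ boundedness of $y$ (without which $\mathbf{def}(y)$ is not even defined), is not visibly a consequence of the $h$-side constraints alone; the paper extracts it from the analytic localizations: $x\in\N{c\cdot b_1^n}{1}$ for every $b_1\in B$ and every $n$, whence $|c|^a_y\cdot|b_1^n|^a_y<1$ with $|c|^a_y>0$, so $|b_1|^a_y\leq1$ by the archimedean property (alternatively $x\in\N{c\cdot b^n}{b}$ bounds $|b|^a_y$ directly). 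Without some such use of the $\N{b_1}{b_2}$-opens you cannot rule out that $y$ is an unbounded point. Finally, after obtaining $\mathbf{def}(y)=\mathbf{supp}(x)$, your assertion that $|\cdot|^h_z$ (for $z=y_{\mrm{for}}$) is a vertical generization of $|\cdot|^h_x$ still needs justification: one must know that a specialization of $h(y)$ in $\Spa{B}$ whose support coincides with the prime of the horizontal specialization $h(z)$ is a vertical specialization of $h(z)$; this is the point the paper attributes to the construction of horizontal specializations, and it should not be passed over silently.
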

\begin{proof}
	We prove the right to left side. Let $y\in \Iv{x}\cup \Iv{x^{\mrm{mer}}} \cup (\bigcup_{z\in \Iv{x}} {z^{\mrm{For}}})$ if $x$ is discrete and let $y\in \Iv{x}$ otherwise. Since $h$ is continuous, 
	$y$ is in every classical localization of $x$, so it suffices to check analytic localizations. Suppose $x\in \N{b_1}{b_2}$, if $y$ is a v.g. of $x$ then $|\cdot|_y^a=|\cdot|_x^a$ so $y\in \N{b_1}{b_2}$. If $x$ is discrete, then $|b_1|_x^a=0$ and $|b_2|_x^a=1$, this implies $|b_1|_{x^{\mrm{mer}}}^a=0$ and that $|b_2|_{x^{\mrm{mer}}}^a\neq 0$, so $x^{\mrm{mer}}\in \N{b_1}{b_2}$ in case $x^{\mrm{mer}}$ exists. Moreover, if $y\in x^{\mrm{For}}$ then $\mathbf{def}(y)=\mathbf{supp}(x)$ so that $|b_1|_y^a<1$, $|b_2|_y^a=1$, and $x^{\mrm{For}}\in \N{b_1}{b_2}$.

	We prove the left to right side, let $y\in \I{x}$. With classical localizations we deduce $\mathbf{supp}(y)\subseteq \mathbf{supp}(x)$, and if $x$ is d-analytic we claim that $\mathbf{supp}(y)=\mathbf{supp}(x)$. Indeed, let $b\in B$ such that $|b|_x^a\notin \{0,1\}$, and let $b_1\in \mathbf{supp}(x)$. If $|b|_x^a<1$ then $|b|_y^a<1$, which implies that $y$ is d-analytic. Additionally, the inequalities $|b_1|_y^a<|b^n|_y^a$ hold for all $n$ since $x\in \N{b_1}{b^n}$. Similarly, if $1<|b|_x^a$ then $1<|b|_y^a$ and $|b_1\cdot b^n|_y^a<|b|_y^a$ hold instead. In both cases, the archimedean property of rank $1$ valuations prove $b_1\in \mathbf{supp}(y)$. Since the only generizations of $h(x)$ in $\Spa{B}$ with the same support are v.g. we get $h(y)\in \Iv{h(x)}$ and $y\in \Iv{x}$ for $x$ d-analytic. 

	Suppose $x$ is discrete, if $\mathbf{supp}(x)=\mathbf{supp}(y)$ we can reason as above. Let $b\in \mathbf{supp}(x)\setminus \mathbf{supp}(y)$. Since $x\in \N{b}{1}$ we have $0<|b|_y^a<1$ and that $y$ is d-analytic. For all $b_1\in B$ $|b\cdot b_1^n|_y^a<1$ holds and $y$ is formal with $\mathbf{supp}(x)\subseteq \mathbf{def}(y)$. If $b_2\notin \mathbf{supp}(x)$ then $x\in \U{b}{b_2^n}$ for all $n$, giving $|b_2|_y^a=1$ and $\mathbf{def}(y)=\mathbf{supp}(x)$. Let $z=y_{\mrm{for}}$ then $\mathbf{supp}(z)=\mathbf{supp}(x)$ and it follows from the construction of horizontal specializations that $z\in \I{x}$. As above, $h(z)$ is a v.g. of $h(x)$, and since both $z$ and $x$ are discrete then $z$ is a v.g. of $x$. In other words, $z\in \Iv{x}$ and $y\in z^{\mrm{For}}$.
\end{proof}
The olivine spectrum is compatible with completion and rational localization. 
\begin{pro}
	\label{pro:completionworks}
	If $\Hub{\hat{B}}$ denotes the completion of $\Hub{B}$, then $\Spo{\hat{B}}=\Spo{B}$.
\end{pro}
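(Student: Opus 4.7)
The plan is to show that the natural map $\phi : \Spo{\hat{B}} \to \Spo{B}$ induced by functoriality of $\mrm{Spo}$ is a homeomorphism. For the bijection on underlying sets, we invoke Huber's classical result that $\Spa{\hat{B}} \to \Spa{B}$ is a homeomorphism, coming from the unique extension of continuous valuations along $B \to \hat{B}$. Squaring gives a bijection $\Spa{\hat{B}}^2 \to \Spa{B}^2$, and the conditions defining $\Spo$ inside $\Spa^2$ --- that the second coordinate has rank at most $1$ and is a vertical generization of the first --- are intrinsic to the valuations and hence preserved. Continuity of $\phi$ is automatic from functoriality.

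For continuity of $\phi^{-1}$, it suffices to show each basic open of $\Spo{\hat{B}}$ is open in $\Spo{B}$. A classical localization $\U{\hat{b}_1}{\hat{b}_2}$ is the pullback along $h : \Spo{\hat{B}} \to \Spa{\hat{B}}$ of a rational subset of $\Spa{\hat{B}}$; Huber's theorem identifies this with an open of $\Spa{B}$ expressible as a union of rational opens with $B$-coefficients, so pulling back along $h : \Spo{B} \to \Spa{B}$ realizes $\U{\hat{b}_1}{\hat{b}_2}$ as a union of classical localizations of $\Spo{B}$. For an analytic localization $\N{\hat{b}_1}{\hat{b}_2}$ and a point $x$ inside it, fix a ring of definition $B_0$ of $B$ with finitely generated ideal of definition $I = (c_1, \ldots, c_n)$, and approximate $\hat{b}_i$ by $b_i \in B$ with $\hat{b}_i - b_i \in I^k \hat{B}_0$ for some large $k$. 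Since $|\cdot|^a_x$ is continuous on $\hat{B}$ and of rank at most $1$, for $k$ large enough $|\hat{b}_i - b_i|^a_x$ becomes strictly less than $\min(|\hat{b}_2|^a_x, |\hat{b}_2|^a_x - |\hat{b}_1|^a_x)$, forcing $|b_i|^a_x = |\hat{b}_i|^a_x$ by the non-archimedean triangle inequality, and hence $x \in \N{b_1}{b_2}$. To extend this to a $\Spo{B}$-open neighborhood of $x$ contained in $\N{\hat{b}_1}{\hat{b}_2}$, we intersect $\N{b_1}{b_2}$ with the finitely many basic opens $\N{c_{j_1}\cdots c_{j_k}}{b_i}$ indexed by multi-indices of size $k$; on this intersection the expansion $\hat{b}_i - b_i = \sum c_{j_1}\cdots c_{j_k}\, \hat{a}_{j_1\cdots j_k}$ with $\hat{a}_{j_1\cdots j_k} \in \hat{B}_0$ bounded uniformly yields $|\hat{b}_i - b_i|^a_y < |b_i|^a_y$, so $|\hat{b}_i|^a_y = |b_i|^a_y$ and $y \in \N{\hat{b}_1}{\hat{b}_2}$.

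The main obstacle is the approximation argument for analytic localizations, since the differences $\hat{b}_i - b_i$ live in $\hat{B}$ and cannot be tested directly using $\Spo{B}$-basic opens. Finite generation of the ideal of definition $I$ is the essential input, reducing uniform control of these differences to a finite intersection of basic opens with $B$-coefficients, while boundedness of continuous valuations on the ring of definition controls the residual $\hat{B}_0$-coefficients.
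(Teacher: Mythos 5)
Your skeleton (bijection from $\Spa{\hat{B}}=\Spa{B}$, classical localizations via Huber's comparison of rational subsets, analytic localizations via approximation modulo a power of a finitely generated ideal of definition plus a finite intersection of basic opens with $B$-entries) is the same as the paper's, but the key estimate at the end is not justified as written. You need, for every $y$ in your candidate neighborhood, $|c_{j_1}\cdots c_{j_k}\,\hat{a}_J|^a_y<|b_2|^a_y$, and you deduce it from $|c_J|^a_y<|b_2|^a_y$ together with the claim that the coefficients $\hat{a}_J\in\hat{B}_0$ are ``bounded uniformly'', citing ``boundedness of continuous valuations on the ring of definition''. That principle is false in this setting: points of $\Spo{B}$ are only guaranteed to satisfy $|B^+|^a_y\le 1$, and a ring of definition need not lie in $B^+$. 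For example, take $B=\Zp[X]$ with the $p$-adic topology, $B_0=\Zp[X]$, and $B^+$ the integral closure of $\Zp+pB$: the rank-$1$ valuation $|f|=2^{\deg(f\bmod p)}$ is continuous, satisfies $|B^+|\le 1$, and gives an unbounded meromorphic point of $\Spo{B}$ with $|X|^a=2$ even though $X\in B_0$. So the inequality you need does not follow from the reason you give. It is in fact true, but proving it requires an argument you have not supplied: either replace $B_0$ by $B_0\cap B^+$ (still a ring of definition with a finitely generated ideal of definition; then $\hat{B}_0=B_0+I^m\hat{B}_0$ together with continuity gives $|\hat{B}_0|^a_y\le 1$ for every $y$), or argue by cases on the support of $y$ --- if the support is open it contains every generator $c_j$, so the whole error term has $a$-value $0$, while if $y$ is analytic some $|c_j|^a_y\neq 0$ and power-boundedness of $\hat{a}_J$ then forces $|\hat{a}_J|^a_y\le 1$. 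Note that the paper's proof is organized precisely to sidestep this issue: it first moves the denominator into $B$ using the classical localizations $\U{f_x}{f}\cap\U{f}{f_x}$, arranges $|i_k|^h_x\le|f|^h_x$, approximates the numerator modulo $I^2\hat{B}_0$, and then bounds each term $i_ji_k\hat{a}$ by pairing one generator with $|i_j|^a_y\le|f|^a_y$ (coming from the classical localizations $\U{i_j}{f}$) while the leftover factor $i_k\hat{a}\in I\hat{B}_0$ is topologically nilpotent and hence of $a$-value $<1$ at any continuous valuation of rank $\le 1$; no bound on $|\hat{B}_0|^a$ is ever needed.

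Two smaller points. Using $b_1$ as a denominator in the extra opens $\N{c_J}{b_1}$ fails when $|\hat{b}_1|^a_x=0$ (e.g.\ $\hat{b}_1=0$), since then $x$ does not lie in those opens, and likewise ``forcing $|b_1|^a_x=|\hat{b}_1|^a_x$'' is not available in that case; it suffices to use $b_2$ as the only denominator and conclude $|\hat{b}_1|^a_y\le\max(|b_1|^a_y,|\hat{b}_1-b_1|^a_y)<|b_2|^a_y=|\hat{b}_2|^a_y$. Also, the quantity $|\hat{b}_2|^a_x-|\hat{b}_1|^a_x$ has no meaning in the value group; what you actually need is just that both approximation errors at $x$ are $<|\hat{b}_2|^a_x$, which continuity provides.
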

\begin{proof}
	Since $\Spa{\hat{B}}=\Spa{B}$ the map $\Spo{\hat{B}}\to \Spo{B}$ is bijective and classical localizations of $\Spa{\hat{B}}$ are open in $\Spa{B}$. It suffices to prove $\N{g}{f}$ is open in $\Spo{B}$ for $f,g\in \hat{B}$. Let $x\in \N{g}{f}$ and $f_x\in B$ with $|f_x|_x^h=|f|_x^h$. We have $\U{f_x}{f}\cap\U{f}{f_x}\cap \N{g}{f}=\U{f_x}{f}\cap\U{f}{f_x}\cap \N{g}{f_x}$, so we may assume $f\in B$. Take a ring of definition $B_0\subseteq B$ and an ideal of definition $I\subseteq B_0$ with $|i_k|_x^h\leq |f|_x^h$ for a finite set of generators $\{i_1\dots i_m\}\subseteq I$. Let $g_x\in B$ such that $g-g_x\in I^2\cdot \hat{B}_0$. Then $(\bigcap_{i} \U{i}{f})\cap \N{g_x}{f}=(\bigcap_{i} \U{i}{f})\cap \N{g}{f}$ so the left hand side is open in $\Spo{B}$. 
\end{proof}
\begin{pro}
	\label{pro:localizationworks}
	Let $s,t_1,\dots,t_n\in B$ defining a rational localization $\Spa{R}:=U(\frac{t_1,\dots,t_n}{s})\subseteq \Spa{B}$. 
	Then $\Spo{R}\to \Spo{B}$ is a homeomorphism onto $h^{-1}(U(\frac{t_1,\dots,t_n}{s}))$.
\end{pro}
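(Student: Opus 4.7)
The strategy is to mirror the classical fact that $\Spa{R} \to \Spa{B}$ is a homeomorphism onto $U(\frac{t_1,\dots,t_n}{s})$, applying it separately to the two coordinates $(|\cdot|^h, |\cdot|^a)$ of a point in $\Spo{B}$ and checking that the compatibility ``$|\cdot|^a$ is a rank $\leq 1$ vertical generization of $|\cdot|^h$'' is preserved. By \Cref{pro:completionworks} we may replace $R$ by its uncompleted version $B[\frac{t_1}{s},\dots,\frac{t_n}{s}]$ throughout.

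For the pointwise bijection, fix $x = (|\cdot|^h, |\cdot|^a) \in \Spo{B}$ with $h(x) \in U(\frac{t_1,\dots,t_n}{s})$. The $h$-component extends uniquely to a continuous valuation on $R$ by the standard theory of rational localizations. Since $|\cdot|^a$ is a vertical generization of $|\cdot|^h$ it has the same support, so $|s|^a \neq 0$, and the inequalities $|t_i|^h \leq |s|^h$ descend to $|t_i|^a \leq |s|^a$ under the order-preserving quotient $\Gamma_h \twoheadrightarrow \Gamma_a$. Hence $|\cdot|^a$ also extends uniquely to $R$. A short computation (every element of $R$ becomes an element of $B$ after multiplication by a suitable power of $s$) shows that the value groups of each valuation on $R$ and on $B$ coincide, so the convex subgroup of $\Gamma_h$ cutting out $|\cdot|^a$ is the same whether computed on $B$ or on $R$. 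Thus the extended pair is still a rank $\leq 1$ vertical generization, which produces the desired inverse $h^{-1}(U(\frac{t_1,\dots,t_n}{s})) \to \Spo{R}$.

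For the topological comparison, the classical localizations $\U{a}{b}$ with $a, b \in R$ pull back from rational subsets of $\Spa{R}$, which are open in $\Spa{B}$, so the $h$-side is automatic. For an analytic localization $\N{f}{g}$ with $f, g \in R$, write $f = f_B / s^m$ and $g = g_B / s^n$ with $f_B, g_B \in B$; since $|s|^a \neq 0$ uniformly on $\Spo{R}$, the powers of $s$ are invertible in $\Gamma_a$ and one obtains $\N{f}{g} = \N{f_B s^n}{g_B s^m}$, an analytic localization for elements of $B$. Conversely, for $f, g \in B$ the set $\N{f}{g} \subseteq \Spo{R}$ corresponds to $\N{f}{g} \cap h^{-1}(U(\frac{t_1,\dots,t_n}{s})) \subseteq \Spo{B}$ under the bijection. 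The generating opens therefore match and the bijection is a homeomorphism.

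The main obstacle will be controlling the analytic coordinate $|\cdot|^a$ under the passage between $R$ and its uncompleted version, together with the strict-inequality content of the analytic localizations. This will reuse the approximation trick from the proof of \Cref{pro:completionworks}: any element of the completion is $I$-adically close to a $B$-polynomial in the $t_i/s$, and for rank $\leq 1$ valuations this closeness is enough to preserve strict inequalities once we have pinned down $|s|^a$ away from $0$.
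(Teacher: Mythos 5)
Your proposal is correct and follows essentially the same route as the paper's proof: reduce via \Cref{pro:completionworks} to elements of the uncompleted localization, then clear denominators by powers of $s$ (using $|s|^a\neq 0$ on $\Spo{R}$) to express each analytic localization $\N{f}{g}$ of $R$ as the trace of an analytic localization of $B$, the classical localizations being automatic. The extra verification that the pair $(|\cdot|^h,|\cdot|^a)$ extends to $R$ with the rank-$\leq 1$ vertical-generization condition preserved is left implicit in the paper but is correctly handled in your write-up.
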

\begin{proof}
	It suffices to check $\N{r_1}{r_2}\subseteq \Spo{R}$ is open in $\Spo{B}$ for $r_1,r_2\in R$. By \Cref{pro:completionworks} and the construction of rational localizations we may assume $r_1,r_2\in B[\frac{1}{s}]\subseteq R$. 
	Write $r_1=\frac{b_1}{s^{n_1}}$, $r_2=\frac{b_2}{s^{n_2}}$ and let $m=n_1-n_2$. Then $\N{r_1}{r_2}=\N{b_1}{b_2\cdot s^m}\cap \Spo{R}$ when $m\geq 0$ and $\N{r_1}{r_2}=\N{b_1\cdot s^m}{b_2}\cap \Spo{R}$ otherwise.  
\end{proof}
The following example is key to the prove \Cref{lem:sotechinicalitscrazy} and \Cref{thm2:non-analyticthm}. We encourage the reader to workout this example carefully. Recalling \Cref{exa:thebasiccase} under this light might be helpful. 
\begin{exa}
	\label{exa:valuationringsarecool}
	Suppose $B^+\subseteq B$ are valuation rings with $\mrm{Frac}(B)=\mrm{Frac}(B^+)$ both with the discrete topology. We describe $\Spo{B}$ in two steps: first observe that $\Spo{B}\subseteq \Spor{B^+}$ and that it acquires the subspace topology. 
	Indeed, this follows from \Cref{pro:localizationworks} and the identification $\Spo{B}=\cap_{b\in B\setminus B^+}\U{0}{b} \subseteq \Spor{B^+}$. 

Second, we describe $\Spor{B^+}$ explicitly using that all points are bounded and admit a deformation ideal.
Since $B^+$ is a valuation ring elements of $\mrm{Spa}(B^+,B^+)$ are determined by their support and specialization ideals. Consider the map $\Spor{B^+}\to \mrm{Spec}(B^+)^3$ with $q\mapsto (\mathbf{supp}(q),\mathbf{def}(q),\mathbf{sp}(q)).$ The following hold:
\begin{itemize}
	\item The map is injective with image those triples $(q_1,q_2,q_3)$ with $q_1\subseteq q_2\subseteq q_3$, and such that $[q_1,q_2]=\{q_1,q_2\}$ where the left term is an interval for the order defined by containment. 
	\item A triple $q=(q_1,q_2,q_3)$ is meromorphic if and only if $q_1\neq q_2$. In this case, there is $b\in B^+$ with $b\in q_2\setminus q_1$ and $q_2=\sqrt{(b)}$. 
	\item $q\in \Iv{r}$ if $q_1=r_1$, $q_2=r_2$ and $q_3\subseteq r_3$.  
	\item $q=r^{\mrm{mer}}$ if $r_1=q_1=r_2$, $q_2\neq q_1$ and $r_3=q_3$. 
	\item F.g. are unique and $r$ is the f.s. of $q$ if $q_1\neq q_2$, $r_1=q_2=r_2$ and $r_3=q_3$.

\end{itemize}
We describe the open subsets.
\begin{itemize}
	\item If $\frac{f}{g}\in B^+$ then $\U{f}{g}=\U{0}{g}$ and consists of triples $(q_1,q_2,q_3)$ with $g\notin q_1$. 
	\item If $\frac{g}{f}\in B^+$ we can let $b=\frac{g}{f}$ then $\U{f}{g}=\U{1}{b}$ and it consists of triples with $b\notin q_3$. 
\item The families $\{\U{1}{b}\}_{b\in B^+}$ and $\{\U{0}{g}\}_{b\in B^+}$ are nested. In particular, finite intersections of classical localizations have the form $\U{1}{b}\cap \U{0}{g}$ for some $b,g\in B^+$.
	\item When $n=\frac{f}{g}\in B^+$ then $\N{g}{f}$ is empty and $\N{f}{g}=\U{0}{g}\cap \N{n}{1}$. 
	\item The set $\N{n}{1}$ consists of the triples $q=(q_1,q_2,q_3)$ such that $n\in q_2$. 
	\item The family of sets $\{\N{n}{1}\}_{n\in B^+}$ is nested. 
\end{itemize}
In summary, if $x\in U\subseteq \Spor{B^+}$ for $U$ an open subset there are elements $g,b,n\in B^+$ with $x\in \U{0}{g}\cap \U{1}{b}\cap \N{n}{1}\subseteq U$. Moreover, elements of $\U{0}{g}\cap \U{1}{b}\cap \N{n}{1}\subseteq U$ are explicitly described by the constraints: $g\notin q_1$, $b\notin q_3$ and $n\in q_2$.  
\end{exa}

\subsection{Olivine Huber pairs}
For the rest of the section $\Hub{B}$ denotes a complete Huber pair over $\Zp$. 
\begin{pro}
	\label{pro:Tatesaresupereasy}
	If $R$ is a Tate Huber pair, then $h:\Spo{R}\to \Spa{R}$ is a homeomorphism. 
\end{pro}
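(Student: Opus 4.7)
The plan is to verify that $h$ is bijective and open, continuity and surjectivity having already been recorded. The Tate hypothesis enters through the pseudo-uniformizer $\varpi \in R^\times$: since $\varpi$ is a unit, $|\varpi|_y \neq 0$ for every $y \in \Spa{R}$, which forces every point to be analytic (a non-analytic $y$ would have open $\mathbf{supp}(y)$, hence containing some $\varpi^N$, contradicting $|\varpi|_y \neq 0$).

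For bijectivity, the continuous vertical generizations of an analytic $y$ correspond to valuation subrings $K_y^+ \subseteq V \subseteq K_y^\circ$, with the rank of the associated valuation decreasing as $V$ grows. The maximum $V = K_y^\circ$ corresponds to the rank-$1$ valuation $y^a$ coming from the topology of the nonarchimedean field $K_y$. The trivial valuation is also not continuous on $K_y$, since it would send the topologically nilpotent $\varpi$ to $1$. Hence $(y, y^a)$ is the unique preimage of $y$ in $\Spo{R}$ under $h$.

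For openness, I would check that both types of subbasic open subset of $\Spo{R}$ map to open subsets of $\Spa{R}$. The classical localization $\U{b_1}{b_2}$ maps to the rational subset $U(\frac{b_1}{b_2}) \subseteq \Spa{R}$. For the analytic localization, the key identity would be
\[
h(\N{b_1}{b_2}) = \bigcup_{N \geq 1} \Bigl\{ y \in \Spa{R} : |b_1^N|_y \leq |\varpi b_2^N|_y \neq 0 \Bigr\},
\]
exhibiting the image as a union of open subsets of $\Spa{R}$. Under the identification $y \leftrightarrow (y, y^a)$, the condition $|b_1|_y^a < |b_2|_y^a$ is equivalent to $b_1/b_2$ being topologically nilpotent in the Tate field $K_y$, which in turn is equivalent to $(b_1/b_2)^N \in \varpi K_y^+$ for some $N \geq 1$, i.e.\ $|b_1^N|_y \leq |\varpi b_2^N|_y$.

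The main technical hurdle is this last equivalence, which performs a rank lifting from the rank-$1$ quotient $\Gamma_y^a$ back to the possibly higher rank $\Gamma_y$. The pseudo-uniformizer makes this possible: $\{\varpi^n K_y^+\}_n$ is a neighbourhood basis of zero in $K_y$, so the asymptotic condition $(b_1/b_2)^n \to 0$ is witnessed by a single concrete inequality in $\Gamma_y$ as displayed above. This is precisely where the Tate hypothesis powers the argument.
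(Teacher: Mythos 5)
Your proof is correct and follows essentially the same route as the paper: injectivity comes from the Tate hypothesis forcing every continuous valuation to be analytic with a unique rank-$1$ vertical generization, and openness rests on the identity $h(\N{b_1}{b_2})=\bigcup_{N\geq 1}\{y\in \Spa{R}\mid |b_1^N|_y\leq |\varpi b_2^N|_y\neq 0\}$, which is exactly the formula the paper uses. Your extra justification of the rank-lifting equivalence via topological nilpotence of $b_1/b_2$ in the residue field is a correct elaboration of what the paper leaves implicit.
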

\begin{proof}
	Since $\Hub{R}$ is Tate, $\Spa{R}$ has no trivial continuous valuations and $h$ is injective. If $x^a$ is the maximal generization of $x$ in $\Spa{R}$ then $h^{-1}(x)=\{(x,x^a)\}$. It suffices to prove that $h(\N{r_1}{r_2})$ is open. But if $\varpi\in R$ is a topologically nilpotent unit, then $h(\N{r_1}{r_2})=\bigcup_{0<n} \{z\in \Spa{R}\mid |r_1^n|_z\leq |r_2^n \varpi|_z\neq 0\}$.
\end{proof}

We define a canonical map $\pi:|\Spd{B}|\to \Spo{B}$ as follows.\footnote{The topological considerations in what follows can be done purely in the context of adic spaces without the theory of perfectoid spaces. To do this one substitutes $|\Spd{B}|$ by $\Spo{B}'$ where this second space has $\Spo{B}$ as underlying set, but has the strongest topology making maps coming from Tate Huber pairs continuous.}
Given $[x] \in |\Spd{B}|$ represented by a map $x : \mathrm{Spa}(C_x^\sharp , C_x^{\sharp,+}) \to \Spa{B}$, we can define $\pi([x]) = (x(s), x(\eta)) \in \mathrm{Spa}(B, B^+ )^2$ where $s \in \mathrm{Spa}(C_x^\sharp , C_x^{\sharp,+})$ is the closed point and $\eta \in \mathrm{Spa}(C_x^\sharp , C_x^{\sharp,+})$ is the unique rank one point. 
Then $x(\eta)$ has rank $\leq 1$ and is a vertical generization of $x(s)$, so $(x(s), x(\eta)) \in \Spo{B}$.


\begin{pro}
	\label{pro:rightpointsspo}
	The map $\pi:|\Spd{B}|\to \Spo{B}$ defined above is continuous and bijective.
\end{pro}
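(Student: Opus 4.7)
The plan is to establish bijectivity and continuity separately. For bijectivity, the strategy is to observe that an olivine point $(v^h, v^a) \in \Spo{B}$ determines, up to passing to algebraic and perfectoid closures, a canonical affinoid perfectoid field $\Spa{C}$ together with a map $\Spa{C} \to \Spd{B}$ realizing it. Concretely, let $K = K_{v^h}$ be the residue field of $v^h$ with $K^+$ the valuation subring cut out by $v^h$; the rank-$\leq 1$ vertical generization $v^a$ determines the topology by prescribing the rank 1 generization, and in the analytic case one completes with respect to this topology. Taking an algebraic closure, performing a perfectoid completion, and tilting produces a characteristic $p$ affinoid perfectoid field $\Spa{C}$ with a map $\Spa{C} \to \Spd{B}$. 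Surjectivity of $\pi$ follows immediately. For injectivity, given two representatives $\Spa{C_i} \to \Spd{B}$ with the same olivine image, each factors (after algebraic and perfectoid enlargement) through the canonical $\Spa{C}$, providing a common dominating representative, so $[x_1] = [x_2]$ in $|\Spd{B}|$.

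For continuity, it suffices to check preimages of the two generating families of opens of $\Spo{B}$. The classical localizations satisfy $\U{b_1}{b_2} = h^{-1}(U(\tfrac{b_1}{b_2}))$, where $U(\tfrac{b_1}{b_2}) \subseteq \Spa{B}$ is the usual rational open, and the composition $h \circ \pi \colon |\Spd{B}| \to |\Spa{B}|$ coincides with the continuous Scholze surjection recalled in \Cref{rem:diamondconstr}, so the preimage of $\U{b_1}{b_2}$ is open. For analytic localizations, the preimage $\pi^{-1}(\N{b_1}{b_2})$ consists of equivalence classes of untilts $C^\sharp$ in which $b_2^\sharp$ is nonzero at the rank 1 valuation and $b_1^\sharp / b_2^\sharp$ is topologically nilpotent there; this corresponds to an open subfunctor of $\Spd{B}$, representable on affinoid perfectoid test objects as a union of rational opens of the untilt (obtained by choosing a pseudo-uniformizer $\varpi$ of the test object and comparing $b_1^{\sharp,N}$ to $\varpi \cdot b_2^{\sharp,N}$ for varying $N$). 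Its underlying topological subset of $|\Spd{B}|$ is therefore open.

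The main obstacle will be the surjectivity construction when $v^a$ is trivial, i.e., for discrete olivine points. The residue field $K$ of $v^h$ is then only equipped with the discrete topology, so one must manually introduce an auxiliary topologically nilpotent parameter, for instance by embedding $K$ into $K\rpot{t}$ with its $t$-adic topology, before algebraically closing and perfectoidifying. The delicate check is that the composite $B \to K \hookrightarrow C^\sharp$ lands in $(C^{\sharp,\circ})^\times \cup \{0\}$, which is what ensures that the pull-back of the rank 1 valuation of $C^\sharp$ to $B$ is trivial and thus matches $v^a$. A parallel subtlety arises when $v^h$ is non-analytic: one must verify that the specialization and (when applicable) deformation ideals line up correctly with the horizontal specialization relating $v^h$ to $v^a$, so that the final pair of valuations on $B$ is precisely the prescribed olivine point.
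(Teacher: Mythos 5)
Your continuity argument and your surjectivity construction are essentially the paper's: the classical localizations pull back along $h\circ\pi$, which is the map of \Cref{rem:diamondconstr}, the analytic localizations are handled by the same $\varpi$-comparison that proves \Cref{pro:Tatesaresupereasy}, and for surjectivity the paper likewise takes the completed algebraic closure of the residue field $\hat{K}_{h(x)}$, resp.\ of $K_{h(x)}\rpot{t}$ after adjoining a formal pseudo-uniformizer in the discrete case. The delicate checks you flag at the end (image of $K_{h(x)}$ landing in $O_{C^\sharp}^{\times}\cup\{0\}$ when $|\cdot|_x^a$ is trivial, and the bookkeeping of support/specialization/deformation ideals) are indeed the right ones there.

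The gap is in injectivity. You assert that two representatives $y_i:\Spa{C_i}\to\Spd{B}$ with the same olivine image ``factor (after algebraic and perfectoid enlargement) through the canonical $\Spa{C}$, providing a common dominating representative,'' but this single sentence is where all the content lies, and as stated it is not justified. To apply the equivalence relation of \Cref{defi:diamondtop} you must produce, for each $i$, an enlargement $\Spa{C_i'}$ together with \emph{surjective} maps of adic spectra $\Spa{C_i'}\to\Spa{C_i}$ and $\Spa{C_i'}\to\Spa{C}$ commuting over $\Spd{B}$; this forces $C_i'^{+}\cap C_i=C_i^{+}$ and $C_i'^{+}\cap C=C^{+}$ simultaneously. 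Since extensions of the valuation $K^{+}_{h(x)}$ (or of $K^{+}_{h(x)}+tK_{h(x)}\pot{t}$) to an algebraic closure are unique only up to conjugacy, the embedding $C^{\sharp}\hookrightarrow C_i'^{\sharp}$ has to be chosen so that the higher-rank valuation rings match, and one must also track the untilt structures and pass to completions; none of this is addressed, and it is precisely the nontrivial part of the proposition, since $C_i$ itself need not contain any copy of the algebraically closed $C$. The paper sidesteps this entirely: it factors both $y_i$ through the pseudo-residue field $\Spd{K_x}$ (with $K_x=\hat{K}_{h(x)}$, resp.\ $K_{h(x)}\rpot{t}$ after a choice of pseudo-uniformizer), observes that $\Spa{K_x}$ is \emph{analytic}, and then invokes the known bijectivity of $|X^\dia|\to|X|$ for analytic pre-adic spaces (\Cref{rem:diamondconstr}) to conclude that the two induced points of $|\Spd{K_x}|$ coincide — so the common refinement is supplied by Scholze's theory rather than built by hand. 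Either import that reduction, or carry out the explicit valuation-theoretic matching (conjugating the embedding to align $C^{+}$ with $C_i'^{+}$ and verifying both surjectivities); without one of these, the injectivity step is incomplete.
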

\begin{proof}
	Continuity follows from \Cref{defi:diamondtop} and \Cref{pro:Tatesaresupereasy}. For injectivity, take points $y_1,y_2:\Spa{C_i}\to \Spd{B}$ with $\pi(y_1)=\pi(y_2)=:x$. Let $(K_{h(x)},K^+_{h(x)})$ be the residue field $\Spa{B}$. The maps $\Hub{B}\to {(C^\sharp_i,C^{\sharp,+}_i)}$ factor through $\Hub{B}\to {(K_{h(x)},K^+_{h(x)})}$. Let $s_i$ be the closed point of $\Spa{C_i}$, we show that the $s_i$ define the same point. We split our analysis in three cases.

	\textit{Case 1:} Suppose that $x$ is analytic. In this case, $s_1$ and $s_2$ map to $h(\pi(x))$ in $\Spa{B}^{\mrm{an}}$. This case follows from the bijectivity of $|X^\dia|\to |X|$ for analytic pre-adic spaces (\Cref{rem:diamondconstr}).
 
 \textit{Case 2:} Suppose that $x$ is meromorphic, then $h(x)$ is non-analytic in $\Spa{B}$. Let $K_{h(x)}^\circ:=\{k\in K_{h(x)}\mid |k|_x^a\leq 1 \}$ since $|\cdot|_x^a$ is non-trivial $K_{h(x)}^\circ\neq K_{h(x)}$. Choose $b\in B$ with $0<|b|_x^a<1$ or $|b|_x^a>1$. The subspace topology of $(K^\circ_{h(x)})\subseteq_{y_i^*} O_{C^\sharp_i}$ is either the $(b)$-adic topology or the $(\frac{1}{b})$-adic topology. After taking completion we get a commutative diagram:
	\begin{center}
		\begin{tikzcd}
			& \Spa{C_1} \ar{d}{p'_1} \arrow[bend left=10]{ddr}{y_1} \\
			\Spa{C_2} \ar{r}{p'_2} \ar[bend right=10]{rrd}{y_2} & \Spd{\hat{K}_{h(x)}}\ar{rd}{\iota_x} \\
			& & \Spd{K_{h(x)}}
		\end{tikzcd}
	\end{center}
	Now, $p'_1(s_1)=p'_2(s_2)$ in $\Spd{\hat{K}_{h(x)}}$. Since $\Spa{\hat{K}_{h(x)}}$ is analytic we may conclude as in the first case.

	\textit{Case 3:} Suppose that $x$ is discrete, in this case $h(x)$ is non-analytic in $\Spa{B}$ and $\Hub{K_{h(x)}}$ has the discrete topology. Since $|\cdot|_x^a$ is trivial, $y^*_i(K_{h(x)})\subseteq O^{\sharp,\times}_{C_i}$. After choosing pseudo-uniformizers $\varpi_i\in O_{C^\sharp_i}$ we may extend the $y_i$ to continuous adic maps of topological rings $p'^*_i:K_{h(x)}\pot{t}\to O_{C^\sharp_i}$ where $K_{h(x)}\pot{t}$ has the $(t)$-adic topology. These induce the following commutative diagram:
	\begin{center}
		\begin{tikzcd}
			& \Spa{C_1} \ar{d}{p'_1} \arrow[bend left=10]{ddr}{y_1} \\
	\Spa{C_2} \ar{r}{p'_2} \ar[bend right=10]{rrd}{y_2} & \mrm{Spd}(K_{h(x)}\rpot{t},K_{h(x)}^++ t\cdot K_{h(x)}\pot{t})\ar{rd}{\iota_x} \\
			& & \Spd{K_{h(x)}}
		\end{tikzcd}
	\end{center}
	Again, $p'_1(s_1)=p_2'(s_2)$ in $\mrm{Spd}(K_{h(x)}\rpot{t},K_{h(x)}^++ t\cdot K_{h(x)}\pot{t})$ which is also analytic. 
	
	The case by case study given above also shows that $\pi$ is surjective. 
	Indeed, we can take a completed algebraic closures of $K_{h(x)}$ ($\hat{K}_{h(x)}$, or $ K_{h(x)}\rpot{t}$ respectively) when $x$ is analytic (meromorphic or discrete respectively).
\end{proof}
\begin{defi}
	Whenever $x$ is d-analytic we let $\Hub{K_x}$ denote $\Hub{\hat{K}_{h(x)}}$, and if $x$ is discrete we let $\Hub{K_x}$ denote $(K_{h(x)}\rpot{t}, K_{h(x)}^++t\cdot K_{h(x)}\pot{t})$ as in the proof of \Cref{pro:rightpointsspo}. In both cases we call $\Hub{K_x}$ the \textit{pseudo-residue field} at $x$.
\end{defi}
\begin{rem}
	The pseudo-residue field map $\Spo{K_x}\to \Spo{B}$ is a homeomorphism onto its image. The functor $\Spd{K_x}\to \Spd{B}$ surjects onto the subsheaf of $\Spd{B}$ consisting of maps that factor through $\Iv{x}$, but when $x$ is discrete the map $\Spd{K_x}\to \Spd{B}$ is not injective. Actually, when $x$ is discrete and $|\cdot|_x^h$ is non-trivial the subsheaf of points that factor through $\Iv{x}$ is not representable by an adic space. 
\end{rem}

\begin{cor}
	\label{cor:verticalgeneriz}
	For any map of Huber pairs $m^*:\Hub{B_1}\to \Hub{B_2}$ the map $\Spof{m}$ is compatible with v.g.. More precisely, if $x\in \Spo{B_2}$, $y=\Spof{m}(x)$ and $y'$ is a v.g. of $y$ then there exist $x'$, a v.g. of $x$, with $\Spof{m}(x')=y'$.
\end{cor}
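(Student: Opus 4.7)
The plan is to construct $x'$ explicitly via the combinatorics of convex subgroups of value groups. Since the a-part of any vertical generization of $x$ is forced to equal $|\cdot|_x^a$, and since $|\cdot|_{y'}^a=|\cdot|_y^a=|m^*(-)|_x^a$, the problem reduces to producing an appropriate v.g.\ $|\cdot|_{x'}^h$ of $|\cdot|_x^h$ in $\Spa{B_2}$ that projects to $|\cdot|_{y'}^h$, together with the check that the pair $(|\cdot|_{x'}^h, |\cdot|_x^a)$ defines a valid point of $\Spo{B_2}$.

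Write $\Gamma_x$ for the value group of $|\cdot|_x^h$ and $\Gamma_y$ for that of $|\cdot|_y^h$; there is a natural inclusion $\Gamma_y\hookrightarrow \Gamma_x$ as the subgroup generated by the nonzero values $|m^*(b)|_x^h$ for $b\in B_1$. Vertical generizations of $|\cdot|_x^h$ in $\Spa{B_2}$ are parameterized by convex subgroups of $\Gamma_x$, and likewise for $|\cdot|_y^h$. Let $H_{y'}\subseteq \Gamma_y$ be the convex subgroup corresponding to $|\cdot|_{y'}^h$, and $H_a\subseteq \Gamma_x$ the convex subgroup giving $|\cdot|_x^a$ as a v.g.\ of $|\cdot|_x^h$ (so $\Gamma_x/H_a$ has rank $\leq 1$). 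Define $H_{x'}\subseteq \Gamma_x$ to be the convex hull of $H_{y'}$, and let $|\cdot|_{x'}^h$ be the corresponding v.g.\ of $|\cdot|_x^h$.

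The verification reduces to two compatibilities. First, because $H_{y'}$ is already convex in $\Gamma_y$, one has $H_{x'}\cap \Gamma_y=H_{y'}$, which yields an identification of the valuation on $B_1$ induced through $|\cdot|_{x'}^h$ with $|\cdot|_{y'}^h$. Second, the convex subgroup of $\Gamma_y$ giving $|\cdot|_y^a$ as a v.g.\ of $|\cdot|_y^h$ is exactly $H_a\cap \Gamma_y$, so from $y'\in \Spo{B_1}$ we deduce $H_{y'}\subseteq H_a\cap \Gamma_y$; since $H_a$ is convex in $\Gamma_x$, the convex hull $H_{x'}$ of $H_{y'}$ must also lie in $H_a$. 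This ensures that $|\cdot|_x^a$ is itself a v.g.\ of $|\cdot|_{x'}^h$ (so $(|\cdot|_{x'}^h,|\cdot|_x^a)$ is in $\Spo{B_2}$), and at the same time that $|\cdot|_{x'}^h$ is continuous, since it lies between the two continuous valuations $|\cdot|_x^h$ and $|\cdot|_x^a$ in the v.g.\ chain.

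The main obstacle is the $\Spo$-compatibility of the lift: a naive v.g.\ of $|\cdot|_x^h$ mapping to $|\cdot|_{y'}^h$ might fail to have $|\cdot|_x^a$ as a further v.g., in which case the pair would fall outside $\Spo{B_2}$. Choosing $H_{x'}$ as the convex hull of $H_{y'}$ (rather than an arbitrary convex subgroup of $\Gamma_x$ restricting to $H_{y'}$) is precisely what forces $H_{x'}\subseteq H_a$ via the chain $H_{y'}\subseteq H_a\cap \Gamma_y\subseteq H_a$, resolving the obstacle and finishing the argument.
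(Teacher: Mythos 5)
Your proposal is correct, but it proves the corollary by a genuinely different route than the paper. The paper's proof is soft: it forms the commutative square of pseudo-residue fields $\Spd{K_x}\to \Spd{K_y}$ over $\Spd{B_2}\to\Spd{B_1}$, invokes the fact that a map of locally spatial diamonds is generalizing (so its image in $|\Spd{K_y}|=\Iv{y}$ contains $y$ and is stable under generization, hence is everything), and identifies $|\Spd{K_x}|=\Iv{x}$; this is short but leans on the pseudo-residue field construction of \Cref{pro:rightpointsspo} and on Scholze's diamond machinery, and it is why the standing hypothesis (complete Huber pair over $\Zp$) appears. Your argument is purely valuation-theoretic: taking $H_{x'}$ to be the convex hull of $H_{y'}$ in $\Gamma_x$ is exactly the right choice, since convexity of $H_{y'}$ in $\Gamma_y$ gives $H_{x'}\cap\Gamma_y=H_{y'}$ (and, via the bound of elements of $H_{x'}$ by elements of $H_{y'}$, the order-compatibility needed to identify the induced valuation on $B_1$ with $|\cdot|^h_{y'}$), while $H_{y'}\subseteq H_a\cap\Gamma_y$ and convexity of $H_a$ force $H_{x'}\subseteq H_a$, so the pair $(|\cdot|^h_{x'},|\cdot|^a_x)$ lands in $\Spo{B_2}$. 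The only step you state without justification is continuity of $|\cdot|^h_{x'}$; it is true and elementary: for convex subgroups $H'\subseteq H$ one has $\{a: (v/H)(a)<\gamma H\}\subseteq\{a:(v/H')(a)<\gamma H'\}$, so continuity of $v/H_a=|\cdot|^a_x$ already makes each such set a neighborhood of $0$, hence open (it is an additive subgroup). What your approach buys is an explicit lift and independence from the diamond formalism (indeed it needs neither completeness nor the base $\Zp$); what the paper's approach buys is brevity by reusing infrastructure it has already built and will use again (pseudo-residue fields, generalizing maps of diamonds).
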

\begin{proof}
	Given $x\in \Spo{B_2}$ and $y\in \Spo{B_1}$ as in the statement we may, after making some choices if necessary, construct the following commutative diagram of pseudo-residue fields:
	\begin{center}
		\begin{tikzcd}
			\Spd{K_x} \ar{r}\ar{d} & \Spd{K_y} \ar{d}\\
			\Spd{B_2}\ar{r} & \Spd{B_1}
		\end{tikzcd}
	\end{center}
	Since the map $\Spd{K_x}\to \Spd{K_y}$ is a map of locally spatial diamonds it is generalizing and consequently surjective. But $|\Spd{K_x}|=\Iv{x}$ and analogously for $y$.
\end{proof}

\begin{lem}
	\label{lem:samegenripattern}
The topological spaces $\Spo{B}$ and $|\Spd{B}|$ have the same generization pattern.
\end{lem}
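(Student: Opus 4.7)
The plan is to leverage the continuous bijection $\pi:|\Spd{B}|\to \Spo{B}$ from \Cref{pro:rightpointsspo}. Continuity of $\pi$ immediately gives one direction: specializations in $|\Spd{B}|$ map to specializations in $\Spo{B}$. For the converse, by \Cref{pro:generizationpattern} the partial order on $\Spo{B}$ is the transitive closure of the three elementary relations (vertical, meromorphic, and formal generization), so it suffices to lift each such elementary specialization to a specialization in $|\Spd{B}|$. The unifying idea is to use the pseudo-residue field construction built in the proof of \Cref{pro:rightpointsspo}: in every case both $x$ and $y$ arise from points of the spectrum of an appropriate analytic Huber pair, within which generization is completely controlled by \Cref{rem:diamondconstr}(3), \Cref{pro:Tatesaresupereasy}, and \Cref{pro:spatial-spectral}(4).

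For a vertical generization $y\in\Iv{x}$, both $x$ and $y$ lie in the image of $\Spo{K_x}\to\Spo{B}$, where $\Hub{K_x}$ is the pseudo-residue field at $x$; since $\Hub{K_x}$ is analytic, we have $|\Spd{K_x}|\cong|\Spa{K_x}|\cong\Spo{K_x}$, and within this locally spatial diamond the specialization $y\preceq x$ is immediate. Pushing forward along the continuous map $|\Spd{K_x}|\to|\Spd{B}|$ yields the desired specialization in $|\Spd{B}|$. The meromorphic and formal cases proceed by the same template with a different choice of pseudo-residue field. For an m.g.\ ($x$ discrete, $y=x^{\mrm{mer}}$), use the analytic Huber pair $\Hub{K_x}=(K_{h(x)}\rpot{t},K_{h(x)}^++tK_{h(x)}\pot{t})$ of Case 3 of \Cref{pro:rightpointsspo}, and observe that both points lift compatibly to $|\Spa{K_x}|$ with the correct specialization. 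For an f.g.\ ($x=y_{\mrm{for}}$), instead use the analytic pseudo-residue field $\Hub{K_y}$ of the d-analytic point $y$, and realize $x$ as a horizontal specialization of the generic rank-one point of $|\Spa{K_y}|$, using the explicit valuation-theoretic description of \Cref{exa:valuationringsarecool}.

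The principal technical obstacle lies in the formal generization case: one must verify that the horizontal specialization of the rank-one point in $|\Spa{K_y}|$ actually maps under $\pi$ to the prescribed discrete point $x=y_{\mrm{for}}\in\Spo{B}$, and not to some unrelated discrete specialization. This is precisely what the valuation-ring picture of \Cref{exa:valuationringsarecool} is designed to track, recording how the support, deformation, and specialization ideals transform through the pseudo-residue field construction. Once this compatibility is established, continuity of $|\Spd{K_y}|\to|\Spd{B}|$ transports the specialization across to $|\Spd{B}|$ and completes the proof.
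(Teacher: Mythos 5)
Your reduction to the three elementary relations via \Cref{pro:generizationpattern}, and your treatment of vertical generizations through the pseudo-residue field, are exactly the paper's argument. The gap is in the meromorphic and formal cases, and it is not a technical verification that can be patched along the lines you sketch: the pseudo-residue field of a point never contains both the d-analytic point and its discrete specialization in its image, so there is no specialization inside $|\Spd{K_x}|$ (or $|\Spd{K_y}|$) to transport. Concretely, the image of $\Spd{K_x}\to\Spd{B}$ is exactly $\Iv{x}$. If $x$ is discrete, every continuous valuation on $(K_{h(x)}\rpot{t},K_{h(x)}^++t\cdot K_{h(x)}\pot{t})$ has $K_{h(x)}\pot{t}$ as a bounded ring of definition, so its rank-$1$ vertical generization is trivial on the field $K_{h(x)}$; hence every point of $|\Spa{K_x}|$ maps to a \emph{discrete} point of $\Spo{B}$ and the meromorphic point $x^{\mrm{mer}}$ is never hit (check this in \Cref{exa:thebasiccase}: the image is $\{t^{\mrm{d}},\eta\}$, missing $t^{\mrm{f}}$). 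Dually, if $y$ is d-analytic, every point of $\Spa{\hat{K}_{h(y)}}$ restricts to a valuation on $B$ with support $\mathbf{supp}(y)$ and nontrivial rank-$1$ part, so the formal specialization $y_{\mrm{for}}$ (which has support $\mathbf{def}(y)\supsetneq\mathbf{supp}(y)$, or trivial rank-$1$ part) lies outside the image; the ``horizontal specialization of the generic rank-one point'' you invoke is a specialization in $\Spa{B}$, not a point of $|\Spa{K_y}|$, and asserting it is a specialization in $|\Spd{B}|$ is precisely the statement to be proved, not an input.

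What is missing is the paper's actual mechanism for these two cases: products of points. Starting from a geometric point $p:\Spa{C}\to\Spa{B}$ over the d-analytic point $x$ and a suitable $\varpi\in C^{\circ\circ}$, one forms $R_\infty^+=\prod_{i=1}^{\infty}C^+$ with pseudo-uniformizer $(\varpi^n)_n$ (for the meromorphic case) and $R_0^+=\prod C^+$ with pseudo-uniformizer $(\varpi^{1/n})_n$ (for the formal case, using that $|B|^a_x\le 1$ so the map factors through $(O_C,C^+)$), producing maps $\Spa{R_\infty}\to\Spa{B}$ and $\Spa{R_0}\to\Spa{B}$. One then computes that the closed points of principal connected components map to $x$, while the closed points of non-principal (ultrafilter) components map to $x_{\mrm{mer}}$, respectively $x_{\mrm{for}}$ — this is where the factorization through the residue fields $C_{\mathcal{U}}$ and the estimates like $\varpi_\infty/\varpi^n\in\prod_{i>n}O_C$ enter. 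Since principal closed points are dense among closed points of the product of points, continuity of $|\Spa{R_\infty}|\to|\Spd{B}|$ forces $x_{\mrm{mer}}$ (resp. $x_{\mrm{for}}$) to lie in the closure of $\{x\}$ in $|\Spd{B}|$. Without some construction of this kind — a single perfectoid space over $\Spd{B}$ whose image contains both points with the specialization visible — your argument cannot close, and this is exactly the subtlety the paper flags when it warns that $|\Spd{C}|$ is not dense in $|\Spdf{C^+}|$.
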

\begin{proof}

	Since $|\Spd{B}|\to \Spo{B}$ is continuous the generization pattern of $|\Spd{B}|$ is smaller than that of $\Spo{B}$, it suffices by \Cref{pro:generizationpattern} to prove that formal, meromorphic and vertical specializations are specializations in $|\Spd{B}|$. For $x\in \Spo{B}$ the pseudo-residue field map $\iota_x:\Spd{{K}_x}\to \Spd{B}$ is a bijection onto $\Iv{x}$ so v.s. are specializations in $\Spd{B}$. 
	Let $x\in \Spo{B}$ be d-analytic and let $b$ such that $|b|_x^a\notin \{0,1\}$. Let $p:\Spa{C}\to \Spa{B}$ be a geometric point mapping to $x$ and let $\varpi\in C^{\circ \circ}$ be either $p^*(b)$ or $\frac{1}{p^*(b)}$. To this choice we will associate two product of points as follows. Let $R^+=\prod_{i=1}^\infty C^+$, let $\varpi_0=(\varpi^{\frac{1}{n}})_{n=1}^\infty$ and $\varpi_\infty = (\varpi^n)_{n=1}^\infty$. Let $R^+_0$ ($R^+_\infty$ respectively) be $R^+$ endowed with the $\varpi_0$-topology ($\varpi_\infty$-topology respectively), and let $R_0=R_0^+[\frac{1}{\varpi_0}]$ ($R_\infty=R_\infty^+[\frac{1}{\varpi_\infty}]$ respectively). We have diagonal maps of rings $C^+\to R^+_\infty$ and $C\to R_\infty$, but we warn the reader that these maps are not continuous. On the other hand, the map $C^+\to R^+_0$ is continuous but $\varpi$ is not invertible in $R_0$ so the map does not extend to a map $C\to R_0$. 

	Suppose that $x$ is meromorphic. 
	The diagonal map $f:B\to K_{h(x)}\to R_\infty$ becomes continuous giving a map $\Spa{R_\infty}\to \Spa{B}$. The space $\pi_0(|\Spa{R_\infty}|)$ is the Stone--\v{C}ech compactification of $\mathbb{N}$ whose elements are ultrafilters of $\bb{N}$. Principal ultrafilters $\{\mathcal{U}_n\}_{n\in \mathbb{N}}$ define inclusions $\iota_n:\Spa{C}\to \Spa{R_\infty}$ that correspond to the $n$th-projection in the coordinate rings. The closed point of a principal connected component maps to $x$ under $\Spof{f}$. We claim that the closed point of a non-principal connected component maps to $x_{\mrm{mer}}$. It suffices to construct a commutative diagram as below:
	\begin{center}
		\begin{tikzcd}
			\Spa{C_\mathcal{U}}\arrow{r} \ar{d} &\Spa{K_{x_{\mrm{mer}}}} \ar{d} \\
			\Spa{R_\infty} \ar{r} & \Spa{K_{h(x)}} \ar{r} & \Spa{B}
		\end{tikzcd}
	\end{center}
	We claim that the natural map $K_{h(x)}\to C_\mathcal{U}$ maps to $O_{C_\mathcal{U}}$. It suffices to prove $\varpi_\infty\cdot K_{h(x)}\subseteq  O_{C_\mathcal{U}}$, 
	and since $K_{h(x)}=K^+_{h(x)}[b,\frac{1}{b}]$  it suffices to prove that $\frac{\varpi_\infty}{\varpi^n}\in O_{C_\mathcal{U}}$ for $n\in \mathbb{N}$. Clearly $\frac{\varpi_\infty}{\varpi^n}\in \prod_{i=n+1}^\infty O_C$ and since the ultrafilter is non-principal complements of finite sets are in $\mathcal{U}$, which proves the claim. 
	
	By letting $t$ map to $\varpi_\infty$ we get a map $K_{h(x)}\rpot{t}\to C_\mathcal{U}$, the intersection of $K_{h(x)}\pot{t}$ with $C_\mathcal{U}^+$ in $O_{C_\mathcal{U}}$ is $K_{h(x)}^++t\cdot K_{h(x)}\pot{t}=K^+_{x_{\mrm{mer}}}$ which gives our factorization. The set of closed points contained in a principal component are dense within the set of closed points of $|\Spa{R_\infty}|$. This gives that m.s. in $\Spo{B}$ are specializations in $|\Spd{B}|$.

	Suppose that $x$ is formal. Since $|B|_x^a\leq 1$ the map $\Hub{B}\to \Hub{C}$ factors through $(O_C,C^+)$ and $\mathbf{def}(x)=B\cap C^{\circ \circ}$. This allows us to define a map $\Spa{R_0}\to \Spa{B}$. As above, we prove that principal components of $\pi_0(\Spa{R_0})$ map to $x$ in $\Spo{B}$ while the non-principal ones map to $x_{\mrm{for}}$, which proves that f.s. are specializations.
	Let $k=O_C/C^{\circ \circ}$ and $k^+=C^+/C^{\circ \circ}$, it suffices to prove that $(O_C,C^+)\to \Hub{C_\mathcal{U}}$ factors as: 
	$$(O_C,C^+)\to \Hub{k} \to (k\rpot{t},k^++t\cdot k\pot{ t})\to \Hub{C_\mathcal{U}}$$
	Now, $\frac{\varpi}{\varpi_0^n}\in \prod_{i=n+1}^\infty O_C$ which implies that $|\varpi|_{\mathcal{U}}\leq |\varpi_0^n|_\mathcal{U}$. Since $\varpi_0$ is a pseudo-uniformizer in $C_\mathcal{U}$ this implies $|\varpi|_\mathcal{U}=0$. Clearly $k\subseteq O_{C_\mathcal{U}}$ and we may send $t$ to $\varpi_0$ to construct our factorization. 
\end{proof}
\begin{pro}
	\label{pro:strongtopoonSpo}
	Let $\Huf{B}$ be a formal Huber pair then $|\Spdf{B}|\to \Spor{B}$ is a homeomorphism.

\end{pro}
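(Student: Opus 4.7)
The plan is to upgrade the continuous bijection $\pi : |\Spdf{B}| \to \Spor{B}$ provided by \Cref{pro:rightpointsspo} to a homeomorphism. Since the generization patterns on both sides agree by \Cref{lem:samegenripattern}, the substantive task is to show that $\pi$ is an open map, i.e.\ that every open of $|\Spdf{B}|$ is a union of preimages of sets of the form $\U{b_1}{b_2}$ and $\N{b_1}{b_2}$.

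My first move would be to split the analysis using the ideal of definition $I = (f_1,\ldots,f_n)$ of $B$. On the analytic locus, the rational localizations inverting each $f_i$ yield Tate Huber pairs, so \Cref{pro:Tatesaresupereasy} together with \Cref{pro:localizationworks} immediately gives the homeomorphism there. This reduces the question to the behavior of $\pi$ at bounded points of $\Spor{B}$, i.e.\ the discrete and formal points, where the $\N{b_1}{b_2}$ opens carry information not visible in the topology of $\Spa(B,B)$.

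For the second step I would use the product-of-points constructions from the proof of \Cref{lem:samegenripattern} to realize formal specializations topologically inside $|\Spdf{B}|$. Concretely, given a bounded point $x \in \Spor{B}$ with pseudo-residue field $\Hub{K_x}$, I would build a product of points $\Spa{R_0} \to \Spdf{B}$ (using the pseudo-uniformizer $\varpi_0 = (\varpi^{1/n})_n$ from a geometric point over $x$, as in the formal case of that proof) whose image is a formal neighborhood of $x$. By \Cref{pro:productofpointsarestrictlytotallydisconn} the source is strictly totally disconnected, and by \Cref{exa:valuationringsarecool} its topology is explicitly described by the triple $(\mathbf{supp},\mathbf{def},\mathbf{sp})$ of nested ideals; in particular, the preimages of $\N{b_1}{b_2}$ are intersections of basic opens of the form $\U{0}{g} \cap \U{1}{b} \cap \N{n}{1}$, and are in particular open.

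The main obstacle, as I anticipate it, lies in packaging these pointwise constructions into a single v-cover $\Spa{R} \twoheadrightarrow \Spdf{B}$ for which the resulting quotient topology on $|\Spdf{B}|$ has no extra opens beyond those detected by preimages of classical and analytic localizations. The finite generation of $I$ is crucial here: it permits coherent choices of pseudo-uniformizers across all bounded points simultaneously and, combined with \Cref{exa:prodpointsbasis}, guarantees that the assembled product of points is in fact a v-cover of $\Spdf{B}$. Once this is in place, the topology on $|\Spdf{B}|$ — which by definition is the quotient of the one on $|\Spa{R}|$ — agrees with $\Spor{B}$ because the pullback of any open in $|\Spdf{B}|$ is already expressible in terms of $\U{b_1}{b_2}$ and $\N{b_1}{b_2}$ openings on the source.
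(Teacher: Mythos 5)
There is a genuine gap at the decisive step. A continuous bijection with matching generization patterns is not automatically a homeomorphism, so everything hinges on showing that any subset $U\subseteq\Spor{B}$ whose preimage in your chosen cover is open is itself a union of finite intersections of sets $\U{b_1}{b_2}$ and $\N{b_1}{b_2}$ with $b_i\in B$. Your final paragraph simply asserts this: the fact that the pullback of $U$ to a strictly totally disconnected v-cover $\Spa{R}$ is open (and expressible via rational subsets of $R$) says nothing about $U$ being cut out by localizations involving elements of $B$ — the opens of $R$ do not descend, and this descent problem is exactly the content of the proposition. (Relatedly, \Cref{exa:valuationringsarecool} describes $\Spo$ of a valuation Huber pair, not the topology of $|\Spa{R}|$ for a product of points, so it cannot be invoked to read off preimages of $\N{b_1}{b_2}$ in your cover.) The reduction of the analytic locus to the Tate case via \Cref{pro:Tatesaresupereasy} and \Cref{pro:localizationworks} is fine but does not touch the hard points, which are the discrete and formal ones.

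The paper's proof supplies precisely the missing mechanism, and it does so with a very specific cover rather than an ad hoc product of points: it uses $Y=\Spf{B\pot{t}}^{t\neq 0}$, an analytic pre-adic space with $|Y|=|Y^\dia|$ surjecting onto $|\Spdf{B}|$. Given an open $U\ni x$ and a lift $y\in Y$, one chooses for each classical or analytic localization through $x$ a quasicompact rational neighborhood of $y$ (with numerators/denominators drawn from $B$ together with powers of $t$) mapping into it; \Cref{lem:samegenripattern} guarantees the total intersection maps into $\I{x}\subseteq U$, and a compactness argument in the patch topology of $\Spf{B\pot{t}}$ reduces to a finite intersection $Z$ contained in the preimage of $U$. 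The crux is then an explicit computation showing $\Spof{f}(Z)$ equals an intersection of sets $\U{b_1^n}{b_2^n\cdot b_3}$ and $\N{b_1}{b_2}$, with the reverse inclusion proved by lifting an arbitrary point $w$ of that intersection into $Z$ by a careful choice of pseudo-uniformizer $\varpi$ satisfying $M\leq|\varpi|_q\leq m$ (using algebraic closedness of $C$ to extract $n$th roots). Without an argument of this kind — i.e., an actual proof that images of suitable opens under the covering map are open in $\Spor{B}$ — your proposal does not establish openness of $\pi$, and "coherent choices of pseudo-uniformizers across all bounded points" remains an unproved assertion rather than a construction.
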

\begin{proof}
	By \Cref{pro:rightpointsspo} the map is a continuous bijection. Let $Y=\Spf{B\pot{t}}^{t\neq0}$ and recall that $|Y|=|Y^\dia|$ since this is an analytic pre-adic space. Let $U$ be open in $|\Spdf{B}|$, let $x\in U$ and let $y\in Y$ mapping to $x$. We construct a neighborhood of $x$ in $U$ open in $\Spor{B}$. Let $f:(B,B)\to (B\pot{t},B\pot{t})$ be the canonical map.
	For $\U{b_1}{b_2}$ or $\N{b_1}{b_2}$ containing $x$ we choose quasicompact neighborhoods of $y$ in $\Spf{B\pot{t}}$, that we denote $U_{b_1,b_2,y}$ and $N_{b_1,b_2,y}$, whose image in $\Spor{B}$ are contained in $\U{b_1}{b_2}$ and $\N{b_1}{b_2}$ respectively. 
	For $\U{b_1}{b_2}$ pick a finite set $S\subseteq B$ and $n\in\mathbb{N}$ such that $|s|_y\leq |b_2|_y$ for $s\in S$, that $|t^{n}|_y\leq |b_2|_y$, and that the ideal generated by $S$ is open in $B$. 
	We let $U_{b_1,b_2,y}=U(\frac{S, t^{n},b_1}{b_2})\subseteq \Spf{B\pot{t}}$. 
	Rational localizations are quasicompact and clearly $\Spof{f}(h^{-1}(U_{b_1,b_2,y}))\subseteq \U{b_1}{b_2}$. For $\N{b_1}{b_2}$ pick a finite set $S$ and $n_1, n_2\in \mathbb{N}$, such that $|b_1^{n_1}|_y\leq |b_2^{n_1}\cdot t|_y$, that $|s|_y\leq |b_2^{n_1}\cdot t|_y$ for $s\in S$, that $|t^{n_2}|_y\leq |t\cdot b_2^{n_1}|_y$ and that $S$ generates an open ideal in $B$. We let $N_{b_1,b_2,y}=U(\frac{S, t^{n_2},b_1^{n_1}}{b_2^{n_1}\cdot t})$. Since $t$ is topologically nilpotent in ${B\pot{t}}$, if $z\in \Spf{B\pot{t}}$ then $|t|_z<1$ and $\Spof{f}(h^{-1}(N_{b_1,b_2,y}))\subseteq \N{b_1}{b_2}$. Notice that $N_{b_1,b_2,y}\subseteq \Spf{B\pot{t}}^{t\neq 0}$.  
	
	Let $X=(\bigcap N_{b_1,b_2,y})\cap (\bigcap U_{b_1,b_2,y})$, then $\Spof{f}(X)\subseteq \I{x}$ and by \Cref{lem:samegenripattern}, also $\Spof{f}(X)\subseteq U$. Now, $\Spof{f}^{-1}(U)$ is open in $\Spf{B\pot{t}}^{t\neq 0}$ and the families, $\{U_{b_1,b_2,y}\cap N_{0,1,y}\}$ and $\{N_{b_1,b_2,y}\}$, consist of quasicompact open subsets of $\Spf{B\pot{t}}^{t\neq0}$. A compactness argument in the patch topology of $\Spf{B\pot{t}}$ proves that a finite intersection is contained in $\Spof{f}^{-1}(U)$. We prove that the image under $\Spof{f}$ of such a finite intersection is open in $\Spor{B}$. 	
	More generally, let $Z=\cap_{i=1}^n V_i$ with $V_i$ of the form $V_{b_{i,1},b_{i,2}}:=\{z\in \Spf{B\pot{t}}^{t\neq 0}\mid \, |b_{i,1}|_z\leq |b_{i,2}|_z\neq 0 \}$ where $b_{i,1}\in B\cup \{t^n\}_{n\in \bb{N}}$ and $b_{i,2}\in B\cup t\cdot B$, we claim that $\Spof{f}(Z)$ is open in $\Spor{B}$. If $b_{i,1},b_{i,2}\in B$ then $V_{b_{i,1},b_{i,2}}=\Spof{f}^{-1}(\U{b_{i,1}}{b_{i,2}})$ and for $Z$ as above we have $\Spof{f}(Z\cap V_{b_{i,1},b_{i,2}})=\Spof{f}(Z)\cap \U{b_{i,1}}{b_{i,2}}$, so we can reduce to the case where each $V_i=V_{b_{i,1},b_{i,2}}$ satisfy that either $b_{i,1}\in \{t^n\}_{n\in \bb{N}}$ or $b_{i,2}=b_2\cdot t$. 
	Let $T_Z^n\subseteq B$ with $b\in T_Z^n$ if either $b_{i,1}=t^n$ and $b=b_{i,2}$ or if $b_{i,1}=t^{n+1}$ and $b_{i,2}=b\cdot t$ for some $i$. Let $T_Z^{\ll}\subseteq \in B\times B$ with $(b_1,b_2)\in T_Z^{\ll}$ if $(b_1,b_2)=(b_{i,1},b_{i,2})$ for some $i$, and let $T_Z^-$ and $T_Z^+$ denote the image of $T_Z^{\ll}$ under the first and second projection maps. We prove that $\Spof{f}(Z)$ is the intersection of all the sets of the form $\U{b_1^{n}}{b_2^{n}\cdot b_3}$ where $(b_1,b_2)\in T_Z^{\ll}$ and $b_3\in T_Z^n$ and all the sets of the form $\N{b_1}{b_2}$, with $(b_1,b_2)\in T_Z^{\ll}$, which proves $\Spof{f}(Z)$ is open. 
	
	It is not hard to see $\Spof{f}(Z)$ is contained in this intersection.  
	To prove the converse, let $w$ be in the intersection, we construct a lift in $Z$. Pick a point $q:\Spa{C}\to \Spf{B}$ over $w$, the choice of $\varpi\in C^{\circ \circ,\times}$ defines a lift of $q$ to $\Spa{C}\to \Spf{B\pot{t}}^{t\neq0}$. If $w$ is discrete then $|b_1|^a_w=0$ for every $b_1\in T_Z^-$ and $|b_2|_w^a=|b_3|_w^a=1$ for every $b_2\in T_Z^+$ and $b_3\in T_Z^n$. In this case, any choice of $\varpi$ defines a lift landing inside of $Z$. 
	If $w$ is d-analytic $\varpi$ must be chosen more carefully. Since $C$ is algebraically closed we may choose $n$th-roots of $(b_3)$ for all $b_3\in T_Z^n$. For a lift of $q$ to land in $Z$, $\varpi$ must satisfy the following:  $|\varpi|_q\leq |(b_3)^{\frac{1}{n}}|_q$ for all $b_3\in T_Z^n$ and $\frac{|(b_1)|_q}{|(b_2)|_q}\leq |\varpi|_q$ for all $(b_1,b_2)\in T_Z^{\ll}$. We let $m$ be the smallest of the values in $\Gamma_q$ of the form $|b_3^\frac{1}{n}|_q$ with $b_3\in T_Z^n$ and we let $M$ be the largest of the values of the form $|\frac{b_1}{b_2}|_q$ with $(b_1,b_2)\in T_Z^{\ll}$. Since $w\in \U{b_1^n}{b_2^n\cdot b_3}$ we have $M\leq m$. Since $w\in \N{b_1}{b_2}$ for all pairs $(b_1,b_2)\in T_Z^{\ll}$ we also have $M<1$. Any $\varpi\in C$ with $|\varpi|_q<1$ and $M\leq |\varpi|_q\leq m$ defines a lift of $q$ in $Z$. 
\end{proof}

\begin{defi}
	Let $\Hub{B}$ be a complete Huber pair over $\Zp$, we say that $\Hub{B}$ is \textit{olivine} if the map $|\Spd{B}|\to \Spo{B}$ is a homeomorphism.
\end{defi}
\begin{ques}
	\label{ques:isallolivine}
Is every complete Huber pair over $\Zp$ an olivine Huber pair?	
\end{ques}

We have enough partial progress answering this question. Although we do not know what to expect in full generality, for the Huber pairs that we consider this is true. Let us clarify. By \Cref{rem:diamondconstr} Tate Huber pairs are olivine. By \Cref{pro:strongtopoonSpo} formal Huber pairs are olivine. By \Cref{pro:localizationworks} if $\Hub{B}\to \Hub{R}$ induces a locally closed immersion $\Spd{R}\subseteq \Spd{B}$ and $\Hub{B}$ is olivine then $\Hub{R}$ is olivine. Moreover, being olivine can be verified locally in the analytic topology of $\Spa{B}$. The following criterion can be used in most circumstances of interest.  
\begin{pro}
	\label{pro:mainoreprop}
	\label{pro:quotienttopologyforgeneralB}
Let $\Hub{B}$ be a complete Huber pair over $\Zp$, suppose it is topologically of finite type over a formal Huber pair $(B_0,B_0)$. Then $\Hub{B}$ is olivine. 
\end{pro}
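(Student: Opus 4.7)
The strategy is to reduce to the formal case, already treated by \Cref{pro:strongtopoonSpo}, by realizing both $|\Spd{B}|$ and $\Spo{B}$ as subspaces of the analogous objects attached to an ambient formal Huber pair. By \Cref{pro:rightpointsspo} the natural map $\pi:|\Spd{B}|\to\Spo{B}$ is a continuous bijection, so I only need to show that $\pi$ is open.

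Using the tft hypothesis, choose a continuous surjection $\phi:A_0\twoheadrightarrow B$ from $A_0:=B_0\langle X_1,\ldots,X_n\rangle$, the $I_0A_0$-adic completion of $B_0[X_1,\ldots,X_n]$, where $I_0\subseteq B_0$ is a finitely generated ideal of definition. Then $(A_0,A_0)$ is a formal Huber pair, and after enlarging $B_0$ inside $B^+$ if necessary we may arrange $\phi(A_0)\subseteq B^+$, so that $\phi$ is a morphism of Huber pairs $(A_0,A_0)\to(B,B^+)$. \Cref{pro:strongtopoonSpo} applied to $(A_0,A_0)$ then yields a homeomorphism $\pi_{A_0}:|\Spdf{A_0}|\to\Spor{A_0}$. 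Let $J:=\ker\phi$ and let $Y\subseteq\Spor{A_0}=|\Spdf{A_0}|$ consist of those points whose support contains $J$. The plan is to show that the natural maps $\iota_o:\Spo{B}\hookrightarrow\Spor{A_0}$ and $\iota_d:|\Spd{B}|\hookrightarrow|\Spdf{A_0}|$ induced by $\phi$ are both topological embeddings with image $Y$; this immediately forces $\pi$ to be a homeomorphism.

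That both images coincide with $Y$ set-theoretically is immediate, since a valuation on $A_0$ factors through $B$ precisely when $J$ lies in its support. For $\iota_o$, picking any lifts $\tilde{b}_i\in A_0$ of $b_i\in B$, valuations killing $J$ cannot distinguish $b_i$ from $\tilde{b}_i$, and so
\[
    \U{b_1}{b_2}=\U{\tilde{b}_1}{\tilde{b}_2}\cap\Spo{B},\qquad \N{b_1}{b_2}=\N{\tilde{b}_1}{\tilde{b}_2}\cap\Spo{B};
\]
every basic open of $\Spo{B}$ is thus the restriction of an open of $\Spor{A_0}$, proving $\iota_o$ is an embedding. For $\iota_d$, I intend to invoke \Cref{pro:Joaoetal}: one first verifies that $Y$ is weakly generalizing in $|\Spdf{A_0}|$, which reduces on a geometric point $\Spa{C}\to\Spdf{A_0}$ to the all-or-nothing observation that valuations on the field $C^\sharp$ have trivial support; then one identifies the resulting closed subsheaf $\Spdf{A_0}\times_{\underline{|\Spdf{A_0}|}}\underline{Y}$ with $\Spd{B}$, using that perfectoid test rings are reduced so that pointwise vanishing of $\phi(J)$ at every point of $\Spa{R^\sharp}$ forces its global vanishing in $R^\sharp$. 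The bijection in \Cref{pro:Joaoetal} then endows $|\Spd{B}|$ with the subspace topology of $Y$ inherited from $|\Spdf{A_0}|$.

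I expect the main obstacle to be executing the identification $\Spd{B}=\Spdf{A_0}\times_{\underline{|\Spdf{A_0}|}}\underline{Y}$ cleanly; this reduces to the assertion that, for a perfectoid Huber pair $(R,R^+)$, a global element of $R^\sharp$ lying in the support of every continuous valuation must be zero, which is a form of reducedness of perfectoid rings.
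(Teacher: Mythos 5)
Your reduction collapses at the very first step: the presentation $\phi\colon A_0=B_0\langle X_1,\dots,X_n\rangle\twoheadrightarrow B$ with $(A_0,A_0)$ a \emph{formal} Huber pair does not exist in general. For a $B_0$-algebra map $B_0[X_1,\dots,X_n]\to B$, $X_i\mapsto b_i$, to extend continuously to the $I_0$-adic completion $A_0$, the images of $I_0$-adically null sequences of coefficients (attached to monomials of unbounded degree) must converge in $B$, and this forces each $b_i$ to be power-bounded; hence the image of any such $\phi$ lies in the closure of $B_0[b_1,\dots,b_n]\subseteq B^\circ$, and surjectivity fails whenever $B\neq B^\circ$. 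But $B\neq B^\circ$ is exactly the situation the proposition is designed for, e.g.\ the Tate pair $(\Qp\langle T\rangle,\Zp\langle T\rangle)$: this is why the definition of ``topologically of finite type over $(B_0,B_0)$'' uses the multiplier rings $B_0\langle T_1,\dots,T_n\rangle_{M_1,\dots,M_n}$, whose variables may have non-integral radius, and why no closed immersion of $\Spd{B}$ into the $\mrm{Spd}$ of a formal Huber pair can be expected. If you instead allow a surjection that is merely an abstract ring map compatible with some coarser topology, then $B$ is no longer $A_0/J$ with the quotient (adic) topology, and your ``immediate'' set-theoretic identification breaks: continuous valuations on $B$ form only a \emph{subset} of the continuous valuations on $A_0$ whose support contains $J$, so neither $\Spo{B}$ nor $|\Spd{B}|$ is the locus $Y$.

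For comparison, the paper's proof copes with the non-adic directions by first reducing, via a closed immersion as you intend, but into $\mrm{Spd}(B_0\langle T_1,\dots,T_n\rangle_{M_1,\dots,M_n},C)$ (which is not formal), and then inducting on $n$: the rational loci $\{|T_1|\le 1\}$ and $\{1\le |T_1|\neq 0\}$ of its adic spectrum are topologically of finite type over the \emph{new} formal bases $B_0\langle T_1\rangle_{\{1\}}$ and $B_0\langle \tfrac{1}{T_1}\rangle_{\{1\}}$ in one variable fewer, and olivineness is local in the analytic topology and stable under rational localization (\Cref{pro:localizationworks}) and closed immersions, with base case \Cref{pro:strongtopoonSpo}. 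Your embedding strategy would only treat pairs that are themselves adic quotients of a formal Huber pair, i.e.\ essentially the case already covered by \Cref{pro:strongtopoonSpo} together with closed-immersion stability. A further (secondary) caution: even granting the presentation, \Cref{pro:Joaoetal} only gives a bijection between weakly generalizing closed subsets and closed subsheaves; it does not by itself say that $|\Spd{B}|$ carries the subspace topology of $Y\subseteq|\Spdf{A_0}|$, so you would still owe an argument that this closed immersion of v-sheaves is a homeomorphism onto its image.
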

\begin{proof}
	By definition, there is $M=\{M_i\}_{i=1}^n$ with $B_0\cdot M_i\subseteq B_0$ open and a strict surjection $f:B_0\langle T_1\dots, T_n\rangle_{M_1,\dots, M_n}\to B$.
	Let $C$ be the ring of integral elements of $B_0\langle T_1\dots, T_n\rangle_{M_1,\dots, M_n}$, then $B^+$ is the integral closure of $f(C)$ in $B$. 
	Since $\Spd{B} \to \mathrm{Spd}(B_0\langle T_1\dots, T_n\rangle_{M_1,\dots, M_n},C)$ is a closed immersion it suffices to prove the claim for $(B_0\langle T_1\dots, T_n\rangle_{M_1,\dots, M_n},C)$. We proceed by induction the base case being \Cref{pro:strongtopoonSpo}. 
	Let $\Spa{R}$ be the rational localization corresponding to $\{x\in \Spa{B} \mid |T_1|_x\leq |1|_x\neq 0\}$, then $\Hub{R}$ is olivine by induction. Indeed, $\Hub{R}=(A_0\langle T_2,\dots,T_n\rangle_{M_2,\dots,M_n}, C')$ for $(A_0,A_0)=(B_0\langle T_1\rangle_{\{1\}},B_0\langle T_1\rangle_{\{1\}})$ which is a formal. Let $\Spa{S}=\{x \in \Spa{B}\mid |1|_x\leq |T_1|_x\neq 0\}$. If we let $A_0=B_0\langle \frac{1}{T_1}\rangle_{\{1\}}$ then we may rewrite $\Spa{S}$ as the locus of points in $$\mathrm{Spa}(A_0\langle T_2,\dots,T_n\rangle_{M_2,\dots,M_n},C'')$$ such that $m\leq \frac{1}{T_1}\neq 0$ for $m\in M_1$. By induction $(A_0\langle T_2,\dots,T_n\rangle_{M_2,\dots,M_n},C'')$ is olivine, and since rational localizations preserve olivine Huber pairs we conclude $\Hub{S}$ is olivine. 
\end{proof}

\begin{rem}
	For an arbitrary Huber pair $(B,B^+)$ with $B_0$ a ring of definition we can consider the commutative diagram
\begin{center}
	\begin{tikzcd}
		\mid \Spd{B}\mid \ar{r}\ar{d} & \Spo{B}\ar{d} \\
	\varprojlim_i	\mid \mrm{Spd}(B_i,B_i^+)\mid \ar{r} &  \varprojlim_i \mrm{Spo}(B_i,B_i^+) 
	\end{tikzcd}
\end{center}
where $(B_i,B_i^+)$ ranges over all subrings of $B$ that are topologically of finite type over $B_0$. By \Cref{pro:mainoreprop} the bottom horizontal arrow is a homeomorphism and one can verify directly that the right vertical arrow is also a homeomorphism. It is not clear to us if the left vertical arrow is a homeomorphism or not since taking limits of v-sheaf does not necessarily commute with taking underlying topological spaces. Adding to the complexity of the situation the transition maps $\mrm{Spd}(B_i,B_i^+)\to \mrm{Spd}(B_j,B_j^+)$ might not be quasicompact. Counterexample to \Cref{ques:isallolivine} should come from this failure. We do not know if letting $B=\Fp[T_1,\dots,T_n,\dots]$ and $B^+=\Fp$ with the discrete topology gives an olivine Huber pair.
\end{rem}

\subsection{Some open and closed subsheaves}
By \cite[Proposition 12.9]{Et} open subsets of $\Spo{B}$ define open subsheaves of $\Spd{B}$, and when $\Hub{B}$ is olivine this association is bijective. Since the formation of $\Spd{B}$ commutes with localization in $\Spa{B}$, one can compute the open subsheaf corresponding to classical localizations. The following lemma describes, in some cases, the open subsheaf associated to analytic localizations. 
\begin{lem}
	\label{lem:representinganalyticnbhoods}
	Suppose that $B^+$ is $I$-adic and that $B\subseteq \mrm{Frac}(B^+)$. Let $b\in B$, let $B^+_b$ be the $(b,I)$-adic completion of $B^+$ and let $B_b=B\otimes_{B^+}B^+_b$. 
	If $\Hub{B_b}$ is Huber, then, $\N{b}{1}\subseteq \Spd{B}$ is represented by $\Spd{B_b}$. This condition is satisfied if $B^+=B$ or if $B$ and $B^+$ are valuation rings.
\end{lem}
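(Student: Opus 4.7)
The plan is to construct a natural morphism $\Spd{B_b} \to \Spd{B}$ from the canonical Huber pair map $(B, B^+) \to (B_b, B^+_b)$, verify that it factors through the open subsheaf $\N{b}{1} \subseteq \Spd{B}$, and establish the reverse factorization via the universal property of the $(b, I)$-adic completion.

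For the easy direction, note that since $B^+_b$ is $(b, I)$-adic, $b$ lies in the ideal of definition and hence is topologically nilpotent in $B^+_b$. Thus for any test object given by a continuous morphism $(B_b, B^+_b) \to (C^\sharp, C^{\sharp,+})$ from an affinoid perfectoid untilt, the image of $b$ lies in $C^{\sharp, \circ\circ}$, forcing $|b|^a_x < 1$ at every $x \in \mrm{Spa}(C^\sharp, C^{\sharp,+})$. Under the identification $|\Spd{B}| \cong \Spo{B}$ of \Cref{pro:rightpointsspo}, this shows the morphism $\Spd{B_b} \to \Spd{B}$ lands in the open subsheaf $\N{b}{1}$.

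For the main direction, consider a test affinoid perfectoid $\mrm{Spa}(C, C^+) \to \N{b}{1} \subseteq \Spd{B}$, encoded by an untilt $C^\sharp$ and a continuous morphism $f: (B, B^+) \to (C^\sharp, C^{\sharp,+})$ with $|f(b)|_x < 1$ for every $x$. In an affinoid perfectoid, this condition forces $f(b) \in C^{\sharp, \circ\circ}$, and combined with $f(I) \subseteq C^{\sharp, \circ\circ}$ from continuity, the ideal $(b, I) \subseteq B^+$ maps into $C^{\sharp, \circ\circ}$. Consequently $f^+: B^+ \to C^{\sharp,+}$ is continuous for the $(b, I)$-adic topology on the source, and by completeness of $C^{\sharp,+}$ the universal property of completion gives a unique continuous extension $B^+_b \to C^{\sharp,+}$. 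Pairing this with $f:B \to C^\sharp$ via the tensor product yields the unique ring map $B_b = B \otimes_{B^+} B^+_b \to C^\sharp$ compatible with $f$; continuity as a map of Huber pairs holds because the ideal of definition $(b, I) \cdot B^+_b$ still maps into $C^{\sharp, \circ\circ}$. This produces the inverse factorization and hence the identification $\Spd{B_b} \cong \N{b}{1}$.

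Finally, for the assertion on when $(B_b, B^+_b)$ is Huber: in the case $B^+ = B$, we have $B_b = B^+_b$, which is a $(b, I)$-adically complete adic ring, so the pair is trivially Huber. In the valuation ring case, $B = B^+[S^{-1}]$ for some multiplicative subset $S \subseteq B^+$ (since $B^+ \subseteq B$ are both valuation subrings of $\mrm{Frac}(B^+)$), the completion $B^+_b$ remains a valuation ring, and $B_b \cong B^+_b[S^{-1}]$ inherits a Huber topology with $B^+_b$ as an open and bounded ring of definition. The main obstacle I anticipate is the careful bookkeeping required to verify that the map from the pushout $B \otimes_{B^+} B^+_b$ is continuous in the topology coming from the assumed Huber structure, not merely a ring homomorphism; this boils down to the observation that the ideal $(b, I) \cdot B^+_b$ is an ideal of definition, which interfaces cleanly with the topologically nilpotent image in $C^{\sharp, \circ\circ}$.
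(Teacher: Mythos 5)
Your argument is correct in substance and runs parallel to the paper's, but it replaces the paper's key manoeuvre by a global shortcut. The paper handles the converse direction pointwise: for each $x$ it only knows $|f^*b^n|_x\leq|\varpi|_x$ for some $n$ depending on $x$, so it passes to the rational subdomain $U(\tfrac{f^*b^n}{\varpi})$, obtains $(b,I)$-adic continuity of $B^+\to R_1^+$ there, and then glues the local factorizations using injectivity of $\Spd{B_b}\to\Spd{B}$ (density of $B$ in $B_b$). You instead assert that for an affinoid perfectoid test object the condition $|f(b)|_x<1$ at every point already forces $f(b)\in C^{\sharp,\circ\circ}$, which lets you invoke the universal property of the $(b,I)$-adic completion once, globally, with no gluing; your uniqueness clause then plays the role of the paper's injectivity step. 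This is a legitimate and slightly cleaner route, but the assertion you use is exactly the subtle point the paper's localization is designed to avoid, and you state it without proof: strict inequality at a single (possibly higher-rank) point gives no uniform bound on powers, so the implication needs the archimedean property of the rank-$1$ generizations together with quasicompactness of $|\mrm{Spa}(C^\sharp,C^{\sharp,+})|$ (the nested rational subsets $\{x:|f(b)^n|_x\leq|\varpi|_x\}$ cover, hence one of them is everything, whence $f(b)^N\in\varpi C^{\sharp,+}$). Equivalently, you are using the standard characterization $C^{\sharp,\circ\circ}=\{g:\,|g|_x<1\text{ for all }x\}$ for a complete Tate--Huber pair; cite or prove it, since without completeness and quasicompactness it fails and your whole converse direction collapses. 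Two smaller glosses: the continuity of $B^+\to C^{\sharp,+}$ for the $(b,I)$-adic topology uses that $(b,I)$ is finitely generated (so finitely many topologically nilpotent images control all high powers), which is worth a sentence; and in the valuation-ring case your claim that $B^+_b$ is a ring of definition of $B^+_b[S^{-1}]$ needs that each $s\in S$ divides a power of the ideal of definition in $B^+_b$ --- otherwise $s$ dies in the completion and $B_b$ degenerates to the zero ring (still Huber, but not for the reason you give); the paper does not prove this final assertion either, so this is only a gap relative to the statement, not relative to the paper's proof.
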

\begin{proof}
Since $B\subseteq B_b$ is dense,
	$\Spd{B_b}\to \Spd{B}$ is injective.
	If $f:\Spa{R}\to \Spd{B}$ factors through $\Spd{B_b}$ then $f^*(b)$ is topologically nilpotent in $R^\sharp$. This gives $\Spof{f}(\Spa{R})\subseteq \N{b}{1}$ and since this happens for all $\Hub{R}\in \mrm{Perf}$, $\Spd{B_b}\to \Spd{B}$ factors through $\N{b}{1}$. 
Conversely, pick $f:\mrm{Spa}(R^\sharp,R^{\sharp,+})\to \Spa{B}$ with $\Spof{f}(\mrm{Spa}(R^\sharp,R^{\sharp,+}))\subseteq \N{b}{1}$. Let $x\in \mrm{Spa}(R^\sharp,R^{\sharp,+})$, let $\varpi\in {R^{\sharp,+}}$ be a pseudo-uniformizer, then $|f^*b^n|_x\leq |\varpi|_x$ for some $n$. 
Let $\Spa{R_1}=U(\frac{f^*b^n}{\varpi})\subseteq \mrm{Spa}(R^\sharp,R^{\sharp,+})$. Now, $B^+\to R_1^+$ is continuous for the $(I,b)$-topology so we get a map $(B_b,B_b^+)\to (R_1,R_1^+)$. This proves $f$ factors locally, and by injectivity it also does globally.
%
\end{proof}
In general the subsheaf $\N{b}{1}$ is not of the form $\Spd{R}$. 
Recall that $\Spo{B}^\dagger\subseteq \Spo{B}$ is the closed subset of bounded points. Observe that $\Spo{B}^\dagger$ is stable under v.g. and by \Cref{pro:Joaoetal} it defines a closed subsheaf of $\Spd{B}$. Let $\Spd{B}^\dagger$ denote this closed subsheaf.
\begin{prop}
	Let $\F:\mrm{Perf}\to \mrm{Sets}$ parametrize triples $(R^\sharp,\iota,f)$ where $(R^\sharp,\iota)$ is an untilt of $R$ and $f:\mrm{Spa}(R^{\sharp,\circ},R^{\sharp,+})\to \mrm{Spa}(B,B^+)$ is a morphism of pre-adic spaces. Then $\F=\Spd{B}^\dagger$. 
\end{prop}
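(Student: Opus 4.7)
The plan is to exhibit mutually inverse natural transformations between $\F$ and $\Spd{B}^\dagger$, exploiting the fact that $(R^{\sharp,\circ},R^{\sharp,+})$ is itself a (non-analytic) Huber pair over $\Zp$ whose analytic locus is exactly $\mrm{Spa}(R^\sharp,R^{\sharp,+})$. Concretely, for an affinoid perfectoid $\Hub{R}$ with untilt $R^\sharp$ and pseudo-uniformizer $\varpi$, the ring $R^{\sharp,\circ}$ carries the $\varpi$-adic topology, and inverting $\varpi$ recovers $R^\sharp$. Restriction to the analytic locus thus defines a natural transformation $\Phi:\F\to \Spd{B}$, and I will show its image is precisely $\Spd{B}^\dagger$.

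For the forward inclusion $\Phi(\F)\subseteq \Spd{B}^\dagger$, the key point is that the underlying ring map $f^*:B\to R^\sharp$ attached to an element of $\F$ factors through $R^{\sharp,\circ}$ by construction. Hence for every $b\in B$ and every continuous valuation $x$ of $R^\sharp$ we have $|f^*b|_x\leq 1$. Tracing through the pseudo-residue field description of $\pi:|\Spd{B}|\to \Spo{B}$ from \Cref{pro:rightpointsspo}, this is precisely the condition that each olivine image point satisfies $|B|^a\leq 1$, i.e., lies in $\Spo{B}^\dagger$. So $\Phi$ factors through $\Spd{B}^\dagger$.

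The substantive step is the reverse inclusion. Starting from $(R^\sharp,\iota,f_0)\in \Spd{B}^\dagger(R,R^+)$ with continuous ring map $f_0^*:B\to R^\sharp$ sending $B^+$ into $R^{\sharp,+}$, I plan to exploit boundedness pointwise to recover a morphism of Huber pairs $(B,B^+)\to (R^{\sharp,\circ},R^{\sharp,+})$. Since every $x\in |\Spa{R^\sharp}|$ maps to a bounded olivine point, we have $|f_0^*b|_x\leq 1$ for all $b\in B$; as $R^\sharp$ is uniform Tate, this characterizes $R^{\sharp,\circ}$, so $f_0^*$ lands in $R^{\sharp,\circ}$. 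Continuity for the $\varpi$-adic topology on $R^{\sharp,\circ}$ is then automatic, since the sets $\{\varpi^n R^{\sharp,\circ}\}_{n\geq 0}$ form a neighborhood basis of $0$ both in $R^\sharp$ and in $R^{\sharp,\circ}$. Together with $f_0^*(B^+)\subseteq R^{\sharp,+}$, this yields a map of Huber pairs, hence a pre-adic map $f:\mrm{Spa}(R^{\sharp,\circ},R^{\sharp,+})\to \Spa{B}$ whose restriction to the analytic locus is $f_0$. The two constructions are visibly inverse.

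The only delicate point is the translation between the olivine boundedness condition and the ring-theoretic condition that $f_0^*(B)\subseteq R^{\sharp,\circ}$; everything else is formal manipulation of Huber pairs. Uniformity and the Tate hypothesis on $R^\sharp$ are used essentially to identify power-boundedness with the pointwise valuative bound, and the earlier examples (\Cref{exa:thebasiccase}, \Cref{exa:valuationringsarecool}) are a useful sanity check that this identification genuinely fails without such a hypothesis.
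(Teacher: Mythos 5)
Your proof is correct, but it follows a genuinely different route from the paper's. The paper never argues section-by-section: it first shows that $\F\to \Spd{B}$ is a closed immersion, by writing $\F=\bb{A}_\Zp^{|B|,\dagger}\times_{\bb{A}_\Zp^{|B|}}\Spd{B}$ and reducing to the single statement that $\bb{A}^{1,\dagger}_\Zp\to\bb{A}^1_\Zp$ is a closed immersion (checked after base change, where the complement is an explicit union of rational opens stable under vertical generization), and then, since $\Spd{B}^\dagger\to\Spd{B}$ is a closed immersion by construction, compares the two closed subsheaves on geometric points, where boundedness literally means $f^*(B)\subseteq O_{C^\sharp}$. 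You instead build an explicit inverse on sections over every affinoid perfectoid $\Hub{R}$, and the price you pay for skipping the closed-immersion bookkeeping is the analytic input that for the uniform Tate ring $R^\sharp$ one has $R^{\sharp,\circ}=\{r\mid |r|_x\le 1 \text{ for all rank }1\text{ points }x\}$; this is standard for perfectoid rings (uniformity, i.e.\ boundedness of $R^{\sharp,\circ}$, is genuinely needed: from $|b^n\varpi|_x<1$ at all points one gets $\{b^n\}\subseteq \varpi^{-1}R^{\sharp,\circ}$, which is bounded only because $R^{\sharp,\circ}$ is). Your argument has the advantage of producing the moduli description of $\Spd{B}^\dagger\Hub{R}$ for arbitrary $\Hub{R}$ directly, not just on geometric points, whereas the paper's argument needs no uniformity and also records along the way that $\F\subseteq\Spd{B}$ is closed. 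One point of precision you should fix in the write-up: the boundedness condition read off from $\Spo{B}^\dagger$ via \Cref{pro:rightpointsspo} is a bound on the rank $\leq 1$ component $|\cdot|^a$, i.e.\ a bound at the rank $1$ (maximal) generizations only; the blanket statements ``$|f^*b|_x\le 1$ for every continuous valuation $x$ of $R^\sharp$'' are false at higher-rank points of $\mrm{Spa}(R^\sharp,R^{\sharp,+})$ even for power-bounded elements (take $r\in R^{\sharp,\circ}\setminus R^{\sharp,+}$). Since both your forward and reverse steps only ever use the rank $1$ bound, this is a wording issue rather than a gap, but it should be stated at rank $1$ throughout.
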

\begin{proof}
	We prove $\F \to \Spd{B}$ is a closed immersion. Let $\bb{A}_\Zp^{|B|}$ parametrize tuples $(R^{\sharp},\iota,x)$ with $(R^\sharp,\iota)$ an untilt and $x:B\to R^\sharp$ a map of sets. Define $\bb{A}_\Zp^{|B|,\dagger}$ similarly with $x:B\to R^{\sharp,\circ}$. We have a basechange identity $\F=\bb{A}_\Zp^{|B|,\dagger}\times_{\bb{A}_\Zp^{|B|}} \Spd{B}$. Since limits preserve closed immersions it suffices to prove $\bb{A}^{1,\dagger}_\Zp\to \bb{A}^1_\Zp$ is a closed immersion, which can be checked after basechange. Let $f_r:\Spa{R}\to \bb{A}^1_\Zp$ defined by $r\in R^\sharp$. Then $\bb{A}^{1,\dagger}\times_{\bb{A}^1_\Zp}\Spa{R}$ is the complement in $\mrm{Spa}(R,R^{\sharp,+})$ of $\bigcup_{\varpi\in R^{\sharp,\circ\circ}} \{x\in \mrm{Spa}(R^\sharp,R^{\sharp,+})\mid |{1}|_x\leq |r\cdot {\varpi}|_x\neq 0\}$. This is and stable under v.g. as we wanted to show.

	Since $\Spd{B}^\dagger$ and $\F$ are closed immersions it suffices to prove they coincide on geometric points. This follows from the definition of the bounded locus. 
\end{proof}


\begin{lem}
	\label{lem:boundedlocusgivesqcqs}
	Let $\Hub{A}$ and $\Hub{B}$ be complete Huber pairs over $\Zp$ and $\Hub{B}\to \Hub{A}$ be an adic morphism. Then $\Spd{A}^\dagger \to \Spd{B}^\dagger$ is representable in spatial diamonds. In particular, it is qcqs.
\end{lem}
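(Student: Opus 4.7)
The plan is to verify representability by base change: for any morphism $\Spa{R}\to\Spd{B}^\dagger$ from an affinoid perfectoid, I want to show that $\Spd{A}^\dagger\times_{\Spd{B}^\dagger}\Spa{R}$ is a spatial diamond. Such a morphism amounts to the choice of a perfectoid untilt $R^\sharp$ of $R$ together with a continuous morphism of complete Huber pairs $g\colon \Hub{B}\to (R^{\sharp,\circ},R^{\sharp,+})$.

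The first ingredient is an intrinsic description of the base: as v-sheaves over $\Spd{\Zp}$ one has a natural isomorphism $\Spa{R}\cong\mrm{Spd}(R^{\sharp,\circ},R^{\sharp,+})^\dagger$. Indeed, both sides parametrize, over a perfectoid $\Hub{S}$, an untilt $S^\sharp$ together with a morphism of Huber pairs $(R^{\sharp,\circ},R^{\sharp,+})\to (S^{\sharp,\circ},S^{\sharp,+})$: any morphism $\Spa{S^\sharp}\to \Spa{R^\sharp}$ restricts to an integral morphism on power-bounded and integral subrings, and conversely any integral morphism extends uniquely by inverting a pseudo-uniformizer of $R^\sharp$.

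The second ingredient is the pushout $(A',A'^+):=\Hub{A}\,\widehat{\otimes}_{\Hub{B}}(R^{\sharp,\circ},R^{\sharp,+})$ in complete Huber pairs, which exists precisely because $\Hub{B}\to \Hub{A}$ is adic: choosing compatible rings of definition $B_0\subseteq A_0$ and an ideal of definition $I\subseteq B_0$ with $I\cdot A_0$ an ideal of definition of $A_0$, one topologizes $A_0\otimes_{B_0}R^{\sharp,\circ}$ by the adic topology generated by the images of $I$ and $R^{\sharp,\circ\circ}$, completes, and takes the integral closure of the image of $A^+\otimes_{B^+}R^{\sharp,+}$; the structural map $(R^{\sharp,\circ},R^{\sharp,+})\to (A',A'^+)$ then inherits adicness. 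By the universal property of this pushout combined with the first ingredient, the fiber product identifies with
\[
\Spd{A}^\dagger\times_{\Spd{B}^\dagger}\Spa{R}\;\cong\;\mrm{Spd}(A',A'^+)^\dagger.
\]

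The last step, and main obstacle, is to verify that $\mrm{Spd}(A',A'^+)^\dagger$ is a spatial diamond. Writing $\Hub{A}$ as a cofiltered union of complete Huber subpairs topologically of finite type over $\Hub{B}$ presents $(A',A'^+)$ as a cofiltered limit of Huber pairs topologically of finite type over the formal Huber pair $(R^{\sharp,\circ},R^{\sharp,+})$; to each such stage \Cref{pro:mainoreprop} applies, giving an olivine Huber pair with spectral underlying $\Spo$-space, and the $\dagger$-locus is then realized via \Cref{pro:Joaoetal} as the closed subsheaf cut out by the weakly generalizing closed subset of bounded points, which is itself spectral as a closed subspace of a spectral space. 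This produces a spatial diamond at each finite stage, and the main subtlety is confirming that spatiality is preserved under the cofiltered limit along the affine qcqs transition maps and that the $(-)^\dagger$ construction commutes with this limit; granted these, quasicompactness and quasiseparatedness of $\Spd{A}^\dagger\to \Spd{B}^\dagger$ follow at once from the corresponding properties of spatial diamonds.
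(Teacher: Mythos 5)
Your key step fails at the ``first ingredient''. The claimed isomorphism $\Spa{R}\cong\mrm{Spd}(R^{\sharp,\circ},R^{\sharp,+})^\dagger$ is false: a continuous ring map $R^{\sharp,\circ}\to S^{\sharp,\circ}$ carrying $R^{\sharp,+}$ into $S^{\sharp,+}$ need not send a pseudo-uniformizer of $R^\sharp$ to a unit of $S^\sharp$ (its image can even be zero), so it does not extend to a map $(R^\sharp,R^{\sharp,+})\to(S^\sharp,S^{\sharp,+})$. Concretely, every point of $\mrm{Spo}(R^{\sharp,\circ},R^{\sharp,+})$ is bounded, so $\mrm{Spd}(R^{\sharp,\circ},R^{\sharp,+})^\dagger=\mrm{Spd}(R^{\sharp,\circ},R^{\sharp,+})$; this v-sheaf has nonempty reduction (maps factoring through $R^{\sharp,\circ}/R^{\sharp,\circ\circ}$ give points of it) and is not a diamond, whereas $\Spa{R}$ is a perfectoid space with empty reduction. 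The error propagates: already for $A=B$ your recipe would compute the fiber product $\Spd{A}^\dagger\times_{\Spd{B}^\dagger}\Spa{R}$ as $\mrm{Spd}(R^{\sharp,\circ},R^{\sharp,+})^\dagger$ instead of the correct answer $\Spa{R}$, and in general $\mrm{Spd}(A\,\widehat{\otimes}_B R^{\sharp,\circ})^\dagger$ is formed over the integral base and need not be a diamond at all, let alone spatial. Your final reduction also leaves the commutation of $(-)^\dagger$ and spatiality with the cofiltered limit as an admitted gap, but that is secondary to the wrong base-change identification.

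The repair is to keep the relative object analytic over $\Spa{R^\sharp}$ rather than over the integral $\mrm{Spd}(R^{\sharp,\circ},R^{\sharp,+})$, which is what the paper does: use adicness to write $\Hub{A}$ as a filtered colimit of Huber pairs topologically of finite type over $\Hub{B}$, so that $\Spd{A}^\dagger=\varprojlim_i\Spd{A_i}^\dagger$ and \cite[Lemma 12.17]{Et} reduces to each $A_i$; a presentation gives a closed immersion into $\mrm{Spd}(B\langle T_1,\dots,T_n\rangle)^\dagger$, reducing to $A_i=B\langle T_1\rangle_{M_1}$ inside $\bb{A}^{1,\dagger}_B$; finally the base change of $\bb{A}^{1,\dagger}_B\to\Spd{B}^\dagger$ along $\Spa{R}\to\Spd{B}^\dagger$ is $\mrm{Spd}(R^\sharp\langle T\rangle,R')$, with $R'$ the minimal ring of integral elements containing $R^{\sharp,+}$ --- the $\dagger$-condition is absorbed into requiring $T$ to take power-bounded values, while the coordinate ring stays Tate, so the fiber product is genuinely a spatial diamond.
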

\begin{proof}
	Since the map $\Hub{B}\to \Hub{A}$ is adic we can write $(A,A^+)$ as a (completion of a) filtered colimit $\varinjlim_{i\in I} \Hub{A_i}$ where each $\Hub{A_i}$ is topologically of finite type over $\Hub{B}$, and the transition maps realize $A_i\to A_j$ as a topological subring for $i<j$. One can see that $\Spd{A}^\dagger=\varprojlim_i \Spd{A_i}^\dagger$ and by \cite[Lemma 12.17]{Et} it suffices to prove that $\Spd{A_i}^\dagger\to \Spd{B}^\dagger$ is representable in spatial diamonds. 
	A presentation of $A_i$ as a topologically of finite type $B$-algebra gives a closed immersion $\Spd{A_i}^\dagger\to \mrm{Spd}(B\langle (T_k)_{k=1}^n\rangle_{M_k})^\dagger$. 
	Since closed immersions are representable in spatial diamonds we may assume $A_i=B\langle T_1\rangle_{M_1}$. There is an open immersion $\Spdf{B\langle T_1\rangle_{M_1}} \to \bb{A}^1_B$ and $\Spd{B\langle T_1\rangle_{M_1}}\cap \bb{A}^{1,\dagger}_B=\Spd{B\langle T_1\rangle_{M_1}}^\dagger$. Clearly, $\bb{A}_B^{1,\dagger}\to \Spd{B}^\dagger$ is representable in locally spatial diamonds, we need to verify it is quasicompact. This can be done after basechanges by affinoid perfectoid. But the basechange by a map $\Spa{R}\to \Spd{B}^\dagger$ is representable by $\mrm{Spd}(R^\sharp\langle T\rangle, R')$ where $R'$ is the minimal ring of integral elements containing $R^{\sharp,+}$. 
\end{proof}

The following statement says that at least the bounded locus of a Huber pair is always olivine. 

\begin{prop}
	\label{pro:bounededsialwaysolivine}
	Suppose that $\Hub{B}$ is a complete Huber pair over $\Zp$. The natural map $$|\Spd{B}^\dagger|\to \Spo{B}^\dagger$$ is a homeomorphism.
\end{prop}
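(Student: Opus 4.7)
The map is continuous and bijective by \Cref{pro:rightpointsspo}, so it remains to show it is open. The plan is to realize both $\Spd{B}^\dagger$ and $\Spo{B}^\dagger$ as cofiltered inverse limits of their analogues for topologically-of-finite-type sub-Huber pairs of $\Hub{B}$, where the conclusion is supplied by \Cref{pro:mainoreprop}.

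Fix a ring of definition $B_0\subseteq B^+$, and let $\{\Hub{B_i}\}_{i\in I}$ range over the directed system of complete sub-Huber pairs of $\Hub{B}$ that are topologically of finite type over $(B_0,B_0)$, with inclusions as transition maps; then $\Hub{B}$ is the completion of $\varinjlim_i \Hub{B_i}$. Using the functor-of-points description of $\Spd{B}^\dagger$ given just before this proposition, together with completeness of $R^{\sharp,\circ}$ for an affinoid perfectoid $\Hub{R}$, an integral morphism $\mrm{Spa}(R^{\sharp,\circ},R^{\sharp,+})\to \Spa{B}$ corresponds bijectively to a compatible system of integral morphisms to the $\Spa{B_i}$, which yields $\Spd{B}^\dagger = \varprojlim_i \Spd{B_i}^\dagger$ as v-sheaves. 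On the topological side, $\Spo{B}^\dagger = \varprojlim_i \Spo{B_i}^\dagger$ as topological spaces: the set-theoretic bijection comes from a parallel argument, and the topologies match because \Cref{pro:completionworks} allows us to approximate elements $b_1,b_2\in B$ generating a basic open by elements of some $B_i$ without changing the associated open set.

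The key input is \Cref{lem:boundedlocusgivesqcqs}: since every inclusion $\Hub{B_i}\hookrightarrow \Hub{B_j}$ and every structure map $(B_0,B_0)\to \Hub{B_i}$ is adic, each transition map $\Spd{B_i}^\dagger \to \Spd{B_j}^\dagger$ (and indeed each $\Spd{B_i}^\dagger \to \Spdf{B_0}$) is representable in spatial diamonds, and in particular qcqs. For cofiltered inverse limits of v-sheaves with qcqs transition maps that are representable in spatial diamonds, the functor $|\cdot|$ commutes with the limit, by the diamond analogue of the standard inverse-limit statement for spectral spaces (cf.\ \cite[Lemma 12.17]{Et} together with the general principle from \S 2 that cofiltered limits of spectral spaces along spectral maps are computed on underlying sets). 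Combining with the homeomorphisms $|\Spd{B_i}^\dagger|\cong \Spo{B_i}^\dagger$ of \Cref{pro:mainoreprop} and passing to inverse limits of homeomorphisms yields the desired $|\Spd{B}^\dagger|\cong\Spo{B}^\dagger$.

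The main obstacle is precisely the step where $|\cdot|$ is commuted past the cofiltered inverse limit. As the remark following \Cref{pro:mainoreprop} warns, this commutation can fail for arbitrary Huber pairs, because the transition maps between the full $\Spd{B_i}$ need not be qcqs. The bounded-locus hypothesis, via \Cref{lem:boundedlocusgivesqcqs}, is exactly what restores qcqs-ness of the transition maps and makes the commutation work; so the crux of the write-up is to cite or establish the qcqs-diamond-limit principle in sufficient generality to cover $\Spd{B_i}^\dagger$, which need not itself be a diamond but is a v-sheaf with spatial-diamond-representable structure map to $\Spdf{B_0}$.
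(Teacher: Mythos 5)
Your reduction is the same one the paper starts with: write $\Hub{B}$ as a filtered colimit of Huber pairs $\Hub{B_i}$ topologically of finite type over $(B_0,B_0)$, note $\Spd{B}^\dagger=\varprojlim_i\Spd{B_i}^\dagger$, invoke \Cref{pro:mainoreprop} for each $\Hub{B_i}$ and \Cref{lem:boundedlocusgivesqcqs} for the transition maps. But the step you yourself flag as ``the crux'' is not a citable fact, and it is precisely the gap: there is no ``qcqs-limit principle'' that applies to the v-sheaves $\Spd{B_i}^\dagger$. The statement \cite[Lemma 12.17]{Et} computes $|\varprojlim X_i|$ only for cofiltered systems of \emph{locally spatial diamonds} with qcqs transition maps, and $\Spd{B_i}^\dagger$ is not a locally spatial diamond: it contains all the non-analytic (discrete) points, e.g.\ the image of $(\Spd{B_i}\red)^\diamond$, whereas quasiseparated diamonds have empty reduction (\Cref{pro:redofdiamond}). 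The remark following \Cref{pro:mainoreprop} is exactly a warning that $|\cdot|$ need not commute with limits of v-sheaves; quasicompactness of the transition maps removes one of the two obstructions listed there but does not by itself give the commutation, and nothing in Section 1 or in \cite{Et} supplies it for this class of v-sheaves. So ``cite or establish the principle in sufficient generality'' is not a loose end to tidy up; it is the missing idea, and your argument does not close it. (Your auxiliary claim $\Spo{B}^\dagger=\varprojlim_i\Spo{B_i}^\dagger$ as topological spaces is also left unverified, though it is the lesser issue.)

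For comparison, the paper never commutes $|\cdot|$ past the limit of the $\Spd{B_i}^\dagger$ at all. It replaces each $\Spd{B_i}^\dagger$ by the punctured open unit disc $\bb{D}^{\times}_{B^\dagger_i}$ over it, which \emph{is} a locally spatial diamond because $\bb{D}^{\times}_{B_0}$ is represented by the analytic space $(\Spf{B_0\pot{t}}^{t\neq 0})^\dia$ and the transition maps are representable in spatial diamonds by \Cref{lem:boundedlocusgivesqcqs}; on that system \cite[Lemma 12.17]{Et} legitimately gives $|\bb{D}^{\times}_{B^\dagger}|=\varprojlim_i|\bb{D}^{\times}_{B^\dagger_i}|$. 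It then shows directly that $\pi\colon|\bb{D}^{\times}_{B^\dagger}|\to\Spo{B}^\dagger$ is a quotient map: an open subset of $\pi^{-1}(S)$ around a given point comes from a finite level $j$, its image under the \emph{open} projection $\pi_j$ is open in $|\Spd{B_j}^\dagger|=\Spo{B_j}^\dagger$ (this is where olivine-ness of $\Hub{B_j}$ from \Cref{pro:mainoreprop} enters), and pulling back along $\Spo{B}^\dagger\to\Spo{B_j}^\dagger$ yields an open neighborhood inside $S$. If you want to repair your write-up, you need some such device — an open surjection onto $\Spd{B}^\dagger$ from a genuine locally spatial diamond compatible with the limit — rather than an appeal to a limit theorem that does not cover $\Spd{B_i}^\dagger$.
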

\begin{proof}
	Let $B_0\subseteq B^+$ be a ring of definition and express $\Hub{B}$ as a filtered colimit $\varinjlim_{i\in J} (B_i,B_i^+)$ with both $B_i$ and $B_i^+$ of finite type over $B_0$, then $\Spd{B}^\dagger= \varprojlim \Spd{B_i}^\dagger$. By \Cref{pro:mainoreprop} each $\Hub{B_i}$ is olivine and by \Cref{lem:boundedlocusgivesqcqs} the transition maps are representable in spatial diamonds. Let $\pi_i:\bb{D}^{\times}_{B^\dagger_i}\to \Spd{B_i}^\dagger$ denote the punctured open unit disc over $\Spd{B_i}^\dagger$. Observe that $\pi_i$ is open. 
	Now, $\bb{D}^{\times}_{B_0}$ is a locally spatial diamond represented by $(\Spf{B_0\pot{t}}^{t\neq 0})^\dia$. In particular, $\bb{D}^{\times}_{B^\dagger_i}$ is also a locally spatial diamond and since the transition maps $\bb{D}^{\times}_{B^\dagger_i}\to \bb{D}^{\times}_{B^\dagger_j}$ are qcqs we see that by \cite[Lemma 12.17]{Et} $|\bb{D}^{\times}_{B^\dagger}|=\varprojlim |\bb{D}^{\times}_{B^\dagger_i}|$. It suffices to prove that $\pi: |\bb{D}^{\times}_{B^\dagger}| \to \Spo{B}^\dagger$ is a quotient map. Let $S\subseteq \Spo{B}^\dagger$ with $\pi^{-1}(S)$ open. For every point $y\in \pi^{-1}(S)$ there is an index $j_y\in J$ and an open subset of $U_y\subseteq  \bb{D}^{\times}_{B^\dagger_i}$ whose preimage in $\bb{D}^{\times}_{B^\dagger}$ is contained in $\pi^{-1}(S)$ and contains $y$. Now, $\pi_{j_y}(U_y)$ is open in $|\Spd{B_i}^\dagger|$ and since $\Hub{B_i}$ is olivine it is also open in $\Spo{B_i}^\dagger$. The preimage of $\pi_{j_y}(U_y)$ in $\Spo{B}^\dagger$ contains $\pi(y)$, is open and it is contained in $h^{-1}(S)$. 
\end{proof}

\subsection{Discrete Huber pairs in characteristic $p$}
For the rest of the subsection $A$ denotes a discrete perfect ring in characteristic $p$ and $A^+\subseteq A$ is integrally closed.
\begin{prop}
	\label{pro:quotientmapishfordiscrete}
	Let $\Hub{A}$ be as above. The projection map $\Spo{A}^\dagger \to \Spa{A}$ is surjective. Moreover, if $\Si\subseteq \Spa{A}$ is stable under arbitrary generization and $h^{-1}(\Si)$ is open in $\Spo{A}^\dagger$ then $\Si$ is open in $\Spa{A}$. 
\end{prop}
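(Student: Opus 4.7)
The plan has two parts.

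For surjectivity, given $y\in\Spa{A}$, set $\mathfrak{p}:=\mathbf{supp}(y)$ and let $v_y$ be the trivial valuation on $A$ with support $\mathfrak{p}$ (so $v_y(a)=1$ if $a\notin\mathfrak{p}$ and $v_y(a)=0$ if $a\in\mathfrak{p}$). This is a rank-zero vertical generization of $y$ (quotient $\Gamma_y$ by itself), and $|A|_{v_y}\subseteq\{0,1\}$ gives boundedness, so $(y,v_y)\in\Spo{A}^\dagger$ lies above $y$.

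For the second assertion, I will use a pro-constructibility argument. Put $C:=\Spa{A}\setminus\Si$; generization-stability of $\Si$ means $C$ is specialization-stable, and the hypothesis on $h^{-1}(\Si)$ implies $h^{-1}(C)$ is closed in $\Spo{A}^\dagger$. Since $\Spa{A}$ is spectral, by \Cref{pro:pro-cons-spec} the goal reduces to showing that $C$ is pro-constructible. Granting that $h:\Spo{A}^\dagger\to\Spa{A}$ is a spectral map of spectral spaces: then $h^{-1}(C)$, being closed, is pro-constructible and hence patch-compact; patch-continuity of $h$ makes $h(h^{-1}(C))$ patch-compact in $\Spa{A}$, hence patch-closed (the patch topology on a spectral space is Hausdorff); and surjectivity identifies this image with $C$.

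The main obstacle is thus the spectrality of $\Spo{A}^\dagger$ and of $h$. The plan here is to reduce to the topologically finitely generated case: by \Cref{pro:bounededsialwaysolivine} one has $\Spo{A}^\dagger=|\Spd{A}^\dagger|$, and writing $\Hub{A}=\varinjlim(A_i,A_i^+)$ as a filtered colimit of Huber pairs topologically of finite type over a ring of definition of $A^+$, \Cref{lem:boundedlocusgivesqcqs} gives $\Spd{A}^\dagger=\varprojlim\Spd{A_i}^\dagger$ with qcqs transitions. For each $i$, $\Hub{A_i}$ is olivine by \Cref{pro:mainoreprop}, so $|\Spd{A_i}^\dagger|$ is spectral with a spectral projection to $\Spa{A_i}$; passing to cofiltered limits then yields the needed spectrality.
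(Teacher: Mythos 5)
Your surjectivity argument is correct: the pair $(y,v_y)$, with $v_y$ the trivial (rank-zero) vertical generization of $y$, is a bounded point of $\Spo{A}$ over $y$; this is essentially the paper's observation, which instead invokes surjectivity of $h$ on all of $\Spo{A}$ and replaces an unbounded meromorphic point by its (bounded) meromorphic specialization. The second half, however, has a genuine gap. Your entire reduction rests on the claim that $\Spo{A_i}^\dagger=|\Spd{A_i}^\dagger|$ is a spectral space and that its projection to $\Spa{A_i}$ is a spectral map, and you justify this by ``$\Hub{A_i}$ is olivine by \Cref{pro:mainoreprop}''. Being olivine only identifies $|\Spd{A_i}|$ with $\Spo{A_i}$ as topological spaces; it gives no spectrality. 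The sheaf $\Spd{A_i}^\dagger$ is not a (locally spatial) diamond, so \Cref{pro:spatial-spectral} does not apply, and \Cref{lem:boundedlocusgivesqcqs} only yields relative representability in spatial diamonds: over $\Fpd$ this makes $|\Spd{A_i}^\dagger\times\Spa{R}|$ spectral for affinoid perfectoid $\Spa{R}$, not $|\Spd{A_i}^\dagger|$ itself, since quotients of spectral spaces need not be spectral. Nowhere does the paper prove that $\Spo{B}^\dagger$ is spectral or that $h$ is quasicompact, and supplying this would require work comparable to the proposition you are proving; there is also the secondary point that you need $|\varprojlim_i\Spd{A_i}^\dagger|=\varprojlim_i|\Spd{A_i}^\dagger|$ together with spectrality of the transition maps, though with qcqs transitions that is the lesser worry.

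The paper avoids this entirely by pulling back to an honest Tate Huber pair: $\mrm{Spd}(A\rpot{t},A^++t\cdot A\pot{t})$ surjects onto $\Spd{A}^\dagger$ (it is the punctured open unit disc over it), the space $Y:=\mrm{Spa}(A\rpot{t},A^++t\cdot A\pot{t})$ is spectral by Huber, and $Y\to\Spa{A}$ is spectral because it is $\mrm{Spa}$ of a morphism of Huber pairs; the openness of $\Si$ is then extracted by a constructible-topology argument on $Y$ in the style of \Cref{pro:strongtopoonSpo}. In fact your patch-topology skeleton becomes a complete and clean proof if you run it on $Y$ instead of on $\Spo{A}^\dagger$: setting $C=\Spa{A}\setminus\Si$, the preimage of $C$ in $|Y|$ (via $|Y|\to\Spo{A}^\dagger\to\Spa{A}$, which agrees with the map induced by $(A,A^+)\to(A\rpot{t},A^++t\cdot A\pot{t})$) is closed, hence pro-constructible, hence patch-compact; its image is patch-closed by spectrality of $Y\to\Spa{A}$; it equals $C$ by surjectivity of $|Y|\to\Spo{A}^\dagger$ together with your part one; and \Cref{pro:pro-cons-spec} applied to the specialization-stable pro-constructible set $C$ finishes. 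As written, though, the spectrality of $\Spo{A_i}^\dagger$ is an unproven and nontrivial input, so the proposal is incomplete.
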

\begin{proof}
	The complement of the bounded locus consists of d-analytic points. Since $A$ has the discrete topology every d-analytic point is meromorphic. If $x\in \Spo{A}$ is meromorphic, then $y:=x_{\mrm{mer}}$ is bounded and satisfies $h(x)=h(y)$. Consequently, $h(\Spo{A})=h(\Spo{A}^\dagger)$. 
	
	Now, observe that $\mrm{Spd}(A\rpot{t},A^++t\cdot A\pot{t})\to \Spd{A}$ surjects onto $\Spd{A}^\dagger$ and represents the punctured open unit ball over it. Consider, $f:\mrm{Spa}(A\rpot{t},A^++t\cdot A\pot{t})\to \Spa{A}$, it suffices to prove that if $\Si$ is stable under generization and $f^{-1}(\Si)$ is open, then $\Si$ is open. The rest of the argument is a variant of the proof of \Cref{pro:strongtopoonSpo}, using only classical localizations. In this case, one exploits the constructible topology of $\mrm{Spa}(A\rpot{t},A^++t\cdot A\pot{t})$. We omit the details.
\end{proof}

\begin{prop}
	\label{prop:Valuationringsareolivine}
	If $A$ and $A^+$ are valuation rings with the same fraction field then $\Hub{A}$ is olivine. 	
\end{prop}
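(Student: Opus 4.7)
The map $\pi:|\Spd{A}|\to \Spo{A}$ is a continuous bijection by \Cref{pro:rightpointsspo}, so it suffices to show it is open. Let $\mathfrak{p}\subseteq A^+$ be the prime with $A=A^+_{\mathfrak{p}}$, available since $A^+\subseteq A$ are both valuation rings with common fraction field.

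The strategy is to cover $\Spo{A}$ by subsets on which olivine-ness is already established. First, \Cref{pro:bounededsialwaysolivine} yields a homeomorphism $|\Spd{A}^\dagger|\xrightarrow{\sim}\Spo{A}^\dagger$ on the bounded locus. Its complement consists of non-bounded points, which are meromorphic since $A$ carries the discrete topology, and since $A^+$ is a valuation ring every such point lies in $\N{c}{1}$ for some $c\in \mathfrak{m}_{A^+}$. By \Cref{lem:representinganalyticnbhoods}, each $\N{c}{1}$ is represented by $\Spd{A_c}$, where $A_c^+$ is the $(c)$-adic completion of $A^+$ and $A_c = A\otimes_{A^+} A_c^+$. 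When $c\notin \mathfrak{p}$, the element $c$ is a unit in $A_c$, so $\Hub{A_c}$ is Tate and olivine by \Cref{pro:Tatesaresupereasy} together with the analytic case in \Cref{rem:diamondconstr}. When $c\in \mathfrak{p}$, the pair $\Hub{A_c}$ is again one of two valuation rings with $A_c^+$ now $(c)$-adically complete; I would reduce its olivine-ness to the previous cases by further decomposing $\N{c}{1}$ via rational or analytic localizations that eventually land in Tate or formal Huber pairs, thereby accessing \Cref{pro:strongtopoonSpo} or \Cref{pro:mainoreprop}.

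To glue, let $V\subseteq |\Spd{A}|$ be open and $x=\pi(v)\in \pi(V)$. If $x$ is meromorphic, the olivine-ness of $\Spd{A_c}$ for appropriate $c$ gives $\pi(V)\cap\N{c}{1}$ as an open neighborhood of $x$ in $\Spo{A}$. If $x$ is bounded, \Cref{pro:bounededsialwaysolivine} produces an open $V_0\subseteq \Spo{A}$ with $x\in V_0$ and $V_0\cap \Spo{A}^\dagger\subseteq \pi(V)$; combined with openness of each $\pi(V)\cap \N{c}{1}$ in $\N{c}{1}$, the explicit neighborhood basis $\{\U{1}{b}\cap \U{0}{g}\cap \N{n}{1}\}_{b,g,n\in A^+}$ from \Cref{exa:valuationringsarecool}, and the generization-stability of $\pi(V)$ coming from \Cref{lem:samegenripattern}, I would refine $V_0$ to an open neighborhood of $x$ contained in $\pi(V)$. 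The hard part will be this last refinement step: one must select $b,g,n$ so that every meromorphic point in the resulting basic open is forced into $\pi(V)$, and this is precisely where the valuation-ring hypothesis on both $A$ and $A^+$ carries its weight.
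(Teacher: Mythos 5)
Your proposal is an outline whose two critical steps are precisely the ones you leave undone, so there is a genuine gap. First, the olivine-ness of $\Hub{A_c}=\N{c}{1}$ when $c\in\mathfrak{p}$ is never established: ``further decomposing via rational or analytic localizations that eventually land in Tate or formal Huber pairs'' is a hope, not an argument, and the pair you land on is of the same shape as the one you started with (two valuation-type rings with a non-discrete ring of integral elements), so nothing guarantees the recursion terminates in a case covered by \Cref{pro:strongtopoonSpo} or \Cref{pro:mainoreprop}. Second, and more seriously, the gluing at a bounded point does not go through with the tools you cite: $\Spo{A}^\dagger$ is \emph{closed}, not open, so \Cref{pro:bounededsialwaysolivine} only produces a set of the form $W\cap\Spo{A}^\dagger$ with $W$ open in $\Spo{A}$, and $W$ may contain unbounded points outside $\pi(V)$. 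Stability of $\pi(V)$ under generization (via \Cref{lem:samegenripattern}) is weaker than openness, and you give no mechanism for choosing $b,g,n$ in the basic open $\U{1}{b}\cap\U{0}{g}\cap\N{n}{1}$ that forces the meromorphic points it contains into $\pi(V)$. You flag this yourself as ``the hard part''; it is exactly the content of the proposition, so the proof is not complete.

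The missing idea, which is what the paper exploits, is that the existence of a single unbounded point already rigidifies the whole situation. If $x\in\Spo{A}\setminus\Spo{A}^\dagger$, pick $\pi\in A$ with $1<|\pi|^a_x$; then $b=\tfrac{1}{\pi}\in\mathfrak{m}_{A^+}$, and the archimedean property of the rank-$1$ valuation $|\cdot|^a_x$ combined with the dichotomy ``$a'b^n\in A^+$ or $(a'b^n)^{-1}\in A^+$'' shows $A=A^+[\tfrac{1}{b}]$. Hence $\Spo{A}$ is the classical localization $\U{0}{b}$ inside $\Spor{A^+}$, which is olivine by \Cref{pro:strongtopoonSpo}, and olivine-ness passes to this localization by \Cref{pro:localizationworks}; if no unbounded point exists, $\Spo{A}=\Spo{A}^\dagger$ and \Cref{pro:bounededsialwaysolivine} alone finishes. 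In particular there is no need to handle $\N{c}{1}$ separately or to glue along the boundary of the bounded locus, which is where your argument stalls.
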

\begin{proof}
	If $\Spo{A}^\dagger=\Spo{A}$, then \Cref{pro:bounededsialwaysolivine} proves that $\Hub{A}$ is olivine. Suppose $x\in \Spo{A}\setminus \Spo{A}^\dagger$, then $x$ is meromorphic and there is $\pi\in A$ with $1<|\pi|^a_x$. We must have $\frac{1}{\pi}\in A^+$ since $A^+$ is a valuation ring and $\pi\notin A^+$. Let $b=\frac{1}{\pi}$, we claim that $A=A^+[\frac{1}{b}]$. By the archimedean property of $|\cdot|^a_x$ for every $a'\in A$ there is a big enough $n\in \bb{N}$ with $|b^n\cdot a' |_x^a<1$. Since $A^+$ is a valuation ring either ${a'}\cdot b^n\in A^+$ or $\frac{1}{a'\cdot b^n}\in A^+$, but the second case contradicts that $|\cdot|_x^a\in \Spa{A}$. 

	By \Cref{pro:strongtopoonSpo}, $\Huf{A^+}$ is olivine and since $\Spo{A}\subseteq \Spor{A^+}$ is the open locus in which $b\neq 0$ we conclude by \Cref{pro:localizationworks} that $\Hub{A}$ is also olivine.
\end{proof}

\begin{lem}
	\label{lem:redZp2}
	There is a unique map $\Spd{A}\to \Zpd$, it is given by $\Spd{A}\to \Fpd\to \Zpd$.
\end{lem}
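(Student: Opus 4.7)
The plan is to split the claim into existence and uniqueness.

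For existence, I first observe that $\Fpd$ is the terminal object of $\topPerf$. Indeed, for any $Y \in \mrm{Perf}$ the set $\Fpd(Y)$ of triples $(Y^\sharp, \iota, f\colon Y^\sharp \to \Spf{\Fp})$ is a singleton: the existence of $f$ forces $p = 0$ in $\mathcal{O}_{Y^\sharp}$, so $Y^\sharp = Y$, and the Huber pair map $\Fp \to \mathcal{O}_Y$ is the unique one as $\Fp$ is initial in rings. The unique morphism $\Spd{A} \to \Fpd$ then produces, upon composition with the canonical closed immersion $\iota\colon \Fpd \hookrightarrow \Zpd$, the map claimed in the statement.

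For uniqueness, let $g\colon \Spd{A} \to \Zpd$ be arbitrary. Since $\iota$ is a closed immersion (hence a monomorphism) and $\Fpd$ is terminal, it is enough to show that $g$ factors through $\iota$; the factoring arrow is then automatically the unique morphism from the existence step. By \Cref{pro:Joaoetal}, $\iota$ corresponds to the weakly generalizing closed subset $\{p = 0\} \subseteq |\Zpd| = \{\eta, s\}$, with $s$ the closed characteristic-$p$ point and $\eta$ the open analytic point (where $p$ is invertible). The factorization is thus equivalent to the topological statement $|g|(|\Spd{A}|) \subseteq \{s\}$, i.e., $g$ assigns a characteristic-$p$ untilt at every geometric point of $\Spd{A}$.

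To check this last condition I would pull $g$ back along a product-of-points v-cover $\Spa{R} \to \Spd{A}$ as in \Cref{exa:prodpointsbasis}. The morphism $g$ is then encoded by a single untilt $R^\sharp$ of $R$ over $\Zp$, subject to a v-descent condition against the two projections from $\Spa{R} \times_{\Spd{A}} \Spa{R}$. By inspection of the definition of $\dia$ applied to the characteristic-$p$ pre-adic space $\Spa{A}$, every $Y$-point of $\Spd{A}$ tautologically carries the untilt $Y^\sharp = Y$, so the canonical characteristic-$p$ choice $R^\sharp = R$ always satisfies the descent. The main obstacle I foresee is ruling out any competing characteristic-$0$ descent; I expect to resolve it by exploiting the abundance of Huber pair automorphisms over $(A, A^+)$ after refining the cover, together with the fact that a functorial characteristic-$0$ untilt would require a functorial choice of pseudo-uniformizer on perfectoid extensions of the discrete characteristic-$p$ ring $A$, which is unavailable.
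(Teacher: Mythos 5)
Your two reduction steps are correct and coincide with how the paper itself frames the problem: $\Fpd$ is the final object of $\topPerf$, $\Fpd\to\Zpd$ is the closed subsheaf attached (via \Cref{pro:Joaoetal}) to the closed point $s\in|\Zpd|$, and so it suffices to show that for every geometric point $\Spa{C}\to\Spd{A}$ the untilt assigned by an arbitrary $g:\Spd{A}\to\Zpd$ has characteristic $p$. But that pointwise statement is the entire content of the lemma, and your proposal stops exactly there: the final paragraph is a declaration of intent (``abundance of Huber pair automorphisms over $\Hub{A}$'', ``no functorial pseudo-uniformizer is available''), not an argument, and you flag it yourself as the main obstacle. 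Note also that your sketch never makes essential use of the standing hypothesis that $A$ carries the discrete topology, yet without it the statement is false: for a characteristic $p$ perfectoid field $C^\flat$ the v-sheaf $\mrm{Spd}(C^\flat,O_{C^\flat})$ is representable and any characteristic $0$ untilt $C^\sharp$ gives a map to $\Zpd$ that does not factor through $\Fpd$. So any correct completion must use discreteness in an essential way.

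The paper supplies the missing mechanism, a convergence trick. Fix a geometric point given by a map $\Hub{A}\to\Hub{C}$ and a pseudo-uniformizer $\varpi\in C$, and form the product of points $R_\infty$ with $R_\infty^+=\prod_{i=1}^\infty C^+$ topologized by $\varpi_\infty=(\varpi^{p^i})_i$, as in the proof of \Cref{lem:samegenripattern}. Because $A$ is discrete, the diagonal $A\to R_\infty$ is continuous, hence defines $\Spa{R_\infty}\to\Spd{A}$, and $g$ assigns to it an untilt of $R_\infty$ whose ideal is generated by a primitive element $\xi=p+[\varpi_\infty^{1/p^k}]\cdot\alpha$ with $\alpha\in W(R_\infty^+)$. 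Each coordinate projection $\iota_i:R_\infty\to C$ recovers the original map $A\to C$ independently of $i$, so by functoriality of $g$ the ideals $(\iota_i(\xi))\subseteq W(C^+)$ all coincide with the ideal $I$ of the untilt assigned at the given geometric point. Since $\iota_i(\xi)=p+[\varpi^{p^i/p^k}]\cdot\iota_i(\alpha)$ converges to $p$ in the $(p,[\varpi])$-adic topology and $I$ is closed, one gets $p\in I$, i.e.\ the untilt has characteristic $p$. It is this limiting argument --- a cover on which the same point of $\Spd{A}$ has infinitely many realizations, together with closedness of the untilt ideal --- that your appeal to automorphisms would in any case have to reproduce (an orbit of $\xi$ under $A$-linear maps must be made to converge to $p$, and one must still tie such a cover to the given geometric point); as written, your proposal does not contain it.
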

\begin{proof}	
	It suffices to prove that $\Spa{C}\to \Spd{A}\to \Zpd$ factors through $\Fpd$ for geometric points. Consider, $\Spa{R_\infty}\to \Spd{A}$ as in the proof of \Cref{lem:samegenripattern}, with $R_\infty^+=\prod_{i=1}^\infty C^+$ and $\varpi_\infty=(\varpi^{p^i})$. The map $\Spa{R_\infty}\to \Spd{A}\to \Zpd$ defines an untilt of $R_\infty$ given by $\xi=p+(\varpi_\infty)^\frac{1}{p^k}\cdot \alpha$ with $\alpha\in W(R_\infty^+)$. For any $i\in \bb{N}$ the projection $\iota_i:R_\infty \to C$ gives an untilt of $C$. Since $\Hub{A}\to \Hub{R_\infty} \xrightarrow{\iota_i}\Hub{C}$ is independent of the projection, all of these untilts agree. This says that the ideal $I_i$ generated by $\iota_i(\xi)$ in $W(C^+)$ agree, we call this ideal $I$. Since $\iota_i(\xi)=p-\varpi^{\frac{p^i}{p^k}}\iota_i(\alpha)$ the sequence $\iota_i(\xi)$ converges to $p$ in the $(p,\varpi)$-adic topology. But the ideal associated to an untilt is closed, so $p\in I$ and $\Spa{C}\to \Zpd$ factors through $\Fpd$.
\end{proof}

\begin{lem}
	\label{lem:discreteperfecthubpairs1}
	Let $\Hub{A}$ be as above and let $\Hub{B}$ be a complete Huber pairs over $\Zp$. Then every morphism of v-sheaves $\Spd{A}\to \Spd{B}$ comes from a unique morphism of Huber pairs $\Hub{B}\to \Hub{A}$.
\end{lem}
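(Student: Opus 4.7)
The plan is to treat uniqueness and existence separately, leveraging the full-faithfulness of $\diamondsuit$ on analytic pre-adic spaces (\Cref{rem:diamondconstr}) applied to suitably chosen Tate perfectoid geometric points of $\Spd{A}$. For uniqueness, suppose $\phi_1, \phi_2 \colon \Hub{B} \to \Hub{A}$ induce the same v-sheaf morphism. For each prime $\mathfrak{p} \subseteq A$ fix an algebraic closure $\bar{K}_\mathfrak{p}$ of the residue field, and form the Tate perfectoid characteristic $p$ pair $\Hub{C_\mathfrak{p}} := (\bar{K}_\mathfrak{p}\rpot{t^{1/p^\infty}}, \bar{K}_\mathfrak{p}^\circ\pot{t^{1/p^\infty}})$; the canonical continuous map $\Hub{A} \to \Hub{C_\mathfrak{p}}$ defines a geometric point $\iota_\mathfrak{p} \colon \Spd{C_\mathfrak{p}} \to \Spd{A}$. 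Analytic full-faithfulness applied to $\phi_i \circ \iota_\mathfrak{p}$ forces the compositions $\Hub{B} \to \Hub{C_\mathfrak{p}}$ to coincide, so extracting constant terms yields $\phi_1(b) \equiv \phi_2(b) \pmod{\mathfrak{p}}$ for every $b$ and every $\mathfrak{p}$. Since perfect $\Fp$-algebras are reduced, $\phi_1 = \phi_2$.

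For existence, given $\eta \colon \Spd{A} \to \Spd{B}$, first reduce: by \Cref{lem:redZp2} the structure map $\Spd{A} \to \Zpd$ factors through $\Fpd$, so $\eta$ factors through $\Spd{B} \times_{\Zpd} \Fpd$. Since $\Spa{A}$ is non-analytic and $(\Spa{B}^{\mathrm{na}})^\diamond \hookrightarrow \Spd{B}$ is a closed immersion (\Cref{rem:diamondconstr}), $\eta$ further factors through $\Spd{B^{\mathrm{na}}}$. Finally, since $A$ is perfect and $R^{\sharp,\circ}$ is perfect for any characteristic $p$ perfectoid $\Hub{R}$, the natural map $\mathrm{Spd}((B^{\mathrm{na}})^{\mathrm{perf}}, \ldots) \to \Spd{B^{\mathrm{na}}}$ is an isomorphism, so we may replace $B$ by $(B^{\mathrm{na}})^{\mathrm{perf}}$ and assume $\Hub{B}$ is itself a discrete perfect Huber pair over $\Fp$; a continuous map to $\Hub{A}$ is then simply a ring map $B \to A$ with $B^+ \mapsto A^+$.

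Now introduce the Tate perfectoid characteristic $p$ pair $\Hub{A_t} := (A\rpot{t^{1/p^\infty}}, A^+\pot{t^{1/p^\infty}})$ and the canonical $j_t \colon \Spd{A_t} \to \Spd{A}$ given by the inclusion of $A$ as constants. Analytic full-faithfulness applied to $\eta \circ j_t$ produces a unique continuous Huber-pair map $\psi_t \colon \Hub{B} \to \Hub{A_t}$; the same construction with a fresh variable $s$ gives $\psi_s \colon \Hub{B} \to \Hub{A_s}$. Embed both into the ambient Tate perfectoid pair $\Hub{A_{t,s}} := (A\rpot{t^{1/p^\infty}, s^{1/p^\infty}}, A^+\pot{t^{1/p^\infty}, s^{1/p^\infty}})$ via the natural inclusions; functoriality of $\eta$, together with the fact that both compositions $\Spd{A_{t,s}} \to \Spd{A_t}, \Spd{A_s} \to \Spd{A}$ agree with the canonical $\Spd{A_{t,s}} \to \Spd{A}$, forces the resulting maps $\Hub{B} \to \Hub{A_{t,s}}$ to coincide. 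Since an element of $A_{t,s}$ lying in both $A_t$ and $A_s$ must be a constant, $\psi_t(b) \in A$ for each $b$, defining the sought ring map $\phi \colon B \to A$; integrality $\phi(B^+) \subseteq A^+$ follows from $\psi_t(B^+) \subseteq A_t^+$.

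The technical heart will lie in correctly setting up the Tate perfectoid pair $\Hub{A_{t,s}}$ (e.g., by completing $A[t^{\pm 1/p^\infty}, s^{\pm 1/p^\infty}]$ at a suitable pseudo-uniformizer such as $t$), verifying that the natural morphism $\Spd{A_{t,s}} \to \Spd{A_t} \times_{\Spd{A}} \Spd{A_s}$ makes the descent step legitimate, and confirming the intersection identity $A_t \cap A_s = A$ inside it; the product-of-points constructions appearing in the proofs of \Cref{lem:samegenripattern} and \Cref{pro:strongtopoonSpo} provide the relevant template.
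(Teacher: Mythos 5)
The central gap is in the existence half: your construction only pins down $\eta$ on the bounded locus of $\Spd{A}$. The chart $j_t:\Spd{A_t}\to \Spd{A}$ with $A_t=A\rpot{t\pthr}$ is not surjective; its image is exactly the bounded locus $\Spd{A}^\dagger$, and in general $\Spd{A}$ has unbounded (meromorphic) points, i.e.\ geometric points $\Spa{C}\to \Spd{A}$ whose associated map $A\to C$ does not land in $O_C$ (the complement of $\Spd{A}^\dagger$ is $\bigcup_{a\in A}\N{1}{a}$, which is typically nonempty). So even granting your extraction of $\phi:\Hub{B}\to\Hub{A}$ from $\psi_t$, you have only shown that $\phi^\dia$ and $\eta$ agree after pullback along $j_t$, i.e.\ on $\Spd{A}^\dagger$; the lemma requires $\eta=\phi^\dia$ on all of $\Spd{A}$, and agreement at the unbounded points is not formal. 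This is exactly the second half of the paper's proof: one checks that $(\eta,\phi^\dia)$ factors through the diagonal of $\Spd{B}\times\Spd{B}$ by evaluating at meromorphic geometric points via the product of points $R_\infty$ of \Cref{lem:samegenripattern}, using that the residue field at a non-principal ultrafilter is bounded and that $C\to C_{\cali{U}}$ is injective. Your reduction step, ``since $\Spa{A}$ is non-analytic, $\eta$ factors through $\Spd{B^{\mrm{na}}}$,'' has the same flaw and in fact begs the question: non-analyticity of $\Spa{A}$ does not prevent a hypothetical exotic v-sheaf map from sending a meromorphic point of $\Spd{A}$ into the open analytic locus $(\Spa{B}^{\mrm{an}})^\dia$; that a morphism out of $\Spd{A}$ must kill $B^{\circ\circ}$ is part of what the lemma asserts, not something available beforehand.

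On the constant-extraction step itself: comparing two independent variables $t$ and $s$ is a legitimate substitute for the paper's device, which instead uses that $f^*(B)\subseteq A\rpot{t\pthr}$ is invariant under the automorphisms $t\mapsto t^{p^{\pm n}}$ over $A$; but your proposed ambient pair does not work as suggested. If you complete $A[t^{\pm\pthr},s^{\pm\pthr}]$ $t$-adically, then $s$ is a unit which is not topologically nilpotent, so $A_s\to A_{t,s}$ is not continuous and the map $\Spd{A_{t,s}}\to\Spd{A_s}$ you need does not exist. One must instead take something like the $(t,s)$-adic completion of $A[t\pthr,s\pthr]$ with $ts$ inverted, and then the deferred claims (perfectoidness, injectivity of $A_t,A_s\hookrightarrow A_{t,s}$, and $A_t\cap A_s=A$) still require proof. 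These points are repairable, but together with the missing diagonal argument above, the proposal as written produces only a candidate $\phi$ agreeing with $\eta$ over $\Spd{A}^\dagger$, which falls short of the statement.
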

\begin{proof}
	Given a map $g:\Spd{A}\to \Spd{B}$ we first construct a map $m:\Spa{A}\to \Spa{B}$, and then prove that $m^\dia=g$. 
	Let $R=A\rpot{t\pthr}$, $R^+=A^++(t\pthr)A\pot{t\pthr}$ and $X=\Spa{R}$. 
	The natural map $X\to \Spd{A}$ surjects onto $\Spd{A}^\dagger$. 
	Since $X$ representable, any $f:X\to \Spd{B}$ is given by an untilt $R^\sharp$ and a map $f^*:(B,B^+)\to (R^\sharp,R^{\sharp,+})$. Let $f$ be induced by $g$, by \Cref{lem:redZp2} the untilt must be $R$.
	Since $f$ factors through $g$, $f^*(B)$ is invariant under automorphism of $R$ over $A$. 
	Take $b\in B$, we show $f^*(b)\in A\subseteq R$. Now, $t^{p^n}\cdot f^*(b)$ is topologically nilpotent for big enough $n$. Replacing by $t\mapsto t^{\frac{1}{p^m}}$ we conclude that $t^{p^n}f^*(b)$ is topologically nilpotent for $n\in \bb{Z}$. This proves that $f^*(b)$ is power-bounded so that $f^*(b)\in A\pot{t\pthr}$. Write $f^*(b)=a_0+t^{\frac{1}{p^m}}q$ with $a_0\in A$ and $q\in A\pot{ t\pthr}$. Now, $t^{\frac{1}{p^m}}q$ converges to $0$ under $t\mapsto t^{p^n}$ so $f^*(b)=a_0$. We get a ring map $m^*:B\to A$. The subspace topology of $A$ in $R$ is discrete, this gives continuity of $m^*$. Moreover, $R^+\cap A=A^+$. We have constructed $m:\Spa{A}\to \Spa{B}$ with $m^\dia=g$ over $\Spd{A}^\dagger$. 

	Consider $(g,m^\dia):\Spd{A}\to \Spd{B}\times \Spd{B}$ we show that $\Spd{A}$ factors through the diagonal $\Delta:\Spd{B}\to \Spd{B}\times \Spd{B}$. We can check this on geometric points $x:\Spd{C}\to \Spd{A}$. Since the maps agree on $\Spd{A}^\dagger$ we can assume that $x$ is meromorphic. Pick a pseudo-uniformizer $\varpi\in C$ and consider $R_\infty$ as in \Cref{lem:samegenripattern}. Consider $\Spa{R_\infty}\to \Spd{A}$ given by the diagonal morphism $A\to \prod_{i=1}^\infty C$. Recall, $C\subseteq_\Delta R_\infty \subseteq \prod_{i=1}^\infty C,$
	and that although $C\subseteq_\Delta R_\infty$ is not continuous the composition $A\to R_\infty$ is. 
	Now, $\Spa{R_\infty}\to \Spd{A}\to \Spd{B}\times \Spd{B}$ gives two maps $f_1,f_2:B\to R_\infty$, and both have to factor through the diagonal $C\subseteq_\Delta R_\infty$. 
	By the proof of \Cref{lem:samegenripattern} the residue field at a non-principal ultrafilter $\cali{U}$ maps to $x_{\mrm{mer}}$. Since $\Spa{C_\cali{U}}\to \Spd{B}\times \Spd{B}$ factors through $\Spd{A}^\dagger$ (being discrete in $\Spo{A}$), it also factors through the diagonal. These ring maps are the compositions $f_i:B\to C\to R_\infty \to C_\cali{U}$. We can conclude $f_1=f_2$ since $C\to C_\cali{U}$ is injective.  
\end{proof}

\begin{thm}
	\label{lem:discreteperfecthubpairs}
	\label{pro:perfectschemesvsadicultimate}
	Let $Y$ be a perfect discrete adic space over $\Fp$ and let $X$ be a pre-adic space over $\Zp$. The natural map $\mrm{Hom}_{_\mrm{PreAd}}(Y,X)\to \mrm{Hom}(Y^\dia,X^\dia)$ is bijective. In particular, $\dia$ is fully faithful when restricted to the category of perfect discrete adic spaces over $\Fp$.
\end{thm}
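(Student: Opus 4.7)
The plan is to reduce to the affinoid statement \Cref{lem:discreteperfecthubpairs1} by analytic descent on $Y$, and then refine affinoid covers of $X$ using the olivine spectrum together with \Cref{pro:quotientmapishfordiscrete}. First I would observe that both functors $Y' \mapsto \mrm{Hom}_{\mrm{PreAd}}(Y',X)$ and $Y' \mapsto \mrm{Hom}({Y'}^\dia, X^\dia)$ satisfy analytic descent on $Y$: the former by construction of pre-adic spaces, and the latter because $\dia$ sends analytic open covers to open covers of v-sheaves (\Cref{rem:diamondconstr}(4)), along which morphisms of v-sheaves glue. Thus I may assume $Y = \Spa{A}$ with $A$ discrete perfect in characteristic $p$; then $\Hub{A}$ is olivine by \Cref{pro:mainoreprop}, so $|Y^\dia|$ coincides with $\Spo{A}$ as a topological space.

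Now fix $\phi : Y^\dia \to X^\dia$ and an affinoid open cover $\{U_j = \mrm{Spa}^{\mrm{ind}}(B_j, B_j^+)\}$ of $X$. Each $W_j := \phi^{-1}(U_j^\dia) \subseteq Y^\dia$ is an open subsheaf corresponding to an open subset of $\Spo{A}$, and the $\{W_j\}$ cover $Y^\dia$. Define $\Sigma_j \subseteq |Y|$ to be the set of $y \in \Spa{A}$ whose discrete lift to $\Spo{A}$ lies in $W_j$. Coverage of $|Y|$ by $\{\Sigma_j\}$ is immediate from the coverage of $|Y^\dia|$ by $\{W_j\}$ applied to discrete lifts. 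Openness is the key point: $\Sigma_j$ is stable under generization in $|Y|$ (because $W_j$ is generizing in $\Spo{A}$), and $h^{-1}(\Sigma_j)$ is open in $\Spo{A}^\dagger$ (any formal or meromorphic generization of a discrete lift in $W_j$ remains in $W_j$ by stability of opens under generization); thus \Cref{pro:quotientmapishfordiscrete} yields that $\Sigma_j$ is open in $\Spa{A}$.

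Refining $\{\Sigma_j\}$ to an affinoid open cover $\{V_k\}$ of $Y$ with $V_k \subseteq \Sigma_{j(k)}$, the restriction of $\phi$ to each $V_k^\dia$ lands in $W_{j(k)}$ and hence factors through $U_{j(k)}^\dia$. By \Cref{lem:discreteperfecthubpairs1} applied to the affinoid pair $(A_k, A_k^+)$ of $V_k$ and to $(B_{j(k)}, B_{j(k)}^+)$, the restriction descends to a unique pre-adic morphism $V_k \to U_{j(k)} \hookrightarrow X$. The uniqueness on overlaps glues these into a single $f : Y \to X$ with $f^\dia = \phi$, and injectivity of the original map follows by the same reduction. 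The conceptually delicate step is openness of $\Sigma_j$: since $|Y^\dia|$ is genuinely larger than $|Y|$ as soon as $A$ has a microbial point (see \Cref{exa:thebasiccase}), an arbitrary open of $\Spo{A}$ can in principle contain a formal or meromorphic lift while excluding its discrete specialization, so not every $W_j$ need descend to $\Spa{A}$; \Cref{pro:quotientmapishfordiscrete} is precisely the statement that this pathology cannot occur for the generizing opens $W_j$ that arise from pulling back along $\phi$, and it is what makes the descent go through.
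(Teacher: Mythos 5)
Your overall architecture (glue on $Y$, then invoke \Cref{lem:discreteperfecthubpairs1} affinoid-locally) matches the paper's, but the step you yourself flag as delicate is where the argument genuinely breaks. You claim $\Sigma_j$ is stable under generization in $|Y|$ ``because $W_j$ is generizing in $\Spo{A}$''. This does not follow: the discrete-lift map $\Spa{A}\to\Spo{A}$ does not intertwine the specialization orders. By \Cref{pro:generizationpattern}, the generizations of a discrete point of $\Spo{A}$ are its vertical generizations, the meromorphic point over it, and formal points; the discrete lift of a proper \emph{horizontal} generization of $y$ is never among them, so membership of $\tilde y$ in an open $W_j$ says nothing about the discrete lifts of the generizations of $y$. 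Concretely, in \Cref{exa:thebasiccase} with $A=\Fp\pot{u}$ the open $W=\N{u}{1}=\{t^{\mrm{f}},s\}$ gives $\Sigma=\{s\}$, which is not stable under the generizations $s\rightsquigarrow t\rightsquigarrow\eta$ in $\Spa{A}$, is not open, and has $h^{-1}(\Sigma)=\{s\}$ not open in $\Spo{A}^\dagger$ — so \emph{both} hypotheses of \Cref{pro:quotientmapishfordiscrete} fail. Your parenthetical reading of \Cref{pro:quotientmapishfordiscrete} as ``precisely the statement that this pathology cannot occur for pullback opens'' is a misreading: that proposition takes generization-stability (and openness of $h^{-1}(\Sigma)$ in the bounded locus) as hypotheses; it does not supply them. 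Since nowhere in your argument do you actually use that $W_j$ is the pullback of an open along a morphism of v-sheaves, your justification applies equally to arbitrary opens of $\Spo{A}$, where the counterexample shows it is false.

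Establishing exactly this stability for the opens $\phi^{-1}(U_j^\dia)$ is the hard content of the paper's proof, and nothing in your proposal replaces it. The paper defines $f(y)=h(g(z))$ for a lift $z$ of $y$, must check well-definedness ($h(g(z))=h(g(z_{\mrm{mer}}))$ at meromorphic points) and that $f^{-1}$ of opens is stable under arbitrary generization, and for the latter it passes through the residue fields at horizontal generizations (\cref{horizontalspecializandvals}) to reduce to $Y=\Spa{A}$ with $A^+\subseteq A$ valuation rings with common fraction field; the case is then settled by \Cref{lem:sotechinicalitscrazy}, whose contradiction argument uses the explicit description of \Cref{exa:valuationringsarecool}, the representability of $\N{b}{1}$ from \Cref{lem:representinganalyticnbhoods}, \Cref{prop:Valuationringsareolivine}, and \Cref{lem:discreteperfecthubpairs1}. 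Your verification of the second hypothesis of \Cref{pro:quotientmapishfordiscrete} has the same problem: knowing that the d-analytic points over $y\in\Sigma_j$ lie in $W_j$ does not exhibit $h^{-1}(\Sigma_j)$ as open. A smaller issue: invoking \Cref{pro:mainoreprop} to declare an arbitrary discrete perfect $\Hub{A}$ olivine is unjustified, since that criterion requires topological finite type over a ring of definition inside $A^+$; the paper only needs, and only proves, the olivine property in the valuation-ring case.
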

This theorem says, intuitively speaking, that (up to perfection) one does not get new morphisms of v-sheaves when the source is a discrete adic space. 
\begin{proof}
	It is not hard to prove injectivity. For surjectivity, the hard part is to prove that morphisms $g:Y^\dia\to X^\dia$ induce a map of topological spaces $f:|Y|\to |X|$ making the following diagram commute: 
	\begin{center}
		\begin{tikzcd}
			\mid Y^\dia\mid\ar{r}{g}\ar{d} & \mid X^\dia\mid \ar{d}\\
			\mid Y\mid\ar{r}{f} & \mid X\mid
		\end{tikzcd}
	\end{center}
	Indeed, if this holds true one can reduce to \Cref{lem:discreteperfecthubpairs1} by standard glueing arguments.
	Verifying that $g:|Y^\dia|\to|X^\dia|$ descends to $f:|Y|\to|X|$ can be done locally on $|Y|$, we may assume $Y=\Spa{A}$. Let $y\in |Y|$ and $z\in \Spo{A}$ with $h(z)=y$, we define $f(y):=h(g(z))$. We must verify that this doesn't depend on $z$ and that it is continuous. The map $f$ is well defined if and only if $h(g(z))=h(g(z_{\mrm{mer}}))$ when $z$ is meromorphic, and by \Cref{pro:quotientmapishfordiscrete} to prove continuity  it suffices to prove that if $\Si \subseteq |X|$ is open then $f^{-1}(\Si)$ is stable under arbitrary generization in $\Spa{A}$. Let $w\in \Spa{A}$ be a horizontal generization of $y$. Let $\Hub{k_y}$ and $\Hub{k_w}$ denote the affinoid residue fields of $w$ and $y$ and let $K_w$ denote the smallest ring containing $k_w^+$ and $A$ as in \cref{horizontalspecializandvals}. It suffices to prove that $|\mrm{Spd}(K_w,k_w^+)|\to |X^\dia|$ and $|\mrm{Spd}(k_y,k^+_y)|\to |X^\dia|$ descend to continuous maps $|\mrm{Spa}(K_w,k_w^+)|\to |X|$ and $|\mrm{Spa}(k_y,k^+_y)|\to |X|$. In summary, we have reduced to the case where $Y=\Spa{A}$ with $A^+\subseteq A$ two valuation rings with the same fraction field. We deal with this case in \Cref{lem:sotechinicalitscrazy} below.
	\end{proof}
	\begin{lem}
		\label{lem:sotechinicalitscrazy}
		Let $X$ and $Y=\Spa{A}$ as above, with $A^+\subseteq A\subseteq \mrm{Frac}(A^+)$ both valuation rings. Let $g:\Spd{A}\to X^\dia$ be a map. Let $h(c_\mrm{min})\in \Spa{A}$ denote the unique closed point and let $c_\mrm{min}\in \Spo{A}$ denote the unique discrete point mapping to $h(c_\mrm{min})$. If $h(g(c_\mrm{min}))\in |X|$ lies in $\Spa{B_1}\subseteq X$ then $g$ factors through a map $\Spd{A}\to \Spd{B_1}\subseteq X^\dia$. In particular, $g$ is coming from a map of pre-adic spaces $\Spa{A}\to \Spa{B_1}\subseteq X$.
	\end{lem}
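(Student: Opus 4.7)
The strategy is to show that $g: \Spd{A} \to X^\dia$ factors through the open subsheaf $\Spd{B_1} \subseteq X^\dia$, and then apply \Cref{lem:discreteperfecthubpairs1} to the resulting map $\Spd{A} \to \Spd{B_1}$ to obtain the unique map of Huber pairs $(B_1, B_1^+) \to (A, A^+)$ --- hence the required pre-adic map $\Spa{A} \to \Spa{B_1} \subseteq X$. Since open subsheaves of $X^\dia$ correspond bijectively to open subsets of $|X^\dia|$, and since $|\Spd{B_1}| \subseteq |X^\dia|$ is exactly the preimage of the open $|\Spa{B_1}| \subseteq |X|$ under the continuous surjection $|X^\dia| \to |X|$ of \Cref{rem:diamondconstr}(\ref{mapofadictovsheaf}), the factorization is equivalent to the set-theoretic assertion that $h(g(z)) \in |\Spa{B_1}|$ for every $z \in |\Spd{A}|$.

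To prove this, I would descend the continuous composition $|\Spd{A}| \xrightarrow{|g|} |X^\dia| \to |X|$ through $h: |\Spd{A}| \simeq \Spo{A} \to |\Spa{A}|$ (where the identification $|\Spd{A}| = \Spo{A}$ is furnished by \Cref{prop:Valuationringsareolivine}) to a continuous map $f: |\Spa{A}| \to |X|$. Setting $f(y) := h(g(z))$ for any $z \in h^{-1}(y)$, well-definedness reduces, by the structure of the $h$-fibers, to proving the equality $h(g(z^{\mrm{disc}})) = h(g(z^{\mrm{mer}}))$ for every microbial non-analytic point $y \in \Spa{A}$ with its two lifts $z^{\mrm{disc}}, z^{\mrm{mer}} \in \Spo{A}$. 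This equality I would extract from the analytic product-of-points construction used in the proof of \Cref{lem:samegenripattern}: the perfectoid test spaces $\Spa{R_\infty}$ and $\Spa{R_0}$ built from ultraproducts of algebraically closed perfectoid fields realize $z^{\mrm{disc}}$ (respectively $z^{\mrm{mer}}$) as the images of non-principal (respectively principal) connected components of one and the same analytic perfectoid space, so continuity of $g$ on that test space together with the ultrafilter argument pins down the two images in $|X|$ to coincide. Continuity of $f$ then follows from \Cref{pro:quotientmapishfordiscrete}, applied to $h$, since preimages under $f \circ h$ of opens in $|X|$ are open in $\Spo{A}^\dagger$ and generization-stable in $|\Spa{A}|$ (generization being lifted via the classification of \Cref{pro:generizationpattern}).

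With $f$ in hand, the valuation-ring structure of $A, A^+$ guarantees that the unique closed point $h(c_{\min}) \in |\Spa{A}|$ is a specialization of every point (since the primes of the valuation rings $A$ and $A^+$ form chains, so $|\Spa{A}|$ has a unique closed point dominating everything). Combined with the openness --- hence generization-stability --- of $|\Spa{B_1}|$ and the hypothesis $f(h(c_{\min})) = h(g(c_{\min})) \in |\Spa{B_1}|$, continuity of $f$ propagates the containment to $f(|\Spa{A}|) \subseteq |\Spa{B_1}|$. Thus $h(g(z)) \in |\Spa{B_1}|$ for every $z \in \Spo{A}$, which yields the desired factorization of $g$ through $\Spd{B_1}$, and the final statement then follows from \Cref{lem:discreteperfecthubpairs1}.

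The hard part will be the well-definedness of $f$. The discrete and meromorphic lifts $z^{\mrm{disc}}, z^{\mrm{mer}}$ of a common point of $\Spa{A}$ have genuinely different pseudo-residue fields (a discrete field versus an analytic Tate field), and the topological specialization $z^{\mrm{disc}} \in \overline{\{z^{\mrm{mer}}\}}$ in $\Spo{A}$ only shows that $h(g(z^{\mrm{disc}}))$ is a specialization of $h(g(z^{\mrm{mer}}))$ in $|X|$, not that they agree. Upgrading this one-sided specialization to an honest equality is exactly what the analytic product-of-points test spaces from the proof of \Cref{lem:samegenripattern} are designed to deliver, and this is the crucial technical input on which the rest of the argument depends.
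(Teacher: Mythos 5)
Your first reduction is fine: since $\Spd{B_1}\subseteq X^\dia$ is an open subsheaf and its points are exactly those geometric points of $X^\dia$ whose underlying map to $|X|$ lands in $|\Spa{B_1}|$, the factorization is indeed equivalent to the containment $h(g(z))\in|\Spa{B_1}|$ for all $z\in\Spo{A}$, and \Cref{lem:discreteperfecthubpairs1} then finishes. The gap is in the step you yourself flag as "the hard part": the equality $h(g(z^{\mrm{disc}}))=h(g(z^{\mrm{mer}}))$. The product-of-points construction in \Cref{lem:samegenripattern} does not deliver it. Composing $g$ with the map $\Spa{R_\infty}\to\Spd{A}$ gives, since $\Spa{R_\infty}$ is perfectoid, an honest map of adic spaces $\mrm{Spa}(R_\infty^\sharp,R_\infty^{\sharp,+})\to X$; the principal closed points all map to $h(g(z^{\mrm{mer}}))$ and are dense among closed points, so the non-principal closed points (which lie over $z^{\mrm{disc}}$) map into the closure of $\{h(g(z^{\mrm{mer}}))\}$ --- and that is all. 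Nothing about the ultrafilter forces the image of a non-principal component to equal, rather than merely specialize from, the common image of the principal ones; a continuous map on $\Spa{R_\infty}$ is free to send distinct connected components to distinct points. So your well-definedness claim is an assertion of hope, not an argument, and the continuity step (generization-stability of $f^{-1}(\Sigma)$ along horizontal generizations) hinges on exactly the same unproved compatibility. Note also that this equality is never proved directly in the paper: it is only recoverable \emph{a posteriori}, once the lemma shows $g$ comes from a map of pre-adic spaces on such a chart, so your strategy is essentially the strategy of \Cref{pro:perfectschemesvsadicultimate}, which the paper deliberately reduces \emph{to} this lemma because the descent cannot be established head-on.

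The paper's actual proof avoids constructing $f$ altogether. It argues by contradiction: if $g$ does not factor, one reduces (killing the largest support occurring outside the pullback $U_1$ of $\Spd{B_1}$) to the case where the bad locus has zero support, finds a second chart $\Spa{B_2}\subseteq X$ through which $\mrm{Spd}(K,A^+)$ factors, and uses the explicit description of opens in $\Spor{A^+}$ from \Cref{exa:valuationringsarecool} to produce a single element $b$ with $\N{b}{1}\subseteq U_1$ and $\U{0}{b}\subseteq U_2$ covering $\Spo{A}$. The analytic localization $\N{b}{1}$ is represented by $\Spd{A_b}$ via \Cref{lem:representinganalyticnbhoods}, the overlap is a perfectoid field, and gluing over it, together with \Cref{lem:discreteperfecthubpairs1} applied to $\mrm{Spd}(A[\frac{1}{b}],A^+)\to\Spd{B_2}$, forces $\U{0}{b}\subseteq U_1$, contradicting the assumption. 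If you want to salvage your outline, you would have to replace the appeal to \Cref{lem:samegenripattern} by an argument of this gluing type; as written, the crucial identity is unsupported.
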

	\begin{proof}
		Suppose to get a contradiction that there is an “exotic” $g$ that does not satisfy this property. By \Cref{prop:Valuationringsareolivine}, $|\Spd{A}|=\Spo{A}$ and we work with the later. Let $U_1\subseteq \Spo{A}$ be pullback of $\Spd{B_1}$, this a proper open subset. Let $Z=\Spo{A}\setminus U_1$, it is quasicompact and by \cite[Lemma 18.3.2]{Ber} we may find the largest prime $\mathfrak{p}_m\in \mrm{Spec}(A)$ of the form $\mathbf{supp}(z)$ with $z\in Z$. Replacing $A$ and $A^+$ by $A/\mathfrak{p}_m$ and $A^+/\mathfrak{p}_m$ we may assume that if $z\in Z$ then $\mathbf{supp}(z)=0$. Let $K=\mrm{Frac}(A)$, then $Z\subseteq \mrm{Spo}(K,A^+)$. Since $Z$ is a closed it contains the unique closed point $q_{\mrm{min}}\in \mrm{Spo}(K,A^+)$.\footnote{This is the unique discrete point such that $h(q_{\mrm{min}})$ is closed in $\mrm{Spa}(K,A^+)$.}
		Now, $U_1$ contains all analytic localizations $\N{n}{1}$ with $n\neq 0$ and $n\in \mathbf{supp}(c_\mrm{min})$. Indeed, if $z\in Z\cap \N{n}{1}$ then $|n|_z^a<1$ and either $z$ or $z_{\mrm{for}}$ have non-trivial support giving a contradiction. 
	
		Also, $\mrm{Spd}(K,A^+)\to X^\dia$ factors through another open affine subsheaf $\Spd{B_2}$, since it has a unique closed point. Let $U_2$ be the pullback of $\Spd{B_2}$. By \Cref{exa:valuationringsarecool}, there is an open with $q_{\mrm{min}}\in \U{0}{b}\cap \U{1}{b'}\cap \N{n}{1}\subseteq U_2$. Moreover, in the notation of \Cref{exa:valuationringsarecool} $q_{\mrm{min}}=(0,0,\frak{m})$ with $\frak{m}$ the maximal ideal of $A^+$. This gives that $n=0$ and that $b'\in A^{+}\setminus \frak{m}$ so $\U{1}{b'}=\Spo{A}$ and $\N{n}{1}=\Spo{A}$. In summary, $q_{\mrm{min}}\in\U{0}{b}\subseteq U_2$.
	
		We have found neighborhoods $\N{b}{1}\subseteq U_1$ and $\U{0}{b}\subseteq U_2$. Observe that $\Spo{A}=\N{b}{1}\cup \U{0}{b}$. Let$A_b^+$ denote the $(b)$-adic completion and $A_b=A^+_b\otimes_{A^+}A$. \Cref{lem:representinganalyticnbhoods} shows that $\N{b}{1}$ is represented by $\Spd{A_b}$. Also, $\U{0}{b}$ is represented by $\mrm{Spd}(A[\frac{1}{b}],A^+)$ and $\N{b}{1}\cap \U{0}{b}$ is represented by $\mrm{Spa}(A_b[\frac{1}{b}], A^+_b)$. Notice that this is a perfectoid field. Let $q_b$ be the closed point of $\N{b}{1}\cap \U{0}{b}$.
	Since the morphisms glue, there is $\Spa{B_3}\subseteq \Spa{B_1}\times_{X} \Spa{B_2}$ and $\mrm{Spa}(A_b[\frac{1}{b}], A^+_b)\to \Spd{B_3}$ making the following diagram commute: 
	\begin{center}
		\begin{tikzcd}
			
			\mrm{Spa}(A_b[\frac{1}{b}],A^+_b)\ar{rr} \ar{dd}\ar{rd}		&  &	\mrm{Spd}(A[\frac{1}{b}],A^+) \ar{d} \\
			& \Spd{B_3}\ar{r} \ar{d}			& \Spd{B_2}\ar{d}	\\	
			\mrm{Spd}(A_b,A^+_b)\ar{r}		& 	\Spd{B_1}\ar{r}&	X^\dia
		\end{tikzcd}
	\end{center}
	By \Cref{lem:discreteperfecthubpairs1} the map $\mrm{Spd}(A[\frac{1}{b}],A^+)\to \Spd{B_2}$ is given by a map of Huber pairs $\Hub{B_2}\to (A[\frac{1}{b}],A^+)$. The pullback of $\Spd{B_3}$ to $\mrm{Spo}(A[\frac{1}{b}],A^+)$ has the form $h^{-1}(U_3)$ for some $U_3\subseteq \mrm{Spa}(A[\frac{1}{b}],A^+)$. Moreover, $h(q_b)$ is the closed point of $\mrm{Spa}(A[\frac{1}{b}],A^+)$. This proves that $\mrm{Spd}(A[\frac{1}{b}],A^+)$ factors through $\Spd{B_3}$ and consequently through $\Spd{B_1}$ contradicting our assumption.
\end{proof}
We now study perfect discrete Huber pairs of the form $\Huf{A}$. 
\begin{pro}
	\label{pro:schematicmapsgiveschematicsets}
	Let $A$ be discrete ring and $f^*:\Hub{B}\to \Huf{A}$ a map. The following hold:
	\begin{enumerate}
		\item $f(\Spor{A})= h^{-1}(f(\Spf{A}))$.
		\item $f(\Spf{A})$ is stable under horizontal specializations in $\Spa{B}$.
		\item $f(\Spf{A})$ is stable under vertical generizations in $\Spa{B}$.
	\end{enumerate}
\end{pro}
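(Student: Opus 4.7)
The plan is to handle (1) by case analysis on $y$, and to handle (2)--(3) together by a convex-subgroup lifting argument. For (1), the inclusion $f(\Spor{A}) \subseteq h^{-1}(f(\Spf{A}))$ is immediate from the naturality of $h$ under $f$: for $x \in \Spor{A}$ one has $h(f(x)) = f(h(x)) \in f(\Spf{A})$. For the reverse inclusion, I would first reduce to the bounded case: given $y$ with $h(y) = f(z)$ for some $z \in \Spf{A}$, the equality $A^+ = A$ forces $|A|_z \leq 1$, hence $|B|^h_y = |f^*(B)|_z \leq 1$ and $|B|^a_y \leq 1$, so $y \in \Spo{B}^\dagger$. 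Additionally, $h(y)$ is non-analytic in $\Spa{B}$ (its support pulls back along the continuous $f^*$ to the open support of $z$), so $y$ is either discrete or formal-meromorphic.

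In the discrete case I would take $x := (z, z_{\mrm{triv}}) \in \Spor{A}$ with $z_{\mrm{triv}}$ the trivial vertical generization of $z$, and then $f(x) = y$ since pullbacks of trivial valuations are trivial with the appropriate support. In the formal-meromorphic case I would construct a rank one valuation $x^a \in \Spa{A}$ with $f(x^a) = y^a$ and $|A|_{x^a} \leq 1$. The residue field extension $K_{h(y)} = \mrm{Frac}(B/\mathbf{supp}(y^h)) \hookrightarrow K_z = \mrm{Frac}(A/\mathbf{supp}(z))$ carries the rank one valuation ring $V^a \subseteq K_{h(y)}$ of $y^a$, which contains $B/\mathbf{supp}(y^h)$. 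Applying Chevalley's theorem to the subring $R := V^a \cdot A/\mathbf{supp}(z) \subset K_z$ localized at a maximal ideal dominating $\mathfrak{m}_{V^a} R$ produces a valuation ring $W \subset K_z$ with $A/\mathbf{supp}(z) \subseteq W$ and $V^a \subseteq W$; the domination property forces $W \cap K_{h(y)} = V^a$ (otherwise some $k \in K_{h(y)} \setminus V^a$ would have $k^{-1} \in \mathfrak{m}_{V^a} \subseteq \mathfrak{m}_W$, contradicting $k \in W$). A careful choice of the maximal ideal (so that $R_{\mathfrak{m}_R}$ has Krull dimension one) ensures $W$ has rank one, giving $x^a$; then setting $x = (x^a, x^a)$ when $y^h = y^a$, or choosing $x^h$ as a suitable specialization of $x^a$ otherwise, gives the desired lift.

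For (2) and (3), I would use the classical surjectivity of the restriction map $H \mapsto H \cap \Gamma_b$ from convex subgroups of $\Gamma_z$ to convex subgroups of $\Gamma_b$ (take $H$ to be the convex hull of a given $H_B$ in $\Gamma_z$; convexity of $H_B$ in $\Gamma_b$ then recovers $H \cap \Gamma_b = H_B$). Given $b = f(z)$ and $b'$ either a horizontal specialization $b_{H_B}$ or a vertical generization $b/H_B$, I lift $H_B$ to convex $H_A \subseteq \Gamma_z$ with $H_A \cap \Gamma_b = H_B$; since $|A|_z \leq 1$ renders $c\Gamma_z$ trivial, the containment condition $H_A \supseteq c\Gamma_z$ for horizontal specializations is automatic, and $z' := z_{H_A}$ (resp.\ $z/H_A$) is a point of $\Spf{A}$ with $f(z') = b'$. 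The main obstacle I anticipate is controlling the rank of $W$ in the formal-meromorphic case of (1): naive rank one coarsening of a higher-rank valuation can flip the restriction to $K_{h(y)}$ from $V^a$ to the trivial valuation, so the choice of localization in the Chevalley step must be made transverse to $\Gamma_{V^a}$ inside $\Gamma_W$ in order to preserve the correct intersection.
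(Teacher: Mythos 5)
Parts (2) and (3) of your proposal are correct, and they take a genuinely different, more elementary route than the paper: the paper proves (2) by factoring the residue-field map of $h(y)$ through $\Spf{K^+_{h(y)}}$ and chasing surjectivity of induced maps of valuation spectra, and deduces (3) from \Cref{cor:verticalgeneriz} (whose proof is diamond-theoretic), whereas your lift of a convex subgroup $H_B\subseteq\Gamma_b$ to its convex hull $H_A\subseteq\Gamma_z$, with $H_A\cap\Gamma_b=H_B$ and with $c\Gamma_z$ trivial because $|A|_z\le 1$, settles both statements directly at the level of value groups; I checked this and it works. The easy inclusion in (1) and the discrete case are also fine and agree with the paper.

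The gap is in the formal--meromorphic case of (1), which is exactly the case the paper flags as the subtle one. First, your Chevalley step presupposes $\mathfrak{m}_{V^a}R\neq R$; this is not formal (it fails, for instance, for $V^a=\mathbb{Z}_{(p)}\subseteq \mathbb{Q}$ and $1/p\in \bar A$, where $\bar A:=A/\mathbf{supp}(z)$), and it has to be deduced from the compatibility with the given point $z$: writing $V_z\subseteq K_z$ for the valuation ring of $z$ and $V_{y^h}$ for that of $y^h$, the identity $V_z\cap K_{h(y)}=V_{y^h}$ gives $\mathfrak{m}_{V^a}\subseteq\mathfrak{m}_{V_{y^h}}$, hence $|ma|_z<1$ for $m\in\mathfrak{m}_{V^a}$, $a\in\bar A$, so $1\notin\mathfrak{m}_{V^a}R$ --- but your construction of $W$ never invokes $z$, so this must be added. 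Second, and more seriously, the rank-one control that you yourself flag is not delivered by the proposed fix: there is no reason a maximal ideal with $\dim R_{\mathfrak{m}}=1$ should exist, and even granting it, the implication ``a valuation dominating a one-dimensional local domain has rank one'' is an Abhyankar-type inequality valid under Noetherian hypotheses, which fail here since $V^a$ is an arbitrary rank-one valuation ring; moreover it is not even clear that a rank-one valuation ring of $K_z$ itself (i.e.\ a lift with support exactly $\mathbf{supp}(z)$) with the required properties exists. The robust move is to allow the support to grow and then prove that nothing changes as seen from $B$, and this is precisely what the paper's construction achieves: pick $b\in B$ with $0<|b|^a<1$ at the meromorphic point, form the $(b)$-adic completion $K^+$ of $V_z$ and $K=K^+[\frac{1}{b}]$, and push the closed point of $\Spa{K}$ along $\Spor{V_z}\to\Spor{A}\to\Spo{B}$; the completion kills $\bigcap_n b^nV_z$, and the archimedean property of the rank-one component shows that no element of $B$ outside $\mathbf{supp}(f(z))$ lands in that ideal, so both components of the image restrict correctly on $B$ simultaneously. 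By contrast, in your plan, even granting a rank-one $W$, the last step producing $x^h$ with $f(x^h)=y^h$ as ``a suitable specialization of $x^a$'' is a second extension problem of the same kind (extending $V_{y^h}/\mathfrak{m}_{V^a}$ from the residue field of $V^a$ to that of $W$ compatibly with the image of $\bar A$) and is not carried out, whereas the paper's device produces the pair $(x^h,x^a)$ at once.
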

\begin{proof}
	For the first claim let $y\in \Spor{A}$ and let $x=\Spof{f}(y)$. If $x$ is d-analytic, $y$ is meromorphic and $y_{\mrm{mer}}$ maps to $x_{\mrm{mer}}$, giving $h^{-1}(h(x))\subseteq Im(\Spof{f})$.
	Suppose that $x$ is discrete and that $x^{\mrm{mer}}$ exists. In this case, $y^{\mrm{mer}}$ might not exist and even if it does it might not map to $x^{\mrm{mer}}$. 
	Consider instead $h(y)\in \Spf{A}$ and its residue field map $\iota_{h(y)}:\Spa{K_{h(y)}}\to \Spf{A}$. Notice that $\iota_{h(y)}$ factors through $g:\Spf{K_{h(y)}^+}\to \Spf{A}$. We prove that $x^{\mrm{mer}}$ is in the image of $\Spof{f\circ g}$. Take $b\in B$ with $|b|^a_{x^{\mrm{mer}}}\notin \{0,1\}$ and replace it with its inverse in $K_{h(y)}$, if necessary, so that $b\in K_{h(y)}^+$. Define $K^+$ as the $(b)$-adic completion of $K_{h(y)}^+$, and let $K=K^+[\frac{1}{b}]$. We get a map $\Spa{K}\to  \Spor{K^+_{h(y)}} \to \Spo{B}$. 
	One can verify that the closed point of $\Spa{K}$ maps to $x^{\mrm{mer}}$. 

	The proof of the second claim also follows from observing that the residue field map $\iota_{h(y)}$ factors through $g$. Indeed, we get the following commutative diagram of adic spaces: 
	\begin{center}
		\begin{tikzcd}
			\Spa{K_{h(y)}}\ar{r} \ar{d}& \Spf{K_{h(y)}^+}\ar{r}{g} \ar{d} & \Spf{A}\ar{d} \\
			\Spa{K_{h(x)}}\ar{r} & \Spf{K_{h(x)}^+}\ar{r}{g'}  & \Spa{B} \\
		\end{tikzcd}
	\end{center}
	Where we use that $K^+_{h(x)}=K_{h(y)}^+\cap K_{h(x)}$ to define $g'$. Moreover, the vertical map on the left is surjective since $h(x)=f(h(y))$ and one can deduce that the vertical map in the middle column is also surjective because the map of valuation rings is local. A prime ideal of $J\subseteq K^+_{h(x)}$ determines a horizontal specializations of $|\cdot|_{h(x)}$, namely $|\cdot|_{h(x)}/J$, and every horizontal specialization of $h(x)$ can be constructed in this way. For $J$ as above we let $K_J^+=K^+_{h(x)}/J$ and $K_J=\mrm{Frac}(K_J^+)$, we get the following commutative diagram: 
	\begin{center}
		\begin{tikzcd}
			\Spa{K_J}\ar{r} \ar[bend right=10]{drr} &\Spf{K^+_J} \ar{dr} \ar{rr}& & \Spf{K^+_{h(y)}} \ar{dl} \\
 &			& \Spa{B} & 
		\end{tikzcd}
	\end{center}
	The closed point of $\Spa{K_J}$ maps to the horizontal specialization of $h(x)$ associated to the ideal $J$.

The third claim follows from \Cref{cor:verticalgeneriz} and from the first claim.
\end{proof}
\begin{defi}
	\label{defi:schematicsubs}
	We say that a subset of $\Spo{B}$ is a \textit{schematic subset} if it is a union of sets of the form $\Spof{m}(\Spor{A})$ where $\Huf{A}$ is given the discrete topology and $m^*:\Hub{B}\to \Huf{A}$ is a map of Huber pairs. 
\end{defi}

The following statement is the key result that allow us to construct a well-defined specialization map.

\begin{pro}
	\label{pro:closedschematicallyreduced}
	Suppose that $Z\subseteq \Spo{B}$ is a schematic closed subset. Let $\sigma:\Spo{B}\to \mrm{Spec}(B)$ denote the map $x\mapsto \mathbf{supp}(x)$ attaching to every point of $\Spo{B}$ its support ideal. Notice that $\sigma=\mathbf{supp}\circ h$ where $\mathbf{supp}:\Spa{B}\to \mrm{Spec}(B)$ also attaches the support ideal. Then $Z=\sigma^{-1}(V(I))$ for some prime ideal $I\subseteq B$ open for the topology in $B$. 
\end{pro}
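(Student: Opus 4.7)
Begin by writing $Z=\bigcup_\alpha \Spof{m_\alpha}(\Spor{A_\alpha})$ as in \Cref{defi:schematicsubs} and set $J_\alpha:=\ker(m_\alpha^*)\subseteq B$. Each $J_\alpha$ is open because $m_\alpha^*$ is a continuous map of Huber pairs into a discrete ring, and supports pull back to supports, giving $\mathbf{supp}(\Spof{m_\alpha}(\Spor{A_\alpha}))\subseteq V(J_\alpha)$ in $\mrm{Spec}(B)$. By \Cref{pro:schematicmapsgiveschematicsets} one has $Z=h^{-1}(h(Z))$ and $W:=h(Z)$ is stable under horizontal specializations and vertical generizations in $\Spa{B}$; consequently $\sigma(Z)=\mathbf{supp}(W)$ is specialization-closed in $\mrm{Spec}(B)$ and contained in $\bigcup_\alpha V(J_\alpha)$.

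Define $I:=\bigcap_\alpha J_\alpha$. The main technical point is that $I$ is open: since $Z$ sits inside a (locally) spectral space via \Cref{pro:rightpointsspo} and \Cref{pro:bounededsialwaysolivine}, closedness of $Z$ gives quasicompactness, and the defining schematic cover can be replaced by a finite subcover $\{\Spof{m_{\alpha_i}}(\Spor{A_{\alpha_i}})\}_{i=1}^n$. Then $I=\bigcap_{i=1}^n J_{\alpha_i}$ is a finite intersection of open ideals, hence itself open. Primality of $I$ is automatic in the main case of interest, namely when $Z$ is irreducible because it comes from a single valuation-theoretic source---as happens in the applications to the well-definedness of the specialization map; in full generality $I$ is naturally only an open radical ideal, and one should read the conclusion modulo this distinction.

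The inclusion $Z\subseteq \sigma^{-1}(V(I))$ is tautological. For the reverse, given $x\in\sigma^{-1}(V(I))$ one has $\mathbf{supp}(x)\supseteq J_{\alpha_i}$ for some $i$, and the pseudo-residue field construction of \Cref{pro:rightpointsspo} represents $x$ by a map $\Spo{K_x}\to\Spo{B}$. Using the factorization $B/J_{\alpha_i}\hookrightarrow A_{\alpha_i}$, one lifts this to a map landing in $\Spor{A_{\alpha_i}}$ over $\Spo{B}$, placing $x$ inside $\Spof{m_{\alpha_i}}(\Spor{A_{\alpha_i}})\subseteq Z$; the saturation $Z=h^{-1}(W)$ together with stability of $W$ under vertical generization and horizontal specialization absorbs any discrepancy in the $a$-coordinate. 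This lifting step is the principal obstacle, and it is handled by refining the defining cover at the outset so that each $A_\alpha$ is large enough to surject onto every point of $\sigma^{-1}(V(J_\alpha))$, for example by enlarging $A_\alpha$ to contain the perfection of $B/J_\alpha$ together with enough bounded valuations.
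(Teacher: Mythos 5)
There is a genuine gap, in two places. First, your finiteness reduction does not work: quasicompactness (even granting that $Z$, being closed, is quasicompact, which itself needs the ambient space to be quasicompact) only lets you extract finite subcovers from covers by \emph{open} sets, while the schematic pieces $\Spof{m_\alpha}(\Spor{A_\alpha})$ are images of maps and are not open in $Z$. So the passage from $I=\bigcap_\alpha J_\alpha$ to a finite intersection is unjustified, and with it the step ``$\mathbf{supp}(x)\supseteq I$ implies $\mathbf{supp}(x)\supseteq J_{\alpha_i}$ for some $i$'' collapses (primality of the support only handles finite intersections). Second, and more seriously, the reverse inclusion $\sigma^{-1}(V(I))\subseteq Z$ rests on each piece surjecting onto $\sigma^{-1}(V(J_\alpha))$, which is simply false in general: $\mrm{Spec}(A_\alpha)\to\mrm{Spec}(B/J_\alpha)$ need not be surjective, and your proposed repair of ``enlarging $A_\alpha$'' goes the wrong way, since postcomposing $B\to A_\alpha\to A'_\alpha$ can only shrink the image $\Spof{m'_\alpha}(\Spor{A'_\alpha})\subseteq\Spof{m_\alpha}(\Spor{A_\alpha})$. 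Arranging the surjectivity you ask for while keeping the union equal to $Z$ is essentially equivalent to the conclusion you are trying to prove.

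What is missing is exactly the content of the paper's argument: one must show that $\sigma(Z)$ (equivalently $h(Z)$) is \emph{closed}, and you only record that it is stable under specialization, which is far from closedness in $\mrm{Spec}(B)$. The paper first reduces to $B$ discrete (every map to a discrete ring factors through $B/B^{\circ\circ}$, which is also what makes the resulting ideal open), notes via \Cref{pro:schematicmapsgiveschematicsets} and the closedness of $Z$ that $Z=\sigma^{-1}(\sigma(Z))$, and then proves $h(Z)$ closed by combining stability under horizontal specialization (so the complement of $h(Z)$ is generization-stable) with the quotient-type statement \Cref{pro:quotientmapishfordiscrete}; the section $\mrm{Triv}:\mrm{Spec}(B)\to\Spa{B}$ then transports this to $\sigma(Z)$. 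This topological step is the heart of the proposition and is absent from your proposal; without it, identifying $I$ with $\bigcap_\alpha J_\alpha$ and lifting points through $B/J_{\alpha_i}\hookrightarrow A_{\alpha_i}$ cannot be made to work. (Your remark that the conclusion should really be read with $I$ an open radical rather than prime ideal is fair, and consistent with how the proposition is used in \Cref{lem:formallyclosedisZariski}, but it does not affect the gaps above.)
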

\begin{proof}
	Any map $m^*:\Hub{B}\to \Huf{A}$ with $A$ a discrete ring factors through $(B/B^{\circ \circ},B^+/B^{\circ \circ})$, so we may assume that $B$ has the discrete topology. By \Cref{pro:schematicmapsgiveschematicsets}, $Z=h^{-1}(h(Z))$ and by \Cref{cor:verticalgeneriz}, $Z$ is closed under v.g. Moreover, since $Z$ is closed in $\Spo{B}$ it is also stable under vertical specialization. This implies that $Z=\sigma^{-1}(\sigma(Z))$. 
	We prove $\sigma(Z)$ is closed. Since $B$ has the discrete topology, the support map admits a continuous section $\mrm{Triv}:\mrm{Spec}(B)\to \Spa{B}$ that assigns to a prime ideal $\mathfrak{p}\subseteq B$ the trivial valuation with support $\mathfrak{p}$. We have $\sigma(Z)=\mrm{Triv}^{-1}(h(Z))$ so we may prove $h(Z)$ is closed instead. By \Cref{pro:schematicmapsgiveschematicsets}, $h(Z)$ is also closed under horizontal specialization, this gives that the complement of $h(Z)$ in $\Spa{B}$ is stable under (arbitrary) generization. Now, $\Spo{B}\setminus Z=h^{-1}(\Spa{B}\setminus h(Z))$ and by \Cref{pro:quotientmapishfordiscrete} the set $\Spa{B}\setminus h(Z)$ is open.
\end{proof}

\section{The reduction functor}
\subsection{The v-topology for perfect schemes}

In this section, set-theoretic carefulness is necessary. We advise the reader to review the definition and basic properties of cut-off cardinals \cite[\S 4]{Et}. 

Denote by $\mrm{PCAlg}^\mrm{op}_\Fp$ the category of perfect affine schemes over $\Fp$. If $\kappa$ is a cut-off cardinal we let $\mrm{PCAlg}^\mrm{op}_{\Fp,\kappa}$ be the category of perfect affine schemes over $\Fp$ whose underlying topological space and whose ring of global sections have cardinality bounded by $\kappa$. Given $S=\mrm{Spec}(A)\in \mrm{PCAlg}^\mrm{op}_\Fp$ we associate to it a v-sheaf in $\mrm{Perf}$ given by:
$S^\diamond(\Hub{R})=\{f:A\to R^+|f\text{\,\,is\,a\,\,morphism\,of\,rings}\}$.
\begin{rem}
	Notice that $\mrm{Spec}(A)^\diamond =\Spdf{A}$ when $A$ is given the discrete topology. 
\end{rem}
\begin{pro}
	\label{pro:fullyperfsch}
		If $\kappa$ is a cut-off cardinal and $S\in \mrm{PCAlg}^\mrm{op}_{\Fp,\kappa}$ then $S^\diamond$ is a $\kappa$-small v-sheaf.
\end{pro}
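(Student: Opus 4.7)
My plan is to verify the sheaf property and the $\kappa$-smallness separately. Sheafiness of $S^\diamond$ should be immediate from the v-descent of the integral structure sheaf $\mathcal{O}^+$ on affinoid perfectoid spaces (a theorem of Scholze, \cite[Theorem 8.7]{Et}). Since the functor $\mrm{Hom}_{\mrm{Ring}}(A,-)$ commutes with limits, applying it to the equalizer diagram $R^+\to R'^+\rightrightarrows R''^+$ associated to a v-cover $\Hub{R}\to \Hub{R'}$ preserves the equalizer, giving the sheaf condition for $S^\diamond$.

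For $\kappa$-smallness, I will construct a product-of-points cover as in \Cref{defi:prodpoints}. The main observation is that every geometric point $\mrm{Spa}(C,C^+)\to S^\diamond$, namely a ring map $f\colon A\to C^+$ with $C$ algebraically closed nonarchimedean, factors through a perfectoid subfield of bounded cardinality. Indeed, $f(A)\subseteq C^+$ is a perfect subring of cardinality $\leq |A|\leq \kappa$; its fraction field inside $C$, its algebraic closure inside $C$, and the completion of the latter with respect to the restricted valuation from $C$ all have cardinality at most $\kappa$, since $\kappa$ is an uncountable strong limit cardinal. Taking $C_0$ to be this completion and $C_0^+:=C^+\cap C_0$ yields the desired factorization through a perfectoid field in $\mrm{Perf}_\kappa$.

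Next, let $I$ be a set of representatives for isomorphism classes of tuples $(\Hub{C_i},\varpi_i, f_i\colon A\to C_i^+)$ satisfying the conditions of \Cref{defi:prodpoints} together with $|C_i|\leq \kappa$. The cut-off hypothesis on $\kappa$ ensures that $|I|\leq \kappa$. Let $\Spa{R}$ denote the associated product of points; this is a $\kappa$-small strictly totally disconnected affinoid perfectoid space by \Cref{pro:productofpointsarestrictlytotallydisconn}. The maps $\{f_i\}_{i\in I}$ assemble, via the universal property of the product, into a single ring map $A\to R^+=\prod_{i\in I}C_i^+$, and hence into a morphism $\Spa{R}\to S^\diamond$. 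By the previous paragraph, every geometric point of $S^\diamond$ lifts through this morphism by factoring through some projection $R^+\to C_i^+$, so $\Spa{R}\to S^\diamond$ is surjective on geometric points and is therefore a v-cover. This exhibits $S^\diamond$ as a $\kappa$-small v-sheaf.

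The hard part will be the set-theoretic bookkeeping: justifying that $I$ is genuinely a set of cardinality at most $\kappa$, and that the completion step used in constructing $C_0$ does not blow up cardinalities. Both rest on the uncountable strong-limit properties built into the definition of a cut-off cardinal, as in \cite[\S 4]{Et}.
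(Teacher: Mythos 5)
Your sheaf-property argument is fine: $\mrm{Hom}(A,-)$ preserves limits, so v-descent for $\mathcal{O}^+$ (\cite[Theorem 8.7]{Et}) immediately gives the sheaf condition for $S^\diamond$. The $\kappa$-smallness argument, however, breaks down exactly at the bookkeeping you postpone. Even granting $|I|\leq \kappa$, the ring $R^+=\prod_{i\in I}C_i^+$ has cardinality as large as $\kappa^{\kappa}=2^{\kappa}>\kappa$, so your product of points is \emph{not} an object of $\mrm{Perf}_\kappa$; a surjection from it can only exhibit $S^\diamond$ as a $\lambda$-small v-sheaf for a larger cut-off $\lambda$, and descending from $\lambda$ to $\kappa$ is precisely the nontrivial point (compare \Cref{pro:bigcardinalembedding}), not something the construction delivers. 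Two further problems: the subfield $C_0$ generated by $f(A)$ need not be nonarchimedean --- if the induced valuation on $f(A)$ is trivial (e.g.\ $A=\Fp$), its completed algebraic closure inside $C$ is discrete, hence not a perfectoid field --- so you must also adjoin a pseudo-uniformizer of $C$; and ``surjective on geometric points, therefore a v-cover'' is unjustified: surjectivity of v-sheaf maps is detected pointwise only for quasicompact maps of \emph{small} v-sheaves (\cite[Lemma 12.11]{Et}), which presupposes what you are proving. The honest check is to lift a given section $A\to T^+$ over the product-of-points cover of \Cref{exa:prodpointsbasis}, and there assembling the pointwise factorizations into a single continuous map of Huber pairs requires compatible choices of pseudo-uniformizers (the same subtlety as in \Cref{rem:differentunifgivesdifferent}); this can be repaired, but it is missing.

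The paper states the proposition without proof, and the intended route is simpler and avoids all of this: by the remark preceding the statement, $S^\diamond=\Spdf{A}$ for $A$ discrete, hence a small v-sheaf by \Cref{pro:Zpdvsheaf} (\cite[Lemma 18.1.1]{Ber}); for $\kappa$-smallness one uses the single $\kappa$-small affinoid perfectoid $X=\mrm{Spa}(A\rpot{t\pthr},A\pot{t\pthr})$ with its canonical map $X\to S^\diamond$, which is surjective already at the level of sections over affinoids, since any $A\to T^+$ extends to $(A\rpot{t\pthr},A\pot{t\pthr})\to \Hub{T}$ by sending $t$ to a pseudo-uniformizer of $T$. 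Moreover $X\times_{S^\diamond}X$ is Zariski closed in $X\times X$ and hence $\kappa$-small as well (here one uses that $\kappa$ is a strong limit), so $S^\diamond$ is presented by objects of $\mrm{Perf}_\kappa$; this is exactly the presentation the paper exploits later, e.g.\ in the proof of \Cref{pro:commutetopoi}. I would rewrite your smallness argument along these lines rather than via a universal product of points.
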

\Cref{pro:fullyperfsch} gives rise to functors $\diamond_\kappa:\mrm{PCAlg}_{\Fp,\kappa}^\mrm{op}\to \topPerf_{\kappa}$ that are compatible when we vary $\kappa$ and give rise to a functor $\diamond:\mrm{PCAlg}_{\Fp}^\mrm{op}\to \topPerf$
\begin{pro}
	\label{pro:diamcommuteswithlim}
	The functors $\diamond: \mrm{PCAlg}^\mrm{op}_\Fp\to \topPerf$ and $\diamond_\kappa: \mrm{PCAlg}^\mrm{op}_{\Fp,\kappa}\to \topPerf_\kappa$ are fully-faithful and commute with finite limits.
\end{pro}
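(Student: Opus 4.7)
The plan is to reduce both claims to results already established in the paper. For fully-faithfulness, the first step is to identify $\mrm{Spec}(A)^{\diamond}$ with $\Spdf{A}$, viewing $(A,A)$ as a Huber pair carrying the discrete topology. Working over $\Fp$, the untilt of any $\Hub{R}\in \mrm{Perf}$ is tautologically $R$ itself, so a section of $\Spdf{A}$ on $\Hub{R}$ is the same data as a morphism of pre-adic spaces $\mrm{Spa}(R,R^+)\to \mrm{Spa}^{\mrm{ind}}(A,A)$, and since $A$ is discrete this is just an arbitrary ring homomorphism $A\to R^+$, which is the defining formula for $\mrm{Spec}(A)^{\diamond}(R,R^+)$. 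Under this identification, fully-faithfulness reduces to the assertion that $\mrm{Hom}_{\mrm{Rings}}(B,A)\to \mrm{Hom}(\Spdf{A},\Spdf{B})$ is bijective for perfect $\Fp$-algebras $A$ and $B$, which is a special case of \Cref{pro:perfectschemesvsadicultimate} applied to the perfect discrete adic spaces $\mrm{Spa}(A,A)$ and $\mrm{Spa}(B,B)$.

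For commutation with finite limits, I would argue pointwise: limits in $\topPerf$ and $\topPerf_\kappa$ are computed sectionwise, so the task is to verify that for each $\Hub{R}\in \mrm{Perf}$ the functor $A\mapsto \mrm{Hom}_{\mrm{Rings}}(A,R^+)$ sends finite colimits in $\mrm{PCAlg}_{\Fp}$ to limits of sets. The crucial input is that $R^+$ is itself perfect: since $R$ has characteristic $p$ and $R^+\subseteq R$ is integrally closed, any $p$-th root in $R$ of an element of $R^+$ satisfies a monic polynomial with coefficients in $R^+$ and therefore lies in $R^+$. Hence $\mrm{Hom}_{\mrm{Rings}}(-,R^+)$ on $\mrm{PCAlg}_{\Fp}$ agrees with $\mrm{Hom}(-,R^+)$ on all $\Fp$-algebras via the perfection adjunction, and as an ordinary representable functor the latter converts colimits into limits. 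The terminal object of $\mrm{PCAlg}^{\mrm{op}}_{\Fp}$ is $\mrm{Spec}(\Fp)$, whose diamond is $\Fpd$, the terminal v-sheaf on $\mrm{Perf}$. The $\kappa$-bounded variant requires no new argument: bounding the cardinality of $A$ bounds that of the sections of $\mrm{Spec}(A)^{\diamond}$, so the construction stays inside $\topPerf_\kappa$ and all of the above arguments go through unchanged.

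The only substantive step is the identification of $\mrm{Spec}(A)^{\diamond}$ with $\Spdf{A}$ combined with the invocation of \Cref{pro:perfectschemesvsadicultimate}; this is where the main difficulty lies, but that difficulty has already been absorbed by the previous section. Once those inputs are in hand, both halves of the proposition are essentially formal manipulations with hom-sets.
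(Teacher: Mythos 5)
Your proposal is correct and follows the same route as the paper: the paper's proof of this proposition is precisely "a direct consequence of" \Cref{pro:perfectschemesvsadicultimate}, i.e.\ it reduces fully-faithfulness to that theorem via the identification $\mrm{Spec}(A)^{\diamond}=\Spdf{A}$ for discrete perfect $A$, exactly as you do. Your sectionwise argument for finite limits (using that $R^+$ is perfect, so $\mrm{Hom}(-,R^+)$ turns perfected colimits of rings into limits of sets) is the natural elaboration of what the paper leaves implicit, so the two arguments agree in substance.
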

\begin{proof}

This is a direct consequence of \Cref{lem:discreteperfecthubpairs}. 
\end{proof}

After embedding $\mrm{PCAlg}^\mrm{op}_\Fp$ in $\topPerf$ one can define a Grothendieck topology on $\mrm{PCAlg}^\mrm{op}_\Fp$ by considering a small family of maps of affine schemes, $(S_i\to T)_{i\in \F}$, to be a cover if the map $\coprod_{i\in \F}S_i^\diamond\to T^\diamond$ is a surjective map of v-sheaves. However, there is an intrinsic way of defining this topology which we now discuss. 
\begin{defi}\textup{(\cite[Definition 2.1]{Witt})}
		\label{defi:v-top-sch}
		\begin{enumerate}
			\item A morphisms of qcqs schemes $S\to T$, is said to be universally subtrusive (or a v-cover) if for any valuation ring $V$ and a map $\mrm{Spec}(V)\to T$ there is an extension of valuation rings $V\subseteq W$ (\cite[Tag 0ASG]{Stacks}) and a map $\mrm{Spec}(W)\to S$ making the following diagram commutative:
	\begin{center}
		\label{diag:univesubtrus}
	\begin{tikzcd}
		\mrm{Spec}(W) \arrow[d]\arrow[r]& S\arrow[d]\\
		\mrm{Spec}(V) \arrow[r]& T
	\end{tikzcd}
	\end{center}
\item A small family of morphisms in $\mrm{PCAlg}^\mrm{op}_\Fp$, $(S_i\to T)_{i \in \F}$, is said to be universally subtrusive (or a v-cover) if there is a finite subset $\F'\subseteq \F$ for which $\coprod_{i\in \F'} S_i\to T$ is universally subtrusive.
		\end{enumerate}
\end{defi}
\begin{lem}\textup{(\cite[Remark 2.2]{Witt})}
	\label{lem:subtrusivevsadic}
	A morphism $f:\mrm{Spec}(B)\to \mrm{Spec}(A)$ of affine schemes (not necessarily over $\Fp$) is universally subtrusive if and only if the map of topological spaces $|f^{\mrm{ad}}|:|\Spf{B}|\to |\Spf{A}|$ is surjective.
\end{lem}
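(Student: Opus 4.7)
Both conditions translate into statements about ring maps into valuation rings, so my plan is to express each in the common language of factorizations through valuation rings and then match them up. Recall that for a ring $A$ with the discrete topology, the points of $|\Spf{A}|=|\mrm{Spa}(A,A)|$ are naturally pairs $(\mathfrak{p},V)$ with $\mathfrak{p}\in\mrm{Spec}(A)$ and $V\subseteq\kappa(\mathfrak{p})$ a valuation ring containing the image of $A/\mathfrak{p}$; equivalently, they are equivalence classes of ring maps $A\to V'$ into a valuation ring, where two such maps are equivalent when they factor through a common local extension of valuation rings. In these terms $|f^{\mrm{ad}}|$ sends $(\mathfrak{q},W)$ to $(\mathfrak{q}\cap A,\,W\cap \kappa(\mathfrak{q}\cap A))$.

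The forward direction will be a formal unpacking: given $x\in|\Spf{A}|$ represented by a ring map $A\to V$, I would apply universal subtrusiveness to the induced $\mrm{Spec}(V)\to\mrm{Spec}(A)$ to obtain a valuation ring extension $V\subseteq W$ together with a compatible lift $B\to W$; the point $y\in|\Spf{B}|$ represented by $B\to W$ then satisfies $|f^{\mrm{ad}}|(y)=x$ by construction, using that $V=W\cap \mrm{Frac}(V)$ for valuation ring extensions in the sense of \cite[Tag 0ASG]{Stacks}.

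For the reverse direction, I would start with a test map $\mrm{Spec}(V)\to\mrm{Spec}(A)$, produce from it a point $x\in|\Spf{A}|$, then use surjectivity of $|f^{\mrm{ad}}|$ to choose $y\in|\Spf{B}|$ lifting $x$ and represent it by some $B\to W_0$. The two compositions $A\to V$ and $A\to B\to W_0$ now represent the same point of $|\Spf{A}|$: they share a common support $\mathfrak{p}\in\mrm{Spec}(A)$ and induce the same valuation ring $V_0\subseteq\kappa(\mathfrak{p})$. The task reduces to finding a valuation ring $\tilde V$ together with a valuation ring extension $V\subseteq\tilde V$ in the sense of \cite[Tag 0ASG]{Stacks} and a ring map $W_0\to\tilde V$ making the compositions $A\to V\to\tilde V$ and $A\to B\to W_0\to\tilde V$ agree; granted this, $\mrm{Spec}(\tilde V)\to\mrm{Spec}(W_0)\to\mrm{Spec}(B)$ fills in the square of \Cref{diag:univesubtrus}.

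The hard part is the amalgamation producing $\tilde V$. I plan to choose a field $M$ containing both $\mrm{Frac}(V)$ and $\mrm{Frac}(W_0)$ as extensions of $\kappa(\mathfrak{p})$ (for instance, as a residue field of the nonzero tensor product $\mrm{Frac}(V)\otimes_{\kappa(\mathfrak{p})}\mrm{Frac}(W_0)$) and then invoke the classical extension theorem for valuations (Bourbaki, \emph{Commutative Algebra}, Ch.~VI) to build $\tilde V\subseteq M$ that restricts to $V$ on $\mrm{Frac}(V)$ and dominates $W_0$ on $\mrm{Frac}(W_0)$. The fact that $V$ and $W_0$ induce the same $V_0$ on $\kappa(\mathfrak{p})$ is exactly what allows the two extensions to be combined into a single one, and the local nature of the inclusion $V\subseteq\tilde V$ will follow from its dominance over $W_0$.
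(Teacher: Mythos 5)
Your forward direction and your reduction of the converse to an amalgamation problem for valuation rings are both correct, but the amalgamation step itself — the part you call the hard part — is not established by the argument you sketch, and as stated it is false. If you first fix $M$ to be an \emph{arbitrary} residue field of $\mrm{Frac}(V)\otimes_{\kappa(\mathfrak{p})}\mrm{Frac}(W_0)$, there need not exist any valuation ring $\tilde V\subseteq M$ with $\tilde V\cap \mrm{Frac}(V)=V$ and $W_0\subseteq\tilde V$: take $\kappa(\mathfrak p)=\mathbb{Q}$, $V_0=\mathbb{Z}_{(5)}$, $\mrm{Frac}(V)=\mrm{Frac}(W_0)=\mathbb{Q}(i)$, $V=\mathbb{Z}[i]_{(2+i)}$, $W_0=\mathbb{Z}[i]_{(2-i)}$ (both contract to $\mathbb{Z}_{(5)}$), and choose the ``diagonal'' residue field $M=\mathbb{Q}(i)$ of $\mathbb{Q}(i)\otimes_{\mathbb{Q}}\mathbb{Q}(i)$; then $\tilde V\cap\mrm{Frac}(V)=V$ forces $\tilde V=V$, which does not contain $W_0$. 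So the choice of compositum cannot precede the construction of the valuation. Moreover, Bourbaki's extension theorem controls only one restriction at a time, and your closing inference is also invalid: dominance of $\tilde V$ over $W_0$ only gives that $\tilde V$ is local over $V_0$, not over $V$ — a proper coarsening of $V$ can still contract to $V_0$ on $\kappa(\mathfrak p)$ when the value group of $V$ has larger rank (e.g.\ the rank-$2$ valuation on $\mathbb{Q}(t)$ composing the Gauss extension of $\mathbb{Z}_{(5)}$ with the $t$-adic valuation on $\mathbb{F}_5(t)$, versus its rank-$1$ coarsening).

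The needed amalgamation is true, but it must be built the other way around: work with $C=V\otimes_{V_0}W_0$ rather than with the tensor product of fraction fields. Since $V\to\kappa_V$ and $W_0\to\kappa_{W_0}$ are local over $V_0$, one has $C/(\mathfrak m_VC+\mathfrak m_{W_0}C)\cong\kappa_V\otimes_{\kappa_{V_0}}\kappa_{W_0}\neq 0$, so there is a prime $\mathfrak q\subseteq C$ lying over both maximal ideals; any minimal prime $\mathfrak r\subseteq\mathfrak q$ meets $V$ and $W_0$ only in $0$ because $C$ is torsion-free (equivalently flat) over each valuation ring, so $V$ and $W_0$ embed in the domain $C/\mathfrak r$; now apply Chevalley's theorem to dominate $(C/\mathfrak r)_{\mathfrak q}$ by a valuation ring $\tilde V$ of $\mrm{Frac}(C/\mathfrak r)$. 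This $\tilde V$ receives injective local maps from $V$ and from $W_0$ compatibly over $V_0$, which is exactly what your reduction requires; the compositum is whatever fraction field this process produces, not one fixed in advance. (For comparison, the paper offers no proof at all here: it cites \cite[Remark 2.2]{Witt}, where essentially this tensor-over-the-valuation-ring argument is the content.)
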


\begin{lem}
	\label{lem:schemes-quasicompact}
	Let $f:S\to T$ be a morphism of perfect affine schemes over $\Fp$. The map $f^\diamond:S^\diamond\to T^\diamond$ is a quasicompact map of v-sheaves.
\end{lem}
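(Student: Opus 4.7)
The proof plan is as follows. Quasicompactness of $f^\diamond$ means that for every morphism $\Spa{R}\to T^\diamond$ from an affinoid perfectoid, the fiber product $S^\diamond\times_{T^\diamond}\Spa{R}$ admits a v-surjection from an affinoid perfectoid. Writing $T=\mrm{Spec}(A)$ and $S=\mrm{Spec}(B)$, such a map is classified by a ring homomorphism $\varphi:A\to R^+$, and a section of the fiber product over an affinoid perfectoid $\Hub{R'}$ consists of a Huber pair map $(R,R^+)\to(R',R'^+)$ together with a ring homomorphism $B\to R'^+$ whose restriction to $A$ equals the composition $A\xrightarrow{\varphi} R^+\to R'^+$. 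I plan to construct an affinoid perfectoid that v-covers this fiber product.

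The candidate is built universally. Let $D_0$ be the perfection of $B\otimes_A R^+$, a perfect $\Fp$-algebra equipped with compatible ring maps from $B$ and from $R^+$ over $A$. Fix a pseudo-uniformizer $\varpi\in R^+$, equip $D_0$ with the $\varpi$-adic topology, and let $D$ denote its $\varpi$-adic completion, which remains perfect. The Huber pair $(D[\tfrac{1}{\varpi}],D^+)$, with $D^+$ the integral closure of the image of $D$ in $D[\tfrac{1}{\varpi}]$, is then a complete Tate perfect Huber pair in characteristic $p$. Let $\Hub{R''}$ denote its uniform completion; since a complete, uniform, perfect Tate Huber pair in characteristic $p$ is by definition perfectoid, $\Spa{R''}$ is affinoid perfectoid.

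The natural compositions $B\to D_0\to D\to R''^+$ and $R^+\to D_0\to D\to R''^+$, together with the induced Huber pair map $(R,R^+)\to(R'',R''^+)$, assemble into a morphism $g:\Spa{R''}\to S^\diamond\times_{T^\diamond}\Spa{R}$ of v-sheaves. It remains to check that $g$ is a v-surjection, which I verify on algebraically closed geometric points $\Spa{C}$. A point of the fiber product over $\Spa{C}$ yields a ring homomorphism $B\otimes_A R^+\to C^+$; since $C^+$ is perfect and $\varpi$-adically complete, this extends uniquely to a continuous ring homomorphism $D\to C^+$. The resulting Huber pair map $(D[\tfrac{1}{\varpi}],D^+)\to(C,C^+)$ factors through the uniform completion by uniformity of $\Hub{C}$, producing the desired lift $\Spa{C}\to\Spa{R''}$. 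The main technical point is confirming that the uniform completion actually lands in the perfectoid world, but this is automatic here because in characteristic $p$ the perfectoid condition on a Tate Huber pair reduces to perfectness, completeness, and uniformity.
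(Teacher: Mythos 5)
Your construction is correct in substance, but it takes a genuinely different route from the paper. The paper disposes of the lemma in one line: for a discrete perfect ring one has $\Spdf{A}^\dagger=\Spdf{A}$, so the statement is a special case of \Cref{lem:boundedlocusgivesqcqs}, whose proof runs through filtered limits of topologically finite type algebras, \cite[Lemma 12.17]{Et}, and the explicit representability of the base change of $\bb{A}^{1,\dagger}_B$; that route yields the stronger conclusion that the map is representable in spatial diamonds (in particular qcqs). You instead attack quasicompactness directly: for a test map $\Spa{R}\to T^\diamond$ you build the $\varpi$-adically completed perfection of $B\otimes_A R^+$, invert $\varpi$, and pass to a perfectoid Tate ring $R''$ mapping onto the fiber product. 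This is more elementary and self-contained, and in fact your lifting argument shows more than a v-cover: the lift of a point is canonical, so $\Spa{R''}$ surjects onto the fiber product already at the level of $\Hub{V}$-points for every affinoid perfectoid $\Hub{V}$.

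Two steps need tightening. First, and most importantly, "verify on algebraically closed geometric points" is not by itself a valid criterion for surjectivity of a map of v-sheaves: passing from pointwise lifting to v-surjectivity requires an input such as \cite[Lemma 12.11]{Et}, which demands quasicompactness of the map (or quasiseparatedness of the target), i.e.\ exactly the kind of statement you are in the middle of proving, so as written this step is circular or at least unjustified. The repair is immediate and costs nothing: run your argument verbatim for an arbitrary map $\Spa{V}\to S^\diamond\times_{T^\diamond}\Spa{R}$ from an affinoid perfectoid. The data is a ring map $B\otimes_A R^+\to V^+$; since $V^+$ is perfect and $\varpi$-adically complete it extends continuously to $D\to V^+$, and since $V$ is uniform the induced map $(D[\tfrac{1}{\varpi}],D^+)\to \Hub{V}$ factors through $\Hub{R''}$. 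Nothing about fields or algebraic closedness is used, so you get surjectivity on all affinoid perfectoid points, which trivially implies v-surjectivity. Second, "by definition perfectoid" is a slight overstatement: you should note that the uniform completion stays perfect (because $(D[\tfrac{1}{\varpi}])^\circ$ is perfect and $\varpi$-adic completion of a perfect ring along a finitely generated ideal is perfect), after which Frobenius surjectivity on $R''^{\circ}/\varpi$ is clear; alternatively, recall that in characteristic $p$ a complete perfect Tate ring is automatically uniform, so the uniformization detour can be skipped altogether. With these repairs your argument is a complete and valid alternative proof.
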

\begin{proof}
	Observe that for a perfect discrete ring $A$ we have the identity $\Spdf{A}^\dagger=\Spdf{A}$. We can apply \Cref{lem:boundedlocusgivesqcqs}.
\end{proof}
   
\begin{pro}
		\label{pro:two-v-covers}
		\begin{enumerate}
			\item Let $f:S\to T$ be a morphism of perfect affine schemes over $\Fp$. The map $f$ is universally subtrusive if and only if $f^\diamond:S^\diamond \to T^\diamond$ is a surjective map of v-sheaves.
			\item A family of morphisms $(S_i\to T)_{i\in \F}$ is universally subtrusive if and only if $(\coprod_{i\in \F}S_i^\diamond)\to T^\diamond$ is a surjective map of v-sheaves.
		\end{enumerate}
\end{pro}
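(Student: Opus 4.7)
The plan is to reduce both parts to a topological surjectivity question on olivine spectra, and then exploit Lemma \ref{lem:subtrusivevsadic}. By Proposition \ref{pro:strongtopoonSpo} applied to the formal Huber pairs $\Huf{A}$ and $\Huf{B}$, we identify $|T^\diamond|=\Spor{A}$ and $|S^\diamond|=\Spor{B}$. By Lemma \ref{lem:schemes-quasicompact} the map $f^\diamond$ is quasicompact, and for a quasicompact map of small v-sheaves, surjectivity as v-sheaves is equivalent to surjectivity of the underlying topological map $|f^\diamond|$: given a test $\Spa{R}\to T^\diamond$, v-cover by a product of points (Example \ref{exa:prodpointsbasis}), lift each geometric factor $\Spa{C}$ to $|S^\diamond|$ using topological surjectivity, and pass to a common refinement of perfectoid fields (following the equivalence relation in Definition \ref{defi:diamondtop}) to obtain an honest v-sheaf lift over a further v-cover. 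This reduces (1) to the statement that $f$ is universally subtrusive iff $\Spor{B}\to \Spor{A}$ is surjective.

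By Lemma \ref{lem:subtrusivevsadic}, universal subtrusivity of $f$ is equivalent to surjectivity of the map $\Spf{B}\to \Spf{A}$. Since the projections $h:\Spor{-}\to \Spf{-}$ are surjective and natural, one implication follows immediately by composition. For the converse, given $x=(|\cdot|_x^h,|\cdot|_x^a)\in \Spor{A}$ we construct a lift $y\in\Spor{B}$ by cases. If $x$ is discrete (i.e.\ $|\cdot|_x^a$ is trivial), pick $y^h\in \Spf{B}$ with $f(y^h)=|\cdot|_x^h$ from the assumed surjectivity of $\Spf{B}\to \Spf{A}$, and let $y^a$ be the trivial valuation on $B$ with support $\mathbf{supp}(y^h)$. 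If $x$ is d-analytic, let $V\subseteq V_1\subseteq K$ be the valuation rings on the residue field $K=K_{h(x)}$ corresponding to $|\cdot|_x^h$ and $|\cdot|_x^a$ respectively, with $V_1$ of rank $1$. Apply universal subtrusivity of $f$ to $\mrm{Spec}(V_1)\to \mrm{Spec}(A)$ to obtain a local extension $V_1\subseteq W'$ of valuation rings and a factorization $\mrm{Spec}(W')\to \mrm{Spec}(B)$; by replacing $W'$ with its rank $1$ coarsening we may assume $W'$ itself has rank $1$. Then by a composite valuation construction (extending the residual valuation on the residue field of $V_1$ which witnesses the refinement $V\subseteq V_1$ to the residue field of $W'$) construct a refinement $W''\subseteq W'$ of valuation rings of $\mrm{Frac}(W')$ with $W''\cap K=V$. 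Take $y^h$ and $y^a$ to be the pullbacks to $B$ of the $W''$- and $W'$-valuations; localness of the extensions yields $y^h\circ f^*=|\cdot|_x^h$ and $y^a\circ f^*=|\cdot|_x^a$, and $y^a$ is by construction the rank $1$ vertical generization of $y^h$.

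Part (2) reduces to part (1) as follows. By Definition \ref{defi:v-top-sch}, a family $(S_i\to T)_{i\in \F}$ is universally subtrusive iff some finite subfamily's disjoint union is universally subtrusive as a single map. Dually, quasicompactness of $T^\diamond$ (which holds since $T$ is affine) together with the quasicompactness of each $S_i^\diamond$ forces surjectivity of $\coprod_{i\in \F}S_i^\diamond \to T^\diamond$ as v-sheaves to be realized by some finite sub-coproduct. Applying part (1) to the single map $\coprod_{i\in \F'}S_i\to T$ for an appropriate finite $\F'\subseteq \F$ then yields the equivalence.

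The main obstacle will be setting up cleanly the equivalence between v-sheaf surjectivity and topological surjectivity of $|f^\diamond|$ using quasicompactness, together with the valuation-theoretic manipulation in the d-analytic case to produce lifts compatible with both components of a point of $\Spor{A}$; specifically, ensuring that a single factorization $B\to W'$ obtained from universal subtrusivity can be promoted via composite valuations to simultaneously recover the horizontal valuation $|\cdot|_x^h$ and its rank $1$ coarsening $|\cdot|_x^a$ upon pullback to $A$.
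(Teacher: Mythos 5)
Your reductions agree with the paper's: quasicompactness of $f^\diamond$ (\Cref{lem:schemes-quasicompact}) lets one test surjectivity on $|f^\diamond|$, the identification of $|S^\diamond|$ with $\Spor{B}$ reduces part (1) to comparing $\Spor{B}\to\Spor{A}$ with $\Spf{B}\to\Spf{A}$ via \Cref{lem:subtrusivevsadic}, one direction is surjectivity of $h$, and your treatment of part (2) via quasicompactness is fine. The divergence is in the converse of the topological statement: the paper gets it from \Cref{pro:schematicmapsgiveschematicsets}, whereas you attempt a direct valuation-theoretic lift of a d-analytic point $x=(|\cdot|_x^h,|\cdot|_x^a)$, and there your argument has two genuine gaps. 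First, the step ``by replacing $W'$ with its rank $1$ coarsening we may assume $W'$ has rank $1$'' is not justified: a rank-$1$ coarsening need not exist when $\Gamma_{W'}$ has no maximal proper convex subgroup, and even when it exists it need not remain an extension of $V_1$ --- if $\Gamma_{V_1}$ lies inside the maximal proper convex subgroup of $\Gamma_{W'}$ (e.g. $\Gamma_{W'}=\mathbb{Z}^2$ lexicographic with $\Gamma_{V_1}$ the non-dominant factor), the coarsened valuation becomes trivial on $\mrm{Frac}(V_1)$, so its pullback to $A$ is no longer $|\cdot|_x^a$, and in that example no rank-$1$ coarsening of $W'$ is local over $V_1$. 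One can repair this (first pass to the valuation ring $W'/\mathfrak{p}$ attached to the convex hull of $\Gamma_{V_1}$ in $\Gamma_{W'}$, then coarsen by the largest convex subgroup meeting $\Gamma_{V_1}$ trivially), but that is a different operation from the one you describe.

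The more serious gap is in the composite-valuation step: the point $y^h$ you build need not lie in $\mrm{Spa}(B,B)$, hence $(y^h,y^a)$ need not be a point of $\Spor{B}$ at all. You take $W''\subseteq W'$ to be the preimage of an extension $\bar{W}\subseteq\kappa(W')$ of the residual valuation ring $\bar{V}=V/\mathfrak{m}_{V_1}$, chosen only so that $\bar{W}\cap\kappa(V_1)=\bar{V}$. But the subtrusivity square only constrains the image of $A$ (which lands in $\bar{V}$); it says nothing about the image $\bar{B}$ of $B$ in $\kappa(W')$. For $y^h$ to satisfy $|b|_{y^h}\le 1$ for all $b\in B$ you need $\bar{B}\subseteq\bar{W}$, and an extension of $\bar{V}$ containing $\bar{B}$ exists only when $\mathfrak{m}_{\bar{V}}\cdot\bar{V}[\bar{B}]\neq\bar{V}[\bar{B}]$; if, say, some element of $\bar{B}$ is the inverse of an element of $\mathfrak{m}_{\bar{V}}$, no choice of $\bar{W}$ works for that $W'$, and nothing in your setup lets you re-choose $W'$ to avoid this. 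This simultaneous compatibility of the horizontal valuation and its rank-$1$ generization is exactly the subtlety the paper flags in the proof of \Cref{pro:schematicmapsgiveschematicsets} (``$y^{\mrm{mer}}$ might not exist and even if it does it might not map to $x^{\mrm{mer}}$''), and it is resolved there not by a raw application of universal subtrusivity but by lifting only $h(x)$, passing to the residue field of the lift, and taking a $(b)$-adic completion to manufacture a compatible rank-$1$ refinement. Your proof needs either that device or an argument that $W'$ and $\bar{W}$ can be chosen with $\bar{B}\subseteq\bar{W}$; as written the lifting step fails.
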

\begin{proof}
	Since $f^\diamond:S^\diamond \to T^\diamond$ is quasicompact, by \cite[Lemma 12.11]{Et} it is surjective if and only if $|f^\diamond|$ is surjective. By \Cref{pro:rightpointsspo} and \Cref{lem:subtrusivevsadic}, it suffices to prove that $\Spor{B}\to \Spor{A}$ is surjective if and only if the map $\Spf{B}\to \Spf{A}$ is. Surjectivity of $h$ proves one direction, the converse is a consequence of \Cref{pro:schematicmapsgiveschematicsets}.
	The second claim, follows easily from the first. 
\end{proof}
\begin{rem}
	\label{rem:prodpointssch}
	One can discuss the analogue of \Cref{exa:prodpointsbasis}. Given an index set $I$ and $\{V_i\}_{i\in I}$ a family of perfect valuation rings over $\Fp$, we let $R=\prod_{i\in I}V_i$. We call the affine schemes constructed in this way a scheme-theoretic product of points. They form a basis for the v-topology on $\mrm{PCAlg}^\mrm{op}_\Fp$ \cite[Lemma 6.2]{Witt}.
\end{rem}
Given a cut-off cardinal $\kappa$ we let $\topSch_\kappa$ be the topos associated to the site $\mrm{PCAlg}^\mrm{op}_{\Fp,\kappa}$ with the v-topology, and we will refer to an object in this topos as a $\kappa$-small scheme-theoretic v-sheaf. For any pair of cut-off cardinals $\kappa<\lambda$ we have a continuous fully-faithful embedding of sites $\iota_{\kappa,\lambda}^*:\mrm{PCAlg}^\mrm{op}_{\Fp,\kappa}\to \mrm{PCAlg}^\mrm{op}_{\Fp,\lambda}$, which induces a morphism of topoi $\iota_{\kappa,\lambda}:\topSch_\lambda\to \topSch_\kappa$.
\begin{pro}
	\label{pro:bigcardinalembedding}
	The functor $\iota_{\kappa,\lambda}^*:\topSch_\kappa\to \topSch_\lambda$ is fully-faithful \cite[Proposition 8.2]{Et}.
\end{pro}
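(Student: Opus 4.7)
The argument adapts Scholze's proof for the perfectoid analogue [Et, Proposition 8.2] to the scheme-theoretic setting introduced here. By standard descent theory, to show $\iota^*_{\kappa,\lambda}$ is fully faithful, it suffices to verify that for every $X \in \mrm{PCAlg}^\mrm{op}_{\Fp,\kappa}$ and every v-cover $Y \to X$ in $\mrm{PCAlg}^\mrm{op}_{\Fp,\lambda}$, there exists a v-cover $Z \to X$ with $Z \in \mrm{PCAlg}^\mrm{op}_{\Fp,\kappa}$ refining $Y \to X$. (Note also that the fiber product of two $\kappa$-small perfect affine schemes over a $\kappa$-small base is again $\kappa$-small, since tensor products preserve the cardinality bound.) Granting the refinement property, the v-topology on $\mrm{PCAlg}^\mrm{op}_{\Fp,\kappa}$ agrees with the one induced from the larger site, and hence sheafification does not alter sections over $\kappa$-small objects, so the unit $\F \to \iota_{\kappa,\lambda,*}\iota^*_{\kappa,\lambda}\F$ is an isomorphism for every $\F \in \topSch_\kappa$.

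To produce the refinement, I would invoke the basis property of \Cref{rem:prodpointssch}: first refine $Y \to X$ by a scheme-theoretic product of points $\mrm{Spec}(\prod_{i \in I} V_i) \to X$. By \Cref{pro:two-v-covers} together with \Cref{lem:subtrusivevsadic}, a v-cover of $X = \mrm{Spec}(A)$ corresponds to a surjection at the level of associated adic spectra, and for each scheme-theoretic point $x \in |X|$ one can select a valuation ring $V_x$ inside an algebraic closure of the residue field $k(x)$ dominating the localization at $x$, chosen of cardinality bounded by $\kappa$. The resulting map $\mrm{Spec}(\prod_{x \in |X|} V_x) \to X$ is then a v-cover that refines any given cover of $X$, and is our candidate object in $\mrm{PCAlg}^\mrm{op}_{\Fp,\kappa}$.

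The principal obstacle is the set-theoretic bookkeeping that ensures $\mrm{Spec}(\prod_{x \in |X|} V_x)$ is genuinely $\kappa$-small: both the indexing set $|X|$ (a priori of cardinality up to $2^{|A|}$) and the product of valuation rings must remain bounded by $\kappa$. This is not a genuine mathematical obstacle but rather a careful invocation of the defining closure properties of cut-off cardinals from [Et, §4], in particular that they are strong limit cardinals closed under the products of the sort arising here. Once this set-theoretic compatibility is in place, the refinement $\mrm{Spec}(\prod_{x \in |X|} V_x) \to X$ lies in $\mrm{PCAlg}^\mrm{op}_{\Fp,\kappa}$, and fully faithfulness of $\iota^*_{\kappa,\lambda}$ follows.
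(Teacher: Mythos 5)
Your reduction in the first paragraph (unit of adjunction is an isomorphism provided every v-cover of a $\kappa$-small object in the big site can be refined by a $\kappa$-small v-cover) would indeed give full faithfulness if the refinement property held, but that property is false, and this is a genuine gap rather than bookkeeping. A refinement of $Y\to X$ must \emph{factor through} $Y$, and a $\kappa$-small scheme may admit no map to $Y$ at all. Concretely, take $X=\mrm{Spec}(\Fp)$ and $Y=\mrm{Spec}(K)$ with $K$ a perfect field of cardinality $\kappa$ (e.g.\ the perfection of a purely transcendental extension of transcendence degree $\kappa$); by \Cref{lem:subtrusivevsadic} this is a v-cover in $\mrm{PCAlg}^\mrm{op}_{\Fp,\lambda}$, yet any nonzero perfect ring $R$ receiving a map from $K$ contains a copy of $K$, so no nonempty $\kappa$-small scheme maps to $Y$ and no $\kappa$-small refinement exists. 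Your second paragraph compounds this: the product of points $\mrm{Spec}(\prod_{x\in|X|}V_x)$ is built only from the points of $X$, so while it may cover $X$, there is no reason for it to factor through the given $Y$ — it is not a refinement of $Y\to X$. Finally, the set-theoretic issue you set aside as harmless is exactly where the argument dies: cut-off cardinals cannot rescue a map $B\to W_x$ when $|B|\geq\kappa$, since the image of $B$ already obstructs $\kappa$-smallness.

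The paper's proof goes in the opposite direction: instead of refining a big cover by a smaller one, it factors the big cover through a \emph{coarser} $\kappa$-small cover. The key input is \Cref{lem:kappasmallcovers}: any map $S\to T$ with $T$ $\kappa$-small factors as $S\to T'\to T$ with $T'$ $\kappa$-small and $S\to T'$ a v-cover (built by a transfinite/countable closure process on subrings, using a patch-topology compactness argument). Using this, any v-cover $S'\to S$ in $\mrm{PCAlg}^\mrm{op}_{\Fp,\lambda}$ is exhibited as a cofiltered limit of $\kappa$-small v-covers $\mrm{Spec}(B_i)\to\mrm{Spec}(A_i)$ with $A=\varinjlim A_i$, $B=\varinjlim B_i$. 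One then shows that the left Kan extension presheaf $\G(S)=\varinjlim_{T\in I^\kappa_S}\F(T)$ is \emph{already} a v-sheaf on the big site, because the sheaf condition for $S'\to S$ is the filtered colimit of the sheaf conditions for the small covers and filtered colimits of sets are exact; hence $\iota^*_{\kappa,\lambda}\F=\G$ and $\iota_{\kappa,\lambda,*}\iota^*_{\kappa,\lambda}\F(S)=\F(S)$ for $\kappa$-small $S$ since the identity is cofinal in $\mathcal{C}^\kappa_S$. If you want to salvage your outline, replace the refinement claim by this limit description of big covers; the Čech/plus-construction computation then goes through with $\G(Y)=\varinjlim_i\F(\mrm{Spec}(B_i))$ in place of sections over a small refinement.
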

\begin{proof}
	It is enough to prove that the adjunction $\F\to \iota_{\kappa,\lambda,*}\iota_{\kappa,\lambda}^* \F$ is an isomorphism. 
	Define a presheaf $S\mapsto \G(S)$ constructed as follows. 
	Let $\mathcal{C}^\kappa_S$ denote the category of maps of affine schemes $S\to T$ with $T\in \mrm{PCAlg}^\mrm{op}_{\Fp,\kappa}$. This category is cofiltered and there is a $\lambda$-small set of objects $I^\kappa_S\subseteq \mathcal{C}^\kappa_S$, that is cofinal in $\mathcal{C}^\kappa_S$. We let $\G(S)=\varinjlim_{T\in I^\kappa_S} \F(T)$, for any choice of $I^\kappa_S$. Unraveling the definitions we see that $\iota_{\kappa,\lambda}^*\F$ is the sheafification of $\G$. 

	We claim that $\G$ is already a sheaf. Indeed, since filtered colimits are exact it suffices to prove that v-covers $S'\to S$ in $\mrm{PCAlg}^\mrm{op}_{\Fp,\lambda}$ are filtered colimits of v-covers in $\mrm{PCAlg}^\mrm{op}_{\Fp,\kappa}$. Let $S=\mrm{Spec}(A)$ and let $S'=\mrm{Spec}(B)$, write $A=\varinjlim_{i\in I^\kappa_S} A_i$ and $B=\varinjlim_{j\in I^\kappa_{S'}} B_j$ with $A_i$ and $B_j$ $\kappa$-small rings, we may assume that the transition maps are injective. By \Cref{lem:kappasmallcovers} below we may assume that the $\mrm{Spec}(A)\to \mrm{Spec}(A_i)$ are v-covers. Consequently, $S'\to S\to \mrm{Spec}(A_i)$ are v-covers and when $S'\to \mrm{Spec}(A_i)$ factors through $\mrm{Spec}(B_j)\to \mrm{Spec}(A_i)$ this later one is also a v-cover. Replacing our index sets $I^\kappa_S$ and $I^\kappa_{S'}$ by a common index set $I$ and replacing $B_j$ by the smallest subring of $B$ containing $B_j$ and $A_i$ for some $i\in I^\kappa_S$ we can ensure $(\mrm{Spec}(B_i)\to \mrm{Spec}(A_i))_{i\in I}$ is defined for all $i\in I$ and is a v-cover. We get our desired expression $$(S'\to S)=\varprojlim_{i\in I}(\mrm{Spec}(B_i)\to \mrm{Spec}(A_i))_{i\in I}.$$

	Once we know $\iota_{\kappa,\lambda}^*\F=\G$, we compute $\iota_{\kappa,\lambda,*}\iota_{\kappa,\lambda}^*\F(S)=\F(S)$  since the identity is cofinal in $\mathcal{C}^\kappa_S$. 
\end{proof}
\begin{lem}
	\label{lem:kappasmallcovers}
	Let $\kappa$ be a cut-off cardinal, $S\in \mrm{PCAlg}^\mrm{op}_{\Fp}$ and $T\in \mrm{PCAlg}^\mrm{op}_{\Fp,\kappa}$. Given a morphism $g:S\to T$, there is $T'\in \mrm{PCAlg}^\mrm{op}_{\Fp,\kappa}$ together with morphisms $f:S\to T'$ and $h:T'\to T$ such that $f$ is a v-cover and $g=h\circ f$.
\end{lem}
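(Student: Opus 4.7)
The plan is to enlarge $B$ inside $A$ by a countable iteration to obtain a $\kappa$-small perfect subring $B'\subseteq A$ containing $B$ that enjoys the ``saturation'' property $IA\cap B'=I$ for every finitely generated ideal $I\subseteq B'$; setting $T':=\mrm{Spec}(B')$ will then give the v-cover property via a Chevalley-type extension argument.

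First I would replace $B$ by its image $g^*(B)\subseteq A$ (still $\kappa$-small and perfect) to reduce to the case where $g^*:B\hookrightarrow A$ is injective. Then I would build $B'$ iteratively: set $B^{(0)}:=B$, and given $B^{(n)}\subseteq A$ a $\kappa$-small perfect subring containing $B$, form $B^{(n+1)}$ by adjoining to $B^{(n)}$, for each pair $(I,s)$ consisting of a finitely generated ideal $I\subseteq B^{(n)}$ and an element $s\in B^{(n)}\cap IA$, a finite set of witness elements $a_k\in A$ realizing $s=\sum_k a_k i_k$ with $i_k\in I$, and then taking the perfect closure in $A$. Since the number of such pairs is bounded by $|B^{(n)}|^{<\omega}\leq\kappa$ (using that $\kappa$ is an infinite cut-off cardinal), one has $|B^{(n+1)}|\leq\kappa$. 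Let $B':=\bigcup_n B^{(n)}\subseteq A$; this is $\kappa$-small and perfect. By construction $IA\cap B'=I$ for every finitely generated $I\subseteq B'$, because both the ideal and any purported counterexample live in some $B^{(n)}$ and were handled at step $n$; passing to filtered unions over finitely generated subideals gives $\mathfrak{p}A\cap B'=\mathfrak{p}$ for every prime $\mathfrak{p}\subseteq B'$, which yields lying-over for the inclusion $B'\hookrightarrow A$.

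To verify that $f:S\to T':=\mrm{Spec}(B')$ is a v-cover in the sense of \Cref{defi:v-top-sch}, let $v:B'\to V$ be a map to a valuation ring, with $\mathfrak{p}=\ker(v)$ and $\mathfrak{p}''=v^{-1}(\mathfrak{m}_V)$. Saturation applied to $\mathfrak{p}$ gives the injection $B'/\mathfrak{p}\hookrightarrow A/\mathfrak{p}A$; tensoring with $\mrm{Frac}(V)$ over $\mrm{Frac}(B'/\mathfrak{p})\subseteq\mrm{Frac}(V)$ yields $\mrm{Frac}(V)\otimes_{B'}A\neq 0$, and hence a prime $\mathfrak{P}\subseteq R:=V\otimes_{B'}A$ with $\mathfrak{P}\cap V=(0)$. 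Saturation applied to $\mathfrak{p}''$ together with the identification $k_V\otimes_{B'}A=k_V\otimes_{B'/\mathfrak{p}''}(A/\mathfrak{p}''A)$ and the flatness of $k_V$ over $\mrm{Frac}(B'/\mathfrak{p}'')$ gives $k_V\otimes_{B'}A\neq 0$, hence $\mathfrak{m}_V R\subsetneq R$. A standard commutative-algebra argument (choose $\mathfrak{M}$ minimal over $\mathfrak{m}_V R+\mathfrak{P}$ for a suitable minimal prime $\mathfrak{P}$ lying in the generic fiber) produces a chain $\mathfrak{P}\subseteq\mathfrak{M}$ in $R$ with $\mathfrak{P}\cap V=(0)$ and $\mathfrak{M}\cap V=\mathfrak{m}_V$. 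In the domain $D:=R/\mathfrak{P}$, $V$ embeds and the image of $\mathfrak{M}$ is a prime contracting to $\mathfrak{m}_V$, so Chevalley's extension theorem produces a valuation ring $W\subseteq\mrm{Frac}(D)$ with $D\subseteq W$ and $V\subseteq W$ a domination. The composite $A\to D\hookrightarrow W$ together with $V\subseteq W$ furnish the required lift $\mrm{Spec}(W)\to S$. Finally $h:T'\to T$ is induced by $B\hookrightarrow B'$, and $g=h\circ f$.

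The main obstacle is this last step, namely producing simultaneously a prime in the generic fiber of $\mrm{Spec}(R)\to\mrm{Spec}(V)$ and a prime in the closed fiber containing it: the saturation property is exactly what is needed to guarantee both non-vanishings $\mrm{Frac}(V)\otimes_{B'}A\neq 0$ and $k_V\otimes_{B'}A\neq 0$ populating the two fibers, after which the chain and the Chevalley extension come from standard commutative algebra. The cardinality bookkeeping in the iterative construction is routine given that $\kappa$ is an infinite cut-off cardinal.
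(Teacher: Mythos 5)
Your construction and the first half of the verification are fine: the iterative adjunction of witnesses does give a $\kappa$-small perfect $B'\subseteq A$ with $IA\cap B'=I$ for all finitely generated $I$, and from this the two non-vanishings $\mrm{Frac}(V)\otimes_{B'}A\neq 0$ and $k_V\otimes_{B'}A\neq 0$ follow as you say. The gap is the step you call a ``standard commutative-algebra argument'': non-emptiness of the generic and closed fibers of $R=V\otimes_{B'}A$ over $V$ does \emph{not} produce a chain $\mathfrak{P}\subseteq\mathfrak{M}$ with $\mathfrak{P}\cap V=(0)$ and $\mathfrak{M}\cap V=\mathfrak{m}_V$; the ring $R=\mrm{Frac}(V)\times k_V$ has both fibers non-empty and admits no such chain, because no prime of the generic fiber specializes into the closed fiber. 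Such a chain is in fact \emph{equivalent} to the valuative lifting required by \Cref{defi:v-top-sch} (its existence is exactly the kernel/center of the sought map $R\to W$), so this is where the whole content of ``v-cover'' versus mere lying-over sits: a v-cover must lift valuations, i.e.\ specializations after base change to valuation rings, and your saturation property is a statement about ideals of $B'$ only. You cannot invoke it for $R$, since cyclic purity is not stable under base change along $B'\to V$, and nothing in your iteration controls relations inside the tensor product $V\otimes_{B'}A$.

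Two ways out. One is to strengthen the closure property in your iteration from ideal-membership witnesses to solutions of arbitrary finite systems of $B^{(n)}$-linear equations that are solvable in $A$ (a $\kappa$-bounded amount of data per stage): in the limit $B'\subseteq A$ becomes pure (universally injective), a base-change-stable property, and then for every $B'\to V$ one has $aR\cap V=aV$ for all $a\in V$; from this the chain does exist (if $\mathfrak{m}_VR+J=R$ with $J$ the $V$-torsion, writing $1=m_1r+j$, $tj=0$, $t\neq 0$, one gets $t\in m_1R\cap V=m_1V$, then $m_1s\in m_1^2sR\cap V=m_1^2sV$ and cancellation forces $m_1\in V^\times$, a contradiction), after which your Chevalley step is correct. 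The other is the paper's route, which saturates directly against the valuative criterion rather than an ideal-theoretic surrogate: at each stage it adjoins elements of the big ring witnessing that every valuation (bounded by $1$) on $A_n$ not in the image of $\mrm{Spa}$ of the big ring fails to extend one step up, and then identifies the limit's adic spectrum with the inverse limit and uses patch-compactness to lift points, concluding via \Cref{lem:subtrusivevsadic}. Either fix works, but as written your proof is missing the key input that makes the chain exist.
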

\begin{proof}
	Let $S=\mrm{Spec}(B)$ and $T=\mrm{Spec}(A)$. By replacing $A$ by its image in $B$ we may assume $g^*:A\to B$ to be injective. We construct recursively a countable sequence of $\kappa$-small subrings $$A=A_0\subseteq \dots\subseteq A_n\subseteq A_{n+1}\subseteq\dots B$$ such that the image of $\Spf{B}\to \Spf{A_n}$ coincides with that of $\Spf{A_{n+1}}\to \Spf{A_n}$. Assume $A_n$ is defined and let $Z_n\subseteq \Spf{A_n}$ be the image of $\Spf{B}$ in $\Spf{A_n}$. If $x\in \Spf{A_n}\setminus Z_n$ the valuation $|\cdot|_x:A_n\to \Gamma_x$ can't be extended to a valuation $|\cdot|:B\to \Gamma$. A compactness argument proves there are finitely many elements $\{a_1,\dots a_m\}$ such that $|\cdot|_x$ does not extend to $A_n[a_1,\dots, a_m]\subseteq B$. Since $\Spf{A_n}\setminus Z_n$ is $\kappa$-small, there is $\lambda<\kappa$ and a set $\{a_i\}_{i\in \lambda}\subseteq B$ such that $A_n[a_i]_{i\in \lambda}$ does not extend any $x\in \Spf{A_n}\setminus Z_n$. We let $A_{n+1}=A_n[a_i\pthr]_{i\in \lambda}$. 
	
	We let $A_\infty=\varinjlim_{i\in \mathbb{N}}A_i$, it is $\kappa$-small and we claim that the map $\mrm{Spec}(B)\to \mrm{Spec}(A_\infty)$ is a v-cover. 
	We use \Cref{lem:subtrusivevsadic} to prove instead that $\Spf{B}\to \Spf{A_\infty}$ is surjective. One verifies that $\Spf{A_\infty}=\varprojlim_{i\in \bb{N}}\Spf{A_i}$. Given a compatible sequence $x_i\in \Spf{A_i}$ let $M_i$ be the preimage of $x_i$ in $\Spf{B}$. This gives a sequence $\Spf{B}\supseteq M_0\supseteq M_1\dots $
	Since the maps $\Spf{B}\to \Spf{A_i}$ are spectral, each $M_i$ is compact in the patch topology. Any element in this intersection maps to $x_\infty$. 
\end{proof}
We define $\topSch$ as the big colimit $\bigcup_\kappa \topSch_\kappa$ along all cut-off cardinals and the fully-faithful embeddings $\iota_{\kappa,\lambda}^*$. Objects in $\topSch$ are called small scheme-theoretic v-sheaves.

The general formalism of topoi, specifically (\cite[IV 4.9.4]{SGA4}), allows us to promote $\diamond_\kappa: \mrm{PCAlg}^\mrm{op}_{\Fp,\kappa}\to \topPerf_\kappa$ to a morphism of topoi $f_\kappa:\topPerf_\kappa\to \topSch_\kappa$ for which $f_\kappa^*|_{\mrm{PCAlg}^\mrm{op}_{Fp,\kappa}}=\diamond_\kappa$. 
\begin{pro}
	\label{pro:commutetopoi}
Given two cut-off cardinals $\kappa<\lambda$ we have a commutative diagram of morphism of topoi:	
\begin{center}
	\begin{tikzcd}
		\topPerf_\lambda \ar{r}{f_\lambda}\ar{d}{\iota_{\kappa,\lambda}} &	\topSch_\lambda \ar{d}{\iota_{\kappa,\lambda}}\\
		\topPerf_\kappa \ar{r}{f_\kappa }&	\topSch_\kappa
	\end{tikzcd}
\end{center}
Moreover, the natural morphism $\iota_{\kappa,\lambda}^*\circ f_{\kappa,*}\to f_{\lambda,*}\circ \iota_{\kappa,\lambda}^*$ is an isomorphism.

\end{pro}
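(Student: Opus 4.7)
My plan is to show first that the diagram of $*$-functors $2$-commutes and then to derive the base change isomorphism. For the commutativity, note that both composites $f_\lambda^* \circ \iota_{\kappa,\lambda}^*$ and $\iota_{\kappa,\lambda}^* \circ f_\kappa^*$ are colimit-preserving functors $\topSch_\kappa \to \topPerf_\lambda$ (as compositions of left adjoints). They agree on a $\kappa$-small representable $y_S$: indeed $\iota_{\kappa,\lambda}^*$ sends representables to representables by construction, while $f_\kappa^*$ and $f_\lambda^*$ restrict to $\diamond$ on representables by the defining property from \cite[IV 4.9.4]{SGA4}, so both composites produce $S^\diamond \in \topPerf_\lambda$. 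Since every object of $\topSch_\kappa$ is canonically a colimit of its slice over representables, and both composites preserve this colimit, the isomorphism extends to all of $\topSch_\kappa$. Passing to right adjoints then yields the commutativity $\iota_{\kappa,\lambda,*} \circ f_{\lambda,*} \cong f_{\kappa,*} \circ \iota_{\kappa,\lambda,*}$ of the diagram of topoi.

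For the base change isomorphism $\iota_{\kappa,\lambda}^* \circ f_{\kappa,*} \cong f_{\lambda,*} \circ \iota_{\kappa,\lambda}^*$, I would evaluate on $\lambda$-small representables. For $\F \in \topPerf_\kappa$ and a $\lambda$-small perfect affine scheme $S = \mrm{Spec}(A)$, the formula established in the proof of \Cref{pro:bigcardinalembedding} gives
\[
(\iota_{\kappa,\lambda}^* \circ f_{\kappa,*}\F)(S) = \varinjlim_{T \in I^\kappa_S} (f_{\kappa,*}\F)(T) = \varinjlim_{T \in I^\kappa_S} \F(T^\diamond),
\]
while the right-hand side equals $(\iota_{\kappa,\lambda}^*\F)(S^\diamond)$. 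To identify these, I would prove the analog of \Cref{pro:bigcardinalembedding} directly in $\topPerf$: the presheaf $X \mapsto \varinjlim_{Y \in I^\kappa_X} \F(Y)$ on $\mrm{Perf}_\lambda$ is already a v-sheaf, hence coincides with $\iota_{\kappa,\lambda}^*\F$. Evaluating at $X = S^\diamond$ and using the cofiltered presentation $S^\diamond = \varprojlim_{T \in I^\kappa_S} T^\diamond$ in $\topPerf_\lambda$ (which follows from writing $A = \varinjlim A_T$ and the defining formula $S^\diamond\Hub{R} = \mathrm{Hom}(A, R^+)$) produces the matching colimit.

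The step I expect to be most delicate is the perfectoid analog of \Cref{lem:kappasmallcovers}, asserting that any map from a $\lambda$-small affinoid perfectoid space to a $\kappa$-small one admits a $\kappa$-small v-cover refinement with the appropriate cofinality property. The strategy of \Cref{lem:kappasmallcovers} --- iteratively enlarging by finitely many ring elements whose valuations fail to extend through the larger ring --- should transpose to the perfectoid setting, provided one carefully tracks the Huber pair structure (rings of integral elements, pseudo-uniformizers, topologies) at each stage. Once this input is available, the sheafiness argument of \Cref{pro:bigcardinalembedding} adapts essentially verbatim to $\mrm{Perf}_\lambda$, and the base change identification follows routinely.
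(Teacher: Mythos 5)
The first half of your proposal (commutativity of the square of topoi via agreement of the pullback composites on representables) is fine and is essentially the paper's argument, which deduces it formally from the commutativity of the underlying continuous functors. The genuine problem is in the second half, at the final identification. You correctly reduce to showing, for $\F\in\topPerf_\kappa$ and a $\lambda$-small $S=\mrm{Spec}(A)$, that $\varinjlim_{T\in I^\kappa_S}\F(T^\diamond)\cong \mrm{Hom}_{\topPerf_\lambda}(S^{\diamond_\lambda},\iota_{\kappa,\lambda}^*\F)$, but your justification does not go through: the colimit formula you propose to establish computes $\iota_{\kappa,\lambda}^*\F$ only on ($\lambda$-small) \emph{perfectoid spaces}, and $S^\diamond$ is not one, so ``evaluating at $X=S^\diamond$'' is not meaningful; and the cofiltered presentation $S^\diamond=\varprojlim_{T}T^\diamond$ only gives a canonical map $\varinjlim_T\mrm{Hom}(T^\diamond,\iota^*\F)\to\mrm{Hom}(S^\diamond,\iota^*\F)$ --- mapping out of a cofiltered limit of v-sheaves does not turn into a filtered colimit of Hom sets without a compactness input, so bijectivity is exactly what remains to be proved.

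The paper closes this gap with a specific device you are missing: for each $T=\mrm{Spec}(B)$ it introduces the affinoid perfectoid spaces $X_T=\mrm{Spa}(B\rpot{t\pthr},B\pot{t\pthr})$ and $Y_T=X_T\times_{T^\diamond}X_T$ (and likewise $X_S$, $Y_S$), so that $\mrm{Hom}(T^\diamond,\F)$ is an equalizer of Homs out of honest representables; it then uses that the families $(X_T)_{T\in I^\kappa_S}$ and $(Y_T)_{T\in I^\kappa_S}$ are cofinal among $\kappa$-small perfectoid spaces under $X_S$ and $Y_S$, applies the colimit description of $\iota_{\kappa,\lambda}^*\F$ on the representables $X_S$, $Y_S$ (the $\topPerf$-analogue of \Cref{pro:bigcardinalembedding}, essentially available from Scholze), and commutes the filtered colimit with the equalizer to land on $\mrm{Eq}(\mrm{Hom}(X_S,\iota^*\F)\rightrightarrows\mrm{Hom}(Y_S,\iota^*\F))=\mrm{Hom}(S^{\diamond_\lambda},\iota^*\F)$. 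Relatedly, the step you single out as most delicate (a perfectoid analogue of \Cref{lem:kappasmallcovers}) is not where the real difficulty lies on the paper's route; the crux is the coequalizer presentation together with the cofinality of $(X_T)$, $(Y_T)$, without which your interchange of the limit presentation of $S^\diamond$ with $\mrm{Hom}(-,\iota^*\F)$ is unjustified.
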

\begin{proof}
	The commutativity of morphism of topoi follows formally from the similar commutativity of continuous functors.
For the second claim, given an element $S\in \mrm{PCAlg}^\mrm{op}_{\Fp,\lambda}$ we let $I^\kappa_S$ be an index set category as in the proof of \Cref{pro:bigcardinalembedding}. If $S=\mrm{Spec}(A)$ we let $X=\mrm{Spa}(A\rpot{t\pthr},A\pot{t\pthr})$ and $Y=X\times_{S^\diamond}X$. In a similar way, for $T\in I^\kappa_S$ with $T=\mrm{Spec}(B)$ we let $X_T=\mrm{Spa}(B\rpot{t\pthr},B\pot{t\pthr})$ and $Y_T=X_T \times_{T^\diamond} X_T$. The family of perfectoid spaces $(X_T)_{T\in I^\kappa_S}$ ($(Y_T)_{T\in I^\kappa_S}$ respectively) is cofinal in the category $\mathcal{C}^\kappa_X$ of maps $X\to X'$ with $X'$ a $\kappa$-small perfectoid space ($\mathcal{C}^\kappa_Y$ respectively). We get the following chain of isomorphisms:
\begin{align}
	\iota_{\kappa,\lambda}^* f_{\kappa,*}\F (S) & =  \varinjlim_{T\in I^\kappa_S} \mrm{Hom}_{\topSch_\kappa}(h_T, f_{\kappa,*}\F) \\
	& =  \varinjlim_{T\in I^\kappa_S} \mrm{Hom}_{\topPerf_\kappa}(f^*_\kappa h_T, \F) \\
	& =  \varinjlim_{T\in I^\kappa_S} \mrm{Hom}_{\topPerf_\kappa}(T^{\diamond_\kappa}, \F) \\
	& =  \varinjlim_{T\in I^\kappa_S} \mrm{Eq}_{\topPerf_\kappa}(\mrm{Hom}(X_T, \F)\rightrightarrows \mrm{Hom}(Y_T, \F)) \\
	& =  \mrm{Eq}_{\topPerf_\lambda}(\varinjlim_{T\in I^\kappa_S} \mrm{Hom}(X_T, \F)\rightrightarrows \varinjlim_{T\in I^\kappa_S}\mrm{Hom}(Y_T, \F)) \\
	& =  \mrm{Eq}_{\topPerf_\lambda}(\mrm{Hom}(X_S, \iota_{\kappa,\lambda}^*\F)\rightrightarrows \mrm{Hom}(Y_S, \iota_{\kappa,\lambda}^*\F)) \\
	& =  \mrm{Hom}_{\topPerf_\lambda}(S^{\diamond_\lambda}, \iota_{\kappa,\lambda}^*\F) \\
	& =  \mrm{Hom}_{\topSch_\lambda}(h_S, f_{\lambda,*}\iota_{\kappa,\lambda}^*\F) \\
	& = f_{\lambda,*}\iota_{\kappa,\lambda}^*\F(S)
\end{align}
\end{proof}

Recall that a morphism of topoi consists of a pair of adjoint functors $(f^*,f_*)$ such that $f^*$ commutes with finite limits. By \Cref{pro:commutetopoi} above we can gather all of the morphisms of topoi $f_\kappa: \topPerf_\kappa\to \topSch_\kappa$ into a pair of adjoint functors $(f^*,f_*):\topPerf\to \topSch$ such that $f^*$ commutes with finite limits. This is not a morphism of topoi because $\topPerf$ and $\topSch$ are not topoi, but they behave as such. 
\begin{defi}
	\label{defi:reduced}
		Let $(f^*,f_*)$ be the pair of adjoint functors described above, given $\F\in \topSch$ we will denote $f^*\F$ by $\F^\diamond$ and given $\G\in \topPerf$ we will denote $f_*\G$ by $(\G)\red$. We refer to $(-)\red$ as the reduction functor.
\end{defi}

\begin{rem}
	\label{rem:diamond+ajunct}
	By adjunction $\F\red(S)=\mrm{Hom}_{\topPerf}(S^\diamond,\F)$. 
	We could have simply defined it in this way, but it is useful to know that “reduction” preserves smallness.
\end{rem}

We can endow small scheme-theoretic v-sheaf with a topological space in a similar fashion to \Cref{defi:diamondtop}. Given $\Si\in \topSch$ we let $|\Si|$ denote the set of equivalence classes of maps $\mrm{Spec}(k)\to \Si$, where $k$ is a perfect field over $\Fp$. Two maps $p_1$, $p_2$ are equivalent if we can complete a commutative diagram as below: 
\begin{center}
\begin{tikzcd}
	&\mrm{Spec}(k_1) \arrow{rd}{p_1} & \\
	\mrm{Spec}(k_3)\arrow{ru}{q_1} \arrow{rd}{q_2} \arrow{rr}{p_3}	& & \Si  \\
	&\mrm{Spec}(k_2) \arrow{ru}{p_2}& \\
\end{tikzcd}
\end{center}
 
\begin{pro}
			\label{pro:topologyschematicvsheaf}
Let $\Si\in \topSch$ the following hold:

	\begin{enumerate}
		\item There is a pair of cut-off cardinals $\kappa<\lambda$ and a $\lambda$-small family $\{S_i\}_{i\in I}$ of objects in $\mrm{PCAlg}^\mrm{op}_{\Fp, \kappa}$ together with a surjective map $X=(\coprod_{i\in I} S_i)\to \Si$.
		\item The small scheme-theoretic v-sheaf $R=X\times_\Si X$ has a similar cover $Y=(\coprod_{j\in J} T_j)\to R$, there is a natural map $|X|\to |\Si|$ which induces a bijection $|\Si|\cong |X|/|Y|$. We endow $|\Si|$ with the quotient topology induced by this bijection.
		\item The topology on $|\Si|$ does not depend on the choices of $X$ or $Y$.
		\item Any map of small v-sheaves $\Si_1\to \Si_2$ induces a continuous map of topological spaces $|\Si_1|\to |\Si_2|$.
	\end{enumerate}
\end{pro}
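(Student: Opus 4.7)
The plan is to mimic Scholze's proof of the analogous statement for diamonds (\cite[Proposition 11.13]{Et}), replacing perfectoid spaces by perfect affine schemes over $\Fp$ and using the v-topology of Definition \ref{defi:v-top-sch} throughout. The key inputs are: the existence of a set of generators of $\topSch_\kappa$ (namely $\mrm{PCAlg}^\mrm{op}_{\Fp,\kappa}$), the fact that scheme-theoretic products of points form a basis for the v-topology (Remark \ref{rem:prodpointssch}), and the compatibility of $\diamond$ with limits (Proposition \ref{pro:diamcommuteswithlim}).

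For part (1), since $\Si \in \topSch$, pick a cut-off cardinal $\kappa$ with $\Si \in \topSch_\kappa$. By the definition of a sheaf topos, the representables coming from $\mrm{PCAlg}^\mrm{op}_{\Fp,\kappa}$ form a generating set, so there is a $\lambda$-small family $\{S_i \to \Si\}_{i \in I}$ (with $\lambda > \kappa$ some cut-off cardinal bounding the size of the set of such maps) whose coproduct $X = \coprod_i S_i$ surjects onto $\Si$. For part (2), applying the same construction to the small scheme-theoretic v-sheaf $R = X \times_\Si X$ yields a surjection $Y \to R$ with $Y$ a disjoint union of objects in $\mrm{PCAlg}^\mrm{op}_{\Fp,\kappa'}$ for some $\kappa'$. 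The map $|X| \to |\Si|$ is defined on representatives $\mrm{Spec}(k) \to X$ by post-composition; the equivalence relation generated by this map contains the one induced by $|Y| \rightrightarrows |X|$, so one gets a continuous surjection $|X|/|Y| \to |\Si|$.

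The main obstacle will be proving injectivity of this surjection, i.e. showing that two geometric points $p_1, p_2 : \mrm{Spec}(k_i) \to X$ with the same image in $|\Si|$ are already equivalent modulo $|Y|$. The strategy is: by definition of equivalence in $|\Si|$, we obtain a field $k_3$ and maps $\mrm{Spec}(k_3) \to \mrm{Spec}(k_i)$ so that the two compositions into $\Si$ coincide, hence factor through a map $\mrm{Spec}(k_3) \to R = X \times_\Si X$. Now $Y \to R$ is surjective as a map of v-sheaves, so by a lifting argument via scheme-theoretic products of points (applying Proposition \ref{pro:two-v-covers} and Remark \ref{rem:prodpointssch}) we can, after passing to a suitable v-cover $\mrm{Spec}(k_3') \to \mrm{Spec}(k_3)$ with $k_3'$ a perfect field extension, lift to a map $\mrm{Spec}(k_3') \to Y$. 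This exhibits $p_1, p_2$ as equivalent in $|X|/|Y|$.

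For part (3), independence of choices is the usual argument: given two covers $X \to \Si$ and $X' \to \Si$, form $X \times_\Si X'$, cover it by a suitable $X''$, and observe that the resulting quotient topologies agree because one can compare both to the topology computed using $X''$. Concretely, the map $|X''| \to |X|$ is a quotient map (again via the lifting argument using products of points together with Proposition \ref{pro:two-v-covers}), so the quotient topology factors through, proving independence. For part (4), a morphism $\Si_1 \to \Si_2$ can be upgraded, after choosing covers $X_1 \to \Si_1$ and $X_2 \to \Si_2$, to a commutative square by passing to $X_1 \times_{\Si_2} X_2$ and covering it by a disjoint union of affine perfect schemes $X_1'$; continuity of $|X_1'| \to |X_2|$ is automatic since both are representable, and passing to the quotient gives continuity of $|\Si_1| \to |\Si_2|$.
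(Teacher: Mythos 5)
The paper states \Cref{pro:topologyschematicvsheaf} without proof, so the comparison here is with the intended argument, namely the scheme-theoretic analogue of \cite[Propositions 11.13, 12.3, 12.7]{Et}; your overall architecture (generators give the cover, lift field-valued points along sheaf surjections to get the bijection $|\Si|\cong|X|/|Y|$, compare two presentations through a common refinement, and reduce continuity to the representable level) is exactly that, and parts (1) and (2) are correct as you argue them: for (2) the only input is that a surjection of v-sheaves evaluated on $\mrm{Spec}(k)$ lifts after passing to the residue field of a point of a v-cover, which indeed follows from \Cref{defi:v-top-sch} and \Cref{pro:two-v-covers}.

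The gap is in your justification of the key topological claim in (3) (and implicitly (4)): you assert that $|X''|\to|X|$ is a quotient map "via the lifting argument using products of points together with \Cref{pro:two-v-covers}", but those tools only yield surjectivity on points, and a continuous surjection is in general not a quotient map. This is precisely where the content lies (it is the analogue of \cite[Lemma 12.5]{Et}), and it needs a separate argument: a universally subtrusive morphism of affine schemes is quasicompact, surjective, and specializing (the valuation-ring criterion of \Cref{defi:v-top-sch} lifts specializations), hence closed by \Cref{pro:pro-cons-spec} and \Cref{cor:closedmap}, hence submersive; then, for an arbitrary surjection of v-sheaves $X''\to X$ with $X$ a disjoint union of affines, lift the identity of each component $S_i$ v-locally to get a subtrusive $T_i\to S_i$ factoring through $X''$, and use that if a composite $|T_i|\to|X''|\to|X|$ is a quotient map onto $|S_i|$ then so is the second factor. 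With this inserted, (3) goes through as you outline. In (4), "continuity of $|X_1'|\to|X_2|$ is automatic since both are representable" is also too quick, since $X_2$ is a disjoint union: either choose $X_1'$ by covering $\Si_1\times_{\Si_2}T_j$ separately for each component $T_j$ of $X_2$, so that every component of $X_1'$ maps to a single $T_j$ and the map is induced by an actual scheme morphism via the full faithfulness in \Cref{pro:diamcommuteswithlim}, or give an argument for why a v-sheaf map from an affine into a sheaf coproduct decomposes Zariski-locally.
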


\subsection{Reduction functor and formal adicness}

\begin{defi}
	Let $\F\in \topSch$, we say it is \textit{reduced} if $\F\to (\F^\diamond)\red$ is an isomorphism. 
\end{defi}
\begin{pro}\textup{(\cite[Proposition 18.3.1]{Ber})}
	\label{pro:schemesarereduced}
	\label{pro:reducedgivesfullyfaithful}
	\begin{enumerate}
		\item If $S$ is a perfect scheme over $\Fp$ then the Yoneda functor $h_S$ is reduced. 
		\item The functor $\diamond:\topSch \to \topPerf$ is fully-faithful when restricted to small reduced v-sheaves.	
	\end{enumerate}
\end{pro}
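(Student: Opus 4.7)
Part (2) is a formal consequence of the adjunction $(f^\ast, f_\ast) = (\diamond, (-)\red)$ of Definition 3.5 together with the definition of reducedness. For small reduced scheme-theoretic v-sheaves $\F$ and $\G$,
$$\mathrm{Hom}(\F^\dia, \G^\dia) \;=\; \mathrm{Hom}(\F, (\G^\dia)\red) \;=\; \mathrm{Hom}(\F, \G),$$
where the first equality is the adjunction and the second is the reducedness of $\G$; the composite bijection is the canonical map induced by $\diamond$. So the substance lies entirely in part (1).

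For part (1), I would deduce the isomorphism $h_S \xrightarrow{\sim} (h_S^\dia)\red$ from \Cref{pro:perfectschemesvsadicultimate}. Concretely, evaluating on a perfect affine $T = \mathrm{Spec}(A)$ over $\Fp$, I must show that the natural map $\mathrm{Hom}_{\mrm{Sch}}(T, S) \to \mathrm{Hom}(T^\dia, h_S^\dia)$ is bijective. The plan is to view $T$ as the perfect non-analytic adic space $T^{\mrm{ad}} := \mathrm{Spa}(A, A)$ (with the discrete topology on $A$) and view the perfect scheme $S$ as the pre-adic space $S^{\mrm{ad}}$ glued from the discrete affinoids $\mathrm{Spa}(\mathcal{O}_S(U), \mathcal{O}_S(U))$ along an affine open cover of $S$, and then assemble the bijection from three identifications: (a) $\mathrm{Hom}_{\mrm{Sch}}(T, S) = \mathrm{Hom}_{\mrm{PreAd}}(T^{\mrm{ad}}, S^{\mrm{ad}})$, i.e.\ full faithfulness of the functor $\mathrm{Sch} \to \mathrm{PreAd}$ sending a scheme to its discrete pre-adic space; (b) $\mathrm{Hom}_{\mrm{PreAd}}(T^{\mrm{ad}}, S^{\mrm{ad}}) = \mathrm{Hom}(T^{\mrm{ad}, \dia}, S^{\mrm{ad}, \dia})$, which is \Cref{pro:perfectschemesvsadicultimate} applied directly; and (c) $S^{\mrm{ad}, \dia} = h_S^\dia$, which follows from $\dia$ commuting with colimits of pre-adic spaces (\Cref{rem:diamondconstr}(4)) combined with the observation that on each affine piece $U^{\mrm{ad}, \dia} = \Spdf{\mathcal{O}(U)} = h_U^\dia$.

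The bulk of the work lies in (a). The affine case is immediate, as $\mathrm{Hom}_{\mrm{PreAd}}(\mathrm{Spa}(A, A), \mathrm{Spa}(B, B)) = \mathrm{Hom}_{\mrm{Ring}}(B, A) = \mathrm{Hom}_{\mrm{Sch}}(\mathrm{Spec}(A), \mathrm{Spec}(B))$ with continuity automatic for discrete Huber pairs. For general $S$, given a pre-adic morphism $f: T^{\mrm{ad}} \to S^{\mrm{ad}}$, one produces a scheme morphism $\bar f: T \to S$ by noting that $f$ sends trivial valuations to trivial valuations (value groups can only shrink under pullback along $f^\#$), so that $|f|$ restricts to a continuous map on the scheme-theoretic loci inside $T^{\mrm{ad}}$ and $S^{\mrm{ad}}$; combined with the structure-sheaf morphism this produces $\bar f$. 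Conversely, a scheme morphism glues to a pre-adic morphism, and the two operations are mutually inverse since any rational localization of $T^{\mrm{ad}}$ along $g \in A$ maps under the support map $\sigma$ onto the standard open $D(g) \subseteq T$ with the expected structure ring $A[1/g]$, so the adic structure sheaf is entirely determined by the scheme structure. The main obstacle is thus this scheme/pre-adic identification; once it is in place, \Cref{pro:perfectschemesvsadicultimate} does the heavy lifting.
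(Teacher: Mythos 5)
Your proposal is correct and follows essentially the same route as the paper: part (2) is exactly the paper's adjunction computation, and part (1) is deduced, as in the paper, from \Cref{pro:perfectschemesvsadicultimate} (= \Cref{lem:discreteperfecthubpairs}). The only difference is one of exposition: the paper states this reduction in a single line, whereas you spell out the intermediate identifications between scheme morphisms, morphisms of discrete pre-adic spaces, and morphisms of the associated v-sheaves.
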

\begin{proof}
The first claim follows from \Cref{lem:discreteperfecthubpairs}. The second claim follows from adjunction. Indeed, 
$\mrm{Hom}_\topPerf(\G^\diamond,\F^\diamond) = 	\mrm{Hom}_\topSch(\G,(\F^\diamond)\red)= \mrm{Hom}_\topSch(\G,\F)$.
\end{proof}

Intuitively, the reduction functor kills all topological nilpotent elements and removes analytic points. One can think of reduction functor as taking the underlying reduced subscheme of a formal scheme.
\begin{lem}
	\label{lem:redZp}
	The scheme-theoretic v-sheaf $\Zpd\red$ is represented by $\mrm{Spec}(\Fp)$.
\end{lem}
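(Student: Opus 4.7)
The plan is to derive this lemma as a direct consequence of \Cref{lem:redZp2} via the universal property of the reduction functor. By \Cref{rem:diamond+ajunct}, for any perfect affine scheme $S = \mathrm{Spec}(A)$ over $\mathbb{F}_p$ we have the identification
$$\mathrm{Spd}(\mathbb{Z}_p)^{\mathrm{red}}(S) \;=\; \mathrm{Hom}_{\topPerf}(S^\diamond,\,\mathrm{Spd}(\mathbb{Z}_p)).$$
Since $\mathrm{Spec}(\mathbb{F}_p)$ is terminal in $\mathrm{PCAlg}^{\mathrm{op}}_{\mathbb{F}_p}$ and a scheme-theoretic v-sheaf is tested on objects of this category, it suffices to show that the above Hom set is a singleton for every such $S$; then $\mathrm{Spd}(\mathbb{Z}_p)^{\mathrm{red}}$ is the terminal object of $\topSch$, which is $h_{\mathrm{Spec}(\mathbb{F}_p)}$.

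First I would identify $S^\diamond$ with $\mathrm{Spd}(A,A)$, the small v-sheaf associated to the pre-adic space $\mathrm{Spa}^{\mathrm{ind}}(A,A)$ with $A$ given the discrete topology. Unwinding definitions: for $\Hub{R}$ an affinoid perfectoid in characteristic $p$, an element of $\mathrm{Spd}(A,A)(R,R^+)$ consists of an untilt $(R^\sharp,\iota)$ together with a continuous morphism $(A,A)\to(R^\sharp,R^{\sharp,+})$. Because $A$ is an $\mathbb{F}_p$-algebra, the relation $p\cdot 1_A=0$ forces $p=0$ in $R^{\sharp,+}$, hence $R^\sharp$ is of characteristic $p$ and must coincide with $R$ via the tilting identification. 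The remaining datum is a ring homomorphism $A\to R^+$, which is exactly the defining formula of $S^\diamond(R,R^+)$.

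Having made this identification, I would invoke \Cref{lem:redZp2} in the case $A^+=A$ (which is trivially integrally closed in itself): there is a unique morphism $\mathrm{Spd}(A,A)\to\mathrm{Spd}(\mathbb{Z}_p)$, given by the composition through $\mathrm{Spd}(\mathbb{F}_p)$. Combining with the previous step gives $\#\mathrm{Hom}_{\topPerf}(S^\diamond,\mathrm{Spd}(\mathbb{Z}_p))=1$ for every perfect affine $S$ over $\mathbb{F}_p$, which completes the proof.

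No genuine obstacle remains: the entire conceptual content has been carried by \Cref{lem:redZp2}, whose proof used the product-of-points argument together with the closedness of the ideal defining an untilt. The present lemma simply packages that result through the adjunction $(-)^\diamond\dashv(-)^{\mathrm{red}}$ of \Cref{defi:reduced}, so that ``reduction'' sends the $p$-adic formal v-sheaf $\mathrm{Spd}(\mathbb{Z}_p)$ to the expected mod-$p$ fibre $\mathrm{Spec}(\mathbb{F}_p)$.
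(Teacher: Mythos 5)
Your proposal is correct and follows essentially the same route as the paper, which simply cites \Cref{lem:redZp2} and leaves the adjunction bookkeeping implicit; you have spelled out exactly those implicit steps (the identification $\mrm{Spec}(A)^\diamond=\Spdf{A}$, the formula $\F\red(S)=\mrm{Hom}_{\topPerf}(S^\diamond,\F)$, and the terminal-object conclusion). No gaps.
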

\begin{proof}	
	This is a direct consequence of \Cref{lem:redZp2}.
\end{proof}	

For an f-adic ring ${A}$ over $\Zp$, we let $\ovr{A}=(A/(A\cdot A^{\circ \circ}))^{\mrm{perf}}$ where $A\cdot A^{\circ \circ}$ is the ideal generated by the topological nilpotent elements. The following statement generalizes \Cref{lem:redZp} 
\begin{pro}	
	\label{pro:tworeds}	
	\label{pro:globaltworeds}	
 Let $X$ be a pre-adic space over $\Zp$ and let $X^{\mrm{na}}$ be the reduced adic space associated to the non-analytic locus of \Cref{pro:preadicreduced}. The following hold: 
	\begin{enumerate}
		\item The map $(X^{\mrm{na},\diamond})\red\to (X^\diamond)\red$ is an isomorphism. 
		\item If $X=\Spa{A}$ for $\Hub{A}$ a Huber pair over $\Zp$, then $\Spd{A}\red$ is represented by $\mrm{Spec}(\ovr{A})$. 	
	\end{enumerate}
\end{pro}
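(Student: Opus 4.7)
The plan is to deduce both parts from \Cref{lem:discreteperfecthubpairs} (full-faithfulness of $\dia$ on perfect discrete adic spaces) together with the adjunction description of the reduction functor from \Cref{rem:diamond+ajunct}.

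For part (1), I would compute the sections of both sides on an arbitrary affine perfect $\Fp$-scheme $S = \mrm{Spec}(R)$. By adjunction,
\[
(X^\diamond)\red(S) = \mrm{Hom}_{\topPerf}(S^\diamond, X^\diamond).
\]
Since $\mrm{Spa}(R,R)$ is a perfect discrete adic space over $\Fp$, \Cref{lem:discreteperfecthubpairs} identifies this with $\mrm{Hom}_{\mrm{PreAd}}(\mrm{Spa}(R,R), X)$. The space $\mrm{Spa}(R,R)$ is non-analytic and reduced (since $R$ is of characteristic $p$ and perfect), so by the universal property of $X^{\mrm{na}}$ from \Cref{pro:preadicreduced} this hom-set coincides with $\mrm{Hom}_{\mrm{PreAd}}(\mrm{Spa}(R,R), X^{\mrm{na}})$. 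Running \Cref{lem:discreteperfecthubpairs} in reverse identifies the latter with $(X^{\mrm{na}, \diamond})\red(S)$, functorially in $S$, which gives the claimed isomorphism of scheme-theoretic v-sheaves.

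For part (2), by part (1) combined with \Cref{pro:preadicreduced} I may assume $A$ is discrete and reduced, so that $\ovr{A} = A^{\mrm{perf}}$. Since $\Spd{A}$ lies over $\Zpd$ and \Cref{lem:redZp} gives $\Zpd\red = \mrm{Spec}(\Fp)$, the v-sheaf $\Spd{A}\red$ lies over $\mrm{Spec}(\Fp)$, and it suffices to evaluate sections on $\mrm{Spec}(R)$ with $R$ a perfect $\Fp$-algebra. The same adjunction plus \Cref{lem:discreteperfecthubpairs} yields
\[
\Spd{A}\red(\mrm{Spec}(R)) = \mrm{Hom}_{\mrm{PreAd}}(\mrm{Spa}(R,R), \mrm{Spa}(A, A^+)).
\]
With both Huber pairs discrete and $R^+ = R$, the right-hand side reduces to $\mrm{Hom}_{\mrm{Rings}}(A, R)$ (the $A^+ \to R^+$ compatibility being automatic since $R^+ = R$). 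As $R$ is a perfect $\Fp$-algebra, such a ring map factors uniquely through $\ovr{A} = A^{\mrm{perf}}$, and conversely every map $\ovr{A} \to R$ arises in this way, giving the identification $\Spd{A}\red \cong h_{\mrm{Spec}(\ovr{A})}$.

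I expect the only delicate point to be that ``perfection'' in $\ovr{A}$ must be interpreted in the perfect $\Fp$-algebra sense (so that $p$ is killed along with the topologically nilpotent elements); this is forced by \Cref{lem:redZp}, which sends $\Zp$ to $\Fp$ under $(-)\red$. Everything else is a routine stitching together of the full-faithfulness theorem, the universal property of the non-analytic reduction, and the adjunction defining $(-)\red$.
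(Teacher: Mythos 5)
Your argument is correct and follows essentially the same route as the paper's proof: identify $\mrm{Hom}(S^\diamond,X^\diamondsuit)$ with maps of pre-adic spaces $\Spf{R}\to X$ via \Cref{lem:discreteperfecthubpairs}, observe these factor through the non-analytic locus, and then factor the resulting ring map $A/A^{\circ\circ}\cdot A\to R$ uniquely through the perfection. Your closing worry is not really an issue: since the structure map from $\Zp$ (with its $p$-adic topology) is continuous, $p$ is automatically topologically nilpotent in $A$, so $A/A^{\circ\circ}\cdot A$ is already an $\Fp$-algebra and the perfection is unambiguous.
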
	
   \begin{proof}	           

	   By \Cref{lem:discreteperfecthubpairs} if $S=\mrm{Spec}(R)\in \mrm{PCAlg}_\Fp^\mrm{op}$ then morphisms $S^\diamond\to X$ are given by maps of pre-adic spaces $f:\Spf{R}\to X$. These factor through the non-analytic locus.
	   The non-analytic locus of $\Spa{A}$ is represented by the Huber pair $(A/A^{\circ \circ}\cdot A,A^{\circ \circ}\cdot A^+)$. Since $R$ is perfect the map $f^*:A/A\cdot A^{\circ\circ}\to R$ factors uniquely through its perfection.
   \end{proof}

\begin{pro}
	\label{pro:redofdiamond} If $Y$ is a quasiseparated diamond, then $Y\red=\emptyset$.
\end{pro}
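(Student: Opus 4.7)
The approach uses the adjunction $(-)^\diamond \dashv (-)\red$ (see \Cref{defi:reduced} and \Cref{rem:diamond+ajunct}): to show $Y\red = \emptyset$, it suffices to show $\mrm{Hom}_{\topPerf}(\mrm{Spec}(A)^\diamond, Y) = \emptyset$ for every nonempty perfect $\Fp$-algebra $A$. By restricting along any field-valued point $\mrm{Spec}(k) \hookrightarrow \mrm{Spec}(A)$, it is enough to treat $A = k$ a perfect field.

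The crux is the representable case. If $Y = \Spa{T}^\diamond$ for an affinoid perfectoid pair $\Hub{T}$ in characteristic $p$, then by \Cref{thm2:non-analyticthm} applied to the perfect non-analytic adic space $\mrm{Spa}(k,k)$,
\[
\mrm{Hom}(\mrm{Spec}(k)^\diamond, \Spa{T}^\diamond) = \mrm{Hom}_{\mrm{PreAd}}(\mrm{Spa}(k,k),\Spa{T}),
\]
which consists of continuous morphisms of Huber pairs $\Hub{T} \to (k,k)$. Since $\Hub{T}$ is Tate, its pseudo-uniformizer $\varpi$ is a topologically nilpotent unit; continuity into the discrete pair $(k,k)$ forces $\varpi \mapsto 0$ in $k$, but $\varpi$ is a unit in $T$, so $1 = 0$ in $k$, impossible for $k \neq 0$. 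Hence this Hom-set is empty.

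For a general quasiseparated diamond $Y$, fix a pro-\'etale surjection $X \to Y$ by a disjoint union $X = \coprod_i \Spa{T_i}$ of affinoid perfectoid spaces in characteristic $p$. A hypothetical map $g: \mrm{Spec}(k)^\diamond \to Y$ yields, by v-descent, a compatible family of lifts on a v-cover $\coprod \Spa{R_\alpha} \to \mrm{Spec}(k)^\diamond$, each equipped with a lift $\Spa{R_\alpha} \to X$ compatible with the $k$-algebra structure $k \to R_\alpha^+$. The key observation is that $\mrm{Spec}(k)^\diamond(\Hub{R_\alpha}) = \mrm{Hom}(k,R_\alpha^+)$ depends only on $R_\alpha^+$ as a ring and is insensitive to the pseudo-uniformizer topology on $\Hub{R_\alpha}$; hence functoriality of $g$ forces the image $g(\Hub{R_\alpha})$ to be fixed by the (large) group $\mrm{Aut}_k(\Hub{R_\alpha})$ acting on $Y(\Hub{R_\alpha})$. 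Passing to a suitable subfamily and using the lift to $X$, one extracts a morphism that, in the limit, factors through a map of the type ruled out in the representable case --- namely a continuous Huber map from a Tate $\Hub{T_i}$ into a discrete ring containing $k$. Quasiseparatedness of $Y$ is used to control the equivalence relation $R = X \times_Y X$, so that the descent data can be faithfully reduced.

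The main obstacle is handling this descent data: the lift to $X$ is not canonical but only defined up to the equivalence $R = X \times_Y X$. Making the reduction to the representable case rigorous requires tracking how the automorphism-invariance interacts with the equivalence relation --- the quasicompactness of $R$ (from quasiseparatedness of $Y$) prevents the ambiguity from spreading out and forces the contradiction to descend from the affinoid perfectoid cover to $Y$ itself.
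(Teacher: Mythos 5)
There is a genuine gap. Your representable case is fine (and the adjunction reduction to maps $\mrm{Spec}(k)^\diamond\to Y$ matches the paper's first line), but the whole difficulty of the statement lies in the non-representable case, and there your argument never actually happens: ``passing to a suitable subfamily\dots one extracts a morphism that, in the limit, factors through a map of the type ruled out in the representable case'' is a hope, not a proof, and your closing paragraph concedes exactly this. The problem is that a lift of the v-cover $\Spa{R_\alpha}\to\mrm{Spec}(k)^\diamond$ to the perfectoid cover $X$ is a perfectly honest continuous map of Tate Huber pairs, so no contradiction is visible at the level of $X$; the contradiction must be extracted from the fact that the map to $Y$ (not to $X$) is insensitive to the topology, while the lift is only well defined up to the equivalence $R=X\times_Y X$. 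Asserting that quasicompactness of $R$ ``prevents the ambiguity from spreading out'' does not identify any mechanism by which the $\mrm{Aut}_k$-invariance of the composite descends to a constraint on the lift, and indeed the fixed points of $\mrm{Aut}_k(\Hub{R_\alpha})$ need not be small enough to force the image into $k$ without a careful choice of test objects.

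The paper closes this gap with a structural input you do not use: given $f:\mrm{Spec}(k)^\diamond\to Y$ with $k$ algebraically closed, its topological image is a single point $y$, the sub-v-sheaf $Y_y$ is again a quasiseparated diamond with one point by \cite[Proposition 11.10]{Et}, and by \cite[Proposition 21.9]{Et} one has $Y_y=\mrm{Spa}(C,O_C)/\underline{G}$ with $C$ an algebraically closed nonarchimedean field and $\underline{G}$ profinite acting continuously and faithfully. This is where quasiseparatedness is really spent. With that presentation, maps from algebraically closed perfectoid points to $Y_y$ are $G$-orbits of maps to $\mrm{Spa}(C,O_C)$, and the paper then implements a precise version of your ``automorphism-invariance'' idea: take the v-cover $\Spa{K_1}\to\mrm{Spec}(k)^\diamond$ with $K_1$ an algebraic closure of $k\rpot{t\pthr}$, and two embeddings $\iota_1^*,\iota_2^*:K_1\to K_2$ into a larger algebraically closed nonarchimedean field with $\iota_1^*(K_1)\cap\iota_2^*(K_1)=k$; since both composites agree on $\mrm{Spec}(k)^\diamond$, one gets $g^*(C)\subseteq k$, which is impossible because $C$ is Tate and $k$ sits discretely. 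To repair your proof you would need either to invoke this classification of one-point quasiseparated diamonds, or to supply some equally concrete device pinning the image of $C$ (or of $T_i$) inside $k$ after accounting for the $R$-ambiguity; as written, that step is missing.
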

\begin{proof}
	It suffices to prove that there are no maps $f:S^\diamond \to Y$ for $S=\mrm{Spec}(k)$ and $k$ an algebraically closed field. Suppose $f$ exists and let $y\in|Y|$ be the unique point in the image of $|f|$. Consider $Y_y$ the sub-v-sheaf of points that factors through $y$. By \cite[Proposition 11.10]{Et} it is a quasiseparated diamond and $|Y_y|$ consists of one point. Using \cite[Proposition 21.9]{Et} we write $Y_y=\mrm{Spa}(C,O_C)/\underline{G}$ with $C$ a nonarchimedean algebraically closed field over $\Fp$ and $\underline{G}$ a profinite group acting continuously and faithfully on $C$. 
	
	Consider the v-cover $S'=\mrm{Spa}(K_1,O_{K_1})\to \mrm{Spec}(k)^\diamond$ where $K_1$ is an algebraic closure of $k\rpot{t\pthr}$. Similarly, let $T=\mrm{Spa}(K_2,O_{K_2})$ where $K_2$ is an algebraically closed nonarchimedean field containing $k$ discretely and whose value group $\Gamma_{K_2}\subseteq \bb{R}^{> 0}$ has at least two $\bb{Q}$-linearly independent elements. By hypothesis on $K_2$, we can find two embeddings $\iota^*_i:K_1\to K_2$ with $\iota^*_1(K_1)\cap  \iota^*_2(K_1)=k$. 
	
	The composition of $[g]:\Spa{K_1}\to S^\diamond\to  Y_y$ satisfies $[g]\circ \iota_1= [g]\circ \iota_2$. Since $\Spa{K_1}$ and $\Spa{K_2}$ are algebraically closed the maps to $Y_y$ are given by $G$-orbits of maps to $\mrm{Spa}(C,O_C)$.
	Let $g^*:(C,O_C)\to (K_1,O_{K_1})$ represent $[g]$ in $\mrm{Hom}(\Spa{K_1},Y_y)$, we get maps $\iota_i^*\circ g^*:(C,O_C)\to (K_2,O_{K_2})$ and since $[g]\circ \iota_1=[g]\circ \iota_2$ we have $\iota^*_1\circ g^*(C)=\iota^*_2\circ g^*(C)\subseteq k$. The incompatibility of topology between $k$ and $C$ gives the contradiction. 
\end{proof}
Recall that a morphism of adic spaces $X\to Y$ is said to be adic if the image of an analytic point is again an analytic point. For v-sheaves we can define a related notion. 
\begin{defi}
	\label{defi:adicmorph}
	A morphism $\F\to \G$ is \textit{formally adic} if the following diagram is Cartesian: 
\begin{center}
\begin{tikzcd}
	(\F\red) ^\diamond \ar[r]\ar[d] &(\G\red)^\diamond\ar[d]\\
	\F\ar[r] &\G
\end{tikzcd}
\end{center}
\end{defi}
Although the notion of a morphism of adic spaces being adic is related to the morphism of v-sheaves being formally adic neither of this notions implies the other. 
   \begin{exa}	           
   	\label{exa:adicnotformaladic}	           
	Endow $\Fp\rpot{t}$ with the discrete topology, then $\mrm{Spa}(\Fp\rpot{t},\Fp\pot{t})\to \mrm{Spa}(\Fp,\Fp)$ is adic. Nevertheless, $\mrm{Spd}(\Fp\rpot{t},\Fp\pot{t}) \to \mrm{Spd}(\Fp,\Fp)$ is not formally adic. 
	Observe that $\mrm{Spo}(\Fp\rpot{t},\Fp\pot{t})$ has an unbounded meromorphic point.	           
   \end{exa}	           
   \begin{exa}	           
   	\label{exa:formaladicnotadic}	           
	Let $K$ be a perfect nonarchimedean field and consider $\mrm{Id}:\mrm{Spa}(K_1,O_{K_1})\to \mrm{Spa}(K_2,O_{K_2})$ where $K_2=K$ given the discrete topology and $K_1=K$ given the norm topology. This morphism is not adic, but the reduction diagram is Cartesian. Indeed, it looks like this:	           
   \begin{center}	           
   \begin{tikzcd}	           
   \emptyset \arrow{r} \arrow{d} & \mrm{Spec}(K_2)^\diamond \arrow{d} \\	           
   \mrm{Spd}(K_1,O_{K_1}) \arrow{r} & \mrm{Spd}(K_2,O_{K_2})	           
   \end{tikzcd}	           
   \end{center}	           
   \end{exa}	           
Although formal adicness does not capture adicness in general, it does in important situations: 
\begin{pro}
	\label{pro:twoadics}
	Let $\Huf{A}$ and $\Huf{B}$ be formal Huber pairs over $\Zp$ with ideals of definition $I_A$ and $I_B$ respectively. Then $\Spf{A}\to \Spf{B}$ is adic if and only if $\Spdf{A}\to \Spdf{B}$ is formally adic.
\end{pro}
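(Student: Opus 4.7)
The plan is to verify the Cartesian property directly on $R$-points for $\Spa{R}\in\mrm{Perf}$. First I would set $L:=f(I_B)\cdot A$, which lies in $\sqrt{I_A}$ by continuity of $f$, and observe that $f$ is adic precisely when $\sqrt L=\sqrt{I_A}$. By \Cref{pro:tworeds} one has $\Spdf{A}\red=\mrm{Spec}((A/\sqrt{I_A})^{\mrm{perf}})$ and likewise for $B$, and by \Cref{lem:redZp2} any map from $(\Spdf{B}\red)^\diamond$ to $\Zpd$ factors through $\Fpd$, so $R$-points of the fiber product have trivial untilt $R^\sharp=R$. Unravelling, the $R$-points of $\Spdf{A}\times_{\Spdf{B}}(\Spdf{B}\red)^\diamond$ become continuous ring maps $g:A\to R^+$ for which $B\xrightarrow{f} A\xrightarrow{g} R^+$ kills $\sqrt{I_B}$, and since $R^+$ is a reduced perfect $\Fp$-algebra this is equivalent to $g(L)=0$; the $R$-points of $(\Spdf{A}\red)^\diamond$ are continuous $g:A\to R^+$ with $g(\sqrt{I_A})=0$, and the comparison is the obvious inclusion since $L\subseteq\sqrt{I_A}$.

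For the ``$\Rightarrow$'' direction, assuming $f$ is adic gives $\sqrt L=\sqrt{I_A}$. Then any continuous $g:A\to R^+$ with $g(L)=0$ also kills $\sqrt L=\sqrt{I_A}$ (as $R^+$ is reduced) and thereby factors uniquely through $(A/\sqrt{I_A})^{\mrm{perf}}$ (as $R^+$ is perfect). This gives the identification of $R$-points and shows the square is Cartesian.

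For the ``$\Leftarrow$'' direction I would argue the contrapositive: assuming $\sqrt L\subsetneq\sqrt{I_A}$, I construct a geometric $R$-point of the fiber product lying outside $(\Spdf{A}\red)^\diamond$. A preliminary observation is that $p\in\sqrt L$: indeed $p\in\sqrt{I_B}$ by continuity of $\Zp\to B$, so some $p^m\in I_B$ and thus $p^m\in L$. I then consider the pre-adic space $Y:=\mrm{Spa}^{\mrm{ind}}(A/L,A/L)$ with the quotient ($I_A$-adic) topology. Since $p^m=0$ in $A/L$, every perfectoid test of $Y$ has characteristic $p$ and trivial untilt. Since $\sqrt L\subsetneq\sqrt{I_A}$ the image of $I_A$ in $A/L$ is not nilpotent, so $A/L$ is non-discrete and the analytic locus $Y^{\mrm{an}}$ is non-empty; by prime avoidance applied to $\sqrt L=\bigcap_{\mathfrak{p}\supseteq L}\mathfrak{p}$ one can furthermore choose a prime $\mathfrak{p}\supseteq L$ with $I_A\not\subseteq \mathfrak{p}$ and some picked $i\in I_A\setminus\sqrt L$ satisfying $i\notin\mathfrak{p}$, yielding an analytic point $y\in|Y|^{\mrm{an}}$ at which $i$ does not vanish. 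By \Cref{rem:diamondconstr} the diamond $(Y^{\mrm{an}})^\dia$ is a locally spatial diamond, so there is a geometric point $\Spa{C}\to (Y^{\mrm{an}})^\dia$ above $y$; composing with $(Y^{\mrm{an}})^\dia\subseteq \Spdf{A/L}\to \Spdf{A}$ produces a continuous map $A\to O_C$ that kills $L$ but sends $i$ to a nonzero topologically nilpotent element of $O_C$, contradicting the Cartesian property.

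The hard part will be the reverse direction: the unravelling on $R$-points is routine once the trivial untilt is enforced, but producing a perfectoid geometric point that actually witnesses the gap $\sqrt{I_A}\setminus\sqrt L$ requires both the prime-avoidance step to locate an analytic point of $Y$ at which an element of $\sqrt{I_A}\setminus\sqrt L$ survives, and the invocation of \Cref{rem:diamondconstr} to upgrade this analytic point to a geometric perfectoid test; the vanishing of a power of $p$ in $A/L$ is what ensures the resulting untilt is trivial and the test lands in the correct fiber product.
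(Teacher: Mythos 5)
Your unravelling of the $R$-points of $\Spdf{A}\times_{\Spdf{B}}(\Spdf{B}\red)^\diamond$ (trivial untilt because $p^m\in L$, then continuous $g:A\to R^+$ killing $L$) and your forward direction are correct, and agree in substance with the paper, which phrases adicness as $\sqrt{I_A}=\sqrt{I_B A}$ and identifies the base change with $(\mrm{Spec}(A/I_BA)^{\mrm{perf}})^\diamond=(\mrm{Spec}(\ovr{A}))^\diamond$. The gap is in the converse, precisely at ``by prime avoidance \dots yielding an analytic point $y\in|Y|^{\mrm{an}}$ at which $i$ does not vanish.'' A prime $\mathfrak{p}\supseteq L$ with $i\notin\mathfrak{p}$ is a point of $\mrm{Spec}(A/L)$, not a \emph{continuous} valuation $\leq 1$ on the adic ring $A/L$, and non-nilpotence of $\bar i$ does not produce one: if $\bar i$ lies in $\bigcap_n(\overline{I}_A^{\,n})$ (more generally in $\bigcap_n (L+I_A^n)/L$), continuity forces every point of $\mrm{Spa}(A/L,A/L)$ to kill $\bar i$ even though many primes avoid it. Worse, your argument never uses any completeness/separatedness of $A$, and without that the implication you are trying to prove is simply false: take $A=\Fp[x,z_2,z_3,\dots]/(x-x^nz_n:\,n\geq 2)$ with $I_A=(x)$ (so $(x^n)=(x)$ for all $n$, and $x$ is not nilpotent since $z_n\mapsto x^{1-n}$ maps $A$ onto $\Fp[x,x^{-1}]$), $B=\Zp$ with $I_B=(p)$ and the obvious map, so $L=pA=0$. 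This is not adic, yet for every continuous $g:A\to R^+$ into a perfectoid $R^+$ one has $g(x)\in\bigcap_k\varpi^kR^+=0$, so $\mrm{Spa}(A/L,A/L)$ has no analytic points at all, the two sheaves you compare have the same $R$-points, and the square is Cartesian. So the step ``prime $\Rightarrow$ analytic point where $i$ survives'' is a non sequitur, and no soft argument of this shape can close it.

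What you actually need is both weaker and harder: not that your chosen $i$ survives at some point, but that \emph{some} element of $I_A$ survives at some continuous valuation killing $L$, and establishing this is exactly where the adic topology and completeness of $A$ (in the intended reading of ``formal Huber pair'') must enter. One workable route: assume every continuous valuation $\leq 1$ on $A$ killing $L$ kills $I_A$; covering the would-be analytic locus by the loci $\{|I_A|\leq|a_j|\neq 0\}$ for the finitely many generators $a_j$ of $I_A$, emptiness forces each $a_j$ to become invertible in the image of $(A/L)[\tfrac{I_A}{a_j}]$ inside $(A/L)[1/a_j]$, hence $a_j^{N+s}\in I_A^{N+s+1}+L$ for suitable $N,s$, and then one concludes $I_A^M\subseteq I_A^{M+1}+L$ for large $M$ and gets $I_A^M\subseteq L$ from $I_A$-adic separatedness of the relevant (completed) quotient; only at that last step does completeness do its work. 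Alternatively, follow the paper's more formal route: from the Cartesian square deduce that the natural surjection $(A/I_BA)^{\mrm{perf}}\to\ovr{A}$ is an isomorphism, using full faithfulness of $\diamond$ on perfect rings (\Cref{pro:reducedgivesfullyfaithful}, \Cref{pro:tworeds}), so $\sqrt{I_A}=\sqrt{I_BA}$ and finite generation of $I_A$ gives $I_A^m\subseteq I_BA$. Your remaining scaffolding (triviality of the untilt, upgrading an analytic point to a perfectoid geometric point via \Cref{rem:diamondconstr}, and using such a point to contradict Cartesianness) is fine, but as written the reverse implication is not proved.
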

\begin{proof}
The reduction diagram looks as follows:
\begin{center}
\begin{tikzcd}
	\mrm{Spec}(\ovr{A})^\diamond \ar[rrd, bend left] \ar[ddr, bend right] \ar{rd} & & \\
&	(\mrm{Spec}(A/I_B)^{\mrm{perf}})^\diamond \arrow{r} \arrow{d} & \mrm{Spec}(\ovr{B})^\diamond \arrow{d} \\
&	\Spdf{A} \arrow{r} &\Spdf{B} 
\end{tikzcd}
\end{center}
Continuity of the morphism $B\to A$ ensures that $I_B^n\subseteq I_A$ for some $n$. The morphism is adic if and only if $I_A^m\subseteq A\cdot I_B$ for some $m$. If the morphism is adic, then ${A}/I_A$ and $(A/A\cdot I_B)$ become isomorphic after taking perfection which gives formal adicness. Conversely, if the morphism is formally adic, by hypothesis the rings $(A/I_B)^{\mrm{perf}}$, and $\ovr{A}$ are isomorphic with the isomorphism being induced by the natural surjective ring map with source $(A/p)^{\mrm{perf}}$. This implies that the ideals $I_A$ and $I_B$ define the same Zariski closed subset in $\mrm{Spec}(A)$. In particular, the elements of $I_A$ are nilpotent in $A/I_B$, and since $I_A$ is finitely generated $I_A^m\subseteq I_B$ for some $m$.
\end{proof}

\begin{pro}
	\label{pro:preservedbycomposit}
	Let $\F$, $\G$ and $\Hi$ be small v-sheaves.
	\begin{enumerate}
		\item If $\F\to \Hi$ and $\Hi\to \G$ are formally adic, the composition $\F\to \G$ is formally adic.
		\item If $\F\to \Hi$ is formally adic, the basechange $\G\times_\Hi\F \to \G$ is formally adic.
	\end{enumerate}
\end{pro}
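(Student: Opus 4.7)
The plan is to exploit the fact that both functors involved in the definition of formal adicness preserve the relevant limits: $(-)\red = f_*$ is a right adjoint by construction (see \Cref{defi:reduced}), hence commutes with arbitrary limits; and $(-)^\diamond = f^*$ is the inverse image functor of the topos-like adjunction constructed just before \Cref{defi:reduced}, hence commutes with finite limits. With this in hand, both statements become diagram chases using the pullback pasting lemma.

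For claim (2), suppose $\F \to \Hi$ is formally adic, and set $\F' := \G \times_\Hi \F$. I would show that the canonical map $(\F'\red)^\diamond \to (\G\red)^\diamond \times_\G \F'$ is an isomorphism. First, since $(-)\red$ preserves limits and $(-)^\diamond$ preserves finite limits,
\[(\F'\red)^\diamond = (\G\red \times_{\Hi\red} \F\red)^\diamond = (\G\red)^\diamond \times_{(\Hi\red)^\diamond} (\F\red)^\diamond.\]
Second, using the pasting lemma and then the formal adicness hypothesis on $\F \to \Hi$,
\[(\G\red)^\diamond \times_\G \F' = (\G\red)^\diamond \times_\Hi \F = (\G\red)^\diamond \times_\Hi \bigl((\F\red)^\diamond \times_{(\Hi\red)^\diamond} \Hi\bigr).\]
The natural map $\G \to \Hi$ induces $\G\red \to \Hi\red$ and hence $(\G\red)^\diamond \to (\Hi\red)^\diamond$ compatibly with the counits, so the map $(\G\red)^\diamond \to \Hi$ factors through $(\Hi\red)^\diamond$. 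Consequently the right-hand side collapses to $(\G\red)^\diamond \times_{(\Hi\red)^\diamond} (\F\red)^\diamond$, matching the first computation.

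For claim (1), suppose $\F \to \Hi$ and $\Hi \to \G$ are both formally adic. Using the factorization $\F \to \Hi \to \G$ and pullback pasting,
\[(\G\red)^\diamond \times_\G \F \;=\; \bigl((\G\red)^\diamond \times_\G \Hi\bigr) \times_\Hi \F.\]
Formal adicness of $\Hi \to \G$ identifies the inner pullback with $(\Hi\red)^\diamond$, and then formal adicness of $\F \to \Hi$ identifies the outer pullback with $(\F\red)^\diamond$. This is exactly the Cartesian condition required for $\F \to \G$.

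The only real subtlety is the naturality step that factors $(\G\red)^\diamond \to \Hi$ through $(\Hi\red)^\diamond$, which is a formal consequence of the adjunction $((-)^\diamond, (-)\red)$ applied to the given morphism $\G \to \Hi$; everything else is mechanical manipulation of Cartesian squares.
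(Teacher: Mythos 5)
Your proof is correct and takes essentially the same route as the paper, which handles (1) as a pasting of Cartesian squares and (2) via the commutation of $(-)\red$ (a right adjoint) and $(-)^\diamond$ with finite limits, plus the naturality of the adjunction maps. The only blemish is the displayed expression $(\F\red)^\diamond \times_{(\Hi\red)^\diamond} \Hi$, which is ill-formed since there is no map $\Hi\to (\Hi\red)^\diamond$; what you mean, and what your following sentence actually uses, is $(\G\red)^\diamond\times_\Hi\F=(\G\red)^\diamond\times_{(\Hi\red)^\diamond}\bigl((\Hi\red)^\diamond\times_\Hi\F\bigr)=(\G\red)^\diamond\times_{(\Hi\red)^\diamond}(\F\red)^\diamond$, using the factorization of $(\G\red)^\diamond\to\Hi$ through $(\Hi\red)^\diamond$ and formal adicness of $\F\to\Hi$.
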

\begin{proof}
The first claim is clear. The second follows from the commutativity of $(-)\red$ and $(-)^\diamond$ with finite limits. 
\end{proof}
\begin{defi}
	\label{defi:p-adic}
	We say that a v-sheaf $\F$ over $\Zpd$ is \textit{formally $p$-adic} (or just \textit{$p$-adic} when the context is clear) if the morphism $\F\to \Zpd$ is formally adic.
\end{defi}
Over $\Zp$ the situation of \Cref{exa:formaladicnotadic} does not happen.
\begin{pro}
	\label{pro:formallypadicispadic}
	Suppose we have a Huber pair $\Hub{A}$ and a map $f:\Spa{A}\to \Spf{\Zp}$, if $f^\diamond$ is formally adic then $f$ is adic (as a morphism of adic spaces). 
\end{pro}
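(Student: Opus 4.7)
The plan is to prove the contrapositive: if $f$ is not adic, then $f^\diamond$ is not formally adic. Recall from the remarks before the proposition that $f$ fails to be adic precisely when some analytic point $x\in\Spa{A}$ is mapped to the non-analytic point of $\Spf{\Zp}$; equivalently, there exists a continuous valuation $v$ on $(A,A^+)$ with $v(p)=0$ whose support $\mathbf{supp}(v)\subseteq A$ is not an open prime.

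The key observation is that the ideal $A\cdot A^{\circ\circ}\subseteq A$ is open. To see this, I fix a ring of definition $A_0\subseteq A^+$ with finitely generated ideal of definition $I\subseteq A_0$; every element of $I$ is topologically nilpotent, so $I\subseteq A^{\circ\circ}$. Since $A_0$ is an open subring of $A$ and $I$ is open in $A_0$, the ideal $I\cdot A$ is open in $A$, and a fortiori so is $A\cdot A^{\circ\circ}\supseteq I\cdot A$. Any prime of $A$ containing an open ideal is automatically open, so from the non-openness of $\mathbf{supp}(v)$ we infer $I\cdot A\not\subseteq \mathbf{supp}(v)$. Writing an element of $I\cdot A\setminus \mathbf{supp}(v)$ as a finite sum $\sum c_i a_i$ with $c_i\in A$ and $a_i\in I$, the ultrametric inequality $v(\sum c_i a_i)\le \max_i v(c_i)v(a_i)$ forces $v(c_i)v(a_i)\neq 0$ for some $i$. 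In particular there is $a:=a_i\in I\subseteq A_0\cap A^{\circ\circ}$ with $v(a)\neq 0$.

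Next I would pass to a geometric point. Replacing $v$ by a rank-one vertical generization (which preserves both the support and the value of any element) and taking the residue affinoid field produces a rank-one continuous valuation on $(A,A^+)$ with residue a complete nonarchimedean field of characteristic $p$; a completed algebraic closure then gives an affinoid perfectoid pair $(C,C^+)$ of characteristic $p$ together with a continuous morphism $(A,A^+)\to (C,C^+)$ sending $p\mapsto 0$ and $a\mapsto$ a nonzero element of $C^+$. The induced morphism $\Spa{C}\to \Spd{A}$ lies over $\Fpd\subseteq\Zpd$ and so determines a map into the fiber product $\Spd{A}\times_{\Zpd}\Fpd$. By the Cartesian diagram defining formal adicness this map would have to factor through $\mrm{Spec}(\ovr{A})^\diamond$; unwinding the adjunction $(-)^\diamond\dashv(-)\red$ of \Cref{defi:reduced}, such a factorization amounts to a ring map $\ovr{A}=(A/A\cdot A^{\circ\circ})^{\mrm{perf}}\to C^+$, equivalently to a ring map $A\to C^+$ that kills $A\cdot A^{\circ\circ}$. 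This is incompatible with the non-vanishing image of $a\in A^{\circ\circ}\subseteq A\cdot A^{\circ\circ}$ in $C$, producing the required contradiction.

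The main obstacle is the openness of $A\cdot A^{\circ\circ}$: it is precisely this fact that translates Huber's analytic/non-analytic dichotomy (controlling adicness of the morphism of adic spaces) into information about which primes can occur as supports of maps factoring through $\ovr{A}$ (controlling formal adicness of the morphism of v-sheaves). Once this is in hand, the rest of the argument is a direct unwinding of the functor-of-points description of $\mrm{Spec}(\ovr{A})^\diamond$ together with the standard construction of a geometric point from a continuous valuation.
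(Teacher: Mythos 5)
Your argument is correct, and it takes a genuinely different route from the paper's. The paper works globally: it takes the analytic locus $U\subseteq \Spa{A}$, notes that $(U^\diamondsuit)\red=\emptyset$ by \Cref{pro:redofdiamond} together with \Cref{pro:globaltworeds}, deduces via \Cref{pro:preservedbycomposit} that $U^\diamondsuit\to\Zpd$ is formally adic, and concludes that $U^\diamondsuit$ has empty fiber over $\Fpd$, hence factors through $\Qpd$, which is exactly adicness of $f$. You instead prove the contrapositive pointwise: from a non-adic $f$ you extract an analytic valuation with $v(p)=0$ and a topologically nilpotent $a$ with $v(a)\neq 0$ (your observation that $A\cdot A^{\circ\circ}$ is open, which is the same fact underlying \Cref{pro:preadicreduced} and \Cref{pro:tworeds}), build a characteristic $p$ perfectoid point over it, and contradict the identification $\Spd{A}\times_{\Zpd}\Fpd=(\mrm{Spec}(\ovr{A}))^\diamond$ forced by formal adicness, \Cref{pro:tworeds} and \Cref{lem:redZp}. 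What each buys: the paper's proof is three lines once the machinery (in particular the nontrivial \Cref{pro:redofdiamond}) is in place, while yours is more elementary and self-contained, making explicit exactly how formal adicness constrains maps landing over $\Fpd$. Two small points you should tighten, though neither is a gap: a rank-one vertical generization does not literally ``preserve the value of any element'' (values pass to a quotient group), only the vanishing or non-vanishing of values and the support, which is all you need, and its existence is the standard microbiality of analytic points; and when you unwind the factorization through $\mrm{Spec}(\ovr{A})^\diamond$ you are implicitly using that the adjunction map $\mrm{Spec}(\ovr{A})^\diamond\to\Spd{A}$ is the one induced by $A\to\ovr{A}$, so that the composite recovers your original map $A\to C$ (this is fine because the untilt is the trivial one, but it deserves a sentence).
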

\begin{proof}
	Let $U\subseteq \Spa{A}$ the open subset of analytic points. 
	It follows from \Cref{pro:redofdiamond} and \Cref{pro:globaltworeds} that $U^\diamond \to \Spd{A}$ is formally adic. 
	By \Cref{pro:preservedbycomposit}, $U^\diamond\to \Zpd$ is formally adic and the map must factor through $\Qpd$. This proves proves that $f$ is adic.
\end{proof}

Recall that a v-sheaf $\F$ is said to be separated if the diagonal $\F\to \F\times\F$ is a closed immersion \cite[Definition 10.7]{Et}. We need the following related notion:
\begin{defi}
	\label{defi:formallysep}
	Let $\F$ and $\G$ be small v-sheaves.
	\begin{enumerate}
		\item We say $\F\to \G$ is \textit{formally closed} if it is a formally adic closed immersion.
		\item We say that a v-sheaf is \textit{formally separated} if the diagonal map $\F\to \F\times\F$ is formally closed. 
	\end{enumerate}
\end{defi}

\begin{lem}
	\label{lem:Zpdseparated}
The v-sheaf $\Zpd$ is formally separated.
\end{lem}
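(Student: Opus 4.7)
The plan is to verify the two conditions of \Cref{defi:formallysep} separately for the diagonal $\Delta \colon \Zpd \to \Zpd \times \Zpd$: formal adicness and being a closed immersion.

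For formal adicness, I will use that the reduction functor is a right adjoint (\Cref{defi:reduced}) and hence commutes with finite limits. By \Cref{lem:redZp}, $\Zpd\red \cong \mrm{Spec}(\Fp)$, so $(\Zpd \times \Zpd)\red$ is $\mrm{Spec}(\Fp) \times \mrm{Spec}(\Fp) \cong \mrm{Spec}(\Fp)$ in $\topSch$, and the reduced diagonal is the identity. The Cartesian square in \Cref{defi:adicmorph} then becomes
\begin{center}
\begin{tikzcd}
\Fpd \ar[r,"\mathrm{id}"] \ar[d] & \Fpd \ar[d,"{(\iota,\iota)}"] \\
\Zpd \ar[r,"\Delta"] & \Zpd \times \Zpd,
\end{tikzcd}
\end{center}
where $\iota \colon \Fpd \to \Zpd$ is the unique map of \Cref{lem:redZp2}. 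Since $(\iota,\iota) = \Delta \circ \iota$ factors through $\Zpd$, the fiber product $\Zpd \times_{\Zpd \times \Zpd} \Fpd$ collapses to $\Zpd \times_\Zpd \Fpd = \Fpd$, making the square Cartesian.

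For the closed immersion condition, I will test on strictly totally disconnected perfectoid spaces. A map $\Spa{R} \to \Zpd \times \Zpd$ with $R$ strictly totally disconnected amounts to a pair of untilts $R^{\sharp_1}, R^{\sharp_2}$ of $R$ over $\Zp$. The sub-v-sheaf $Z := \Spa{R} \times_{\Zpd \times \Zpd} \Zpd$ then parametrizes maps $T \to \Spa{R}$ whose pulled-back untilts $T^{\sharp_1}, T^{\sharp_2}$ agree. Fixing distinguished elements $\xi_j \in W(R^+)$ generating $\ker(\theta_j \colon W(R^+) \to R^{\sharp_j, +})$, the pulled-back untilts coincide if and only if the ideals $(\xi_1)$ and $(\xi_2)$ agree after pullback to $W(T^+)$, equivalently if and only if $\theta_2(\xi_1) = 0$ in $T^{\sharp_2,+}$. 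Hence $Z$ is cut out inside $\Spa{R^{\sharp_2}}$ (whose associated diamond equals $\Spa{R}$ via tilting) by the vanishing of the single element $\theta_2(\xi_1) \in R^{\sharp_2,+}$, which is a closed immersion of perfectoid spaces.

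The main obstacle lies in the second part: one must verify rigorously that vanishing of $\theta_2(\xi_1)$ genuinely captures the locus of coinciding untilts, which ultimately rests on the fact that two distinguished elements of $W(T^+)$ generate the same ideal if and only if they are associate, plus some careful bookkeeping to match the sub-v-sheaf of $\Spa{R}$ with the closed subspace of $\Spa{R^{\sharp_2}}$ under tilting. Formal adicness, by contrast, is essentially automatic once one observes that reduction kills the difference between $\Zpd$ and $\Fpd$.
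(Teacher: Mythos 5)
Your proposal is correct and follows essentially the same route as the paper: formal adicness via $(-)\red$ commuting with limits together with $\Zpd\red=\mrm{Spec}(\Fp)$, and the closed-immersion condition by identifying the pullback of the diagonal with the locus where two untilts agree and cutting it out as a Zariski-closed subset of one of the untilts. The divisibility fact you flag (a degree-one primitive element dividing another forces the ideals to coincide, so vanishing of $\theta_2(\xi_1)$ detects agreement of untilts) is exactly what the paper packages by citing \cite[Proposition 11.3.1]{Ber} on untilts as closed Cartier divisors, so there is no genuine gap.
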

\begin{proof}
	To prove that the diagonal $\Zpd\to \Zpd\times \Zpd$ is a closed immersion observe that the basechanges by maps $\Spa{R}\to \Zpd\times \Zpd$, with $\Spa{R}\in \mrm{Perf}$ define the locus on which two untilts agree in $|\Spa{R}|$. Each untilt is individually cut out of $\Spf{W(R^+)}\setminus\{V([\varpi])\}$ as a closed Cartier divisor \cite[Proposition 11.3.1]{Ber}. The intersection defines a Zariski closed subset in each of the untilts and these are represented by a perfectoid space. 

	We compute directly $(\Zpd\times\Zpd)\red=\Fp$ since $(-)\red$ commutes with limits. On the other hand,  $\Fpd\times_{\Zpd^2} \Zpd=\Fpd$, which proves that the diagonal is formally adic.
\end{proof}

\begin{pro}
	\label{pro:formallypadicgivesformaldiagona}
If $\F$ is formally $p$-adic, then the diagonal $\F\to \F\times \F$ is formally adic.
\end{pro}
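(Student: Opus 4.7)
The plan is to verify directly that the defining square of formal adicness for the diagonal $\Delta_\F: \F \to \F \times \F$ is Cartesian. Since the reduction functor $(-)\red$ is a right adjoint and $(-)^\diamond$ is the inverse image of a morphism of topoi (hence preserves finite limits), the composition $((-)\red)^\diamond$ commutes with finite products, so $((\F \times \F)\red)^\diamond \cong (\F\red)^\diamond \times (\F\red)^\diamond$. I therefore need to check that
\begin{center}
\begin{tikzcd}
(\F\red)^\diamond \ar{r}\ar{d} & (\F\red)^\diamond \times (\F\red)^\diamond \ar{d}\\
\F \ar{r}{\Delta_\F} & \F \times \F
\end{tikzcd}
\end{center}
is Cartesian, where the top map is the diagonal of $(\F\red)^\diamond$.

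Next, since $\F$ is formally $p$-adic, I rewrite $(\F\red)^\diamond \cong \F \times_\Zpd \Fpd$, where $\Fpd = (\Zpd\red)^\diamond$ by \Cref{lem:redZp}. Distributing the absolute product over the fiber product yields
\[
(\F\red)^\diamond \times (\F\red)^\diamond \cong (\F \times \F) \times_{\Zpd \times \Zpd}(\Fpd \times \Fpd),
\]
so the fiber product computing the Cartesian condition simplifies to $\F \times_{\Zpd \times \Zpd}(\Fpd \times \Fpd)$, where the map $\F \to \Zpd \times \Zpd$ factors as the structural map $\F \to \Zpd$ followed by the diagonal $\Delta_\Zpd$.

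The final input is \Cref{lem:Zpdseparated}, which asserts that $\Delta_\Zpd$ is formally adic. This gives the Cartesian identity $\Fpd \cong \Zpd \times_{\Zpd \times \Zpd}(\Fpd \times \Fpd)$ (using again that $((\Zpd \times \Zpd)\red)^\diamond \cong \Fpd \times \Fpd$). Factoring the base change of $\F$ through $\Zpd$ then produces
\[
\F \times_{\Zpd \times \Zpd}(\Fpd \times \Fpd) \cong \F \times_\Zpd \bigl(\Zpd \times_{\Zpd \times \Zpd}(\Fpd \times \Fpd)\bigr) \cong \F \times_\Zpd \Fpd \cong (\F\red)^\diamond,
\]
which is exactly the desired Cartesian condition. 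I do not expect a serious obstacle; the only delicate point is the bookkeeping mixing absolute and relative products, which resolves cleanly once one observes that the map $\F \to \Zpd \times \Zpd$ factors through $\Delta_\Zpd$.
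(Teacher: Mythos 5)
Your proof is correct and rests on exactly the same ingredients as the paper's argument: the hypothesis that $\F\to \Zpd$ is formally adic, the formal adicness of $\Delta_{\Zpd}$ from \Cref{lem:Zpdseparated}, and the commutation of $((-)\red)^\diamond$ with finite limits. The paper merely packages this differently, factoring the absolute diagonal as $\F\to \F\times_\Zpd \F\to \F\times\F$ and citing stability of formal adicness under basechange and composition (\Cref{pro:preservedbycomposit}), whereas you unwind the same reduction into an explicit fiber-product computation.
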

\begin{proof}
We have a formally adic map $\F\to \Zpd$, and since formal adicness is preserved by basechange and composition we get a formally adic map $\F\times_\Zpd \F\to \Zpd$. By a general property of Cartesian diagrams, the diagonal map $\F\to \F\times_\Zpd \F$ is also formally adic. Now, $\F\times_\Zpd \F$ is the basechange of the diagonal $\Zpd\to \Zpd\times \Zpd$ by the projection $\F\times \F\to \Zpd\times \Zpd$. This gives that $\F\times_\Zpd \F\to \F\times \F$ and by composition that $\F\to\F\times\F$ are also formally adic.
\end{proof}

\begin{lem}
	\label{rem:diamondofreduced}
	The diagonal $\F\to \F\times \F$ is formally adic if and only if the adjunction morphism $(\F\red)^\diamond\to \F$ is injective. Let $\Spa{A}\in \mrm{Perf}$ and $m\in \F\Hub{A}$. Then $m\in (\F\red)^\diamond\Hub{A}$ if and only if $\Spa{A}$ admits a v-cover $\Spa{R}\to \Spa{A}$ and a map $\mrm{Spec}(R^+)^\diamond\to \F$ making the following diagram commutative:
\begin{center}
	\begin{tikzcd}
		\Spa{R} \ar{r}\ar{d} &  \mrm{Spec}(R^+)^\diamond \ar{d} \\
		\Spa{A} \ar{r}{m} & \F
	\end{tikzcd}
\end{center}
\end{lem}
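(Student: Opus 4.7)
The plan is as follows. \textbf{For the first assertion}, the key is that $(-)\red$ is a right adjoint and $(-)^\diamond$ is a left adjoint (and the inverse image part of a morphism of topoi), so both commute with finite limits. In particular $((\F\times\F)\red)^\diamond\cong (\F\red)^\diamond\times (\F\red)^\diamond$ canonically, and the Cartesian square defining formal adicness of the diagonal $\Delta\colon \F\to \F\times \F$ becomes
\begin{center}
\begin{tikzcd}
(\F\red)^\diamond \ar{r}\ar{d}{\phi} & (\F\red)^\diamond\times (\F\red)^\diamond \ar{d}{\phi\times\phi} \\
\F \ar{r}{\Delta} & \F\times \F
\end{tikzcd}
\end{center}
where $\phi$ is the counit and the top row is the diagonal of $(\F\red)^\diamond$. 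The pullback of $\Delta$ along $\phi\times\phi$ is by definition $(\F\red)^\diamond\times_\F (\F\red)^\diamond$, so the square is Cartesian if and only if the natural diagonal $(\F\red)^\diamond \to (\F\red)^\diamond \times_\F (\F\red)^\diamond$ is an isomorphism. In a category of sheaves of sets this is the standard reformulation of $\phi$ being a monomorphism, i.e., injective as a map of v-sheaves.

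\textbf{For the second assertion}, I would identify $(\F\red)^\diamond$ as the v-sheafification of the presheaf
$$P\colon \Hub{R}\mapsto \F\red(\mrm{Spec}(R^+))=\mrm{Hom}_{\topPerf}(\mrm{Spec}(R^+)^\diamond, \F),$$
where the equality is the adjunction $f^*\dashv f_*$ and the formula makes sense because $R^+$ is perfect of characteristic $p$ whenever $\Hub{R}\in\mrm{Perf}$. There is a tautological natural transformation $h_{\Spa{R}}\to \mrm{Spec}(R^+)^\diamond$ sending an $\Hub{S}$-point $\Hub{R}\to \Hub{S}$ to the underlying ring map $R^+\to S^+$, and under the presheaf-level map $P\to \F$ a section $\widetilde{n}\colon \mrm{Spec}(R^+)^\diamond\to \F$ goes to the pullback of $\widetilde{n}$ along $h_{\Spa{R}}\to \mrm{Spec}(R^+)^\diamond$. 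By the universal property of v-sheafification, $m$ lifts to an element of $(\F\red)^\diamond(\Hub{A})$ exactly when, after pulling back along some v-cover $\Spa{R}\to \Spa{A}$, the restriction $m|_{\Spa{R}}$ lies in the image of $P(\Hub{R})\to \F(\Hub{R})$. Unwinding what this means is precisely the commutativity of the claimed square.

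\textbf{The main obstacle} is ensuring the sheafification description of $(\F\red)^\diamond$ is correct. Concretely, one must verify that the presheaf $P$ above together with its natural transformation to $\F$ really does exhibit the counit $(\F\red)^\diamond\to \F$ after v-sheafification. This follows by writing $\F\red$ as a colimit of representables $\mrm{Spec}(A)\in\mrm{PCAlg}^\mrm{op}_{\Fp}$, noting that $\mrm{Spec}(A)^\diamond(\Hub{R}) = \mrm{Hom}(A,R^+)=h_{\mrm{Spec}(A)}(\mrm{Spec}(R^+))$ matches $P$ on representables, and using that the left adjoint $(-)^\diamond$ commutes with colimits, together with \Cref{pro:schemesarereduced} to identify the counit at the level of representables. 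Once these identifications are set up, both parts are essentially bookkeeping.
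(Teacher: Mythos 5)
Your argument is correct and follows the same route as the paper's (very terse) proof: the first claim is the standard criterion that a map of sheaves $\G\to\F$ is injective iff $(\G\times\G)\times_{\F\times\F}\F=\G$, applied with $\G=(\F\red)^\diamond$ using that $(-)\red$ and $(-)^\diamond$ commute with finite products, and the second claim is the unwinding of the description of $(\F\red)^\diamond$ as the v-sheafification of $\Hub{R}\mapsto\mrm{Hom}(\mrm{Spec}(R^+)^\diamond,\F)$. The only difference is that you spell out the justification of this sheafification formula (colimit of representables plus $(-)^\diamond$ commuting with colimits), which the paper simply asserts.
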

\begin{proof}
	In general, a map of sheaves $\G\to \F$ is injective if and only if $(\G\times \G)\times_{\F\times \F} \F=\G$. 
	Now, $(\F\red)^\diamond$ is the sheafification of $\Hub{R} \mapsto \mrm{Hom}(\mrm{Spec}(R^+)^\diamond,\F)$. The description given in the statement above is what one gets from taking sheafification and assuming injectivity of $(\F\red)^\diamond\to \F$. 
\end{proof}

The following lemma will be key for our theory of specialization, it roughly says that formally adic closed immersions behave as expected: 
\begin{lem}
	\label{lem:formallyclosedisZariski}
	Let $(A,A)$ be a formal Huber pair and let $\F\to \Spdf{A}$ be formally adic closed immersion. Then $(\F\red)^\diamond=\mrm{Spec}(A/J)^\diamond$ for some open ideal $J\subseteq A$.
\end{lem}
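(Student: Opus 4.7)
The plan is to combine formal adicness (which forces $(\F\red)^\diamond$ to be cut out of a discrete perfect scheme) with the classification of closed subsheaves via the olivine spectrum and schematic closed subsets.

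First, since $\F\to\Spdf{A}$ is formally adic, $(\F\red)^\diamond=\F\times_{\Spdf{A}}(\Spdf{A}\red)^\diamond$. By \Cref{pro:tworeds}, $(\Spdf{A}\red)^\diamond=\mrm{Spec}(\ovr{A})^\diamond$, where $\ovr{A}=(A/\sqrt{I})^{\mrm{perf}}$ for $I\subseteq A$ an ideal of definition; let $\pi:A\to\ovr{A}$ denote the natural map, whose kernel $\sqrt{I}$ is open. As closed immersions are preserved under basechange, $(\F\red)^\diamond$ is a closed subsheaf of $\mrm{Spec}(\ovr{A})^\diamond=\Spdf{\ovr{A}}$. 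By \Cref{pro:Joaoetal} combined with \Cref{pro:strongtopoonSpo}, this corresponds to a weakly generalizing closed subset $W\subseteq \Spor{\ovr{A}}$.

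Next, I will argue $W$ is \emph{schematic} in the sense of \Cref{defi:schematicsubs}. Using \Cref{pro:topologyschematicvsheaf}, pick a surjection $\coprod_{i} S_i^\diamond\twoheadrightarrow (\F\red)^\diamond$ coming from a presentation of the scheme-theoretic v-sheaf $\F\red$, where each $S_i=\mrm{Spec}(R_i)$ with $R_i$ a perfect discrete $\Fp$-algebra. By \Cref{lem:discreteperfecthubpairs1}, each composite $S_i^\diamond\to\Spdf{\ovr{A}}$ is induced by a genuine map of discrete Huber pairs $(\ovr{A},\ovr{A})\to(R_i,R_i)$, so the image of $|S_i^\diamond|$ in $\Spor{\ovr{A}}$ is schematic by definition, and $W$, being the union, is schematic.

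Now \Cref{pro:closedschematicallyreduced} applied to the discrete Huber pair $(\ovr{A},\ovr{A})$ produces an ideal $J'\subseteq\ovr{A}$ with $W=\sigma^{-1}(V(J'))$. The closed subsheaf $\mrm{Spec}(\ovr{A}/J')^\diamond\subseteq\Spdf{\ovr{A}}$ has the same underlying closed subset $W$, so the bijection in \Cref{pro:Joaoetal} yields $(\F\red)^\diamond=\mrm{Spec}(\ovr{A}/J')^\diamond$. Setting $J=\pi^{-1}(J')\subseteq A$, the ideal $J$ contains $\sqrt{I}$ and is thus open; after replacing $J'$ by its $p$-power-root closure (which does not affect $\mrm{Spec}(\ovr{A}/J')^\diamond$), a direct check using that $\ovr{A}$ is the perfection of $A/\sqrt{I}$ gives $(A/J)^{\mrm{perf}}\cong \ovr{A}/J'$, and therefore $\mrm{Spec}(A/J)^\diamond=\mrm{Spec}(\ovr{A}/J')^\diamond=(\F\red)^\diamond$.

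The main obstacle is the schematicity of $W$. This is where the combination of \Cref{pro:topologyschematicvsheaf} and \Cref{lem:discreteperfecthubpairs1} is crucial: without the identification of morphisms from discrete perfect schemes with ring homomorphisms, one could not guarantee that every chart of $\F\red$ lands in the schematic locus of $\Spor{\ovr{A}}$, and hence one could not invoke \Cref{pro:closedschematicallyreduced} to produce the desired ideal $J$.
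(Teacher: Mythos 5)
Your proof is correct and follows essentially the same route as the paper: identify $|(\F\red)^\diamond|$ as a schematic closed subset of the olivine spectrum and then invoke \Cref{pro:closedschematicallyreduced} to produce the open ideal. The only (cosmetic) differences are that you establish schematicity via a presentation of $\F\red$ by perfect affine schemes together with \Cref{lem:discreteperfecthubpairs1}, where the paper instead uses the description of sections of $(\F\red)^\diamond$ from \Cref{rem:diamondofreduced}, and that you work over $\ovr{A}$ and pull the ideal back to $A$ rather than applying \Cref{pro:closedschematicallyreduced} directly to $(A,A)$.
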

\begin{proof}
	$|\F|\subseteq \Spor{A}$ is closed and we get an expression $\F=\Spdf{A}\times_{\underline{|\Spd{A}|}} \underline{|\F|}$. 
	By \Cref{pro:tworeds}, $(\Spdf{A}\red)^\diamond=\mrm{Spec}(\ovr{A})^\diamond$ which is closed in $\Spdf{A}$. By formal adicness $(\F\red)^\diamond$ is closed in $\Spdf{A}$ determined by $|\F|\cap |\mrm{Spec}(\ovr{A})^\diamond|$. By \Cref{rem:diamondofreduced}, a map $\Spa{R}\to \F$ factors through $(\F\red)^\diamond$ if after possibly replacing $R$ by a v-cover it factors through $\mrm{Spec}(R^+)^\diamond \to \F\cap \mrm{Spec}(\ovr{A})^\diamond$. This proves that $|(\F\red)^\diamond|$ is a schematic closed subset of $\Spor{A}$ as in \Cref{defi:schematicsubs}. By \Cref{pro:closedschematicallyreduced}, it is a Zariski closed subset corresponding to an open ideal $J\subseteq A$.
\end{proof}


We will often use implicitly the following easy result. 
\begin{lem}
	\label{lem:basechangerepresentimpliesadic}
	Let $\F$ and $\G$ be two small v-sheaves, and $f:\F\to \G$ a map between them. Suppose that the adjunction map $(\G\red)^\diamond \to \G$ is injective and that $\F\times_{\G} (\G^\mrm{red})^\diamond$ is representable by a reduced scheme-theoretic v-sheaf, then $f$ is formally adic.
\end{lem}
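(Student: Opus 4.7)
The plan is to exhibit the canonical morphism $\psi \colon (\F\red)^\diamond \to \mathcal{H}$, where $\mathcal{H} := \F \times_\G (\G\red)^\diamond$, and to prove that $\psi$ is an isomorphism. The existence of $\psi$ comes from naturality of the counit $\epsilon \colon ((-)\red)^\diamond \to \mrm{id}$ applied to $f$, which produces a commutative square over $\F \to \G$; together with the injectivity of $(\G\red)^\diamond \to \G$ and the universal property of the pullback, this yields $\psi$.

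The key step will be to apply the right adjoint $(-)\red$ to the structural map $\alpha \colon \mathcal{H} \to \F$ and show that $\alpha\red \colon \mathcal{H}\red \to \F\red$ is an isomorphism. Since $(\G\red)^\diamond \to \G$ is a monomorphism by assumption, its basechange $\alpha$ is also a monomorphism; and since right adjoints preserve monomorphisms, $\alpha\red$ is mono. Using the hypothesis that $\mathcal{H}$ is representable by a reduced scheme-theoretic v-sheaf $\mathcal{H}_0$, we identify $\mathcal{H}\red = \mathcal{H}_0$. Next, applying $(-)\red$ to the equality $\alpha \circ \psi = \epsilon_\F$ and pre-composing with the unit $\eta_{\F\red}$, the triangle identity $(\epsilon_\F)\red \circ \eta_{\F\red} = \mrm{id}_{\F\red}$ forces
$$\alpha\red \circ (\psi\red \circ \eta_{\F\red}) = \mrm{id}_{\F\red}.$$
Thus $\alpha\red$ is a monomorphism admitting a section, hence an isomorphism.

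To finish, apply $(-)^\diamond$ to the inverse $\gamma$ of $\alpha\red$, obtaining an isomorphism $\gamma^\diamond \colon (\F\red)^\diamond \to \mathcal{H}_0^\diamond = \mathcal{H}$. A routine adjunction check, using \Cref{pro:reducedgivesfullyfaithful} together with the reducedness of $\mathcal{H}_0$, shows that $\gamma^\diamond = \psi$: both correspond under the adjunction bijection $\mrm{Hom}((\F\red)^\diamond, \mathcal{H}) \cong \mrm{Hom}(\F\red, \mathcal{H}_0)$ to the morphism $\gamma$. Hence $\psi$ is an isomorphism, the defining square of \Cref{defi:adicmorph} is Cartesian, and $f$ is formally adic. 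The main subtlety to keep in mind is that $\F\red$ need not itself be reduced, so one cannot argue directly that $\eta_{\F\red}$ is an isomorphism; the triangle identity supplies exactly the section needed to sidestep this issue.
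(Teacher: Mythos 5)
Your proof is correct, and it reaches the same conclusion as the paper but through a slightly different mechanism. The paper's argument never constructs the comparison map $\psi$ explicitly: it notes that $((\G\red)^\diamond)\red\to\G\red$ is a monomorphism (right adjoints preserve monos) with a section coming from the triangle identity, hence an isomorphism, and then computes in one line, using that $(-)\red$ preserves fiber products, $(\F\times_\G(\G\red)^\diamond)\red=\F\red\times_{\G\red}((\G\red)^\diamond)\red=\F\red$; reducedness of the representing object $T$ then gives $(\F\red)^\diamond=T^\diamond=\F\times_\G(\G\red)^\diamond$. You instead avoid invoking preservation of fiber products altogether: you get monomorphy of $\alpha\red$ from pullback-stability of monos plus mono-preservation by the right adjoint, and the section from the triangle identity at $\F$ rather than at $\G$. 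The ingredients (injectivity hypothesis, triangle identities, reducedness of the representing object) are the same, but the bookkeeping is organized differently, and your version has the virtue of explicitly verifying that the resulting isomorphism is the canonical map $\psi$ into the fiber product — a compatibility the paper's computation leaves implicit — so it is, if anything, a slightly more careful rendering of the same lemma.
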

\begin{proof} 
	Let $T\in \topSch$ be reduced and such that $T^\diamond=\F\times_{\G} (\G^\mrm{red})^\diamond$. By hypothesis $(\G\red)^\diamond \to \G$ is a monomorphism and since $(-)\red$ is a right adjoint $((\G\red)^\diamond)\red \to \G\red$ is also a monomorphism. Recall that for any pair of adjoint functors $(L,R)$ the compositions $R\to R\circ L\circ R\to R$ and $L\to L\circ R\circ L\to L$ are the identity. This implies that $((\G\red)^\diamond)\red \to \G\red$ is an isomorphism. We compute directly:
	\begin{align*}
		(T^\diamond	)\red	&=(\F\times_{\G} (\G\red)^\diamond)\red	\\
					&= \F\red\times_{\G\red} ((\G\red)^\diamond)\red\\
					&= \F\red\times_{\G\red} \G\red \\
					&= \F\red
	\end{align*}
and
\begin{align*}
	(\F\red)^\diamond &=((T^\diamond)\red)^\diamond\\
			  &= T^\diamond
\end{align*}

\end{proof}

\section{Specialization}
\subsection{Specialization for Tate Huber pairs}
\begin{defi}
	\label{defi:specializationTate}
	Given a Tate Huber pair $\Hub{A}$ over $\Zp$ and a pseudo-uniformizer $\varpi\in A$, we define the specialization map $\mrm{sp}_{A}:|\Spa{A}|\to |\mrm{Spec}(\ovr{A^+})|$ by sending a valuation $|\cdot|_x\in |\Spa{A}|$ to the ideal $\frak{p}\subseteq A^+$ given by $\frak{p}=\{a\in A^+ \mid |a|_x<1\}$.
\end{defi}
These maps of sets are functorial in the category of Tate Huber pairs. We thank David Hansen for providing reference and an explanation of the following statement.

\begin{pro}\textup{(\cite[Theorem 8.1.2]{Bhatt})}
	\label{pro:specialisniceforTate}
	The specialization map $\mrm{sp}_{A}:|\Spa{A}|\to |\mrm{Spec}(\ovr{A^+})|$ is a continuous, surjective, spectral and closed map of spectral topological spaces.
\end{pro}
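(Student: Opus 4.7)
The plan is to verify the four assertions---continuity, spectrality, surjectivity, and closedness---in sequence, with the first two coming from an explicit description of preimages of basic opens and the last two requiring some work with valuations.

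That both sides are spectral is standard: the target is spectral as the Zariski spectrum of a ring, and the source by Huber's classical result on the adic spectrum of a complete Huber pair. For continuity and spectrality of $\mathrm{sp}_A$ itself, I would compute the preimage of a basic open: given $\bar f \in \ovr{A^+}$ with any lift $f \in A^+$, the definition of $\mathrm{sp}_A$ gives
\[
\mathrm{sp}_A^{-1}(D(\bar f)) \;=\; \{x \in |\mathrm{Spa}(A,A^+)| : |f|_x = 1\},
\]
and since $f \in A^+$ forces $|f|_x \leq 1$, this preimage equals the rational subset $U(\tfrac{1}{f}) = \{x : |1|_x \leq |f|_x \neq 0\}$. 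Rational subsets are quasi-compact and open, and the $D(\bar f)$ both generate the topology of $|\mathrm{Spec}(\ovr{A^+})|$ and furnish a basis of its quasi-compact opens, so continuity and spectrality of $\mathrm{sp}_A$ follow simultaneously.

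For surjectivity, given a prime $\mathfrak p \subset \ovr{A^+}$, let $\tilde{\mathfrak p} \subset A^+$ be its preimage, which contains all of $(A^+)^{\circ\circ}$ and in particular $\varpi$. I would choose a minimal prime $\mathfrak q \subset A^+$ with $\mathfrak q \subset \tilde{\mathfrak p}$ and $\varpi \notin \mathfrak q$; such a $\mathfrak q$ exists because $\varpi$ is a non-zero-divisor in $A^+$ and hence lies in no associated (in particular, no minimal) prime. In the fraction field $F$ of the integral domain $A^+/\mathfrak q$ the image of $\varpi$ is nonzero, so $F$ contains the image of $A = A^+[\tfrac{1}{\varpi}]$. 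A standard construction (Chevalley extension followed by passage to a rank-one valuation ring centered at the same prime) yields a rank-one valuation ring $V \subset F$ dominating $(A^+/\mathfrak q)_{\tilde{\mathfrak p}/\mathfrak q}$; continuity of the induced valuation on $A$ holds because the value group is archimedean and $|\varpi| < 1$, so $|\varpi|^N$ is cofinal with zero. By construction the specialization ideal is $\tilde{\mathfrak p}$, giving a point $x$ with $\mathrm{sp}_A(x) = \mathfrak p$.

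For closedness, \Cref{cor:closedmap} reduces the task to showing that $\mathrm{sp}_A$ is specializing. Given $x$ with $\mathrm{sp}_A(x) = \mathfrak p$ and a further specialization $\mathfrak q \supset \mathfrak p$, I would enlarge the maximal ideal of the residue field's valuation ring to realize $\mathfrak q$. Explicitly, the affinoid residue field $(K_x, K_x^+)$ satisfies $\mathfrak m_{K_x^+} \cap A^+ = \tilde{\mathfrak p}$; a valuation ring $\bar V \subset K_x^+/\mathfrak m_{K_x^+}$ dominating the image of $A^+/\tilde{\mathfrak p}$ at the image of $\tilde{\mathfrak q}$ pulls back under $K_x^+ \to K_x^+/\mathfrak m_{K_x^+}$ to a valuation ring $V \subsetneq K_x^+$ of $K_x$, and the pair $(K_x, V)$ defines a point $y$ that vertically specializes $x$ and satisfies $\mathrm{sp}_A(y) = \mathfrak q$. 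The main obstacle in this step and in the surjectivity step is checking that the newly constructed valuations are continuous (and, for closedness, that $V$ is open and bounded in $K_x$); these delicate topological verifications are carried out in detail in \cite[Theorem~8.1.2]{Bhatt}.
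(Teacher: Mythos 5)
Your continuity/spectrality computation and your closedness argument (reduce to showing $\mrm{sp}_A$ is specializing via \Cref{cor:closedmap}, then produce a vertical specialization of $x$ by composing with a Chevalley valuation on the residue field of $K_x^+$) are sound and follow the standard route that the paper simply outsources to \cite[Theorem 8.1.2]{Bhatt}. The problem is the surjectivity step. The claimed ``standard construction'' of a \emph{rank-one} valuation ring $V\subset F$ dominating $(A^+/\mathfrak q)_{\tilde{\mathfrak p}/\mathfrak q}$ does not exist in general, and the statement you need is in fact false: rank-one points of $|\Spa{A}|$ do not surject onto $|\mrm{Spec}(\ovr{A^+})|$. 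Take $(A,A^+)=(C,C^+)$ with $C$ a nonarchimedean field and $C^+\subsetneq O_C$ an open and bounded valuation subring of rank $2$, as in \Cref{exa:valringnotcJ}. For $\mathfrak p$ the closed point of $\mrm{Spec}(\ovr{C^+})$ one has $\tilde{\mathfrak p}=\mathfrak m_{C^+}$, $\mathfrak q=(0)$, $F=C$, and the local ring to be dominated is $C^+$ itself; any valuation ring of $C$ dominating $C^+$ contains $C^+$ and has center $\mathfrak m_{C^+}$, hence equals $C^+$, which has rank $2$. So no rank-one $V$ exists, and indeed the unique point of $|\Spa{C}|$ specializing to $\mathfrak p$ is the rank-two point. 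Your appeal to the archimedean property to get continuity is exactly what breaks: you cannot buy continuity by forcing rank one.

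The repair is the classical one: keep the (possibly high-rank) valuation $v$ of $F$ produced by Chevalley, restrict it to $A$ (legitimate since $\varpi\notin\mathfrak q$), and then pass to the \emph{horizontal specialization} of $v$ along the convex subgroup $H\subseteq\Gamma_v$ generated by $v(\varpi)$ (note $c\Gamma_v\subseteq H$ because $v(A^+)\le 1$ and every $a\in A$ satisfies $\varpi^n a\in A^+$ for some $n$). The resulting valuation $w$ has $w(\varpi)$ cofinal in its value group, hence is continuous since $w(A^+)\le 1$, and one checks directly that $\{a\in A^+\mid w(a)<1\}=\{a\in A^+\mid v(a)<1\}=\tilde{\mathfrak p}$, so $\mrm{sp}_A(w)=\mathfrak p$ with no rank hypothesis. (Alternatively, as the paper remarks, the whole proposition can be deduced once one knows \Cref{pro:totallydisconhomeo}.) With that substitution the rest of your outline goes through.
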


\begin{pro}
	\label{pro:totallydisconnectedspecialhomeomorphism}
	For a strictly totally disconnected space $\Spa{R}$, the specialization map $\mrm{sp}_{R}$ is a homeomorphism.
	\label{pro:totallydisconhomeo}
\end{pro}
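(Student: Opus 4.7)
The plan is to deduce this from \Cref{pro:specialisniceforTate}. Since $\mrm{sp}_R$ is already known to be continuous, surjective, and closed, it will suffice to prove injectivity: a continuous closed bijection automatically has a continuous inverse and is therefore a homeomorphism. So the entire content is to check that $\mrm{sp}_R$ is injective.

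Step 1. First I would show that if $\mrm{sp}_R(x) = \mrm{sp}_R(y)$ then $x$ and $y$ lie in the same connected component of $\Spa{R}$. Since $\Spa{R}$ is totally disconnected, two points lie in the same component if and only if they agree on all idempotents of $R^+$; and for an idempotent $e \in R^+$ the relation $e^2 = e$ forces $|e|_x, |e|_y \in \{0,1\}$. If $|e|_x = 0$, then $e \in \mrm{sp}_R(x) = \mrm{sp}_R(y)$, so $|e|_y < 1$, hence $|e|_y = 0$. By symmetry, $x$ and $y$ lie in the same connected component, which by \Cref{pro:connectdcompcriterion} is of the form $\Spa{C}$ with $C$ algebraically closed and $C^+$ an open and bounded valuation subring of $C$.

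Step 2. Next I reduce to injectivity of $\mrm{sp}_C : |\Spa{C}| \to |\mrm{Spec}(\overline{C^+})|$, which is compatible with $\mrm{sp}_R$ through the natural map $\mrm{Spec}(\overline{C^+}) \to \mrm{Spec}(\overline{R^+})$ induced by $R^+ \to C^+$. Since $C$ is algebraically closed, the continuous valuations on $C$ satisfying $|C^+| \leq 1$ are in order-reversing bijection with valuation subrings $V$ with $C^+ \subseteq V \subseteq C^\circ$, which by standard valuation theory correspond bijectively to primes of $C^+$ sandwiched between $C^{\circ \circ}$ and the maximal ideal, i.e., to primes of $C^+/C^{\circ \circ}$. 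Since perfection is a universal homeomorphism on spectra, this identifies $|\Spa{C}|$ with $|\mrm{Spec}(\overline{C^+})|$. Unwinding definitions shows that under these identifications, $\mrm{sp}_C$ sends the valuation $|\cdot|_V$ to the maximal ideal of $V$ intersected with $C^+$, which is precisely the prime that produced $V$; thus $\mrm{sp}_C$ is the identification itself, and in particular injective.

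Combining Steps 1 and 2 yields injectivity of $\mrm{sp}_R$: two points with the same image live in a common component $\Spa{C}$, where injectivity of $\mrm{sp}_C$ forces them to coincide. The main subtlety lies in carefully relating the global target $|\mrm{Spec}(\overline{R^+})|$ with the local targets $|\mrm{Spec}(\overline{C^+})|$ coming from each connected component; Step 1 is what guarantees no cross-component collisions can occur, which is exactly what one needs for the local injectivity argument to globalize.
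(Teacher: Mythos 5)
Your argument is correct and is essentially the paper's own proof: the paper likewise uses \Cref{pro:specialisniceforTate} to reduce to injectivity, first shows that two points with the same image under $\mrm{sp}_R$ lie in the same connected component, then invokes \Cref{pro:connectdcompcriterion} to reduce to the case $R=C$ and finishes by ``generalities of valuation rings''---your Steps 1 and 2 simply supply the details (the idempotent argument, the dictionary between points of $\mrm{Spa}(C,C^+)$ and primes of $C^+/C^{\circ\circ}$) that the paper leaves implicit. One remark on the subtlety you flag at the end: what makes the component-wise argument globalize is not only Step 1 but the fact that $C^+$ is the completion of a quotient of $R^+$ and $C^{\circ\circ}$ is open in $C^+$, so $\ovr{R^+}\to \ovr{C^+}$ is surjective and hence $\mrm{Spec}(\ovr{C^+})\to \mrm{Spec}(\ovr{R^+})$ is injective, which is what turns equality of $\mrm{sp}_R$-images into equality of $\mrm{sp}_C$-images within a fixed component.
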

\begin{proof}
	By \Cref{pro:specialisniceforTate} the map is surjective and a quotient map so it suffices to prove injectivity. One first proves that if $\mrm{sp}_{R}(x)=\mrm{sp}_{R}(y)$, then $x$ and $y$ are in the same connected component of $|\Spa{R}|$. In this way one reduces to prove injectivity component by component. Using \Cref{pro:connectdcompcriterion} we can assume $R=C$, and this case follows from generalities of valuation rings.  
\end{proof}

\begin{rem}
One can also prove \Cref{pro:specialisniceforTate} if we knew already that \Cref{pro:totallydisconhomeo} holds.  
\end{rem}

\subsection{Specializing v-sheaves}
We now discuss the specialization map for v-sheaves. The idea is to descend the specialization map from the case of formal Huber pairs.
\begin{defi}
	We say that a small v-sheaf $\F$ is \textit{v-locally formal} if there is a set $I$, a family $\Huf{B_i}_{i\in I}$ of formal Huber pairs over $\Zp$ and a surjective map of v-sheaves $\coprod_{i\in I}\Spdf{B_i}\to \F$.
\end{defi}
\begin{defi}
	\label{defi:v-formalizing}
	Let $\F\in \topPerf$, $\Hub{A}$ be a Tate Huber pair and $f:\Spd{A}\to \F$ a map.
\begin{enumerate}
	\item We say that $\F$ \textit{formalizes} $f$ (or that $f$ is \textit{formalizable}) if there is $t:\Spdf{A^+}\to \F$ factoring $f$. 
%
	\item We say that $\F$ \textit{v-formalizes} $f$ if for some v-cover $g:\Spa{B}\to \Spa{A}$, $\F$ formalizes $f\circ g$.
	\item We say that $\F$ is \textit{formalizing} if it formalizes maps with source an affinoid perfectoid space.
	\item We say that $\F$ is \textit{v-formalizing} if it v-formalizes any $f$ as above.
\end{enumerate}
\end{defi}

We use this extensively because it gives an abstract way to verify that a v-sheaf is v-locally formal.
\begin{lem}
	\label{lem:Zpdvformalizing}
	The following statements hold:
\begin{enumerate}
	\item The v-sheaf $\Zpd$ is formalizing.
	\item $\Spdf{B}$ is formalizing for any formal Huber pair over $\Zp$. 
 \item A small v-sheaf $\F$ is v-formalizing if and only if it is v-locally formal.
\end{enumerate}
\end{lem}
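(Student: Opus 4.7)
The plan is to derive (1) from (2) by taking $B=\Zp$ (noting $\Spdf{\Zp}=\Zpd$), prove (2) by exploiting the integrality condition $B^+=B$, and then deduce (3) from (2) via standard v-covering arguments.

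For (2), consider a map $f:\Spd{A}\to\Spdf{B}$ with $\Spa{A}$ affinoid perfectoid Tate. Via the diamond functor and the fact that $\Spa{A}$ is analytic, $f$ is given by a morphism of pre-adic spaces $\Spa{A}\to\Spa{B}$ over $\Zp$, i.e., a continuous map of Huber pairs $\Hub{B}\to(A,A^+)$. Since $B^+=B$ by the definition of a formal Huber pair, the image of $B$ must lie in $A^+$, yielding a continuous map $\Huf{B}\to(A^+,A^+)$ where $A^+$ carries its $\varpi$-adic topology for a pseudo-uniformizer $\varpi$; continuity is preserved because any ideal of definition of $B$ is mapped into the topologically nilpotent elements of $A$, hence into some power of $\varpi$. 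Applying $\dia$ to the resulting pre-adic morphism $\Spf{A^+}\to\Spa{B}$ produces the formalization $t:\Spdf{A^+}\to\Spdf{B}$, which by construction restricts to $f$ on the open subfunctor $\Spd{A}\subseteq\Spdf{A^+}$ obtained by inverting $\varpi$.

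For the ``$\Leftarrow$'' direction of (3), given a v-cover $\pi:\coprod_i\Spdf{B_i}\to\F$ and $f:\Spd{A}\to\F$ with $\Hub{A}$ Tate, surjectivity of $\pi$ yields an affinoid perfectoid v-cover $g:\Spa{C}\to\Spa{A}$ and a lift $\tilde{f}:\Spd{C}\to\Spdf{B_i}$ of $f\circ g$ for some $i$; applying (2) to $\tilde{f}$ gives the required formalization $\Spdf{C^+}\to\Spdf{B_i}\to\F$ of $f\circ g$. Conversely, if $\F$ is v-formalizing, choose a surjection $\coprod_j\Spa{A_j}\to\F$ from affinoid perfectoids (which exists by smallness of $\F$), and for each $j$ apply v-formalizability to obtain a v-cover $\Spa{C_j}\to\Spa{A_j}$ together with a formalization $\Spdf{C_j^+}\to\F$. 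The induced map $\coprod_j\Spdf{C_j^+}\to\F$ is then surjective, since it dominates $\coprod_j\Spd{C_j}\to\F$, and the latter is a v-cover because the composition $\coprod_j\Spa{C_j}\to\coprod_j\Spa{A_j}\to\F$ is a composition of v-covers. Each $\Huf{C_j^+}$ is a formal Huber pair over $\Zp$, so $\F$ is v-locally formal.

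The main technical point is the identification in (2) of a v-sheaf morphism $\Spd{A}\to\Spdf{B}$ with a pre-adic morphism $\Spa{A}\to\Spa{B}$, which lets us exploit $B^+=B$ to promote the resulting Huber pair map to one with integral target, and thereby extend $f$ from the ``generic fiber'' $\Spd{A}$ to the ``integral model'' $\Spdf{A^+}$.
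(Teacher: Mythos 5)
Your route inverts the paper's logic (deriving (1) from (2)), and in doing so it loses the one piece of genuine content in the lemma. The gap is the first step of your proof of (2): a map of v-sheaves $f:\Spd{A}\to\Spdf{B}$ is \emph{not} in general induced by a morphism of pre-adic spaces $\Spa{A}\to \mrm{Spa}(B,B)$. Unwinding the definition of $\Spdf{B}$, such an $f$ is the datum of an untilt $A'$ of $A^\flat$ together with a continuous map $B\to A'^{+}$, and this untilt need not be $A$ itself. The failure is most severe exactly in the case the lemma is designed for: if $\Spa{R}$ is a characteristic~$p$ affinoid perfectoid, then $\mrm{Hom}(\Spa{R},\Zpd)$ is the set of \emph{all} untilts of $R$, while there is exactly one morphism of pre-adic spaces $\Spa{R}\to \mrm{Spa}(\Zp,\Zp)$ (it factors through $\Fp$, i.e.\ it only sees the trivial untilt). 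Hence your (2) with $B=\Zp$ only formalizes the characteristic~$p$ points of $\Zpd$: applying $\diamondsuit$ to the unique ring map $\Zp\to R^+$ yields the composite $\Spdf{R^+}\to \Fpd\to \Zpd$, which does not restrict to a given characteristic-zero untilt $R^\sharp$ on $\Spa{R}$. So (1) is not a consequence of your (2), and the same defect propagates into your ``$\Leftarrow$'' half of (3): there you must produce a formalization $\Spdf{C^+}\to\Spdf{B_i}$ with $C^+$ of characteristic~$p$, but the lift $\tilde f$ in general involves a characteristic-zero untilt $C^\sharp$, and the only ring map available is $B_i\to C^{\sharp,+}$, which is not a map into $C^+$ (indeed no ring map $B_i\to C^+$ can see a nontrivial untilt).

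What is missing is precisely the paper's Witt-vector argument, which is the actual proof of (1): write $C^{\sharp,+}=W(C^+)/(\xi)$ with $\xi=p+[\varpi]\alpha$; then any point of $\Spdf{C^+}$, i.e.\ a continuous $\psi:C^+\to S^+$, produces via $W(\psi)(\xi)\in W(S^+)$ an untilt $S^\sharp$ of $S$ together with a map $C^{\sharp,+}\to S^{\sharp,+}$, and composing with $B_i\to C^{\sharp,+}$ gives the required $\Spdf{C^+}\to\Spdf{B_i}$ (equivalently, one first builds the canonical map $\Spdf{C^+}\to\Spdf{C^{\sharp,+}}$ and then your integral-functoriality argument applies). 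Without this $A_{\mrm{inf}}$-step neither (1) nor the mixed-characteristic instances of (2) and of (3)$\Leftarrow$ can be obtained. The parts of your proposal that do work --- the ``$\Rightarrow$'' direction of (3), and your proof of (2) in the case where the untilt attached to $f$ is the source itself (e.g.\ characteristic-zero $A$ with its tautological untilt, where $B=B^+$ indeed forces $B\to A^+$ and one just applies $\diamondsuit$, modulo tightening your continuity remark to: $\varpi^nA^+$ is open in $A$, so its preimage under the continuous map $B\to A$ contains a power of the ideal of definition) --- coincide with the paper's argument; the Witt-vector construction is the part you cannot dispense with.
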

\begin{proof}
	Let $\Spa{R}\in \mrm{Perf}$ in characteristic $p$ and an untilt $\iota:(R^\sharp)^\flat \to R$. 
	Let $\xi=p+[\varpi]\alpha$ be a generator of the kernel of $W(R^+)\to (R^\sharp)^+$. The image of $\xi$ under $W(R^+)\to W(A^+)$ defines an untilt of $\Spa{A}$ for every map $\Spa{A}\to \Spdf{R^+}$, and gives a map $\Spdf{R^+}\to \Zpd$. 
	Now, given $\mrm{Spa}(R^\sharp,R^{\sharp,+})\to \Spf{B}$ we get $B\to R^{\sharp,+}$ and $\Spdf{R^{\sharp,+}}\to \Spdf{B}$.
	For the third claim, assume that $\F$ is v-formalizing. Since it is small there is a set $I$ and a surjective map by a union of affinoid perfectoid spaces $\coprod_{i\in I}\Spa{R_i}\to \F$. After refining this cover we may assume that each $\Spa{R_i}\to \F$ formalizes to $\Spdf{R_i^+}\to \F$. Then $\coprod_{i\in I} \Spdf{R_i^+}\to \F$ is surjective, so $\F$ is v-locally formal. If $\F$ is v-locally formal a map $\Spa{R}\to \F$ will v-locally factor through a map $\Spa{R}\to \Spdf{B_i}$. By the second claim, this map formalizes $\Spdf{R^+}\to \Spdf{B_i}$. 
\end{proof}
\begin{pro}
The following properties are easy to verify.
	\label{rem:propertiesofformalizing}
\begin{enumerate}
\item If $f:\F\to \G$ is a surjective map of small v-sheaves and $\F$ is v-formalizing then $\G$ is v-formalizing. 
\item If $\mrm{Spec}(R)\in \mrm{PCAlg}^\mrm{op}_\Fp$ then $\mrm{Spec}(R)^\diamond$ is formalizing.
\item If $X\in \topSch$ then $X^\diamond$ is v-formalizing by \Cref{rem:diamondofreduced}.
\item Non-empty v-formalizing v-sheaves have non-empty reduction. Quasi-separated diamonds are not v-formalizing.
\item If $\F$ formalizes $f:\Spa{A}\to \F$ then $\F$ formalizes $f\circ g$ for any map $g:\Spa{B}\to \Spa{A}$.
\end{enumerate}
\end{pro}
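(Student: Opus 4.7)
My plan is to handle the five items in turn. All five are formal consequences of the definitions, and I expect only (3) to require a real input, namely \Cref{rem:diamondofreduced}.

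Items (1), (2), and (5) are essentially tautological. For (1), a map $\Spd{A}\to \G$ admits, by surjectivity of $f$, a v-local lift to $\F$; invoking the v-formalizing hypothesis on $\F$ and then post-composing with $f$ yields the desired v-local formalization (the composite of two v-covers being a v-cover). For (2), a map $\Spa{A}\to\mrm{Spec}(R)^\diamond$ is by definition just a ring map $R\to A^+$; the same map, read as a morphism of Huber pairs $(R,R)\to (A^+,A^+)$ (continuity is automatic since the source is discrete), induces a morphism $\Spdf{A^+}\to \mrm{Spec}(R)^\diamond$ through which the original point tautologically factors. For (5), a map $g:\Spa{B}\to \Spa{A}$ of Tate affinoids restricts to a continuous map $(A^+,A^+)\to(B^+,B^+)$ for the induced $\varpi$-adic topologies, which produces $\Spdf{B^+}\to\Spdf{A^+}$; composing with the given $\Spdf{A^+}\to\F$ formalizes $f\circ g$.

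The central step is (3). The triangle identities for the adjunction $((-)^\diamond,(-)\red)$ give the identity $X^\diamond = ((X^\diamond)\red)^\diamond$ for every $X\in\topSch$, so every $m:\Spa{A}\to X^\diamond$ automatically lies in $((X^\diamond)\red)^\diamond(\Spa{A})$. After reducing to a char-$p$ affinoid perfectoid source by a v-cover if necessary, the explicit criterion of \Cref{rem:diamondofreduced} supplies a v-cover $\Spa{R}\to\Spa{A}$ together with a factorization through $\mrm{Spec}(R^+)^\diamond\to X^\diamond$. The mild wrinkle, which I flag as the main obstacle, is that $\mrm{Spec}(R^+)^\diamond$ is the v-sheaf attached to $R^+$ equipped with the \emph{discrete} topology, whereas \Cref{defi:v-formalizing} asks for a formalization through $\Spdf{R^+}$ with its $\varpi$-adic topology; this is resolved by noting that the identity $R^+\to R^+$ is continuous from the $\varpi$-adic to the discrete topology, so $\Spa{R}\to \mrm{Spec}(R^+)^\diamond$ factors canonically through $\Spdf{R^+}$, supplying the required formalization.

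Finally, for (4), a non-empty v-formalizing $\F$ receives some map $\Spdf{B^+}\to\F$ with $B$ a non-zero Tate pair; applying $(-)\red$ and \Cref{pro:tworeds} yields $\mrm{Spec}(\ovr{B^+})\to\F\red$ whose source is non-empty, because any pseudo-uniformizer of $B^+$ witnesses that $B^+\cdot B^{+,\circ\circ}$ is a proper ideal (otherwise $1$ would be topologically nilpotent). The second clause is then immediate from \Cref{pro:redofdiamond}, which supplies $Y\red=\emptyset$ for every quasiseparated diamond $Y$: any non-empty such $Y$ therefore fails to be v-formalizing.
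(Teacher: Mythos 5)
Your items (1), (2), (4) and (5) are correct, and for (3) you follow the route the paper itself points to via \Cref{rem:diamondofreduced}; the substance of the argument goes through. Three points in (3) are misstated, though none is fatal. First, the triangle identities do \emph{not} give $X^\diamond=((X^\diamond)\red)^\diamond$: they only say that the composite $X^\diamond\to((X^\diamond)\red)^\diamond\to X^\diamond$ (unit pushed through $\diamond$, then counit) is the identity, so $X^\diamond$ is a retract. That retract property is all you need: a section $m\in X^\diamond\Hub{A}$ is the image under the counit of a section of $((X^\diamond)\red)^\diamond$, and since the latter is the sheafification of $\Hub{R}\mapsto \mrm{Hom}(\mrm{Spec}(R^+)^\diamond, X^\diamond)$, $m$ is v-locally the restriction of a map $\mrm{Spec}(R^+)^\diamond\to X^\diamond$ along $\Spa{R}\to\mrm{Spec}(R^+)^\diamond$. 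Note you only use this easy direction; the full ``if and only if'' of \Cref{rem:diamondofreduced} presupposes injectivity of the counit, which you have not verified for a general $X\in\topSch$ and do not need. Second, your continuity claim is backwards: the identity of $R^+$ is continuous from the \emph{discrete} topology to the $\varpi$-adic topology (not the other way), and by contravariance of $\mrm{Spd}$ this is exactly what produces the map $\Spdf{R^+}\to \mrm{Spec}(R^+)^\diamond$ through which $\Spa{R}\to \mrm{Spec}(R^+)^\diamond$ canonically factors; composing with $\mrm{Spec}(R^+)^\diamond\to X^\diamond$ then gives the formalization, as you intend.

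Third, ``reducing to a char-$p$ affinoid perfectoid source by a v-cover'' is not a legitimate reduction: if $\Hub{A}$ has characteristic $0$, every v-cover $\Spa{B}\to\Spa{A}$ is again of characteristic $0$, and \Cref{defi:v-formalizing} then asks for a factorization through the characteristic-zero pair $\Spdf{B^+}$, not through $\Spdf{B^{\flat,+}}$. The bridge is one extra observation: integral tilting $B^{+}\mapsto B^{\flat,+}=\varprojlim_{x\mapsto x^p}B^{+}/p$ is functorial in continuous maps, so there is a canonical morphism $\Spdf{B^{+}}\to\Spdf{B^{\flat,+}}$ compatible with $\Spd{B}\cong\Spa{B^\flat}$, and any characteristic-$p$ formalization through $\Spdf{B^{\flat,+}}$ pulls back along it to the required one. (If one reads the definition, as the paper does in practice, e.g.\ in the proof of \Cref{lem:Zpdvformalizing}, as only ever invoked for characteristic-$p$ affinoid perfectoid sources, your argument is already complete; the same caveat applies to your (2).) As a side remark, (3) also follows formally from (1) and (2): choose a surjection $\coprod_i S_i\to X$ from representables and use that $\diamond=f^*$, being a left adjoint, preserves surjections; this bypasses \Cref{rem:diamondofreduced} at the cost of a small bookkeeping step for coproducts.
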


\begin{pro}
	\label{pro:separateduniqueform}
	Let $\F$ be a small v-sheaf, and $f:\Spa{R}\to \F$ a map with $\Spa{R}$ affinoid perfectoid in characteristic $p$. If $\F$ is formally separated then $f$ admits at most one formalization.
\end{pro}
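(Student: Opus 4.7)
The plan is to form the equalizer of $t_1$ and $t_2$, namely the fibre product
\[
\mathcal{Z} := \Spdf{R^+}\times_{\F\times\F}\F,
\]
where $\Spdf{R^+}\to\F\times\F$ is $(t_1,t_2)$ and $\F\to\F\times\F$ is the diagonal $\Delta$, and to argue that $\mathcal{Z}$ exhausts $\Spdf{R^+}$. By formal separatedness $\Delta$ is a formally adic closed immersion, so by \Cref{pro:preservedbycomposit} the pullback $\mathcal{Z}\hookrightarrow\Spdf{R^+}$ is a formally adic closed immersion as well. The hypothesis $t_1|_{\Spd{R}}=t_2|_{\Spd{R}}$ says precisely that the open immersion $\Spd{R}\hookrightarrow\Spdf{R^+}$ (which corresponds to $\U{0}{\varpi}\subseteq\Spor{R^+}$ for a pseudo-uniformizer $\varpi\in R^+$, via \Cref{pro:strongtopoonSpo}) factors through $\mathcal{Z}$. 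Upgrading this to $\mathcal{Z}=\Spdf{R^+}$ would force $(t_1,t_2)$ to factor through $\Delta$, yielding $t_1=t_2$.

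By \Cref{lem:formallyclosedisZariski} there is an open ideal $J\subseteq R^+$ with $(\mathcal{Z}\red)^\diamond=\mrm{Spec}(R^+/J)^\diamond$, and formal adicness makes the diagram with vertices $(\mathcal{Z}\red)^\diamond$, $(\Spdf{R^+}\red)^\diamond$, $\mathcal{Z}$, $\Spdf{R^+}$ Cartesian. Hence $\mathcal{Z}=\Spdf{R^+}$ will follow once we check $(\mathcal{Z}\red)^\diamond=(\Spdf{R^+}\red)^\diamond=\mrm{Spec}(\ovr{R^+})^\diamond$. Since $(-)^\diamond$ factors through perfection and perfection kills nilpotent ideals, and since $J$ being open automatically gives $\sqrt{J}\supseteq\sqrt{\varpi R^+}$, this reduces to the reverse containment $J\subseteq\sqrt{\varpi R^+}=(R^+)^{\circ\circ}$.

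To produce this ideal-theoretic containment, I would exploit the generization pattern of the olivine spectrum (\Cref{pro:generizationpattern}). Every rank-one analytic point $x\in|\Spa{R}|\subseteq\Spor{R^+}$ is a formal point in the olivine sense (d-analytic and bounded), and its formal specialization $x_{\mrm{for}}$ is the discrete point with $\mathbf{supp}(x_{\mrm{for}})=\mathbf{def}(x)$; for rank-one $x$, this equals the classical specialization prime $\mrm{sp}_R(x)$ of \Cref{defi:specializationTate}. By \Cref{lem:samegenripattern} formal specializations are topological specializations in $|\Spdf{R^+}|$, so $x_{\mrm{for}}\in\overline{\{x\}}\subseteq|\mathcal{Z}|$ since $|\mathcal{Z}|$ is closed. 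Unwinding the formal-adic identification $|(\mathcal{Z}\red)^\diamond|=|\mathcal{Z}|\cap|(\Spdf{R^+}\red)^\diamond|$ in the olivine picture converts the membership $x_{\mrm{for}}\in|\mathcal{Z}|$ into the ideal-theoretic statement $J\subseteq\mathbf{supp}(x_{\mrm{for}})=\mrm{sp}_R(x)$. Surjectivity of the Tate-pair specialization map (\Cref{pro:specialisniceforTate}) says these primes exhaust all $\mathfrak{p}\subseteq R^+$ with $\mathfrak{p}\supseteq(R^+)^{\circ\circ}$, so
\[
J\subseteq\bigcap_{\mathfrak{p}\supseteq(R^+)^{\circ\circ}}\mathfrak{p}=(R^+)^{\circ\circ},
\]
as required.

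The main obstacle I expect is the olivine bookkeeping in the third paragraph: correctly identifying the support of $x_{\mrm{for}}$ (which is why I restrict to rank-one $x$, where $\mathbf{def}(x)$ coincides with $\{a\in R^+:|a|_x<1\}$, in contrast with higher-rank points where these two ideals can differ), and carefully using formal adicness of $\mathcal{Z}\hookrightarrow\Spdf{R^+}$ to turn the closed-set containment $\overline{|\Spd{R}|}\subseteq|\mathcal{Z}|$ into a concrete bound on $J$ inside the non-analytic locus of $\Spor{R^+}$. Once these topological identifications are pinned down, surjectivity of $\mrm{sp}_R$ closes the argument cleanly.
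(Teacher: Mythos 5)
Your construction coincides with the paper's up to the point where the equalizer $\mathcal{Z}\subseteq\Spdf{R^+}$ is formed and recognized as a formally adic closed immersion containing $\Spd{R}$; after that you take a genuinely different route. The paper first treats $\Hub{R}=\Hub{C}$ an affinoid field, where $\ovr{C^+}$ is a valuation ring, so a closed subscheme of $\mrm{Spec}(\ovr{C^+})$ containing the generic point (which is exactly the formal specialization of $\mrm{Spa}(C,O_C)$) must be all of it; it then bootstraps to general $R$ by pulling $\mathcal{Z}$ back along the canonical formalizations $\Spdf{C^+}\to\Spdf{R^+}$ of \emph{all} residue fields, of all ranks, and invoking surjectivity of $\mrm{sp}_R$ over all of $|\Spa{R}|$. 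You instead stay over $R^+$, extract the open ideal $J$ from \Cref{lem:formallyclosedisZariski}, and try to bound $J$ using formal specializations of rank-one points together with \Cref{lem:samegenripattern}. Up to the implicit use of \Cref{pro:Joaoetal} (a closed subsheaf with full underlying space is everything), this is fine.

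The gap is in your last inference. The primes $\mrm{sp}_R(x)$ with $x$ of rank one do \emph{not} exhaust the primes of $R^+$ containing $(R^+)^{\circ\circ}$, and \Cref{pro:specialisniceforTate} does not say they do: its surjectivity runs over all points of $\Spa{R}$, including higher-rank ones, and your olivine mechanism can only ever reach the formal specialization of the rank-one generization — for any analytic point of $\Spor{R^+}$ the deformation ideal $\mathbf{def}(x)=\{a\in R^+\mid |a|^a_x<1\}$ is computed from the rank-one component $|\cdot|^a_x$, so restricting to rank-one $x$ changes nothing and the deeper primes are never produced. Concretely, for $\Hub{R}=\Hub{C}$ with $C^+\subsetneq O_C$ of rank $\geq 2$, the closed point of $\mrm{Spec}(\ovr{C^+})$ is $\mrm{sp}_{C}$ of the higher-rank point only, so your "exhaustion" claim fails there. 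Fortunately the containment you actually need, $J\subseteq (R^+)^{\circ\circ}$, is still true and can be repaired in one line: if $a\in J$ then $|a|_x<1$ at every rank-one point; every point $y$ of the Tate space $\Spa{R}$ has a rank-one vertical generization, and $|a|_x<1$ there forces $|a|_y<1$, so $a\in\mrm{sp}_R(y)$ for \emph{all} $y$; now surjectivity of $\mrm{sp}_R$ over all points gives $a\in\bigcap_{\mathfrak{p}\supseteq (R^+)^{\circ\circ}}\mathfrak{p}=(R^+)^{\circ\circ}$. With that patch (or by imitating the paper's residue-field bootstrap) your argument closes; as written, the final step is false and the proof is incomplete.
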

\begin{proof}
	Pick two maps $g_i:\Spdf{R^+}\to \F$ that agree on $\Spa{R}$. Consider $(g_1,g_2):\Spdf{{R}^+}\to \F\times \F$, and the pullback along $\Delta_\F:\F\to \F\times \F$ to get $\G\subseteq \Spdf{R^+}$ a formally closed subsheaf. We prove $\G=\Spdf{R^+}$, it suffices to show $|\G|=|\Spdf{R^+}|$. Moreover, since $|\Spa{R}|\subseteq |\G|$ and $|\Spdf{R^+}|=|\Spa{R}|\cup |\mrm{Spec}(\ovr{R^+})^\diamond|$  it suffices to prove $|(\G\red)^\diamond|= |\mrm{Spec}(\ovr{R^+})^\diamond|$. 
	We first assume $\Hub{R}=\Hub{C}$ for $C$ is a nonarchimedean field and $C^+\subseteq C$ an open and bounded valuation subring. Let $k^+=\ovr{C^+}$ and $k=\mrm{Frac}(k^+)$, then $\mrm{Spec}(k^+)=\Spdf{C^+} \red$ and by \Cref{lem:formallyclosedisZariski} $(\G\red)^\diamond=\mrm{Spec}(k^+/I)^\diamond$ for some ideal $I$. Since $\Spa{C}\subseteq \G$ and $|\G|$ is closed, $|\G|$ contains the formal specialization of $\mrm{Spa}(C,O_C)$, which is the image of $\mrm{Spec}(k)^\diamond$. By formal adicness $|(\G\red)^\diamond|=|\G|\cap |\mrm{Spec}(k^+)^\diamond|$ and we can conclude that $\mrm{Spec}(k)^\diamond\subseteq (\G\red)^\diamond$. This proves $I=\{0\}$ and $(\G\red)^\diamond=\mrm{Spec}(k^+)^\diamond$ in this case. 

	In the general case, for every map $\Spa{C}\to \Spa{R}$ the canonical formalization $\Spdf{C^+}\to \Spdf{R^+}$ factors through $\G$. In particular, after taking reduction, the map $\mrm{Spec}(k^+)\to \mrm{Spec}(\ovr{R^+})$ factors through $\G\red$. This says that $|\G\red|$ contains every point of $|\mrm{Spec}(\ovr{R^+})|$ in the image of the specialization map. By \Cref{lem:formallyclosedisZariski} $\G\red\to \mrm{Spec}(\ovr{R^+})$ is a closed immersion and by \Cref{pro:specialisniceforTate} the specialization map is surjective, these two imply that $\G\red=\mrm{Spec}(\ovr{R^+})$. 
\end{proof}
\begin{pro}
	\label{pro:v-formalizing+separatedstableprod}
	The following statements hold:
\begin{enumerate}
\item Given two maps of v-sheaves $\F\to \Hi$, $\G\to \Hi$ if $\F$ and $\G$ are v-formalizing and $\Hi$ is formally separated then $\F\times_\Hi \G$ is v-formalizing.
\item	The subcategory of v-sheaves that are v-formalizing and formally separated is stable under fiber product and contains $\Zpd$.
\end{enumerate}
\end{pro}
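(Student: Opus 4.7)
The plan is as follows. For (1), fix a Tate Huber pair $\Hub{A}$ and a map $f:\Spd{A}\to \F\times_\Hi\G$, with projections $f_\F:\Spd{A}\to \F$ and $f_\G:\Spd{A}\to \G$. By v-formalizability of $\F$ and $\G$, I can find v-covers $g_i:\Spa{B_i}\to \Spa{A}$ for $i=1,2$ together with formalizations $t_\F^{(1)}:\Spdf{B_1^+}\to \F$ and $t_\G^{(2)}:\Spdf{B_2^+}\to \G$ of $f_\F\circ g_1^\dia$ and $f_\G\circ g_2^\dia$ respectively. The key construction is to pass to a common refinement $\Spa{B}:=\Spa{B_1}\times_{\Spa{A}}\Spa{B_2}$ (which exists in affinoid perfectoid spaces via a completed tensor product, and is a v-cover of $\Spa{A}$ by composition and pullback of v-covers). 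Using the continuous ring maps $B_i^+\to B^+$ induced by the projections, the two given formalizations pull back to maps $t_\F,t_\G:\Spdf{B^+}\to \F,\G$ that formalize $f_\F\circ g^\dia$ and $f_\G\circ g^\dia$, where $g:\Spa{B}\to \Spa{A}$.

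The crucial step is then an application of Proposition~\ref{pro:separateduniqueform}: the two compositions $\Spdf{B^+}\to \F\to \Hi$ and $\Spdf{B^+}\to \G\to \Hi$ are both formalizations of the same Tate map $\Spa{B}\to \Hi$, so by formal separatedness of $\Hi$ they must coincide. Hence $(t_\F,t_\G)$ assembles into a single formalization $\Spdf{B^+}\to \F\times_\Hi\G$ of $f\circ g^\dia$, which proves (1).

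For (2), the fact that $\Zpd$ lies in the subcategory is immediate from Lemma~\ref{lem:Zpdvformalizing}(1) and Lemma~\ref{lem:Zpdseparated}. Given $\F,\G,\Hi$ in the subcategory with maps $\F\to\Hi\gets\G$, part (1) already yields v-formalizability of $\F\times_\Hi\G$, so I only need to check formal separatedness. I set $X=\F\times_\Hi\G$, $Y=\F\times\G$, and let $j:X\to Y$ denote the canonical map. Since $\Delta_\Hi$ is formally closed and $j$ is its pullback along $Y\to \Hi\times\Hi$, Proposition~\ref{pro:preservedbycomposit} shows $j$ is formally closed. Next, $Y$ is formally separated: the diagonal $\Delta_Y$ factors as a composition of pullbacks of $\Delta_\F$ and $\Delta_\G$ (each of which is formally closed by hypothesis), and formal closedness is preserved by composition and pullback. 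Finally, since $j$ is a monomorphism, the absolute diagonal $\Delta_X$ identifies with the pullback of $\Delta_Y$ along $j\times j:X\times X\to Y\times Y$; one last application of Proposition~\ref{pro:preservedbycomposit} finishes the argument.

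The only genuine subtlety is the appeal to Proposition~\ref{pro:separateduniqueform} in part (1), which is precisely where the formal separatedness hypothesis on $\Hi$ is used to glue two a priori independent formalizations; everything else is formal categorical manipulation. For part (2), the one statement worth verifying carefully is the identification of $\Delta_X$ with the pullback of $\Delta_Y$ along $j\times j$ when $j$ is a monomorphism, which is a routine check on functors of points using that $j(x_1)=j(x_2)$ implies $x_1=x_2$.
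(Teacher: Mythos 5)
Your proposal is correct, and part (1) is essentially the paper's own argument: pass to a common cover on which both projections formalize, then invoke uniqueness of formalizations into the formally separated $\Hi$ (\Cref{pro:separateduniqueform}) to glue the two formalizations into one map $\Spdf{B^+}\to \F\times_\Hi\G$. The only cosmetic difference is how the common cover is produced: you take $\Spa{B_1}\times_{\Spa{A}}\Spa{B_2}$ (which, since $B_1,B_2$ need not be perfectoid, should be read as the completed tensor product Huber pair, or one can simply refine sequentially using that formalizability is stable under precomposition, \Cref{rem:propertiesofformalizing}(5)); the paper leaves this step implicit. For part (2) your route genuinely differs from the paper's in how formal adicness of the diagonal is verified. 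The paper appeals to \Cref{rem:diamondofreduced}: the diagonal of $X=\F\times_\Hi\G$ is formally adic iff $(X\red)^\diamond\to X$ is injective, and since $(-)\red$ and $(-)^\diamond$ commute with finite limits this becomes the statement that $(\F\red)^\diamond\times_{(\Hi\red)^\diamond}(\G\red)^\diamond$ is a subsheaf of $\F\times_\Hi\G$, which follows from injectivity of the three adjunction maps. You instead argue diagrammatically: $j:X\to \F\times\G$ is a monomorphism (pullback of $\Delta_\Hi$), $\Delta_{\F\times\G}$ is formally closed because it factors as a composition of pullbacks of $\Delta_\F$ and $\Delta_\G$, and $\Delta_X$ is the pullback of $\Delta_{\F\times\G}$ along $j\times j$, so everything follows from stability of formally closed maps under pullback and composition (\Cref{pro:preservedbycomposit} for the formally adic part, the standard stability of closed immersions for the rest). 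Both arguments are valid; yours handles closedness and formal adicness of $\Delta_X$ in one stroke and avoids any computation with the reduction functor, while the paper's is shorter given that the injectivity criterion of \Cref{rem:diamondofreduced} is already available.
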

\begin{proof}
	Given a map $\Spa{A}\to \F\times_\Hi \G$ we can find a cover $\Spa{B}\to \Spa{A}$ for which the compositions with the projections to $\F$ and $\G$ are both formalizable. By formal separatedness any pair of choices of formalizations $\Spdf{B^+}\to \G$ and to $\Spdf{B^+}\to \F$ define the same formalization to $\Hi$ and a map to $\F\times_\Hi \G$. The second claim follows from the stability of separatedness by basechange and composition, from \Cref{lem:Zpdseparated} and from \Cref{rem:diamondofreduced}. Indeed, we need to prove that $(\F\red)^\diamond\times_{(\Hi\red)^\diamond} (\G\red)^\diamond$ is a subsheaf of $\F\times_\Hi \G$, but this follows from knowing that $\F\red$ (respectively $\Hi$, $\G$) is a subsheaf of $\F$ (respectively $\Hi$, $\G$).  
\end{proof}
\begin{defi}
	\label{defi:specializingsheaf}
	Let $\F\in\topPerf$, we say it is \textit{specializing} if it is formally separated and v-locally formal. 
\end{defi}
\begin{defi}
	\label{defi:specializationforvsheaf}
	Let $\F$ be a specializing v-sheaf and let $f:\coprod_{i\in I}\Spdf{B_i}\to \F$ be a surjective map. The specialization map for $\F$, denoted $\mrm{sp}_{\F}$, is the unique map $\mrm{sp}_{\F}:|\F|\to |\F\red|$ making the following diagram commutative:
	\begin{center}
		\begin{tikzcd}
			\coprod_{i\in I} \mid  \Spdf{B_i} \mid \ar{r}{f}\ar{d}{\mrm{sp}_{B_i}} & \mid \F \mid \ar{d} \\
			\coprod_{i\in I} \mid  \mrm{Spec}(\ovr{(B_i)}) \mid  \ar{r}{|f\red|} & \mid \F\red \mid  
		\end{tikzcd}
	\end{center}
\end{defi}
\begin{rem}
	\label{rem:constructionofpoints}
	We use \Cref{pro:separateduniqueform} to prove that this map of sets is well defined and does not depend on the choices taken. Indeed, given $[x]\in |\F|$ we take a formalizable representative $x:\Spa{K_x}\to \F$. Consider, its unique formalization $\Spdf{K^+_x}\to \F$ and apply reduction to obtain $\mrm{Spec}(\ovr{(K^+_x)})\to \F^{\mrm{red}}$. The maximal ideal of $\ovr{(K^+_x)}$ maps to $\mrm{sp}_{\F}([x])$. 
\end{rem}

\begin{pro}
	\label{pro:specializationvsheaves}
	For any specializing v-sheaf $\F$ the specialization map $\mrm{sp}_{\F}:|\F|\to |\F\red|$ is continuous. Moreover, this construction is functorial in the category of specializing v-sheaves. 	
\end{pro}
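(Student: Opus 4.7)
The plan is to verify that $\mrm{sp}_\F$ is well-defined as a map of sets, then establish continuity by refining the v-formal cover to a diamond cover, and finally check functoriality pointwise. For well-definedness, I would expand \Cref{rem:constructionofpoints}: any $[x]\in|\F|$ admits a formalizable representative $\iota_x:\Spa{K_x}\to\F$ (after passing to a v-cover of an arbitrary representative and restricting to a suitable point), and formal separatedness together with \Cref{pro:separateduniqueform} guarantees that its formalization $\tilde\iota_x:\Spdf{K_x^+}\to\F$ is unique. Consequently, the image in $|\F\red|$ of the closed point of $\mrm{Spec}(\ovr{K_x^+})$ under $\tilde\iota_x\red$ depends only on $[x]$, independently of the cover $f$; this makes the diagram in \Cref{defi:specializationforvsheaf} commutative for any cover $f$.

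For continuity, I would fix the surjective cover $f:Y=\coprod_i\Spdf{B_i}\to\F$ and refine it to a diamond cover, since $|\F|$ carries the quotient topology from any surjective morphism of v-sheaves out of a diamond. For each $i$ set $Z_i:=(\Spf{B_i\pot{t_i}}^{t_i\neq 0})^\dia$, a locally spatial diamond with canonical projection $Z_i\to\Spdf{B_i}$; this projection is a v-sheaf surjection since any $\Spa{R}\to\Spdf{B_i}$ with $\Spa{R}\in\mrm{Perf}$ lifts to $Z_i$ by sending $t_i$ to a pseudo-uniformizer of the untilt $R^\sharp$. The coproduct $Z:=\coprod_i Z_i$ is again a (locally spatial) diamond, and the composition $g:Z\to Y\xrightarrow{f}\F$ is surjective, so $|g|:|Z|\to|\F|$ is a quotient map. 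Continuity of $\mrm{sp}_\F$ then reduces to that of
\[
\mrm{sp}_\F\circ|g|=|f\red|\circ\Bigl(\coprod_i\mrm{sp}_{B_i}\Bigr)\circ|Z\to Y|,
\]
whose factors are all continuous: $|Z\to Y|$ comes from a morphism of pre-adic spaces, $|f\red|$ is induced by a morphism of small scheme-theoretic v-sheaves, and each $\mrm{sp}_{B_i}:\Spor{B_i}\to|\mrm{Spec}(\ovr{B_i})|$ is continuous because the preimage of a basic open $D(\bar b)$ is the open subset $\U{b}{1}\cap\U{1}{b}\subseteq\Spor{B_i}$.

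Functoriality is the identity $\mrm{sp}_\G\circ|\phi|=|\phi\red|\circ\mrm{sp}_\F$ for any morphism $\phi:\F\to\G$ of specializing v-sheaves, which I would verify pointwise: for $[x]\in|\F|$ with formalizable representative $\iota_x:\Spa{K_x}\to\F$ and unique formalization $\tilde\iota_x:\Spdf{K_x^+}\to\F$, the composition $\phi\circ\tilde\iota_x$ is the unique formalization of $\phi\circ\iota_x$ by \Cref{pro:separateduniqueform}. Applying reduction and tracking the closed point of $\mrm{Spec}(\ovr{K_x^+})$ through the factorization $\mrm{Spec}(\ovr{K_x^+})\to\F\red\to\G\red$ gives the same element as $\mrm{sp}_\G(|\phi|([x]))$. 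The main obstacle I expect is the verification that $Z$ is indeed a diamond and that $|g|$ is a quotient map in Scholze's sense; once those foundational points are in place, the rest is an assembly of continuities already established in preceding sections.
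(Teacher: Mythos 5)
Your proposal is correct and takes essentially the same route as the paper: well-definedness and functoriality come from uniqueness of formalizations (\Cref{pro:separateduniqueform}, \Cref{rem:constructionofpoints}), and continuity from the commutative square over a formalizing cover $\coprod_i\Spdf{B_i}\to\F$ together with the quotient topology on $|\F|$ and continuity of $\mrm{sp}_{B_i}$ and of $|f\red|$ (\Cref{pro:topologyschematicvsheaf}). Your extra step of refining to the punctured-disc diamond cover $Z_i=(\Spf{B_i\pot{t_i}}^{t_i\neq 0})^\dia$ merely makes explicit the quotient-map property of $|f|$ that the paper asserts directly (and this cover is exactly the one the paper itself uses, e.g.\ in \Cref{pro:strongtopoonSpo} and \Cref{pro:bounededsialwaysolivine}), so it is a harmless elaboration rather than a different argument.
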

\begin{proof}
	Functoriality follows from uniqueness of formalizations, functoriality of the reduction functor and \Cref{rem:constructionofpoints}.
	For continuity, take a cover $f:\coprod_{i\in I} \Spdf{R^+_i}\to\F$. We get the following commutative diagram:
\begin{center}
\begin{tikzcd}

	\mid \coprod_{i\in I} \Spdf{R^+_i}\mid \arrow{r}{f} \arrow{d}{\mrm{sp}_{R^+_i}} & \mid \F \mid \arrow{d}{\mrm{sp}_{\F}} \\
	\mid \mrm{Spec}(\ovr{(R^+_i)}) \mid \arrow{r}{f\red} & \mid \F\red\mid
\end{tikzcd}
\end{center}
Now, $f\red$ is continuous by \Cref{pro:topologyschematicvsheaf}, $f$ is continuous and a quotient map, and the maps $\mrm{sp}_{R^+_i}$ are continuous by \Cref{pro:specialisniceforTate}. Since the diagram is commutative, the map $\mrm{sp}_{\F}$ is also continuous.
\end{proof}

\subsection{Pre-kimberlites, formal schemes and formal neighborhoods}
\begin{defi}
	\label{defi:pre-Kimberlites}
	Let $\F$ be a specializing v-sheaf. We say $\F$ is a \textit{prekimberlite} if:
	\begin{enumerate}
	\item $\F\red$ is represented by a scheme. 
	\item The map $(\F\red)^\diamond \to \F$ coming from adjunction is a closed immersion.
\end{enumerate}
If $\F$ is a prekimberlite, we let the \textit{analytic locus} be $\F^{\mrm{an}}=\F\setminus (\F\red)^\diamond$. 
\end{defi}
 


In what follows, we prove that the v-sheaf associated to separated formal schemes are prekimberlites.  
For this we fix a convention of what we mean by a ``formal scheme''. We follow \cite[\S 2.2]{SW}.
\begin{conv}
	\label{conv:formalschemes}
	Denote by $\mrm{Nilp}_\Zp$ the category of algebras in which $p$ is nilpotent, and endow $\mrm{Nilp}^\mrm{op}_\Zp$ with the structure of a site by giving it the Zariski topology. By a formal scheme $\frak{X}$ over $\Zp$ we mean a Zariski sheaf on $\mrm{Nilp}^\mrm{op}_\Zp$ which is Zariski locally of the form $\mrm{Spf}(A)$. Here $A$ is a topological ring given the $I$-adic topology for a finitely generated ideal of $A$ containing $p$, and $\mrm{Spf}(A)$ denotes the functor $\mrm{Spec}(B)\mapsto \varinjlim_n \mrm{Hom}(A/I^n,B)$.

	For a formal scheme $\frak{X}$ over $\Zp$ we let $\ovr{\frak{X}}$ denote its reduction in the sense of formal schemes (\cite[Tag 0AIN]{Stacks}). Recall that this is a sheaf in $\mrm{Nilp}^\mrm{op}_\Zp$ which is representable by a scheme. Moreover, the map $\ovr{\frak{X}}\to \frak{X}$ is relatively representable in schemes, it is a closed immersion and for any open $\mrm{Spf}(A)\subseteq \frak{X}$ the pullback to $\ovr{\frak{X}}$ is given by the reduced subscheme of $\mrm{Spec}(A/I)$ (for an ideal of definition $I\subseteq A$).

	We say that $\frak{X}$ is separated if $\ovr{\frak{X}}$ is a separated scheme (\cite[Tag 0AJ7]{Stacks}).
\end{conv}

Recall the following result of Scholze and Weinstein.\footnote{What is called adic spaces in \cite{SW} is what we call pre-adic spaces here and in \cite{Ber}.}
\begin{prop}\textup{(\cite[Proposition 2.2.1]{SW})}
	\label{pro:SWfullyfaithfulformal}
	The functor $\mrm{Spf}(A)\mapsto \mrm{Spa}(A,A)$ extends to a fully faithful functor $\frak{X}\mapsto \frak{X}^{\mrm{ad}}$ from formal schemes over $\Zp$ as in \Cref{conv:formalschemes} to the category of pre-adic spaces. 
\end{prop}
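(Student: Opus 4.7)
The plan is to verify the statement first on affine formal schemes and then globalize by gluing. On the affine side, given $\mathfrak{X} = \mathrm{Spf}(A)$ with $A$ an $I$-adic ring with $I$ finitely generated containing $p$, set $\mathfrak{X}^{\mathrm{ad}} := \mathrm{Spa}^{\mathrm{ind}}(A, A)$, viewed as a pre-adic space. Since $A = A^+$, the ``integral'' condition is automatic, so $\mathrm{Hom}_{\mathrm{FSch}}(\mathrm{Spf}(A), \mathrm{Spf}(B))$ — which by definition consists of continuous ring maps $B \to A$ — agrees with $\mathrm{Hom}_{\mathrm{Huber}}((B,B), (A,A))$. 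Combined with the universal property of $\mathrm{Spa}^{\mathrm{ind}}$ in the category of pre-adic spaces (applied to maps whose target is $\mathrm{Spa}^{\mathrm{ind}}(B,B)$), this gives the fully faithful identification on the full affine subcategory; one must additionally verify that the map $B \to A$ arising from a morphism of pre-adic spaces is continuous, which follows because pre-adic morphisms send topologically nilpotent elements to topologically nilpotent elements and $J \subseteq B$ is finitely generated.

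Next, I would globalize the construction. Given an arbitrary separated formal scheme $\mathfrak{X}$ and a Zariski open cover $\{\mathrm{Spf}(A_i)\}$ of $\mathfrak{X}$ with pairwise intersections covered by $\{\mathrm{Spf}(A_{ij,k})\}$, I would first check that whenever $\mathrm{Spf}(A_f) \hookrightarrow \mathrm{Spf}(A)$ is a basic open immersion (for $f \in A$, with $A_f$ the $I$-adic completion of $A[1/f]$), the induced map $\mathrm{Spa}^{\mathrm{ind}}(A_f, A_f) \to \mathrm{Spa}^{\mathrm{ind}}(A,A)$ is an open immersion onto the rational subset $\{|f| = 1\}$. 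Given this, the $\mathrm{Spa}^{\mathrm{ind}}(A_i, A_i)$ glue along their pairwise intersections to produce a pre-adic space $\mathfrak{X}^{\mathrm{ad}}$. Independence of the cover follows because two covers can be refined by a common one using the basic-open compatibility above.

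For full faithfulness in the non-affine case, the essential input is that a morphism $\mathfrak{X}^{\mathrm{ad}} \to \mathfrak{Y}^{\mathrm{ad}}$ can be reconstructed from its restriction to an affine open cover of the source and composition with an affine open cover of the target. Concretely, for a morphism $f: \mathfrak{X}^{\mathrm{ad}} \to \mathfrak{Y}^{\mathrm{ad}}$ and an affine open $\mathrm{Spf}(B) \subseteq \mathfrak{Y}$, the preimage $f^{-1}(\mathrm{Spa}^{\mathrm{ind}}(B,B))$ is open in $\mathfrak{X}^{\mathrm{ad}}$ and contains all points of $\mathfrak{X}$ that support-map to $\mathrm{Spf}(B) \subseteq \mathfrak{X}$; using the support map $|\mathrm{Spa}(A,A)| \to |\mathrm{Spf}(A)|$ (which is continuous and surjective, with the preimage of a Zariski open $\mathrm{Spf}(A_f)$ equal to $\{|f|=1\}$), this open pulls back to an open subset of $\mathfrak{X}$ that is a union of basic opens, reducing to the affine case already handled.

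The main obstacle is the gluing step, specifically the verification that basic open immersions $\mathrm{Spf}(A_f) \hookrightarrow \mathrm{Spf}(A)$ translate to open immersions of pre-adic spaces onto the expected rational domain, together with the compatibility of the support maps with the Zariski open cover structure. Once this is in place (essentially a calculation with the continuous ring map $A \to A_f$ and the fact that $|\mathrm{Spa}(A,A)|$ has a subbasis of rational localizations of the form $\{|g| \leq |f|\}$ and $\{|g| < 1\}$), the remainder of the argument is formal: the functoriality and full faithfulness assertions reduce Zariski locally to the affine case, where everything is governed by the universal property of $\mathrm{Spa}^{\mathrm{ind}}$ and the identification of continuous $B \to A$ with morphisms of Huber pairs $(B,B) \to (A,A)$.
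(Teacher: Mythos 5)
The paper does not prove this proposition at all: it is imported directly from Scholze--Weinstein (\cite[Proposition 2.2.1]{SW}), so there is no internal argument to compare yours against. Your sketch is essentially the standard argument behind that citation, and its skeleton is sound: the affine case is the identification of continuous ring maps $B\to A$ with morphisms of Huber pairs $(B,B)\to (A,A)$ together with the universal property of $\mathrm{Spa}^{\mathrm{ind}}$, and your basic-open computation is correct --- the image of $\mathrm{Spa}(A_f,A_f)\to \mathrm{Spa}(A,A)$ is the rational subset $\{|f|=1\}$ (the inequality $|f|\leq 1$ being automatic), whose rational localization is exactly the $I$-adic completion $A_f$ of $A[1/f]$.

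Two points need tightening. First, what you call the support map should be the specialization map $x\mapsto \{a\in A: |a(x)|<1\}$; the genuine support map $x\mapsto \{a: |a(x)|=0\}$ has preimage of $D(f)$ equal to $\{|f|\neq 0\}$, not $\{|f|=1\}$. Second, and more substantively, in the full-faithfulness step the claim that $f^{-1}(\mathrm{Spa}^{\mathrm{ind}}(B,B))$ ``pulls back to an open subset of $\mathfrak{X}$ that is a union of basic opens'' is not literally true: an open subset of $\mathrm{Spa}(A,A)$ need not be a union of formal opens $\{|s|=1\}$ (the analytic point of $\mathrm{Spa}(\Zp,\Zp)$ is open but contains no nonempty formal open). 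What actually makes the reduction work is the weaker statement that every open neighborhood of the trivial-valuation point $\tilde{x}$ attached to $x\in \mathrm{Spf}(A)$ contains a full formal open $\{|s|=1\}=\mathrm{sp}^{-1}(D(s))$ with $s\notin \mathfrak{p}_x$: since every valuation on $\mathrm{Spa}(A,A)$ satisfies $|\cdot|\leq 1$, any rational subset $U(\tfrac{T}{s})$ containing $\tilde{x}$ automatically contains all of $\{|s|=1\}$. Because each $\tilde{x}$ lies in some $f^{-1}(\mathrm{Spa}^{\mathrm{ind}}(B_j,B_j))$, this lets you cover $\mathfrak{X}$ by affine formal opens $\mathrm{Spf}(A_i)$ with $f\bigl((\mathrm{Spf}\,A_i)^{\mathrm{ad}}\bigr)\subseteq (\mathrm{Spf}\,B_{j(i)})^{\mathrm{ad}}$, apply the affine case to each, and glue using its uniqueness. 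With that correction your proposal is a complete proof of the cited statement.
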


\begin{prop}
	\label{exa:formalispre-Kimberlite}
	If $\frak{X}$ is a separated formal scheme over $\Zp$, then $(\frak{X}^{\mrm{ad}})^\diamondsuit$ is a prekimberlite.
\end{prop}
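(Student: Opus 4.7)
The plan is to verify the three defining conditions of a prekimberlite for $\F := (\frak{X}^{\mrm{ad}})^\diamondsuit$: being specializing (i.e.\ formally separated and v-locally formal), having $\F\red$ representable by a scheme, and having $(\F\red)^\diamond \to \F$ a closed immersion. All three are local in nature, so I will first reduce to the affine case via a Zariski cover $\{\Spf(A_i)\}$ of $\frak{X}$ with each $A_i$ an $I_i$-adic ring with $p\in I_i$, and then use that on each piece $\Spdf{A_i}$ the needed statements either follow from the preceding machinery (notably \Cref{pro:tworeds}, \Cref{pro:twoadics}, \Cref{lem:Zpdvformalizing}, \Cref{pro:formallypadicgivesformaldiagona}) or from \Cref{pro:SWfullyfaithfulformal} combined with the behavior of $\diamondsuit$.

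First I would verify that $\F$ is v-locally formal. The cover $\{\Spf(A_i)\}$ induces an open cover $\{\Spa(A_i,A_i)\}$ of $\frak{X}^{\mrm{ad}}$ and hence a surjective v-cover $\coprod_i \Spdf{A_i}\to \F$ by formal Huber pairs, so $\F$ is v-locally formal by \Cref{lem:Zpdvformalizing}. Next, since locally $\Spf(A_i)\to \Spf(\Zp)$ is adic (as $p$ lies in $I_i$), each $\Spdf{A_i}\to \Zpd$ is formally adic by \Cref{pro:twoadics}; since this can be checked v-locally, $\F\to \Zpd$ is formally $p$-adic in the sense of \Cref{defi:p-adic}. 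By \Cref{pro:formallypadicgivesformaldiagona}, the diagonal $\F\to \F\times\F$ is then formally adic. To finish formal separatedness I must show this diagonal is a closed immersion: separatedness of $\frak{X}$ in the sense of \Cref{conv:formalschemes} means that $\frak{X}\to \frak{X}\times_{\Spf(\Zp)}\frak{X}$ is a closed immersion of formal schemes (locally corresponding to the surjection $A_i\hat{\otimes}_{\Zp}A_i\twoheadrightarrow A_i$), and I would transport this via \Cref{pro:SWfullyfaithfulformal} to a closed immersion of pre-adic spaces, then v-locally check that $\Spdf{A_i}\to \Spdf{A_i\hat{\otimes}_{\Zp}A_i}$ is a closed immersion of v-sheaves by testing against strictly totally disconnected spaces.

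For the reduction, I would compute $\F\red$ v-locally using \Cref{pro:tworeds}: on each affine piece $(\Spdf{A_i})\red = \mrm{Spec}(\ovr{A_i})$, where $\ovr{A_i} = (A_i/A_i\cdot A_i^{\circ\circ})^{\mrm{perf}}$ is the perfection of the usual reduced subscheme of $\Spec(A_i/I_i)$. These local reductions glue (since the reduction functor preserves Zariski open immersions among these affine pieces, and $\ovr{\frak{X}}$ is a genuine separated scheme by \Cref{conv:formalschemes}) to identify $\F\red$ with the perfection $\ovr{\frak{X}}^{\mrm{perf}}$, which is indeed represented by a scheme. For the last condition, $(\F\red)^\diamond\to \F$ can be checked locally to be a closed immersion: on each piece it becomes $\mrm{Spec}(\ovr{A_i})^\diamond \to \Spdf{A_i}$, which is closed because it is cut out by the open ideal $A_i\cdot A_i^{\circ\circ}\subseteq A_i$ (in the sense of \Cref{lem:formallyclosedisZariski}), and on the topological side the corresponding subset of $\Spor{A_i}$ is the closed subset of non-analytic points.

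The main obstacle I anticipate is the second half of formal separatedness, namely upgrading the classical closed immersion of formal schemes $\frak{X}\hookrightarrow \frak{X}\times_{\Spf(\Zp)}\frak{X}$ to a closed immersion of v-sheaves after applying $\diamondsuit$. The functor $\diamondsuit$ is not a priori known to preserve closed immersions of non-analytic pre-adic spaces, so one must verify this by hand: given a strictly totally disconnected $X=\Spa{R}$ with a map to $\Spdf{A_i\hat{\otimes}_{\Zp}A_i}$, the pullback of $\Spdf{A_i}$ must be representable by a closed immersion of perfectoid spaces. This reduces, via the description in \Cref{lem:discreteperfecthubpairs1} and the olivine spectrum of \S 2, to checking that the surjection $A_i\hat{\otimes}_{\Zp}A_i\twoheadrightarrow A_i$ produces exactly the right Zariski-closed locus in $R^+$, which is where one uses the convention that the diagonal ideal is closed and finitely generated over an ideal of definition.
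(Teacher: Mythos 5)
The main gap is in the formal separatedness step. First, \Cref{conv:formalschemes} defines separatedness of $\frak{X}$ by requiring that $\ovr{\frak{X}}$ be a separated \emph{scheme}; it does not say that $\frak{X}\to\frak{X}\times_{\mrm{Spf}(\Zp)}\frak{X}$ is a closed immersion of formal schemes, so your reinterpretation is itself a claim needing proof (or, as the paper does, one should use $\ovr{\frak{X}}$ directly). Second, and more seriously, testing the diagonal $\F\to\F\times\F$ against a strictly totally disconnected $Z$ does \emph{not} reduce to the affine diagonals $\Spdf{A_i}\to\mrm{Spd}(A_i\hat{\otimes}_{\Zp}A_i,A_i\hat{\otimes}_{\Zp}A_i)$: since $Z$ splits open covers, a map $Z\to\F\times\F$ factors through some $\Spdf{A_i}\times_{\Zpd}\Spdf{A_j}$ with possibly $i\neq j$, and over such a piece the pullback of the diagonal is $(U_i\cap U_j)^{\diamondsuit}$ sitting inside the product of two \emph{different} affines --- this is exactly the global content of separatedness, which your affine computation never touches. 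The paper's proof handles precisely this: it pulls back the diagonal of $\frak{X}$ to get a formal scheme $\frak{Y}$ which is quasicompact because $\frak{X}$ is separated, covers $\frak{Y}$ by finitely many affines, uses \Cref{lem:boundedlocusgivesqcqs} to conclude that $X^{\diamondsuit}\to\Zpd$ is quasiseparated, and only then applies the valuative criterion of separatedness (\cite[Proposition 10.9]{Et}): maps $\mrm{Spa}(K,K^+)\to X^{\diamondsuit}$ are maps $\mrm{Spf}(K^+)\to\frak{X}$, which are pairs $(g_\eta,g_s)$ with $g_\eta:\mrm{Spf}(O_K)\to\frak{X}$ and $g_s:\mrm{Spec}(K^+/K^{\circ\circ})\to\frak{X}_{\mrm{red}}$ agreeing on the overlap, so uniqueness of extensions reduces to the valuative criterion for the separated scheme $\frak{X}_{\mrm{red}}$. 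Without a substitute for this quasicompactness-plus-valuative-criterion argument, the closed-immersion part of formal separatedness is unproven in your proposal.

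There are two secondary issues. Your assertion that formal adicness of $\F\to\Zpd$ ``can be checked v-locally'' is not a lemma in the paper and is not obvious (the reduction functor is a right adjoint and does not visibly localize on the source); in fact you do not need $p$-adicness at all here: once $(\F\red)^{\diamond}\to\F$ is known to be injective --- and the paper identifies it with the closed immersion $((\frak{X}_{\mrm{red}})^{\mrm{ad}})^{\diamondsuit}\to X^{\diamondsuit}$ using \Cref{pro:tworeds} and \Cref{rem:diamondconstr} --- formal adicness of the diagonal is automatic from \Cref{rem:diamondofreduced}. Similarly, the identification $\F\red=(\frak{X}_{\mrm{red}})^{\mrm{perf}}$ is obtained in the paper by applying \Cref{pro:tworeds} to the global pre-adic space rather than by gluing the affine reductions, a gluing you assert but do not justify. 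The v-locally-formal part of your argument is fine and agrees with the paper.
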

\begin{proof}
	Let $X=\frak{X}^{\mrm{ad}}$ and let $W=X^{\mrm{na}}$, then $W=(\frak{X}_{\mrm{red}})^{\mrm{ad}}$. Clearly $X^\diamondsuit$ is v-locally formal. 
	By \Cref{pro:tworeds} we have $(W^\diamondsuit)\red=(X^\diamondsuit)\red$ which is the perfection of $\frak{X}_\mrm{red}$. The adjunction morphism agrees with the map $W^\diamondsuit\to X^\diamondsuit$ which by \Cref{rem:diamondconstr} is a closed immersion. 
	
	The only thing left to prove is that $X^\diamondsuit\to \Zpd$ is separated, we first prove that $X^\diamondsuit$ is quasiseparated. Let $Z=\Spa{R}$ be a strictly totally disconnected space and take a map $f:Z\to X^\diamondsuit\times_\Zpd X^\diamondsuit$. Since $Z$ splits any open cover we may assume that $f$ factors through an open neighborhood of the form $\Spdf{B_1}\times_\Zpd \Spdf{B_2}$ for an open subset $\mrm{Spf}(B_1)\times_{\mrm{Spf}(\Zp)} \mrm{Spf}(B_2)\subseteq \mathfrak{X}\times_{\mrm{Spf}(\Zp)}\frak{X}$. Consider the following basechange diagrams, where $Y=\frak{Y}^{\mrm{ad}}$
	\begin{center}
		\begin{tikzcd}
			\frak{Y}\ar{r}\ar{d} & \mrm{Spf}(B_1)\times_{\mrm{Spf}(\Zp)} \mrm{Spf}(B_2)\ar{d} &
			{Y}\ar{r}\ar{d} & \Spf{B_1}\times_{\Zp} \Spf{B_2}\ar{d} \\

			\frak{X} \ar{r} & \frak{X}\times_{\mrm{Spf}(\Zp)} \frak{X} &
			X \ar{r} & X\times_{\Zp} X
		\end{tikzcd}
	\end{center}
	Since $\frak{X}$ is separated $\frak{Y}$ is quasicompact. This implies that $Y$ admits a finite open cover of the form $\coprod_{i=1}^n \Spf{A_i}\to Y$. Moreover, the diagonal map $X\to X\times_{\Zp} X$ is adic. 
	By \Cref{lem:boundedlocusgivesqcqs} the maps $\Spdf{A_i}\to \Spdf{B_1}\times_\Zp \Spdf{B_2}$ are quasicompact, which proves that $Y^\diamondsuit\to \Spdf{B_1}\times_\Zp \Spdf{B_2}$ and any basechange of it is also quasicompact. Now we may use the valuative criterion of separatedness \cite[Proposition 10.9]{Et}. Given $\mrm{Spa}(K,O_K)\to X^\diamondsuit$ we must show there is at most one extension to $\mrm{Spa}(K,K^+)\to X^\diamondsuit$ where $K^+\subseteq O_K$ is an open and bounded valuation subring. Maps $\mrm{Spa}(K,K^+)\to X^\diamondsuit$ are in bijection with maps $\mrm{Spf}(K^+)\to \frak{X}$. On the other hand, maps $g:\mrm{Spf}(K^+)\to \frak{X}$ are in bijection with pairs $(g_\eta,g_s)$ where $g_\eta:\mrm{Spf}(O_K)\to \frak{X}$, $g_s:\mrm{Spec}(K^+/K^{\circ \circ})\to \frak{X}_\mrm{red}$ and such that $g_\eta=g_s$ when we restrict the maps to $\mrm{Spec}(O_K/K^{\circ \circ})$. 
	At this point we may use the valuative criterion of separatedness of $\frak{X}_\mrm{red}$.
\end{proof}

\begin{defi}
	\label{defi:tubularneigh}
	Let $\F$ be a prekimberlite and let $S\subseteq \F\red$ be a locally closed immersion of schemes. We let $\Tubf{\F}{S}$, the \textit{formal neighborhood} of $S$ on $\F$, be the subsheaf given by the Cartesian diagram:
\begin{center}
\begin{tikzcd}
	\Tubf{\F}{S}  \ar{rr} \ar{d}  & &   \und{\mid S \mid} \ar{d} \\
	\F \ar{r} & \und{\mid \F \mid} \ar{r}{\mrm{sp}_{\F}} &   \und{\mid \F\red \mid}\\
\end{tikzcd}
\end{center}
If $S\subseteq \F\red$ is open we call it \textit{open formal neighborhood}.
\end{defi}
\begin{pro}
	\label{pro:twoformalneighb}
	Suppose $\Huf{A}$ is a formal Huber pair over $\Zp$ with ideal of definition $I$. Let $J\subseteq A$ be a finitely generated ideal containing $I$ and $B$ the completion of $A$ with respect to $J$. The closed immersion of schemes, $S=\mrm{Spec}(\ovr{B})\to \mrm{Spec}(\ovr{A})$, induces an identification $\Tubf{\Spdf{A}}{S}=\Spdf{B}$. 
\end{pro}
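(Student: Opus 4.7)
The plan is to verify the equality $\Spdf{B} = \Tubf{\Spdf{A}}{S}$ as subsheaves of $\Spdf{A}$ by comparing their sections on affinoid perfectoid test objects $\Hub{R}$ in characteristic $p$.

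First I would check that the canonical map $\Spdf{B} \to \Spdf{A}$ factors through $\Tubf{\Spdf{A}}{S}$. By \Cref{pro:tworeds} one has $(\Spdf{B})\red = \mrm{Spec}(\ovr{B}) = S$ (using that $B$ is $J$-adically complete and $J \supseteq I$), and by functoriality of the specialization map (\Cref{pro:specializationvsheaves}) the composite $|\Spdf{B}| \to |\Spdf{A}| \to |\mrm{Spec}(\ovr{A})|$ factors as $|\Spdf{B}| \xrightarrow{\mrm{sp}} |S| \hookrightarrow |\mrm{Spec}(\ovr{A})|$, hence lands in $|S|$. This yields the factorization through the fiber product defining $\Tubf{\Spdf{A}}{S}$; moreover, density of $A$ in $B$ shows that $\Spdf{B} \to \Spdf{A}$ is a monomorphism, so $\Spdf{B}$ is genuinely a subsheaf of $\Spdf{A}$.

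For the reverse inclusion, take $g \in \Tubf{\Spdf{A}}{S}(R)$, classified by an untilt $R^\sharp$ and a continuous ring map $f \colon A \to R^{\sharp,+}$ (with $A$ carrying the $I$-adic topology). The hypothesis that $g$ lands in the formal neighborhood means that for every $x \in |\Spa{R}|$, the point $\mrm{sp}_A(g(x)) \in |\mrm{Spec}(\ovr{A})|$ lies in $|S| = V(\bar J)$. Tracing through the construction of $\mrm{sp}_A$ via the canonical formalization of the residue point of $x$ (\Cref{rem:constructionofpoints}) — which, via the composition $A \xrightarrow{f} R^{\sharp,+} \to C^{\sharp,+}$, reduces to the statement that the pullback of the maximal ideal of $\ovr{C^{\sharp,+}}$ contains $\bar J$ — this is equivalent to $f(j)$ being topologically nilpotent at every point of $|\Spa{R^\sharp}|$ for each $j \in J$. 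Since $R^\sharp$ is perfectoid, pointwise topological nilpotency is equivalent to membership in $R^{\sharp,\circ\circ}$, so $f(J) \subseteq R^{\sharp,\circ\circ}$. Because $J$ is finitely generated, a pigeonhole argument shows that $f(J)$ is then a topologically nilpotent ideal of $R^{\sharp,+}$, so $f$ is continuous for the $J$-adic topology on $A$. By the universal property of $J$-adic completion, $f$ extends uniquely to a continuous map $\hat f \colon B \to R^{\sharp,+}$, giving the desired factorization $\Spa{R} \to \Spdf{B}$.

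The main technical obstacle is the translation of the geometric hypothesis ``$g$ factors through $\Tubf{\Spdf{A}}{S}$'' into the algebraic condition $f(J) \subseteq R^{\sharp,\circ\circ}$: this requires a careful reading of how $\mrm{sp}_A$ is computed through the universal formalization of a Tate point, together with the perfectoid-specific comparison between pointwise topological nilpotency on $|\Spa{R^\sharp}|$ and global membership in $R^{\sharp,\circ\circ}$. Once this translation is in hand, extending $f$ to $B$ via completion and verifying compatibility with the map to $\Spdf{A}$ is immediate.
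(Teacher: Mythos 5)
Your proposal is correct and follows essentially the same route as the paper: factor $\Spdf{B}\to\Spdf{A}$ through the formal neighborhood via $(\Spdf{B})\red=S$ and functoriality of specialization, get injectivity from density of $A$ in $B$, and for surjectivity translate the condition into $f(J)\subseteq R^{\sharp,\circ\circ}$, use finite generation of $J$ to deduce $J$-adic continuity, and extend to $B$ by completeness. The only cosmetic difference is that you extract $f(J)\subseteq R^{\sharp,\circ\circ}$ pointwise from geometric points (invoking uniformity of the perfectoid $R^\sharp$), whereas the paper reads it off globally from the induced map $\mrm{Spec}(\ovr{R^{\sharp,+}})\to\mrm{Spec}(\ovr{A})$, using surjectivity of $\mrm{sp}_{R^\sharp}$ and reducedness of $\ovr{R^{\sharp,+}}$.
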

\begin{proof}
	Let $S=\mrm{Spec}(\ovr{B})$ and $T=\mrm{Spec}(\ovr{A})$. The reduction of $\Spdf{B}\to \Spdf{A}$ induces $S\to T$. Since specialization is functorial, points coming from $\Spdf{B}$ specialize to $S$. Consequently, the map factors as $\Spdf{B}\to \Tubf{\Spdf{A}}{S}\to \Spdf{A}$. Since $A$ is dense in $B$, this map is an injection. To prove surjectivity onto $\Tubf{\Spdf{A}}{S}$, let $f:A\to R^+$ be a map such that $f:\mrm{Spec}(\ovr{R^+})\to \mrm{Spec}(\ovr{A})$ factors through $|S|$. If $a\in J$, then $f(a)$ is nilpotent in $\mrm{Spec}(R^+/\varpi^n)$. Since $J$ is finitely generated there is an $m$ for which $J^m\subseteq (\varpi^n)$ in $R^+$. This proves that the map $f:A\to R^+$ is continuous for the $J$-adic topology on $A$. Since $R^+$ is complete the map $f:A\to R^+$ factors through $B$. 
\end{proof}
\begin{pro}
	\label{pro:pullbackformal}
	Let $f:\G\to\F$ be a map of prekimberlites and $S\subseteq |\F\red|$ a locally closed subscheme. Let $T=S\times_{\F\red} \G\red$, then $\Tubf{\F}{S}\times_\F \G=\Tubf{\G}{T}$. In particular, $\G\to \F$ factors through $\Tf{\F}{S}$ if and only if $\G\red\to \F\red$ factors through $S$.
\end{pro}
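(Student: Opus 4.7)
The proposition breaks into two parts: the main identity $\Tubf{\F}{S}\times_\F \G = \Tubf{\G}{T}$, and the equivalence between the two factorization conditions. I will treat them in order.

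For the main identity, the plan is to unwind the defining pullback and invoke functoriality of the specialization map (\Cref{pro:specializationvsheaves}). By definition $\Tubf{\F}{S}\times_\F\G = \G\times_{\und{|\F\red|}}\und{|S|}$, where the structure map $\G\to\und{|\F\red|}$ is the composition $\G\to\F\to\und{|\F|}\to\und{|\F\red|}$. Applying functoriality to $f:\G\to\F$ replaces this composition by $\G\to\und{|\G|}\to\und{|\G\red|}\to\und{|\F\red|}$, so rearranging the fiber product yields $\G\times_{\und{|\G\red|}}\und{|\G\red|\times_{|\F\red|}|S|}$. Finally, since $T\hookrightarrow\G\red$ is the base change of the locally closed immersion $S\hookrightarrow\F\red$, the underlying topological space $|T|$ coincides with $|\G\red|\times_{|\F\red|}|S|$, so the expression equals $\Tubf{\G}{T}$.

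For the ``in particular'' statement, I observe that $\Tubf{\F}{S}\hookrightarrow\F$ is a monomorphism (being the pullback of the injection $\und{|S|}\hookrightarrow\und{|\F\red|}$), so $\G\to\F$ factors through $\Tubf{\F}{S}$ if and only if $\Tubf{\F}{S}\times_\F\G=\G$, which by the main identity is equivalent to $\Tubf{\G}{T}=\G$. If $\G\red\to\F\red$ factors through $S$, then $T=\G\red$ and $\Tubf{\G}{T}$ is the pullback of the identity map $\und{|\G\red|}\to\und{|\G\red|}$, so equals $\G$. Conversely, $\Tubf{\G}{T}=\G$ forces the specialization map $\mrm{sp}_\G:|\G|\to|\G\red|$ to have image inside $|T|$.

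The main obstacle is upgrading this set-theoretic inclusion to the scheme-theoretic equality $T=\G\red$. For this, I precompose $\mrm{sp}_\G$ with the closed immersion $(\G\red)^\diamond\hookrightarrow\G$ from the prekimberlite condition: the image of $|(\G\red)^\diamond|\to|\G|\xrightarrow{\mrm{sp}_\G}|\G\red|$ must lie in $|T|$. A direct computation using the recipe for the specialization map on formalizable points (where for a geometric point $\Spa{C}\to(\G\red)^\diamond\hookrightarrow\G$ the canonical formalization $\Spdf{C^+}\to\G$ factors through $(\G\red)^\diamond$) identifies this composition with the canonical topological map $|(\G\red)^\diamond|\to|\G\red|$, which is surjective since any point of $|\G\red|$ is hit by a geometric point $\mrm{Spec}(k)\to\G\red$ whose image under $(-)^\diamond$ lies in $|(\G\red)^\diamond|$. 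Hence $|T|=|\G\red|$; since $T\hookrightarrow\G\red$ is a locally closed immersion into the reduced scheme $\G\red$ which is bijective on underlying points, $T=\G\red$ as schemes, completing the proof.
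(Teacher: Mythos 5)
Your proof is correct and takes essentially the same approach as the paper: the paper's entire proof consists of the observation that, since $S$ is locally closed, $|T|=|S|\times_{|\F\red|}|\G\red|$, followed by "the rest is a standard diagram chase," which is precisely the chase you carry out using functoriality of the specialization map and the compatibility of $\und{(-)}$ with fiber products of topological spaces. Your treatment of the converse direction of the "in particular" statement (showing $|T|=|\G\red|$ via surjectivity of $\mrm{sp}_\G$ on $|(\G\red)^\diamond|$, then upgrading to $T=\G\red$ using that a pointwise-surjective locally closed immersion into a reduced scheme is an isomorphism) supplies details the paper leaves implicit, and correctly avoids the circularity of invoking $(\Tubf{\F}{S})\red=S$, but it is not a genuinely different route.
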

\begin{proof}
	Since $S$ is a locally closed $|T|=|S|\times_{|\F\red|} |\G\red|$. The rest is a standard diagram chase. 
%
\end{proof}

\begin{pro}
	\label{pro:tubularalmostpre-Kimb}
	Let $\F$ be a prekimberlite and let $S\subseteq |\F\red|$ a locally closed subset, then $\Tubf{\F}{S}$ is a prekimberlite and $(\Tubf{\F}{S})\red=S$. 
\end{pro}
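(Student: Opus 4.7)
The plan is to verify the four defining conditions of a prekimberlite for $\Tf{\F}{S}$: that $(\Tf{\F}{S})\red = S$, that $\Tf{\F}{S}$ is v-locally formal, that it is formally separated, and that the adjunction map $S^\diamond \to \Tf{\F}{S}$ is a closed immersion.

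First, for the reduction computation, I test on perfect affine $\Fp$-schemes $T$. A map $T^\diamond \to \Tf{\F}{S}$ is, by the Cartesian definition and adjunction, a map $T \to \F\red$ satisfying the topological condition that the composition $|T^\diamond| \to |\F| \to |\F\red|$ (via $\mrm{sp}_\F$) has image in $|S|$. By functoriality of specialization applied to $T^\diamond \to \F$, together with $((\F\red)^\diamond)\red = \F\red$, this composition factors as $|T^\diamond| \xrightarrow{\mrm{sp}_T} |T| \to |\F\red|$; since $\mrm{sp}_T$ is surjective, the condition is equivalent to $T \to \F\red$ having image in $|S|$. Since $S$ is a locally closed subscheme (and reduced locally closed subschemes of perfect schemes remain perfect), this means factoring through $S$, giving $(\Tf{\F}{S})\red = S$.

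For v-local formality, pull back a formal cover $\coprod \Spdf{B_i} \to \F$ to obtain, via \Cref{pro:pullbackformal}, a surjection $\coprod \Tf{\Spdf{B_i}}{S_i} \to \Tf{\F}{S}$ with $S_i$ locally closed in $\mrm{Spec}(\ovr{B_i})$. Writing $S_i$ as an open in its closure and applying \Cref{pro:twoformalneighb} to the closure identifies $\Tf{\Spdf{B_i}}{\ov{S_i}} = \Spdf{C_i}$ for a formal Huber pair $C_i$; covering the open $S_i \subseteq \ov{S_i}$ by distinguished affines $D(f_k)$ expresses each $\Tf{\Spdf{C_i}}{D(f_k)}$ as the rational localization $\Spdf{C_i\{1/\tilde f_k\}}$, again formal. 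For formal separatedness, observe that $\Tf{\F}{S} \to \F$ is a monomorphism (being a pullback of the monomorphism $\und{|S|} \hookrightarrow \und{|\F\red|}$), so the diagonal of $\Tf{\F}{S}$ is the base change of the formally closed diagonal of $\F$, hence formally closed by \Cref{pro:preservedbycomposit}.

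Finally, for the adjunction map, pull back the closed immersion $(\F\red)^\diamond \hookrightarrow \F$ along $\Tf{\F}{S} \hookrightarrow \F$ to obtain a closed immersion $(\F\red)^\diamond \times_\F \Tf{\F}{S} \hookrightarrow \Tf{\F}{S}$. The main technical step, which I expect to be the principal obstacle, is identifying this pullback with $S^\diamond$. Using \Cref{rem:diamondofreduced} to v-locally represent a map $X \to (\F\red)^\diamond$ by $\mrm{Spec}(R^+)^\diamond \to (\F\red)^\diamond$ coming from a scheme map $\mrm{Spec}(R^+) \to \F\red$, the specialization condition becomes that this scheme map has image in $|S|$, equivalently (since $S$ is reduced locally closed) that it factors through $S$; this matches the analogous characterization of $S^\diamond \hookrightarrow (\F\red)^\diamond$. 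Hence the pullback equals $S^\diamond$, giving the required closed immersion $S^\diamond = ((\Tf{\F}{S})\red)^\diamond \hookrightarrow \Tf{\F}{S}$.
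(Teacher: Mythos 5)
Your first three steps are essentially sound: the computation of $(\Tf{\F}{S})\red$ and the formal separatedness argument run along the paper's own lines (via \Cref{pro:pullbackformal} and base change of the formally closed diagonal of $\F$), and your explicit formal covers for v-local formality are a workable alternative to the paper's route, which instead takes an arbitrary map $\Spa{R}\to \Tf{\F}{S}$, v-formalizes it into $\F$, and uses \Cref{pro:pullbackformal} to see that the formalization lands back in $\Tf{\F}{S}$. The genuine gap is in the last paragraph: the identification $(\F\red)^\diamond\times_\F\Tf{\F}{S}=S^\diamond$ is false in general. By \Cref{pro:pullbackformal} this pullback is $\Tf{((\F\red)^\diamond)}{S}$, the ``formal completion'' of $(\F\red)^\diamond$ along $S$, which need not equal $S^\diamond$ unless $S$ is open. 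Concretely, take $\F=\mrm{Spec}(\Fp[x^{1/p^\infty}])^\diamond$ and $S=V(x)$: for a perfectoid field $\Hub{R}$ of characteristic $p$, a section of the pullback over $\Spa{R}$ is any map sending $x$ to a topologically nilpotent element of $R^+$ (e.g.\ $x\mapsto\varpi$), whereas a section of $S^\diamond$ must send $x$ to $0$. The slip is the sentence ``the specialization condition becomes that this scheme map has image in $|S|$'': the condition defining $\Tf{\F}{S}$ only constrains $\mrm{sp}_\F\circ|f|$ on $|\Spa{R}|$, and this composite factors as $|\Spa{R}|\xrightarrow{\mrm{sp}}|\mrm{Spec}(\ovr{R^+})|\to|\F\red|$; so it only constrains the restriction of your scheme map $\mrm{Spec}(R^+)\to\F\red$ to the closed subscheme $\mrm{Spec}(\ovr{R^+})$, not all of $\mrm{Spec}(R^+)$. (In your first paragraph the analogous step is fine precisely because the test ring is discrete, so $\ovr{A}=A$ and the specialization map surjects onto all of $|\mrm{Spec}(A)|$; that surjectivity is exactly what fails here.)

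Fortunately the statement only requires $S^\diamond\to\Tf{\F}{S}$ to be a closed immersion, not an isomorphism onto that pullback, so the fix is to show that $S^\diamond\to\Tf{((\F\red)^\diamond)}{S}$ is a closed immersion and then compose with the closed immersion $\Tf{((\F\red)^\diamond)}{S}=(\F\red)^\diamond\times_\F\Tf{\F}{S}\hookrightarrow\Tf{\F}{S}$. This is what the paper does: when $S$ is closed in $\F\red$ the map $S^\diamond\to(\F\red)^\diamond$ is a closed immersion, hence so is $S^\diamond\to\Tf{((\F\red)^\diamond)}{S}$; when $S$ is open one has $\Tf{((\F\red)^\diamond)}{S}=S^\diamond$; and the locally closed case is reduced to these two.
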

\begin{proof}

	The formula $(\Tubf{\F}{S})\red=S$ follows from observing that by \Cref{pro:pullbackformal} a map $\mrm{Spec}(A)^\diamond\to \F$ factors through $\Tf{\F}{S}$ if and only if the adjunction map $\mrm{Spec}(A)\to \F\red$ factors through $S$. 
	Since $\Tubf{\F}{S}$ is a subsheaf of a separated v-sheaf it is separated as well. The map $S^\diamond\to \Tubf{\F}{S}$ is injective, so $\Tubf{\F}{S}$ is formally separated. Take a map $\Spa{R}\to \Tubf{\F}{S}\subseteq \F$. After replacing $\Spa{R}$ by a v-cover we get a formalization $\Spdf{R^+}\to \F$. By \Cref{pro:pullbackformal} this factors through $\Tf{\F}{S}$ since $\mrm{Spec}(\ovr{R^+})\to \F\red$ factors through $S$. 
	We have proved $\Tubf{\F}{S}$ is specializing, we prove $S^\diamond\to \Tubf{\F}{S}$ is a closed immersion. 
	Consider $\G=\Tubf{\F}{S}\times_\F (\F\red)^\diamond$. 
	Now, $\G\to \Tubf{\F}{S}$ is a closed immersion, and by \Cref{pro:pullbackformal} $\G$ is $\Tubf{((\F\red)^\diamond)}{S}$. 
	If $S$ is a closed subscheme of $\F\red$, then $S^\diamond\to (\F\red)^\diamond$ is proper, so $S^\diamond \to \Tf{((\F\red)^\diamond)}{S}$ is a closed immersion. 
	If $S$ is an open in $\F\red$, then $\Tubf{((\F\red)^\diamond)}{S}=S^\diamond$. 
	By definition a locally closed subset $S\subseteq |\F\red|$ can regarded as a composition $S\to U\to \F\red$ where $U\to \F$ is an open immersion and $S\to U$ is a closed immersion. 
	In this case $\Tubf{(\Tubf{\F}{U})}{S}=\Tubf{\F}{S}$, applying the argument once to $U\subseteq |\F\red|$ and once to $S\subseteq U$ we get the result.
\end{proof}
\begin{pro}
	\label{pro:constgivesopen}
	Let $\F$ be a prekimberlite, $S\subseteq |\F\red|$ a locally closed constructible subset, then the map $\Tf{\F}{S}\to \F$ is an open immersion.
\end{pro}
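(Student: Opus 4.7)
The plan is to decompose $S$ and work v-locally, reducing to the affine formal case where one can combine \Cref{pro:twoformalneighb} with the explicit description of the olivine analytic localizations $\N{f}{1}$.

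First I would write $S = U \cap Z$ with $U$ constructible open and $Z$ constructible closed in $\F\red$. From \Cref{defi:tubularneigh} and the set-theoretic identity $|S| = |U| \cap |Z|$, one gets $\Tf{\F}{S} = \Tf{\F}{U} \times_\F \Tf{\F}{Z}$, so since a fiber product of open immersions is an open immersion it suffices to treat the two cases separately. The open case is immediate: by continuity of $\mrm{sp}_\F$ (\Cref{pro:specializationvsheaves}), $\mrm{sp}_\F^{-1}(|U|)$ is open in $|\F|$, and a direct functor-of-points check shows that $\Tf{\F}{U}$ is the open subsheaf of $\F$ it determines under the bijection between open subsheaves and open subsets of $|\F|$.

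For the closed case the goal becomes to show that $\mrm{sp}_\F^{-1}(|Z|)$ is open in $|\F|$; the same functor-of-points argument will then identify $\Tf{\F}{Z}$ with the corresponding open subsheaf. I would work v-locally: pick a v-cover $\coprod_{i\in I}\Spdf{A_i}\to\F$ by formal Huber pairs (possible since $\F$ is v-locally formal). By functoriality of specialization, the preimage of $|Z|$ under the composition $\coprod_i|\Spdf{A_i}|\to|\F|\xrightarrow{\mrm{sp}_\F}|\F\red|$ equals $\bigsqcup_i \mrm{sp}_{\Spdf{A_i}}^{-1}(|Z_i|)$, where $Z_i\subseteq\mrm{Spec}(\ovr{A_i})$ is the pullback of $Z$. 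Since $|\F|$ carries the quotient topology induced by the v-cover, openness reduces to verifying it for each $\Spdf{A_i}$, whose underlying space is $\Spor{A_i}$ by \Cref{pro:strongtopoonSpo}.

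In the remaining affine formal case, lift generators of the ideal cutting out $Z_i$ to $f_{i,1},\ldots,f_{i,n}\in A_i$ and let $J_i$ be the ideal they generate together with an ideal of definition of $A_i$. By \Cref{pro:twoformalneighb}, $\Tf{\Spdf{A_i}}{Z_i}=\Spdf{B_i}$ where $B_i$ is the $J_i$-adic completion of $A_i$. The key identification is that $\Spdf{B_i}$ sits inside $\Spdf{A_i}$ as the open subsheaf $\bigcap_j\N{f_{i,j}}{1}$: a map $\Spa{R}\to\Spdf{A_i}$ factors through $\Spdf{B_i}$ iff the induced ring map $A_i\to R^{\sharp,+}$ extends continuously to the finer $J_i$-adic topology, iff the images of the $f_{i,j}$ are topologically nilpotent in $R^{\sharp,+}$; this is exactly the open condition $\bigcap_j\N{f_{i,j}}{1}$ of the olivine spectrum, and is open by construction. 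The main obstacle is precisely this affine identification $\Spdf{B_i} = \bigcap_j\N{f_{i,j}}{1}$, which requires bridging the algebraic side (completion along an enlarged ideal) with the topological side (analytic localizations in $\Spor{A_i}$); this runs parallel to \Cref{lem:representinganalyticnbhoods}, adapted from the Tate to the formal setting, and is checked via the universal property of adic completion applied to perfectoid test objects.
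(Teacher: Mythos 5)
The decomposition $S=U\cap Z$ and the treatment of the open factor are fine (and essentially amount to the paper's Zariski-local reduction), but the closed case contains a genuine error: it is \emph{not} true that $\Tf{\F}{Z}$ is the open subsheaf attached to $\mrm{sp}_\F^{-1}(|Z|)$, and $\mrm{sp}_\F^{-1}(|Z|)$ is in general not open in $|\F|$. The subsheaf $\Tf{\F}{Z}$ is cut out by the condition that \emph{every} point in the image of a test object specializes into $Z$, so its underlying set is only the largest generization-stable subset of $\mrm{sp}_\F^{-1}(|Z|)$, which is usually strictly smaller; this is exactly the content of \Cref{tubularsmeltedkimberlites} and \Cref{pro:densenbhood}, and \Cref{exa:valringnotcJ} shows the discrepancy can be drastic. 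Concretely, take $\F=\Spdf{\Zp\langle T\rangle}$ (with the $p$-adic topology), so $\F\red$ is the perfection of $\mathbb{A}^1_{\Fp}$, and $Z=V(T)$. A rank-$2$ point $x\in\Spor{\Zp\langle T\rangle}$ with $|T|^h_x<1$ but $|T|^a_x=1$ (the ``radius $1^-$'' specialization of the Gauss point) satisfies $T\in\mathbf{sp}(x)$, hence $x\in\mrm{sp}_\F^{-1}(|Z|)$, while its rank-$1$ vertical generization (the Gauss point) specializes to the generic point of $\mathbb{A}^1_{\Fp}$ and so does not; thus $\mrm{sp}_\F^{-1}(|Z|)$ is not stable under generization and cannot be open, and $x\notin|\Tf{\F}{Z}|=\N{T}{1}$. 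Note that your own final paragraph already exhibits the correct condition, namely topological nilpotence of the $f_{i,j}$ in $R^{\sharp,+}$, i.e.\ $|f_{i,j}|^a<1$, which is the analytic condition $\N{f_{i,j}}{1}$ and not the condition $|f_{i,j}|^h<1$ defining $\mrm{sp}^{-1}(|Z_i|)$; so the middle step of your argument is inconsistent with your last step, and the reduction ``show $\mrm{sp}_\F^{-1}(|Z|)$ is open'' cannot be carried out.

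The repair is to abandon the topological reformulation and test the subsheaf directly on formalized test objects, which is what the paper does: for any $\Spdf{R^+}\to\F$, the pullback $\Spdf{R^+}\times_\F\Tf{\F}{S}$ is again a formal neighborhood by \Cref{pro:pullbackformal}, is identified with a completion by \Cref{pro:twoformalneighb}, and is the open subsheaf $\bigcap_k\N{j_k}{1}$ of $\Spdf{R^+}$ by \Cref{lem:representinganalyticnbhoods} (here constructibility enters, to get finitely many generators $j_k$ and hence a finite intersection). Since $\F$ is v-formalizing, every map $\Spa{R}\to\F$ factors through $\Spdf{R^+}$ after a v-cover, and the openness criterion for maps of v-sheaves then yields that $\Tf{\F}{S}\to\F$ is an open immersion. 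Your v-cover-and-quotient-topology strategy also has the secondary weakness that openness of a subset of $|\F|$ by itself does not identify the subsheaf $\Tf{\F}{Z}$ with the corresponding open subsheaf; one must check the equality of subsheaves on perfectoid test objects anyway, which is precisely the pullback computation above.
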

\begin{proof}
	The question is Zariski local in $\F\red$. Indeed, an open cover $\coprod_{i\in I} U_i\to \F\red$ induces an open cover $\coprod_{i\in I} \Tf{\F}{U_i}\to \F$. We may assume that $\F\red=\mrm{Spec}(A)$ and that $S$ is closed and constructible in $\mrm{Spec}(A)$. Write $S=\mrm{Spec}(A/I)$ for $I\subseteq A$ an ideal, by constructibility we may assume that $I$ is finitely generated. Pick $\{i_1,\dots, i_n\}$ a list of generators for $I$, $\Hub{R}\in \mrm{Perf}$ and a map $\Spdf{R^+}\to \F$. Let $X:=\Spdf{R^+}\times_\F \Tf{\F}{S}$, and let $\varpi\in R^+$ be a pseudo-uniformizer. Let $\{j_1,\dots,j_n\}$ be a list of lifts of $\{i_1,\dots i_n\}$ to $R^+$. Then $X$ is the open subsheaf of $\Spdf{R^+}$ defined by $\bigcap_{k=1}^n \N{j_k}{1}$. 
	Indeed, this follows from \Cref{pro:pullbackformal}, \Cref{pro:twoformalneighb} and \Cref{lem:representinganalyticnbhoods}. 
	Since $\F$ is v-formalizing every map $\Spa{R}\to \F$ factors through $\Spdf{R^+}$ after replacing $\Spa{R}$ by a v-cover. By \cite[Proposition 10.11]{Et}  $\Tf{\F}{S}\to \F$ is open.
\end{proof}

\subsection{Heuer's specialization map and \'etale formal neighborhoods}
In \cite{heuer21}, Heuer considers certain specialization maps. These are maps of v-sheaves rather than a map of topological spaces. We discuss his construction and use it to enhance our theory.

\begin{defi}\textup{(\cite[Definition 5.1]{heuer21})}
	Let $X$ be a scheme over $\Fp$. We attach a presheaf $\Heuer{X}$ defined by the (analytic sheafification of the) formula $\Hub{R}\mapsto X(\mrm{Spec}(\ovr{R^+}))$, where $\ovr{R^+}=R^+/R^{\circ \circ}$.
\end{defi}

In \cite[Lemma 5.2]{heuer21}, Heuer proves that $\Heuer{X}$ is a v-sheaf and that when $X$ is affine the sheafification is not necessary. There is an evident map $X^\diamond\to \Heuer{X}$.
\begin{prop}
	\label{pro:correctreductionHeuer}
	If $X$ is a perfect scheme over $\Fp$, then $X=(\Heuer{X})\red$. Moreover, if $\Spa{R}$ is a totally disconnected perfectoid space then $\Heuer{X}(\Spa{R})=\Heuer{X}(\Spdf{R^+})$. 
\end{prop}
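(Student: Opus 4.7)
I plan to establish the first claim by adjunction and a direct naturality check, then deduce the second from the first. The evident natural map $X^\diamond\to \Heuer{X}$ yields, via the reduction-$\diamond$ adjunction of \Cref{rem:diamond+ajunct}, a morphism $X=(X^\diamond)\red\to (\Heuer{X})\red$ of scheme-theoretic v-sheaves. Both sides satisfy Zariski descent in $X$, so the check can be reduced to the affine case $X=\mathrm{Spec}(B)$. For such $X$, I need to exhibit, for each perfect $T=\mathrm{Spec}(A)$ over $\Fp$, a bijection $\mathrm{Hom}(B,A)\cong\mathrm{Hom}(T^\diamond,\Heuer{X})$. A natural transformation on the right is a system of maps $\mathrm{Hom}(A,R^+)\to \mathrm{Hom}(B,\ovr{R^+})$ natural in $\Hub{R}\in\mathrm{Perf}$, using that for affine $X$ no analytic sheafification is needed in defining $\Heuer{X}\Hub{R}$.

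First I would evaluate $\eta$ at the affinoid perfectoid $R_{\mathrm{univ}}=(A\rpot{t\pthr},A\pot{t\pthr})$, which is perfect Tate with $\ovr{R_{\mathrm{univ}}^+}=A$ because the topologically nilpotent elements of $A\pot{t\pthr}$ form the ideal generated by $t^{1/p^k}$. The canonical inclusion $A\hookrightarrow A\pot{t\pthr}$ is mapped by $\eta$ to a ring map $\phi:B\to A$. Then, for any perfectoid $\Hub{R}$ and any $f:A\to R^+$, I choose a pseudo-uniformizer $\varpi\in R^+$ and lift $f$ to a continuous map $\tilde{f}:A\pot{t\pthr}\to R^+$ by sending $t^{1/p^k}\mapsto \varpi^{1/p^k}$ (available since $R$ is perfect). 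Naturality of $\eta$ along $\tilde{f}^\diamond:\Spa{R}\to\Spa{A\rpot{t\pthr}}$ forces $\eta(f)=\bar{f}\circ\phi$, where $\bar{f}$ is the reduction of $f$ modulo topologically nilpotent elements. Conversely, any $\phi:B\to A$ defines such an $\eta$ by $f\mapsto \bar{f}\circ\phi$, yielding the required bijection. This also handles the nonaffine case by Zariski gluing.

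For the second claim, the canonical map $\Spa{R}\to\Spdf{R^+}$ induces a restriction map $\mathrm{Hom}(\Spdf{R^+},\Heuer{X})\to \Heuer{X}(\Spa{R})$. For totally disconnected $\Spa{R}$ the connected components have the form $\Spa{C}$ with $C$ algebraically closed and $C^+$ an open bounded valuation subring (\Cref{pro:connectdcompcriterion}), so $\ovr{C^+}$ is a field and $\Heuer{X}(\Spa{R})$ coincides with $X(\mathrm{Spec}(\ovr{R^+}))$ without further analytic sheafification (a rational cover can be refined to a clopen one, and Zariski descent on $\mathrm{Spec}(\ovr{R^+})$ is automatic). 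To construct the inverse, given $\xi\in X(\mathrm{Spec}(\ovr{R^+}))$, I would form the composition $\Spdf{R^+}\to \Heuer{\mathrm{Spec}(\ovr{R^+})}\to \Heuer{X}$. The first arrow is defined on $\Hub{R'}$-points by sending a continuous map $R^+\to (R')^{\sharp,+}$ (which is the datum of an $\Hub{R'}$-point of $\Spdf{R^+}$) to its reduction $\ovr{R^+}\to\ovr{(R')^{\sharp,+}}$, available because continuous ring maps preserve topologically nilpotent elements; the second arrow is the functoriality of $\Heuer{(-)}$ applied to $\xi$. Restricting along $\Spa{R}\hookrightarrow\Spdf{R^+}$ recovers $\xi$, so the two constructions are mutually inverse.

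The hard part is the naturality step in the first claim: producing the lift $\tilde{f}$ for an arbitrary perfectoid $\Hub{R}$ and verifying its reduction matches $\bar{f}$. This mirrors the rigidity argument of \Cref{lem:discreteperfecthubpairs1} and must be done cleanly enough to propagate through the Zariski gluing. Once that is secured, the second claim is a formal consequence of the first combined with the definition of $\Heuer{(-)}$.
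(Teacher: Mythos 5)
The affine half of your first claim is fine, and it is essentially the paper's own argument: probe $\Heuer{X}$ along the Tate perfectoid $\mrm{Spa}(A\rpot{t\pthr},A\pot{t\pthr})$, whose $\cali{O}^+$ reduces to $A$, and use that no sheafification is needed for affine $X$. The gap is the sentence ``this also handles the nonaffine case by Zariski gluing.'' The assertion that the right-hand side satisfies Zariski descent in $X$ is precisely what has to be proved: given $T=\mrm{Spec}(A)$ perfect and a map $T^\diamond\to\Heuer{X}$, you must show it factors Zariski-locally on $T$ through the charts $\Heuer{U_i}$ for an affine open cover $\{U_i\}$ of $X$. The pullbacks of the $\Heuer{U_i}$ are only open subsheaves of $T^\diamond$, i.e. open subsets of $|\mrm{Spec}(A)^\diamond|=\Spor{A}$, which has far more points and opens than $\mrm{Spec}(A)$ (meromorphic points, analytic localizations), so an a priori ``exotic'' map could send the closed discrete point and its meromorphic generization into different charts. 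Ruling this out is exactly the content of \Cref{lem:sotechinicalitscrazy}, and the paper's proof of this proposition consists largely of redoing that valuation-ring diagram chase with $\Heuer{(-)}$ in place of $\mrm{Spd}$, the key computation being $\Heuer{\mrm{Spec}(B_1)}(\mrm{Spa}(V_b[\tfrac{1}{b}],V_b))=\Heuer{\mrm{Spec}(B_1)}(\mrm{Spa}(V_b,V_b))$. Your proposal supplies no substitute for this step.

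For the second claim you only verify one composite: extension followed by restriction is the identity, which gives surjectivity of $\mrm{Hom}(\Spdf{R^+},\Heuer{X})\to\Heuer{X}(\Spa{R})$. The substantive half is injectivity, i.e. that a map $\Spdf{R^+}\to\Heuer{X}$ is determined by its restriction to $\Spa{R}$, and this is not ``a formal consequence of the first claim'': $\Spa{R}\to\Spdf{R^+}$ is not a v-cover, $|\Spa{R}|$ is not dense in $|\Spdf{R^+}|$ (the pathology the paper stresses throughout), and $\Heuer{X}$ is not formally separated, so \Cref{pro:separateduniqueform} is unavailable — indeed the whole point of this statement, as used in Step 3 of \Cref{thm:invarianceofetalesite}, is that $\Heuer{X}$ formalizes totally disconnected spaces uniquely despite failing formal separatedness. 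The paper gets uniqueness by covering $\Spdf{R^+}$ with $U=(\Spdf{R^+\pot{t\pthr}})^{\mrm{an}}$, computing $\Heuer{X}(\Spa{R_i})=\Heuer{X}(\Spa{R})$ on the rational pieces $U(\tfrac{\varpi}{t})$, $U(\tfrac{t}{\varpi})$ and their overlap, and then gluing via the factorization statement for $\Spdf{K^+}$ mentioned above. Minor point: for totally disconnected (not strictly totally disconnected) $\Spa{R}$ the components are affinoid fields $\mrm{Spa}(K,K^+)$ with $K$ not necessarily algebraically closed, and $\ovr{K^+}=K^+/K^{\circ\circ}$ is in general a valuation ring rather than a field; this does not hurt your splitting argument for removing the sheafification, but the appeal to \Cref{pro:connectdcompcriterion} and the claim that $\ovr{C^+}$ is a field are off.
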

\begin{proof}
	Let $X=\mrm{Spec}(A)$. 
	The inclusion $X\subseteq (\Heuer{X})\red$ of scheme-theoretic v-sheaves is easy to verify. 
	Now, $\Heuer{X}(\mrm{Spec}(R)^\diamond) \subseteq \Heuer{X}(\mrm{Spd}(R\rpot{t\pthr},R\pot{t\pthr})$ and this latter is by \cite[Lemma 5.2]{heuer21} the set of maps $A\to R$. So $(\Heuer{X})\red = X$.
	Moreover, let $\Spa{R}\in \mrm{Perf}$, with pseudo-uniformizer $\varpi$. Consider $U=(\Spdf{R^+\pot{t\pthr}})^{\mrm{an}}$ with its cover $\Spa{R_1}=U(\frac{\varpi}{t})$ and $\Spa{R_2}=U(\frac{t}{\varpi})$. Let $\Spa{R_3}=U(\frac{t}{\varpi})\cap U(\frac{\varpi}{t})$. One computes explicitly that $\Heuer{X}(\Spa{R_i})=\Heuer{X}(\Spa{R})$ for $i\in \{1,2,3\}$, so $\Heuer{X}(U)=\Heuer{X}(\Spa{R})$. This proves $\Heuer{X}(\Spdf{R^+})= \Heuer{X}(\Spa{R})$ when $X$ is affine. Using the techniques of \Cref{pro:perfectschemesvsadicultimate} we can glue and prove the general case. Indeed, open subschemes $f:U\subseteq X$ induce open subsheaves $\Heuer{f}:\Heuer{U}\subseteq \Heuer{X}$ and the delicate part of the glueing happens on $|\mrm{Spec}(R)^\diamond|$ for test objects $R$, which is easily reduced to $R=V$ a valuation ring. 
Following the proof loc. cit. we get to a similar diagram:
	\begin{center}
		\begin{tikzcd}
			
			\mrm{Spa}(V_b[\frac{1}{b}],V_b)\ar{rr} \ar{dd}\ar{rd}		&  &	\mrm{Spd}(V[\frac{1}{b}],V) \ar{d} \\
			& \Heuer{\mrm{Spec}(B_3)}\ar{r} \ar{d}			& \Heuer{\mrm{Spec}(B_2)}\ar{d}	\\	
	 		\mrm{Spd}(V_b,V_b)\ar{r}		& 	\Heuer{\mrm{Spec}(B_1)}\ar{r}&	X^\dia
		\end{tikzcd}
	\end{center}
	Now, $\Heuer{\mrm{Spec}(B_1)}( \mrm{Spa}(V_b[\frac{1}{b}],V_b)=\Heuer{\mrm{Spec}(B_1)}(\mrm{Spa}(V_b,V_b))$ proves $\Spdf{V_b}$ factors through $\Heuer{\mrm{Spec}(B_3)}$ and $\mrm{Spec}(V)^\diamond$ factors through $\Heuer{\mrm{Spec}(B_2)}$. This proves $(\Heuer{X})\red=X$. It also proves that for perfectoid fields $\Hub{K}$ and a map $\Spdf{K^+}\to \Heuer{X}$ if $\Spa{K}\to \Heuer{X}$ factors through an affine then $\Spdf{K^+}$ also does. Since totally disconnected perfectoid spaces split open covers we can conclude $\Heuer{X}(\Spa{R})=\Heuer{X}(\Spdf{R^+})$. 
\end{proof}

Suppose $X$ is a prekimberlite and $\Spa{R}\in \mrm{Perf}$. Let $f:\Spa{R}\to X$ be a formalizable map, applying reduction to the formalization we obtain a map $\mrm{Spec}(\ovr{R^+})\to X\red$, or in other words a map $\Spa{R}\to \Heuer{(X\red)}$. Overall, we obtain a natural transformation $\mrm{SP}_X:X\to \Heuer{(X\red)}$. This is the type of specialization map that Heuer considers. With this switch of perspective we can reinterpret formal neighborhoods: if $S\subseteq |X\red|$ is a locally closed subset we get a map $\Heuer{S}\to \Heuer{(X\red)}$ and $\Tf{\F}{S}=X\times_{\Heuer{(X\red)}}\Heuer{S}$. Under this light, \Cref{pro:constgivesopen} is simply proving that $\Heuer{S}\to \Heuer{(X\red)}$ is an open immersion when $S$ is constructible. Moreover, this leads to a good notion of ``\'etale formal neighborhoods" of a prekimberlite.  

\begin{lem}
	\label{pro:etalepreservesformaladic}
	If $V\to X$ is a quasicompact, separated \'etale map of perfect schemes over $\Fp$, then $\Heuer{V}\to \Heuer{X}$ is separated, quasicompact, formally adic and \'etale.
\end{lem}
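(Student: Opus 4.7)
The plan is to reduce all four properties to the behavior of $\Heuer{(-)}$ on strictly totally disconnected test objects, where Proposition \ref{pro:correctreductionHeuer} gives the clean formula $\Heuer{X}(\Spa{R}) = X(\mrm{Spec}(\ovr{R^+}))$. Since such spaces form a basis for the v-topology and $\Heuer{V}\to \Heuer{X}$ is v-local, one can check each of ``quasicompact", ``separated", ``formally adic", and ``\'etale" after such a base change.

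For formal adicness I would apply Lemma \ref{lem:basechangerepresentimpliesadic}. By Proposition \ref{pro:correctreductionHeuer} we have $(\Heuer{X})\red=X$ and $(\Heuer{V})\red=V$, and the adjunction $X^\diamond\to \Heuer{X}$ is injective (test on strictly totally disconnected $\Spa{R}$: two maps $A\to R^+$ agreeing modulo $R^{\circ\circ}$ and globally on a v-cover must coincide). It then suffices to check that $\Heuer{V}\times_{\Heuer{X}} X^\diamond = V^\diamond$. Evaluating both sides on strictly totally disconnected $\Spa{R}$, a point of the left-hand side is a compatible pair consisting of a map $\mrm{Spec}(R^+)\to X$ and a map $\mrm{Spec}(\ovr{R^+})\to V$ lifting it; a point of the right-hand side is a map $\mrm{Spec}(R^+)\to V$. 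The required bijection is then exactly the \'etale lifting property for the closed immersion $\mrm{Spec}(\ovr{R^+})\hookrightarrow \mrm{Spec}(R^+)$, which applies because the pair $(R^+, R^{\circ\circ})$ is Henselian when $\Spa{R}$ is strictly totally disconnected (the ring $R^+$ is a product of valuation rings of algebraically closed perfectoid fields, each Henselian for its maximal ideal). Uniqueness of the lift gives injectivity; existence gives surjectivity.

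For quasicompactness and separatedness I would argue via the same computation. Given a test $\Spa{R}\to\Heuer{X}$ with $\Spa{R}$ strictly totally disconnected, corresponding to $\mrm{Spec}(\ovr{R^+})\to X$, the above Henselian argument shows that $\Heuer{V}\times_{\Heuer{X}}\Spa{R}$ is representable by the affinoid perfectoid space whose underlying integral ring is the unique \'etale lift of $V\times_X \mrm{Spec}(\ovr{R^+})$ to $\mrm{Spec}(R^+)$. Quasicompactness and separatedness of $V\to X$ then directly transfer: the quasicompact separated \'etale scheme $V\times_X\mrm{Spec}(\ovr{R^+})$ lifts to a quasicompact separated \'etale perfectoid space. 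Alternatively, for separatedness one verifies that $\Heuer{(-)}$ preserves fiber products and closed immersions of affine perfect schemes, so the diagonal $\Heuer{V}\to\Heuer{V}\times_{\Heuer{X}}\Heuer{V}=\Heuer{V\times_X V}$ is the image of a closed immersion under $\Heuer{(-)}$, hence a closed immersion.

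For \'etaleness, given the representability in the previous paragraph, the pullback of $\Heuer{V}\to\Heuer{X}$ along a v-cover by strictly totally disconnected spaces is represented by an \'etale map of affinoid perfectoid spaces, so the original map is \'etale by v-descent. The main obstacle is the Henselian lifting: making precise that strict total disconnectedness of $\Spa{R}$ implies the pair $(R^+, R^{\circ\circ})$ is Henselian in the sense needed to lift \'etale maps uniquely, and that this lift is itself perfectoid-affinoid (so genuinely represents the v-sheaf fiber product rather than merely its sections on that test). Once this input is secured, the four properties fall out together from the single computation of the fiber product on strictly totally disconnected test objects.
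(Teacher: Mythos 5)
Your central mechanism --- identifying $\Heuer{V}\times_{\Heuer{X}}X^\diamond$ with $V^\diamond$ by unique \'etale lifting across the thickening $\mrm{Spec}(R^+/R^{\circ\circ})\hookrightarrow\mrm{Spec}(R^+)$, and representing base changes of $\Heuer{V}\to\Heuer{X}$ by the lifted \'etale $R^+$-algebra --- is exactly the one the paper uses, but two of your supporting claims are false. First, the adjunction $X^\diamond\to\Heuer{X}$ is \emph{not} injective, so \Cref{lem:basechangerepresentimpliesadic} cannot be invoked this way: for $X=\mrm{Spec}(\Fp[T^{1/p^\infty}])$ and any affinoid perfectoid $\Spa{R}$ with pseudo-uniformizer $\varpi$, the points $T\mapsto 0$ and $T\mapsto\varpi$ of $X^\diamond(\Spa{R})$ are distinct but have the same image in $\Heuer{X}(\Spa{R})=\mrm{Hom}(\Fp[T^{1/p^\infty}],R^+/R^{\circ\circ})$; two maps to $R^+$ agreeing modulo $R^{\circ\circ}$ need not coincide, which is precisely what your parenthetical asserts. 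The detour is also unnecessary: the bijection you write down on test objects \emph{is} the natural map $V^\diamond\to\Heuer{V}\times_{\Heuer{X}}X^\diamond$, and proving that this natural map is an isomorphism is literally the Cartesian square defining formal adicness --- this is how the paper argues, at the presheaf level after reducing v-locally to maps factoring through $\mrm{Spec}(R^+)^\diamond$. Second, $\Heuer{(-)}$ does \emph{not} preserve closed immersions, so your alternative separatedness argument fails: for the closed point $\{0\}\subseteq \bb{A}^{1,\mrm{perf}}$, a section of $\Heuer{(\bb{A}^{1,\mrm{perf}})}$ lands in $\Heuer{\{0\}}$ exactly when a lift of the coordinate to $R^+$ is topologically nilpotent, which is an open, not closed, condition. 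What is true, and what the paper uses, is that $\Heuer{(-)}$ commutes with fibre products and preserves open immersions, and that the diagonal of a separated \'etale map is open \emph{and} closed, so the clopen decomposition transfers.

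Your henselian input also needs repair, although its conclusion is correct. A strictly totally disconnected $R^+$ need not be a product of valuation rings (only products of points have that shape), and even for a product of points $R^+=\prod_i C_i^+$ the ideal $R^{\circ\circ}$ is strictly smaller than $\prod_i\frak{m}_i$ (take $x_i\in\frak{m}_i$ with $|x_i|\to 1$), so componentwise henselianity does not yield henselianity of the pair $(R^+,R^{\circ\circ})$. The correct reason, valid for \emph{every} affinoid perfectoid $\Hub{R}$, is that $R^+$ is $\varpi$-adically complete, hence $(R^+,\varpi R^+)$ is henselian, and $R^{\circ\circ}=\sqrt{\varpi R^+}$ inside $R^+$, while henselianity of a pair depends only on the radical of the ideal; this is the paper's ``invariance of the \'etale site under nilpotent thickenings'' combined with completeness. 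Uniqueness of the lift moreover uses separatedness of $V\to X$ (the agreement locus of two lifts is clopen, contains $V(R^{\circ\circ})$, and $R^{\circ\circ}$ lies in the Jacobson radical), which you should make explicit. Once phrased this way you no longer need to restrict to strictly totally disconnected test objects, which also spares you the unaddressed question of what $\Heuer{V}(\Spa{R})$ and $V^\diamond(\Spa{R})$ are on such objects when $V$ is not affine --- that gluing is exactly the delicate part of \Cref{pro:correctreductionHeuer}. For quasicompactness and \'etaleness the paper instead reduces to $X=\mrm{Spec}(A)$, $V=\mrm{Spec}(B)$ and represents the base change along $\Spa{R}\to\Heuer{X}$ by $\Spa{S}$ with $S^+$ the unique \'etale $R^+$-algebra lifting $\ovr{R^+}\otimes_A B$, which is the cleanest way to complete your last two points.
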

\begin{proof}
	The formation of $\Heuer{X}$ commutes with finite limits and preserves open immersions. If $V\to X$ is separated then $V\to V\times_X V$ is open and closed, the same holds for $\Heuer{V}\to \Heuer{V}\times_{\Heuer{X}} \Heuer{V}$ which proves separatedness. 
	We prove formal adicness. Let $Y=\Heuer{V}\times_{\Heuer{X}} X^\diamond$, we get a map $V^\diamond \to Y$. We prove this map is an isomorphism after basechange by any map $\Spa{R}\to X^\diamond$.
	Let $Y_R:=Y\times_{X^\diamond}\Spa{R}$, we may assume $m$ factors through $\mrm{Spec}(R^+)^\diamond\to X^\diamond$ since this happens v-locally. Now, before sheafification $Y_R\Hub{S}$ parametrizes lifts of $\mrm{Spec}(\ovr{S^+})\to \mrm{Spec}(\ovr{R^+})\to X$ to $V$. By invariance of the \'etale site under perfection (respectively nilpotent thickenings), this is the same as parametrizing lifts of $\mrm{Spec}(S^+/\varpi)\to X$ to $V$ (respectively of $\mrm{Spf}(S^+)\to X$ to $V$). In other words, $Y_R$ fits in the following Cartesian diagram:
\begin{center}
\begin{tikzcd}
	Y_R \ar{r}\ar{d} & \Spa{R} \ar{d} \\
	\mrm{Spec}(R^+)^\diamond \times_{X^\diamond} V^\diamond \ar{r} & \mrm{Spec}(R^+)^\diamond
\end{tikzcd}
\end{center}
But this is precisely $V^\diamond\times_{X^\diamond}\Spa{R}$. For quasicompactness and \'etaleness we can argue locally and assume $X=\mrm{Spec}(A)$ and $V=\mrm{Spec}(B)$. Indeed, this follows from the quasicompactness of the specialization map for Tate Huber pairs. Arguing as above, for a map $\Spa{R}\to \Heuer{X}$ the basechange is represented by $\Spa{S}$ where $S^+$ is the unique \'etale $R^+$-algebra lifting $\ovr{R^+}\otimes_A B$.   
\end{proof}

\begin{defi}
	Suppose $X$ is a prekimberlite, we let $(X)_{\mrm{qc}, \mrm{for\text{-}\acute{e}t}}$ be the category that has as objects maps $f:T\to X$ where $T$ is a prekimberlite and $f$ is formally adic, \'etale and quasicompact. Morphisms are maps of v-sheaves commuting with the structure map. We call objects in this category the \textit{\'etale formal neighborhoods} of $X$. 
\end{defi}
Morphisms in $X_{\mrm{for\text{-}\acute{e}t}}$ are automatically quasicompact, formally adic, \'etale and separated. Given a perfect scheme $S$ we let $(S)_{\mrm{\acute{e}t},\mrm{qc},\mrm{sep}}$ denote the category of schemes \'etale, quasicompact and separated over $S$.
\begin{thm}
	\label{thm:invarianceofetalesite}
	For $X$ a prekimberlite, reduction $(-)\red:(X)_{\mrm{qc},\mrm{for\text{-}\acute{e}t}}\cong (X\red)_{\mrm{qc},\mrm{\acute{e}t},\mrm{sep}}$ is an equivalence.
\end{thm}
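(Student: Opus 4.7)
The plan is to construct an explicit quasi-inverse via Heuer's functor. Define the candidate inverse on objects by $V\mapsto \Tf{X}{V}:=X\times_{\Heuer{(X\red)}} \Heuer{V}$, where the structural map $X\to \Heuer{(X\red)}$ is the Heuer-style specialization $\mrm{SP}_X$, and functorially on morphisms via $\Heuer{(-)}$.

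First I would verify that this is well-defined into $(X)_{\mrm{qc},\mrm{for\text{-}\acute{e}t}}$. By \Cref{pro:etalepreservesformaladic}, $\Heuer{V}\to \Heuer{(X\red)}$ is separated, quasicompact, formally adic and \'etale; all four properties pass to $\Tf{X}{V}\to X$ by basechange. Using that $(-)\red$ commutes with finite limits and $(\Heuer{W})\red=W$ (\Cref{pro:correctreductionHeuer}), one computes $(\Tf{X}{V})\red = X\red\times_{X\red}V = V$. The remaining prekimberlite conditions should be mostly routine: v-local formalizability is inherited from $X$ because quasicompact separated \'etale maps of formal Huber pairs lift v-locally to maps of formal Huber pairs via Heuer's construction; formal separatedness of $\Tf{X}{V}$ follows because the diagonal of $\Tf{X}{V}\to X$ is open-and-closed and $X$ is formally separated; and $V^\dia\to \Tf{X}{V}$ is a closed immersion since, under the Cartesian square defining formal adicness of $\Tf{X}{V}\to X$, it equals the pullback of the closed immersion $(X\red)^\dia\to X$.

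Next, essential surjectivity. For $T\to X$ in $(X)_{\mrm{qc},\mrm{for\text{-}\acute{e}t}}$, naturality of $\mrm{SP}$ furnishes a canonical $X$-morphism $\phi:T\to \Tf{X}{T\red}$. Both sides are formally adic, \'etale, and quasicompact over $X$, with common reduction $T\red$; in particular $\phi$ is \'etale and induces the identity on reductions. To conclude $\phi$ is an isomorphism, work v-locally on $X$: since $X$ is v-locally formal it suffices to treat $X=\Spdf{A}$ for a formal Huber pair $(A,A)$, and then reduce further to the case where the pullback of $\phi$ is between two \'etale formal neighborhoods of $\Spdf{A}$ with the same reduction $V\to \mrm{Spec}(\ovr{A})$. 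The classical invariance of the \'etale site under nilpotent thickenings and perfection identifies such covers with quasicompact separated \'etale $\ovr{A}$-schemes, which forces the two lifts to coincide. Full faithfulness then follows formally: using the previous step we may assume $T_i=\Tf{X}{V_i}$, and the universal property of the fiber product together with the adjunction implicit in \Cref{pro:correctreductionHeuer} yields
\[
\mrm{Hom}_X(\Tf{X}{V_1},\Tf{X}{V_2}) = \mrm{Hom}_{\Heuer{(X\red)}}(\Tf{X}{V_1},\Heuer{V_2}) = \mrm{Hom}_{X\red}(V_1,V_2),
\]
where the second identification uses that $(\Tf{X}{V_1})\red = V_1$.

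The hardest step will be verifying that the canonical $\phi:T\to \Tf{X}{T\red}$ is an isomorphism, that is, that two \'etale, formally adic, quasicompact neighborhoods of $X$ sharing a reduction must coincide. After reduction to formal Huber pairs this becomes an invariance-of-\'etale-site statement; the genuinely delicate point is threading Heuer's construction and the specialization map through the v-descent so that the schematic invariance statement can actually be applied, rather than any intrinsically deep fact about \'etale cohomology.
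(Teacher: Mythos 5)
Your construction of the candidate inverse $V\mapsto \Tf{X}{V}=X\times_{\Heuer{(X\red)}}\Heuer{V}$ and the verification that it lands in $(X)_{\mrm{qc},\mrm{for\text{-}\acute{e}t}}$ with $(\Tf{X}{V})\red=V$ is essentially the paper's essential-surjectivity step, and that part is fine. The genuine gap is in the step you yourself flag as the hardest one: showing that the canonical map $\phi:T\to \Tf{X}{T\red}$ is an isomorphism, i.e.\ that a morphism of \'etale formal neighborhoods inducing an isomorphism on reductions is an isomorphism. Your proposed resolution --- pull back to $X=\Spdf{A}$ and invoke ``classical invariance of the \'etale site under nilpotent thickenings and perfection'' --- does not apply as stated: after basechange, $T\times_X\Spdf{A}$ is merely a v-sheaf that is formally adic, quasicompact and \'etale over $\Spdf{A}$ in the v-sheaf sense; it is not known to be of the form $\Spdf{B}$ for a formal Huber pair \'etale over $A$, and no adic or schematic model is available to which the classical invariance statement could be applied. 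Algebraizability of \'etale formal neighborhoods is a \emph{corollary} of the theorem (the paper deduces exactly this afterwards), so using it here is circular. The paper instead proves the rigidity directly at the level of v-sheaves: surjectivity because the sheaf-theoretic image of $\phi$ is open and contains the reduced locus, and injectivity by basechanging to geometric points $\mrm{Spd}(C^+,C^+)$, writing the analytic locus of the fiber as $\coprod_{i=1}^n\mrm{Spa}(C,C'^+)$, and forcing $n=1$ via formalizability of a v-cover and openness of the \'etale map --- an argument with no scheme-theoretic stand-in.

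The same missing rigidity also undermines your full-faithfulness argument. The identification $\mrm{Hom}_{\Heuer{(X\red)}}(\Tf{X}{V_1},\Heuer{V_2})=\mrm{Hom}_{X\red}(V_1,V_2)$ is not an instance of any adjunction established by \Cref{pro:correctreductionHeuer}: applying $(-)\red$ gives a map in one direction, but to see it is bijective you must show that a map $\Tf{X}{V_1}\to\Heuer{V_2}$ over $\Heuer{(X\red)}$ is determined by, and reconstructible from, its reduction; the ``determined by'' half is again the statement that two sections of a separated \'etale formal neighborhood agreeing on the reduced locus coincide, which needs an argument of the same type as Step 1 (the paper avoids this by encoding morphisms as open-and-closed subsheaves of $Z\times_X Y$, resp.\ open-and-closed subschemes of $Z\red\times_{X\red}Y\red$, and matching them via $W\mapsto W\red$ and $V\mapsto\Tf{(Z\times_X Y)}{V\red}$, using Step 1 to check the projections are isomorphisms). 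So the overall architecture of your proof is sound and parallel to the paper's, but the two places where reductions must control v-sheaf morphisms are exactly where a new argument is needed, and the one you supply does not close them.
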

\begin{itemize}
	\item[Step 1:] If a morphism $f:Y\to W$ in $(X)_{\mrm{qc},\mrm{for\text{-}\acute{e}t}}$ induces an isomorphism $f^{\mrm{red}}:Y^{\mrm{red}}\to W^{\mrm{red}}$ then $f$ is an isomorphism.
\begin{proof}
	The sheaf-theoretic image of $Y$ in $W$ is an open subsheaf of $W$ containing $W^\mrm{red}$ so it must be $W$. This proves surjectivity. Since the map is qcqs we may prove injectivity on geometric points. We reduce to the case where $W=\mrm{Spd}(C^+,C^+)$, and the geometric point is given by the inclusion $\mrm{Spa}(C,C^+)\subseteq \mrm{Spd}(C^+)$. In this case, $Y^\mrm{an}$ has the form $\coprod^n_{i=1} \mathrm{Spa}(C,C'^+)$ with $C^+\subseteq C'^+\subseteq O_C$. To count connected components we may restrict to $W=\mrm{Spd}(O_C,O_C)$ so that $Y^\mrm{an}=\coprod^n_{i=1} \mathrm{Spa}(C,O_C)$ and $Y^\mrm{red}=\mathrm{Spec}(O_C/C^{\circ \circ})=W^\mrm{red}$. Replacing $C$ by a v-cover we may assume that $\mrm{Spa}(C,O_C)\to Y$ is formalizable. The map $\mrm{Spd}(O_C,O_C)\to Y$ is \'etale, and since its image is open $n=1$. 
\end{proof}

\item[Step 2:] $(-)\red:(X)_{\mrm{qc},\mrm{for\text{-}\acute{e}t}}\to (X\red)_{\mrm{qc},\mrm{\acute{e}t},\mrm{sep}}$ is fully-faithful.
	\begin{proof}
		Maps from $Z$ to $Y$ over $X$ are in bijection with open and closed subsheaves $W\subseteq Z\times_X Y$ whose projection to $Z$ is an isomorphism. Identically, maps $Z\red\to Y\red$ are in bijection with open and closed subschemes of $Z\red\times_{X\red} Y\red$ whose projection to $Z\red$ is an isomorphism. Since we can check isomorphisms by passing to reduction we get a bijection $W\mapsto W\red$ with inverse $V\mapsto \Tf{(Z\times_X Y)}{V\red}$.
	\end{proof}
\item[Step 3:] $(-)\red:(X)_{\mrm{qc},\mrm{for\text{-}\acute{e}t}}\to (X\red)_{\mrm{qc},\mrm{\acute{e}t},\mrm{sep}}$ is essentially surjective.
	\begin{proof}
		If $V\in (X\red)_{\mrm{qc},\mrm{\acute{e}t},\mrm{sep}}$ we let $\Tf{X}{V}:=X\times_{\Heuer{(X\red)}}\Heuer{V}$, where the map $X\to \Heuer{(X\red)}$ is $\mrm{SP}_X$. By \Cref{pro:etalepreservesformaladic}, $\Tf{X}{V}\to X$ is quasicompact, formally adic, \'etale and separated. Since $X$ is formally separated $\Tf{X}{V}$ also is. Although they are not formally separated, by \Cref{pro:correctreductionHeuer}, $\Heuer{(X\red)}$ and $\Heuer{V}$ still formalize uniquely totally disconnected spaces, this proves $\Tf{X}{V}$ is also v-formalizing and a specializing v-sheaf. It is a prekimberlite since $(\Tf{X}{V})\red=V$, and the map $V^\diamond\to \Tf{X}{V}$ is closed. 
	\end{proof}
\end{itemize}
If $X$ is a prekimberlite and $V\to X\red$ is an \'etale map, \Cref{thm:invarianceofetalesite} associates to $V$ the \'etale formal neighborhood $\Tf{X}{V}:=X\times_{\Heuer{X}}\Heuer{V}$.

\begin{cor}
Let $X$ be a prekimberlite and let $Y\in (X)_{\mrm{qc},\mrm{for\text{-}\acute{e}t}}$. If $X=\frak{X}^\dia$ for a formal scheme $\frak{X}$, then there is a formal scheme $\frak{Y}$ with $Y=\frak{Y}^\dia$. 
\end{cor}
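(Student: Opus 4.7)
The plan is to transport $Y$ down to an \'etale covering of the reduced scheme $\frak{X}_{\mrm{red}}$, then build $\frak{Y}$ formally on $\frak{X}$ by the invariance of the \'etale site under nilpotent thickenings, and finally recognize $\frak{Y}^\dia$ as the unique \'etale formal neighborhood of $X$ with the prescribed reduction.

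First, by \Cref{exa:formalispre-Kimberlite} and \Cref{pro:globaltworeds}, $X = \frak{X}^\dia$ is a prekimberlite with $X\red = (\frak{X}_{\mrm{red}})^{\mrm{perf}}$. Apply \Cref{thm:invarianceofetalesite} to $Y \to X$ to obtain a quasicompact separated \'etale map of perfect schemes $V := Y\red \to X\red = (\frak{X}_{\mrm{red}})^{\mrm{perf}}$. By invariance of the \'etale site under perfection, there is a unique (up to unique isomorphism) quasicompact separated \'etale map of schemes $\overline{V} \to \frak{X}_{\mrm{red}}$ whose perfection recovers $V \to X\red$.

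Next, I would build $\frak{Y} \to \frak{X}$. Working locally, write $\frak{X}$ as a union of open affine formal subschemes $\mrm{Spf}(A)$ with finitely generated ideal of definition $I \subseteq A$ containing $p$, so that $\frak{X}_{\mrm{red}}$ is locally $\mrm{Spec}(A/\sqrt{I})$. Over such a local piece $\overline{V}|_{\mrm{Spec}(A/\sqrt{I})}$ is an \'etale quasicompact separated scheme; by the topological invariance of the \'etale site, for each $n \ge 1$ it lifts uniquely to an \'etale quasicompact separated scheme over $\mrm{Spec}(A/I^n)$, compatibly in $n$. These lifts assemble into a quasicompact separated \'etale formal scheme $\frak{Y}|_{\mrm{Spf}(A)} \to \mrm{Spf}(A)$ (in the affine case it is simply $\mrm{Spf}(B)$ for the unique $I$-adically complete \'etale $A$-algebra $B$ lifting $\overline{V}|_{\mrm{Spec}(A/\sqrt{I})}$). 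Because the lifts are unique, they glue along the overlaps of any open cover of $\frak{X}$, producing a formal scheme $\frak{Y} \to \frak{X}$ which is \'etale, quasicompact, and (being \'etale over a separated $\frak{X}$ with separated $\overline{V}$) separated. In particular $\frak{Y}$ is separated in the sense of \Cref{conv:formalschemes}, so by \Cref{exa:formalispre-Kimberlite} the v-sheaf $\frak{Y}^\dia$ is a prekimberlite.

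It remains to identify $\frak{Y}^\dia$ with $Y$. By \Cref{pro:twoadics}, the adic morphism $\frak{Y} \to \frak{X}$ induces a formally adic map $\frak{Y}^\dia \to X$, and locally in the affine case $\mrm{Spd}(B,B) \to \mrm{Spd}(A,A)$ is visibly quasicompact and \'etale (the \'etaleness of $B$ over $A$ survives passage to $\dia$ since it passes to any test perfectoid via base change). So $\frak{Y}^\dia \in (X)_{\mrm{qc},\mrm{for\text{-}\acute{e}t}}$. Its reduction is, again by \Cref{pro:globaltworeds}, the perfection of $\frak{Y}_{\mrm{red}} = \overline{V}$, which is $V$. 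Thus $\frak{Y}^\dia$ and $Y$ are two objects of $(X)_{\mrm{qc},\mrm{for\text{-}\acute{e}t}}$ with the same reduction $V$, and by the equivalence in \Cref{thm:invarianceofetalesite} they are canonically isomorphic over $X$.

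The main obstacle is the middle step: producing $\frak{Y}$ as a genuine formal scheme, and in particular handling the possibly non-affine $\overline{V}$, which requires a careful glueing of the unique \'etale lifts along overlaps in $\frak{X}$. Once that is in hand, everything else is formal bookkeeping via the equivalences and functoriality already established in the paper.
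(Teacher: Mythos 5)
Your argument is correct and is exactly the intended derivation: the paper states this corollary without proof as a direct consequence of \Cref{thm:invarianceofetalesite}, and your route --- pass to $Y\red\to X\red=(\frak{X}_{\mrm{red}})^{\mrm{perf}}$, deperfect and lift uniquely along nilpotent thickenings to an \'etale adic formal scheme $\frak{Y}\to\frak{X}$, then identify $\frak{Y}^\dia$ with $Y$ via the equivalence since both have reduction $V$ --- is the argument the paper has in mind (compare the use of invariance of the \'etale site under perfection and nilpotent thickenings in \Cref{pro:etalepreservesformaladic}). The only points you gloss (glueing of the unique local lifts, and \'etaleness/quasicompactness of $\frak{Y}^\dia\to X$ after base change to perfectoid test objects) are standard and handled by the same techniques already used in the paper, so there is no gap.
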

\begin{rem}
	\label{rem:naivenearbycycles}
	If $\Ki=(\F,\Di)$ is a smelted kimberlite, by \Cref{thm:invarianceofetalesite}, we obtain a morphism of sites $\Psi':\Di_{\mrm{\acute{e}t}}\to (\F\red)_{\mrm{\acute{e}t}}$. This allows us to form a ``naive nearby cycles functor" $\mrm{R}\Psi'$. If $j:\Di\to \F \leftarrow (\F\red)^\diamond:i$ denote the inclusions, Scholze's $6$-functor formalism give us already a nearby cycles functor $i^*\mrm{R}j_*$. It is an interesting question to understand the relation between these two functors. We have partial progress in answering this question. We will report our findings on a future work. 
\end{rem}

Observe that $\mrm{SP}_X:X\to \Heuer{(X\red)}$ is separated. This allows us to make the following definition.
\begin{defi}
	A prekimberlite is \textit{valuative} if $\mrm{SP}_X:X\to \Heuer{(X\red)}$ is partially proper. 
\end{defi}
\begin{prop}
	If $\frak{X}$ is a separated formal scheme over $\Zp$ as in \Cref{conv:formalschemes}, then $\frak{X}^\dia$ is valuative.
\end{prop}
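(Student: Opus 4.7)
The plan is to verify the valuative criterion for partial properness, as the separatedness of $\mrm{SP}_{\frak{X}^\dia}$ is already recorded just above the definition of valuative. So I fix a perfectoid field $K$ of characteristic $p$, an open bounded valuation subring $K^+\subseteq K^\circ$, and a commutative square with $f:\mrm{Spa}(K,K^\circ)\to \frak{X}^\dia$ on top and $g:\mrm{Spa}(K,K^+)\to \Heuer{((\frak{X}^\dia)\red)}$ on the bottom, satisfying $\mrm{SP}_{\frak{X}^\dia}\circ f = g|_{\mrm{Spa}(K,K^\circ)}$. I aim to produce a unique diagonal lift $\tilde f:\mrm{Spa}(K,K^+)\to \frak{X}^\dia$.

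Next, I unwind the data. Choosing the untilt $K^\sharp$ determined by $f$, \Cref{pro:SWfullyfaithfulformal} converts $f$ into a morphism of formal schemes $\alpha:\mrm{Spf}(K^{\sharp,\circ})\to \frak{X}$, and the desired $\tilde f$ into a morphism $\gamma:\mrm{Spf}(K^{\sharp,+})\to \frak{X}$ restricting to $\alpha$ on the ``generic part''. Using \Cref{pro:correctreductionHeuer} together with the identification $(\frak{X}^\dia)\red=\frak{X}\red^{\mrm{perf}}$ coming from \Cref{pro:tworeds}, the bottom arrow $g$ becomes a scheme morphism $\bar\beta:\mrm{Spec}(V)\to \frak{X}\red$ where $V:=K^{\sharp,+}/K^{\sharp,\circ\circ}$ is a (perfect) valuation subring of the residue field $k^\sharp$ of $K^{\sharp,\circ}$, whose fraction field is $k^\sharp$ itself. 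Commutativity of the square then translates into the condition that $\bar\beta$ and the reduction of $\alpha$ agree on the generic point $\mrm{Spec}(k^\sharp)\hookrightarrow \mrm{Spec}(V)$.

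To construct $\gamma$, let $x\in |\frak{X}\red|$ be the image of the closed point of $\mrm{Spec}(V)$ and pick an open affine $\mrm{Spf}(A)\subseteq \frak{X}$ containing $x$. Since both $\mrm{Spf}(K^{\sharp,\circ})$ and $\mrm{Spf}(K^{\sharp,+})$ are one-point formal schemes specializing to $x$, the map $\alpha$ and any candidate $\gamma$ factor through $\mrm{Spf}(A)$; locally $\alpha$ amounts to a continuous ring map $\alpha^*:A\to K^{\sharp,\circ}$. The compatibility with $\bar\beta$ forces $\alpha^*$ modulo $K^{\sharp,\circ\circ}$ to land in $V\subseteq k^\sharp$, which means $\alpha^*(A)\subseteq K^{\sharp,+}+K^{\sharp,\circ\circ}=K^{\sharp,+}$, the last equality holding because any open bounded valuation subring of $K^\sharp$ contains $K^{\sharp,\circ\circ}$ (an element $y$ with $|y|<1$ cannot satisfy $1/y\in K^{\sharp,+}\subseteq K^{\sharp,\circ}$). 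Since the $\varpi$-adic topologies on $K^{\sharp,+}$ and $K^{\sharp,\circ}$ coincide, $\alpha^*$ viewed as a map $A\to K^{\sharp,+}$ remains continuous, and \Cref{pro:SWfullyfaithfulformal} converts it into the desired $\gamma:\mrm{Spf}(K^{\sharp,+})\to \frak{X}$, hence the lift $\tilde f$. The required compatibilities are built into the construction, and uniqueness is supplied by the already-known separatedness of $\mrm{SP}_{\frak{X}^\dia}$.

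The main obstacle is bookkeeping: reconciling Heuer's $R^+/R^{\circ\circ}$-reduction on the target with the perfection appearing in $(\frak{X}^\dia)\red$, verifying that $V$ embeds as a valuation subring of $k^\sharp$ with $k^\sharp$ as its fraction field so the compatibility condition is genuinely of the expected shape, and checking the elementary but crucial fact that $\mrm{Spf}(K^{\sharp,+})$ is topologically a single point specializing to $x$. Separatedness of $\frak{X}$ enters implicitly, since it guarantees that $\frak{X}\red$ is a separated scheme and that the choice of open affine $\mrm{Spf}(A)$ does not affect the glueing. Once these are aligned, the heart of the argument is the elementary inclusion $\alpha^*(A)\subseteq K^{\sharp,+}$ extracted from the residue-level compatibility.
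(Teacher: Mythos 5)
Your central computation is exactly the paper's key point: compatibility at the residue level forces $\alpha^*(A)\subseteq K^{\sharp,+}$ because $K^{\sharp,+}$ is the preimage of $\ovr{K^{\sharp,+}}$ in $K^{\sharp,\circ}$, which the paper records as $R^{\sharp,+}=R^{\sharp,\circ}\times_{\ovr{R^{\sharp,\circ}}}\ovr{R^{\sharp,+}}$. The gap is in the shape of the criterion you verify it against. Partial properness, in the sense used in this paper (Scholze's notion from \cite[\S 18]{Et}, and as quantified in the paper's own proof: ``given a map $B\to R^{\sharp,\circ}$ such that $\ovr{B}\to\ovr{R^{\sharp,\circ}}$ factors through $\ovr{R^{\sharp,+}}$\dots''), is a lifting property against \emph{all} affinoid perfectoid pairs $\Spa{R}$ relative to $\mrm{Spa}(R,R^\circ)\subseteq\Spa{R}$, not only against perfectoid fields $(K,K^+)$. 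You test only field-valued pairs and neither prove nor cite a reduction of the general case to that one; such a reduction is not formal, and the statements in the literature that let one check partial properness on points/specializations require the map to be representable in locally spatial diamonds (or come from analytic adic spaces), which does not apply to $\mrm{SP}_{\frak{X}^\dia}:\frak{X}^\dia\to\Heuer{((\frak{X}^\dia)\red)}$, whose source has non-analytic points. As written, you have established a field-valued valuative property, which is weaker than what the definition demands.

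The gap is fixable, but not by your localization step as it stands: factoring everything through a single open affine $\mrm{Spf}(A)\subseteq\frak{X}$ uses that $\mrm{Spa}(K^\sharp,K^{\sharp,+})$ has a unique closed point, and this breaks for a general $\Spa{R}$. The paper instead localizes on the \emph{target}, invoking \cite[Proposition 18.6]{Et} to reduce to $\frak{X}=\Spdf{B}$, after which your containment argument goes through verbatim for an arbitrary pair: a continuous map $B\to R^{\sharp,\circ}$ whose reduction factors through $\ovr{R^{\sharp,+}}$ lands in $R^{\sharp,+}$, since $R^{\sharp,+}\supseteq R^{\sharp,\circ\circ}$ is the preimage of $\ovr{R^{\sharp,+}}$. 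Quoting separatedness of $\mrm{SP}_{\frak{X}^\dia}$ for uniqueness is fine, and your bookkeeping points (passing from $\mrm{Spa}(K^\sharp,K^{\sharp,\circ})\to\frak{X}^{\mrm{ad}}$ to $\mrm{Spf}(K^{\sharp,\circ})\to\frak{X}$ via \Cref{pro:SWfullyfaithfulformal}, and evaluating Heuer's sheaf on a field without sheafification) are harmless; the missing content is solely the existence statement in the correct generality of test objects.
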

\begin{proof}
	By \cite[Proposition 18.6]{Et}, one can verify partial properness open locally on the target, this reduces to the case $\frak{X}=\Spdf{B}$. The valuative criterion asks if given a map $B\to R^{\sharp,\circ}$ such that $\ovr{B}\to \ovr{R^{\sharp,\circ}}$ factors through $\ovr{R^{\sharp,+}}$ then $B\to R^{\sharp,\circ}$ factors through $R^{\sharp,+}$. This follows from $R^{\sharp,+}=R^{\sharp,\circ}\times_{\ovr{R^{\sharp,\circ}}} \ovr{R^{\sharp,+}}$. 
\end{proof}
\begin{prop}
	\label{pro:partiallyproperisvaluative}
Let $f:X\to Y$ be a map of prekimberlites. If $f$ is partially proper and $Y$ is valuative then $X$ is valuative. 
\end{prop}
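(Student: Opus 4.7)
The plan is to realize $\mrm{SP}_X$ as a composition of partially proper maps by exploiting the naturality square
\begin{center}
\begin{tikzcd}
X\ar{r}{\mrm{SP}_X}\ar{d}{f} & \Heuer{(X\red)}\ar{d}{\Heuer{(f\red)}}\\
Y\ar{r}{\mrm{SP}_Y} & \Heuer{(Y\red)}.
\end{tikzcd}
\end{center}
Form the fiber product $Z:=Y\times_{\Heuer{(Y\red)}}\Heuer{(X\red)}$ with projections $p:Z\to Y$ and $q:Z\to \Heuer{(X\red)}$, and let $h:=(f,\mrm{SP}_X):X\to Z$ be the induced morphism, so $p\circ h=f$ and $q\circ h=\mrm{SP}_X$.

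Since $Y$ is valuative, $\mrm{SP}_Y$ is partially proper, and its base change $q$ along $\Heuer{(f\red)}$ is also partially proper. It therefore suffices to prove that $h$ is partially proper, since then $\mrm{SP}_X=q\circ h$ will be. For this, factor $h$ as
$$X\xrightarrow{(\mrm{id}_X,h)}X\times_Y Z\xrightarrow{\pi_Z}Z,$$
where the fiber product is formed using $f:X\to Y$ and $p:Z\to Y$. The projection $\pi_Z$ is the base change of $f$ along $p$, hence partially proper by the hypothesis on $f$. The map $(\mrm{id}_X,h)$ is a section of the other projection $\pi_X:X\times_Y Z\to X$, which is the base change of $p$ along $f$; since a section of a separated morphism is automatically a closed immersion (via the standard diagonal argument), I am reduced to showing that $p$ is separated.

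Separatedness of $p$ follows by base change from separatedness of $\Heuer{(f\red)}$. The morphism $f\red:X\red\to Y\red$ is a separated map of perfect schemes, because $f$ is partially proper (hence separated) and the reduction functor, as a right adjoint, preserves diagonals and closed immersions, so $\Delta_{f\red}=(\Delta_f)\red$ is a closed immersion. Then $\Heuer{(-)}$ preserves separatedness: the diagonal of $\Heuer{(f\red)}$ coincides with $\Heuer{(\Delta_{f\red})}$, and a closed immersion $i:C\hookrightarrow D$ of perfect schemes, locally cut out by a finitely generated ideal, induces a closed immersion $\Heuer{i}$ of v-sheaves---its base change along any $\Spa{R}\to \Heuer{D}$ with $\Spa{R}$ strictly totally disconnected is the closed subspace of $\Spa{R}$ defined by topological nilpotence of a finite list of elements of $R^+$. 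Chaining everything, $(\mrm{id}_X,h)$ is a closed immersion, so $h$ is partially proper, and hence $\mrm{SP}_X=q\circ h$ is partially proper, as required. The main mildly technical point is the verification that $\Heuer{(-)}$ preserves separatedness, but this is of the same flavor as manipulations already used in the paper's treatment of closed immersions versus Heuer's functor.
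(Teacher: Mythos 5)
Your reduction is fine up to the point where you need the section $(\mrm{id}_X,h)$ of $\pi_X$ to be a closed immersion: for that you need $p$, hence $\Heuer{(f\red)}$, to be separated, and this is where the argument genuinely breaks. The functor $\Heuer{(-)}$ does \emph{not} send closed immersions of perfect schemes to closed immersions of v-sheaves, and your own description of the base change shows why: the condition that finitely many elements of $R^+$ become topologically nilpotent in a test algebra cuts out the locus $\bigcap_i \N{a_i}{1}$ in $\Spa{R}$, which is stable under generization and is an \emph{open} subsheaf, not a closed one. This is precisely the phenomenon recorded in \Cref{pro:constgivesopen} (and in the footnote to the introduction): for a closed constructible $S\subseteq X\red$, the map $\Heuer{S}\to \Heuer{(X\red)}$, equivalently $\Tf{\F}{S}\to\F$, is an \emph{open} immersion, formally adic only when $S$ is open. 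Concretely, apply $\Heuer{(-)}$ to the diagonal of the perfect affine line over $\Fp$ and test against the $\Spa{R}$-point of the product given by the pair $(T,0)$ with $R=C\langle T^{1/p^\infty}\rangle$: the fiber of $\Heuer{(\Delta)}$ is the open, non-closed locus $\{|T|<1\}\subseteq \Spa{R}$. So $\Heuer{(f\red)}$ is not separated in general, $p$ need not be separated, and the section step fails; the proof collapses there. (A smaller issue: ``reduction preserves closed immersions'' is not a formal property of a right adjoint; in this paper it is only available for \emph{formally adic} closed immersions via \Cref{lem:formallyclosedisZariski}. That part is repairable, since the relevant diagonals are formally closed by formal separatedness, but the Heuer step is not.)

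The way out is to put the separatedness burden on the source rather than on $\Heuer{(f\red)}$, which is what the paper's (very terse) proof does: $\mrm{SP}_X$ is separated because $X$ is formally separated (this is observed just before the definition of valuative prekimberlites), and the composite $X\xrightarrow{f}Y\xrightarrow{\mrm{SP}_Y}\Heuer{(Y\red)}$ is partially proper as a composition of partially proper maps. One then checks the lifting criterion for $\mrm{SP}_X$ by producing the lift via the partially proper composite, and identifies it with the prescribed map to $\Heuer{(X\red)}$ not through separatedness of $\Heuer{(f\red)}$ but through the rigidity of Heuer sheaves of separated schemes: since $X\red$ is separated (again a consequence of formal separatedness of $X$, as used in \Cref{lem:surjectivchecclosedpoints}), restriction along $\Spa(R,R^\circ)\subseteq \Spa(R,R^+)$ is injective on $\Heuer{(X\red)}$-valued points, because $\ovr{R^+}\to \ovr{R^\circ}$ is injective. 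Your fiber-product bookkeeping is compatible with this, but the specific lemma you rely on is false, so the argument must be rerouted along these lines.
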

\begin{proof}
	It follows from two facts: $X\to \Heuer{(Y\red)}$ is partially proper and $X\to \Heuer{(X\red)}$ is separated. 	
\end{proof}
\begin{prop}
	\label{pro:valuativegivesspecializing}
	If $\F$ is a valuative prekimberlite then $\mrm{sp}_{\F}:|\F|\to |\F\red|$ is specialising as a map of topological spaces. 
\end{prop}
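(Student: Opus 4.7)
The plan is to construct $s_2\in|\F|$ with $s_2\preceq s_1$ and $\mrm{sp}_\F(s_2)=t_2$ by enlarging the valuation ring underlying a formalisable representative of $s_1$ and then invoking the partial properness of $\mrm{SP}_\F\colon\F\to\Heuer{(\F\red)}$. After passing to a v-cover, I would represent $s_1$ by some $x_1\colon\mrm{Spa}(C,C^+)\to\F$ with $C$ an algebraically closed perfectoid field in characteristic $p$ such that $x_1$ extends to a formalisation $\mrm{Spd}(C^+,C^+)\to\F$. By \Cref{pro:correctreductionHeuer}, the composite $\mrm{SP}_\F\circ x_1$ then corresponds to a scheme morphism $\sigma\colon\mrm{Spec}(\ovr{C^+})\to\F\red$ sending the closed point to $t_1$. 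Writing $k=O_C/C^{\circ \circ}$ and $k_c=\ovr{C^+}/\mathfrak{m}_{\ovr{C^+}}$, the residue of $\sigma$ at the closed point supplies a field embedding $k(t_1)\hookrightarrow k_c$.

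Next, exploiting $t_2\preceq t_1$ in the scheme $\F\red$, I would choose a valuation ring $V\subseteq k(t_1)$ whose associated morphism $\mrm{Spec}(V)\to\F\red$ realises this specialisation (generic point to $t_1$, closed point to $t_2$), then extend $V$ to a valuation ring $V'\subseteq k_c$ lying above it, and set $\widetilde W:=\pi^{-1}(V')\subseteq\ovr{C^+}$, where $\pi\colon\ovr{C^+}\to k_c$ is the residue projection. By the standard composition-of-valuations theory, $\widetilde W$ is a sub-valuation-ring of $k$ contained in $\ovr{C^+}$. The crucial observation is that on an affine open $\mrm{Spec}(A)\subseteq\F\red$ containing both $t_1$ and $t_2$, the ring map $\sigma^*\colon A\to\ovr{C^+}$ actually lands in $\widetilde W$: its post-composition with $\pi$ is the residue map $A\to k(t_1)$, which takes values in $V\subseteq V'$ because $V$ has centre $t_2$ (standard valuative criterion for schemes). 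This yields a morphism $\tilde\sigma\colon\mrm{Spec}(\widetilde W)\to\F\red$ extending $\sigma$ whose new closed point maps to $t_2$. Setting $L^+:=\pi_C^{-1}(\widetilde W)\subseteq O_C$ for $\pi_C\colon O_C\to k$ produces an open bounded valuation subring of $C$ with $L^+\subseteq C^+$ and $\ovr{L^+}=\widetilde W$.

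The datum $\tilde\sigma$ then translates, again by \Cref{pro:correctreductionHeuer}, into a morphism $\mrm{Spa}(C,L^+)\to\Heuer{(\F\red)}$ that is compatible on the rank-one point with $x_0:=x_1|_{\mrm{Spa}(C,O_C)}\colon\mrm{Spa}(C,O_C)\to\F$. Since $\mrm{SP}_\F$ is partially proper by hypothesis, the valuative criterion furnishes a unique lift $\tilde x\colon\mrm{Spa}(C,L^+)\to\F$ fitting in the compatibility square; applying the uniqueness clause to the intermediate adic space $\mrm{Spa}(C,C^+)$ forces $\tilde x|_{\mrm{Spa}(C,C^+)}=x_1$, so $\tilde x$ genuinely extends $x_1$. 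Defining $s_2$ to be the image under $|\tilde x|$ of the closed point of $\mrm{Spa}(C,L^+)$, the containment $L^+\subseteq C^+$ together with continuity of $|\tilde x|$ gives $s_2\preceq s_1$, whereas the computation of $\mrm{sp}_\F$ via the formalisation $\mrm{Spd}(L^+,L^+)\to\F$ of $\tilde x$, whose reduction is $\tilde\sigma$ by construction, yields $\mrm{sp}_\F(s_2)=t_2$.

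The main obstacle, as I see it, lies in the middle step, namely producing the extended morphism $\tilde\sigma\colon\mrm{Spec}(\widetilde W)\to\F\red$. A naive attempt to lift $t_2\preceq t_1$ inside $\mrm{Spec}(\ovr{C^+})$ alone fails because $t_2$ need not lie in the image of $\sigma$, so one is forced to compose $\ovr{C^+}$ with a valuation ring of $k_c$ extending one on $k(t_1)$ centred at $t_2$, and then to verify using the valuative criterion for $\F\red$ that the affine coordinate ring really lands inside this smaller sub-valuation-ring. Only once this has been set up can the partial properness of $\mrm{SP}_\F$ be brought to bear to lift the whole picture up from $\Heuer{(\F\red)}$ to $\F$.
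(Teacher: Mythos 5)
Your proof is correct and takes essentially the same route as the paper's: represent the point by a formalizable geometric point $\mrm{Spa}(C,C^+)\to\F$, enlarge $C^+$ to a composite valuation subring whose reduction realizes the specialization $t_1\rightsquigarrow t_2$ in $\F\red$ (the paper obtains the needed valuation ring by directly dominating the local ring of $\overline{\{t_1\}}$ at $t_2$ inside the residue field of $\ovr{C^+}$, which amounts to your two-step choice of $V\subseteq V'$), and then lift the resulting map $\mrm{Spa}(C,L^+)\to\Heuer{(\F\red)}$ to $\F$ via partial properness of $\mrm{SP}_\F$. Your explicit verifications that $\sigma^*$ lands in $\widetilde W$ and that uniqueness forces $\tilde x|_{\mrm{Spa}(C,C^+)}=x_1$ are precisely the steps the paper leaves implicit.
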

\begin{proof}
Let $r\in |\F|$, $x=\mrm{sp}_{\F}(r)$ and $y\in |\F\red|$ specializing from $x$. We construct $q$ specializing from $r$ that maps to $y$. Pick a formalizable representative $f_r:\Spa{C}\to \F$. Let $K=O_C/C^{\circ \circ}$ and $K^+=C^+/C^{\circ \circ}$, then $x$ is the image of closed point under $f_x:\mrm{Spec}(K^+)\to \F\red$. Let $R$ be the local ring obtained by intersecting the closure of $x$ and the localization at $y$. Let $k=K^+/\frak{m}_{K^+}$, so that $R\subseteq k$. By \cite[Tag 00IA]{Stacks}, we have a valuation subring $R\subseteq V\subseteq k$ such that $\mrm{Frac}(V)=k$ and $V$ dominates $R$. This induces a valuation subring $K'^+\subseteq K^+$  and a map $f_y:\mrm{Spec}(K'^+)\to \F\red$ whose closed point maps to $y$. In turn, this induces a valuation subring $C'^+\subseteq C^+$ with $C'^+/C^{\circ \circ}=K'^+$. Now, $f_y$ induces a map $\mrm{Spa}(C,C'^+)\to \Heuer{(\F\red)}$ extending $\mrm{SP}_\F\circ f_r$. By partial properness this lifts to a map $f_q:\mrm{Spa}(C,C'^+)\to \F$ and clearly $\mrm{sp}_{\F}(q)=y$. 
\end{proof}
 
\begin{prop}
	\label{pro:preservevaluative}
Formal neighborhoods and \'etale formal neighborhoods of valuative prekimberlites are valuative prekimberlites.
\end{prop}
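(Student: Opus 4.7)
The plan is to realise both kinds of neighbourhood as base changes of the structural map $\mrm{SP}_{\F}$ and conclude by stability of partial properness under base change.

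First, I would rely on the two reinterpretations stated in the excerpt: for a locally closed subscheme $S\subseteq |\F\red|$ the formal neighbourhood satisfies $\Tf{\F}{S}=\F\times_{\Heuer{(\F\red)}}\Heuer{S}$, and for an \'etale, quasicompact, separated map $V\to \F\red$ the \'etale formal neighbourhood is defined by the same recipe $\Tf{\F}{V}=\F\times_{\Heuer{(\F\red)}}\Heuer{V}$. Combined with \Cref{pro:tubularalmostpre-Kimb} and \Cref{thm:invarianceofetalesite} we already know that $\Tf{\F}{S}$ (resp.\ $\Tf{\F}{V}$) is a prekimberlite with reduction $S$ (resp.\ $V$), so what remains to check is partial properness of its specialization map.

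Next, I would verify that the projection $\Tf{\F}{S}\to \Heuer{S}$ coming from the fibre product is, up to the canonical identification $(\Tf{\F}{S})\red=S$, the Heuer specialization map $\mrm{SP}_{\Tf{\F}{S}}$. This is a naturality check: applying $\mrm{SP}$ to the inclusion $\Tf{\F}{S}\to \F$ gives a commutative square
\begin{equation*}
\begin{tikzcd}
\Tf{\F}{S}\ar{r}\ar{d}{\mrm{SP}_{\Tf{\F}{S}}} & \F\ar{d}{\mrm{SP}_{\F}}\\
\Heuer{(\Tf{\F}{S})\red}\ar{r} & \Heuer{(\F\red)}
\end{tikzcd}
\end{equation*}
whose universal property into $\F\times_{\Heuer{(\F\red)}}\Heuer{S}$ forces the identification. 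Consequently $\mrm{SP}_{\Tf{\F}{S}}$ is the base change of $\mrm{SP}_{\F}$ along $\Heuer{S}\to \Heuer{(\F\red)}$.

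Finally, since $\F$ is valuative by hypothesis, $\mrm{SP}_{\F}$ is partially proper; partial properness of maps of small v-sheaves is preserved by arbitrary base change (this is part of the standard formalism around \cite[Proposition 18.6]{Et}), so its base change $\mrm{SP}_{\Tf{\F}{S}}$ is partially proper as well, proving $\Tf{\F}{S}$ is valuative. The \'etale case is identical once one uses the formula $\Tf{\F}{V}=\F\times_{\Heuer{(\F\red)}}\Heuer{V}$ from the proof of \Cref{thm:invarianceofetalesite}. The only genuine verification is the compatibility in the second step, i.e.\ that the projection from the fibre product really coincides with $\mrm{SP}_{\Tf{\F}{S}}$; once that is in hand the rest of the argument is formal, and no additional subtleties about whether $\Heuer{S}\to \Heuer{(\F\red)}$ is itself partially proper (which generally fails for open $S$) need to be confronted.
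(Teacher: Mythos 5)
Your proof is correct and follows the paper's own route: the paper's proof is precisely the one-line observation that both kinds of neighborhood are base changes (via $\Tf{\F}{S}=\F\times_{\Heuer{(\F\red)}}\Heuer{S}$ and $\Tf{\F}{V}=\F\times_{\Heuer{(\F\red)}}\Heuer{V}$), so partial properness of $\mrm{SP}_{\F}$ passes to the projection, which is identified with $\mrm{SP}$ of the neighborhood using $(\Tf{\F}{S})\red=S$, resp.\ $(\Tf{\F}{V})\red=V$. Your expansion — the naturality square identifying the fibre-product projection with the specialization map, and the remark that no properness of $\Heuer{S}\to\Heuer{(\F\red)}$ is needed — is exactly the content left implicit in the paper's "immediate from their expression as a basechange."
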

\begin{proof}
This is immediate from their expression as a basechange.	
\end{proof}

\subsection{Kimberlites and smelted kimberlites}

\begin{defi}
	Let $\F$ be a valuative prekimberlite. 
	\begin{enumerate}
		\item A \textit{smelted kimberlite} $\Ki$ is a pair $\Ki:=(\F,\Di)$, where $\Di \subseteq \F^{\mrm{an}}$ is an open subsheaf such that $\Di$ is a quasiseparated locally spatial diamond, and such that the map $\Di\to \F$ is partially proper. 
		\item We define the specialization map $\mrm{sp}_{\mathcal{K}}:|\Di|\to |\F\red|$ as the composition $|\Di|\to |\F|\xrightarrow{\mrm{sp}_{\F}} |\F\red|$. If the context is clear, we write $\mrm{sp}_\Di$ instead of $\mrm{sp}_\Ki$.
		\item We say $\G$ is a \textit{kimberlite} if $\Ki_{\G}:=(\G,\G^\mrm{an})$ is a smelted-kimberlite and $\mrm{sp}_{\Ki_\G}$ is quasicompact. 
	\end{enumerate}
\end{defi}

\begin{rem}
	Given a valuative prekimberlite $\F$ one is mostly interested in smelted kimberlites $(\F,\Di)$ where $\Di=\F^\mrm{an}$ or where $\Di=\F\times_{\mrm{Spd}(O_K)}\mrm{Spd}(K,O_K)$ when $\F$ comes with a map $\F\to \mrm{Spd}(O_K)$ for $O_K$ a complete rank $1$ valuation ring. Notice that $\F^\mrm{an}\to \F$ is always partially proper if $\F$ is a prekimberlite.
\end{rem}

\begin{rem}
	Let $\F$ be a kimberlite. The quasicompactness hypothesis of $\mrm{sp}_{\F^\mrm{an}}$ is equivalent to asking that $\mrm{sp}_{\F^\mrm{an}}^{-1}(U)$ is a spatial diamond for all $U\subseteq \F\red$ with $U$ affine. 
\end{rem}

\begin{defi}
	Let $\Ki=(\F,\Di)$ be a smelted kimberlite. Given a constructible locally closed subset $S\subseteq \F\red$ we let $\Tup{\Di}{S}=\Tf{\F}{S}\times_\F \Di$. We call this subsheaf the \textit{tubular neighborhood} of $\Di$ around $S$. If $\G$ is a kimberlite we write $\Tup{\G}{S}$ for $\Tf{\G}{S}\times_\G \G^\mrm{an}$.
\end{defi}
\begin{prop}
	\label{tubularsmeltedkimberlites}
	If $\Ki=(\F,\Di)$ is a smelted kimberlite, then $(\Tf{\F}{S},\Tup{\Di}{S})$ is a smelted kimberlite. Moreover, $\Tup{\Di}{S}$ is the open subdiamond corresponding to the interior of $\mrm{sp}_{\Ki}^{-1}(S)$ in $|\Di|$.
\end{prop}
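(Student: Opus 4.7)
The plan is first to verify the smelted kimberlite axioms for the pair $(\Tf{\F}{S}, \Tup{\Di}{S})$, and then to identify the underlying topological space of $\Tup{\Di}{S}$ with the interior of $\mrm{sp}_\Ki^{-1}(S)$.

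First I would gather the structural facts already established. By \Cref{pro:tubularalmostpre-Kimb}, $\Tf{\F}{S}$ is a prekimberlite with $(\Tf{\F}{S})\red = S$, and by \Cref{pro:preservevaluative} it is valuative. Since $S$ is constructible, \Cref{pro:constgivesopen} ensures that $\Tf{\F}{S} \hookrightarrow \F$ is an open immersion, so the basechange $\Tup{\Di}{S} = \Tf{\F}{S} \times_\F \Di \hookrightarrow \Di$ is also an open immersion. As an open subsheaf of the quasiseparated locally spatial diamond $\Di$, $\Tup{\Di}{S}$ inherits both properties by \Cref{pro:spatial-spectral}. To see $\Tup{\Di}{S} \subseteq \Tf{\F}{S}^{\mrm{an}}$, apply \Cref{pro:pullbackformal} to identify $(\F\red)^\diamond \times_\F \Tf{\F}{S}$ with $S^\diamond = ((\Tf{\F}{S})\red)^\diamond$; intersecting with $\F^{\mrm{an}}$ gives $\F^{\mrm{an}} \times_\F \Tf{\F}{S} = \Tf{\F}{S}^{\mrm{an}}$, and since $\Di \subseteq \F^{\mrm{an}}$, the claim follows. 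Finally, $\Tup{\Di}{S} \to \Tf{\F}{S}$ is partially proper as the basechange of the partially proper inclusion $\Di \to \F$.

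For the ``moreover'' identification, the key point is that $|\Tf{\F}{S}|$, viewed as an open subset of $|\F|$, is precisely the interior of $\mrm{sp}_\F^{-1}(|S|)$ in $|\F|$. Indeed, by the correspondence between open subsheaves of $\F$ and open subsets of $|\F|$, an open $V \subseteq |\F|$ with associated subsheaf $\mathcal{V}$ satisfies $\mathcal{V} \subseteq \Tf{\F}{S}$ if and only if every map $\Spa{R} \to \mathcal{V}$ has $|\Spa{R}|$ mapping into $|S|$ under $\mrm{sp}_\F$, which is equivalent to $V \subseteq \mrm{sp}_\F^{-1}(|S|)$. Hence $|\Tf{\F}{S}|$ is the largest open of $|\F|$ contained in $\mrm{sp}_\F^{-1}(|S|)$, i.e.\ its interior. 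Since $|\Di|$ is open in $|\F|$ and $|\Tup{\Di}{S}| = |\Tf{\F}{S}| \cap |\Di|$, and since taking interior commutes with intersecting with an open set, we obtain
\[
|\Tup{\Di}{S}| = \mathrm{int}_{|\F|}(\mrm{sp}_\F^{-1}(|S|)) \cap |\Di| = \mathrm{int}_{|\Di|}(\mrm{sp}_\F^{-1}(|S|) \cap |\Di|) = \mathrm{int}_{|\Di|}(\mrm{sp}_\Ki^{-1}(|S|)),
\]
using that $\mrm{sp}_\Ki$ is the composition $|\Di| \hookrightarrow |\F| \xrightarrow{\mrm{sp}_\F} |\F\red|$.

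The main delicate point is the set-theoretic identification of $|\Tf{\F}{S}|$ with the interior of $\mrm{sp}_\F^{-1}(|S|)$. In general $\mrm{sp}_\F^{-1}(|S|)$ need not be open in $|\F|$ (for instance when $S$ is a closed constructible subscheme, this preimage is closed), so the passage to the interior is genuinely necessary, and this is precisely where the constructibility of $S$ enters via \Cref{pro:constgivesopen} to make $\Tf{\F}{S} \to \F$ open in the first place. Everything else is diagram-chasing with the established properties of formal neighborhoods, partial properness, and the open-subsheaf/open-subset dictionary.
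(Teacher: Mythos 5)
Your verification that $\Tup{\Di}{S}\subseteq \Tf{\F}{S}^{\mrm{an}}$ rests on a false identification. \Cref{pro:pullbackformal} gives $(\F\red)^\diamond\times_\F\Tf{\F}{S}=\Tf{((\F\red)^\diamond)}{S}$, the formal neighborhood of $S$ inside $(\F\red)^\diamond$, and this is strictly larger than $S^\diamond$ unless $S$ is open in $\F\red$: locally, if $\F\red=\mrm{Spec}(A)$ and $S$ is cut out by a finitely generated ideal $I$, then by \Cref{pro:twoformalneighb} this fiber product is $\mrm{Spd}$ of the $I$-adic completion of $A$, which has (d-analytic) points where the elements of $I$ become nonzero topologically nilpotent elements, while $S^\diamond$ requires them to vanish; compare also the proof of \Cref{pro:tubularalmostpre-Kimb}, where equality with $S^\diamond$ is asserted only in the open case. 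Consequently your asserted equality $\F^{\mrm{an}}\times_\F\Tf{\F}{S}=\Tf{\F}{S}^{\mrm{an}}$ also fails in general; only the inclusion $\F^{\mrm{an}}\times_\F\Tf{\F}{S}\subseteq\Tf{\F}{S}^{\mrm{an}}$ holds. That inclusion is all you need, and it has a direct proof bypassing \Cref{pro:pullbackformal}: since $S\to\F\red$ is a monomorphism and $\diamond$ preserves finite limits, $((\Tf{\F}{S})\red)^\diamond=S^\diamond\subseteq(\F\red)^\diamond$; sections of $\Di\subseteq\F^{\mrm{an}}$ have topological image disjoint from $|(\F\red)^\diamond|$, hence sections of $\Tup{\Di}{S}$ miss the image of $|S^\diamond|$, which is exactly the containment $\Tup{\Di}{S}\subseteq\Tf{\F}{S}^{\mrm{an}}$. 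The remaining axioms (valuative prekimberlite via \Cref{pro:tubularalmostpre-Kimb} and \Cref{pro:preservevaluative}, openness via \Cref{pro:constgivesopen}, partial properness by basechange, quasiseparated locally spatial as an open subsheaf of $\Di$) you handle exactly as the paper does.

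For the ``moreover'' identification your route is different from the paper's, and it is correct. You argue purely formally: by the defining Cartesian square of \Cref{defi:tubularneigh}, a section of $\F$ lies in $\Tf{\F}{S}$ exactly when its topological image is carried into $|S|$ by $\mrm{sp}_\F$, so under the open-subsheaf/open-subset dictionary the open subsheaves contained in $\Tf{\F}{S}$ correspond precisely to the open subsets of $|\F|$ contained in $\mrm{sp}_\F^{-1}(|S|)$; since $\Tf{\F}{S}$ is itself open (\Cref{pro:constgivesopen}, which is where constructibility of $S$ enters), $|\Tf{\F}{S}|$ is the interior of $\mrm{sp}_\F^{-1}(|S|)$, and intersecting with the open $|\Di|$ gives the statement. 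The paper instead sandwiches $|\Tup{\Di}{S}|\subseteq(\mrm{sp}_\Ki^{-1}(S))^{\mrm{int}}\subseteq T$, where $T$ is the largest generization-stable subset of $\mrm{sp}_\Ki^{-1}(S)$, and closes the chain by showing $T\subseteq|\Tup{\Di}{S}|$: for $x\in T$ one picks a formalizable geometric point, observes that the reduction of its formalization factors through $S$, and applies \Cref{pro:pullbackformal}. Your argument is shorter and suffices for the proposition as stated; the paper's argument buys the slightly stronger fact that the interior coincides with the largest generization-stable subset, which is the form actually used later (see the proof of \Cref{pro:densenbhood}).
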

\begin{proof}
	By \Cref{pro:preservevaluative} $\Tf{\F}{S}$ is a valuative prekimberlite, and by basechange $\Tup{\Di}{S}\to \Tf{\F}{S}$ is partially proper open. Moreover, \Cref{pro:constgivesopen} $\Tup{\Di}{S}\to \Di$ is an open immersion, so $\Tup{\Di}{S}$ is a quasiseparated locally spatial diamond.  
	For the second claim, let $T\subseteq \mrm{sp}_{\Ki}^{-1}(S)$ be the largest subset stable under generization. It suffices to prove $T\subseteq \Tup{\Di}{S}$ since by \Cref{pro:constgivesopen} we already have $\Tup{\Di}{S}\subseteq (\mrm{sp}_{\Ki}^{-1}(S))^\mrm{int}\subseteq T$. 
	Take $x\in T$ and a formalizable geometric point $\iota_x:\Spa{C_x}\to \F$ over $x$. Since every generization of $x$ is in $\mrm{sp}_{\Ki}^{-1}(S)$ the map $\mrm{Spec}(\ovr{(C_x^+)})\to \F\red$ factors through $S$, so $\iota_x$ factors through $\Tup{\Di}{S}=\Tf{\F}{S}\cap\Di$ by \Cref{pro:pullbackformal}. 
\end{proof}


\begin{thm}
	\label{pro:spectralmapSch-Spatial}
	Let $\Ki=(\F,\Di)$ be a smelted kimberlite and $\G$ be a kimberlite, the following hold: 
	\begin{enumerate}
		\item $\mrm{sp}_\Di:|\Di|\to |\F\red|$ is a specializing, spectral map of locally spectral spaces. 
		\item $\mrm{sp}_{\G^\mrm{an}}:|\G^\mrm{an}|\to |\G\red|$ is a closed map.
	\end{enumerate}
\end{thm}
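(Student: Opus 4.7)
The plan is to establish the three assertions of (1) — local spectrality of source and target, the specializing property, and spectrality of the map — in order, and then to deduce (2) from the kimberlite hypothesis via Corollary \ref{cor:closedmap}. First, the source $|\Di|$ is locally spectral by Proposition \ref{pro:spatial-spectral}, and the target $|\F\red|$ is locally spectral as the underlying space of a (perfect) scheme. Continuity of $\mathrm{sp}_\Di$ is inherited from $\mathrm{sp}_\F$ via the factorization $|\Di|\hookrightarrow|\F|\xrightarrow{\mathrm{sp}_\F}|\F\red|$ (Proposition \ref{pro:specializationvsheaves}). The specializing property combines valuativity of $\F$ (i.e.\ partial properness of $\mathrm{SP}_\F$) with partial properness of the inclusion $\Di\subseteq\F^{\mathrm{an}}$: given $s_1\in|\Di|$ with $t_1:=\mathrm{sp}_\Di(s_1)$ and a specialization $t_2$ of $t_1$ in $|\F\red|$, fix a geometric representative $\iota:\mathrm{Spa}(C,C^+)\to\Di\subseteq\F$, rerun the dominance-of-valuation-rings argument from Proposition \ref{pro:valuativegivesspecializing} to produce a subring $C'^+\subseteq C^+$ and a map $\mathrm{Spa}(C,C'^+)\to\Heuer{(\F\red)}$ realizing $t_2$ at the closed point, and lift to $\tilde\iota:\mathrm{Spa}(C,C'^+)\to\F$ by partial properness of $\mathrm{SP}_\F$. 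The valuative criterion for $\Di\subseteq\F^{\mathrm{an}}$ applied to the already given $\mathrm{Spa}(C,O_C)\to\Di$ then forces $\tilde\iota$ to factor through $\Di$, producing $s_2\in|\Di|$ specializing $s_1$ with $\mathrm{sp}_\Di(s_2)=t_2$.

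For spectrality, the claim is local on both sides, so I fix a spatial qc open $U\subseteq|\Di|$ and an affine qc $V\subseteq|\F\red|$ with $\mathrm{sp}_\Di(U)\subseteq V$, and show that the preimage in $U$ of every qc open $W\subseteq V$ is qc. Choose a strictly totally disconnected v-cover $p:\mathrm{Spa}(R,R^+)\to U$ (available by Example \ref{exa:prodpointsbasis} and Proposition \ref{pro:prodtotallydiscon}), refined so that the composition $\mathrm{Spa}(R,R^+)\to\Di\subseteq\F$ formalizes to $\Spdf{R^+}\to\F$ (using that $\F$ is v-formalizing). Pulling $\mathrm{sp}_\Di$ back along $p$ recovers Huber's specialization $\mathrm{sp}_R:|\mathrm{Spa}(R,R^+)|\to|\mathrm{Spec}(\ovr{R^+})|$ — a homeomorphism by Proposition \ref{pro:totallydisconhomeo} — followed by the scheme map $\mathrm{Spec}(\ovr{R^+})\to\F\red$. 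The latter factors through the affine $V$, so the preimage of $W$ in the affine scheme $|\mathrm{Spec}(\ovr{R^+})|$ is a qc open, hence qc; transporting through $\mathrm{sp}_R$ and pushing forward along the continuous surjection $|p|:|\mathrm{Spa}(R,R^+)|\twoheadrightarrow|U|$ yields the required qc subset of $|U|$.

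Part (2) is then a local application of Corollary \ref{cor:closedmap}. Closedness may be verified on a qc open cover of the target, so fix $V\subseteq|\G\red|$ qc open. The kimberlite hypothesis that $\mathrm{sp}_{\G^{\mathrm{an}}}$ is quasicompact as a map of v-sheaves guarantees that $Y:=\mathrm{sp}_{\G^{\mathrm{an}}}^{-1}(V)$ is a qc open subdiamond with $|Y|$ spectral (Proposition \ref{pro:spatial-spectral}). By (1), $|Y|\to V$ is a spectral and specializing map between spectral spaces, so Corollary \ref{cor:closedmap} makes it closed; gluing over a cover of $|\G\red|$ yields that $\mathrm{sp}_{\G^{\mathrm{an}}}$ itself is closed.

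The main obstacle I expect is the specializing step in (1): matching partial properness of $\Di\subseteq\F^{\mathrm{an}}$ against the possibility that the lift $\tilde\iota$ might a priori escape out of $\F^{\mathrm{an}}$ requires careful bookkeeping of where the closed point lands; the spectrality step is the other subtle point, where the reduction to a strictly totally disconnected cover is the crucial input that activates Proposition \ref{pro:totallydisconhomeo} and lets quasicompactness descend through $|p|$.
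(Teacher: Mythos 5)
Your proposal is correct in substance and coincides with the paper on two of the three points: the specializing property is obtained exactly as in the paper (from \Cref{pro:valuativegivesspecializing} together with partial properness of $\Di\to\F$ --- you merely inline that proof), and closedness in (2) is the same application of \Cref{cor:closedmap} plus quasicompactness of $\mrm{sp}_{\G^{\mrm{an}}}$ (just take the target opens $V\subseteq|\G\red|$ to be affine, so that they are spectral). Where you genuinely diverge is the spectrality step. The paper never checks quasicompactness of preimages directly: it invokes \Cref{pro:spectralvspatch} and proves continuity of $\mrm{sp}_\Di$ for the constructible (patch) topology, by taking a formalizable affinoid cover $X\to\Di$ with $\Di$ assumed spatial, noting that $\mrm{sp}_X$, the reduced map and $|X|^{\mrm{cons}}\to|\Di|^{\mrm{cons}}$ are all patch-continuous and that the last is a surjection of compact Hausdorff spaces, hence a quotient map. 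You instead verify quasicompactness of $\mrm{sp}_\Di^{-1}(W)\cap U$ on a cover by spatial opens $U$ mapping into affines $V$, via a strictly totally disconnected formalizable cover and Huber's specialization map; your identification of $\mrm{sp}_\Di\circ|p|$ with the scheme map composed with $\mrm{sp}_R$ is legitimate (functoriality plus uniqueness of formalizations), and in fact spectrality of $\mrm{sp}_R$ from \Cref{pro:specialisniceforTate} already suffices, so the homeomorphism of \Cref{pro:totallydisconhomeo} is not needed. The one place that needs more care is your opening reduction ``the claim is local on both sides'': with \Cref{defi:Spectral-space}(4) you must handle arbitrary pairs of quasicompact opens, and intersections of quasicompact opens need not be quasicompact, so the reduction is not purely formal. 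The clean repair is precisely the criterion of \Cref{pro:spectralvspatch}: your restrictions $U\to V$ are quasicompact maps of spectral spaces, hence patch-continuous, and patch-continuity glues over your cover, giving spectrality of $\mrm{sp}_\Di$. With that sentence added, your argument is a valid and somewhat more hands-on alternative; the paper's patch-topology route buys the locality for free at the cost of the compact-Hausdorff quotient argument.
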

\begin{proof}
	Being specializing follows from \Cref{pro:valuativegivesspecializing} and the hypothesis that $\Di\to \F$ is partially proper. The second claim follows easily from \Cref{cor:closedmap}, from the first claim and from quasicompactness of $\mrm{sp}_{\G^\mrm{an}}$. We need to prove that $\mrm{sp}_\Di$ is continuous for the constructible topology. We can argue on an open cover of $|\Di|$, so we may assume that $\Di$ is spatial. Find a formalizable cover $X\to\Di$ with $X=\Spa{A}\in \mrm{Perf}$, and consider the diagram:
\begin{center}
\begin{tikzcd}

	\mid \Spa{A}\mid ^{\mrm{cons}}\arrow{r}{g} \arrow{d}{\mrm{sp}_{X}} & \mid \Di \mid^{\mrm{cons}}\arrow{d}{\mrm{sp}_{\Di}} \\
	\mid \mrm{Spec}(\ovr{A^+}) \mid ^{\mrm{cons}}\arrow{r}{g\red} & \mid \F\red \mid^{\mrm{cons}}
\end{tikzcd}
\end{center}
Since $\F\red$ is represented by a scheme, by \Cref{pro:reducedgivesfullyfaithful} $g\red$ is continuous for the patch topology. Similarly, $\mrm{sp}_{X}$ is continuous and since $\Di$ is spatial by \Cref{pro:spatial-spectral} $g$ is also continuous. Moreover, $g$ is a surjective map of compact Hausdorff spaces and consequently a quotient map. Since the diagram commutes, $\mrm{sp}_{\Ki}$ is continuous for the patch topology.
\end{proof}

\begin{pro}
	\label{lem:closedsubschemeisevident}
	Let $f:\G\to \F$ be a formally closed immersion. The following hold:
	\begin{enumerate}
		\item If $\F$ is a specializing v-sheaf, then $\G$ is a specializing v-sheaf.
		\item If $\F$ is a prekimberlite, then $\G$ is a prekimberlite.
		\item If $(\F,\Di)$ forms a smelted kimberlite then $(\G, \G\cap \Di)$ forms a smelted kimberlite. 
		\item If $\F$ is a kimberlite, then $\G$ is a kimberlite.
	\end{enumerate}
\end{pro}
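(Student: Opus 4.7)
The plan is to handle the four parts in sequence, since each builds on the previous, with (1) carrying the content and (2)--(4) being largely formal. The key move in (1) will be to adapt the density argument from the proof of \Cref{pro:separateduniqueform}.

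For (1), formal separatedness of $\G$ is immediate: since $f$ is a monomorphism, $\Delta_\G$ coincides with the projection $\G\times_\F\G\to \G\times\G$, which is a base change of the formally closed $\Delta_\F$ and hence formally closed by \Cref{pro:preservedbycomposit}. For v-formalizability, given $\Spa{A}\to \G$, I will apply v-formalizability of $\F$ to the composition to $\F$ to obtain, after a v-cover $\Spa{B}\to \Spa{A}$, a formalization $g:\Spdf{B^+}\to \F$, and then show that $g$ factors through $\G$. The obstruction is the formally closed sub-v-sheaf $Y:=\Spdf{B^+}\times_\F\G\hookrightarrow\Spdf{B^+}$, which contains $\Spa{B}$; the claim is $Y=\Spdf{B^+}$. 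I will prove this exactly as in \Cref{pro:separateduniqueform}: \Cref{lem:formallyclosedisZariski} writes $(Y\red)^\diamond=\mrm{Spec}(B^+/J)^\diamond$ for an open ideal $J\subseteq B^+$; surjectivity of the specialization map for the Tate pair $\Hub{B}$ (\Cref{pro:specialisniceforTate}) forces $|Y\red|=|\mrm{Spec}(\ovr{B^+})|$ and hence $Y\red=\mrm{Spec}(\ovr{B^+})$; the topological decomposition $|\Spdf{B^+}|=|\Spa{B}|\cup|\mrm{Spec}(\ovr{B^+})^\diamond|$ combined with $\Spa{B}\subseteq Y$ then gives $|Y|=|\Spdf{B^+}|$, which via \Cref{pro:Joaoetal} upgrades to $Y=\Spdf{B^+}$.

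Parts (2)--(4) will then follow formally. For (2), formal adicness gives $(\G\red)^\diamond=\G\times_\F(\F\red)^\diamond$, so the adjunction map is a base change of a closed immersion; to see $\G\red$ is a scheme I work Zariski-locally, identifying $\mrm{Spec}(A)\times_{\F\red}\G\red$ with a closed subscheme of $\mrm{Spec}(A)$ via \Cref{lem:formallyclosedisZariski} applied to the formally closed map $\Spdf{A}\times_\F\G\to \Spdf{A}$. For (3), \Cref{pro:partiallyproperisvaluative} gives valuativity (since closed immersions are partially proper), formal adicness yields $\G^\mrm{an}=\G\times_\F\F^\mrm{an}$ and hence $\G\cap\Di\subseteq\G^\mrm{an}$, the map $\G\cap\Di\to \G$ is partially proper open as a base change of $\Di\to \F$, and $\G\cap\Di\hookrightarrow \Di$ being a closed immersion shows, by pulling back a spatial cover of $\Di$ through closed immersions of perfectoid spaces, that $\G\cap\Di$ is a quasiseparated locally spatial diamond. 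For (4), applying (3) with $\Di=\F^\mrm{an}$ handles the smelted part; for quasicompactness of $\mrm{sp}_{\G^\mrm{an}}$, given a quasicompact open $U\subseteq|\G\red|$ I will lift it to a quasicompact open $V\subseteq|\F\red|$ with $V\cap|\G\red|=U$ (possible since $\G\red\hookrightarrow\F\red$ is a closed immersion of schemes), and then functoriality of specialization plus $\mrm{sp}_{\G^\mrm{an}}(|\G^\mrm{an}|)\subseteq|\G\red|$ give $\mrm{sp}_{\G^\mrm{an}}^{-1}(U)=\mrm{sp}_{\F^\mrm{an}}^{-1}(V)\cap|\G^\mrm{an}|$, which is closed in the quasicompact $\mrm{sp}_{\F^\mrm{an}}^{-1}(V)$, hence quasicompact. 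The only real obstacle is the v-formalizability step in (1): formally closed sub-v-sheaves are not determined by their reductions, so one genuinely needs the topological decomposition of $|\Spdf{B^+}|$ together with \Cref{pro:Joaoetal} to pass from $Y\red=\mrm{Spec}(\ovr{B^+})$ to $Y=\Spdf{B^+}$.
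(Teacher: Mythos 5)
Your proposal is correct and follows essentially the same route as the paper: part (1) by base-changing $\G$ along a formalization $\Spdf{B^+}\to\F$ and rerunning the argument of \Cref{pro:separateduniqueform} (via \Cref{lem:formallyclosedisZariski}, surjectivity of $\mrm{sp}_B$, and \Cref{pro:Joaoetal}), part (2) by formal adicness plus \Cref{lem:formallyclosedisZariski} worked Zariski-locally, part (3) via \Cref{pro:partiallyproperisvaluative} and the fact that a closed sub-v-sheaf of a quasiseparated locally spatial diamond is one, and part (4) from quasicompactness of closed immersions. The only differences are cosmetic: you spell out the diagonal base-change for formal separatedness and the lifting of quasicompact opens, where the paper simply asserts these or cites \cite[Proposition 11.20]{Et}.
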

\begin{proof}
	Suppose $\F$ is specializing, since $\G$ is a subsheaf of $\F$ it is separated, and by formal adicness $(\G\red)^\diamond\to \G$ is injective. Pick $\Spa{R}\in \mrm{Perf}$ and a map $\Spdf{R^+}\to \F$, the basechange $X:=\G\times_\F \Spdf{R^+}$ is a formally closed subsheaf of $\Spdf{R^+}$. Reasoning as in \Cref{pro:separateduniqueform} we conclude $X=\Spdf{R^+}$ when $\Spa{R}\to \F$ factors through $\G$. This proves that $\G$ is v-formalizing and a specializing sheaf. Suppose $\F$ is a prekimberlite. 
%
	By formal adicness $(\G\red)^\diamond\to \G$ is a closed immersion. Now, $(\G\red)^\diamond\to (\F\red)^\diamond$ is also a formally closed immersion and by \Cref{lem:formallyclosedisZariski} $\G\red$ is represented by a closed subscheme of $\F$. This proves that $\G$ is a prekimberlite. Since closed immersions are partially proper the map $\F\to \Heuer{(\G\red)}$ is partially proper. Consequently, the same holds for $\F\to \Heuer{(\F\red)}$.
That $(\G, \G\cap \Di)$ is a smelted kimberlite follows from \cite[Proposition 11.20]{Et}. 
If $\F$ is a kimberlite, then $\G^{\mrm{an}}=\F^{\mrm{an}}\times_\F \G$ so $(\G,\G^\mrm{an})$ is a smelted kimberlite. The quasicompactness hypothesis on $\mrm{sp}_{\G^\mrm{an}}$ follows from that of $\mrm{sp}_{\F^\mrm{an}}$ and that closed immersions are quasicompact.
\end{proof}

\begin{pro}
	\label{pro:formaletalegiveskimberlites}
	Let $\F$ be a prekimberlite and $V\to \F\red$ a quasicompact, separated and \'etale map. The following hold:
	\begin{enumerate}
		\item If $(\F,\Di)$ is a smelted kimberlite, then $(\Tf{\F}{V},\Di\times_\F \Tf{\F}{V})$ is a smelted kimberlite.
		\item If $\F$ is a kimberlite $\Tf{\F}{V}$ is a kimberlite.
	\end{enumerate}
\end{pro}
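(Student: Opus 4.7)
The plan is to use the construction $\Tf{\F}{V} = \F\times_{\Heuer{(\F\red)}} \Heuer{V}$ supplied by \Cref{thm:invarianceofetalesite}, together with the fact established in the proof of that theorem that $\Tf{\F}{V}$ is already a prekimberlite with $(\Tf{\F}{V})\red = V$ and that the structure map $\Tf{\F}{V} \to \F$ is formally adic, quasicompact and \'etale. Given this, to prove (i) we must check that $\Tf{\F}{V}$ is valuative, that the subsheaf $\Di\times_\F \Tf{\F}{V}$ is contained in $(\Tf{\F}{V})^\mrm{an}$ as an open, and that it is a quasiseparated locally spatial diamond. Part (ii) will follow by applying (i) with $\Di=\F^\mrm{an}$ and verifying the quasicompactness of the specialization map.

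First I will check valuativeness. The key observation is that the specialization map for $\Tf{\F}{V}$ is precisely the projection $\mrm{SP}_{\Tf{\F}{V}}:\Tf{\F}{V}\to \Heuer{V}$, which is the base change of $\mrm{SP}_\F:\F\to \Heuer{(\F\red)}$ along $\Heuer{V}\to \Heuer{(\F\red)}$. Since $\F$ is valuative, $\mrm{SP}_\F$ is partially proper, and partial properness is stable under base change, so $\mrm{SP}_{\Tf{\F}{V}}$ is partially proper as well. Hence $\Tf{\F}{V}$ is a valuative prekimberlite.

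Next, the identification $(\Tf{\F}{V})^\mrm{an}=\F^\mrm{an}\times_\F \Tf{\F}{V}$ follows from the formal adicness of $\Tf{\F}{V}\to \F$, which gives $((\Tf{\F}{V})\red)^\diamond = V^\diamond = (\F\red)^\diamond \times_\F \Tf{\F}{V}$, and hence the complements agree. Taking base change of the open immersion $\Di \hookrightarrow \F^\mrm{an}$ yields an open immersion $\Di\times_\F \Tf{\F}{V}\hookrightarrow (\Tf{\F}{V})^\mrm{an}$. Furthermore, the projection $\Di\times_\F \Tf{\F}{V}\to \Di$ is the base change of $\Tf{\F}{V}\to \F$ and hence is \'etale, quasicompact and separated. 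Since \'etale quasicompact maps are representable in locally spatial diamonds, $\Di\times_\F \Tf{\F}{V}$ is a quasiseparated locally spatial diamond. If one uses the stronger definition requiring partial properness of $\Di\times_\F\Tf{\F}{V}\to \Tf{\F}{V}$, this too follows by base change from $\Di\to \F$. This completes (i).

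For (ii), applying (i) with $\Di=\F^\mrm{an}$ shows that $(\Tf{\F}{V},(\Tf{\F}{V})^\mrm{an})$ is a smelted kimberlite, so it remains to verify that $\mrm{sp}:|(\Tf{\F}{V})^\mrm{an}|\to |V|$ is quasicompact. This is the key step and is the part I expect to be the main obstacle. Consider the commutative diagram obtained from the morphism of smelted kimberlites $\Tf{\F}{V}\to \F$: the composite $(\Tf{\F}{V})^\mrm{an}\to \F^\mrm{an}\xrightarrow{\mrm{sp}_{\F^\mrm{an}}}\F\red$ is quasicompact, being the composition of the quasicompact map $(\Tf{\F}{V})^\mrm{an}\to \F^\mrm{an}$ (base change of $\Tf{\F}{V}\to \F$) with $\mrm{sp}_{\F^\mrm{an}}$, quasicompact by the kimberlite assumption on $\F$. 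This composite factors as $(\Tf{\F}{V})^\mrm{an}\xrightarrow{\mrm{sp}} V\to \F\red$, and since $V\to \F\red$ is separated, the standard graph argument (factoring $\mrm{sp}$ through the graph $(\Tf{\F}{V})^\mrm{an}\to (\Tf{\F}{V})^\mrm{an}\times_{\F\red} V$, whose second component is a quasicompact immersion by separatedness of $V\to \F\red$, followed by the projection which is a base change of the quasicompact composite) shows $\mrm{sp}$ itself is quasicompact.
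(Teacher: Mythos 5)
Your part (1) is correct and is essentially the paper's own route: the paper's proof is a one-line citation of \Cref{pro:etalepreservesformaladic} (which gives that $\Heuer{V}\to \Heuer{(\F\red)}$, hence $\Tf{\F}{V}\to \F$, is quasicompact, separated, \'etale and formally adic), of \Cref{pro:preservevaluative} (valuativity by base change, which is exactly your observation that $\mrm{SP}_{\Tf{\F}{V}}$ is a base change of $\mrm{SP}_{\F}$), and of \cite[Corollary 11.28]{Et} for the locally spatial diamond claim. The issue is in part (2), at precisely the step you yourself flagged as the main obstacle. The ``standard graph argument'' lives in a category (schemes, v-sheaves) where quasicompactness is stable under base change; but $\mrm{sp}$ is only a continuous map of topological spaces, and the object $(\Tf{\F}{V})^{\mrm{an}}\times_{\F\red}V$ you form mixes a diamond with schemes, so it can at best be read as a topological fiber product. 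For continuous maps of (locally spectral) topological spaces quasicompactness is \emph{not} stable under base change, and the assertion that the graph is ``a quasicompact immersion by separatedness of $V\to \F\red$'' is exactly where this is hidden: if you try to check it directly, the preimage under the graph of a basic quasicompact open box $U_1\times_W U_2$ is $U_1\cap \mrm{sp}^{-1}(U_2)$, whose quasicompactness is the very thing you are trying to prove. The argument only becomes non-circular if you first know the relevant maps are spectral and argue through compactness in the patch topology (which \Cref{pro:spectralmapSch-Spatial}, applied to the smelted kimberlite produced in part (1), would let you do) --- but none of this is invoked, so as written the step is a genuine gap.

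A cleaner repair using only what is already in the paper: for $U\subseteq V$ a quasicompact open, functoriality of the specialization map together with openness of $U$ gives $\mrm{sp}_{(\Tf{\F}{V})^{\mrm{an}}}^{-1}(U)=|(\Tf{\F}{U})^{\mrm{an}}|$, where $\Tf{\F}{U}=\F\times_{\Heuer{(\F\red)}}\Heuer{U}$ is the \'etale formal neighborhood attached to the quasicompact separated \'etale composite $U\to V\to \F\red$ (use \Cref{pro:pullbackformal} and \Cref{defi:tubularneigh}, as in \Cref{tubularsmeltedkimberlites}). The image of $|U|$ in $|\F\red|$ is quasicompact, hence contained in a quasicompact open $W$, and by compatibility of specialization maps $(\Tf{\F}{U})^{\mrm{an}}\to \F^{\mrm{an}}$ factors through the open subdiamond with underlying space $\mrm{sp}_{\F^{\mrm{an}}}^{-1}(W)$, which is quasicompact because $\F$ is a kimberlite. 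Since $(\Tf{\F}{U})^{\mrm{an}}\to \F^{\mrm{an}}$ is quasicompact (it is a base change of $\Tf{\F}{U}\to \F$, \Cref{pro:etalepreservesformaladic}), $(\Tf{\F}{U})^{\mrm{an}}$ is the preimage of a quasicompact open subsheaf under a quasicompact map, hence quasicompact, and by \Cref{pro:spatial-spectral} its underlying space is quasicompact. This shows $\mrm{sp}_{(\Tf{\F}{V})^{\mrm{an}}}$ is quasicompact and completes part (2).
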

\begin{proof}
	This follows from \Cref{pro:etalepreservesformaladic}, form \Cref{pro:preservevaluative}, and \cite[Corollary 11.28]{Et}.  
\end{proof}

\subsection{Finiteness and normality}
In this section, we discuss a finiteness and a normality hypothesis imposed on kimberlites to tame them. These notions are ad hoc, but they turned out to be useful for applications.

Let us give some motivation. Suppose $\cali{X}$ is a formal scheme topologically of finite type over $\Zp$, let $X_\eta$ denote the generic fiber considered as an adic space and let $X\red$ denote the reduced special fiber. In this situation, we have a specialization map $\mrm{sp}_{X_\eta}:|X_\eta|\to |X\red|$, and for a fixed closed point $x\in |X\red|$ we have the following chain of inclusions $|\Tup{\cali{X}}{x}|\subseteq \mrm{sp}_{X_\eta}^{-1}(x)\subseteq |X_\eta|$. These inclusions satisfy: 
\begin{enumerate}
	\item $\mrm{sp}_{X_\eta}^{-1}(x)$ is a closed subset.
	\item $|\Tup{\cali{X}}{x}|$ is the interior of $\mrm{sp}_{X_\eta}^{-1}(x)$ in $|X_\eta|$.
	\item $|\Tup{\cali{X}}{x}|$ is dense in $\mrm{sp}_{X_\eta}^{-1}(x)$
\end{enumerate}
The first two conditions generalize, by \Cref{tubularsmeltedkimberlites}, to the case of kimberlites whose closed points are constructible. Our finiteness condition is sufficient to make a kimberlite have the third property as well. As the next example shows finiteness hypothesis are necessary for this third property to hold. 
\begin{exa}
	\label{exa:valringnotcJ}
	Let $C$ be a nonarchimedean field and $C^+\subseteq C$ an open and bounded valuation subring whose rank is strictly larger than $1$. Then $\mrm{sp}_{C}$ is a homeomorphism between $\Spa{C}$ and $\mrm{Spec}(C^+/C^{\circ\circ})$. If $x$ denotes the closed point of $\mrm{Spec}(C^+/C^{\circ\circ})$ then $y=\mrm{sp}_{C}^{-1}(x)$ is the closed point in $\Spa{C}$. The interior of $\{y\}$ is empty.
\end{exa}
\begin{defi}
	\label{defi:cJ}
	We say that a locally spatial diamond $X$ is \textit{constructibly Jacobson} if the subset of rank $1$ points are dense for the constructible topology of $|X|$. We refer to them as \textit{cJ-diamonds}.
\end{defi}
\begin{pro}
	\label{pro:densenbhood}
	Suppose that $\Ki=(\F,\Di)$ is a smelted kimberlite with $\Di$ a cJ-diamond, let $S\subseteq |\F|$ a constructible subset. Then $|\Di\cap \Tf{\F}{S}|$ is dense in $\mrm{sp}_{\Ki}^{-1}(S)$.
\end{pro}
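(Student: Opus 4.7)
The approach is to identify rank $1$ points of $\mrm{sp}_{\Ki}^{-1}(S)$ as lying automatically in $|\Tup{\Di}{S}|$, and then to produce enough of them near any given point of $\mrm{sp}_{\Ki}^{-1}(S)$ using the cJ hypothesis.

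First I would reduce to the case where $S$ is closed in $|\F\red|$. Writing $S=U\cap Z$ with $U$ open and $Z$ closed, \Cref{pro:constgivesopen} makes $\Tf{\F}{U}$ an open subsheaf of $\F$, and \Cref{tubularsmeltedkimberlites} shows that $(\Tf{\F}{U},\Di\cap \Tf{\F}{U})$ is a smelted kimberlite whose analytic part inherits the cJ property as an open subdiamond of $\Di$. Replacing $(\F,\Di)$ by this pair reduces the claim to $S=Z$ closed, and by \Cref{pro:spectralmapSch-Spatial} the preimage $\mrm{sp}_{\Ki}^{-1}(S)$ is then closed in $|\Di|$.

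Next, if $x\in \mrm{sp}_{\Ki}^{-1}(S)$ is a rank $1$ point with representative $\Spa{C}\to\Di$ satisfying $C^+=O_C$, the reduction of its canonical formalization is the single point $\mrm{Spec}(O_C/C^{\circ\circ})\to\F\red$, whose image is $\mrm{sp}_{\Ki}(x)\in S$; by \Cref{pro:pullbackformal} this places $x$ in $|\Tup{\Di}{S}|$. It therefore suffices to show that the rank $1$ points of $\mrm{sp}_{\Ki}^{-1}(S)$ are dense in $\mrm{sp}_{\Ki}^{-1}(S)$ in the analytic topology.

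For this, given $y\in \mrm{sp}_{\Ki}^{-1}(S)$ and a quasicompact open $y\in U\subseteq |\Di|$, the plan is to build a rank $1$ point of $\mrm{sp}_{\Ki}^{-1}(S)$ inside $U$ by passing to a strictly totally disconnected v-cover $\pi:\Spa{R}\to\Di$ with $R=\prod_{i\in I}(C_i,O_{C_i})$ a product of rank $1$ algebraically closed nonarchimedean fields (\Cref{defi:prodpoints}). Every point of $|\Spa{R}|$ is then rank $1$, and by \Cref{pro:totallydisconnectedspecialhomeomorphism} the specialization map $\mrm{sp}_R:|\Spa{R}|\to|\mrm{Spec}(\ovr{R^+})|$ is a homeomorphism; this converts the analysis of $\mrm{sp}_{\Ki}\circ\pi$ into a scheme-theoretic one, in which the preimage of $S$ in $|\mrm{Spec}(\ovr{R^+})|$ is closed and meets $\pi^{-1}(U)$ since $y$ lifts to both. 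The main obstacle is that a rank $1$ point of $|\Spa{R}|$ need not descend to a rank $1$ point of $|\Di|$ under the v-quotient $\pi$, and it is precisely the cJ hypothesis that allows one to bypass this: it guarantees that the rank $1$ locus of $|\Di|$ is dense for the constructible topology, so that after refining the cover one produces rank $1$ images of $\pi$ accumulating on $y$ within the closed subset $\mrm{sp}_{\Ki}^{-1}(S)$, thereby exhibiting the required rank $1$ approximations.
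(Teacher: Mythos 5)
Your first step is fine and parallels the paper: a rank $1$ point of $\mrm{sp}_{\Ki}^{-1}(S)$ admits (after a v-cover, since formalizations exist only v-locally) a formalizable rank $1$ representative $\Spa{C}\to\F$ with $C^+=O_C$, whose reduction is the single point $\mrm{Spec}(\ovr{C^+})$ lying over $\mrm{sp}_\Ki(x)\in S$, so by \Cref{pro:pullbackformal} such a point lies in $|\Di\cap\Tf{\F}{S}|$; hence it suffices to show that rank $1$ points are dense in $\mrm{sp}_\Ki^{-1}(S)$. The gap is in how you try to obtain this density. First, the covering device is wrong at the outset: a product of points $\Spa{R}$ with all $C_i^+=O_{C_i}$ does \emph{not} have only rank $1$ points — the non-principal connected components are adic spectra of (completions of) ultraproducts of rank $1$ valuation rings, whose value groups are non-archimedean, so $|\Spa{R}|$ contains points of higher rank; this is also visible from \Cref{pro:totallydisconhomeo}, since $|\mrm{Spec}(\ovr{R^+})|$ has specialization chains of length $>1$. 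Second, and more seriously, your last sentence is an assertion, not an argument: the cJ hypothesis gives density of rank $1$ points in $|\Di|$ for the \emph{constructible} topology, and the step you never supply is why such points can be found inside $\mrm{sp}_\Ki^{-1}(S)$ near a given $y$. Your reduction to $S$ closed (which, incidentally, is only available for locally closed constructible $S$, not for a general constructible set) makes $\mrm{sp}_\Ki^{-1}(S)$ closed in $|\Di|$, but closedness buys nothing here: in a cJ-diamond a closed subset can fail to contain any rank $1$ point at all (e.g.\ the set of specializations of a rank $2$ point of a closed disc), so no density can follow from it.

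What makes the statement true — and what the paper's proof uses — is \Cref{pro:spectralmapSch-Spatial}: $\mrm{sp}_\Ki$ is a spectral map, i.e.\ continuous for the patch topologies. Since $S$ is constructible, $\mrm{sp}_\Ki^{-1}(S)$ is \emph{open} in the patch topology of $|\Di|$. Hence for any open $U\subseteq|\Di|$ meeting $\mrm{sp}_\Ki^{-1}(S)$, the set $U\cap\mrm{sp}_\Ki^{-1}(S)$ is nonempty and patch-open, so it contains a rank $1$ point by the cJ hypothesis. Combined with your first step (or, as in the paper, with the observation via \Cref{pro:constgivesopen} and \Cref{tubularsmeltedkimberlites} that $|\Di\cap\Tf{\F}{S}|$ is the largest generization-stable subset of $\mrm{sp}_\Ki^{-1}(S)$ and that rank $1$ points are generization-stable), this yields the density directly, with no reduction to $S$ closed and no auxiliary strictly totally disconnected cover. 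You should rework the second half of your argument around this patch-continuity input; without it the cJ hypothesis cannot be brought to bear on $\mrm{sp}_\Ki^{-1}(S)$.
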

\begin{proof}
	By the proof of \Cref{pro:constgivesopen}, $|\Di\cap \Tf{\F}{S}|$ is the largest subset of $\mrm{sp}_{\Ki}^{-1}(S)$ stable under generization. Now, $\mrm{sp}_{\Ki}^{-1}(S)$ is open in the patch topology of $|\Di|$ and by assumption rank $1$ points are dense in it. Since rank $1$ points are stable under generization, they belong to $|\Di\cap \Tf{\F}{S}|$. This proves that $|\Di\cap \Tf{\F}{S}|$ is dense in $\mrm{sp}_{\Ki}^{-1}(S)$.
\end{proof}
\begin{pro}
	\label{pro:basicpropCJ}
	Let $f:X\to Y$ be a morphism of locally spatial diamonds the following hold:
\begin{enumerate}
	\item If $|f|$ is surjective and $X$ is a cJ-diamond, then $Y$ is a cJ-diamond. 
	\item If $f$ is an open immersion and $Y$ is a cJ-diamond, then $X$ is a cJ-diamond.
	\item If $f$ realizes $X$ as a quasi-pro-\'etale $\underline{J}$-torsor over $Y$ for a profinite group $J$ and $X$ is a cJ-diamond, then $Y$ is a cJ-diamond.
	\item If $f$ is \'etale and $Y$ is a cJ-diamond, then $X$ is a cJ-diamond.
\end{enumerate}
\end{pro}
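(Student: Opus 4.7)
My plan is to exploit three tools throughout: first, morphisms of locally spatial diamonds induce spectral, generalizing maps on underlying locally spectral spaces (\Cref{pro:spatial-spectral}), so preimages of constructible subsets are constructible; second, spectrality of a continuous map is equivalent to continuity for the patch topology (\Cref{pro:spectralvspatch}); third, morphisms of diamonds functorially send rank $1$ points to rank $1$ points, since composing a representative $\mrm{Spa}(C,O_C) \to X$ with $f$ yields $\mrm{Spa}(C,O_C) \to Y$ representing the image.

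For (1), let $U \subseteq |Y|$ be a nonempty constructible subset. Spectrality of $|f|$ makes $|f|^{-1}(U)$ constructible in $|X|$, and surjectivity makes it nonempty; the cJ hypothesis on $X$ then yields a rank $1$ point $x \in |f|^{-1}(U)$, whose image $|f|(x) \in U$ is rank $1$ in $Y$. For (2), given a nonempty constructible $U \subseteq |X|$ and $x \in U$, I would choose a quasicompact open neighborhood $V \subseteq X$ of $x$ using that $X$ is locally spatial; since $X \to Y$ is an open immersion, $V$ is also quasicompact open in $Y$ and therefore constructible in $|Y|$, so $U \cap V$ is constructible in $|Y|$, and the cJ hypothesis on $Y$ supplies a rank $1$ point inside $U \cap V \subseteq U$. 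Item (3) is immediate from (1): a $\underline{J}$-torsor is surjective as a v-sheaf map, hence $|f|$ is surjective, and (1) applies directly.

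For (4) the plan is to reduce to the case of a finite étale morphism, using that every étale morphism of locally spatial diamonds factors locally on the source as an open immersion followed by a finite étale map; the open immersion step is handled by (2). For $f$ finite étale, $|f|$ is a spectral, proper map with finite fibers, hence sends constructibles to constructibles. Given a nonempty constructible $U \subseteq |X|$, its image $|f|(U) \subseteq |Y|$ is constructible and nonempty, so by cJ of $Y$ it contains a rank $1$ point $y$. For any preimage $x \in U$ of $y$, the fiber $X \times_Y \mrm{Spa}(C_y,O_{C_y})$ is finite étale over $\mrm{Spa}(C_y,O_{C_y})$ and thus a disjoint union of $\mrm{Spa}(C_i,O_{C_i})$ for $C_i/C_y$ finite extensions of algebraically closed perfectoid fields; so $x$ is automatically a rank $1$ point of $X$.

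The main obstacle will be in (4): invoking the local factorization theorem for étale morphisms of locally spatial diamonds and verifying that for a finite étale morphism the map $|f|$ is a finite spectral map that preserves constructibility under direct image. The preservation of rank $1$ points under étale base change is routine once the classical structure of étale algebras over a perfectoid field is in place.
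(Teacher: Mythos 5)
Your items (1) and (2) are fine and essentially the paper's argument. The trouble starts with how you dispose of (3) and then with the key step of (4). You read (3) literally and derive it from (1) via surjectivity of the torsor map; that matches the statement as printed, but the paper's own proof of (3) establishes the opposite (and nontrivial) direction -- if the base $Y$ is cJ then the torsor total space $X$ is cJ -- by showing $|f|$ is open for the patch topology (using $f^{-1}(f(U))=\underline{J}\cdot U$ and that $|f|^{\mrm{cons}}$ is a quotient map) together with the fact that fibers over rank $1$ points consist of rank $1$ points. It is this direction that the paper feeds into (4): reduce a finite \'etale map to a Galois cover via the Galois closure, apply the torsor statement to get that the closure is cJ, then descend along the surjection from the closure using (1). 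By trivializing (3) you deprive yourself of exactly this tool, so your (4) must carry all the weight on its own.

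And there the gap is genuine: the inference ``$|f|$ is a spectral, proper map with finite fibers, hence sends constructibles to constructibles'' is false at that level of generality. For profinite (hence spectral) spaces take $T=\{0\}\cup\{1/n \mid n\geq 1\}$, $S=T\sqcup\{\ast\}$, and $g:S\to T$ the identity on $T$ with $g(\ast)=0$: this is continuous, spectral, closed, with finite fibers, yet the image of the clopen (constructible) set $\{\ast\}$ is $\{0\}$, which is not constructible. What rules out such behaviour for finite \'etale maps is genuinely the \'etale structure (local constancy of the degree), and the clean way to exploit it is the paper's: pass to the Galois closure $X'\to X\to Y$, prove patch-openness of $X'\to Y$ by the group-action argument, deduce that $|f|(U)$ is patch-open (and patch-closed by compactness, hence constructible), or equivalently prove $X'$ is cJ and apply (1) to the surjection $X'\to X$. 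Your endgame -- a rank $1$ point of $Y$ has only rank $1$ preimages because the fiber is finite \'etale over $\mrm{Spa}(C,O_C)$ -- is correct and is also what the paper uses; the missing piece is precisely the constructibility/patch-openness of the image, and supplying it essentially forces you back onto the paper's route through the torsor case.
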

\begin{proof}
	Maps of locally spatial diamonds induce spectral and generalizing maps of locally spectral spaces. The first claim follows easily from this. 
	Suppose that $Y$ is a cJ-diamond. If $f$ is an open immersion, any open in the patch topology of $X$ is also open in the patch topology of $Y$. This proves the second claim. Moreover, this allow us to argue locally, so we can assume $X$ and $Y$ are spatial.
	If $f$ is \'etale, by \cite[Lemma 11.31]{Et}, locally we can write $f$ as the composition of an open immersion and a finite \'etale map. Spaces that are finite \'etale over a fixed spatial diamond form a Galois category and using the first claim we may reduce to the case in which $f$ is Galois with finite Galois group $G$. In this way, the fourth claim follows from the third. 
	For the third claim, we prove that $f$ is an open map for the patch topology. This would finish the proof since the fibers of rank $1$ points of a quasi-pro-\'etale map are also rank $1$. 
	
	Let $J=\varprojlim_i J_i$ with $J_i$ a cofiltered family of finite groups and denote by $f_i:X_i\to Y$ the induced $J_i$-torsors. We get continuous action maps $J_i\times |X_i|^{\mrm{cons}}\to |X_i|^\mrm{cons}$. Moreover, if $S\subseteq |X_i|$ then $f_i^{-1}(f_i(S))=J_i\cdot S$. Now, the formation of the patch topology on a spectral space commutes with limits along spectral maps. This gives a continuous action map $J\times |X|^{\mrm{cons}}\to |X|^\mrm{cons}$. Let $U\subseteq X$ be open in the patch topology, then $f^{-1}(f(U))=J\cdot U$ which is also open. We can conclude since $|f|^\mrm{cons}:|X|^\mrm{cons}\to |Y|^\mrm{cons}$ is a quotient map. 
\end{proof}
We recall a theorem of Huber. His statement is stronger, but we only need this weaker form.
\begin{thm}\textup{(\cite[Theorem 4.1]{Cont})}
	\label{thm:TatealgebrascJ}
	Let $K$ be a complete nonarchimedean field, and let $A$ be a topologically of finite type $K$-algebra. Then, $\mrm{Max}(A)\subseteq \Spac{A}$ is dense for the patch topology. 
\end{thm}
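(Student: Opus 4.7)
The plan is to show that every non-empty basic constructible subset of $\Spac{A}$ contains a point corresponding to a maximal ideal of $A$; since basic constructible sets form a basis for the patch topology, this yields density. A basic constructible set has the form $C = R \cap (\Spac{A} \setminus S_1) \cap \cdots \cap (\Spac{A} \setminus S_n)$ for rational opens $R, S_1, \ldots, S_n \subseteq \Spac{A}$ with $S_i \subseteq R$ (after intersecting each $S_i$ with $R$). Replacing $A$ by the topologically of finite type Tate $K$-algebra representing $R$, we reduce to the case $R = \Spac{A}$; so we have rational opens $S_i = U(\tfrac{f_{i,1},\ldots,f_{i,k_i}}{g_i})$ and a point $x \in \Spac{A} \setminus \bigcup_i S_i$, and must produce a maximal ideal in this complement.

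For each $i$, the complement $\Spac{A} \setminus S_i$ decomposes as $V(g_i) \cup \bigcup_j T_{i,j}$, where $T_{i,j} = \{y : |f_{i,j}|_y > |g_i|_y\}$. Since $x$ lies in the complement, for each $i$ one alternative holds at $x$: either $|g_i|_x = 0$ or $|f_{i,j_i}|_x > |g_i|_x$ for some $j_i$. Let $I_1$ denote the indices of the first type and fix $j_i$ for $i \notin I_1$. The Zariski closed subspace $Z = V(\{g_i : i \in I_1\}) \subseteq \Spac{A}$ is a Tate affinoid $\Spac{A/J}$ with $A/J$ again topologically of finite type over $K$, and it contains $x$.

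Inside $Z$, the strict inequalities $|f_{i,j_i}|_x > |g_i|_x$ can be sharpened to rational localizations containing $x$: choose a pseudo-uniformizer $\varpi \in K^{\circ\circ}$ and integers $k_i > 0$ large enough that $|\varpi^{k_i} g_i|_x \leq |f_{i,j_i}|_x$. Then $x$ lies in the non-empty rational subdomain $W = \bigcap_{i \notin I_1} U\!\bigl(\tfrac{\varpi^{k_i} g_i}{f_{i,j_i}}\bigr) \subseteq Z$, which is itself topologically of finite type over $K$, and $W \subseteq \bigcap_{i \notin I_1} T_{i,j_i} \cap Z \subseteq \Spac{A} \setminus \bigcup_i S_i$. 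A non-zero topologically of finite type Tate algebra over $K$ always has a maximal ideal (via Noether normalization, reducing to $K\langle T_1, \ldots, T_d\rangle$, whose maximal ideals correspond to Galois orbits of points in the closed unit polydisc over $\bar K$), so $W$ contains the desired maximal ideal. The \emph{main obstacle} is the handling of the strict inequalities defining the $T_{i,j}$—they are not rational opens themselves—and the key observation making the argument work is that in the Tate setting a strict inequality $|f|_x > |g|_x$ at a fixed point is witnessed by the rational condition $|\varpi^k g|_x \leq |f|_x$ for $k$ large enough, which ultimately reduces patch density of $\mrm{Max}(A)$ to the classical density of maximal ideals in non-zero Tate affinoids.
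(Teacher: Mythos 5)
The decisive step of your argument is false as stated: the rational subdomain you build does not lie inside the strict-inequality locus. Your set $U\bigl(\tfrac{\varpi^{k_i} g_i}{f_{i,j_i}}\bigr)=\{y: |\varpi^{k_i}g_i|_y\leq |f_{i,j_i}|_y\neq 0\}$ is much larger than $T_{i,j_i}=\{y: |f_{i,j_i}|_y>|g_i|_y\}$: any $y$ with $|f_{i,j_i}|_y=|g_i|_y\neq 0$ satisfies $|\varpi^{k_i}g_i|_y<|g_i|_y=|f_{i,j_i}|_y$, so it lies in your $W$ while possibly lying in $S_i$. Concretely, take $A=K\langle T\rangle$, $S_1=U(\tfrac{1}{T})=\{|T|_y=1\}$ and $x$ any point with $|T|_x<1$; your recipe ($f_{1,1}=1$, $g_1=T$) gives $W=\{y:|\varpi^{k}T|_y\leq 1\}=\Spac{A}$, which is certainly not contained in the complement of $S_1$, so exhibiting a maximal ideal in $W$ proves nothing. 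The inequality you need runs the other way: a condition of the form $|g_i|_y\leq|\varpi^{k_i}f_{i,j_i}|_y$ with $k_i\geq 1$ does imply $|g_i|_y<|f_{i,j_i}|_y$.

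However, with the corrected direction the other half of your argument breaks: you can no longer arrange $x\in W$, because at a higher-rank point the strict inequality $|g_i|_x<|f_{i,j_i}|_x$ need not be witnessed by any $k_i\geq 1$ --- for instance at a rank-$2$ point where the ratio $|g_i|_x/|f_{i,j_i}|_x$ exceeds $|\varpi|_x^{k}$ for every $k$ (the type-5 points of the unit disc are of this kind). Passing to the rank-$1$ maximal generization of $x$, where the archimedean trick does work, is not allowed either, since constructible sets are not stable under generization. So the nonemptiness of some rational subdomain contained in the given constructible set --- which is exactly what would reduce the theorem to the classical fact that a nonzero affinoid algebra has a maximal ideal --- is precisely the point left unproved; your argument only handles it for rank-$1$ points, and that is where the real content of Huber's theorem lies. (For comparison: the paper itself offers no proof of this statement; it is quoted directly from Huber \cite[Theorem 4.1]{Cont}, whose argument is of a different nature.)
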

\begin{cor}
	\label{cor:rigidcJ}
If $X$ is an adic space topologically of finite type over $\mrm{Spa}(K,O_K)$, with $K$ a complete nonarchimedean field over $\Zp$. Then $X^\dia$ is a cJ-diamond.
\end{cor}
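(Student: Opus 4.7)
The plan is to reduce to the affinoid case and then invoke Huber's theorem (\Cref{thm:TatealgebrascJ}). First, I would observe that being cJ is a local property in the analytic topology: if $Y$ is a locally spatial diamond with an open cover $Y=\bigcup_i U_i$ by cJ-diamonds, then $Y$ itself is cJ. Indeed, any nonempty patch-open $V\subseteq |Y|$ meets some $|U_i|$, and $V\cap |U_i|$ is patch-open in $|U_i|$ (since inclusions of opens of locally spectral spaces are spectral), so it contains a rank $1$ point by assumption on $U_i$. Consequently, by covering $X$ with affinoid opens, we may assume $X=\Spa{A}$ with $A$ a topologically of finite type $K$-algebra.

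Next, since $X$ is analytic, \Cref{rem:diamondconstr}(3) gives a homeomorphism $|X^\dia|\cong |X|=|\Spa{A}|$, so the problem reduces to showing that rank $1$ points are dense in the patch topology of $\Spa{A}$. The key geometric input is the inclusion $\Spa{A}\subseteq\Spac{A}$, together with the observation that $\mrm{Max}(A)$ sits inside $\Spa{A}$: for $\mathfrak{m}\in\mrm{Max}(A)$ the residue field $A/\mathfrak{m}$ is a finite extension of $K$, so the corresponding valuation is rank $1$ and continuous; any such rank $1$ continuous valuation satisfies $|a|_x\leq 1$ for every power-bounded $a$, and since $A^+\subseteq A^\circ$ we conclude $\mrm{Max}(A)\subseteq\Spa{A}$.

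Now I would compare patch topologies. The inclusion $\Spa{A}\hookrightarrow\Spac{A}$ is an inclusion of a proconstructible subset (it is cut out by the conditions $|a|\leq 1$ for $a$ ranging over $A^+$), and moreover the quasicompact rational subsets of $\Spa{A}$ are exactly the restrictions of the corresponding rational subsets of $\Spac{A}$. Hence the patch topology on $\Spa{A}$ coincides with the subspace topology induced from the patch topology on $\Spac{A}$. By \Cref{thm:TatealgebrascJ}, $\mrm{Max}(A)$ is patch-dense in $\Spac{A}$; intersecting with the patch-closed subset $\Spa{A}$ (which contains $\mrm{Max}(A)$) preserves density, so $\mrm{Max}(A)$ is patch-dense in $\Spa{A}$, concluding the proof.

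The only mildly delicate point is the bookkeeping around the ring of integral elements: one must verify that rank $1$ continuous valuations in $\Spac{A}$ automatically lie in $\Spa{A}$ for any choice of $A^+\subseteq A^\circ$, and that the patch topology behaves well under this inclusion. Both are straightforward from the definitions, so I expect no essential difficulty beyond citing Huber's density theorem in the right form.
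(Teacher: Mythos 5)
Your proof is correct and follows exactly the route the paper intends: the corollary is stated as an immediate consequence of Huber's density theorem (\Cref{thm:TatealgebrascJ}), via the reduction to affinoids, the homeomorphism $|X^\dia|\cong|X|$ for analytic spaces, and \Cref{pro:basicpropCJ}-style locality. Your extra bookkeeping that $\mrm{Max}(A)$ lands in $\Spa{A}$ for any $A^+\subseteq A^\circ$ and that the patch topology of the proconstructible subset $\Spa{A}\subseteq\Spac{A}$ is the subspace patch topology is precisely the (unwritten) content needed, so there is nothing to add.
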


\begin{exa}
	\label{lem:perfectoidball}
	The perfectoid unit ball $\bb{B}_n=\mrm{Spa}(C\langle T_1\pthr\dots T_n\pthr\rangle,O_C\langle T_1\pthr\dots T_n\pthr\rangle)$ over a perfectoid field $C$ over $\Fp$, is a cJ-diamond. Indeed, we have an equality $\mrm{Spa}(C\langle T_1\cdots T_n\rangle,O_C\langle T_1\cdots T_n\rangle)^\dia=\bb{B}_n$.
\end{exa}

\begin{defi}
	\label{defi:enoughballs}
	Let $C$ be a characteristic $p$ perfectoid field and $X$ a locally spatial diamond over $\mrm{Spa}(C,O_C)$. We say that $X$ has \textit{enough facets} over $C$ if it admits a v-cover of the form $\coprod_{i\in I} \mrm{Spd}(A_i,A_i^\circ)\to X$ where each $A_i$ is an algebra topologically of finite type over $C$.
\end{defi}

\begin{pro}
	\label{pro:enoughballsprop}
	Let $X$ and $Y$ be two locally spatial diamonds with enough facets over $C$, let $C^\sharp$ be an untilt of $C$, and $C\to C'$ a map of perfectoid fields. The following hold:
\begin{enumerate}
	\item The base change $X\times_\C \Cp$ has enough facets over $C'$.
	\item The fiber product $X\times_\C Y$ has enough facets over $C$.
	\item $X$ is a $cJ$-diamond.
	\item If $X=\mrm{Spd}(A,A^\circ)$ for a smooth and topologically of finite type $C^\sharp$-algebra $A$, then $X$ has enough facets. 
\end{enumerate}
\end{pro}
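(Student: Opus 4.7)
The strategy is to verify each claim separately, using that v-covers and the property ``topologically of finite type'' are well-behaved under completed tensor products, and that diamonds of smooth $C^\sharp$-affinoids admit étale maps to Tate balls whose diamonds live naturally over $C$.

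For (1), fix a v-cover $\coprod_{i\in I}\mrm{Spd}(A_i,A_i^\circ)\to X$ with each $A_i$ topologically of finite type over $C$. Base-changing along $\Spa{C',O_{C'}}\to \Spa{C,O_C}$ produces a v-cover
\[
\coprod_{i\in I}\mrm{Spd}(A_i\hat\otimes_C C',(A_i\hat\otimes_C C')^\circ)\longrightarrow X\times_{\Spa{C,O_C}}\Spa{C',O_{C'}},
\]
and $A_i\hat\otimes_C C'$ is t.f.t.\ over $C'$ via any presentation $A_i\cong C\langle T_1,\dots,T_n\rangle/J$. For (2), given v-covers $\coprod_i\mrm{Spd}(A_i,A_i^\circ)\to X$ and $\coprod_j\mrm{Spd}(B_j,B_j^\circ)\to Y$, their fiber product over $\Spa{C,O_C}$ is a v-cover of $X\times_{\Spa{C,O_C}}Y$ of the form $\coprod_{i,j}\mrm{Spd}(A_i\hat\otimes_C B_j,(A_i\hat\otimes_C B_j)^\circ)$, and $A_i\hat\otimes_C B_j$ is again t.f.t.\ over $C$ since one may combine presentations.

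For (3), each $\mrm{Spd}(A_i,A_i^\circ)$ is a cJ-diamond by \Cref{cor:rigidcJ}. The hypothesis that the map $\coprod_i\mrm{Spd}(A_i,A_i^\circ)\to X$ is a v-cover of locally spatial diamonds gives that the induced map on underlying topological spaces is surjective (v-covers of locally spatial diamonds are quotient maps, a fortiori surjective). Applying \Cref{pro:basicpropCJ}(1) yields that $X$ is a cJ-diamond.

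For (4), I would argue as follows. By the étale-local structure of smooth rigid-analytic maps, there is an étale cover $\coprod_j \Spa{A_j,A_j^\circ}\to\Spa{A,A^\circ}$ such that for each $j$ there exists an integer $n_j\geq 0$ and an étale morphism
\[
\Spa{A_j,A_j^\circ}\longrightarrow \Spa{C^\sharp\langle T_1,\dots,T_{n_j}\rangle,C^\sharp\langle T_1,\dots,T_{n_j}\rangle^\circ}.
\]
Passing to diamonds and using that the diamond of the Tate ball over $C^\sharp$ coincides with the diamond of the Tate ball over $C$ (both equal $\bb{B}_{n_j}$ by \Cref{lem:perfectoidball}, via the tautological untilt structure), we obtain for each $j$ an étale morphism of diamonds $\mrm{Spd}(A_j,A_j^\circ)\to \mrm{Spd}(C\langle T_1,\dots,T_{n_j}\rangle,C\langle T_1,\dots,T_{n_j}\rangle^\circ)$. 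By \cite[Lemma 15.6]{Et} together with the equivalence of étale sites of an analytic adic space and of its diamond, this étale map of diamonds descends to an étale morphism of analytic adic spaces with target $\Spa{C\langle T_1,\dots,T_{n_j}\rangle,C\langle T_1,\dots,T_{n_j}\rangle^\circ}$. Its source is therefore of the form $\Spa{B_j,B_j^\circ}$ with $B_j$ étale over $C\langle T_1,\dots,T_{n_j}\rangle$, in particular t.f.t.\ over $C$, and $\mrm{Spd}(B_j,B_j^\circ)\cong \mrm{Spd}(A_j,A_j^\circ)$. The combined map $\coprod_j\mrm{Spd}(B_j,B_j^\circ)\to X$ is then the required v-cover.

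The main obstacle is step (4): one must verify that the étale structure theorem for smooth rigid spaces can be transferred to the diamond world in a way that keeps the test objects t.f.t.\ over the characteristic $p$ field $C$ rather than merely perfectoid. The key observation making this work is that the diamond of the Tate ball is insensitive to passing between $C^\sharp$ and $C$, combined with the equivalence of étale sites.
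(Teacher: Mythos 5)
Parts (1)--(3) of your argument are fine and essentially coincide with the paper's: the paper disposes of (1) and (2) with the same one-line stability observation about topologically of finite type algebras, and of (3) via \Cref{thm:TatealgebrascJ} (through \Cref{cor:rigidcJ}) and \Cref{pro:basicpropCJ}. The genuine gap is in (4), at the sentence claiming that the diamond of the Tate ball over $C^\sharp$ coincides with the diamond of the Tate ball over $C$, ``both equal $\bb{B}_{n_j}$ by \Cref{lem:perfectoidball}''. This is false when $C^\sharp$ has characteristic $0$. \Cref{lem:perfectoidball} only identifies the diamond of the characteristic-$p$ ball $\mrm{Spa}(C\langle T_1,\dots,T_n\rangle, C\langle T_1,\dots,T_n\rangle^\circ)$ with the perfectoid ball; the diamond of $\mrm{Spa}(C^\sharp\langle T_1,\dots,T_n\rangle,C^\sharp\langle T_1,\dots,T_n\rangle^\circ)$ sends $(R,R^+)$ to $(R^{\sharp,+})^n$ while the characteristic-$p$ ball's diamond sends it to $(R^+)^n$, and there is no natural identification: tilting only identifies the balls (or tori) after adjoining $T_i\pthr$, and the resulting perfectoid ball is a non-invertible v-cover of $(\mrm{Spa}(C^\sharp\langle T_1,\dots,T_n\rangle,C^\sharp\langle T_1,\dots,T_n\rangle^\circ))^\dia$, not equal to it. Consequently the \'etale map of diamonds $\mrm{Spd}(A_j,A_j^\circ)\to \mrm{Spd}(C\langle T_1,\dots,T_{n_j}\rangle,C\langle T_1,\dots,T_{n_j}\rangle^\circ)$ you want to descend does not exist as constructed, and the conclusion $\mrm{Spd}(B_j,B_j^\circ)\cong \mrm{Spd}(A_j,A_j^\circ)$ --- i.e.\ that the diamond of a smooth characteristic-$0$ affinoid is itself the diamond of a topologically of finite type $C$-algebra --- is unjustified, and in general false.

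The repair is the paper's argument, and it is precisely here that the definition of ``enough facets'' asks only for a v-cover rather than an isomorphism. Locally on $\mrm{Spa}(A,A^\circ)$ one chooses an \'etale map $\eta:U\to \bb{T}^n_{C^\sharp}$ to the \emph{torus} (so that $T_i\mapsto T_i^p$ is finite \'etale in characteristic $0$), pulls back along the perfectoid torus $\widetilde{\bb{T}}^n_{C^\sharp}=\varprojlim_{T_i\mapsto T_i^p}\bb{T}^n_{C^\sharp}$ to obtain a perfectoid space $\widetilde{U}$ \'etale over $\widetilde{\bb{T}}^n_{C^\sharp}$, and tilts to get $\widetilde{U}^\flat$ \'etale over $\widetilde{\bb{T}}^n_{C}$. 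Invariance of the \'etale site under perfection (\cite[Lemma 15.6]{Et}) then produces an adic space $U'$ \'etale over the finite-level characteristic-$p$ torus $\bb{T}^n_C$ with $U'^\dia=\widetilde{U}^\flat$; covering $U'$ by affinoids $\mrm{Spa}(A_i,A_i^\circ)$ topologically of finite type over $C$, the composite $\coprod_i \mrm{Spd}(A_i,A_i^\circ)\to \widetilde{U}^\flat\to U^\dia$ is the required v-cover, since $\widetilde{U}\to U$ is pro-finite-\'etale surjective. In other words, the facets only cover $U^\dia$ after the pro-\'etale passage to $\widetilde{U}$; they do not, and cannot be expected to, identify $U^\dia$ itself with the diamond of a characteristic-$p$ rigid space.
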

\begin{proof}
	Since being topologically of finite type is stable under products and change of the ground field the first two claims follows. The third claim follows from \Cref{thm:TatealgebrascJ} and \Cref{pro:basicpropCJ}.
	For the last claim, let $\bb{T}_{C^\sharp}^n$ denote $\mrm{Spa}(C^\sharp\langle T_1^{\pm},\dots T_n^{\pm}\rangle,O_{C^\sharp}\langle T_1^{\pm},\dots T_n^{\pm}\rangle)$, and let $\widetilde{\bb{T}}_{C^\sharp}^n=\varprojlim_{T_i\mapsto T_i^p} \bb{T}^n_{C^\sharp}$ analogously for $\bb{T}^n_C$ and $\widetilde{\bb{T}}^n_C$. For $x\in \mrm{Spa}(A,A^\circ)$ there is $U$ a neighborhood of $x$ and an \'etale map $\eta:U\to \bb{T}_{C^\sharp}^n$. Let $\widetilde{U}$ be the pullback of $\eta$ along $\widetilde{\bb{T}}_{C^\sharp}^n\to \bb{T}_{C^\sharp}^n$, we get an \'etale map $\widetilde{U}^\flat\to \widetilde{\bb{T}}^n_C$. By the invariance of the \'etale site under perfection (\cite[Lemma 15.6]{Et}) $\widetilde{U}^\flat=U'^\dia$ for an adic space $U'$ \'etale over $\bb{T}^n_C$. Now, $U'$ admits an open cover of the form $\coprod_{i\in I}\mrm{Spa}(A_i,A_i^\circ)\to U'$ with each $A_i$ topologically of finite type over $C$. This gives a cover,	$\coprod_{i\in I}\mrm{Spd}(A_i,A_i^\circ)\to \widetilde{U}^\flat \to U^\dia$.
\end{proof}
\begin{defi} 
\label{defi:kimberlite} 
Let $\G$ be a kimberlite and $\Ki=(\F, \Di)$ a smelted kimberlite.
\begin{enumerate}
	\item We say that $\Ki$ is \textit{rich} if: $\F$ is valuative, $\Di$ is a cJ-diamond, $|\F\red|$ is locally Noetherian and $\mrm{sp}_{\Di}:|\Di|\to |\F\red|$ is surjective. 
	\item We say that $\G$ is \textit{rich} if: $(\G, \G^\mrm{an})$ is rich. 
	\item If $\Ki$ is rich we say it is \textit{topologically normal} if for every closed point $x\in |\F\red|$ the tubular neighborhood $\Tup{\Di}{x}$ is connected.\footnote{This definition is motivated by \cite[Proposition 2.38]{AGLR22}, where the authors observed that normality ``could be captured" in terms of tubular neighborhoods.}
\end{enumerate}
\end{defi}

	\begin{lem}
		\label{lem:partproperpreKimb}
		If $\Ki=(\F,\Di)$ is a rich smelted kimberlite, then $\mrm{sp}_{\Di}$ is a quotient map. 
	\end{lem}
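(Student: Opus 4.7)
The plan is to realize $\mrm{sp}_{\Di}$ as a quotient map via a diagram chase against a formalizable v-cover of $\Di$, leveraging the Tate specialization maps (which are quotient by \Cref{pro:specialisniceforTate}) together with the canonical presentation of scheme-theoretic v-sheaves (\Cref{pro:topologyschematicvsheaf}). First, using that $\F$ is v-locally formal by \Cref{lem:Zpdvformalizing} and that $\Di$ is a quasi-separated locally spatial diamond, I would choose a v-cover $p\colon X\to \Di$ with $X=\coprod_\alpha \Spa{R_\alpha}$ a disjoint union of affinoid perfectoid spaces, and a formalization $\tilde p\colon \widehat X\to \F$ of the composition $X\to \Di\hookrightarrow \F$, where $\widehat X := \coprod_\alpha \Spdf{R_\alpha^+}$. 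Applying the reduction functor to $\tilde p$ yields the commutative square
\begin{center}
\begin{tikzcd}
\mid X \mid \arrow{r}{\mid p \mid}\arrow{d}[swap]{\mrm{sp}_X} & \mid \Di \mid \arrow{d}{\mrm{sp}_{\Di}} \\
\mid \widehat X\red \mid \arrow{r}{\mid \tilde p\red \mid} & \mid \F\red \mid
\end{tikzcd}
\end{center}
in which $\mrm{sp}_X$ denotes the coproduct of the Tate specialization maps.

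Second, I would verify that three of the four edges of this square are quotient maps. The map $\mrm{sp}_X$ is a closed, surjective, continuous map by \Cref{pro:specialisniceforTate}, hence a quotient. The map $\mid p \mid$ is a quotient by a locality-on-the-target argument: covering $\mid \Di \mid$ by quasi-compact open spatial subsets $V$, quasi-compactness of $V$ allows extraction of a finite v-subcover of $V$ from $\mid p \mid^{-1}(V)\to V$, producing a surjective spectral generalizing map of spectral spaces which is a quotient by \Cref{pro:generalizing-quotient} and \Cref{pro:spatial-spectral}; the full restriction $\mid p \mid^{-1}(V)\to V$ inherits the quotient property, and locality of quotient maps on the target then promotes this to $\mid p \mid$ globally. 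Finally, $\mid \tilde p\red \mid$ is a quotient map: its surjectivity onto $\mid \F\red \mid$ follows from surjectivity of $\mrm{sp}_{\Di}$ (richness hypothesis) together with functoriality of specialization, and the crucial point is that any scheme-theoretic test map $\mrm{Spec}(V)^\diamond\to \widehat X$ automatically factors through $\widehat X\red$ (since a continuous ring map from $R_\alpha^+$ to the discrete perfect ring $V$ must send topologically nilpotents to $0$, by the argument of \Cref{lem:discreteperfecthubpairs1}); this promotes the v-surjection $\widehat X\to \F$ to a surjection $\widehat X\red\to \F\red$ of scheme-theoretic v-sheaves, after which \Cref{pro:topologyschematicvsheaf} identifies $\mid \F\red \mid$ as carrying the quotient topology induced by $\mid \widehat X\red \mid$.

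The proof concludes by diagram chase. The bottom-then-right composition $\mid \tilde p\red \mid\circ \mrm{sp}_X$ is a quotient map (composition of quotient maps) and equals the top-then-right composition $\mrm{sp}_{\Di}\circ \mid p \mid$; since $\mid p \mid$ is a surjective continuous map, the standard topological principle that ``$g\circ f$ quotient together with $f$ surjective continuous forces $g$ quotient'' yields that $\mrm{sp}_{\Di}$ is a quotient map. The main obstacle is the verification that $\mid \tilde p\red \mid$ is a quotient map: because the formalization $\widehat X\to \F$ need not itself be formally adic, one cannot blindly transfer the v-covering property by basechange along $(\F\red)^\diamond\hookrightarrow \F$, and the delicate step is precisely the observation about discrete test maps factoring through the reduction, which rescues the situation and ensures the required presentation via \Cref{pro:topologyschematicvsheaf}.
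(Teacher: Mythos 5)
Your square is set up correctly, and the first two edges are unproblematic ($\mrm{sp}_X$ is a quotient by \Cref{pro:specialisniceforTate}, and $|p|$ is a quotient essentially by the very definition of the topology on $|\Di|$). The genuine gap is the third claim, that $|\tilde p\red|\colon |\widehat X\red|\to|\F\red|$ is a quotient map. Your justification starts from the assertion that $\widehat X\to \F$ is a v-surjection, which is false: $X$ only covers $\Di$, and its formalization hits at most $\Di$ together with some non-analytic points, so there is no reason that $\widehat X\red\to\F\red$ is a surjection of scheme-theoretic v-sheaves. To invoke \Cref{pro:topologyschematicvsheaf} you need exactly such a surjection, and by \Cref{pro:two-v-covers} together with \Cref{lem:subtrusivevsadic} this amounts to universal subtrusiveness, i.e.\ surjectivity on adic spectra (all valuations) — strictly more than the surjectivity of $\mrm{sp}_{\Di}$ on the points of $|\F\red|$ that richness gives you. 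A merely surjective map of schemes need not be a topological quotient map, and nothing in your argument produces, for a given specialization $z_0\rightsquigarrow z_1$ in $|\F\red|$, a valuation-ring point of $\widehat X\red$ witnessing it; that is what quotient-ness hinges on.

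What closes this hole is precisely the hypothesis your proposal never touches: valuativity of $\F$ (built into the definition of a (rich) smelted kimberlite), and to a lesser extent local Noetherianness of $|\F\red|$. The paper argues locally on $|\F\red|$, reduces via local Noetherianness to an irreducible affine piece $\mrm{Spec}(A)$, picks a formalizable geometric point $\Spa{C}\to\Di$ over the generic point, enlarges $C^+$ to the minimal ring of integral elements $C^{\mrm{min}}_g$ whose reduction contains $A$, and uses partial properness of $\mrm{SP}_\F$ to lift $\mrm{Spa}(C,C^{\mrm{min}}_g)$ into $\Di$; the induced map $|\mrm{Spa}(C,C^{\mrm{min}}_g)|\to|\F\red|$ is specializing, surjective and spectral, hence closed by \Cref{cor:closedmap}, hence a quotient map factoring through $\mrm{sp}_{\Di}$, which forces $\mrm{sp}_{\Di}$ to be a quotient. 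If you wish to keep your cover-based setup, you would need the same valuativity input (for instance through \Cref{pro:valuativegivesspecializing}) to enlarge the valuation rings occurring in $\widehat X$ so that $\widehat X\red\to\F\red$ becomes universally subtrusive; as written, the key step is unsupported, and the fact that the valuativity and locally Noetherian hypotheses go unused is a strong sign the argument does not close.
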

	\begin{proof}
		We can argue locally on $|\F\red|$, so we may assume $\F\red=\mrm{Spec}(A)^\diamond$. We first prove the case that $|\F\red|$ is irreducible. Let $g$ be the generic point of $|\F\red|$, let $r\in|\Di|$ with $\mrm{sp}_{\Di}(r)=g$ represented by formalizable map $f_r:\C\to \Di$. Let $C_g^\mrm{min}$ be the minimal ring of integral elements of $C$ such that $C_g^\mrm{min}/C^{\circ \circ}$ contains $A$. This defines a map $\mrm{Spa}(C,C^\mrm{min}_g)\to \Heuer{\mrm{Spec}(A)}$. By valuativity, we get a lift $\mrm{Spa}(C,C_g^\mrm{min})\to \Di$. The map $f^\mrm{min}:|\mrm{Spa}(C,C^\mrm{min})|\to |\F\red|$ is specializing, surjective and a spectral map of spectral spaces. By \Cref{cor:closedmap}, $f^\mrm{min}$ is a closed map and consequently a quotient map of topological spaces.
		The case in which $|\F\red|$ has a finite number of irreducible components is analogous. Since we are allowed to work locally and we assumed $|\F\red|$ is locally Noetherian, every point has an affine neighborhood with finitely many irreducible components.
	\end{proof}

	\begin{prop}
		\label{pro:neighborhoodspreserverich} 
		Constructible formal neighborhoods and \'etale formal neighborhoods preserve rich smelted kimberlites.
	\end{prop}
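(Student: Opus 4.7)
The plan is to verify the four richness conditions---valuativity of the prekimberlite, cJ of the diamond part, local Noetherianity of the reduction, and surjectivity of the specialization map---for both constructions. That the pairs $(\Tf{\F}{S},\Tup{\Di}{S})$ (for $S\subseteq|\F\red|$ locally closed constructible) and $(\Tf{\F}{V},\Tf{\F}{V}\times_\F\Di)$ (for $V\to\F\red$ quasicompact, separated and \'etale) already form smelted kimberlites is recorded in \Cref{tubularsmeltedkimberlites} and \Cref{pro:formaletalegiveskimberlites}. Valuativity of $\Tf{\F}{S}$ and $\Tf{\F}{V}$ follows from \Cref{pro:preservevaluative}. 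For the cJ hypothesis, $\Tup{\Di}{S}\hookrightarrow\Di$ is an open immersion by \Cref{pro:constgivesopen} while $\Tf{\F}{V}\times_\F\Di\to\Di$ is \'etale (as the basechange of $\Tf{\F}{V}\to\F$ from \Cref{pro:etalepreservesformaladic}), so in both cases \Cref{pro:basicpropCJ} applies. Local Noetherianity of $|S|$ is immediate since it is locally closed in $|\F\red|$; for $|V|$ we use that \'etale morphisms preserve locally Noetherian schemes.

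The main work is the surjectivity of the new specialization map. For the constructible case, the plan is to combine the patch topology with the cJ hypothesis and valuativity. The preimage $\mrm{sp}_\Ki^{-1}(|S|)$ is a nonempty patch-clopen subset of $|\Di|$: patch-clopen because $|S|$ is constructible in $|\F\red|$ and $\mrm{sp}_\Ki$ is spectral by \Cref{pro:spectralmapSch-Spatial}, and nonempty by richness of $\Ki$. By the cJ hypothesis it then contains a rank one point of $|\Di|$, which automatically lies in $|\Tup{\Di}{S}|$ because rank one points admit no proper generizations and hence trivially satisfy the interior characterization from \Cref{tubularsmeltedkimberlites}. To promote this nonemptiness into the full surjectivity $\mrm{sp}_{\Tup{\Di}{S}}\colon|\Tup{\Di}{S}|\twoheadrightarrow|S|$, my plan is to argue locally on $|\F\red|$, using local Noetherianity to restrict attention to a Noetherian affine open containing a fixed $x\in|S|$, and then to combine the valuativity of $\F\to\Heuer{\F\red}$ with richness to modify an arbitrary preimage of $x$ into one whose entire reduction chain lies in $|S|$.

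For the \'etale case, the plan is to reduce to the constructible case. Given $v\in|V|$ mapping to $x\in|\F\red|$, I first produce a rank one preimage of $x$ in $|\Di|$ represented by $\Spa{C}\to\Di$ with residue field $k\supseteq k(x)$, and then enlarge $C$ to a perfectoid field $C'$ whose residue field contains $k\otimes_{k(x)}k(v)$; this is possible because $k(v)/k(x)$ is finite separable by \'etaleness of $V\to\F\red$. Invoking the Cartesian description $\Tf{\F}{V}\times_\F\Di=\Di\times_{\Heuer{\F\red}}\Heuer{V}$ together with \Cref{pro:etalepreservesformaladic} then yields a rank one lift $\Spa{C'}\to\Tf{\F}{V}\times_\F\Di$ above $v$. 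The main obstacle throughout is controlling the reduction chain of the constructed preimage so that it actually lands inside the prescribed locally closed subscheme (respectively, lifts to the prescribed point of $V$); this is where the combination of cJ, valuativity of $\F\to\Heuer{\F\red}$, and local Noetherianity of $|\F\red|$ is essential.
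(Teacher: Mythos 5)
The routine parts of your plan are fine and coincide with the paper's (very terse) route: cJ via \Cref{pro:basicpropCJ} together with \Cref{pro:constgivesopen} (resp.\ \Cref{pro:etalepreservesformaladic}), valuativity via \Cref{pro:preservevaluative}, the smelted-kimberlite structure via \Cref{tubularsmeltedkimberlites} and \Cref{pro:formaletalegiveskimberlites}, and local Noetherianity of the new reduction is immediate. The problem is the step you yourself flag as the main work, surjectivity of the new specialization map, where what you offer is a plan rather than an argument, and the mechanism you indicate points in a direction that the hypotheses do not support. Note first what surjectivity really amounts to: a point $q$ of $\Tup{\Di}{S}$ has its maximal rank-one generization $q_1$ again in $\Tup{\Di}{S}$ (it is open, hence generization-stable), $\mrm{sp}_\Ki(q_1)\in S$, and $\mrm{sp}_\Ki(q)$ is a specialization of $\mrm{sp}_\Ki(q_1)$; conversely, by \Cref{pro:preservevaluative} and \Cref{pro:spectralmapSch-Spatial}(1) applied to $(\Tf{\F}{S},\Tup{\Di}{S})$, every specialization inside $S$ of such an image is attained. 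So the image of $\mrm{sp}_{\Tup{\Di}{S}}$ is exactly the set of points of $S$ admitting a generization \emph{inside $S$} of the form $\mrm{sp}_\Ki(r_1)$ with $r_1$ a rank-one point of $\Di$. Your patch-topology argument only produces \emph{some} rank-one point in $\mrm{sp}_\Ki^{-1}(S)$, i.e.\ nonemptiness of $\Tup{\Di}{S}$ (this is \Cref{pro:densenbhood}); it does not hit a prescribed $x\in S$.

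The proposed fix, ``combine valuativity with richness to modify an arbitrary preimage of $x$ into one whose entire reduction chain lies in $|S|$,'' goes the wrong way. Partial properness of $\mrm{SP}_\F$, as exploited in \Cref{pro:valuativegivesspecializing}, lets you pass from $\mrm{Spa}(C,O_C)$ to $\mrm{Spa}(C,C'^+)$, i.e.\ it \emph{appends specializations} to the bottom of the reduction chain of a point; it gives no way to truncate the generic part of the chain of a given preimage of $x$, which is exactly what is needed when $S$ is closed and the chain of any preimage of $x$ exits $S$ through proper generizations of $x$ (taking generizations of the preimage does not help either: that moves the sp-image up and keeps the offending top of the chain). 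To see that something genuinely new is required, take $S=\overline{\{x\}}$ with $x$ not closed (closed, hence constructible, in the locally Noetherian $|\F\red|$): since $x$ has no generization in $S$ other than itself, surjectivity for this neighborhood forces $x$ itself to be $\mrm{sp}_\Ki(r_1)$ for a rank-one $r_1$, and neither your outline nor the cJ/patch-density argument yields this ($\{x\}$ is not constructible, so $\mrm{sp}_\Ki^{-1}(x)$ is not patch-open; richness only guarantees preimages of $x$ of possibly higher rank). The same gap propagates to your étale case: the residue-field lifting over $v$ is fine (with an algebraically closed rank-one representative no enlargement of $C$ is even needed, since the reduction of such a point is the spectrum of an algebraically closed field), but it presupposes a rank-one point over $x=f(v)$, and covering the non-closed points of $V$ again reduces to the same unproved statement. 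For fairness, the paper's own proof is a bare citation of \Cref{pro:basicpropCJ}, \Cref{pro:constgivesopen} and \Cref{pro:correctreductionHeuer} and leaves this point implicit as well; but as written your proposal does not close it, and this is the concrete gap.
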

	\begin{proof}
This follows from \Cref{pro:basicpropCJ}, \Cref{pro:constgivesopen} and \Cref{pro:correctreductionHeuer}.
	\end{proof}

	The following lemma was the starting point of our theory of specialization and a key input for our work on connected components of moduli spaces of $p$-adic shtukas \cite{gleason2021geometric}.   
\begin{lem}
	\label{pro:mainproKimberlites}
	Let $\Ki=(\F,\Di)$ be a topologically normal rich smelted kimberlite. Then $\pi_0(\mrm{sp}_{\Di}):\pi_0(|\Di|)\to \pi_0(|\F\red|)$ is bijective.
\end{lem}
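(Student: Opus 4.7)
The plan is to reduce to showing that each fiber $\mrm{sp}_{\Di}^{-1}(y)$ lies in a single connected component of $|\Di|$ and that this component depends only on the component of $|\F\red|$ containing $y$; combined with the fact that $\mrm{sp}_{\Di}$ is a surjective quotient map of topological spaces (\Cref{lem:partproperpreKimb}), this yields the bijection on $\pi_0$. Surjectivity of $\pi_0(\mrm{sp}_{\Di})$ is immediate from richness, so the content is in injectivity.

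I would first handle closed points: when $y \in |\F\red|$ is closed, topological normality gives that $\Tup{\Di}{y}$ is connected, and since $\Di$ is a cJ-diamond, \Cref{pro:densenbhood} ensures $|\Tup{\Di}{y}|$ is dense in $\mrm{sp}_{\Di}^{-1}(y)$. Because the closure in $|\Di|$ of a connected subset is connected, $\mrm{sp}_{\Di}^{-1}(y)$ sits in a single connected component of $|\Di|$. For a general $y$, I would find a closed specialization $y_0$ of $y$ in $|\F\red|$ and invoke the specializing property of $\mrm{sp}_{\Di}$ (\Cref{pro:spectralmapSch-Spatial}(1)): any $x \in \mrm{sp}_{\Di}^{-1}(y)$ admits $x_0 \in \mrm{sp}_{\Di}^{-1}(y_0)$ with $x_0 \in \overline{\{x\}}$, so $x$ and $x_0$ lie in the same component of $|\Di|$; since $\mrm{sp}_{\Di}^{-1}(y_0)$ is already in one component by the previous step, so is $\mrm{sp}_{\Di}^{-1}(y)$. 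The same argument applied to both specializations and generizations shows that the assignment $y \mapsto [\mrm{sp}_{\Di}^{-1}(y)] \in \pi_0(|\Di|)$ is constant along zig-zags of specializations and generizations in $|\F\red|$; since any two points of a connected component of a locally Noetherian scheme can be joined by such a zig-zag (through generic points of shared irreducible components), the assignment is constant on each component of $|\F\red|$, finishing injectivity.

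The main obstacle will be guaranteeing the existence of a closed specialization $y_0 \in |\F\red|$ for arbitrary $y$ under mere local Noetherianity, since a point closed in a Noetherian affine open need not remain closed in $|\F\red|$. I would handle this by iteration: if $y_0$ is closed in a Noetherian affine neighborhood of $y$ but specializes further to some $y_1 \in |\F\red|$ outside that neighborhood, chase $y_1$ into a new Noetherian affine and repeat, invoking a Zorn-type argument on specialization chains to terminate. In the applications of interest (e.g.\ \Cref{thm2:GrassmannianisaKimberlite}), $|\F\red|$ is in fact globally Noetherian and this technicality disappears.
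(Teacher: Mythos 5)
Your proposal is correct and rests on the same two pillars as the paper's own proof: the specializing property of $\mrm{sp}_{\Di}$ (\Cref{pro:spectralmapSch-Spatial}) to reduce everything to closed points of $|\F\red|$, and topological normality combined with \Cref{pro:densenbhood} to see that the fiber over a closed point is connected (when invoking the tube here you should note, as the paper does, that closed points are constructible because $|\F\red|$ is locally Noetherian). Where you genuinely differ is the global bookkeeping: the paper takes a clopen cover $U\cup V=|\Di|$ and shows $U\cap V\neq \emptyset$ whenever $\mrm{sp}_{\Di}(U)\cap \mrm{sp}_{\Di}(V)\neq\emptyset$, whereas you build a fiberwise component assignment, show it is constant along specializations, and then use that connected components of a locally Noetherian scheme are classes for the zig-zag relation of specializations and generizations. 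Both wrap-ups work; yours avoids having to separate distinct components of $|\Di|$ by a clopen partition, at the cost of the (standard, and correctly justified) zig-zag lemma. Finally, the closed-specialization point you flag is not a gap relative to the paper: its proof makes exactly the same reduction ("we can assume there is a closed point $x\in \mrm{sp}_{\Di}(U)\cap \mrm{sp}_{\Di}(V)$"), so both arguments implicitly use that the relevant points of $|\F\red|$ admit closed specializations. Just be aware that your proposed Zorn/iteration fix does not obviously terminate for a merely locally Noetherian, non-quasi-compact $|\F\red|$ (chains of specializations need not have lower bounds without quasi-compactness), so in that generality one should either impose quasi-compactness or the existence of closed specializations outright; in the paper's applications, where $|\F\red|$ is Noetherian, this is harmless, as you say.
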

\begin{proof}
	Let $U,V\subseteq |\Di|$ be two non-empty closed-open subsets with $V\cup U=|\Di|$. Clearly, $\pi_0(\mrm{sp}_{\Di})$ is surjective. Suppose that $\emptyset \neq \mrm{sp}_{\F}(U)\cap \mrm{sp}_{\F}(V)$, it suffices to show that $U\cap V\neq 0$. Since $\mrm{sp}_{\Di}$ is specializing we can assume there is a closed point $x\in \mrm{sp}_{\Di}(U)\cap \mrm{sp}_{\Di}(V)$. Since $|\F\red|$ is locally Noetherian, closed points are open in the constructible topology. By \Cref{pro:densenbhood}, $\Tup{\Di}{x}$ is dense in $\mrm{sp}_{\Di}^{-1}(x)$, which implies that $\mrm{sp}_{\Di}^{-1}(x)$ is connected. Connectedness gives that $(\mrm{sp}_{\Di}^{-1}(x)\cap U)\cap (\mrm{sp}_{\Di}^{-1}(x)\cap V)\neq \emptyset$ and in particular $U\cap V\neq \emptyset$ which is what we wanted to show. 
\end{proof}

\section{The specialization map for unramified $p$-adic Beilinson--Drinfeld Grassmannians}
So far our discussion has been purely theoretical. In this section, we apply the theory to construct and study the specialization map for some $p$-adic Beilinson--Drinfeld Grassmannians \cite[Definition  20.3.1]{Ber}. 
For the rest of the section $\g$ denotes a reductive over $\Zp$ and we let $T\subseteq B\subseteq \g$ denote integrally defined maximal torus and Borel subgroups respectively. 
Let $\mu\in X_*^+(T_{\overline{\mathbb{Q}}_p})$ be a dominant cocharacter with reflex field $E\subseteq \overline{\mathbb{Q}}_p$. Let $O_E$ denote the ring of integers of $E$ and let $k_E$ denote the residue field. Let $\Grm{\Oe}$ denote the v-sheaf parametrizing $B^+_\mrm{dR}$-lattices with $\g$-structure whose relative position is bounded by $\mu$ as in \cite[Defintion 20.5.3]{Ber} and let $\GrWme{k_E}$ denote the Witt vector affine Grassmannian. 
%
%
Let $F$ be a nonarchimedean field extension of $E$. Let $O_F$ denote the ring of integers of $F$ and the residue field $k_F$, assume that $F$ is complete for the $p$-adic topology and that $k_F$ is perfect. Let $\Grm{O_F}:=\Grm{\Oe}\times_{\Spdf{O_E}}\Spdf{O_F}$ and let $\GrWme{k_F}=\GrWme{k_E}\times_{\mrm{Spec}(k_E)}\mrm{Spec}(k_F)$. 
Here is our result:
\begin{thm}
	\label{thm:GrassmannianisaKimberlite}
	$\Grm{O_F}$ is a topologically normal rich $p$-adic kimberlite with $(\Grm{O_F})\red=\GrWme{k_F}$. 
\end{thm}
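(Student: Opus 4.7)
The plan is to use the two Demazure resolutions constructed in this section—one with $B^+_{\mrm{dR}}$-coefficients and one with $A_{\mrm{inf}}$-coefficients—simultaneously. Fix a reduced word $\cali{C}$ for a Weyl element $w$ with $\mu \leq w$, and let $\pi : \widetilde{S}_{\cali{C}}(w) \to \Grm{O_F}$ be the proper surjection built from $A_{\mrm{inf}}$-coefficients. By construction, the source is an iterated $(\bb{P}^1)^{\dia}$-bundle over $\Spdf{O_F}$, and in particular is manifestly v-locally formal. My first step would be to verify the specializing and prekimberlite properties: v-formalizability of $\Grm{O_F}$ passes from the source via surjectivity of $\pi$ (\Cref{rem:propertiesofformalizing}); formal separatedness follows by applying \Cref{lem:basechangerepresentimpliesadic} to the diagonal of $\Grm{O_F}$, which is a closed immersion by separatedness over $\Spdf{O_E}$, once one identifies $\Grm{O_F} \times_{\Grm{O_F}\times\Grm{O_F}} (\GrWme{k_F})^\dia\times (\GrWme{k_F})^\dia$ with the reduced diagonal of $\GrWme{k_F}$.

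Second, I would compute the reduction. Maps $\mrm{Spec}(R)^{\dia} \to \Grm{O_F}$ unwind, via the $A_{\mrm{inf}}$-version of the moduli problem, to modifications of trivial $\g$-bundles over $A_{\mrm{inf}}(R^+)$ of relative position bounded by $\mu$, and applying $(-)\red$ replaces $A_{\mrm{inf}}(R^+)$ by $W(R)$, which is precisely the moduli problem computed by $\GrWme{k_F}(R)$. Thus $(\Grm{O_F})\red = \GrWme{k_F}$, a scheme by Zhu/Bhatt--Scholze, and the adjunction $(\GrWme{k_F})^{\dia} \to \Grm{O_F}$ is the closed immersion cut out by $p=0$, giving the prekimberlite structure. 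Valuativity of $\Grm{O_F}$ follows because $\pi$ is partially proper and surjective and the Demazure source is valuative (being a tower of $\bb{P}^1$-bundles over the valuative $\Spdf{O_F}$). The analytic locus $(\Grm{O_F})^{\mrm{an}}$ is the $B^+_{\mrm{dR}}$-affine Grassmannian bounded by $\mu$ over $F$—a qcqs spatial diamond—and $\mrm{sp}$ is quasicompact because $\pi$ is. Formal $p$-adicness is \Cref{pro:twoadics} applied to $\Grm{O_F} \to \Spdf{O_F} \to \Zpd$. Richness then follows from: \Cref{cor:rigidcJ} applied to the rigid generic fiber of the local model (cJ-diamond); ind-projectivity of $\GrWme{k_F}$ over $k_F$ (locally Noetherian); and Hensel-type lifting of closed points of $\GrWme{k_F}$ to $\mrm{Spa}(C,O_C)$-points of the generic fiber via the Demazure source (surjectivity of $\mrm{sp}$).

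The main obstacle will be topological normality. The plan is inductive on the length of $\cali{C}$: the base case $\Spdf{O_F}$ is topologically normal because its unique closed point has open unit disk as tubular neighborhood, which is connected; and a $(\bb{P}^1)^{\dia}$-bundle over a topologically normal base remains topologically normal because tubular neighborhoods of closed points in the total space are open-disk or $\bb{G}_m$-disk bundles over a connected tube in the base. To transfer normality to $\Grm{O_F}$, one uses that for a closed point $x \in |\GrWme{k_F}|$ the fiber $\pi^{-1}(x)$ is a geometrically connected closed subvariety of the source (connectedness of Bott--Samelson-type fibers), hence its open tubular neighborhood in the source is connected by the topological normality already established (extended from closed points to connected closed subvarieties by a straightforward inductive argument using the $\bb{P}^1$-bundle structure). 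Properness of $\pi$ then identifies the image of this tube with the tube around $x$ in $(\Grm{O_F})^{\mrm{an}}$, which is therefore connected, completing the proof.
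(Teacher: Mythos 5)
Your strategy is essentially the paper's: run the Demazure resolution with $B^+_{\mrm{dR}}$- and $A_{\mrm{inf}}$-coefficients in parallel, prove the source is a topologically normal rich $p$-adic kimberlite by induction along the iterated bundle structure, and push normality down along the multiplication map using connectedness of its fibers (the paper packages the last step as \Cref{pro:productmorphismconnectedfibers} plus \Cref{lem:quotientofKimberliteisKimberlite}, whose proof via \Cref{pro:mainproKimberlites} is exactly your ``extend connectedness of tubes from closed points to connected closed subvarieties''). Two of your justifications, however, do not work as written. For the cJ property you invoke \Cref{cor:rigidcJ} for ``the rigid generic fiber of the local model'', but for non-minuscule $\mu$ the generic fiber of $\Grm{O_F}$ is not the diamond of a rigid-analytic space, so \Cref{cor:rigidcJ} does not apply; the paper instead shows the generic fiber of the Demazure source has enough facets (\Cref{pro:DemazureCJ}, \Cref{pro:enoughballsprop}) and transfers cJ along the surjection via \Cref{pro:basicpropCJ}. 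Similarly, valuativity is not stated to descend along a proper surjection from a valuative source; the paper obtains it directly from partial properness of $\Grm{O_F}\to \Spdf{O_F}$ over the valuative base $\Spdf{O_F}$ (\Cref{pro:partiallyproperisvaluative}). (Also, \Cref{pro:twoadics} only handles $\Spdf{O_F}\to\Zpd$; formal adicness of $\Grm{O_F}\to\Spdf{O_F}$ needs the special-fiber computation together with \Cref{lem:basechangerepresentimpliesadic}, which you essentially have.)

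The more serious gap is that your resolution $\pi$ does not exist over $O_F$. The loop groups, parahoric models and Demazure varieties of this section are built for a \emph{split} group over $W(k)$ and over the base $\Spdf{O_C}$ with $C$ algebraically closed; since $\g_{O_F}$ need not be split, one must first base change to $O_C$, reduce to $H$ semisimple simply connected, and take $w=w_\mu$, the maximal element of $\pi^{-1}(\mu)$ in $W^{\mrm{aff}}/W_o$, so that $\mrm{Gr}^{\g,\leq w_\mu}_{O_C}=\Grm{O_C}$ (an arbitrary $w\geq \mu$ only gives a strictly larger Schubert variety, which does not resolve $\Grm{}$). Having proved normality over $O_C$, it does \emph{not} formally descend to $O_F$: a closed point $x\in|\GrWme{k_F}|$ may have several closed preimages in the reduction over $\bar{k}_F$, so connectedness of the tubes upstairs only shows that the tube at $x$ has one component per preimage point. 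The paper closes this by passing to the completed maximal unramified subextension $F'$ (where the reductions agree, so connectedness pulls back via \Cref{pro:pullbackformal}) and then using that $\Grm{O_{F'}}\to\Grm{O_F}$ is a $\underline{\pi_1^{\mrm{f\acute{e}t}}}(\mrm{Spec}(O_F))$-torsor whose group permutes the preimage points, hence their tubes, transitively. Some such splitting-and-descent argument (also needed for surjectivity of the specialization map, checked after extension of the base field via \Cref{lem:surjectivchecclosedpoints}) is absent from your proposal.
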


\begin{rem}
	This result has partially been generalized in our collaboration \cite{AGLR22}. There, we prove that the local models for parahoric groups are rich $p$-adic kimberlites. Nevertheless, we only improve the ``normality" part of the result if we assume that $\mu$ is minuscule and outside certain cases in small characteristic.  
\end{rem}

\subsection{Twisted loop sheaves}
Fix a perfect field $k$ in characteristic $p$. Let $X=\mrm{Spec}(A)$ be a scheme of finite type over $W(k)$. In \cite{Et}, Scholze associates to $X$ two v-sheaves over $\Wkd$, which we denote $X^\dia$ and $X^\diamond$ following \cite[Definition 2.10]{AGLR22}. Here $X^\dia:\mrm{Perf_k}\to \mrm{Sets}$ assigns to $\Spa{R}$ triples $(R^\sharp,\iota,f)$ with $(R^\sharp,\iota)$ an untilt and $f\in \mathrm{Hom}_{W(k)}(A,R^\sharp)$ is a $W(k)$-algebra homomorphism. On the other hand, $X$ assigns triples $(R^\sharp,\iota,f)$ with $f\in \mathrm{Hom}_{W(k)}(A,R^{\sharp,+})$. Now, $X^\diamond\subseteq X^\dia$ is open. Both of these functors glue to a construction defined for schemes $X$ locally of finite type over $\mrm{Spec}(W(k))$. Visibly, these constructions are related to $\dia:\mrm{PreAd}_{W(k)}\to \topPerf$. Let us elaborate.

Let $X_p$ denote the $p$-adic completion of $X$. Now, $X_p$ is a $p$-adic Noetherian formal scheme that we may regard as an adic space. We have an identification $X_p^\dia=X^\diamond$. If $X=\mrm{Spec}(A)$ and $X_f\to X$ is the open $X_f=\mrm{Spec}(A[\frac{1}{f}])$ with $f\in A$, then $(X_f)_p\to X_p$ is the locus in $X_p$ where $1\leq |f|$.

The construction of $X^\dia$ is more elaborate. Given an adic space $S$ (thought of as a triple $(|S|,\cali{O}_S,\{ v_s: s\in |S|\})$ in Huber's category $\mathscr{V}$ see \cite{ForRig}), we let $S^H$ denote the topologically ringed space $(|S|,\cali{O}_S)$ that is obtained from $S$ by forgetting the last entry of data. Suppose we are given a morphism of schemes $f:X\to Y$ that is locally of finite type and a morphism $g:S^H\to Y$ of locally ringed spaces where $S$ is an adic space for which every point $s\in S$ has an affinoid open neighborhood with Noetherian ring of definition. In \cite[Proposition 3.8]{ForRig}), Huber constructs an adic space $"S\times_Y X"$ together with a map of adic spaces $p_1:"S\times_Y X"\to S$ and a map of locally ringed spaces $p_2:("S\times_Y X")^H\to X$ with the following universal property. If $T$ is an adic space, $\pi_1:T\to S$ is a map of adic spaces and $\pi_2:T^H\to X$ is a map of locally ringed spaces such that $f\circ \pi_1 = g\circ \pi_2^H$, then there is a unique map $\pi:T\to "S\times_Y X"$ such that $p_1\circ \pi=\pi_1$ and $p_2\circ \pi^H=\pi_2$. Letting $Y=\mrm{Spec}(W(k))$ and $S=\Spf{W(k)}$ we define $X^{\mrm{ad}}$ as $("S\times_Y X")$. Then, $X^\dia=(X^{\mrm{ad}})^\dia$. Moreover, if $X=\mrm{Spec}(A)$ and $X_f=\mrm{Spec}(A[\frac{1}{f}])$ for $f\in A$ we can see from the universal property that $X_f^{\mrm{ad}}$ is the open locus of $X^{\mrm{ad}}$ where $f\neq 0$.

%
\begin{pro}
	\label{pro:propernessgivessharpissharpplus}
	If $X\to \mrm{Spec}(W(k))$ is a proper map of schemes, then the natural map $X^\diamond\to X^\dia$ is an isomorphism.
\end{pro}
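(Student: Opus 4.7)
The map $X^\diamond \hookrightarrow X^\dia$ is already an open immersion of v-sheaves, so the plan is to establish surjectivity: given an affinoid perfectoid $\Spa{R} \in \mrm{Perf}_k$ over $\Wkd$ and a morphism $\psi \colon \Spa{R} \to X^\dia$, I would show that $\psi$ factors through $X^\diamond$. My approach will be v-descent. Using \Cref{exa:prodpointsbasis}, I would replace $\Spa{R}$ by a v-cover $p \colon \Spa{R'} \to \Spa{R}$ with $\Spa{R'}$ a product of points, so that by \Cref{pro:productofpointsarestrictlytotallydisconn} the space $\Spa{R'}$ is strictly totally disconnected and the residue field at every $x \in |\Spa{R'}|$ is an affinoid field $\Spa{K_x}$ with $K_x$ algebraically closed and $K_x^+$ an open and bounded valuation subring of $K_x$ (by \Cref{pro:connectdcompcriterion}). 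Since $X^\diamond \subseteq X^\dia$ is open, the pullback $U := (\psi \circ p)^{-1}(X^\diamond) \subseteq \Spa{R'}$ is an open subsheaf corresponding to an open subset of $|\Spa{R'}|$, so it suffices to verify that $|U| = |\Spa{R'}|$.

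Fixing an arbitrary $x \in |\Spa{R'}|$, the restriction $\Spa{K_x} \hookrightarrow \Spa{R'} \xrightarrow{\psi \circ p} X^\dia$ corresponds, after choosing the induced untilt $K_x^\sharp$ over $W(k)$, to a morphism of $W(k)$-schemes $f \colon \mrm{Spec}(K_x^\sharp) \to X$, together with the structure map $W(k) \to K_x^{\sharp,+}$ coming from $\Spa{R'} \to \Wkd$. The key observation is that $K_x^{\sharp,+}$ is a valuation ring of the nonarchimedean field $K_x^\sharp$ whose fraction field is all of $K_x^\sharp$, since any pseudouniformizer $\varpi \in K_x^{\sharp,+}$ satisfies $K_x^{\sharp,+}[\varpi^{-1}] = K_x^\sharp$. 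Applying the valuative criterion of properness to the proper morphism $X \to \mrm{Spec}(W(k))$ then produces a unique extension $\tilde f \colon \mrm{Spec}(K_x^{\sharp,+}) \to X$ of $f$ over $W(k)$, which is precisely the datum needed to promote the map $\Spa{K_x} \to X^\dia$ to a map into $X^\diamond$; hence $x \in |U|$.

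Since $x$ was arbitrary, one concludes $|U| = |\Spa{R'}|$ and therefore $U = \Spa{R'}$, so $\psi \circ p$ factors through $X^\diamond$; by v-descent applied to the v-sheaf $X^\diamond$ together with the fact that $X^\diamond \hookrightarrow X^\dia$ is a monomorphism, $\psi$ itself factors through $X^\diamond$, as desired. The main point to check carefully is that the valuative criterion applies uniformly at every point of $|\Spa{R'}|$, including the ``limit'' points arising from ultrafilter constructions in a product of points; however, \Cref{pro:productofpointsarestrictlytotallydisconn} guarantees precisely the shape — algebraically closed residue field together with an open bounded valuation subring — on which the valuative criterion operates directly, so I do not anticipate any essential obstacle beyond bookkeeping.
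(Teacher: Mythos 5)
Your argument is correct, but it is a genuinely different route from the paper's: the paper disposes of this proposition in one line by citing Huber's result \cite[Remark 4.6.(iv).d]{ForRig}, which asserts at the level of adic spaces that for proper $X\to \mrm{Spec}(W(k))$ the open immersion from the ($p$-adic formal completion side of the) construction into $"S\times_Y X"$ is an isomorphism; applying $\dia$ gives $X^\diamond\cong X^\dia$ immediately. You instead reprove the statement directly for v-sheaves, reducing to affinoid-field-valued points and invoking the valuative criterion of properness — which is in fact the same mechanism hiding inside Huber's adic-space statement, so your proof has the merit of being self-contained, at the cost of redoing (a diamond-theoretic shadow of) Huber's work. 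Two small remarks on your write-up. First, the passage to a product-of-points cover is superfluous: since $X^\diamond\subseteq X^\dia$ is an open subsheaf, $\psi$ factors through $X^\diamond$ as soon as every point of $|\Spa{R}|$ maps into $|X^\diamond|$, and the residue field at an arbitrary point of any affinoid perfectoid $\Spa{R}$ is already an affinoid field $(K_x,K_x^+)$ with $K_x^+$ an open and bounded valuation subring of $K_x$ — algebraic closedness is never used, since the valuative criterion works for arbitrary valuation rings with fraction field $K_x^\sharp$. Second, the one step you should write out is the identification of affinoid-field-valued points of $X^\dia$ (resp. $X^\diamond$) with $W(k)$-morphisms $\mrm{Spec}(K_x^\sharp)\to X$ (resp. $\mrm{Spec}(K_x^{\sharp,+})\to X$) for \emph{non-affine} $X$: this follows because both $X^\dia$ and $X^\diamond$ are covered by the open subsheaves attached to affine opens of $X$, and a map from $\Spa{K_x}$ factors through one of them since the closed point of $|\Spa{K_x}|$ lies in one of the pulled-back opens and opens are stable under generization (likewise $\mrm{Spec}(K_x^{\sharp,+})$ is local, so the extension $\tilde f$ lands in an affine open). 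With that in place, the image of the $\tilde f$-point of $X^\diamond$ in $X^\dia$ is visibly the original point $f$, so your pointwise criterion does apply and the proof goes through.
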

\begin{proof}
	It follows directly from \cite[Remark 4.6.(iv).d]{ForRig}.
\end{proof}
\begin{pro}
	\label{pro:universallysubtrusgivesvcover}
	If $X$ and $Y$ are qcqs finite type schemes over $\mrm{Spec}(W(k))$ and that $X\to Y$ is universally subtrusive as in \Cref{defi:v-top-sch}, then $X^\dia\to Y^\dia$ and $X^\diamond\to Y^\diamond$ are surjective.
\end{pro}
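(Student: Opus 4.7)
The plan is to reduce surjectivity of both $X^{\dia}\to Y^{\dia}$ and $X^{\diamond}\to Y^{\diamond}$ to a lifting property at geometric points, then use universal subtrusiveness together with a standard extension-of-valuations construction to produce the required v-cover. Since surjectivity of a map of v-sheaves can be tested v-locally on the target and products of points form a basis for the v-topology (\Cref{exa:prodpointsbasis}), it suffices to show that for every map $\iota\colon \Spa{C}\to Y^{\dia}$ (respectively $\Spa{C}\to Y^{\diamond}$), where $C$ is an algebraically closed perfectoid field of characteristic $p$ and $C^{+}\subseteq C$ an open bounded valuation subring, there is a v-cover $\Spa{C'}\to \Spa{C}$ by a space of the same type through which $\iota$ lifts to $X^{\dia}$ (resp. $X^{\diamond}$). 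Using the qcqs hypothesis and the fact that the universal subtrusiveness hypothesis is local on $Y$, I may further assume $X=\mrm{Spec}(A)$ and $Y=\mrm{Spec}(B)$ are affine.

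Unwinding definitions, $\iota$ is given by an untilt $C^{\sharp}$ of $C$ together with a $W(k)$-algebra map $f\colon B\to C^{\sharp}$ for $X^{\dia}$, or $f\colon B\to C^{\sharp,+}$ for $X^{\diamond}$. In the first case I apply universal subtrusiveness to the trivial valuation ring $C^{\sharp}$ and the morphism $\mrm{Spec}(C^{\sharp})\to Y$ to obtain a valuation ring $W$ containing $C^{\sharp}$ together with a ring map $g\colon A\to W$ extending $f$; in the second case I apply it to $C^{\sharp,+}$ and $\mrm{Spec}(C^{\sharp,+})\to Y$ to obtain a valuation ring $W$ dominating $C^{\sharp,+}$ with a lift $g\colon A\to W$. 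Because $A$ is finitely generated over $W(k)$, I may replace $W$ by its intersection with the subfield $K$ of $\mrm{Frac}(W)$ generated by $C^{\sharp}$ and the image of $A$, so that $K=\mrm{Frac}(W)$ is finitely generated over $C^{\sharp}$.

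The remaining step is to produce the v-cover. I extend the natural rank-one valuation on $C^{\sharp}$ to a rank-one valuation on $K$; in the $X^{\diamond}$ case, I additionally arrange that its valuation ring dominates the image of $W$, which is possible because $W$ already dominates $C^{\sharp,+}$. Completing $K$ with respect to this valuation, passing to an algebraic closure, and completing once more yields an algebraically closed complete nonarchimedean field $L^{\sharp}$ containing $K$; as $L^{\sharp}$ is algebraically closed and complete of residue characteristic $p$, it is perfectoid, and I take $C'=L^{\sharp,\flat}$. Choosing $C'^{+}$ to be the tilt of an open bounded valuation subring of $L^{\sharp}$ containing (the image of) $C^{\sharp,+}$, and in the $X^{\diamond}$ case also $W$, the induced map $\Spa{C'}\to \Spa{C}$ is surjective on points and quasicompact, hence a v-cover. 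The composition $A\xrightarrow{g} W\hookrightarrow C'^{\sharp}$ (resp. $A\hookrightarrow C'^{\sharp,+}$) provides the desired lift.

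The main obstacle is the construction of $C'$ in the $X^{\diamond}$ case, where one needs the extension of the rank-one valuation from $C^{\sharp}$ to $K$ to have valuation ring containing $W$ so that after completion and algebraic closure the image of $W$ remains inside a bounded valuation subring. This is the sole nontrivial step and is classical; its validity hinges on the fact that $W$ dominates $C^{\sharp,+}$, so that extending the rank-one generization of $W$ to $K$ produces a rank-one valuation whose valuation ring dominates $W$ itself.
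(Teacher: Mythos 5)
The decisive gap is your very first reduction. Lifting a section $\Spa{C}\to Y^\dia$ after a v-cover $\Spa{C'}\to\Spa{C}$ of the \emph{point} is, by the definition of $|{\cdot}|$, exactly surjectivity of $|X^\dia|\to|Y^\dia|$; it is not the same as surjectivity of v-sheaves, which requires that a section over an \emph{arbitrary} affinoid perfectoid $\Spa{R}$ (equivalently, over a product of points as in \Cref{exa:prodpointsbasis}) lift after a v-cover. Topological surjectivity implies v-sheaf surjectivity only when the map is quasicompact (\cite[Lemma 12.11]{Et}), and the maps at hand are not quasicompact in general: already for a finite affine open cover $X=\coprod_i\mrm{Spec}(B[1/f_i])\to Y=\mrm{Spec}(B)$, which is universally subtrusive, each $\mrm{Spec}(B[1/f_i])^\dia\to Y^\dia$ is a non-quasicompact open immersion. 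This is precisely what the paper's proof is engineered to supply: after reducing to $Y$ affine, it invokes Rydh's structure theorem \cite[Theorem 3.12]{Ryd} to factor $X\to Y$ as a quasicompact open cover followed by a proper surjection, handles open covers directly (an open cover of $Y$ pulls back to an open cover of $\Spa{R^\sharp}$, and quasicompactness of $\Spa{R}$ gives the needed uniformity), and uses Chow's lemma to make the proper part projective, so that the induced map of v-sheaves is quasicompact and \cite[Lemma 12.11]{Et} applies; at that point geometric points lift without even extending $C$, by surjectivity over $\mrm{Spec}(C)$ and the valuative criterion over $\mrm{Spec}(C^+)$, so no valuation-theoretic construction of $C'$ is needed.

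If you try to close the gap by gluing your pointwise lifts over a product of points covering $\Spa{R}$, note the asymmetry between the two functors: for $X^\diamond$ with $X=\mrm{Spec}(A)$ this can be made to work, since $W(\prod_t C_t'^{+})=\prod_t W(C_t'^{+})$ gives $R'^{\sharp,+}=\prod_t C_t'^{\sharp,+}$ and componentwise maps $A\to C_t'^{\sharp,+}$ assemble; but for $X^\dia$ it genuinely fails, because the images of a finite set of generators of $A$ under your subtrusiveness lifts carry no uniform bound as $t$ ranges over $|\Spa{R}|$, so the componentwise map $A\to\prod_t C_t'^{\sharp}$ need not land in $R'^{\sharp}=(\prod_t C_t'^{\sharp,+})[1/\varpi^\sharp]$. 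Two further (more minor) points in your construction of the point-level cover: choosing $C'^{+}$ to merely \emph{contain} the image of $C^{\sharp,+}$ does not make $\Spa{C'}\to\Spa{C}$ surjective (you need $C'^{+}\cap C=C^{+}$, so that the closed point is hit), and in the $X^\diamond$ case the existence of a rank-one valuation on $K$ whose valuation ring contains $W$ is not automatic, since a valuation ring of large rank may have no height-one prime; one must first replace $W$ by a suitable coarsening, and then check that the required compatibilities with $C^{\sharp,+}$ survive. None of these repairs addresses the main issue for $X^\dia$, for which a structural input such as the Rydh factorization (or some other source of quasicompactness) appears unavoidable.
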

\begin{proof}
	Replacing $Y$ by an open cover we may assume $Y=\mrm{Spec}(A)$. By \cite[Theorem 3.12]{Ryd} we may assume that $X\to Y$ factors as $X\to Y'\to Y$ with $Y'\to Y$ a proper surjection and $X\to Y'$ a quasicompact open cover. Open covers give surjective maps so we can reduce to the proper case. By Chow's lemma \cite[Tag 0200]{Stacks}, we may assume $Y'\to Y$ is projective. Now, $Y'^\dia\to Y^\dia$ and $Y'^\diamond\to Y^\diamond$ are quasicompact. Indeed, they are the composition of a closed immersion and projection from $(\bb{P}_{W(k)}^n)^\dia\times_{\Wkd} Y^\dia$ (respectively $(\bb{P}_{W(k)}^n)^\dia\times_{\Wkd} Y^\diamond$). By \cite[Lemma 12.11]{Et}, we may check surjectivity at a topological level. Take geometric points $r:\Spa{C}\to Y^\dia$ and $s:\Spa{C}\to Y^\diamond$ given by ring maps $r^*:A\to C$ and $s^*:A\to C^+$. Since $Y'\to Y$ is proper and surjective $\mrm{Spec}(C)\times_Y Y'\to \mrm{Spec}(C)$ admits a section inducing a lift to $Y'^\dia$. Analogously, $\mrm{Spec}(C^+)\times_Y Y'\to \mrm{Spec}(C^+)$ admits a section (by the valuative criterion of properness). 
\end{proof}
\begin{pro}
	Let $X$ be locally of finite type over $W(k)$ with special fiber $X_s$. Then $(X^\dia)\red=X_s^{\mrm{perf}}=(X^\diamond)\red$.
\end{pro}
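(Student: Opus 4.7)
The plan is to prove both equalities by reducing to the affine case and then invoking the two characterizations of $(-)\red$ already established in the paper: the adjunction formula $\F\red(\mathrm{Spec}(R)) = \mathrm{Hom}_{\topPerf}(\mathrm{Spec}(R)^\diamond,\F)$ from \Cref{rem:diamond+ajunct}, and the explicit computation for formal Huber pairs in \Cref{pro:globaltworeds}. First I would observe that both $X \mapsto (X^\dia)\red$ and $X \mapsto (X^\diamond)\red$, as well as $X \mapsto X_s^{\mrm{perf}}$, are Zariski-local in $X$: an open immersion of schemes $U \hookrightarrow X$ induces an open subsheaf $U^\dia \hookrightarrow X^\dia$ (and similarly for $\diamond$), and open subsheaves are monomorphisms that are preserved by the right adjoint $(-)\red$; one then glues along intersections. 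So we may assume $X = \mathrm{Spec}(A)$ with $A$ of finite type over $W(k)$.

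For the $\diamond$-case, note that a $W(k)$-algebra map $f\colon A \to R^{\sharp,+}$ with $R^{\sharp,+}$ a $p$-adically complete and separated ring extends uniquely to the $p$-adic completion $\hat{A}_p$. Hence $X^\diamond = \Spdf{\hat{A}_p}$ as v-sheaves, where $(\hat{A}_p, \hat{A}_p)$ is a formal Huber pair. \Cref{pro:globaltworeds}(2) gives $(X^\diamond)\red = \mathrm{Spec}(\ovr{\hat{A}_p})$. For the $p$-adic topology on $\hat{A}_p$, an element is topologically nilpotent iff its image in $\hat{A}_p/p = A/pA$ is nilpotent, so $\hat{A}_p^{\circ\circ}\cdot \hat{A}_p$ is the preimage of the nilradical of $A/pA$. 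Since perfection kills nilpotents, $\ovr{\hat{A}_p} = ((A/pA)_{\mrm{red}})^{\mrm{perf}} = (A/pA)^{\mrm{perf}}$, which is exactly $\Gamma(X_s^{\mrm{perf}}, \mathcal{O})$.

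For the $\dia$-case I would unwind the adjunction: for $R$ perfect of characteristic $p$,
\[
(X^\dia)\red(\mathrm{Spec}(R)) \;=\; \mathrm{Hom}_{\topPerf}\!\big(\Spdf{R},\, (X^{\mrm{ad}})^\dia\big),
\]
using that $\mathrm{Spec}(R)^\diamond = \Spdf{R}$ and $X^\dia = (X^{\mrm{ad}})^\dia$. Since $\Spf{R}$ is a perfect discrete (hence non-analytic) adic space over $\Fp$ and $X^{\mrm{ad}}$ is a pre-adic space over $\Zp$, the full-faithfulness result \Cref{thm2:non-analyticthm} identifies this $\mathrm{Hom}$ with $\mathrm{Hom}_{\mrm{PreAd}}(\Spf{R}, X^{\mrm{ad}})$. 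Finally, Huber's universal property of $X^{\mrm{ad}} = "\Spf{W(k)} \times_{\mathrm{Spec}(W(k))} X"$ recalled in Section~5, applied to the adic space $T = \Spf{R}$ (which has $R$ itself as a Noetherian-ring-of-definition-compatible choice), shows that such a morphism is the same data as a morphism of locally ringed spaces $(\mathrm{Spec}(R), R) \to (\mathrm{Spec}(A), A)$ compatible with the $W(k)$-algebra structure, i.e. a $W(k)$-algebra map $A \to R$. Because $R$ is a perfect $\Fp$-algebra, such maps are in bijection with $k$-algebra maps $(A/pA)^{\mrm{perf}} \to R$, i.e. with $X_s^{\mrm{perf}}(\mathrm{Spec}(R))$. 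These identifications are natural in $R$ and in $X$, and one checks immediately that they agree with the canonical maps $X_s^{\mrm{perf}} \to (X^{\diamond})\red$ and $X_s^{\mrm{perf}} \to (X^{\dia})\red$ induced by $X_s \hookrightarrow X$.

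The main subtlety will be the $\dia$-case: $X^{\mrm{ad}}$ is only a pre-adic space (not an adic space) and one must verify that the universal property of Huber's construction can indeed be applied with $T = \Spf{R}$ for discrete perfect $R$. This is exactly the generality handled by \Cref{thm2:non-analyticthm}, so the argument reduces to carefully unwinding the definitions rather than any new technical input.
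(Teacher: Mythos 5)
Your proof is correct and follows essentially the same route as the paper, whose own argument is precisely ``the $p$-adic completion description of $X^\diamond$ plus \Cref{pro:globaltworeds} for the $\diamond$-case, and the universal property of $X^{\mrm{ad}}$ for the $\dia$-case''; your version merely spells out the adjunction and invokes \Cref{lem:discreteperfecthubpairs} directly, which is the same input that \Cref{pro:globaltworeds} rests on. The only cosmetic imprecision is writing $T^H=(\mrm{Spec}(R),R)$ for $T=\Spf{R}$ — the underlying space is the adic spectrum, not $\mrm{Spec}(R)$ — but since maps of locally ringed spaces into the affine scheme $\mrm{Spec}(A)$ are classified by ring maps $A\to\Gamma(T^H,\cali{O})=R$, the conclusion is unaffected.
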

\begin{proof}

	Both identifications follow from \Cref{pro:globaltworeds}. By the construction of $X_p$ as a $p$-adic completion in the case of $X^\diamond$, and by the universal property of $X^{\mrm{ad}}$ in the case of $X^\dia$.
\end{proof}

For the rest of the section let $C$ be an algebraically closed nonarchimedean field over $k$ with ring of integers $O_C$ and residue field $k_C$. Fix a characteristic $0$ untilt $C^\sharp$ and fix $\xi\in W(O_C)$ a generator for the kernel of $W(O_C)\to O_{C^\sharp}$. The choice of untilt determines a unique map $\Spdf{O_C}\to \Zpd$.
\begin{defi}
	\label{defi:sheavesofrings}
	We denote ring sheaves $W^+(\cali{O}),B^+_\mrm{dR}(\cali{O}^\sharp),W(\cali{O}),B_\mrm{dR}(\cali{O}^\sharp):\mrm{Perf}_{\Spdf{O_C}}\to \mrm{Sets}$
	\begin{enumerate}
		\item Where $W^+(\cali{O})$ assigns to $\Spa{R}\to \Spdf{O_C}$ the ring $W(R^+)$.
		\item Where $B^+_\mrm{dR}(\cali{O}^\sharp)$ assigns to $\Spa{R}\to \Spdf{O_C}$ the ring $B^+_\mrm{dR}(R^\sharp)$.
		\item Where $W(\cali{O})$ assigns to $\Spa{R}\to \Spdf{O_C}$ the ring $W(R^+)[\frac{1}{\xi}]$.
		\item Where $B_\mrm{dR}(\cali{O}^\sharp)$ assigns to $\Spa{R}\to \Spdf{O_C}$ the ring $B_\mrm{dR}(R^\sharp):=B^+_\mrm{dR}(R^\sharp)[\frac{1}{\xi}]$.
	\end{enumerate}
\end{defi}
%
Note that we have reduction maps $\mrm{red}:W^+(\cali{O})\to \cali{O}^{\sharp,+}$ and $\mrm{red}:B^+_\mrm{dR}(\cali{O}^\sharp)\to \cali{O}^\sharp$.
\begin{defi}
	\label{defi:loopgroupsofallsorts}
	Let $H$ be a finite type affine scheme over $\bb{G}_{m,W(k)}$, and $(\Hi,\rho)$ a finite type affine scheme over $\bb{A}^1_{W(k)}$ with an isomorphism $\rho:\Hi\times_{\bb{A}^1_{W(k)}}\bb{G}_m\to H$. We associate v-sheaves over $\Spdf{O_C}$. 
	\begin{enumerate}
		\item $W^+\Hi$ assigns to $\Spa{R}$ the set of sections to $\Hi_{W(R^+)}\to \mrm{Spec}(W(R^+))$. 
		\item $WH$ assigns to $\Spa{R}$ the set of sections to $H_{W(R^+)[\frac{1}{\xi}]}\to \mrm{Spec}(W(R^+)[\frac{1}{\xi}])$. 
		\item $L^+\Hi$ assigns to $\Spa{R}$ the set of sections to $\Hi_{B_\mrm{dR}^+(R^\sharp)}\to \mrm{Spec}(B_\mrm{dR}^+(R^\sharp))$. 
		\item $LH$ assigns to $\Spa{R}$ the set of sections to $\Hi_{B_\mrm{dR}(R^\sharp)}\to \mrm{Spec}(B_\mrm{dR}(R^\sharp))$. 
	\end{enumerate}
Here the base change of $\Hi$ and $H$ are given by maps to $\bb{A}^1_{W(k)}$ and $\bb{G}_{m,W(k)}$ defined by $t\mapsto \xi$.
\end{defi}
$\rho$ induces maps $L^+\Hi\xrightarrow{\rho} LH$ and $W^+\Hi\xrightarrow{\rho} WH$. We get the following diagrams of inclusions:
\begin{center}
	\begin{tikzcd}
	& W(\cali{O}) \arrow[hookrightarrow]{rd}& &
	& WH \arrow[hookrightarrow]{rd}& \\
	W^+(\cali{O}) \arrow[hookrightarrow]{ru} \arrow[hookrightarrow]{rd}& & B_\mrm{dR}(\cali{O}^\sharp) &
	W^+\Hi \arrow[hookrightarrow]{ru}{\rho} \arrow[hookrightarrow]{rd}& & LH \\
	&  B^+_\mrm{dR}(\cali{O}^\sharp)\arrow[hookrightarrow]{ru}& &
	& L^+\Hi \arrow[hookrightarrow]{ru}{\rho}&
	\end{tikzcd}
\end{center}
Moreover, if $\ov{\Hi}$ denotes the basechange $\Hi\times_{\bb{A}^1_{W(k)}} \mrm{Spec}(W(k))$ at $t=0$ we get reduction maps $W^+\Hi\to (\ov{\Hi})^\diamond_{\Spdf{O_C}}$ and $L^+\Hi\to (\ov{\Hi})^\dia_{\Spdf{O_C}}$.
\begin{pro}
	\label{pro:smoothsurjectiveloops}
	If $\Hi$ is smooth over $\mrm{Spec}(W(k)[t])$ the reduction maps are surjective. 
\end{pro}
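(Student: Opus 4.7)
Plan.

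We treat the two reduction maps in turn. Both rely on the formal smoothness of $\Hi$ over $\mrm{Spec}(W(k)[t])$; what differs is which completeness property of the target ring powers the inductive lifting. Since surjectivity of v-sheaves is checked on sections after suitable covers, we may reduce to a fixed affine open $\mrm{Spec}(A)\subseteq \Hi$ and work with a single test object $\Spa R$.

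For the $L^+$ case, a section of $(\ov\Hi)^\dia_{\Spdf{O_C}}$ over $\Spa R$ factoring through $\mrm{Spec}(A)$ amounts to a $W(k)[t]$-algebra map $\bar f : A \to R^\sharp$ with $\bar f(t)=0$, and we must produce $f : A \to B^+_{\mrm{dR}}(R^\sharp)$ with $f(t)=\xi$ lifting $\bar f$. By construction $B^+_{\mrm{dR}}(R^\sharp)=\varprojlim_n W(R^+)[1/p]/\xi^n$ is $\xi$-adically complete, and each surjection $B^+_{\mrm{dR}}(R^\sharp)/\xi^n \twoheadrightarrow B^+_{\mrm{dR}}(R^\sharp)/\xi^{n-1}$ is a square-zero extension (for $n\geq 2$). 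Formal smoothness of $\Hi / \mrm{Spec}(W(k)[t])$ therefore provides compatible $W(k)[t]$-algebra lifts $f_n$, and the limit is the desired $f$.

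For the $W^+$ case, the target $W(R^+)$ is only $(p,[\varpi])$-adically complete (for a pseudo-uniformizer $\varpi$ of $R^+$); its $\xi$-adic completion is the strictly larger ring $B^+_{\mrm{dR}}(R^\sharp)$, so we cannot filter by $\xi^n$. Instead, set $W_n:=W(R^+)/(p,[\varpi])^n$ and $R_n:=R^{\sharp,+}/(p,\varpi^\sharp)^n$, where $\varpi^\sharp$ is the image of $[\varpi]$ in $R^{\sharp,+}$. Because $\xi$ is distinguished of degree one, $\xi = p + [\varpi]u$ with $u \in W(R^+)^{\times}$, so $\xi \in (p,[\varpi])$; hence its image $\xi_n$ in $W_n$ is nilpotent and $W_n/\xi_n = R_n$. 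Given $\bar f : A \to R^{\sharp,+}$ with $\bar f(t)=0$, we inductively build $f_n : A \to W_n$ with $f_n(t)=\xi_n$ and $f_n \bmod \xi_n = \bar f \bmod (p,\varpi^\sharp)^n$, starting from the trivial case $W_1 = R^+/\varpi = R_1$. At step $n$, formal smoothness applied to the square-zero extension $W_n \twoheadrightarrow W_{n-1}$ yields some lift $f_n^{(0)}$ of $f_{n-1}$ whose reduction modulo $\xi_n$ may differ from $\bar f \bmod (p,\varpi^\sharp)^n$; this discrepancy lies in $\mrm{Hom}_A(\bar f^{*}\Omega^1_{\ov\Hi/W(k)},(p,\varpi^\sharp)^{n-1}/(p,\varpi^\sharp)^n)$, while the ambiguity in $f_n^{(0)}$ is parametrized by $\mrm{Hom}_A(f_{n-1}^{*}\Omega^1_{\Hi/W(k)[t]},(p,[\varpi])^{n-1}/(p,[\varpi])^n)$. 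Local freeness of the K\"ahler differentials (coming from smoothness) and surjectivity of $(p,[\varpi])^{n-1}/(p,[\varpi])^n \twoheadrightarrow (p,\varpi^\sharp)^{n-1}/(p,\varpi^\sharp)^n$ together imply that the latter torsor surjects onto the former, so we may adjust $f_n^{(0)}$ to the required $f_n$. Passage to the limit yields $f \in \mrm{Hom}_{W(k)[t]}(A, W(R^+))$, whose reduction modulo $\xi$ recovers $\bar f$ by $(p,\varpi^\sharp)$-adic separatedness of $R^{\sharp,+}$.

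The main obstacle is precisely this torsor adjustment in the $W^+$ case: the filtration one needs completeness for (namely $(p,[\varpi])$-adic) is not the one defining the ideal $\xi$ we wish to lift across, and reconciling them at each step relies essentially on $\xi \in (p,[\varpi])$ together with local freeness of the cotangent sheaf of $\Hi$.
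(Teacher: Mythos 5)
Your treatment of the $B^+_\mrm{dR}$-side is exactly the paper's argument: lift the given $R^\sharp$-point of $\ov{\Hi}$ successively along the square-zero extensions $B^+_\mrm{dR}(R^\sharp)/\xi^{n+1}\to B^+_\mrm{dR}(R^\sharp)/\xi^{n}$ using (formal) smoothness of the affine scheme $\Hi$, and pass to the limit by $\xi$-adic completeness.

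For the $W^+$-side you depart from the paper, and the reason you give for doing so is incorrect. You assert that $W(R^+)$ is only $(p,[\varpi])$-adically complete and that its $\xi$-adic completion is the strictly larger ring $B^+_\mrm{dR}(R^\sharp)$. Neither statement is right: $B^+_\mrm{dR}(R^\sharp)$ is the $\xi$-adic completion of $W(R^+)[\tfrac{1}{p}]$ (in particular $p$ is invertible there, which it cannot be in any ring surjecting onto $R^{\sharp,+}=W(R^+)/\xi$), and $W(R^+)$ is in fact $\xi$-adically complete and separated. Indeed, since $\xi=p+[\varpi]\alpha\in(p,[\varpi])$ and $W(R^+)$ is $(p,[\varpi])$-adically complete and separated, a compatible system modulo the powers of $\xi$ lifts: choose lifts $a_n$ with $a_{n+1}-a_n=\xi^n b_n$; the series $\sum_n \xi^n b_n$ converges $(p,[\varpi])$-adically and its limit maps to the given system, while $\bigcap_n \xi^n W(R^+)\subseteq \bigcap_n (p,[\varpi])^n=0$ gives separatedness. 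This is precisely why the paper runs the same $\xi$-adic lifting argument verbatim for $W^+\Hi$; the two cases are treated uniformly.

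That said, your substitute argument for $W^+\Hi$ does go through: inducting along the filtration $W_n=W(R^+)/(p,[\varpi])^n$, using $W_n/\xi_n\cong R^{\sharp,+}/(p,\varpi^\sharp)^n$, and correcting each formally smooth lift by a derivation, which is possible because $\Omega^1_{\Hi/W(k)[t]}$ is finite projective and $(p,[\varpi])^{n-1}/(p,[\varpi])^n$ surjects onto $(p,\varpi^\sharp)^{n-1}/(p,\varpi^\sharp)^n$. So the proposal is correct, but the detour is unnecessary and is motivated by a false claim; what the paper's route buys is brevity and uniformity, the only input beyond smoothness being the elementary fact that completeness for $(p,[\varpi])$ implies completeness for the smaller ideal $(\xi)$.
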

\begin{proof}
	We claim that the map is surjective even at the level of presheaves. The $\Hub{R}$-valued points of $(\ov{\Hi})^\dia$ and $(\ov{\Hi})^\diamond$ can be seen as maps $\mrm{Spec}(R^\sharp)\to \Hi_{B^+_\mrm{dR}(R^\sharp)}$ and $\mrm{Spec}(R^{\sharp,+})\to \Hi_{W(R^+)}$ whose composition with the projections to $\mrm{Spec}(B^+_\mrm{dR}(R^\sharp))$ and $\mrm{Spec}(W(R^+))$ are the usual closed embeddings. By smoothness of $\Hi$, for any $n\in \bb{N}$ the maps can be lifted to maps $\mrm{Spec}(B^+_\mrm{dR}(R^\sharp)/\xi^n)\to \Hi_{B^+_\mrm{dR}(R^\sharp)}$ and $\mrm{Spec}(W(R^+)/\xi^n)\to \Hi_{W(R^+)}$ respectively. Since $\Hi$ is an affine scheme and since both $B^+_\mrm{dR}(R^\sharp)$ and $W(R^+)$ are $(\xi)$-adically complete we may pass to the inverse limit by choosing compatible lifts.
\end{proof}
\begin{defi}
	We define scheme-theoretic v-sheaves $\Wred(\cali{O}),\Wred\Hi:\mrm{PCAlg}^\mrm{op}_{k_C}\to \mrm{Sets}$.
	\label{defi:reducedloopsheaves}
	\begin{enumerate}
		\item Let $\Wred(\cali{O})$ attach to $\mrm{Spec}(R)$ the ring $W(R)$. 
		\item Let $\Wred\Hi$ attach to $\mrm{Spec}(R)$ the sections to $\Hi\times_{W(k)[t]}{W(R)}\to \mrm{Spec}(W(R))$. 
	\end{enumerate}
Here the base change of $\Hi$ is given by the map to $\bb{A}^1_{W(k)}$ defined by $t\mapsto p$.
\end{defi}
\begin{rem}
	These v-sheaves are Zhu's $p$-adic jet spaces in \cite[\S 1.1.1]{Zhu}. 
\end{rem}
\begin{pro}
	\label{pro:formalloopgroupsarekimberlites}
	With the notation as above, $W^+\Hi$ is a $p$-adic kimberlite and $(W^+\Hi)\red=(\Wred\Hi)$.
\end{pro}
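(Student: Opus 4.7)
The plan is to reduce to the case $\Hi = \bb{A}^1_{W(k)[t]}$ via a closed embedding into affine space, and then identify $W^+\bb{A}^1$ with the v-sheaf of an explicit ``perfectoid'' formal scheme. First, since $\Hi$ is affine of finite type over $\bb{A}^1_{W(k)}$, choose an $\bb{A}^1_{W(k)}$-closed immersion $\iota\colon \Hi \hookrightarrow \bb{A}^n_{W(k)[t]}$. Applying the constructions $W^+$ and $\Wred$ yields closed immersions $W^+\Hi \hookrightarrow (W^+\bb{A}^1)^n$ and $\Wred\Hi \hookrightarrow (\Wred\bb{A}^1)^n$. I will check these are formally closed: the Witt vector equations cutting out $W^+\Hi$ inside $W^+\bb{A}^n$ descend under the reduction functor to the Witt vector equations cutting out $\Wred\Hi$ inside $(\Wred\bb{A}^1)^n$, making the defining Cartesian square of \Cref{defi:adicmorph} commute. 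Granted the base case, \Cref{lem:closedsubschemeisevident} together with stability of the kimberlite property under finite products over $\Spdf{O_C}$ (itself a $p$-adic kimberlite, as the v-sheaf of the formal scheme $\Spf{O_C}$) gives the result for general $\Hi$.

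For the base case, I claim an identification of v-sheaves
\[
W^+\bb{A}^1 \;\cong\; \Spdf{A}, \qquad A := O_C\langle T_0\pthr, T_1\pthr, \dots\rangle,
\]
where $A$ is the $\varpi$-adic completion of the perfect polynomial $O_C$-algebra on countably many variables (a formal Huber pair with ideal of definition $(\varpi)$ for a pseudouniformizer $\varpi\in O_C$). Indeed, for $\Spa{R}\in \mrm{Perf}_{\Spdf{O_C}}$, continuous ring maps $A \to R^{\sharp,+}$ correspond to sequences of elements of $R^{\sharp,+}$ admitting compatible $p$-power roots; by the tilting equivalence these are in bijection with sequences in $R^+ = R^{\flat,+}$, i.e.\ with $W(R^+) = W^+\bb{A}^1(\Spa R)$ via Witt vector coordinates. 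The underlying formal scheme $\mathfrak{Y} := \Spf{A}$ is affine, hence separated in the sense of \Cref{conv:formalschemes}, so by \Cref{exa:formalispre-Kimberlite} and the proposition that separated formal schemes over $\Zp$ are valuative, $W^+\bb{A}^1 = \mathfrak{Y}^{\mrm{ad},\dia}$ is a valuative prekimberlite. It is $p$-adic by \Cref{pro:twoadics} applied to the adic composition $\Spf{A} \to \Spf{O_C} \to \Spf{\Zp}$, and its specialization map is quasicompact because lifts of principal opens $D(\bar{f}) \subset \ovr{\mathfrak{Y}}$ yield Tate-affinoid opens in $\mathfrak{Y}^{\mrm{ad},\mrm{an}}$.

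The reduction identification in the base case is $(W^+\bb{A}^1)\red = \mrm{Spec}(A/\varpi)^\mrm{perf} = \mrm{Spec}(k_C[T_0\pthr, T_1\pthr, \dots])$ by \Cref{pro:tworeds}, and this agrees with $\Wred\bb{A}^1$ since both represent $\mrm{Spec}(S) \mapsto S^{\bb{N}} = W(S)$ on perfect $k_C$-algebras. For general $\Hi$, the same closed-embedding calculation descends through the reduction functor: using \Cref{rem:diamond+ajunct}, for $R$ perfect over $k_C$ one has $(W^+\Hi)\red(\mrm{Spec} R) = \mrm{Hom}(\mrm{Spec}(R)^\diamond, W^+\Hi) = \Hi(W(R)) = \Wred\Hi(\mrm{Spec} R)$, where the middle equality unwinds from the identification of $W^+\Hi$ as a closed subsheaf of $W^+\bb{A}^n$ cut out by the $\tilde{f}_j$.

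The main obstacle is the rigorous verification of the identification $W^+\bb{A}^1 \cong \Spdf{A}$: while the set-theoretic bijection $\mrm{Hom}_\mrm{cts}(A, R^{\sharp,+}) = (R^{\flat,+})^{\bb{N}} = W(R^+)$ is conceptually clean, one must confirm that Witt vector coordinates assemble this bijection into a v-sheaf isomorphism compatible with the structure map to $\Spdf{O_C}$ and functorial in $\Spa{R}$. A secondary technicality is justifying quasicompactness of the specialization map in the infinite-variable setting, where affineness of $\mathfrak{Y}$ must take the place of Noetherianity.
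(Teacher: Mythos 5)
Your proposal takes essentially the same route as the paper: identify $W^+$ of the affine line with $\Spdf{B}$ for $B$ a countably generated formal algebra over $O_C$ (hence a $p$-adic kimberlite via \Cref{exa:formalispre-Kimberlite}, \Cref{pro:twoadics} and \Cref{pro:tworeds}), then use a presentation of $\Hi$ to realize $W^+\Hi$ as a formally closed subsheaf of a finite product of copies, conclude with \Cref{lem:closedsubschemeisevident}, and compute the reduction by passing to the special fibre; the paper merely packages the formal closedness as the pullback of the formally closed zero section $\Spdf{O_C}\xrightarrow{0}W^+(\cali{O})^m$ along $F(\ov{r})=(f_j(\xi,\ov{r}))_j$, which is equivalent to your ``equations'' argument. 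One small correction: since $A=O_C\langle T_0\pthr,T_1\pthr,\dots\rangle$ is of characteristic $p$, an $\Spa{R}$-point of $\Spdf{A}$ over $\Spdf{O_C}$ is just a continuous $O_C$-map $A\to R^{+}$ (no untilt or tilting enters), which gives $W(R^+)$ directly via Witt coordinates.
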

\begin{proof}
	$W^+(\cali{O})$ is represented by $\Spdf{O_C\langle T_n\rangle_{n\in \bb{N}}}$, by \Cref{exa:formalispre-Kimberlite}, \Cref{pro:twoadics} and \Cref{pro:tworeds} $W^+(\cali{O})$ is a $p$-adic kimberlite with $W^+(\cali{O})\red=\mrm{Spec}(k_C[T_n]_{n\in \bb{N}})$ which is $\Wred(\cali{O})$. 
	If $\Hi=\mrm{Spec}(A)$ is presented as $A=W(k)[t][\ov{x}]/I$ with $I=(f_1(t,\ov{x}),\dots,f_m(t,\ov{ x }))$. Then $W^+\Hi$ fits in the commutative diagram with Cartesian square:
	\begin{center}
		\begin{tikzcd}
			W^+\Hi \ar{r}\ar{d}& \Spdf{O_C}\ar{d}{0}\ar{dr}{id} \\
			W^+(\cali{O})^n\ar{r}{F} & W^+(\cali{O})^m\ar{r} & \Spdf{O_C}
		\end{tikzcd}
	\end{center}

	Here $F(\ov{r})=(f_1(\xi,\ov{r}),\dots,f_m(\xi,\ov{r}))$. All of these maps are formally adic, and $\Spdf{O_C}\xrightarrow{0} W^+(\cali{O})^m$ is formally closed. 
	By \Cref{lem:closedsubschemeisevident} $W^+\Hi$ is a $p$-adic kimberlite. Finally, we can basechange by $\mrm{Spec}(k_C)^\diamond\to \Spdf{O_C}$ to compute reductions. 
%
\end{proof}

\subsection{Demazure resolution}
We assume the reader has some familiarity with Bruhat--Tits theory and parahoric group schemes \cite{MR756316}. We use twisted loop sheaves to consider integrally defined Demazure resolutions. Our main observation is that the Demazure resolution can be constructed using either parahoric loop groups or what we call below ``formal parahoric loop" groups. The difference is whether one uses $B_\mrm{dR}$ or $A_{\mrm{inf}}$ coefficients. We keep the notation as above. 
\begin{enumerate}
	\item Let $H$ be a split $W(k)$-reductive group, and $T\subseteq B\subseteq H$ maximal split torus and Borel subgroups. 
	\item Let $(X^*,\Phi,X_*,\Phi^\vee)$ be the root datum associated to $(H,T)$.
	\item We let $\langle\cdot, \cdot \rangle:X^*\times X_*\to \bb{Z}$ denote the perfect pairing between roots and coroots.
	\item Let $\Phi^+$ be the set of positive roots associated to $B$.
	\item Let $N$ be the normalizer of $T$ in $H$.
	\item Let $W=N/T$ be the Weyl group of $H$.
	\item We let $\cali{A}=\cali{A}(H,T)$ denote $X_*(T)\otimes_{\bb{Z}}\bb{R}$. 
	\item We let $\Psi=\{\alpha+n\mid \alpha\in \Phi,\,n\in\bb{Z}\}$ denote the set of affine roots on $\cali{A}$. 
	\item Given a point $q\in \cali{A}$ we let $\Phi_q=\{\alpha\in \Phi\mid \alpha(q)\in \bb{Z}\}$ this is a closed sub-root system. 
	\item Let $M_{q}$ be the Levi subgroup containing $T$ with root datum given by $(X^*,\Phi_q,X_*, \Phi_q^\vee)$. 
	\item If $q\in \cali{A}$ we associate $F_q\subseteq \cali{A}$ containing $q$ and bounded by the hyperplanes defined by $\Psi$. 
	\item 
		We let $o\in \cali{A}$ be the origin and $o\in \cali{C}$ be the alcove contained in the Bruhat chamber of $B$.
	\item Let $\bb{S}$ be the reflections along the walls of $\cali{C}$, we let $W^\mrm{aff}$ the affine Weyl group generated by $\bb{S}$. 
	\item For facets $\cali{F}\subseteq \cali{C}$ let $\bb{S}_\cali{F}$ be elements of $\bb{S}$ fixing $\cali{F}$ and let $W_\cali{F}$ be the subgroup generated by $\bb{S}_\cali{F}$. 
	\item Let $\tilde{W}$ be the Iwahori--Weyl. Recall that if $\Omega_H=\pi_1(H^\mrm{der})$ then $\tilde{W}=W^\mrm{aff}\rtimes \Omega_H$. 
\end{enumerate}

Fix a point $q\in \cali{A}$. Using Bruhat--Tits theory and dilatation techniques (\cite[\S 3]{Pap-Zhu}), one can construct smooth affine algebraic groups $\Hi_{{q}}$ over $\mrm{Spec}(W(k)[t])$ and isomorphism $\rho:\Hi_{{q}}\times_{W(k)[t]} \mrm{Spec}(W(k)[t,t^{-1}])\cong H\times_{W(k)}\mrm{Spec}(W(k)[t,t^{-1}])$ satisfying the following:  
\begin{enumerate}[a)]
	\item For a DVR $V$ with uniformizer $\pi\in V$ the basechange $\Hi_{{q}}\times_{\bb{A}^1_{W(k)}} \mrm{Spec}(V)$ along $t\mapsto \pi$ is the parahoric group scheme of $q\in \cali{A}(H,V[\frac{1}{\pi}])=\cali{A}$. 
	\item For $\alpha\in \Phi$ there are closed subgroups $\cali{U}^q_\alpha\subseteq \Hi_{{q}}$ and $\cali{T}\subseteq \Hi_{{q}}$ with $\bb{G}_a\cong \cali{U}^q_\alpha$ and $\bb{G}_m^n\cong \cali{T}$. These extend the root groups $U_\alpha\subseteq H$ and the torus $T\subseteq H$. 
	\item There is an open cell decomposition: $\cali{V}_{q}:=\prod_{\alpha\in \Phi^{-}} \cali{U}^q_\alpha \times \cali{T} \times \prod_{\alpha\in \Phi^{+}} \cali{U}^q_\alpha\subseteq \Hi_{{q}}$. It is an open immersion surjecting onto a fiberwise Zariski-dense neighborhood of the identity section. 
	\item The group multiplication map $\cali{V}_{q}\times\cali{V}_q\xrightarrow{\mu}  \Hi_{{q}}$ is smooth and surjective.
	\item The basechange $\ov{\Hi}_{{q}}:=\Hi_{{q}}\times_{\bb{A}^1_{W(k)}}\mrm{Spec}(W(k))$ along $t=0$ supports a split reductive quotient $(\ov{\Hi})_{{q}}^\mrm{Red}$ over $W(k)$ with root datum identified with $(X^*,\Phi_{q},X_*, \Phi_{q}^\vee)$ so that $M_{q}=(\ov{\Hi})_{{q}}^\mrm{Red}$.
	\item If $\alpha\in \Phi_{q}$, then $\ov{\cali{U}}^q_{\alpha,t=0}\to (\ov{\Hi})_{{q}}^\mrm{Red}$ identifies with the root group of $(\ov{\Hi})_{{q}}^\mrm{Red}$ corresponding to $\alpha$. 
\item If $\alpha \in \Phi\setminus \Phi_{q}$ then the composition $\ov{\cali{U}}^q_{\alpha,t=0}\to (\ov{\Hi})_{{q}}^\mrm{Red}$ factors through the identity section. 
\item We have a commutative diagram of open cell decompositions: 

	\begin{center}
		\begin{tikzcd}
			\prod_{\alpha\in \Phi\setminus \Phi_q} \ov{\cali{U}}_\alpha  \ar{r} \ar{d}{\cong}	 &	\prod_{\alpha\in \Phi^{-}} \ov{\cali{U}}_\alpha \times \ov{\cali{T}} \times \prod_{\alpha\in \Phi^{+}} \ov{\cali{U}}_\alpha \ar{r}{\pi}	\ar{d}{\mu} & \prod_{\alpha\in \Phi_q^{-}} \ov{\cali{U}}_\alpha \times \ov{\cali{T}} \times \prod_{\alpha\in \Phi_q^{+}} \ov{\cali{U}}_\alpha \ar{d}{\mu} \\
			Ker(m) \ar{r}	& 	\ov{\Hi}_q\ar{r}{m}		& \ov{\Hi}^\mrm{Red}_q
		\end{tikzcd}
	\end{center} 
\end{enumerate}

Also, given $q_1,q_2\in \cali{A}$ with $F_{q_1}\subseteq F_{q_2}$ we get a map groups $f:\Hi_{{q_2}}\to \Hi_{{q_1}}$ satisfying:
\begin{enumerate}
	\item[i)] $\rho_1\circ f=\rho_2$ over $\mrm{Spec}(W(k)[t,t^{-1}])$.
	\item[j)] $\ov{\Hi}_{{q_2}}\to (\ov{\Hi}_{{q_1}})^\mrm{Red}$ surjects onto the parabolic subgroup associated to the sub-root system $\Phi{q_1.q_2}:=\{\alpha\in \Phi_{q_1}\mid\lfloor\alpha(q_2)\rfloor=\alpha(q_1)\}$. 
	\item[k)] The kernel of $\ov{\Hi}_{{q_2}}\to (\ov{\Hi}_{{q_1}})^\mrm{Red}$ is fiberwise a vector group. 
\end{enumerate}

\begin{defi}
	\label{defi:familiesofloopgroups}
	Let $q\in \cali{A}$. Since $\Hi_q$ and $H$ are defined over $\mrm{Spec}(W(k)[t])$ and $\mrm{Spec}(W(k)[t,t^{-1}])$ we can use the construction of \Cref{defi:loopgroupsofallsorts}. We call $LH$ the loop group, we call $L^+\Hi_q$ the parahoric loop group and we call $W^+\Hi_q$ the ``formal parahoric loop group".
\end{defi}
Notice that we have injective maps of v-sheaves $W^+\Hi_q\subseteq L^+\Hi_q\overset{\rho}{\subseteq} LH$.

\begin{pro}
	\label{pro:reductionforparahorloopgrps}
	With the notation as above, for any point $q\in \cali{A}$ we have surjective morphisms of v-sheaves in groups:
	$L^+\Hi_q\to [(\overline{\Hi})_q^\mrm{Red}]^\dia=M_{q}^\dia$ and $W^+\Hi_q\to [(\ov{\Hi}_q)^\mrm{Red}]^\diamond=M_{q}^\diamond$.
\end{pro}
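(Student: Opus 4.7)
The strategy is to factor each reduction map through the scheme-theoretic surjection $\overline{\Hi}_q \twoheadrightarrow (\overline{\Hi}_q)^{\mrm{Red}} = M_q$ and then invoke the previously established surjectivity results.

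First I would record that by property (e) of the construction of $\Hi_q$, the special fibre $\overline{\Hi}_q := \Hi_q \times_{\bb{A}^1_{W(k)}} \mrm{Spec}(W(k))$ (pulled back along $t=0$) is a smooth affine group scheme over $W(k)$ whose reductive quotient is $M_q$. The quotient morphism $\pi : \overline{\Hi}_q \to M_q$ is faithfully flat with smooth unipotent kernel (the unipotent radical), hence is itself smooth and surjective as a morphism of finite-type $W(k)$-schemes. In particular $\pi$ is universally subtrusive in the sense of \Cref{defi:v-top-sch}, so by \Cref{pro:universallysubtrusgivesvcover} the induced morphisms of v-sheaves $\pi^\dia : (\overline{\Hi}_q)^\dia \to M_q^\dia$ and $\pi^\diamond : (\overline{\Hi}_q)^\diamond \to M_q^\diamond$ are surjective after base change to $\Spdf{O_C}$.

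Next I would apply \Cref{pro:smoothsurjectiveloops} to the smooth affine $W(k)[t]$-group scheme $\Hi_q$. This gives that the reduction maps $L^+\Hi_q \twoheadrightarrow (\overline{\Hi}_q)^\dia_{\Spdf{O_C}}$ and $W^+\Hi_q \twoheadrightarrow (\overline{\Hi}_q)^\diamond_{\Spdf{O_C}}$ are surjective on the nose (they come from the identifications $B^+_{\mrm{dR}}(R^\sharp)/\xi = R^\sharp$ and $W(R^+)/\xi = R^{\sharp,+}$, which send $t \mapsto \xi \mapsto 0$ and so realise reduction along $t=0$). Composing with $\pi^\dia$ and $\pi^\diamond$ respectively gives the required surjective v-sheaf morphisms $L^+\Hi_q \twoheadrightarrow M_q^\dia$ and $W^+\Hi_q \twoheadrightarrow M_q^\diamond$. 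Functoriality of all the constructions shows these are group homomorphisms.

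The only real work is bookkeeping: one must check that these compositions agree with the natural maps one would write down, i.e.\ that a section $\mrm{Spec}(B^+_{\mrm{dR}}(R^\sharp)) \to \Hi_q$ (respectively $\mrm{Spec}(W(R^+)) \to \Hi_q$) composed with the closed embedding $\overline{\Hi}_q \hookrightarrow \Hi_q$ followed by $\pi$ produces the asserted $M_q$-valued point. I expect this to be essentially a definition-chase rather than an obstacle; the substantive content has been concentrated in \Cref{pro:smoothsurjectiveloops} and \Cref{pro:universallysubtrusgivesvcover}.
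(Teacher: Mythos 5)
Your proposal is correct and follows exactly the paper's argument: the paper's proof is precisely the composition of the surjective reduction maps from \Cref{pro:smoothsurjectiveloops} with the surjectivity of $(\ov{\Hi}_q)^\dia\to M_q^\dia$ and $(\ov{\Hi}_q)^\diamond\to M_q^\diamond$ obtained from \Cref{pro:universallysubtrusgivesvcover}, using that $\ov{\Hi}_q\to \ov{\Hi}_q^{\mrm{Red}}$ is smooth (and surjective). Your extra bookkeeping remarks only spell out what the paper leaves implicit.
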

\begin{proof}
	This is a direct consequence of \Cref{pro:smoothsurjectiveloops} and \Cref{pro:universallysubtrusgivesvcover} since $\ov{\Hi}\to \ov{\Hi}^\mrm{Red}$ is smooth. 
\end{proof}
We let $L^u\Hi_q$ and $W^u\Hi_q$ denote respectively the kernels of the morphisms of \Cref{pro:reductionforparahorloopgrps} above. 
\begin{pro}
	\label{pro:reductionforparahorloopgrpstwoparahorics}
If $q_1,q_2\in \cali{A}$ are such that $F_{q_1}\subseteq F_{q_2}$, then we get inclusions of v-sheaves in groups:
	$L^u\Hi_{q_1}\subseteq L^u\Hi_{q_2}\subseteq L^+\Hi_{q_2}\subseteq L^+\Hi_{q_1}$ and
	$W^u\Hi_{q_1}\subseteq W^u\Hi_{q_2}\subseteq W^+\Hi_{q_2}\subseteq W^+\Hi_{q_1}$
	Moreover, the map from $L^+\Hi_{q_2}$ to $M_{q_1}^\dia$ surjects onto $P_{\Phi_{q_1,q_2}}^\dia\subseteq M_{q_1}^\dia$. Analogously, $W^+\Hi_{q_2}$ surjects onto $P_{\Phi_{q_1,q_2}}^\diamond\subseteq M_{q_1}^\diamond$. 
\end{pro}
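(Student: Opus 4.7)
The plan is to exploit the morphism $f \colon \Hi_{q_2} \to \Hi_{q_1}$ of smooth affine group schemes over $\mrm{Spec}(W(k)[t])$ together with the compatibility $\rho_1 \circ f = \rho_2$. Applying the $L^+$ and $W^+$ constructions of \Cref{defi:loopgroupsofallsorts} yields morphisms of v-sheaves $L^+\Hi_{q_2} \to L^+\Hi_{q_1}$ and $W^+\Hi_{q_2} \to W^+\Hi_{q_1}$ which, by the generic-fibre compatibility, factor through the common embedding into $LH$. To verify these are v-subsheaf inclusions I would first check that $L^+\Hi_{q_i} \to LH$ and $W^+\Hi_{q_i} \to LH$ are injective: this is immediate because $\Hi_{q_i}$ is affine and $\xi$ is a nonzerodivisor in both $B^+_{\mrm{dR}}(R^\sharp)$ and $W(R^+)$, so any integral $\Hi_{q_i}$-section is determined by its generic-fibre section, and on the generic fibre both parahoric loop groups tautologically land in the same $H$-valued sections via $\rho_i$.

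The main algebraic input is the dilatation description of $\Hi_{q_2}$ from \cite[\S 3]{Pap-Zhu}: $\Hi_{q_2}$ is constructed as the dilatation of $\Hi_{q_1}$ along the closed subgroup $P \subseteq \ov{\Hi}_{q_1}$ obtained as the preimage, under $\ov{\Hi}_{q_1} \to M_{q_1}$, of the parabolic $P_{\Phi_{q_1,q_2}} \subseteq M_{q_1}$. By the universal property of dilatations, for any $W(k)[t]$-algebra $S$ in which $t$ is a nonzerodivisor one has
\[
\Hi_{q_2}(S) \;=\; \{\, g \in \Hi_{q_1}(S) : \text{the image of } (g \bmod t) \text{ in } M_{q_1}(S/t) \text{ lies in } P_{\Phi_{q_1,q_2}}(S/t)\,\}.
\]
Since $\xi$ is a nonzerodivisor in both $B^+_{\mrm{dR}}(R^\sharp)$ and $W(R^+)$ with quotients $R^\sharp$ and $R^{\sharp,+}$ respectively, specialising to $S = B^+_{\mrm{dR}}(R^\sharp)$ and $S = W(R^+)$ with $t = \xi$ gives the Cartesian descriptions
\[
L^+\Hi_{q_2} \;=\; L^+\Hi_{q_1} \times_{M_{q_1}^\dia} P_{\Phi_{q_1,q_2}}^\dia, \qquad W^+\Hi_{q_2} \;=\; W^+\Hi_{q_1} \times_{M_{q_1}^\diamond} P_{\Phi_{q_1,q_2}}^\diamond.
\]
Combined with the surjections in \Cref{pro:reductionforparahorloopgrps}, this yields the asserted $L^+\Hi_{q_2} \twoheadrightarrow P_{\Phi_{q_1,q_2}}^\dia$ and $W^+\Hi_{q_2} \twoheadrightarrow P_{\Phi_{q_1,q_2}}^\diamond$ as base changes of surjections.

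The chain of inclusions then falls out of this Cartesian picture. Injectivity of $L^+\Hi_{q_2} \hookrightarrow L^+\Hi_{q_1}$ is read off from the fibre-product expression (or from the first paragraph). For $L^u\Hi_{q_1} \subseteq L^u\Hi_{q_2}$, a section of $L^+\Hi_{q_1}$ mapping to $e \in M_{q_1}^\dia$ automatically sits in $L^+\Hi_{q_2}$ since $e \in P_{\Phi_{q_1,q_2}}^\dia$; under the Cartesian description, its $P_{\Phi_{q_1,q_2}}^\dia$-component is also $e$. Since the further reduction $L^+\Hi_{q_2} \to M_{q_2}^\dia$ factors through the Levi quotient $P_{\Phi_{q_1,q_2}}^\dia \twoheadrightarrow M_{q_2}^\dia$ (which is the correct Levi by the combinatorial identity $\Phi_{q_1,q_2} \cap (-\Phi_{q_1,q_2}) = \Phi_{q_2}$), the section maps to $e \in M_{q_2}^\dia$, giving $L^u\Hi_{q_1} \subseteq L^u\Hi_{q_2}$. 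The analogous argument for $W^+$ and $M_{q_i}^\diamond$ is verbatim.

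The main obstacle will be transferring the dilatation universal property to the coefficient rings $B^+_{\mrm{dR}}(R^\sharp)$ and $W(R^+)$, which are only $\xi$-adically complete rather than DVRs. However, the universal property of dilatations along a closed subscheme of the special fibre is a purely functorial statement valid for any test ring in which the distinguished parameter is a nonzerodivisor, and this is precisely the generality used in \cite[\S 3]{Pap-Zhu}. The remaining bookkeeping---identifying $M_{q_2}$ with the Levi of $P_{\Phi_{q_1,q_2}}$ and matching up the reductions at the v-sheaf level---is routine given the smoothness and cell decompositions recorded in properties (a)--(k).
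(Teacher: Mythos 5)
Your route is genuinely different from the paper's and, if its key input is granted, it is cleaner: you package everything into the single Cartesian description $L^+\Hi_{q_2}=L^+\Hi_{q_1}\times_{M_{q_1}^\dia}P_{\Phi_{q_1,q_2}}^\dia$ (and its $W^+$ analogue), after which the inclusion chain is formal and the surjection onto $P_{\Phi_{q_1,q_2}}^\dia$ follows by base change from \Cref{pro:reductionforparahorloopgrps}. The paper instead argues in three separate steps: injectivity of $L^+\Hi_{q_2}\to L^+\Hi_{q_1}$ via the common embedding into $LH$ (as you also do); surjectivity onto $P_{\Phi_{q_1,q_2}}^\dia$ not by base change but by noting that $\ov{\Hi}_{q_2}\to P_{\Phi_{q_1,q_2}}$ is faithfully flat of finite presentation and invoking \Cref{pro:smoothsurjectiveloops} and \Cref{pro:universallysubtrusgivesvcover}; and the containment $L^u\Hi_{q_1}\subseteq L^+\Hi_{q_2}$ by an explicit lifting argument in the open cell, writing a section as a product of root-group and torus coordinates and checking that for $\alpha\in\Phi_{q_1}\setminus\Phi_{q_1,q_2}$ the coordinate is divisible by $\xi$, so that it extends along $W(k)[t,u]\to W(k)[t,\tfrac{u}{t}]$. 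Your combinatorial identification of the Levi of $P_{\Phi_{q_1,q_2}}$ with $M_{q_2}$ (via $\Phi_{q_1,q_2}\cap(-\Phi_{q_1,q_2})=\Phi_{q_2}$, using $\Phi_{q_2}\subseteq\Phi_{q_1}$) is correct and is the right way to see that your lifted kernel elements land in $L^u\Hi_{q_2}$, a point the paper leaves implicit.

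The weak link is the sentence you treat as a citation: that $\Hi_{q_2}$ \emph{is} the dilatation of $\Hi_{q_1}$ along the preimage of $P_{\Phi_{q_1,q_2}}$ in $\ov{\Hi}_{q_1}$, with the universal property available for every test ring in which the image of $t$ is a nonzerodivisor. Pappas--Zhu use dilatation techniques to build the groups $\Hi_q$ over $\bb{A}^1_{W(k)}$ from schematic root data, but the precise statement you need (the relative dilatation relation between $\Hi_{q_1}$ and $\Hi_{q_2}$ over the two-dimensional base, together with its functor-of-points description) is not a quotable black box of \cite[\S 3]{Pap-Zhu}; it requires its own verification, e.g.\ by comparing the two smooth models on open cells via properties (b)--(k) and then globalizing, or by appealing to uniqueness statements for such smooth models. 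Note that this verification is essentially the open-cell computation the paper performs by hand ($\cali{U}^{q_1}_\alpha=\mrm{Spec}(W(k)[t,u])$ versus $\cali{U}^{q_2}_\alpha=\mrm{Spec}(W(k)[t,\tfrac{u}{t}])$ for $\alpha\in\Phi_{q_1}\setminus\Phi_{q_1,q_2}$, isomorphisms on the remaining cell factors), so the two proofs converge on the same core input; your write-up should either prove the dilatation identification from those properties or restrict, as the paper does, to the weaker lifting statement for sections reducing to the identity, which already suffices for the proposition. The remaining points you flag (that $\xi$ is a nonzerodivisor in $B^+_\mrm{dR}(R^\sharp)$ and in $W(R^+)$ with quotients $R^\sharp$ and $R^{\sharp,+}$, and the compatibility of the reduction $L^+\Hi_{q_2}\to M_{q_2}^\dia$ with the Levi quotient of $P_{\Phi_{q_1,q_2}}$) are correct and routine.
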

\begin{proof}

	We deal with the parahoric loop group case, the other is analogous. Recall that $f:\Hi_{q_2}\to \Hi_{q_1}$ satisfies $\rho_1\circ f=\rho_2$. Functoriality of $L^+$, gives $L^+\Hi_{q_2}\to L^+\Hi_{q_1}\to LH$. Since $L^+\Hi_{q_2}\to LH$ is injective, then $L^+\Hi_{q_2}\to L^+\Hi_{q_1}$ also is. 
	The map of affine schemes $\ov{\Hi}_{q_2}\to P_{\Phi_{q_1,q_2}}$ is faithfully flat of finite presentation. By \Cref{pro:smoothsurjectiveloops} and \Cref{pro:universallysubtrusgivesvcover}, the map $L^+\Hi_{q_2}\to (\ov{\Hi}_{q_2})^\dia\to P_{\Phi_{q_1,q_2}}^\dia$ is surjective. 
	Finally, any map $g:\mrm{Spec}(B^+_\mrm{dR}(R^\sharp))\to \Hi_{{q_1},B^+_\mrm{dR}(R^\sharp)}$ whose induced map $\mrm{Spec}(R^\sharp)\to M_q$ factors through the identity lifts to a map $\mrm{Spec}(B^+_\mrm{dR}(R^\sharp))\to \Hi_{{q_2},B^+_\mrm{dR}(R^\sharp)}$. 
	Indeed, $\mrm{Spec}(R^\sharp)\to \ov{\Hi}_{q_1}$ is in the open cell $\ov{\cali{V}}_{q_1}$, so $g$ has the form $g=(\prod_{\alpha\in \Phi^{-}} u_\alpha(g))\cdot t(g)\cdot (\prod_{\alpha\in \Phi^{+}} u_\alpha(g))$ with $t(g)$ and $\{u_\alpha(g)\}_{\alpha\in \Phi_{q_1}}$ reducing to the identity modulo $\xi$.  
	By inspection, each of this elements lifts uniquely to $\cali{V}_{q_2}$. For $\cali{T}$ and $\cali{U}_\alpha$ with $\alpha\in (\Phi\setminus\Phi_{q_1})\cup \Phi_{q_1,q_2}$, $f$ is an isomorphism. 
	For $\alpha \in \Phi_{q_1}\setminus \Phi_{q_1,q_2}$ we may, after making some choices, write $\cali{U}^{q_1}_\alpha$ as $\mrm{Spec}(W(k)[t,u])$ and $\cali{U}^{q_2}_\alpha$ as $\mrm{Spec}(W(k)[t,\frac{u}{t}])$ such that $f$ is the natural map. Now, $u_\alpha(g)^*:W(k)[t,u]\to B^+_\mrm{dR}(R^\sharp)$ with $t\mapsto \xi$ extends (uniquely) to $W(k)[t,\frac{u}{t}]\to B^+_\mrm{dR}(R^\sharp)$ if $\xi$ divides the image of $u$, but this happens whenever $u_\alpha(g)$ reduces to identity.
\end{proof}

\begin{pro}
	\label{pro:flagvarsforparahorloopgrps}
	Let $q_1,q_2\in \cali{A}$ such that $F_{q_1}\subseteq F_{q_2}$, then we have identifications $L^+\Hi_{q_1}/L^+\Hi_{q_2}=W^+\Hi_{q_1}/W^+\Hi_{q_2}=({\mrm{Fl}}_{q_1,q_2,O_C})^{\dia}$, where $\mrm{Fl}_{q_1,q_2}$ denotes the flag variety. 
\end{pro}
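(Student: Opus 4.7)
The plan is to reduce the identification of quotients to the reductive-group case via the kernels $L^u\Hi_{q_1}$ and $W^u\Hi_{q_1}$, and then identify the reductive flag variety as the quotient of the $(-)^\dia$ and $(-)^\diamond$ versions of $M_{q_1}$ and $P_{\Phi_{q_1,q_2}}$.

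First, I will assemble the relevant exact sequences. By \Cref{pro:reductionforparahorloopgrps}, there are v-surjective group morphisms $L^+\Hi_{q_1}\to M_{q_1}^\dia$ and $L^+\Hi_{q_2}\to M_{q_2}^\dia$, with kernels $L^u\Hi_{q_1}$ and $L^u\Hi_{q_2}$. By \Cref{pro:reductionforparahorloopgrpstwoparahorics}, $L^u\Hi_{q_1}\subseteq L^u\Hi_{q_2}\subseteq L^+\Hi_{q_2}\subseteq L^+\Hi_{q_1}$, and the composition $L^+\Hi_{q_2}\hookrightarrow L^+\Hi_{q_1}\twoheadrightarrow M_{q_1}^\dia$ surjects onto $P_{\Phi_{q_1,q_2}}^\dia$ with kernel $L^u\Hi_{q_1}$. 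Thus, after passing to the quotient by $L^u\Hi_{q_1}$, the first claimed quotient identifies with the v-sheaf quotient $M_{q_1}^\dia/P_{\Phi_{q_1,q_2}}^\dia$. The same formal argument, using $W^+\Hi_i$ and \Cref{pro:reductionforparahorloopgrps,pro:reductionforparahorloopgrpstwoparahorics} applied to the $W^+$ variants, gives $W^+\Hi_{q_1}/W^+\Hi_{q_2}=M_{q_1}^\diamond/P_{\Phi_{q_1,q_2}}^\diamond$.

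Next, I will identify these ``reductive'' quotients with the flag variety. Since $M_{q_1}$ and $P_{\Phi_{q_1,q_2}}$ are split reductive/parabolic over $W(k)$, the quotient $\mathrm{Fl}_{q_1,q_2}:=M_{q_1}/P_{\Phi_{q_1,q_2}}$ is a smooth projective $W(k)$-scheme, and the morphism $M_{q_1}\to \mathrm{Fl}_{q_1,q_2}$ is Zariski-locally a trivial $P_{\Phi_{q_1,q_2}}$-bundle; in particular it is smooth and surjective. Applying the functors $(-)^\dia$ and $(-)^\diamond$, and using \Cref{pro:universallysubtrusgivesvcover} together with the fact that smooth surjections of finite-type schemes are universally subtrusive, we deduce that $M_{q_1}^\dia\to (\mathrm{Fl}_{q_1,q_2})^\dia$ and $M_{q_1}^\diamond\to (\mathrm{Fl}_{q_1,q_2})^\diamond$ are v-surjective with fibres the corresponding torsors under $P_{\Phi_{q_1,q_2}}^\dia$ and $P_{\Phi_{q_1,q_2}}^\diamond$ respectively. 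Hence the v-sheaf quotients coincide with $(\mathrm{Fl}_{q_1,q_2})^\dia$ and $(\mathrm{Fl}_{q_1,q_2})^\diamond$.

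Finally, to glue the two pictures, observe that $\mathrm{Fl}_{q_1,q_2}$ is proper over $W(k)$, so by \Cref{pro:propernessgivessharpissharpplus} the natural map $(\mathrm{Fl}_{q_1,q_2})^\diamond\to (\mathrm{Fl}_{q_1,q_2})^\dia$ is an isomorphism, which justifies the equality between the two quotient descriptions and completes the identification with $(\mathrm{Fl}_{q_1,q_2,O_C})^\dia$ after base-change to $\Spdf{O_C}$.

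The main obstacle I anticipate is verifying cleanly that v-sheaf quotients behave well under all these manipulations; concretely, that forming $L^+\Hi_{q_1}/L^+\Hi_{q_2}$ commutes with the identification $L^+\Hi_{q_2}/L^u\Hi_{q_1}\cong P_{\Phi_{q_1,q_2}}^\dia$, i.e.\ that the natural map $L^+\Hi_{q_1}/L^+\Hi_{q_2}\to M_{q_1}^\dia/P_{\Phi_{q_1,q_2}}^\dia$ is an isomorphism of v-sheaves. This reduces to checking v-local triviality, which follows because $L^+\Hi_{q_1}\to M_{q_1}^\dia$ is already v-surjective by \Cref{pro:reductionforparahorloopgrps} and the kernel $L^u\Hi_{q_1}$ is contained in $L^+\Hi_{q_2}$. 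The corresponding step on the $W^+$-side is identical once \Cref{pro:smoothsurjectiveloops,pro:universallysubtrusgivesvcover} are invoked.
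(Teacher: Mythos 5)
Your proposal is correct and follows essentially the same route as the paper: quotient by the unipotent kernels $L^u\Hi_{q_1}$, $W^u\Hi_{q_1}$ to identify the two quotients with $M_{q_1}^\dia/P_{\Phi_{q_1,q_2}}^\dia$ and $M_{q_1}^\diamond/P_{\Phi_{q_1,q_2}}^\diamond$, identify these with $\mrm{Fl}_{q_1,q_2}^\dia$ and $\mrm{Fl}_{q_1,q_2}^\diamond$, and then use properness of the flag variety via \Cref{pro:propernessgivessharpissharpplus} to equate the $\dia$ and $\diamond$ versions. You merely spell out (via Zariski-local triviality of $M_{q_1}\to \mrm{Fl}_{q_1,q_2}$ and \Cref{pro:universallysubtrusgivesvcover}) a step the paper leaves implicit.
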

\begin{proof}
We compute: $L^+\Hi_{p_1}/L^+\Hi_{p_2} = (L^+\Hi_{p_1}/L^u\Hi_{p_1})/(L^+\Hi_{p_2}/L^u\Hi_{p_1}) = M_{p_1}^\dia/P_{\Phi_{p_1,p_2}}^\dia= \mrm{Fl}_{p_1,p_2}^\dia$ and $W^+\Hi_{p_1}/W^+\Hi_{p_2} = (W^+\Hi_{p_1}/W^u\Hi_{p_1})/(W^+\Hi_{p_2}/W^u\Hi_{p_1})  = M_{p_1}^\diamond/P_{\Phi_{p_1,p_2}}^\diamond = \mrm{Fl}_{p_1,p_2}^\diamond$.
Finally, \Cref{pro:propernessgivessharpissharpplus} gives  $\mrm{Fl}_{p_1,p_2}^\dia=\mrm{Fl}_{p_1,p_2}^\diamond$.
\end{proof}

\begin{lem}
	\label{lem:proetalelocallytrivial}
	Fix $q_1,q_2\in \cali{A}$ with $F_{q_1}\subseteq F_{q_2}$. Let $\F$ be a locally spatial diamond with a map $\F\to \Spdf{O_C}$ and let $\Spa{R}$ be an affinoid perfectoid with a map $\Spa{R}\to \mrm{Fl}_{q_1,q_2}^\dia$. 
	\begin{enumerate}
		\item  The map $L^+\Hi_{q_1}\times_{\Spdf{O_C}} \F\to \mrm{Fl}_{q_1,q_2}^\dia\times_{\Spdf{O_C}}\F$ admits pro-\'etale locally a section. 
		\item The map $L^+\Hi_{q_1}\times_{\mrm{Fl}_{q_1,q_2}^\dia}\Spa{R} \to \Spa{R}$ admits analytic locally a section.
	\end{enumerate}
\end{lem}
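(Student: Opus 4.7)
The plan is to deduce both statements by constructing sections of $L^+\Hi_{q_1}\to \mrm{Fl}_{q_1,q_2}^\dia$ Zariski-locally on $\mrm{Fl}_{q_1,q_2}$, using the Bruhat decomposition of the flag variety together with the integrally defined open cell $\cali{V}_{q_1}\subseteq \Hi_{q_1}$. The analytic cover of $\mrm{Fl}_{q_1,q_2}^\dia$ thus obtained yields (2) directly, and (1) follows by further pullback along a pro-\'etale cover of $\F$ by affinoid perfectoids.

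Recall that $\mrm{Fl}_{q_1,q_2}\cong M_{q_1}/P_{\Phi_{q_1,q_2}}$ is a smooth projective scheme over $\mrm{Spec}(W(k))$. The opposite unipotent radical $U^-$ of $P_{\Phi_{q_1,q_2}}$ embeds as the open big cell and is isomorphic to $\bb{A}^d_{W(k)}$, with $d = \#(\Phi_{q_1}^-\setminus \Phi_{q_1,q_2}^-)$. Since $H$ is split, each Weyl-group coset in $W_{q_1}/W_{\Phi_{q_1,q_2}}$ admits a representative $\dot{w}\in N_{\Hi_{q_1}}(\cali{T})(W(k))$, and the $\dot{w}$-translates of $U^-$ cover $\mrm{Fl}_{q_1,q_2}$ Zariski-locally; applying $(-)^\dia$ yields an analytic open cover of $\mrm{Fl}_{q_1,q_2}^\dia$. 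Since $\dot{w}\in \Hi_{q_1}(W(k))\subseteq L^+\Hi_{q_1}(\Spdf{O_C})$ acts on $L^+\Hi_{q_1}$ by left multiplication and intertwines with the $\dot{w}$-translation on $\mrm{Fl}_{q_1,q_2}^\dia$, it suffices to produce a section over the subsheaf $U^{-,\dia}\subseteq \mrm{Fl}_{q_1,q_2}^\dia$.

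Inside the open cell $\cali{V}_{q_1}\subseteq \Hi_{q_1}$, consider the closed subscheme $\tilde{U}^-:=\prod_{\alpha\in \Phi_{q_1}^-\setminus \Phi_{q_1,q_2}^-}\cali{U}^{q_1}_\alpha$, a product of root subgroups that is isomorphic to affine space $\bb{A}^d$ over $\mrm{Spec}(W(k)[t])$. Reducing at $t=0$, the composition $\tilde{U}^-\hookrightarrow \Hi_{q_1}\to M_{q_1}\to \mrm{Fl}_{q_1,q_2}$ restricts to an isomorphism onto $U^-$. Given an affinoid perfectoid $\Spa{R}$ equipped with a map $\Spa{R}\to U^{-,\dia}$, corresponding to a tuple $u\in U^-(R^\sharp)\cong R^{\sharp,d}$, pick any lift $\tilde{u}\in \tilde{U}^-(B^+_\mrm{dR}(R^\sharp))\cong B^+_\mrm{dR}(R^\sharp)^d$; such a lift exists simply because $B^+_\mrm{dR}(R^\sharp)\twoheadrightarrow R^\sharp$ is surjective on each coordinate. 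The resulting element $\tilde{u}\in \Hi_{q_1}(B^+_\mrm{dR}(R^\sharp))=L^+\Hi_{q_1}(\Spa{R})$ defines a morphism $\Spa{R}\to L^+\Hi_{q_1}$ lifting the given map to $\mrm{Fl}_{q_1,q_2}^\dia$. This establishes (2).

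For (1), pull back the analytic cover $\{(\dot{w}U^-)^\dia\times_{\Spdf{O_C}}\F\}_w$ along the projection $\mrm{Fl}_{q_1,q_2}^\dia\times_{\Spdf{O_C}}\F\to \mrm{Fl}_{q_1,q_2}^\dia$, then refine each member by a pro-\'etale cover consisting of affinoid perfectoids $\Spa{R}\to \mrm{Fl}_{q_1,q_2}^\dia\times_{\Spdf{O_C}}\F$ whose first projection lands in some $\dot{w}U^{-,\dia}$; such a refinement exists because $\F$ is a locally spatial diamond. Applying (2) on each $\Spa{R}$ produces the desired section. The main obstacle lies not in the section-lifting argument (which is immediate from the surjectivity of $B^+_\mrm{dR}\to R^\sharp$) but in the bookkeeping of the Bruhat cover integrally: one must verify that the chosen representatives $\dot{w}$ really do lie in $\Hi_{q_1}(W(k))$ and that $\tilde{U}^-\to U^-$ really is an isomorphism on the special fiber, both of which follow from the standard structure theory of split reductive groups together with the properties (c)--(h) of the Bruhat--Tits parahoric group schemes recalled above.
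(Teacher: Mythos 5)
Your proposal is correct, but it takes a genuinely different route from the paper. The paper reduces (1) to (2) via the $L^+\Hi_{q_2}$-torsor structure of $L^+\Hi_{q_1}\to \mrm{Fl}_{q_1,q_2}^\dia$ and \cite[Proposition 11.24]{Et}, and then proves (2) cohomologically: the obstruction class in $H^1_v(\Spa{R},L^+\Hi_{q_2})$ is pushed along $L^+\Hi_{q_2}\to P^\dia_{\Phi_{q_1,q_2}}$, killed after analytic localization using Zariski-local sections of $M_{q_1}\to \mrm{Fl}_{q_1,q_2}$, and the remaining obstruction in $H^1_v(\Spa{R},L^u\Hi_{q_1})$ is shown to vanish by filtering $L^u\Hi_{q_1}$ by congruence subgroups whose graded pieces are powers of $\cali{O}^\sharp$, invoking $H^1_v(\Spa{R},\cali{O}^\sharp)=0$ and the vector-group kernel of $\ov{\Hi}_{q_1}\to \ov{\Hi}_{q_1}^{\mrm{Red}}$. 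You avoid v-cohomology entirely: you lift a point of the big cell through the integral product of root groups $\prod_{\alpha}\cali{U}^{q_1}_\alpha\subseteq \Hi_{q_1}$, an affine space over $W(k)[t]$, using only the surjectivity of $B^+_\mrm{dR}(R^\sharp)\to R^\sharp$ and the compatibility (c)--(h) of the open cell with the reductive quotient, and you cover $\mrm{Fl}_{q_1,q_2}^\dia$ by Weyl translates of the big cell; your reduction of (1) to (2) by refining the pulled-back cell cover with an affinoid pro-\'etale cover of the locally spatial diamond $\mrm{Fl}_{q_1,q_2}^\dia\times_{\Spdf{O_C}}\F$ is also fine. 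Your approach is more elementary and explicit; the paper's approach avoids the Bruhat bookkeeping and yields the vanishing $H^1_v(\Spa{R},L^u\Hi_{q_1})=\{e\}$, which is useful in its own right.

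One step in your write-up needs repair, though it is repairable: the representatives $\dot{w}$. Writing $\dot{w}\in N_{\Hi_{q_1}}(\cali{T})(W(k))$ is ambiguous ($\Hi_{q_1}$ is a group scheme over $W(k)[t]$, and for a general parahoric not every finite Weyl representative extends integrally -- think of the Iwahori case), and what your argument actually requires is a constant section of $L^+\Hi_{q_1}$ mapping in $M_{q_1}^\dia$ to a representative of each class in $W_{q_1}/W_{\Phi_{q_1,q_2}}$. This holds here because the relevant reflections lie in $W_{q_1}$, the Weyl group of the reductive quotient; in fact no normalizer statement is needed: any $\dot{w}\in M_{q_1}(W(k))$ lifts to $\ov{\Hi}_{q_1}(W(k))$ (the kernel is a vector group over an affine base), then by smoothness of $\Hi_{q_1}$ and $(t)$-adic completeness to $\Hi_{q_1}(W(k)\pot{t})$, and composing with $t\mapsto \xi$ gives the desired constant section of $L^+\Hi_{q_1}$, compatibly in $R$ by $\xi$-adic completeness of $B^+_\mrm{dR}(R^\sharp)$. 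With this made explicit, your proof goes through.
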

\begin{proof}
	We may reduce the first claim to the second one by \cite[Proposition 11.24]{Et}. Indeed, by \Cref{pro:flagvarsforparahorloopgrps} the map is a $L^+\Hi_{q_2}$-torsor. 
	Let us prove the second claim. Let $\mathfrak{obs}\in H_v^1(\Spa{R}, L^+\Hi_{q_2})$ be the obstruction to triviality. We prove $\frak{obs}=e$ after analytic localization. Consider the sequences of maps: $(L^+\Hi_{q_1})\to M_{q_1}^\dia\to \mrm{Fl}_{q_1,q_2}^\dia$ and 
	  $e\to L^u\Hi_{q_1}\to L^+\Hi_{q_2}\to P_{\Phi_{q_1,q_2}}^\dia\to e$.
	  Now, $M_{q_1}^\dia\to \mrm{Fl}_{q_1,q_2}^\dia$ is a $(P_{\Phi_{q_1,q_2}}^\dia)$-torsor with obstruction lying in $H^1_v(\mrm{Fl}_{q_1,q_2}^\dia,P_{\Phi_{q_1,q_2}}^\dia)$. Since $M_{q_1}\to \mrm{Fl}_{q_1,q_2}$ admits Zariski locally a section, replacing $\Spa{R}$ by an analytic cover, we may assume $\mathfrak{obs}=e$ in $H^1_v(\Spa{R}, P_{\Phi_{q_1,q_2}}^\dia)$. 
	  We claim $H_v^1(\Spa{R},L^u\Hi_{q_1})=\{e\}$. Recall the exact sequence $e\to \mrm{\mrm{Ker}}\left( L^+\Hi_{q_1}\to (\ov{\Hi}_{q_1})^\dia\right)\to L^u\Hi_{q_1}\to \mrm{\mrm{Ker}}\left( \ov{\Hi}_{q_1}^\dia \to [(\ov{\Hi}_{q_1})^\mrm{Red}]^\dia \right)\to e$,
	  we prove vanishing of $H^1(\Spa{R},-)$ on the other two groups. 

	  For the left group consider the family of groups $\{L^{u,n}\}_{n=1}^\infty$ filtering $L^{u,1}:=\mrm{Ker}\left( L^+\Hi_{q_1}\to \ov{\Hi}_{q_1}^\dia\right)$. Define them as: 
	  $L^{u,n}\Hub{R}:=\mrm{Ker}\left(\Hi_{q_1,B^+_\mrm{dR}(R^\sharp)}(B^+_\mrm{dR}(R^\sharp))\to \Hi_{q_1,B^+_\mrm{dR}(R^\sharp)}(B^+_\mrm{dR}(R^\sharp)/\xi^n)\right)$.
	  Now, after sheafification $L^{u,n}/L^{u,n+1}=\mrm{Ker}\left(\Hi_{q_1,B^+_\mrm{dR}(R^\sharp)}(B^+_\mrm{dR}(R^\sharp)/\xi^{n+1})\to \Hi_{q_1,B^+_\mrm{dR}(R^\sharp)}(B^+_\mrm{dR}(R^\sharp)/\xi^{n})\right)$. 
	  Since $\mrm{Spec}(B^+_\mrm{dR}(R^\sharp)/\xi^{n})\to \mrm{Spec}(B^+_\mrm{dR}(R^\sharp)/\xi^{n+1})$ is a first order nilpotent thickening, deformation theory gives:
	  $$L^{u,n}/L^{u,n+1}=\mrm{Hom}(e^*\Omega^1_{\Hi_{q_1}}\otimes_{W(k)[t]}B^+_\mrm{dR}(R^\sharp),(\xi^n\cdot B^+_\mrm{dR}(R^\sharp)/\xi^{n+1}))=\mrm{Hom}(e^*\Omega^1_{\Hi_{q_1}}\otimes R^\sharp, R^\sharp).$$
	  Now, $e^*\Omega^1_{\Hi_{q_1}/W(k)[t]}$ is a finite free $W(k)[t]$-module 
	  so $L^{u,n}/L^{u,n+1}\cong(\cali{O}^\sharp)^k$ for some $k$. By \cite[Proposition 8.8]{Et}, $H^1_v(\Spa{R},\cali{O}^\sharp)=0$. This shows that $H_v^1(\Spa{R}, L^{u,1}/L^{u,n})=\{e\}$ for all $n$. Since $L^{u,1}=\varprojlim L^{u,1}/L^{u,n}$ and the transition maps are surjective at the level of presheaves we conclude that $H^1_v\left(\Spa{R},L^{u,1}\right)=\{e\}$.

	  For the right group, we may use \cite[Proposition 8.8]{Et}again since $\mrm{Ker}(\ov{\Hi}_{q_1}\to (\ov{\Hi})^\mrm{Red}_{q_1})$ is a vector group over $W(k)$. 
\end{proof}

We can now consider families of Demazure varieties, see \cite[Definition VI.5.6]{FS21}.

\begin{defi}
	\label{defi:Demazurevsheaves}
	 Let $\sigma_r:=\{r_i\}_{1\leq i\leq n}$ and $\sigma_q:=\{q_i\}_{1\leq i\leq n}$ be a pair of sequences of points in $\cali{A}$ such that $F_{r_{i}},F_{r_{i+1}}\subseteq F_{q_{i}}$, and let $\sigma:=(\sigma_r,\sigma_q)$. To such $\sigma$ we associate a v-sheaf given by the contracted group product:
	 $D(\sigma)=L^+\Hi_{r_1}\overset{L^+\Hi_{q_1}}{\times_{\Spdf{O_C}}}L^+\Hi_{r_2}\overset{L^+\Hi_{q_2}}{\times_{\Spdf{O_C}}}\dots \overset{L^+\Hi_{q_{n-1}}}{\times_{\Spdf{O_C}}}L^+\Hi_{r_n}/L^+\Hi_{q_n}$.
\end{defi}
\begin{pro}
	\label{pro:Demazurevsheavesaresmoothproper}
	The map of v-sheaves $D(\sigma)\to \Spdf{O_C}$ is representable in spatial diamonds, proper and $\ell$-cohomologically smooth for any $\ell\neq p$. 
\end{pro}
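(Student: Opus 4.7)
The plan is to proceed by induction on the length $n$ of the sequence $\sigma$, unwinding the Demazure construction one factor at a time.

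For the base case $n=1$, we have $D(\sigma)=L^+\Hi_{r_1}/L^+\Hi_{q_1}$, which by \Cref{pro:flagvarsforparahorloopgrps} is identified with $\mathrm{Fl}_{r_1,q_1,O_C}^\dia$. Since $\mathrm{Fl}_{r_1,q_1}$ is a smooth projective scheme over $\mathrm{Spec}(W(k))$, the associated adic space $\mathrm{Fl}_{r_1,q_1}^{\mathrm{ad}}$ is smooth and proper over $\Spf{W(k)}$, and hence its diamondification is representable in spatial diamonds, proper, and $\ell$-cohomologically smooth over $\Wkd$ for any $\ell\neq p$ (by standard facts about the $\dia$ functor applied to smooth proper rigid-analytic spaces; see \cite[Lemma 15.6]{Et} and surrounding material). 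These three properties are preserved under base change to $\Spdf{O_C}$.

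For the inductive step, set $\sigma':=((r_2,\ldots,r_n),(q_2,\ldots,q_n))$ and consider the natural projection
$$\pi_1\colon D(\sigma)\longrightarrow L^+\Hi_{r_1}/L^+\Hi_{q_1}=\mathrm{Fl}_{r_1,q_1,O_C}^\dia,\qquad [g_1,g_2,\ldots,g_n]\longmapsto g_1\, L^+\Hi_{q_1}.$$
By construction of the contracted product, the pullback of $\pi_1$ along the $L^+\Hi_{q_1}$-torsor $L^+\Hi_{r_1}\to \mathrm{Fl}_{r_1,q_1,O_C}^\dia$ is canonically isomorphic to the projection $L^+\Hi_{r_1}\times_{\Spdf{O_C}}D(\sigma')\to L^+\Hi_{r_1}$. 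By the second part of \Cref{lem:proetalelocallytrivial}, for any map $\Spa{R}\to \mathrm{Fl}_{r_1,q_1,O_C}^\dia$ from an affinoid perfectoid, there is an analytic cover $\coprod_{i}\Spa{R_i}\to \Spa{R}$ over which the map lifts to $L^+\Hi_{r_1}$. Consequently, analytically locally on the base, $\pi_1$ becomes the projection $D(\sigma')\times_{\Spdf{O_C}}\Spa{R_i}\to \Spa{R_i}$. By the inductive hypothesis, the latter is representable in spatial diamonds, proper and $\ell$-cohomologically smooth.

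It remains to observe that each of these three properties descends analytically (indeed v-locally) on the base: spatiality can be glued along analytic covers, while properness and $\ell$-cohomological smoothness are v-local properties of morphisms in Scholze's framework (see \cite[\S 18, \S 23]{Et}). Composing the resulting map $\pi_1$ (which has the three properties) with the structure map $\mathrm{Fl}_{r_1,q_1,O_C}^\dia\to\Spdf{O_C}$ (handled by the base case) yields the desired conclusion. The main technical obstacle is checking that these three properties genuinely behave well under the analytic-local trivialization furnished by \Cref{lem:proetalelocallytrivial}; alternatively one may give a more hands-on proof by identifying $D(\sigma)$ with the diamondification of the classical iterated Demazure scheme attached to $\sigma$, which is a smooth projective $O_C$-scheme built as an iterated tower of flag bundles, and invoking the base case inductively along this tower.
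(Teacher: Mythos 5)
Your argument is correct and is in substance the same induction-on-the-tower argument the paper gives (the paper's own proof is only a two-line sketch): the paper peels off the \emph{last} factor, observing that the forgetful map $D(\sigma)\to D(\sigma')$ is a $(\mrm{Fl}_{r_n,q_n})^\dia$-fibration and composing with the inductive map $D(\sigma')\to\Spdf{O_C}$, whereas you peel off the \emph{first} factor, exhibiting $D(\sigma)\to \mrm{Fl}^\dia_{r_1,q_1,O_C}$ as a $D(\sigma')$-fibration via \Cref{lem:proetalelocallytrivial} and composing with the base case. The two decompositions are interchangeable: both rest on the same ingredients, namely the local triviality supplied by \Cref{lem:proetalelocallytrivial}, the fact that a flag variety is smooth and projective so that its diamond over any affinoid perfectoid base is spatial, proper and $\ell$-cohomologically smooth, and the stability of these three properties under base change, composition, and (analytic or v-) localization on the target. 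The paper's direction has the mild advantage that the relative map being analyzed has flag-variety fibres, so its properness and smoothness are immediate from the analytic case, while your direction pushes the inductive hypothesis into the fibre, which works equally well since the properties are stable under base change; you also correctly identify the descent of the three properties along the local trivialization as the only point requiring care, and your closing remark identifying $D(\sigma)$ with the diamond of the classical iterated (Bott--Samelson/Demazure) flag-bundle scheme is a legitimate alternative shortcut.
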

\begin{proof}
	Let $\sigma$ be as above and let $\sigma'=(\{r_i\}_{1\leq i\leq n-1},\{q_i\}_{1\leq i\leq n-1})$ be the subsequence of the first $n-1$ points of $\sigma$. We have a projection morphism of v-sheaves $f:D(\sigma)\to D(\sigma')$ given by forgetting the last entry corresponding to $r_n$. One can inductively show that this map satisfies all of the properties in the hypothesis, given that it is a $(\mrm{Fl}_{r_n,q_n})^\dia$-fibration.
\end{proof}
\begin{pro}
	\label{pro:twoloopgroupssamedemazure}
	The map $\pi:W^+\Hi_{r_1}{\times_{\Spdf{O_C}}}\cdots {\times_{\Spdf{O_C}}}W^+\Hi_{r_n}\to D(\sigma)$ coming from $W^+\Hi_{r_i}\subseteq L^+\Hi_{r_i}$ is surjective, it induces an isomorphism 
	$\iota:D(\sigma)\cong W^+\Hi_{r_1}\overset{_{W^+\Hi_{q_1}}}{\times_{\Spdf{O_C}}}\dots \overset{_{W^+\Hi_{q_{n-1}}}}{\times_{\Spdf{O_C}}}W^+\Hi_{r_n}/W^+\Hi_{q_n}$.
	Consequently, $D(\sigma)$ is v-formalizing.
\end{pro}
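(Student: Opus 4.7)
The plan is to first establish v-surjectivity of $\pi$, and then identify $D(\sigma)$ with the contracted product built from the $W^+\Hi_{q_i}$. The crux is the identity $W^+\Hi_{r}/W^+\Hi_{q} = L^+\Hi_{r}/L^+\Hi_{q}$ of \Cref{pro:flagvarsforparahorloopgrps}, from which I would extract two corollaries: (i) every $\Spa{R}$-point of $L^+\Hi_r/L^+\Hi_q$ v-locally lifts along the quotient map $W^+\Hi_r\to L^+\Hi_r/L^+\Hi_q$, and (ii) inside $L^+\Hi_r$ we have the intersection identity
\begin{equation*}
W^+\Hi_r\,\cap\,L^+\Hi_q\,=\,W^+\Hi_q,
\end{equation*}
since both sides compute the preimage in $W^+\Hi_r$ of the identity section in the common flag variety $\mrm{Fl}^\dia_{r,q}$.

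For surjectivity of $\pi$, I would start with a $\Spa{R}$-point of $D(\sigma)$, represent it after a v-cover by a tuple $(g_1,\dots,g_n)$ with $g_i\in L^+\Hi_{r_i}(R)$, and apply (i) iteratively. Writing $g_1 = w_1 h_1$ with $w_1\in W^+\Hi_{r_1}$ and $h_1\in L^+\Hi_{q_1}$, the contracted product relation rewrites the tuple as $(w_1,h_1 g_2,g_3,\dots,g_n)$; since $F_{r_2}\subseteq F_{q_1}$ gives $L^+\Hi_{q_1}\subseteq L^+\Hi_{r_2}$, the product $h_1 g_2$ lies in $L^+\Hi_{r_2}$ and the procedure repeats. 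At the last step the residual factor is absorbed by the quotient by $L^+\Hi_{q_n}$ on the right, producing a representative in $\prod_i W^+\Hi_{r_i}(R')$ for a suitable v-cover $R'/R$.

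For the induced isomorphism $\iota:D_W(\sigma)\to D(\sigma)$, surjectivity follows at once from the previous step. For injectivity, suppose two tuples $(w_i),(w'_i)\in\prod_i W^+\Hi_{r_i}$ represent the same class in $D(\sigma)$; then after a v-cover there exist $h_i\in L^+\Hi_{q_i}$ intertwining them through the contracted product action. The first relation $w_1'=w_1 h_1^{-1}$ forces $h_1\in W^+\Hi_{r_1}\cap L^+\Hi_{q_1}=W^+\Hi_{q_1}$ by (ii), and then the relations $w_i' = h_{i-1} w_i h_i^{-1}$ applied inductively, combined with (ii), force $h_i\in W^+\Hi_{q_i}$ at each stage. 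Hence $(w_i)$ and $(w'_i)$ already lie in the same orbit under $\prod_i W^+\Hi_{q_i}$, so they represent the same class in $D_W(\sigma)$.

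The v-formalizing property then follows formally: each $W^+\Hi_{r_i}$ is a $p$-adic kimberlite by \Cref{pro:formalloopgroupsarekimberlites}, hence v-locally formal and thus v-formalizing by \Cref{lem:Zpdvformalizing}. Since $\Spdf{O_C}$ is formally separated (as the diamondification of a separated formal scheme over $\Zp$), \Cref{pro:v-formalizing+separatedstableprod} shows that the fiber product $\prod_i W^+\Hi_{r_i}$ over $\Spdf{O_C}$ is v-formalizing, and v-surjections preserve v-formalizing by \Cref{rem:propertiesofformalizing}, so $D(\sigma)\cong D_W(\sigma)$ is v-formalizing. The only non-cosmetic step is the fiber-product identity (ii); this is where \Cref{pro:flagvarsforparahorloopgrps} enters in an essential way, and everything else is bookkeeping around the contracted product equivalence relation.
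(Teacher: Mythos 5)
Your proof is correct and follows essentially the same route as the paper: both arguments rest on \Cref{pro:flagvarsforparahorloopgrps} together with the intersection identity $W^+\Hi_{r_i}\cap L^+\Hi_{q_i}=W^+\Hi_{q_i}$ inside $L^+\Hi_{r_i}$, and both deduce the v-formalizing property from \Cref{pro:formalloopgroupsarekimberlites}, \Cref{pro:v-formalizing+separatedstableprod} and the stability of v-formalizing under v-surjections. The only difference is presentational: the paper organizes the lifting and the orbit comparison as an induction on $n$ (normalizing representatives so that the first $n-1$ entries agree), whereas you unroll the same telescoping computation directly.
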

\begin{proof}
Consider the following basechange diagram: 
\begin{equation}
\footnotesize
	\label{diag:formalizingpoints}
\begin{tikzcd}
	W^+\Hi_{r_1}{\times_{\Spdf{O_C}}}\dots \times_{\Spdf{O_C}}(L^+\Hi_{r_n}/L^+\Hi_{q_n}) \arrow{r} \arrow{d}& L^+\Hi_{r_1}{\times_{\Spdf{O_C}}}\dots \times_{\Spdf{O_C}}(L^+\Hi_{r_n}/L^+\Hi_{q_n}) \arrow{r} \arrow{d} &D(\sigma) \arrow{d} \\
	W^+\Hi_{r_1}{\times_{\Spdf{O_C}}}\dots \times_{\Spdf{O_C}} W^+\Hi_{r_{n-1}} \arrow{r} & L^+\Hi_{r_1}{\times_{\Spdf{O_C}}}\dots \times_{\Spdf{O_C}} L^+\Hi_{r_{n-1}} \arrow{r} & D(\sigma')
\end{tikzcd}
\normalsize
\end{equation}
\Cref{pro:flagvarsforparahorloopgrps} gives $W^+\Hi_{r_n}/W^+\Hi_{q_n}=L^+\Hi_{r_n}/L^+\Hi_{q_n}$ and the surjectivity by induction. Assume that we have an identification: 
$\iota':D(\sigma')\cong W^+\Hi_{r_1}\overset{W^+\Hi_{q_1}}{\times_{\Spdf{O_C}}}\dots \overset{W^+\Hi_{q_{n-2}}}{\times_{\Spdf{O_C}}} W^+\Hi_{r_{n-1}}/W^+\Hi_{q_{n-1}}$.
Since $W^+\Hi_{q_k}\subseteq L^+\Hi_{q_k}$, the map $\iota$ is defined and surjective, we prove that it is also injective. Let $[g_1]$ and $[g_2]$ be two maps 
$[g_1],[g_2]:\Spa{R}\to W^+\Hi_{r_1}\overset{W^+\Hi_{q_1}}{\times_{\Spdf{O_C}}}\dots \overset{W^+\Hi_{q_{n-1}}}{\times_{\Spdf{O_C}}} W^+\Hi_{r_{n}}/W^+\Hi_{q_{n}}$ with $\iota([g_1])=\iota([g_2])$. By inductive hypothesis, we may v-locally find representatives $g_1$ and $g_2$ of $[g_1]$ and $[g_2]$ of the form $g_i=(g^1_i,\dots,g^n_i)$ such that $g^j_1=g^j_2$ for $j\in \{1,\dots,n-1\}$. 
Since $[g_1]$ and $[g_2]$ get identified in $D(\sigma)$, v-locally $g_1$ and $g_2$ are on the same $L^+\Hi_{q_1}\times_{\Spdf{O_C}} \dots\times_{\Spdf{O_C}} L^+\Hi_{q_n}$-orbit. Since $g_1$ and $g_2$ share all of their entries except possibly the last, $g^n_1$ and $g^n_2$ are in the same $L^+\Hi_{q_n}$-orbit. Since $g^n_1,g^n_2\in W^+\Hi_{r_n}$ and $W^+\Hi_{q_n}=W^+\Hi_{r_n}\cap L^+\Hi_{q_n}$ they are in the same $W^+\Hi_{q_n}$-orbit, so $[g_1]=[g_2]$. 

Finally, by \Cref{pro:formalloopgroupsarekimberlites} each $W^+\Hi_{r_i}$ is formalizing, \Cref{pro:v-formalizing+separatedstableprod} implies the same for the product, and since $D(\sigma)$ is the quotient of a v-formalizing sheaf it is also v-formalizing.
\end{proof}

\begin{pro}
	\label{pro:specialfiberschemetheoretic}
	The map $D(\sigma)\to{\Spdf{O_C}}$ is formally adic. Moreover, $D(\sigma)\red$ is represented by a qcqs scheme that is perfectly finitely presented and proper over $\mrm{Spec}(k_C)$ \cite[Proposition 3.11, Definition 3.14]{Witt}. 
\end{pro}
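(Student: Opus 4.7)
The plan is to exploit the alternative formal-loop presentation given in \Cref{pro:twoloopgroupssamedemazure}, writing $D(\sigma)$ as the contracted quotient of $P := W^+\Hi_{r_1}\times_{\Spdf{O_C}}\cdots\times_{\Spdf{O_C}}W^+\Hi_{r_n}$ by the group $H := W^+\Hi_{q_1}\times_{\Spdf{O_C}}\cdots\times_{\Spdf{O_C}}W^+\Hi_{q_n}$. This presentation is well-suited to both checking formal adicness and identifying the reduction with a scheme-theoretic Demazure variety built out of the Witt vector parahoric groups $\Wred\Hi_{r_i}$ of \Cref{defi:reducedloopsheaves}.

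To prove formal adicness, I first note from the proof of \Cref{pro:formalloopgroupsarekimberlites} that $W^+(\cali{O})$ is represented by the formal scheme $\Spdf{O_C\langle T_n\rangle_{n\in\bb{N}}}$ whose structure morphism to $\Spdf{O_C}$ is adic, hence formally adic by \Cref{pro:twoadics}. The same proof exhibits $W^+\Hi_{r_i}$ as a formally closed subsheaf of a product of copies of $W^+(\cali{O})$ over $\Spdf{O_C}$, so \Cref{pro:preservedbycomposit} yields that each $W^+\Hi_{r_i}\to\Spdf{O_C}$, and therefore $P\to\Spdf{O_C}$ and $H\to\Spdf{O_C}$, is formally adic. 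Since $\Spdf{O_C}$ comes from a formal scheme, the adjunction $(\Spdf{O_C}\red)^\diamond = \mrm{Spec}(k_C)^\diamond \to \Spdf{O_C}$ is a closed immersion, so by \Cref{lem:basechangerepresentimpliesadic} it is enough to exhibit the base change $D(\sigma)\times_{\Spdf{O_C}}\mrm{Spec}(k_C)^\diamond$ as $S^\diamond$ for a reduced scheme-theoretic v-sheaf $S$. Since both base change and $(-)^\diamond$ preserve colimits (the latter as the left adjoint of the reduction functor), and since formal adicness of $P$ and $H$ over $\Spdf{O_C}$ identifies their base changes to $\mrm{Spec}(k_C)^\diamond$ with $(\Wred\Hi_{r_1}\times_{k_C}\cdots\times_{k_C}\Wred\Hi_{r_n})^\diamond$ and the analogous expression for $H$, the base change of $D(\sigma)$ is the diamond of the scheme-theoretic contracted quotient
\[ S := \Wred\Hi_{r_1}\overset{\Wred\Hi_{q_1}}{\times_{k_C}}\cdots\overset{\Wred\Hi_{q_{n-1}}}{\times_{k_C}}\Wred\Hi_{r_n}/\Wred\Hi_{q_n}. \]

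It then remains to verify that $S$ is representable by a qcqs, perfectly finitely presented, proper perfect scheme over $\mrm{Spec}(k_C)$. I intend to do this by induction on the length of $\sigma$, paralleling \Cref{pro:Demazurevsheavesaresmoothproper}: let $\sigma'$ be the truncation of $\sigma$ by its last entry and $S'$ its associated scheme. The projection $S\to S'$ is a fibration with fiber $(\mrm{Fl}_{r_n,q_n,k_C})^{\mrm{perf}}$, Zariski locally trivial by the scheme-theoretic version of \Cref{lem:proetalelocallytrivial} (the relevant parahoric torsor is Zariski locally trivial on the flag variety, which is classical Bruhat--Tits theory). Since $\mrm{Fl}_{r_n,q_n,k_C}$ is smooth projective, its perfection is proper and perfectly finitely presented over $\mrm{Spec}(k_C)$, and these properties are preserved under a Zariski locally trivial fibration. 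The base case of the induction is $S = \mrm{Spec}(k_C)$, and the inductive step preserves each of the required properties.

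The main obstacle I anticipate is justifying rigorously the claim that the contracted-product quotient commutes with base change to $\mrm{Spec}(k_C)^\diamond$ and with the functor $(-)^\diamond$. While formally these are all left-adjoint-like operations that preserve colimits, the contracted product is a sheafified coequalizer, and one has to ensure that the scheme-theoretic coequalizer agrees with the v-sheaf coequalizer after applying $(-)^\diamond$. The cleanest route is to reduce to the fibered picture via the local triviality of \Cref{lem:proetalelocallytrivial}, so that locally both quotients become direct products where the commutation is immediate; one then glues using that $(-)^\diamond$ is a morphism of sites.
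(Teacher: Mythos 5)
Your proposal is correct and follows essentially the same route as the paper: pass to the $W^+$-presentation of $D(\sigma)$ from \Cref{pro:twoloopgroupssamedemazure}, use that pullback and $(-)^\diamond$ (as a left adjoint) commute with colimits together with formal adicness of the $W^+\Hi$'s to identify $D(\sigma)\times_{\Spdf{O_C}}\mrm{Spec}(k_C)^\diamond$ with the diamond of the scheme-theoretic Witt vector Demazure variety, and conclude via \Cref{lem:basechangerepresentimpliesadic}; your worry about commuting the contracted-product quotient with $(-)^\diamond$ is already dispatched by the left-adjoint argument, so the local-triviality detour is unnecessary. The only difference is that where you sketch an induction (fibration by perfected flag varieties) for the qcqs/perfectly finitely presented/proper properties of the reduction, the paper simply cites \cite{Zhu} and \cite{Witt}, where these facts about the Witt vector Demazure varieties are established.
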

\begin{proof}
	In any Grothendieck topos, pullback commutes with finite limits and colimits. \Cref{pro:formalloopgroupsarekimberlites} gives: 
	$D(\sigma)\times_{\Spdf{O_C}}\mrm{Spec}(k_C)^\diamond =  (\Wred\Hi_{r_1})^\diamond\overset{(\Wred\Hi_{q_1})^\diamond}{\times_{\mrm{Spec}(k_C)^\diamond}}\dots \overset{(\Wred\Hi_{q_{n-1}})^\diamond}{\times_{\mrm{Spec}(k_C)^\diamond}}(\Wred\Hi_{r_n})^\diamond/(\Wred\Hi_{q_n})^\diamond$. 
	Now, this is
	$\left(\Wred\Hi_{p_1}\overset{\Wred\Hi_{q_1}}{\times_{k_C}}\dots \overset{\Wred\Hi_{q_{n-1}}}{\times_{k_C}}\Wred\Hi_{p_n}/\Wred\Hi_{q_n}\right)^\diamond$ 
since $(\cdot)^\diamond$ is a left adjoint and commutes with colimits.
\Cref{lem:basechangerepresentimpliesadic} proves that $D(\sigma)\to \Spdf{O_C}$ is formally adic and that $D(\sigma)\red=D(\sigma)\times_{\Spdf{O_C}}\mrm{Spec}(k_C)^\diamond$. The structural properties of $D(\sigma)\red$ are well known, see \cite{Zhu}, \cite{Witt}. 
\end{proof}

\begin{pro}
	\label{pro:DemazureCJ}
	$D(\sigma)\times_{\Spdf{O_C}}\mrm{Spa}({C,O_{C}})$ has enough facets over $C$. 
\end{pro}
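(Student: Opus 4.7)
The plan is to proceed by induction on the length $n$ of $\sigma$, reducing to the base case of a single flag variety and then invoking \Cref{pro:enoughballsprop}(4) for smooth topologically of finite type $C^\sharp$-algebras.

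For the base case $n=1$, we have $D(\sigma) = L^+\Hi_{r_1}/L^+\Hi_{q_1} = \mrm{Fl}_{r_1,q_1}^\dia$ by \Cref{pro:flagvarsforparahorloopgrps}. Since $\mrm{Fl}_{r_1,q_1}$ is projective over $\mrm{Spec}(W(k))$, \Cref{pro:propernessgivessharpissharpplus} identifies $D(\sigma)\times_{\Spdf{O_C}}\mrm{Spa}(C,O_C)$ with the diamond of the smooth projective adic space $(\mrm{Fl}_{r_1,q_1,C^\sharp})^{\mrm{ad}}$ over $\mrm{Spa}(C^\sharp,O_{C^\sharp})$. Analytifying an affine Zariski cover of the smooth scheme $\mrm{Fl}_{r_1,q_1,C^\sharp}$ gives an affinoid open cover $\{\mrm{Spa}(A_i,A_i^\circ)\}_{i\in I}$ with each $A_i$ smooth and topologically of finite type over $C^\sharp$. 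By \Cref{pro:enoughballsprop}(4), each $\mrm{Spd}(A_i,A_i^\circ)$ has enough facets over $C$, and their coproduct provides the required v-cover.

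For the inductive step, the forgetful projection $\pi\colon D(\sigma)\to D(\sigma')$ (as used in the proof of \Cref{pro:Demazurevsheavesaresmoothproper}) realizes $D(\sigma)$ as the $\mrm{Fl}_{r_n,q_n}^\dia$-bundle associated to the $L^+\Hi_{q_{n-1}}$-torsor $E = L^+\Hi_{r_1}\overset{L^+\Hi_{q_1}}{\times}\cdots\overset{L^+\Hi_{q_{n-2}}}{\times} L^+\Hi_{r_{n-1}}\to D(\sigma')$. Iterating this bundle structure and applying \Cref{pro:propernessgivessharpissharpplus} at each stage (since all intermediate flag varieties are proper) identifies $D(\sigma)\times_{\Spdf{O_C}}\mrm{Spa}(C,O_C)$ with the diamond of the classical scheme-theoretic Demazure variety $\mrm{Dem}_\sigma$ over $\mrm{Spec}(W(k))$ base-changed to $C^\sharp$ and analytified; $\mrm{Dem}_\sigma$ is smooth and projective as an iterated $\mrm{Fl}$-bundle. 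As in the base case, a cover by affinoids $\mrm{Spa}(A_i,A_i^\circ)$ with $A_i$ smooth tft over $C^\sharp$ combined with \Cref{pro:enoughballsprop}(4) yields enough facets.

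The main obstacle is establishing the identification of $D(\sigma)\times_{\Spdf{O_C}}\mrm{Spa}(C,O_C)$ with the diamond of the classical Demazure variety, which requires matching the contracted-product construction using $B^+_\mrm{dR}$-coefficient loop groups against the scheme-theoretic iterated flag-bundle construction. The key point is that the $L^+\Hi$-loop groups with $B^+_\mrm{dR}$-coefficients see only their residue-field quotient after taking the flag-variety quotient (\Cref{pro:flagvarsforparahorloopgrps}), so on an affinoid perfectoid $\mrm{Spa}(R,R^+)$ over $\mrm{Spa}(C,O_C)$ the v-sheaf $D(\sigma)(R)$ unwinds to classical $\mrm{Dem}_\sigma(R^\sharp)$-points. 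An alternative route would use \Cref{lem:proetalelocallytrivial}(2) to trivialize the $\mrm{Fl}$-bundle analytically locally on affinoid perfectoid covers of $D(\sigma')_C$ and then combine with \Cref{pro:enoughballsprop}(2) iteratively; the identification with the classical Demazure variety is conceptually cleaner and avoids having to handle non-tft perfectoid factors.
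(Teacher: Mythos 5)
Your base case is fine, but the inductive step rests on an identification that is neither proved nor a consequence of anything you cite: that $D(\sigma)\times_{\Spdf{O_C}}\Spa{C}$ is the diamond of a classical (scheme-theoretic) Demazure variety base-changed to $C^\sharp$. \Cref{pro:propernessgivessharpissharpplus} only says $X^\diamond=X^\dia$ for a proper $W(k)$-scheme $X$; it says nothing about contracted products of $B^+_\mrm{dR}$-loop groups. The actual content of your claim is that at each stage the $L^+\Hi_{q_i}$-torsor $E_i\to D(\sigma_{\leq i})$, pushed out along $L^+\Hi_{q_i}\twoheadrightarrow P^\dia_{\Phi_{r_{i+1},q_i}}$ (\Cref{pro:reductionforparahorloopgrpstwoparahorics}), is the diamond of a classical torsor. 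This is exactly where the difficulty sits: the quotient map is reduction modulo $\xi$ \emph{inside the larger parahoric} $L^+\Hi_{r_{i+1}}$, so for elements of $L^+\Hi_{q_i}$ it extracts first-order ($\xi$-coefficient) data, which transforms via the ring structure of $B^+_\mrm{dR}(R^\sharp)/\xi^2$. That ring is a non-split square-zero extension of $\cali{O}^\sharp$ (the Fontaine extension), not functorially isomorphic to $R^\sharp[\epsilon]$, so the resulting twists are genuinely v-sheaf-theoretic and need not come from algebraic bundles over the classical flag variety; this is the same phenomenon that makes non-minuscule Schubert varieties in the $B^+_\mrm{dR}$-Grassmannian fail to be diamonds of classical varieties. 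So "the v-sheaf $D(\sigma)(R)$ unwinds to classical points" is precisely the assertion that needs a proof, and it is at best delicate and quite possibly false in general.

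The one-sentence alternative you set aside is in fact the paper's proof, and it avoids the issue entirely: assuming inductively a v-cover $S=\coprod_i\mrm{Spd}(B_i,B_i^\circ)\to D(\sigma')_C$ with $B_i$ topologically of finite type over $C$, one uses \Cref{lem:proetalelocallytrivial} to trivialize the $(\mrm{Fl}_{r_n,q_n,C^\sharp})^\dia$-fibration $D(\sigma)_C\times_{D(\sigma')_C}S\to S$ analytically locally, and then concludes with \Cref{pro:enoughballsprop}. Your stated reason for rejecting this route ("having to handle non-tft perfectoid factors") does not apply: enough facets only requires a v-cover by $\mrm{Spd}(A,A^\circ)$ with $A$ tft over $C$, the trivialized pieces are fiber products $\mrm{Spd}(B_i,B_i^\circ)\times_{\Spa{C}}\mrm{Fl}^\dia$, and parts (2) and (4) of \Cref{pro:enoughballsprop} handle exactly such products, with no perfectoid factors entering the final cover. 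You should either carry out that local-triviality argument or supply a genuine proof of classicality of the twisted products; as written the inductive step has a gap.
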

\begin{proof}
	We prove this by induction. Let $\sigma=(\{r_i\}_{1\leq i\leq n},\{q_i\}_{1\leq i\leq n})$ and $\sigma'=(\{r_i\}_{1\leq i\leq n-1},\{q_i\}_{1\leq i\leq n-1})$. 
	Suppose that $D(\sigma')_{C}$ has enough facets, let $S:=\coprod_{i\in I} \mrm{Spd}(B_i,B_i^\circ)$ and let $f:S\to D(\sigma')_{C}$ be a cover as in \Cref{defi:enoughballs}. 
	Let $\F=D(\sigma)_{C}\times_{D(\sigma')_{C}} S$, we prove that $\F$ has a enough facets. By \Cref{lem:proetalelocallytrivial}, $\F\to S$ is analytically locally a trivial $(\mrm{Fl}_{r_n,q_n,C^\sharp})^\dia$-fibration. 
	We may conclude by \Cref{pro:enoughballsprop}. 
\end{proof}
\begin{prop}
	\label{thm:DemazureKimberlite}
	For any $\sigma$ as in \Cref{defi:Demazurevsheaves}, $D(\sigma)$ is a topologically normal rich $p$-adic kimberlite. 
\end{prop}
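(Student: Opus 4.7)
The plan is to assemble the structural results already established for $D(\sigma)$ in \Cref{pro:Demazurevsheavesaresmoothproper}, \Cref{pro:twoloopgroupssamedemazure}, \Cref{pro:specialfiberschemetheoretic}, and \Cref{pro:DemazureCJ}, and feed them into the abstract formalism of Section 4. First I would verify that $D(\sigma)$ is a $p$-adic prekimberlite: v-formalizing comes from \Cref{pro:twoloopgroupssamedemazure}; formal adicness of $D(\sigma)\to\Spdf{O_C}$ is \Cref{pro:specialfiberschemetheoretic}, and since $\Spdf{O_C}\to\Zpd$ is adic (hence formally adic by \Cref{pro:twoadics}), \Cref{pro:preservedbycomposit} shows the composition is formally adic, so $D(\sigma)$ is $p$-adic. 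Then \Cref{pro:formallypadicgivesformaldiagona} forces the diagonal to be formally adic; combined with separatedness of the proper structure map of \Cref{pro:Demazurevsheavesaresmoothproper}, the diagonal is formally closed, giving formal separatedness. The reduction $D(\sigma)\red$ is represented by a scheme (\Cref{pro:specialfiberschemetheoretic}), and the adjunction $(D(\sigma)\red)^\dia\hookrightarrow D(\sigma)$ is a closed immersion as the base change of the closed immersion $\mrm{Spec}(k_C)^\dia\hookrightarrow\Spdf{O_C}$ provided by \Cref{exa:formalispre-Kimberlite}. Valuativity follows from \Cref{pro:partiallyproperisvaluative}: $\Spdf{O_C}$ is valuative and $D(\sigma)\to\Spdf{O_C}$ is partially proper. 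Finally, quasicompactness of the specialization map on $D(\sigma)^{\mrm{an}}$ is immediate since this analytic locus equals $D(\sigma)\times_{\Spdf{O_C}}\mrm{Spa}(C,O_C)^\dia$, which is spatial by \Cref{pro:Demazurevsheavesaresmoothproper}, so $D(\sigma)$ is a kimberlite.

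For richness, $|D(\sigma)\red|$ is locally Noetherian by \Cref{pro:specialfiberschemetheoretic} (it is perfectly finitely presented and proper over $\mrm{Spec}(k_C)$); the analytic locus base-changed to $\mrm{Spa}(C,O_C)$ has enough facets over $C$ by \Cref{pro:DemazureCJ}, hence is a cJ-diamond by \Cref{pro:enoughballsprop}. For surjectivity of the specialization map, given a closed point $x\in|D(\sigma)\red|$ I would pick a geometric representative $\mrm{Spec}(k)^\dia\to D(\sigma)$ through $x$ and use the valuative lifting property (proof of \Cref{pro:valuativegivesspecializing}) to produce an analytic generization whose specialization recovers $x$; this is possible because $D(\sigma)\to\Spdf{O_C}$ is partially proper and the source is valuative.

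The main obstacle is topological normality, which I would establish by induction on the length $n$ of $\sigma$. The base $n=0$ is trivial. For the inductive step, consider the projection $f:D(\sigma)\to D(\sigma')$ given by forgetting the last entry; this is a $(\mrm{Fl}_{r_n,q_n})^\dia$-fibration (see the proof of \Cref{pro:Demazurevsheavesaresmoothproper}). Fix a closed point $x\in|D(\sigma)\red|$ with image $x'\in|D(\sigma')\red|$, and set $T_x=\Tup{D(\sigma)^\mrm{an}}{x}$ and $T_{x'}=\Tup{D(\sigma')^\mrm{an}}{x'}$. By inductive hypothesis $T_{x'}$ is connected. The pro-\'etale local triviality provided by \Cref{lem:proetalelocallytrivial} exhibits $T_x\to T_{x'}$ as a pro-\'etale locally trivial $(\mrm{Fl}_{r_n,q_n})^\dia$-fibration restricted to tubes, whose fibers are analytic tubes in $(\mrm{Fl}_{r_n,q_n,C^\sharp})^\dia$ around closed points of the perfected special fiber. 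Since $\mrm{Fl}_{r_n,q_n}$ is a smooth projective scheme over $W(k)$ admitting its own iterated $(\bb{P}^1)^\dia$-bundle decomposition via a Demazure resolution, these fiberwise tubes are connected (they are, up to perfection, open polydiscs), and together with connectedness of $T_{x'}$ this yields connectedness of $T_x$. The hard part of this final step is the rigorous transfer of the pro-\'etale-local fibration structure to a genuine connectedness statement for analytic tubes, which will require careful interplay between the specialization map, the patch topology on $T_{x'}$, and the spreading of connected components across the pro-\'etale cover.
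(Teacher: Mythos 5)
Your structural skeleton (prekimberlite via \Cref{pro:twoloopgroupssamedemazure}, \Cref{pro:specialfiberschemetheoretic} and \Cref{pro:formallypadicgivesformaldiagona}; valuativity via \Cref{pro:partiallyproperisvaluative}; richness via \Cref{pro:DemazureCJ}; induction along $D(\sigma)\to D(\sigma')$ for the tubes) matches the paper's proof. But the two steps you flag as delicate are exactly where your argument has genuine gaps. First, surjectivity of the specialization map: partial properness of $\mrm{SP}$ only lets you \emph{specialize} an already existing analytic point (extend $\mrm{Spa}(K,O_K)\to D(\sigma)$ to a smaller $K^+$, as in \Cref{pro:valuativegivesspecializing}); it does not manufacture an analytic \emph{generization} of a point of $(D(\sigma)\red)^\diamond$. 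So "pick $\mrm{Spec}(k)^\diamond\to D(\sigma)$ through $x$ and lift" does not get off the ground. The correct route, and the one the paper takes, is to prove the tubes $\Tup{D(\sigma)}{x}$ at closed points are non-empty (over every algebraically closed extension $C'$) and then invoke \Cref{lem:surjectivchecclosedpoints} to handle arbitrary, not necessarily closed, points of $|D(\sigma)\red|$ — a reduction you also omit.

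Second, and more seriously, in the connectedness induction you identify the fibers of $\Tup{D(\sigma)}{x}\to\Tup{D(\sigma')}{x'}$ with tubes in $\mrm{Fl}_{r_n,q_n}^\dia$ by appealing to the pro-\'etale local triviality of \Cref{lem:proetalelocallytrivial}. That triviality lives entirely on the generic fiber ($B^+_\mrm{dR}$-level) and only pro-\'etale locally; it carries no information about the specialization map, so it cannot tell you that the fiber of the \emph{tube} over a point of $\Tup{D(\sigma')}{x'}$ is the tube of $\mrm{Fl}^\diamond_{O_{C'}}$ around a closed point of its special fiber. What is actually needed is an \emph{integral} trivialization of the fibration over $\Spdf{O_{C''}}$ after pulling back along a formalizable geometric point of $D(\sigma')$, and this is precisely where the $A_\mrm{inf}$-coefficient ($W^+\Hi$) Demazure presentation of \Cref{pro:twoloopgroupssamedemazure} (diagram \ref{diag:formalizingpoints}) is indispensable: formalizable points lift to $W^+\Hi_{r_1}\times_{\Spdf{O_C}}\cdots\times_{\Spdf{O_C}}W^+\Hi_{r_{n-1}}$, over which the bundle is literally trivial, and then the fiber of the tube is the tube of a smooth projective $O_{C^{\sharp\prime}}$-scheme at a closed point, i.e.\ an open polydisc. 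The paper isolates this as hypothesis \eqref{specializingcondition} of \Cref{lem:tubularofbundles}; the passage from connected base tube plus connected fibers over rank-$1$ points to connectedness of the total tube is then done not by spreading along a pro-\'etale cover, but by using that the map is open and closed (properness plus $\ell$-cohomological smoothness) together with density of rank-$1$ points, so that any clopen decomposition pushes down to clopen sets of the base meeting at a rank-$1$ point whose connected fiber gives the contradiction. Your proposal uses the $W^+$-description only for v-formalizing and never for this trivialization, so the central idea of the paper's argument is missing.
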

\begin{proof}
	By \Cref{pro:twoloopgroupssamedemazure}, \Cref{pro:formallypadicgivesformaldiagona} and \Cref{pro:specialfiberschemetheoretic} it is a $p$-adic prekimberlite, and by \Cref{pro:Demazurevsheavesaresmoothproper} together with \Cref{pro:partiallyproperisvaluative} it is a valuative kimberlite whose analytic locus is qcqs. 
	Since $D(\sigma)\red$ is a proper perfectly finitely presented scheme over $k_C$, $|D(\sigma)\red|$ is Noetherian. Also, $D(\sigma)^{\mrm{an}}$ coincides with $D(\sigma)\times_{\Spdf{O_C}}\C$ and by \Cref{pro:DemazureCJ} and  this is a qcqs cJ-diamond. By \Cref{lem:surjectivchecclosedpoints}, we may check surjectivity of $\mrm{sp}_{D(\sigma)^\mrm{an}}$ on closed points. At this point, it suffices to prove that if $x\in |D(\sigma)\red|$ is a closed point, then $\Tup{D(\sigma)}{x}$ is non-empty and connected. This follows inductively from \Cref{lem:tubularofbundles}. Indeed, \Cref{connected} holds by induction over the maps $D(\sigma)\to D(\sigma')$ and \Cref{specializingcondition} follows from the diagram \ref{diag:formalizingpoints} since each of the $W^+\Hi_{r_i}$ is formalizing and basechanges along maps that factor through $W^+\Hi_{r_1}{\times_{\Spdf{O_C}}}\dots \times_{\Spdf{O_C}} W^+\Hi_{r_{n-1}}$ will give a trivial bundle.
\end{proof}

	\begin{lem}
		\label{lem:surjectivchecclosedpoints}
		Let $C$ be a characteristic zero nonarchimedean algebraically closed field, and let $k=O_C/\frak{m}_C$. Let $(\F,\F_C)$ be a smelted kimberlite over $\Spdf{O_C}$ with $\F_C=\F\times_{\Spdf{O_C}} \mrm{Spa}(C,O_C)$. Consider $\F_{O_{C'}}:=\F\times_{\Spdf{O_C}} \Spdf{O_{C'}}$ ranging over algebraically closed nonarchimedean field extension $C'/C$. Suppose that for every $C'$ and every closed point $x\in |\F_{O_{C'}}\red|$ the tubular neighborhood 
		$\Tup{(\F_{C'})}{x}$ is non-empty. Then $\mrm{sp}_{\F_\eta}$ is a surjection.
	\end{lem}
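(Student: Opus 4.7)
\emph{Plan.} The strategy is: given $y \in |\F\red|$, lift $y$ to a closed point $x$ in the reduction of a sufficiently large basechange $C'/C$, apply the hypothesis to obtain an $\F_{C'}$-point specialising to $x$, and push it forward to a preimage of $y$ in $|\F_C|$ via the functoriality of the specialization map.

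Fix $y \in |\F\red|$, let $k(y)$ denote its residue field, and let $\tilde k$ be an algebraic closure of $k(y)$. I will construct a complete algebraically closed nonarchimedean extension $C'/C$ whose residue field $\tilde k'$ contains $\tilde k$: one can start from $(W(\tilde k) \widehat\otimes_{W(k)} O_C)[\tfrac{1}{p}]$ and pass to a completed algebraic closure. Because the reduction functor is a right adjoint (and hence commutes with fibre products), $\F_{O_{C'}}\red \cong \F\red \times_{\mrm{Spec}(k)} \mrm{Spec}(\tilde k')$. The embedding $k(y) \hookrightarrow \tilde k \subseteq \tilde k'$ and the map $\mrm{Spec}(k(y)) \to \F\red$ representing $y$ base change to a $\tilde k'$-point $\mrm{Spec}(\tilde k') \to \F_{O_{C'}}\red$ whose image $x$ lies over $y$ and satisfies $k(x) = \tilde k'$.

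Granting that $x$ is a closed point of $|\F_{O_{C'}}\red|$, the hypothesis yields some $r' \in |\Tup{(\F_{C'})}{x}|$, and by the characterization of tubes in \Cref{tubularsmeltedkimberlites} this gives $\mrm{sp}_{\F_{C'}}(r') = x$. Let $p: \F_{O_{C'}} \to \F$ denote the projection. By the functoriality of the specialization map (\Cref{pro:specializationvsheaves}), the element $r := p(r') \in |\F_C|$ satisfies $\mrm{sp}_{\F_C}(r) = p(x) = y$, so $y$ lies in the image of $\mrm{sp}_{\F_C}$ as desired.

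The main obstacle is verifying that $x$ is closed in the total space $|\F_{O_{C'}}\red|$, and not merely in the fibre over $y$. For the intended application---where $\F\red$ is the perfection of a proper finite-type $k$-scheme, as is the case for $\F = D(\sigma)$ by \Cref{pro:specialfiberschemetheoretic}---the base change $\F_{O_{C'}}\red$ is locally of finite type over the algebraically closed field $\tilde k'$, and the Nullstellensatz forces every point with residue field $\tilde k'$ to be closed. For more general prekimberlites an additional finiteness hypothesis on $\F\red$, or a separate argument using that $\mrm{sp}$ is a specialising spectral map, would be required.
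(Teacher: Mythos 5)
Your overall strategy is exactly the paper's: lift the given point of $|\F\red|$ to a rational point of the reduction after a suitable base change $C'/C$, invoke the non-emptiness of the tubular neighborhood there (via \Cref{tubularsmeltedkimberlites}), and push the resulting point of $|\F_{C'}|$ forward using functoriality of the specialization map (\Cref{pro:specializationvsheaves}). The construction of $C'$, the identification $\F_{O_{C'}}\red \cong \F\red\times_{\mrm{Spec}(k)}\mrm{Spec}(\tilde k')$ via the right-adjointness of reduction, and the final diagram chase are all fine and coincide with the paper's argument.

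The genuine gap is the one you flag yourself: closedness of the lifted point $x$. The lemma as stated carries no finiteness hypothesis on $\F\red$ (it is only assumed that $(\F,\F_C)$ is a smelted kimberlite), so an appeal to the Nullstellensatz for schemes locally of finite type over $\tilde k'$, or to an ``additional finiteness hypothesis,'' does not prove the statement — it proves a weaker statement that happens to suffice for $D(\sigma)$ by \Cref{pro:specialfiberschemetheoretic}, but the lemma is quoted in that generality and must be proved in it. The missing observation is that no finiteness is needed at all: $\F$ is a prekimberlite, hence formally separated, and this forces $\F\red$ — and therefore its base change $\F\red\times_{\mrm{Spec}(k)}\mrm{Spec}(\tilde k')$ — to be a separated scheme. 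Your point $x$ is by construction a section of the structure map $\F\red\times_{\mrm{Spec}(k)}\mrm{Spec}(\tilde k')\to \mrm{Spec}(\tilde k')$, and a section of a separated morphism is a closed immersion; hence $x$ is a closed point of $|\F_{O_{C'}}\red|$, with no hypothesis on $\F\red$ beyond what the definition of a (pre)kimberlite already provides. Replacing your Nullstellensatz paragraph by this separatedness argument closes the gap and recovers the paper's proof verbatim.
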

	\begin{proof}
		Given a point in $x\in |\F\red|$ we can find a field extension of perfect fields $K/k$ for which $\F\red\times_k \mrm{Spec}(K)$ has a section $y:\mrm{Spec}(K)\to \F\red\times_k \mrm{Spec}(K)$ mapping to $x$ under $\F\red\times_k \mrm{Spec}(K)\to \F\red$. Since $\F$ is formally separated, $\F\red\times_k \mrm{Spec}(K)$ is also separated and sections to the structure map define closed points. We can construct a nonarchimedean field $C'$ with $C\subseteq C'$ and $W(k)[\frac{1}{p}]\subseteq W(K)[\frac{1}{p}]\subseteq C'$. We get a map $\F_{O_{C'}}\to \F$, and in $|\F_{O_{C'}}\red|$ there is a closed point $y$ mapping to $x$. Any point $r\in |\F_{{C'}}|$ with $\mrm{sp}_{{\F_{C'}}}(r)=y$ maps to a point whose image under the specialization map is $x$. 
	\end{proof}

\begin{lem}
	\label{lem:tubularofbundles}
	Let $f:\F\to \G$ be a proper, $\ell$-cohomologically smooth map of $p$-adic kimberlites over $\Spdf{O_C}$. 
	Suppose $\G\red$ and $\F\red$ are perfectly finitely presented over $\mrm{Spec}({k_C})$ \cite[Definition 3.10]{Witt}.
	Let $X\to \mrm{Spec}(O_{C^\sharp})$ be a smooth projective scheme and suppose that $f$ is $X^\diamond$-bundle. 
	Suppose that:
\begin{enumerate}
	\item For any nonarchimedean field $C'$ extension $C$ and a map $t:{\mrm{Spd}(C',O_{C'})}\to \G$ there is an extension $C''$ of $C'$ such that $\F\times_\G {\mrm{Spd}(O_{C''})}$ is isomorphic to $X^\diamond\times_{\Spdf{O_C}} \Spdf{O_{C''}}$. \label{specializingcondition}
	\item For any closed point $x\in |\G\red|$ the tubular neighborhood $\Tup{\G}{x}$ is non-empty and connected. \label{connected}
\end{enumerate}
	Then, for any closed point $y\in |\F\red|$ the tubular neighborhood $\Tup{\F}{y}$ is also non-empty and connected. 
\end{lem}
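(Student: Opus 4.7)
Let $y\in|\F\red|$ be a closed point and let $x:=f\red(y)\in|\G\red|$, which is closed by properness of $f\red$. Write $Z_x:=(f\red)^{-1}(x)\subseteq|\F\red|$; by the $X^\diamond$-bundle hypothesis, $Z_x$ is a twisted form of $X_s\otimes_{k_C}\kappa(x)$, and it contains $y$ as a closed point. The plan is first to use the functoriality of formal neighborhoods to reduce the study of $\Tup{\F}{y}$ to an analysis of a map $\Tup{\F}{y}\to\Tup{\G}{x}$ whose geometric fibers are tubular neighborhoods of $X^\diamond$ at closed points of the special fiber, and then to combine the connectedness of $\Tup{\G}{x}$ with the explicit geometry of these fibers.

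\textbf{Step 1 (reduction along $f$).} I will apply \Cref{pro:pullbackformal} to $f$ and the locally closed subscheme $\{x\}\subseteq|\G\red|$ to obtain $\Tf{\F}{Z_x}=\F\times_\G\Tf{\G}{x}$. Since $f$ is formally adic over $\Zpd$, \Cref{pro:formallypadicispadic} ensures it respects the analytic locus, so intersecting with $\F^\mrm{an}$ yields $\Tup{\F}{Z_x}=\F^\mrm{an}\times_{\G^\mrm{an}}\Tup{\G}{x}$. The restricted map $\Tup{\F}{Z_x}\to\Tup{\G}{x}$ is then a pullback of the $X^\diamond$-bundle $f$. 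Since $y\in Z_x$ is closed, $\Tup{\F}{y}$ is the preimage of $\{y\}$ under the specialization $|\Tup{\F}{Z_x}|\to|Z_x|$, and the main object of study becomes the composite map $\Tup{\F}{y}\to\Tup{\G}{x}$.

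\textbf{Step 2 (geometric fibers are open polydisks).} For a geometric point $t:\Spa{C'}\to\Tup{\G}{x}$, valuativity of $\G$ will lift it to $\Spdf{O_{C'}}\to\G$, and hypothesis \ref{specializingcondition} produces $C''$ over which the bundle trivializes, namely $\F\times_\G\Spdf{O_{C''}}\cong X^\diamond\times_{\Spdf{O_C}}\Spdf{O_{C''}}$. Under this trivialization $y$ lifts to a closed point $y''$ of $X_s\otimes_{k_C}k_{C''}$, and pulling $\Tup{\F}{y}$ back along $\Spa{C''}\to\Tup{\G}{x}$ identifies it with the base change of $\Tup{X^\diamond}{y''}$ to $\Spa{C''}$. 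Because $X/O_{C^\sharp}$ is smooth and projective, this tubular neighborhood is the adic generic fiber of the formal completion of $X_{C''}$ at $y''$, which is an open perfectoid polydisk over $C''$; in particular it is non-empty and connected.

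\textbf{Step 3 (conclusion and main obstacle).} Non-emptiness of $\Tup{\F}{y}$ will then be immediate: $\Tup{\G}{x}$ is non-empty by hypothesis \ref{connected}, and Step 2 supplies non-empty geometric fibers. For connectedness, the situation will be that $\Tup{\F}{y}\to\Tup{\G}{x}$ is a specializing surjection with connected base (hypothesis \ref{connected}) and geometrically connected fibers (Step 2). The hard part will be converting this fiber-plus-base connectedness into connectedness of the total space in the v-sheaf setting, since the trivialization of Step 2 is only v-local. I plan to exploit that the fibers are universally $\pi_0$-trivial perfectoid polydisks, so that combined with the spatiality of both source and target (from the kimberlite hypotheses via \Cref{pro:spatial-spectral}) one obtains a bijection $\pi_0(\Tup{\F}{y})\xrightarrow{\sim}\pi_0(\Tup{\G}{x})$ via a spectral-space argument analogous to \Cref{pro:mainproKimberlites}. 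As a fallback, one may invoke proper base change in Scholze's six-functor formalism, which is applicable because $f$ is proper and $\ell$-cohomologically smooth, to compute $H^0(\Tup{\F}{y},\underline{\Lambda})$ directly and conclude.
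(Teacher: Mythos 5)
Your Steps 1 and 2 track the paper's argument: the paper likewise studies the restricted map $\Tup{\F}{y}\to\Tup{\G}{x}$, formalizes a geometric point of the base tube (after a v-cover/extension; note this uses that $\G$ is v-formalizing rather than "valuativity"), invokes hypothesis \ref{specializingcondition} to trivialize the bundle, uses perfect finite presentation of $\F\red\to\G\red$ and $k_C=k(y)$ to see that the relevant reduced fiber is a single closed point $Z$, and identifies the geometric fiber of the tube with $\Tup{(X^\diamond_{O_{C'}})}{Z}$, an open unit ball, hence non-empty and connected. Up to the minor imprecision about which property of $\G$ supplies the formalization, this part is fine.

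The genuine gap is in Step 3, which is the heart of the lemma and which you only sketch. Your primary route asserts that the fibers are "universally $\pi_0$-trivial" and that a spectral-space argument analogous to \Cref{pro:mainproKimberlites} gives $\pi_0(\Tup{\F}{y})\cong\pi_0(\Tup{\G}{x})$; but Step 2 only controls fibers over rank $1$ geometric points (and only after passing to an extension), so universal $\pi_0$-triviality is not established, nor have you identified the topological properties of $\Tup{\F}{y}\to\Tup{\G}{x}$ that would make such an argument run. The paper's mechanism is precisely this missing input: since $f$ is $\ell$-cohomologically smooth and proper, $|f|$ is open and closed (\cite[Proposition 23.11]{Et}); given non-empty clopen $U,V$ covering $|\Tup{\F}{y}|$, their images cover all rank $1$ points of $|\Tup{\G}{x}|$ (surjectivity on rank $1$ points from Step 2) and, rank $1$ points being dense because the base tube lies in a cJ-diamond, one concludes $f(U)\cup f(V)=|\Tup{\G}{x}|$; connectedness of $\Tup{\G}{x}$ then forces $f(U)\cap f(V)$ to contain a rank $1$ point, over which the fiber is a non-empty connected ball meeting both $U$ and $V$, so $U\cap V\neq\emptyset$. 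Your fallback does not repair this: proper base change cannot be applied to $\Tup{\F}{y}\to\Tup{\G}{x}$, since that map is not proper (its fibers are open unit balls, not quasicompact), and applying it to $f$ itself only computes cohomology of the tube around the whole reduced fiber $Z_x$, i.e. of $\Tup{\F}{Z_x}$, which says nothing about the tube at the single point $y$. As written, the proposal therefore leaves the crucial connectedness step unproved.
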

\begin{proof}
	Take a closed point $y\in |\F\red|$ with $x=f(y)$ and consider the map $f:\Tup{\F}{y}\to \Tup{\G}{x}$. Assume for now that for all maps $\Cp\to \Tup{\G}{x}$ the base change $\Tup{\F}{y}\times_{\Tup{\G}{x}} \Cp$ is non-empty and connected, we finish the proof under this assumption. 
	The map $|\Tup{\F}{y}|\to |\Tup{\G}{x}|$ is specializing, and by assumption surjective on rank $1$ points. 
	Let $U$ and $V$ non-empty open and closed subsets with $U\cup V=|\Tup{\F}{y}|$. Since $f$ is open and closed \cite[Proposition 23.11]{Et}, $f(U)\cup f(V)=|\Tup{\G}{x}|$ and $f(U)$ and $f(V)$ meet at a rank $1$ point. 

	Let us prove our assumption holds, take a map $t:\Cp\to \Tup{\G}{x}$. After, replacing $\Cp$ by a v-cover we can assume $\G$ formalizes $t$ and that $t$ has the base change property of \Cref{specializingcondition}. We get a pair of Cartesian diagrams, the right being obtained by taking the reduction of the left: 
	\footnotesize
\begin{center}
\begin{tikzcd}
	\Tf{\F}{y}\times_\G \Spdf{O_{C'}}\ar{r} \ar{d}	&X^\diamond\times_{\Spdf{O_C}}\Spdf{O_{C'}} \arrow{r} \arrow{d} & \Spdf{O_{C'}} \arrow{d} & 
	Z\ar{r}\ar{d}&	X\times \mrm{Spec}(k') \arrow{r} \arrow{d} & \mrm{Spec}(k') \arrow{d} \\
	\Tf{\F}{y}\ar{r} & \F \arrow{r} &\G & 
	\mrm{Spec}(k(y))\ar{r}{y} &\F\red \arrow{r} &\G\red
\end{tikzcd}
\end{center}
\normalsize
Since $\F\red\to \G\red$ is perfectly finitely presented and $k$ is algebraically closed, $k=k(y)=k(x)$ and the composition $y:\mrm{Spec}(k)\to \G\red$ is a closed immersion. Consequently $Z\to \mrm{Spec}(k')$ is an isomorphism. This gives $\Tf{\F}{y}\times_\G \Spdf{O_{C'}}=X^\diamond_{O_{C'}}\times_\F \Tf{\F}{y}= \Tf{({X^\diamond_{O_{C'}}})}{Z}$ by \Cref{pro:pullbackformal}. But $Z\to X\times \mrm{Spec}(k')$ is a closed point, so $\Tup{{(X^\diamond_{O_{C'}}})}{Z}$ is isomorphic to an open unit ball $\bb{B}_{<1}^n$ over $C'^\sharp$, proving the assumption. 
\end{proof}

We keep the notation from the beginning of the previous subsection and we restrict our attention to parahoric loop groups associated to points contained in our chosen alcove $\cali{C}$. Given $s_j\in \bb{S}$ we denote by $L^+\Hi_{s_j}$ the parahoric loop group associated to the wall $F_{s_j}$ in $\cali{C}$ corresponding to the reflection $s_j$. For a point $r\in \cali{C}$ we let $J_r\subseteq \bb{S}$ denote the set $\{s_j\mid r\in F_{s_j}\}$. We will denote by $L^+B$ the parahoric loop group associated to $\cali{C}$.

By functoriality of $L(-)$ we have loop group versions of the Weyl and Iwahori--Weyl groups by the formula $LW:=LN/LT$ and $L\tilde{W}:=LN/L^+\cali{T}$. They fit in an exact sequence:
$e\to LT/L^+\cali{T}\to L\tilde{W}\to LW\to e$.
A direct computation shows that $LW=L(N/T)=\underline{W}\times \Spdf{O_C}$, that $LT/L^+\cali{T}=\underline{X_*(T)}\times \Spdf{O_C}$ and that $L\tilde{W}=\underline{\tilde{W}}\times \Spdf{O_C}$.
Since $H$ is a split reductive group over $W(k)[t,t^{-1}]$, for any element $w\in W$ we can find a section $n_w:\mrm{Spec}(W(k)[t,t^{-1}])\to N$ whose projection to $W$ is $w$ \cite[Corollary 5.1.11]{Conrad}. This allow us to define a similar section $n_w:\Spdf{O_C}\to LN\subseteq LH$. Also for any $\mu\in X_*(T)$ and any $\Spa{R}\to \Spdf{O_C}$ we can consider the element $\xi^\mu\in T(B_\mrm{dR}(R^\sharp))$. This is functorial and defines a section $\Spdf{O_C}\to LT$ mapping to $\mu\in \underline{X_*(T)}\times \Spdf{O_C}$. In particular, for any element $\tilde{w}\in \tilde{W}$ there is a section $n_{\tilde{w}}:\Spdf{O_C}\to LN$ projecting to $\tilde{w}$ in $L\tilde{W}$. We can use $n_{\tilde{w}}$ to construct an automorphism $n_{\tilde{w}}:\GruH{{O_C}}\to \GruH{{O_C}}$ with $n_{\tilde{w}}(x\cdot L^+H):= n_{\tilde{w}}\cdot x\cdot L^+H$. We will use this discussion in the proof of \Cref{thm:GrassmannianisaKimberlite}.

\begin{lem}
	\label{pro:productmorphismconnectedfibers}
	Let $\sigma=(\sigma_r,\sigma_q)$ with $\sigma$ as in the previous subsection except that we require $\sigma_r,\sigma_q\subseteq \cali{C}$. Suppose that $L^+\Hi_{q_n}=L^+\Hi_{r_n}=L^+H$ then the multiplication map $\mu:D(\sigma)\to \GruH{O_C}=LH/L^+H$ has geometrically connected fibers.
\end{lem}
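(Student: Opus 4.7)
The plan is to proceed by induction on the length $n$ of the sequence $\sigma$. For the base case $n=1$, the hypothesis $L^+\Hi_{r_1}=L^+\Hi_{q_1}=L^+H$ forces $D(\sigma)=L^+H/L^+H$ to be a single point over $\Spdf{O_C}$, and $\mu$ is the identity section of $\GruH{O_C}$; its fibers are either empty or a single point, hence trivially geometrically connected.

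For the inductive step with $n\geq 2$, I would exploit the contracted-product decomposition
\[
D(\sigma)=L^+\Hi_{r_1}\times^{L^+\Hi_{q_1}}_{\Spdf{O_C}} D(\sigma^{(1)}),
\]
where $\sigma^{(1)}$ is the subsequence obtained by dropping the first index from both $\sigma_r$ and $\sigma_q$. Since $\sigma^{(1)}$ retains the hypothesis $L^+\Hi_{r_n}=L^+\Hi_{q_n}=L^+H$, the inductive hypothesis applies to $\mu^{(1)}:D(\sigma^{(1)})\to\GruH{O_C}$. I would then introduce the auxiliary map
\[
\alpha: L^+\Hi_{r_1}\times_{\Spdf{O_C}} D(\sigma^{(1)})\longrightarrow \GruH{O_C},\qquad (g_1,d)\longmapsto g_1\cdot \mu^{(1)}(d),
\]
where $g_1\in L^+\Hi_{r_1}\subseteq LH$ acts on $\GruH{O_C}$ by left translation. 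Using $L^+\Hi_{q_1}\subseteq L^+\Hi_{r_2}$ (from $F_{r_2}\subseteq F_{q_1}$ via \Cref{pro:reductionforparahorloopgrpstwoparahorics}) and the left $L^+\Hi_{q_1}$-equivariance of $\mu^{(1)}$, one verifies that $\alpha$ is invariant under the diagonal $L^+\Hi_{q_1}$-action $h\cdot(g_1,d)=(g_1 h^{-1},h\cdot d)$, and descends to $\mu$. Therefore $\mu^{-1}(y)=\alpha^{-1}(y)/L^+\Hi_{q_1}$ for any geometric point $y$ of $\GruH{O_C}$, and since a quotient of a connected v-sheaf by a group action is connected, it suffices to show that $\alpha^{-1}(y)$ is geometrically connected.

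Projection onto the first factor gives a proper map $p_1:\alpha^{-1}(y)\to L^+\Hi_{r_1}$, proper because $D(\sigma^{(1)})$ is proper over $\Spdf{O_C}$ by \Cref{pro:Demazurevsheavesaresmoothproper}. The fiber of $p_1$ over $g_1$ is identified with $(\mu^{(1)})^{-1}(g_1^{-1}\cdot y)$, geometrically connected by the inductive hypothesis. On the other hand, $L^+\Hi_{r_1}$ is geometrically connected over $\Spdf{O_C}$: writing it as the inverse limit of the jet spaces $L^k\Hi_{r_1}(\Spa{R})=\Hi_{r_1}(B^+_\mrm{dR}(R^\sharp)/\xi^k)$, the base level $L^1\Hi_{r_1}=\ov{\Hi}_{r_1}^\dia$ is geometrically connected since $\ov{\Hi}_{r_1}$ is smooth with geometrically connected fibers, and by smoothness each transition $L^{k+1}\Hi_{r_1}\to L^k\Hi_{r_1}$ is a torsor under the connected vector v-sheaf $\mrm{Lie}(\Hi_{r_1})\otimes \cali{O}^\sharp$ (deformation theory as in the proof of \Cref{lem:proetalelocallytrivial}). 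A proper map of v-sheaves with geometrically connected fibers over a geometrically connected base has a geometrically connected total space, so $\alpha^{-1}(y)$, and hence $\mu^{-1}(y)$, is geometrically connected.

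The main obstacle I foresee is the careful bookkeeping of geometric connectedness in the v-sheaf setting, namely (i) the passage through the inverse limit $L^+\Hi_{r_1}=\varprojlim_k L^k\Hi_{r_1}$, which should follow from the surjectivity and connected-fiberedness of the transition maps combined with \cite[Lemma 12.17]{Et} (computing underlying topological spaces of limits along qcqs maps), and (ii) the conclusion step for the total space of $p_1$, where one uses that $|p_1|$ is a proper (hence closed) spectral map of locally spectral spaces so that any disconnection of $|\alpha^{-1}(y)|$ would force a disconnection of either the base or some fiber. As an alternative route one could bypass (i)–(ii) by reducing to the special fiber via the reduction functor: by \Cref{pro:specialfiberschemetheoretic} the map $\mu\red$ is the classical convolution morphism of Bott--Samelson resolutions for the Witt vector affine Grassmannian, whose fibers are known to be geometrically connected, and this information propagates to $\mu$ using that $D(\sigma)$ is a topologically normal rich $p$-adic kimberlite (\Cref{thm:DemazureKimberlite}).
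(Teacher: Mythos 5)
Your inductive set-up (splitting off the first factor, descending along the $L^+\Hi_{q_1}$-action, projecting $\alpha^{-1}(y)$ to $L^+\Hi_{r_1}$) is reasonable, and the connectedness of $L^+\Hi_{r_1}$ is fine, but the key step is broken: $p_1:\alpha^{-1}(y)\to L^+\Hi_{r_1}$ is \emph{not} surjective. Its image is $\{g_1\in L^+\Hi_{r_1}\mid g_1^{-1}\cdot y\in \mrm{Im}(\mu^{(1)})\}$, and $\mrm{Im}(\mu^{(1)})$ is a proper closed Schubert-type subsheaf of $\GruH{O_C}$, so for a general geometric point $y$ the fibers of $p_1$ are empty over a large part of the base. ``Connected base and connected-or-empty fibers'' does not give a connected total space (two points mapping into a line is already a counterexample), so what you actually need is connectedness of the image of $p_1$ (equivalently, of the image of $\mu^{-1}(y)$ in the first partial flag variety $\mrm{Fl}_{r_1,q_1}^\dia$). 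That statement is exactly the nontrivial content of the lemma: classically one proves, via the Bruhat decomposition and the lifting properties of the Bruhat order for the Tits system (BN-pair), that this image is either a single point or the whole $L^+\Hi_{q_1}$-orbit closure, hence connected, and this is what the paper's proof invokes — it reduces to the classical combinatorial case using that geometric fibers of $D(\sigma)$ are proper spatial diamonds, that rank $1$ points are dense in spatial diamonds, and that the rank $1$ geometric points of the parahoric loop groups form the parabolic subgroups of a Tits system. Without this combinatorial input your induction does not close.

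Your fallback route also has a gap: knowing that $\mu\red$ is the classical convolution morphism with connected fibers does not propagate to $\mu$ via the kimberlite formalism. The tubular-neighborhood/topological-normality machinery of \Cref{pro:mainproKimberlites} compares $\pi_0$ of the analytic locus of a kimberlite with $\pi_0$ of its reduction; it says nothing about the fibers of a morphism $D(\sigma)\to \GruH{O_C}$ over an arbitrary geometric point of the target (which typically lives in characteristic $0$ and does not specialize to a single fiber of $\mu\red$). So neither branch of your proposal yields the lemma as written; the missing ingredient in both is the BN-pair argument controlling how $L^+\Hi_{r_1}$-orbits meet the image of $\mu^{(1)}$.
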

\begin{proof}	
	The proof is combinatorial and follows the classical case. The key geometric inputs are as follows, the basechange of $D(\sigma)\to \Spdf{O_C}$ by geometric points are proper spatial diamonds, rank $1$ points are always dense for any spatial diamond and the group of rank $1$ geometric points of a parahoric loop group coincide with the “parabolic subgroups” of a Tits-systems (or $BN$-pair). These two observations together with \cite[Lemma 12.11]{Et} reduces the proof to the classical combinatorial case. 
	We omit the details.	
\end{proof}	

We can now prove \Cref{thm:GrassmannianisaKimberlite}.

\begin{proof}[Proof (of \Cref{thm:GrassmannianisaKimberlite})]
	Observe that $\Grm{O_F}\to \Spdf{O_F}$ is formally adic. Indeed, $\Grm{O_F}\times_ {\Spdf{O_F}} \mrm{Spec}(k_F)^\diamond$ is $(\GrWme{k_F})^\dia$ and since $\GrWme{k_F}$ is proper this is $(\GrWme{k_F})^\diamond$ we may conclude by \Cref{lem:basechangerepresentimpliesadic}. 
	This gives that $(\Grm{O_F})^\mrm{red}=\GrWme{k_F}$, which is represented by a scheme \cite[Theorem 8.3]{Witt}, that the adjunction map $((\Grm{O_F})^\mrm{red})^\diamond\to \Grm{O_F}$ is a closed immersion and that $(\Grm{O_F})^{\mrm{an}}=\Grm{O_F}\times_{\Spdf{O_F}}\Spdf{F}$, which is represented by a spatial diamond by \cite[Proposition 20.4.5]{Ber}.
Also, $\Grm{O_F}$ is separated by \cite[Proposition 20.5.4]{Ber}, and by \Cref{pro:formallypadicgivesformaldiagona} it is formally separated. 
To prove that $\Grm{O_F}$ is a $p$-adic kimberlite we need to prove it is v-formalizing. To prove that it is rich it suffices to prove $\Grm{O_F}\times_{\Spdf{O_F}}\Spdf{F}$ has enough facets and that the tubular neighborhoods at closed points are non-empty. These can be checked after basechange to $\Spdf{O_C}$ for $C/F$ a completed algebraic closure. 
	
	 Suppose for now that $F=C$. 
	 In this case $\g\times_{\Zp} W(k_F)$ is a split reductive group, and since $\Grm{{O_C}}$ only depends on $\g_{W(k_F)}$, we may assume $\g=H$ with $H$ split reductive. Furthermore, using the discussion of the beginning of the section one can reduce to the case in which $H$ is semisimple and simply connected. In this case $\tilde{W}=W^\mrm{aff}$.
	Recall that we have inclusions $X_*^+(T)\subseteq X_*(T)\subseteq \tilde{W}$ so we may think of $\mu$ as an element of the Iwahori--Weyl group. By definition, $\GrumH{{O_C}}\Hub{R}$ consists of those elements in $\GruH{{O_C}}\Hub{R}$ satisfying that for any geometric point $q:\mrm{Spa}(C',C'^{,+})\to \Spa{R}$ the type of $q$, $\mu_q$, is in the double coset  
	$H(B^+_\mrm{dR}(C'^\sharp))\backslash H(B_\mrm{dR}(C'^\sharp))/H(B^+_\mrm{dR}(C'^\sharp))=X_*^+(T)=W_o\backslash W^\mrm{aff}/W_o$
	satisfies $\mu_q\leq \mu$ in the Bruhat order. Now, given $w\in \tilde{W}$ we can consider the subsheaf $\Gruw{{O_C}}\subseteq \Gru{{O_C}}$ with the similar property on a geometric point using instead the double coset 
	$B(B^+_\mrm{dR}(C'^\sharp))\backslash H(B_\mrm{dR}(C'^\sharp))/H(B^+_\mrm{dR}(C'^\sharp))=W^\mrm{aff}/W_o$. The projection map $\pi:W^\mrm{aff}/W_o\to W_o\backslash W^\mrm{aff}/W_o$ is order preserving and $\pi^{-1}(\mu)$ has a unique element $[w_\mu]$ of largest length, it has the property that $v\leq w_\mu$ if and only if $\pi(v)\leq \mu$. In particular, we have an equality of sheaves $\GrueH{{O_C}}{w_\mu}=\GrumH{{O_C}}$. We prove that for $w\in W^\mrm{aff}$ the v-sheaf $\GruwH{{O_C}}$ satisfies the conclusions of the theorem. 

	Find a reduced expression for $w=s_{j_1}\dots s_{j_n}$ and consider $D(w):=L^+H_{s_{j_1}}\overset{L^+B}{\times_{\Spdf{O_C}}}\dots \overset{L^+B}{\times_{\Spdf{O_C}}}L^+H_{s_{j_n}}/L^+H$.
	The multiplication map $m:D(w)\to \GruH{{O_C}}$ factors through $\GruwH{{O_C}}$ and surjects onto it. This implies $\GruH{{O_C}}$ is a kimberlite. \Cref{pro:partiallyproperisvaluative}, \Cref{pro:basicpropCJ} and \Cref{pro:spectralmapSch-Spatial} imply it is rich.
	\Cref{thm:DemazureKimberlite} and \Cref{pro:productmorphismconnectedfibers} combined with \Cref{lem:quotientofKimberliteisKimberlite} allow us to conclude in this case.

	Finally, let us deal with the case $F\neq C$. Let ${F'}$ the completed maximal unramified subextension of $F$ in $C$. We have surjective maps of v-sheaves:
	$\Grm{{O_C}}\to \Grm{O_{{F'}}}\to \Grm{O_F}$.
	We may argue as above to prove $\Grm{O_F}$ and $\Grm{O_{{F'}}}$ are rich kimberlites. By \Cref{pro:pullbackformal}, $\Grm{O_{{F'}}}$ has connected tubular neighborhoods since $(\Grm{{O_C}})\red=(\Grm{O_{{F'}}})\red$. On the other hand, $\Grum{O_{{F'}}}\to \Grm{O_F}$ is a $\underline{\pi_1^{\mrm{f\acute{e}t}}}(\mrm{Spec}(O_F))$-torsor and for any closed point $x\in |(\Grm{O_F})\red|$ the action of $\pi^\mrm{f\acute{e}t}_1(\mrm{Spec}(O_F))$ permutes transitively the closed points $y\in |(\Grm{O_{{F'}}})\red|$ over $x$. The action permutes transitively the tubular neighborhoods associated to $y$ which proves that the tubular neighborhood over $x$ is also connected.
\end{proof}
\begin{lem}
	\label{lem:quotientofKimberliteisKimberlite}  
	Let $f:\F\to\G$ be a surjective map of rich $p$-adic kimberlites over $\Spdf{O_F}$, such that $\G\red$ and $\F\red$ are perfectly of finite type over $\mrm{Spec}(k_F)$. Suppose $f\red$ has geometrically connected fibers and that $\F$ is topologically normal. Then $\G$ is topologically normal.  
\end{lem}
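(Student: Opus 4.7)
The strategy is to reduce connectedness of $\Tup{\G}{x}$ to connectedness of the fiber $Z := (f\red)^{-1}(x) \subseteq \F\red$, by surjecting onto it from $\Tup{\F}{Z}$ and computing $\pi_0(\Tup{\F}{Z})$ with \Cref{pro:mainproKimberlites}.

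First, since $x$ is closed in $|\G\red|$ and $f\red$ has geometrically connected fibers, $Z$ is a closed subscheme of $\F\red$ and $|Z|$ is connected. By \Cref{pro:pullbackformal} applied to the closed immersion $\{x\} \hookrightarrow \G\red$, we have $\Tf{\F}{Z} = \Tf{\G}{x} \times_\G \F$. Formal adicness of $f$ (which holds since both $\F$ and $\G$ are $p$-adic) gives $\F^\mrm{an} = \F \times_\G \G^\mrm{an}$, so intersecting with $\F^\mrm{an}$ yields the base-change identification $\Tup{\F}{Z} = \Tup{\G}{x} \times_{\G^\mrm{an}} \F^\mrm{an}$. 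The surjectivity of $f$ is inherited by $f^\mrm{an}$, and by base change $\Tup{\F}{Z} \to \Tup{\G}{x}$ is surjective as v-sheaves, hence on underlying topological spaces.

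Next, I will show $|\Tup{\F}{Z}|$ is connected by verifying that the smelted kimberlite $\Ki := (\Tf{\F}{Z}, \Tup{\F}{Z})$ is rich and topologically normal, and applying \Cref{pro:mainproKimberlites}. Richness is immediate from \Cref{pro:neighborhoodspreserverich}, and the reduction of $\Tf{\F}{Z}$ is $Z$ by \Cref{pro:tubularalmostpre-Kimb}. For topological normality, note that any closed point $y \in |Z|$ is also closed in $|\F\red|$ (since $Z$ is closed in $\F\red$), and a further application of \Cref{pro:pullbackformal} to $\{y\} \hookrightarrow Z$ identifies the tubular neighborhood of $y$ inside $\Ki$ with $\Tup{\F}{y}$, which is connected by topological normality of $\F$. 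Thus \Cref{pro:mainproKimberlites} gives a bijection $\pi_0(|\Tup{\F}{Z}|) \cong \pi_0(|Z|)$; since $|Z|$ is connected, so is $|\Tup{\F}{Z}|$.

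Finally, a continuous surjection from a connected space has connected image, so $|\Tup{\G}{x}|$ is connected, which is the desired topological normality of $\G$. The only mildly subtle point is the transitivity identification $\Tup{\Tf{\F}{Z}}{y} = \Tup{\F}{y}$ used in verifying topological normality of $\Ki$; this is a straightforward diagram chase through the two applications of \Cref{pro:pullbackformal}, first along $\{x\} \hookrightarrow \G\red$ and then along $\{y\} \hookrightarrow Z$.
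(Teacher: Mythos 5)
Your proof is correct and takes essentially the same route as the paper's: pull back the tubular neighborhood along $f$ using \Cref{pro:pullbackformal} to identify it with $\Tup{\F}{Z}$ for the fiber $Z=(f\red)^{-1}(x)$, deduce its connectedness from \Cref{pro:neighborhoodspreserverich} and \Cref{pro:mainproKimberlites} together with geometric connectedness of the fiber, and conclude by surjectivity of $f$. You simply spell out the details the paper leaves implicit (formal adicness of $f$, closedness in $|\F\red|$ of closed points of $Z$, and the transitivity identification of tubular neighborhoods).
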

\begin{proof}

	Pick a closed point $x\in |\G\red|$, by \Cref{pro:pullbackformal} $\Tup{\G}{x}\times_{\G}\F=\Tup{\F}{S}$ with $S=(f\red)^{-1}(x)$. By \Cref{pro:mainproKimberlites} and \Cref{pro:neighborhoodspreserverich}, $\Tup{\F}{S}$ is connected. Since $f$ is surjective $\Tup{\G}{x}$ is also connected.
\end{proof}

\bibliography{biblio.bib}
\bibliographystyle{amsplain}


\end{document}